\DeclareFontFamily{OT1}{pzc}{}
\DeclareFontShape{OT1}{pzc}{m}{it}{ <-> s*[1.2] pzcmi7t }{}
\DeclareMathAlphabet{\mathpzc}{OT1}{pzc}{m}{it}
\RenewDocumentCommand\cite{O{}mE{*}{{}}}{%
\IfBlankTF{#1}{\IfBlankTF{#3}{\oldcite{#2}}{\oldcite[#3]{#2}}}{\oldcite[#1#3]{#2}}}
\newcommand\cites[1]{\cite{#1}}
\theoremstyle{plain}
\newtheorem{ABCtheorem}{Theorem}
\newtheorem{theorem}{Theorem}[section]
\newtheorem{lemma}[theorem]{Lemma}
\newtheorem{proposition}[theorem]{Proposition}
\newtheorem{corollary}[theorem]{Corollary}
\newtheorem{ABCcorollary}[ABCtheorem]{Corollary}
\newenvironment{customtheorem}[1]
  {\innercustomthm}
  {\endinnercustomthm}
\theoremstyle{definition}
\newtheorem{definition}[theorem]{Definition}
\newtheorem{notation}[theorem]{Notation}
\theoremstyle{remark}
\newtheorem{remark}[theorem]{Remark}
\newtheorem*{remark*}{Remark}
\newtheoremstyle{noparens}%
{}{}%
{}{}%
{\bfseries}{.}%
{ }%
{\thmname{#1}\thmnumber{ #2}\thmnote{ #3}}
\theoremstyle{noparens}
\newtheorem*{claim}{Claim}
\newenvironment{claimproof}[1][]{%
\par
\noindent
$\ulcorner$ \hspace{0.1cm}\textit{Proof\ifx\newenvironment#1\newenvironment\else\space\fi#1.}
\ignorespaces}{\hfill$ {\scriptstyle\blacksquare}\,\,\,\,\lrcorner$\par\vspace{1.25ex}}
\DeclareMathOperator{\sech}{sech}
\newcommand{\Real}{\mathbb{R}}
\newcommand{\R}{\mathbb{R}}
\newcommand{\C}{\mathbb{C}}
\newcommand{\Natural}{\mathbb{N}}
\newcommand{\N}{\mathbb{N}}
\newcommand{\Integers}{\mathbb{Z}}
\newcommand{\Z}{\mathbb{Z}}
\newcommand{\Dscr}{\mathscr{D}}
\newcommand{\Mcal}{\mathcal{M}}
\newcommand{\Ncal}{\mathcal{N}}
\newcommand{\Gcal}{\mathcal{G}}
\newcommand{\Bcal}{\mathcal{B}}
\newcommand{\Scal}{\mathcal{S}}
\newcommand{\Rcal}{\mathcal{R}}
\newcommand{\V}{\mathcal{V}}
\newcommand{\U}{\mathcal{U}}
\newcommand{\Hcal}{\mathcal{H}}
\newcommand{\aha}{{\frac{1}{2}}}
\newcommand{\fr}[2]{\frac{#1}{#2}}
\newcommand{\supp}{\operatorname{supp}}
\newcommand{\eps}{\varepsilon}
\newcommand{\de}{\partial}
\newcommand{\RR}{\R^2}
\newcommand{\ph}{\varphi}
\newcommand{\wphi}{\phi}
\newcommand{\unif}{\text{{\rm unif}}}
\newcommand{\rc}{\text{{\rm rc}}}
\newcommand{\wt}{\widetilde}
\newcommand{\wh}{\widehat}
\newcommand{\jap}[1]{\braket{#1}}
\newcommand{\mb}[1]{\mathbf{#1}}
\newcommand{\mc}[1]{\mathcal{#1}}
\newcommand{\s}{\;\,}
\newcommand{\loc}{\mathrm{loc}}
\newcommand{\KP}{\mathrm{KP}}
\newcommand{\mKP}{\mathrm{mKP}}
\newcommand{\KdV}{\mathrm{KdV}}
\newcommand{\BMO}{\mathrm{BMO}}
\newcommand{\PP}[2]{#2_{\mathrm{p},#1}}
\newcommand{\PBMO}{\mathrm{BMO}_{\mathrm {p}}}
\newcommand{\PBMOx}[1]{\mathrm{BMO}_{\mathrm {p},#1}}
\newcommand{\PBMOl}{\PP\lambda\BMO}
\newcommand{\PBMOo}{\PP0\BMO}
\newcommand{\PHu}{\Hcal_{\mathrm{p}}^1}
\newcommand{\PHux}[1]{\Hcal_{\mathrm{p},#1}^1}
\newcommand{\p}{{\rm p}}
\newcommand{\OO}{\operatorname{O}}
\newcommand{\demu}{\partial_x^{-1}}
\newcommand{\Hcrit}{{\dot H^{-\aha,0}(\RR)}}
\newcommand{\wv}{{\wt v}}
\newcommand{\wV}{{\wt V}}
\newcommand{\wW}{{\wt W}}
\newcommand{\wu}{{\bar u}}
\newcommand{\GammaExp}{\accentset{\succ}{\Gamma}}
\newcommand{\M}{M}
\newcommand{\Mml}{\M_-^\lambda}
\newcommand{\Mpl}{\M_+^\lambda}
\newcommand{\Mult}{{\mathpzc M}}
\newcommand{\Fun}{\Phi}
\NewDocumentCommand{\sto}{}{\mathrel{\mathpalette \virgo_small:nn \to}}
\NewDocumentCommand{\sgets}{}{\mathrel{\mathpalette \virgo_small:nn \gets}}
\newcommand{\samB}{\mathpzc B}
\newcommand{\dtsmV}{\mathpzc V}
\newcommand{\eleV}{\mathbf V}
\newcommand{\eleVt}{{\mathbf V}_{\!\sto}}
\newcommand{\eleB}{\mathbf B}
\newcommand{\eleBt}{{\mathbf B}_{\sto}}
\newcommand{\dx}{\,dx}
\newcommand{\dxdy}{\,dx\,dy}
\newcommand{\tanhcnu}{\tanh\hspace{-0.3pt}\circ\hspace{2pt}\nu}
\newcommand{\sechscnu}{\sech^2\!\circ\hspace{2pt}\nu}
\newcommand{\etapmcnu}{\eta^\pm\!\!\circ\nu}
\newcommand{\etapcnu}{\eta^+\!\!\circ\nu}
\newcommand{\etamcnu}{\eta^-\!\!\circ\nu}
\newcommand{\vertiii}[1]{{\left\vert\kern-0.25ex\left\vert\kern-0.25ex\left\vert #1 
    \right\vert\kern-0.25ex\right\vert\kern-0.25ex\right\vert}}
\def\Xint#1{\mathchoice
    {\XXint\displaystyle\textstyle{#1}}%
    {\XXint\textstyle\scriptstyle{#1}}%
    {\XXint\scriptstyle\scriptscriptstyle{#1}}%
    {\XXint\scriptscriptstyle\scriptscriptstyle{#1}}%
    \!\int}
\def\XXint#1#2#3{{\setbox0=\hbox{$#1{#2#3}{\int}$}
    \vcenter{\hbox{$#2#3$}}\kern-.5\wd0}}
\def\fint{\Xint-}
\def\Xint#1{\mathchoice
    {\XXint\displaystyle\textstyle{#1}}%
    {\XXint\textstyle\scriptstyle{#1}}%% OTHER COMMANDS
    {\XXint\scriptstyle\scriptscriptstyle{#1}}%
    {\XXint\scriptscriptstyle\scriptscriptstyle{#1}}%
    \!\int}
\def\XXint#1#2#3{{\setbox0=\hbox{$#1{#2#3}{\int}$}
    \vcenter{\hbox{$#2#3$}}\kern-.5\wd0}}
\newcommand{\w}{z}
\newcommand{\g}{h}
\newcommand{\als}[1]{\begin{align*}#1\end{align*}}
\newcommand{\QQQ}{\qquad\qquad}
\newcommand{\Qq}{\qquad}
\numberwithin{equation}{section}
\begin{document}
\allowdisplaybreaks

\title[On the the B\"acklund transform and the stability of the line soliton of KP-II]{On the B\"acklund transform and the stability of the line soliton of the KP-II equation on $\R^2$}
\author{Lorenzo Pompili}

\begin{abstract}
    We study the Miura map of the KP-II equation on $\RR$ and the resulting B\"acklund transform, which adds a line soliton to a given solution. This work aims to develop a complementary approach to T. Mizumachi’s method for the $L^2$-stability of the line soliton, which the potential for generalization to multisolitons.
    
    We construct the B\"acklund transform by classifying solutions of the Miura map equation close to a modulated kink; this translates into studying eternal solutions of the forced viscous Burgers' equation under distinct boundary conditions at $\pm\infty$. We then show that its range, when intersected with a small ball in $|D_x|^\aha L^2(\RR)\cap L^2(\RR)\cap \braket{y}^{0-}\!L^1(\RR)$, forms a codimension-1 manifold.
    
    We prove codimension-1 $L^2$-stability of the line soliton in the aforementioned weighted space as a corollary, providing the first stability result at sharp regularity. The codimension-1 condition in the range of the B\"acklund transform is an intrinsic property, and we conjecture that it corresponds to a known long time behavior of perturbed line solitons. The stability is expected to hold without this condition, as in Mizumachi's works.
    
    Finally, we show the construction of a multisoliton addition map for $(k,1)$-multisolitons, $k\geq 1$.
\end{abstract}
\maketitle
%\tableofcontents

\section{Introduction}
We consider the Kadomtsev--Petviashvili equation on the plane $\RR_{x,y}$
\begin{equation}\label{eq:KP-II}\tag{KP-II}
    u_t-6uu_x+u_{xxx}+3\partial_x^{-1}u_{yy}=0,
\end{equation}
a well-known two-dimensional generalization of the KdV equation
\begin{equation}\label{eq:kdv}\tag{KdV}
    u_t-6uu_x+u_{xxx}=0.
\end{equation}
The KdV equation can be seen as a special case of KP-II where solutions do not depend on the $y$ variable. A family of solutions of \eqref{eq:KP-II} is given by the KdV solitons
\[u(t,x,y)=\ph^{\lambda}(x-x_0-4\lambda^2t),\qquad \ph^{\lambda}(x):=-2\lambda^2\sech^2\left(\lambda x\right)=-\frac{c}{2}\sech^2\left(\frac{\sqrt c x}{2}\right),\]
for $\lambda>0$, $x_0\in \R$, and where the last equality holds\footnote{The parameter $c=4\lambda^2$ is the translational velocity of the soliton. The choice of parametrizing the family of solitons by the parameter $\lambda$ is natural from the inverse scattering point of view: $-\lambda^2$ is the ground state energy of the Schr\"odinger operator
$$ L^\KdV_u=-\partial_x^2+u $$
with potential $u=\ph^{\lambda}$, which is the Lax operator of a solution $u$ of the KdV equation. The ground state of $L_{\ph^{\lambda}}$ is $\sech(\lambda x)$.\\We remark that we intentionally chose the constants in equations \eqref{eq:kdv}, \eqref{eq:KP-II} so that the solitons are negative, despite physically representing water waves with positive elevation, so that the potential in the above Schr\"odinger operator coincides with the KdV solution itself, otherwise we would need a minus sign in front of $u$.} for $c=4\lambda^2$. In the context of the KP-II equation, the KdV soliton is called \emph{line soliton}, since it decays in the $x$ variable and is constant in the $y$ variable\footnote{Line solitons that are not parallel to the $y$-axis can be obtained by applying the KP-II Galilean symmetry \eqref{eq:KP-IIgalileanSymmetry} to the KdV solitons.}. The variable $y$ is often called the \emph{transversal direction}.\\

Historically, the KdV equation was one of the the first nonlinear dispersive models derived to describe travelling waves. The KdV soliton corresponds to the `wave of translation' discovered and studied by J. Scott Russell starting from 1834, often observable in shallow water or along narrow water channels. Russell's experiments, followed, among others, by the theoretical contributions of Boussinesq and Korteweg--de Vries, as well as the numerical experiments of Fermi--Pasta--Ulam--Tsingou and Kruskal--Zabusky, sparked significant interest among mathematicians and physicists, providing solid motivation for the development of soliton theory \cite{palais-1997-the_symmetries_of_solitons}.

Solitons of integrable dispersive PDEs are known to have a special behavior: they generally interact elastically with each other and with the `radiation', the part of the solution that decays in time. Their evolution is essentially decoupled from that of the rest of the solution, and they can be thought roughly as nonlinear eigenmodes of the equation. This is connected to the integrable PDEs being formally diagonalized by \emph{scattering transforms}, also called \emph{nonlinear Fourier transforms}. In physical terms, their shape being restored after interactions with other waves can be seen as a particle-like property, as suggested by the suffix `-on'.

Remarkably, this unique property of solitons can be made precise by the so-called \emph{soliton addition maps}, or \emph{B\"acklund transforms}\footnote{The term B\"acklund transform is an umbrella term that is used in the literature of integrable PDEs to describe various maps that conjugate the flows of two PDEs (often the same PDE). The name `soliton addition map' is more specific and refers to what is described in the paragraph. In this article, we will use the two terms synonymously.}, which commute with the flow of the respective integrable PDE and allow to nonlinearly add and subtract solitons from a given solution. These maps are naturally linked to the scattering transform, but they generally require different analytic techniques to be studied. They were derived and used to study the stability of solitons for several integrable PDEs in 1 space dimension \cite{mizumachi-pelinovsky-2012-nls-soliton-stability-backlund,Koch-tataru-2020-multisolitons-cubic-NLS-mKd,munoz-2023-sineGordon-kink,koch-yu-2023-asymptotic}.

What makes solitons of \eqref{eq:KP-II} on $\RR$ special, compared to those of other integrable PDEs, is their non-compact nature: the evolution of generic perturbations of the line soliton are described by scaling and translation parameters that depend on the $y$ variable. This introduces behaviors, such as those discussed in \cite{mizumachi2019phase} (see Subsection \ref{subsec:conjecture_on_Phi_and_h}), which, from a global perspective, suggest that the heuristic description provided in the third paragraph may be incomplete or require further refinement, although the same description works well locally in $y$. A B\"acklund transform for \eqref{eq:KP-II} is formally available and similar to that of \eqref{eq:kdv}, but the unboundedness of the soliton support poses some challenges for its well-definedness. The stability of the line soliton of \eqref{eq:KP-II} was proved by Mizumachi in \cite{mizumachi2015linesoli,mizumachi2018linesoli2,mizumachi2019phase}, with a proof that does not rely on the B\"acklund transform or on the integrability of the equation. The stability of KP-II multisolitons in $L^2(\RR)$, as well as their asymptotic stability, remains an open problem, except in the case of the line soliton.
\vspace{2mm}

The main objective of our work is to understand the soliton addition map of \eqref{eq:KP-II}. Our broader motivation is to look for a robust proof for the $L^2$ stability of general KP-II multisolitons, and possibly extend the same techniques to similar models. The goal of the present paper is to give a rigorous analytic treatment of the B\"acklund transform related to the line soliton. In particular, we are interested in understanding how the non-compactness of the line soliton plays a role in its properties, and how the transform gives information on the stability of the line soliton, compared to integrable models admitting localized solitons.

After constructing the soliton addition map and showing its properties, we find that the map is not surjective around the line soliton, but the range has codimension-1 in a weighted space of $L^2$ regularity. This allows to prove codimension-1 stability of the line soliton in $L^2(\RR)$ under such perturbations. The last result is comparable to some of the results of Mizumachi: in particular, it is weaker due to the codimension-1 condition, and stronger in the regularity assumptions.

We also construct the analogous B\"acklund transforms for a subclass of KP-II multisolitons, and plan to discuss analogous stability results for these multisolitons in a follow-up paper.

\subsection{Description of the problem and fundamental objects}\label{subsec:description}

\subsubsection{Properties of the KP-II equation}
The KP-II equation is invariant under space-time translations, although not under space rotations. It possesses several other symmetries: among others, a scaling symmetry
\begin{equation}\label{eq:scaling-symmetry-KP-II}
\Scal^\KP_\lambda u(t,x,y)=\lambda^2 u(\lambda^3 t,\lambda x,\lambda^2 y), 
\end{equation}
a reflection symmetry with respect to the $y$ variable
\begin{equation}\label{eq:reflection-symmetry-KP-II}
    \Rcal u(x,y,t)=u(x,-y,t),
\end{equation}
and two other important symmetries, which in the literature are both called \emph{Galilean symmetries}:
\begin{align}
    \Gcal^\KP_\sigma u(t,x,y)&:=u(t,x-\sigma y-3\sigma^2t,y+6\sigma t),\label{eq:KP-IIgalileanSymmetry}\\
    \Gcal^\KdV_\mu u(t,x,y)&:=\mathcal B_\mu u(t,x,y)-\mu:=u(t,x-6\mu t,y)-\mu.\label{eq:KdVgalileanSymmetry}
\end{align}
The second one is inherited from the KdV equation, while the first one does not apply to KdV, so we refer to them as the KP-II and the KdV Galilean symmetries respectively (here $\mathcal B$ stands for `boost'). The KP-II Galilean symmetry plays well with the well-posedness theory of the equation as it preserves the Sobolev spaces used in the standard theory, whereas the KdV symmetry adds a constant to the solution. In rigorous terms, since the antiderivative in the KP-II equation is not a priori uniquely defined for solutions with arbitrary growth at infinity, both Galilean symmetries need to be accompanied by auxiliary changes of variables for the function $v$ such that $v_x=u_{yy}$ appearing in the KP-II equation.

A natural function space whose norm is invariant under the symmetries $\Scal^\KP$ and $\Gcal^\KP$ is the Banach space $\Hcrit$, with norm defined by
\[ \|u\|_{\Hcrit}:=\int_{\RR} |2\pi\xi|^{-1}|\hat u(\xi,\eta)|^2d\xi\,d\eta. \]
It is known that the KP-II equation is globally well-posed in $H^k(\RR)$, $k\in\N$ for large data, as well as in $\Hcrit$ for small data. The globality of solutions in the first result follows from the conservation of the $L^2$ norm of the equation, while the latter comes from global-in-time bilinear estimates for solutions of the linear equation and the scaling invariance of the space $\Hcrit$. The line soliton $\ph$ does not lie in any of the aforementioned Banach spaces, but global well-posedness was proved in $H^k(\RR)+\ph$, $k\in\N$. These results are discussed in more detail in Section \ref{sec:time}.

\begin{remark}
    The antiderivative appearing in \eqref{eq:KP-II} can be understood in the following way. The operator $S=\partial_x^3+3\demu\partial_y^2$ is a skew-adjoint unbounded operator on $L^2(\RR)$ with an explicit domain given in terms of the Fourier transform. In particular, the unitary group $t\mapsto e^{-t(\partial_x^3+\demu\partial_y^2)}$ is a well-defined continuous group on $L^2(\RR)$ and other $L^2$-based Sobolev spaces, and Strichartz estimates for this group can be proved in full analogy with the linear Schr\"odinger equation, so that the Duhamel formulation of \eqref{eq:KP-II} makes perfect sense for general classes of functions. More refined bilinear estimates are required for proving well-posedness of the nonlinear equation (see \cite{koch-tataru-visan-2014-dispersive-equations-nonlinear-waves} for a good introduction). In addition to that, the operator $\demu\partial_y$ extends to a well-defined operator on a suitable Banach space containing the space of solutions from the well-posedness theory, even at low regularity (see Remark \ref{rk:Z-aha-is-in-L2-loc} and Theorem \ref{theorem:global-well-posedness-KP-II-HHK09}).
\end{remark}

\subsubsection{Modulational stability of the line soliton}

The line soliton $\ph$ is not orbitally stable in $L^2(\RR)$ \cite{mizumachi2015linesoli}. Since it is infinitely long on $\RR$, in general small perturbations of $\ph$ evolve so that the perturbed modulation parameters $\lambda$ and $x_0$ depend sensibly on the transversal variable. For this reason, what is natural to investigate is the so-called \emph{modulational stability} of $\ph$, in which we ask for the solution to remain close to a line soliton whose parameters (scaling and translation) are allowed to depend on the $y$ variable. We give the following definition.

\begin{definition}[Modulational stability]\label{def:modulational_orbital_stability}
We say that the line soliton $\ph$ is \emph{modulationally stable} in $L^2(\RR)$ under perturbations in a set $\mc N\subset L^2(\RR)$ provided that the following holds: for every initial datum $u_0\in \mc N+\ph$, there exist two continuous functions $x=x(t,y)$, $\lambda(t,y)$ called \emph{modulation parameters} such that the solution $u=u(t,x,y)$ of \eqref{eq:KP-II} with initial datum $u_0$ satisfies
\[ \sup_{t>0}\|u(t,x,y)-\ph^{\lambda(t,y)}(x-x(t,y))\|_{L^2(\RR)} \lesssim_{\hspace{1pt}\mc N} \|u_0-\ph\|_{L^2(\RR)}, \]
where the implicit constant only depends on the set $\mc N$.
\end{definition}
\noindent A desirable result is obtained when $\mc N$ contains a small ball centered at the origin in a Banach space $X\subset L^2(\RR)$ containing the space of test functions $\mathscr D(\RR)$, or at least a finite-codimensional submanifold of $X$ containing the origin. The definition above is adapted to our specific problem, but notice that it can be generalized in several ways allowing for different spaces and moduli of continuity, or considering asymptotic stability instead.

Due to the nature of the result we want to show, we will make extensive use of the following notation throughout the article.
\begin{notation}[Subscript notation]\label{notation:f_alpha}
For a function $f=f(x)$ and for $\alpha\in\R$, we will denote by $f_\alpha$ the function
\[
    f_\alpha(x):=f(x-\alpha).
\]
More generally, $f$ and $\alpha$ will be allowed to depend on $y$, or on $t,y$: in that case, $f_\alpha$ will denote
    \begin{equation}\label{eq:f_alpha}
        f_\alpha(t,x,y):=f(t,x-\alpha(t,y),y).
    \end{equation}
The only exception to this notation rule is the function $G_\alpha$ in Definition \ref{def:VandG}.
\end{notation}

\subsubsection{The Miura map and the mKP-II equation}\label{subsubsec:backlund_transformation_introduction}
The KP-II equation is related to the mKP-II equation
\begin{equation}\label{eq:mKP-II}\tag{mKP-II}
    v_t-6v^2v_x+v_{xxx}+3\de_x^{-1}v_{yy}+6v_x\de_x^{-1}v_y=0
\end{equation}
via the Miura map
\[M_\pm^\lambda(v)=-\left(\de_x^{-1}v_y\pm v_x-v^2+\lambda^2\right).\]
Formally, if $v(t,x,y)$ is a solution of \eqref{eq:mKP-II}, then $M_\pm^\lambda(v)(t,x-6\lambda^2t,y)$, i.e., $\mathcal B_{\lambda^2} M_\pm^\lambda(v)$, are solutions of \eqref{eq:KP-II}. This fact is rigorously true as long as all the terms $\demu\partial_y v$ apparing in the Miura map and in \eqref{eq:mKP-II} and are a distributional $x$-antidrivative of $v_y$ (see Proposition \ref{prop:miura-maps-mKP-to-KP}). The parameter $\lambda$ comes from the symmetries of the equation: it is related to the KdV Galilean symmetry, as it holds
\[
\Bcal_{\lambda^2} M^\lambda_\pm=\Gcal^\KdV_{\lambda^2} M^0_\pm,
\]
but we can also see it as coming from the scaling symmetry:
\[
M^\lambda_\pm\Scal^\mKP_{\lambda} =\Scal^\KP_{\lambda} M^1_\pm,
\]
where $\Scal^\mKP_{\lambda}:=\lambda^{-1}\Scal^\KP_{\lambda}$ is the scaling symmetry of \eqref{eq:mKP-II}. The mKP-II equation admits analogous Galilean symmetries, which are discussed in Section \ref{subsec:manySystems}.

There exist kink solutions of \eqref{eq:mKP-II} of the form
\begin{equation}\label{eq:kink_Q}
v(t,x,y)=Q^\lambda(x-x_0+2\lambda^2t)=Q^\lambda\left(x-x_0+\frac{c}{2}t\right),\qquad Q^\lambda(x):=\lambda\tanh(\lambda x),
\end{equation}
and the following relations hold:
\begin{equation}\label{eq:miura-maps-kink-to-soliton}
    \Mml(Q^\lambda)=0,\qquad \Mpl(Q^\lambda)=\ph^\lambda.
\end{equation}
The above identities indicate that the soliton is a special solution of \eqref{eq:KP-II} that is connected to the trivial solution through the Miura map. In the rest of this work, we will look at the case $\lambda=1$, without loss of generality thanks to the scaling symmetries $\mathcal S^\KP_\lambda$, $\mathcal S^\mKP_\lambda$, and set $M_\pm:=M_\pm^1$, $Q:=Q^1$, $\ph:=\ph^1$. 

\subsubsection{The B\"acklund transform}

Using the Miura maps, we can define a relation between two different functions $u$, $\wu$ if there exists a third function $v$ such that it holds
\begin{equation}
    \left\{
    \begin{aligned}
&M_+(v)=\wu,\\
&M_-(v)=u,
    \end{aligned}
    \right.
\end{equation}
or more rigorously, with some redundancy to allow a symmetric writing of the system,
\begin{equation}
    \left\{
    \begin{aligned}
&v_y+v_{xx}=(v^2)_x-\wu_x,\\
&v_y-v_{xx}=(v^2)_x-u_x,\\
&u-\wu=2v_x.
    \end{aligned}
    \right.
\end{equation}
It is desirable to look for a map
\[
\samB:(u,\gamma_0)\mapsto \wu
\]
that satisfies the above system, where $\gamma_0$ is an auxiliary parameter that allows to select one of the many pairs $(v,\wu)$ that would satisfy the system for fixed $u$. Clearly, $v=Q(\cdot-\gamma_0)$, $u=0$, $\wu=\ph(\cdot-\gamma_0)$ is a solution of the above system for all $\gamma_0\in\R$, so we expect $\R$ to be a natural parameter space for the second entry of $\samB$. By the commuting properties of the Miura map, we expect that if $u$ now depends on time and is a solution of \eqref{eq:KP-II}, the function $\wu$ is a new solution of \eqref{eq:KP-II}, at least for a suitably chosen time-dependent $\gamma_0$. Moreover, the effect of the transformation is that of nonlinearly adding a line soliton to the solution $u$ so that the new function is still a solution of \eqref{eq:KP-II}. When well-defined, the map $\samB$ is called \emph{soliton addition map}, or \emph{B\"acklund transform}. It is evident how such a map can give plenty of information on the dynamics of solutions close to the soliton. In particular, the map $\samB$ essentially conjugates the \eqref{eq:KP-II} flow around the zero solution with the same flow around the soliton. The stability of the line soliton can morally be translated to the problem of the stability of the zero solution of \eqref{eq:KP-II} if $\samB$ satisfies suitable continuity properties.

A large part of this article is dedicated to the construction of the soliton addition map $\samB$ and studying its properties, which will then be used to study the stability of the line soliton. In particular, we study the inversion of the Miura map $M_-$ around the pair $Q\mapsto 0$. This amounts to solving the \emph{viscous Burgers equation with forcing}
\begin{equation}\label{eq:miuraEquation}\tag{$\mathcal M$}
    v_y-v_{xx}=(v^2)_x-u_x
\end{equation}
for all $x,y\in\RR$, where $u$ is a given profile, and $v$ is a suitable perturbation of $Q$ to be determined.

\subsection{Context and previous work}
\subsubsection{Completely integrable dispersive equations}
The Miura map written above is part of the rich set of algebraic properties that form the integrable structure of the KdV and KP-II equations. Most of the known \emph{completely integrable} dispersive PDEs can formally be rewritten using a \emph{Lax pair}, a set of two differential operators $L(t)=L[u(t)], P(t)=P[u(t)]$ which depend only on the solution $u$ of the original equation at time $t$, that transform the PDE into the \emph{Lax equation}
\begin{equation}
    \frac{d}{dt}L(t)=[P(t),L(t)],
\end{equation}
where $[\cdot,\cdot]$ is the commutator between operators. The operator $L(t)$ is called the \emph{Lax operator} of the solution $u(t)$. Some of the most notable instances of dispersive integrable PDEs are the 1D cubic NLS, KdV, mKdV and Benjamin--Ono equations; higher-dimensional integrable equations include the KP-I and KP-II equations, the Davey--Stewartson, and the Veselov--Novikov equations (the latter admits a modified version of the Lax equation). The theory of integrable equations begun with the article by Gardner--Greene--Kruskal--Miura on the KdV equation in 1967 \cite{gardner-greene-kruskal-miura-1967-KdV} featuring the scattering transform of KdV, which as already mentioned diagonalizes the equation reducing it to a family of linear ODEs with constant coefficients, and was developed in the seventies and eighties with the discovery of scattering transforms for several integrable PDEs. The method used in \cite{gardner-greene-kruskal-miura-1967-KdV} to invert the scattering transform and recover the solution of the original PDE was extended to other models, and is now commonly known as \emph{inverse scattering transform (IST)} \cite{palais-1997-the_symmetries_of_solitons}.

Concerning the inverse scattering theory of KP-II on $\RR$, results on solutions that are perturbations of the line soliton are available. We mention the IST theory developed by Villarroel--Ablowitz \cite{Villarroel-Ablowitz-2004-On-IVP-of-KP-II-with-data-that-do-not-decay-along-a-line} after previous works by Boiti--Pempinelli--Pogrebkov--Prinari, and the subsequent extensive works by Wu on the KP-II line soliton and multisolitons \cite{Derchyi-Wu-2020-direct-scattering-Gr12-KP-II-soliton,wu2022stability}.

\subsubsection{Recent developments}
The main limitation of the use of the inverse scattering machinery, especially for PDEs on the Euclidean space, is that the inverse scattering transform is well-behaved only when the solution decays fast enough in space (for instance, $u_0\in L^1(\R;\jap{x}dx)$ for the KdV equation, see the survey \cite{aktosun-2004-IST-KdV-and-solitons} for an overview of the basic theory) and regular enough. These conditions are often strictly stronger than the ones actually needed for the well-posedness of the equation: typically, the initial datum is allowed to be in $H^s(\R^n)$ for a suitable $s\in\R$.

In more recent years, there was an increasing effort in employing the integrable structure in the study of dispersive PDEs in $L^2$-based Sobolev spaces and in spaces of critical regularity. The main and groundbreaking application of these techniques is low regularity well-posedness, which was established much earlier on the circle for some models, see for instance \cites{kappeler-topalov-2006-sharp-wellposedness-KdV-on-the-torus,kappeler-topalov-2005-sharp-wellposedness-mKdV-on-the-torus,Gerard-kappeler-topalov-2020-sharp-wellposedness-BO-on-the-torus}, and finally on the real line, with the first work in this direction being the celebrated article on the KdV equation in $H^{-1}(\R)$ by Killip--Visan \cite{killip-visan-2019-kdv-is-wellposed-in-H-1}, with several subsequent works on the cubic 1D NLS equation, modified KdV and many more, see for instance \cites{harrophGriffith-killip-visan-2020-sharp-wellposedness-cubicNLS-and-mKdV,bringmann-killip-visan-2021-global-well-posedness-5th-order-kdv,thierry-laurens-2023-sharp-gwp-of-KdV-with-exotic-spatial-asymptotics,Killip-Laurens-Visan-2024-sharp-well-posedness-BO}. Another important application which came slightly before the former, is the construction of generalized conservation laws at $H^s$-regularity which prove almost-conservation of the Sobolev norms: one of the earliest works on the real line is by Koch and Tataru \cite{Koch-tataru-2018-conserved-energies-cubic-NLS-mKdV-KdV} for the NLS, KdV and mKdV equations, with subsequent works on Gross--Pitaevskii \cites{koch-liao-2021-conserved-energies-1D-gross-pitaevskii,koch-liao-2023-conserved-energies-1D-gross-pitaevskii} and Benjamin--Ono \cite{talbut-2021-conservation-laws-BO} by other authors.

Another important instance of the use of the structure of integrable PDEs is the stability of solitons and multisolitons. In the eighties and nineties, there was a considerable effort in the study of solitons of integrable equations: important objects in this sense are the \emph{B\"acklund transforms} for integrable PDEs, which allow the explicit construction of multisolitons. Although these were mostly treated as algebraic manipulation of the equations, with the more recent well-posedness results on the equations, it became natural to ask whether these maps can be used to study the dynamics of solutions that are close to multisolitons. This was initially done for the KdV equation using the Miura map (see the next paragraph), and later for some other models such as cubic NLS \cite{mizumachi-pelinovsky-2012-nls-soliton-stability-backlund} and sine-Gordon \cite{munoz-2023-sineGordon-kink,koch-yu-2023-asymptotic}. The first work that used B\"acklund transforms for the study of solitons in a somewhat abstract and more general sense, and which serves as a significant inspiration for our paper, is that of Koch--Tataru \cite{Koch-tataru-2020-multisolitons-cubic-NLS-mKd}. The remarkable property of B\"acklund transforms is that, despite coming from the integrable structure and being linked to the scattering transform, they are generally robust enough that they do not need to rely on the whole analytic framework needed for the treatment of the inverse scattering transform.

\subsubsection{KdV solitons}
In the above discussion, the KdV equation was historically an important link in the chain that led to the current understanding and state of the art. The KdV Miura map,
\[ v\,\,\mapsto\,\, M^\lambda_{\KdV,\pm}(v):=\mp v_x+v^2-\lambda^2, \]
which maps solutions of the (defocusing) mKdV equation
\[ v_t-6v^2v_x+v_{xxx}=0 \]
to solutions of \eqref{eq:kdv} (up to an inertial change of frame of reference), allows in its simplicity to understand the power of B\"acklund transforms in soliton dynamics. The resulting soliton addition map, analogous to the one described earlier for KP-II, makes possible to establish a diffeomorphism\footnote{More precisely, the map $M_{\KdV,-}^\lambda$ has one-dimensional fibers, while the map $(\lambda,v)\mapsto M^\lambda_{\KdV,+}(v)$ is a diffeomorphism locally: this gives rise to a soliton removal map which is locally a submersion with two-dimensional kernel. To make it a full diffeomorphism, one needs to take into account two additional degrees of freedom, which naturally correspond to the choice of position and scaling parameter of the soliton. See \cite{buckKochKDVatH-1regularity} for a detailed treatment.} between a neighbourhood of the zero solution and a neighbourhood of the soliton $\ph$, so that one can reduce the stability of the soliton to the stability of the zero solution, which is a consequence of the conservation laws of the equation. This idea was used to prove the stability of KdV solitons back in 2003 by Merle--Vega \cite{merle-vega-2003-stability-of-solitons-for-KdV} for $L^2$ data, and later by Buckmaster--Koch \cite{buckKochKDVatH-1regularity} in the well-posedness critical topology $H^{-1}(\R)$ (we remind that the scaling-critical Sobolev space for KdV is $\dot H^{-\fr32}(\R)$). The picture was completed later by Killip--Visan \cite{killip-visan-2022-kdv-multisolitons}, who proved stability of multisolitons at sharp regularity.

\subsubsection{The KP-II equation and the line soliton}

The nonlinear $L^2$ stability of the KP-II line soliton was proved in \cite{mizumachi-tzvetkov-2012-stability-line-soliton-periodic-transverse-perturbations} on the cylinder $\R_x\times\mathbb T_y$ following the same idea of \cite{merle-vega-2003-stability-of-solitons-for-KdV} for the KdV case. In that setting, the soliton addition map coming from the KP-II Miura map still gives a local diffeomorphism connecting $0$- and $1$-solitons under those boundary conditions

The stability question on $\RR$ turns out to be much more delicate and challenging. The linearized evolution of KP-II around the line soliton admits resonant continuous eigenmodes in an exponentially weighted space, with eigenvalues accumulating at zero \cite{mizumachi2015linesoli}. These eigenmodes represent the modulations that a perturbed, infinitely long line soliton undergoes during the KP-II evolution. The presence of these eigenmodes suggests that the B\"acklund transform behaves differently from the KdV case, since the linearized KP-II equation around $u\equiv 0$ possesses a different spectrum. In particular, the argument relying on the Miura map used for KdV does not readily generalize to KP-II on $\RR$ and was not investigated before, unlike in the cylindrical geometry mentioned above where solitons essentially have finite length.

Eventually, Mizumachi solved the stability problem on $\RR$ in a striking series of papers \cites{mizumachi2015linesoli,mizumachi2018linesoli2,mizumachi2019phase} in which he proved \emph{modulational $L^2$-stability} of the line soliton as well as \emph{asymptotic stability} in suitable subsets of $\RR$, under polynomially decaying perturbations and under perturbations in $\de_x L^2(\RR)$, with additional regularity and smallness assumptions. The papers provide a precise description of the position and amplitude modulation parameters $x=x(t,y)$ and $\lambda=\lambda(t,y)$, which evolve under a 1D wave equation with damping. Remarkably, Mizumachi's arguments do not rely on the integrability of the KP-II equation, and were in fact used in later works on non-integrable PDEs \cite{mizumachi_shimabukuro-2020-benney-luke}. On the other hand, the role of the Miura map in the stability of the line soliton remained unclear.

\subsubsection{KP-II multisolitons}

As for many other integrable dispersive PDEs, the \eqref{eq:KP-II} equation admits a set of explicit, analytic solutions called \emph{multisolitons}, which represent the interaction of several (half-)line solitons. These naturally include the KdV multisolitons as a subclass, which look like a superposition of several parallel line solitons. Due to the KP-II Galilean symmetry \eqref{eq:KP-IIgalileanSymmetry}, single line solitons can be `tilted' with respect to the $y$ axis, and in fact more general multisolitons exist which combine several line solitons with different slopes in the $x,y$-plane, creating complicated and fascinating patterns that impressively resemble real water wave interactions, as noted in \cite{ablowitz-baldwin-2012-pictures-of-multisolitons}. See Subsection \ref{subsec:multisolitons} for more details, and we refer to \cites{ChakravartyKodama2007ClassificationOTlineSoliton,kodama2017solitonsAndGrassmannians} for an extensive treatment of KP-II multisolitons.

The KP-II multisolitons are expected to be stable up to modulations as in the case of the line soliton, although very little was proved on the subject. The linear stability was rigorously proved only very recently by Mizumachi for the case of the so-called elastic 2-line solitons \cite{mizumachi-2024-linear-stability-elastic-2-line-soliton}.
On the nonlinear dynamics, we have the recent preprint of D. Wu \cite{wu2022stability} which addresses the inverse scattering problem of KP-II around multisolitons and states an $L^\infty$ bound uniform in time for all perturbations in an $L^1-$based Sobolev space, where the $L^\infty$ estimate is given in terms of the full $L^1$ Sobolev norm of the initial perturbation. To the best of our knowledge, there are no other results at present concerning the long term nonlinear dynamics of KP-II multisolitons, except the line soliton. In particular, modulational stability in $L^2(\RR)$ and asymptotic stability are open.

\subsection{Strategy, main results, and structure of the paper}

As previously discussed, the aim of this article is to study the soliton addition map connecting the zero solution and the line soliton, and try to make use of it in the study of the nonlinear stability of the line soliton.

Let us make use of Notation \ref{notation:f_alpha}. Due to the relations \eqref{eq:miura-maps-kink-to-soliton}, one would hope that for a small, uniformly bounded in time solution $u$ of KP-II there exists an associated solution $v$ of mKP-II such that $M_-(v)=u$ which is close in some sense to a modulated kink $Q_\alpha$, $\alpha=\alpha(y)$, so that $\wu:= M_+(v)$ is a solution of KP-II close to a modulated line soliton $\ph_\alpha$. To make sense of the soliton addition map, one needs to solve the equation $M_-(v)=u$, which we rewrote as
\begin{equation*}\tag*{(\ref*{eq:miuraEquation})}
    v_y-v_{xx}=(v^2)_x-u_x,
\end{equation*}
for known $u$ and for solutions $v$ close to a modulated kink. As previously discussed, we expect to find a one-parameter family of solutions for given $u$, since by translation invariance $M_-(Q(\cdot-\gamma))=0$ for any $\gamma\in\R$. These simple observations suggest the following program:
\begin{itemize}
    \item classify the solutions $v$ to equation \eqref{eq:miuraEquation} close to a modulated kink and provide a suitable parametrization with respect to a real parameter $\gamma_0\in\R$: $v=\dtsmV(u,\gamma_0)$,
    \item define the soliton addition map as discussed in \ref{subsubsec:backlund_transformation_introduction},
    \[
    \samB(u,\gamma_0):=M_+(\dtsmV(u,\gamma_0))
    \]
    \item prove two-sided bounds for $\samB$ of the form $\|u\|_X\sim \|\samB(u,\gamma_0)-\ph_\alpha\|_X$ for suitable $\alpha$, for some Banach space $X$
    \item prove that $\samB$ commutes with the KP-II flow modulo the correct choice of $\gamma_0=\gamma_0(t)$
    \item characterize the range of $\samB$, or find sufficient conditions for a perturbation of the line soliton to fall in the range of the map
\end{itemize}
The above strategy would be able to reduce the modulational stability of the line soliton to the stability of the zero solution in the space $X$. It is natural to choose $X=L^2(\RR)$ due to the conservation of the $L^2$ norm along the KP-II flow. We want to follow this strategy in analogy with previous works on KdV and KP-II on the cylinder, but we have to solve the problems introduced by the infinite length of the solitons.

To construct the maps $\dtsmV$ and $\samB$, we choose to work in the critical space $\Hcrit$ to make use of the small-data global existence result in \cite{hadacHerrKoch2009wellPosednessKP-IIinCriticalSpace}, see Theorem \eqref{theorem:global-well-posedness-KP-II-HHK09}. The outcome of our research can be summarized as follows.
\begin{enumerate}
    \item For small $u\in\Hcrit$, equation \eqref{eq:miuraEquation} admits a 1-parameter family of solutions of the form $v=w+\tanh_\alpha$, $w\in L^3(\RR)$, $\alpha_y\in L^2(\R_y)$ parametrized by $\gamma_0\in\R$, which roughly coincides with $\alpha|_{y=0}$. \label{item:z-001}
    \item The soliton addition map is well-defined for generic small data in $\Hcrit$:\label{item:z-002}
    \[
    \samB\colon B^{\dot H^{-\aha,0}}_{\eps_0}(0)\times \R \to H^{-\aha,0}(\RR)+\{\ph_\alpha \,|\,\alpha_y\in L^2(\R_y)\},
    \]
    for a universal constant $\eps_0>0$. The second parameter $\gamma_0\approx\alpha(0)$ selects one solution $v$ from the 1-parameter family above, and sets $\samB(u,\gamma_0)=u-2v_x$. This has the effect of superimposing a modulated line soliton on $u$.
    \item The map $\samB$ satisfies a two-sided $L^2$ estimate. If $\wu=\samB(u,\gamma_0)$, then $\|u\|_{L^2(\RR)}$ is comparable with the $L^2$-distance between $\wu$ and the manifold of modulated line solitons \label{item:z-003}
    \[
    \{\ph_\alpha\,|\,\alpha\in C(\R_y),\,\|\alpha_y\|_{L^2(\R_y)}<\infty\},
    \]
    with a penalization factor that grows with $\|\alpha_y\|_{L^2(\R_y)}$.
    \item The map $\samB$ commutes with the KP-II flow. More precisely, assuming that $u=u(t)$ is a solution of KP-II in $\Hcrit$ and $\gamma_{0,0}\in\R$, $\wu(t)=\samB(u(t),\gamma_0(t))$ is a solution of KP-II for a suitable $t\mapsto\gamma_0(t)$ with $\gamma_0(0)=\gamma_{0,0}$. If $u(0)$ is also in $L^2$, then $\wu$ coincides with the unique solution of KP-II given by the well-posedness theory. \label{item:z-004}
    \item The range of $\samB$ contains $\ph+\Ncal$, where \label{item:z-005}
    \[
    \Ncal\subset\Hcrit\cap L^2(\RR)\cap L^1(\RR)\cap L^1(\RR;\sech^2(x)(1+|y|)^\eps dx\,dy)=:Y_\eps(\RR)
    \]
    is an analytic codimension-1 submanifold of the Banach space $Y_\eps(\RR)$ for any $\eps>0$. Functions outside this manifold are not in the range of $\samB$.
    \item As a consequence of (\ref*{item:z-003}), (\ref*{item:z-004}), (\ref*{item:z-005}), the line soliton is modulationally stable in $L^2$ under perturbations in $\Ncal$.
\end{enumerate}
The strategy involved will allow to make a natural generalization with little additional effort.
\begin{enumerate}
    \item[(7)] We construct a multisoliton addition map for $(k,1)$-multisolitons (see the definition in Section \ref{sec:preliminaries}) for any $k\geq 1$, which equals $\samB$ for $k=1$ up to a change of variables.
\end{enumerate}

Our result for the first point is the following theorem, which classifies the solutions of equation \eqref{eq:miuraEquation} close to a modulated kink. The preimage of small data $u\in\Hcrit$ through the Miura map $M_-$ under suitable conditions is a one-parameter family of solutions to \eqref{eq:miuraEquation} which can be parametrized by the position of the kink at $y=0$.

We use the notation $f_\alpha(x,y):=f(x-\alpha(y),y)$ as in Notation \ref{notation:f_alpha}, we let $\rho$ be a standard mollifier in $\RR$, and $\eta^\pm(x):=(1+e^{\mp2x})^{-1}$.

\begin{ABCtheorem}[Classification of solutions of \eqref{eq:miuraEquation} close to a kink]\label{theorem:theorem_1}
    Let $u\in \Hcrit$ be small enough, and $\gamma_0\in\R$. There exists a unique solution $v$ to equation \eqref{eq:miuraEquation} of the form $v=w+\tanh_\sigma$, $w\in L^3(\RR)$, $\sigma\in C(\R_y)$, $\sigma_y\in L^2(\R_y)$, satisfying the localization condition
    \[
    \int_{\RR} \rho(x-\gamma_0,y) v(x,y) \,dx\,dy=0.
    \]
    The solution can be decomposed into
    \[ v=\tanh_\alpha+\eta^+_\alpha\cdot (v^+-1) +\eta^-_\alpha\cdot (v^-+1) +\omega, \]
    where $v^\pm$ are the unique solutions of \eqref{eq:miuraEquation} in $L^3(\RR)\pm 1$, $\alpha\in C(\R_y)$ is determined uniquely by the orthogonality condition
    \[
    \int_{\R} \omega(x,y)\,dx=0\;\,\forall y\in \R,
    \]
    and it holds the estimate
    $$ \|v^\pm\mp 1\|_{L^3(\RR)}+\|\cosh_\alpha \omega\|_{C_0 L^2\cap L^2H^1}+\|\alpha_y\|_{L^2}\lesssim\|u\|_{\dot H^{-\aha,0}(\RR)}. $$
    Moreover, $v^\pm$ satisfy all the bounds of Corollary \ref{cor:estimates-for-wtv-part-2} (with $\lambda=\pm 1$), and the shift $\alpha$ satisfies
    \[
    \sup_{y_1,y_2\in\R}\frac{|\alpha(y_2)-\alpha(y_1)|}{\log(2+|y_2-y_1|)}\lesssim \|u\|_\Hcrit.
    \]
    The map $\dtsmV:(u,\gamma_0)\mapsto v$ is continuous assuming the topology of the codomain is $L^3_\loc(\RR)$.\\
    Finally, there exist $\psi,\psi^\pm\in L^6_\loc(\RR)$, with $1/\psi,1/\psi^\pm\in L^6_\loc(\RR)$, $\psi_x,\psi^\pm_x\in L^2_\loc(\RR)$, $\psi, \psi^\pm >0$ a.e., uniquely defined up to a positive multiplicative constant by the systems
    \begin{equation}
        \left\{
        \begin{aligned}
            &(\de_y-\de_x^2+u)\psi^\pm=0,\\
            &v^\pm=\de_x(\log(\psi^\pm)),
        \end{aligned}
        \right.
        \qquad\qquad
        \left\{
        \begin{aligned}
            &(\de_y-\de_x^2+u)\psi=0,\\
            &v=\de_x(\log(\psi)),
        \end{aligned}
        \right.
    \end{equation}
    and it holds, up to a positive multiplicative constant,
    \[
    \psi=(1-\theta)\psi^++\theta\psi^-
    \]
    for a unique $\theta\in(0,1)$ that depends bijectively on $\gamma_0$ for fixed $u$.
\end{ABCtheorem}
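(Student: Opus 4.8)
The plan is to decouple the problem into: (i) the analysis of the two "extreme" solutions $v^\pm$ with boundary conditions $\pm 1$ at spatial infinity; (ii) the interpolation construction producing the general solution $v$ as a convex-type combination; (iii) the identification of the free parameter via the Cole–Hopf/linearized-heat representation $\psi$; and (iv) the modulation and localization bookkeeping that pins $\alpha$ and $\gamma_0$. The backbone throughout is the Cole–Hopf substitution: a solution $v$ of \eqref{eq:miuraEquation} formally corresponds, via $v=\de_x(\log\psi)$, to a positive solution of the linear forced heat equation $(\de_y-\de_x^2+u)\psi=0$; this is what makes the whole structure transparent and is why the final paragraph of the statement holds. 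I would begin by constructing $v^+$ and $v^-$ directly as fixed points of the Duhamel map for \eqref{eq:miuraEquation} in the affine space $L^3(\RR)\pm 1$, using the small-data global existence in $\Hcrit$ (Theorem \eqref{theorem:global-well-posedness-KP-II-HHK09}) and the dispersive/smoothing estimates for the group generated by $\partial_x^2-\de_y$; uniqueness in that class is a contraction argument for $\|u\|_{\Hcrit}$ small, and the bounds of Corollary \ref{cor:estimates-for-wtv-part-2} are then inherited. The corresponding linear solutions $\psi^\pm$ are recovered by integrating $\psi^\pm=\exp(\demu v^\pm)$ (interpreted after subtracting the linear-in-$x$ growth $\pm x$), giving positivity a.e. and the claimed $L^6_\loc$/$L^2_\loc$ regularity of $\psi^\pm$, $1/\psi^\pm$, $\psi^\pm_x$ from the $L^3$ control of $v^\pm\mp1$.

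Next, for the general solution I would exploit linearity of the $\psi$-equation: set $\psi=(1-\theta)\psi^++\theta\psi^-$ for $\theta\in(0,1)$ (so $\psi>0$ automatically, as a positive combination of positive functions), and define $v:=\de_x(\log\psi)$. A direct computation — essentially the quotient rule — shows $v$ solves \eqref{eq:miuraEquation}, and that $v$ has the interpolating spatial asymptotics $+1$ at $-\infty$ and $-1$ at $+\infty$ (since $\psi^+$ dominates as $x\to-\infty$ and $\psi^-$ as $x\to+\infty$), which is exactly the kink boundary behavior. Writing $v=\tanh_\alpha+(\text{corrections})$ then amounts to isolating the leading kink profile: the decomposition $v=\tanh_\alpha+\etapcnu\cdot(v^+-1)+\etamcnu\cdot(v^-+1)+\omega$ (using $\eta^\pm$ as smooth cutoffs at the two ends) is designed so that $\omega$ is the genuinely localized remainder, and one verifies $\|\cosh_\alpha\,\omega\|_{C_0L^2\cap L^2H^1}\lesssim\|u\|_{\Hcrit}$ by plugging the decomposition into \eqref{eq:miuraEquation}, treating the $\etapmcnu\cdot(v^\pm\mp1)$ terms as known inhomogeneities controlled by part (i), and running the energy estimate for $\omega$ in the exponentially weighted space $\cosh_\alpha L^2$. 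The weight $\cosh_\alpha$ is natural because the linearization of $v\mapsto v_y-v_{xx}-(v^2)_x$ around $\tanh_\alpha$ is $\de_y-\de_x^2-2\de_x(\tanh_\alpha\,\cdot\,)$, whose conjugation by $\cosh_\alpha$ is coercive; this is also where the $L^2$ bound on $\alpha_y$ enters, as the modulation equation for $\alpha$ is forced by the projection of $u$ onto the resonant mode and closes with an $L^2(\R_y)$ estimate.

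The remaining steps are the bookkeeping that selects $\alpha$ and $\gamma_0$. The shift $\alpha$ is fixed by the orthogonality/secular condition $\int_\R\omega(x,y)\,dx=0$ for all $y$: this is a scalar ODE-type constraint in $y$ (an implicit relation $\int(v-\tanh_\alpha-\text{cutoff corrections})\,dx=0$) which determines $\alpha(y)$ uniquely once one checks the relevant Jacobian $\int\de_x(\tanh)\,dx=2\ne0$ is nondegenerate, via the implicit function theorem; the log-modulus-of-continuity bound $|\alpha(y_2)-\alpha(y_1)|\lesssim\|u\|_{\Hcrit}\log(2+|y_2-y_1|)$ then comes from integrating $\alpha_y\in L^2(\R_y)$ against the Cauchy–Schwarz tail, i.e. $|\alpha(y_2)-\alpha(y_1)|\le\|\alpha_y\|_{L^2}|y_2-y_1|^{1/2}$ refined by a dyadic/logarithmic argument exploiting extra decay of $\alpha_y$ at scale. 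Finally, the localization normalization $\int\rho(x-\gamma_0,y)v\,dx\,dy=0$ fixes which member of the one-parameter family we pick: since the family is parametrized by $\theta\in(0,1)$ and the functional $\theta\mapsto\int\rho(\cdot-\gamma_0)\,\de_x\log((1-\theta)\psi^++\theta\psi^-)$ is smooth and strictly monotone in $\theta$ (the derivative being $\int\rho(\cdot-\gamma_0)\,\de_x\big((\psi^--\psi^+)/\psi\big)$, which has a sign because $\psi^-/\psi^+$ is monotone in $x$ by a maximum-principle comparison for the heat equation), it admits a unique zero, giving the bijection $\gamma_0\leftrightarrow\theta$; continuity of $\dtsmV$ in $L^3_\loc$ follows from continuity of all the fixed-point and implicit-function constructions in the data. \textbf{The main obstacle} I expect is step (ii)–(iii): making rigorous the claim that every solution of \eqref{eq:miuraEquation} near a modulated kink — a priori only in $L^3(\RR)+\tanh_\sigma$ with $\sigma_y\in L^2$ — actually arises from a \emph{positive} $\psi$ of the stated convex form, i.e. proving that the Cole–Hopf correspondence is onto this class and that the two extreme solutions $\psi^\pm$ really span all admissible boundary behaviors; this requires a careful Harnack/maximum-principle argument for the forced heat operator $\de_y-\de_x^2+u$ with the merely $\Hcrit$-regular potential $u$, controlling $\psi$ from below so that $\log\psi$ and $v=\de_x\log\psi$ make sense, and ruling out solutions with wilder spatial asymptotics than the kink.
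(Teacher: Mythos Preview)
Your high-level strategy matches the paper's closely: construct $v^\pm$ by a fixed-point argument in $L^3(\RR)\pm 1$ (using heat-kernel estimates, not the KP-II well-posedness of Theorem~\ref{theorem:global-well-posedness-KP-II-HHK09} as you write), pass to primitives $V^\pm$ and set $\psi^\pm=e^{V^\pm}$, take a positive linear combination and read off $v=\partial_x\log\psi$, run a weighted energy estimate for $\omega$ in $\cosh_\alpha L^2$ using the coercivity of the conjugated linearized operator, and fix $\gamma_0$ by an implicit-function argument on the $\rho$-pairing. That part of your proposal is sound and essentially what the paper does.

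There are, however, two genuine gaps.

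\textbf{The logarithmic bound on $\alpha$.} Your claim that it ``comes from integrating $\alpha_y\in L^2$ \dots refined by a dyadic/logarithmic argument exploiting extra decay of $\alpha_y$ at scale'' cannot work: $\alpha_y\in L^2$ gives at best $|\alpha(y_2)-\alpha(y_1)|\lesssim |y_2-y_1|^{1/2}$, and there is no mechanism by which this improves to a logarithm without additional structure. In the paper this bound comes from parabolic BMO theory. One shows that the primitives $\wt V^\pm:=V^\pm-(\pm x+y)$ lie in the tilted parabolic BMO spaces $\PBMOx{\pm1}(\RR)$ with norm $\lesssim\|u\|_{\Hcrit}$ (Corollary~\ref{cor:BMO-bounds-for-Vlambda}), and that their difference is in $C^{0,1/4}_\unif$ (Lemma~\ref{lemma:holder_regularity_for_difference_of_primitive_solutions}). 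The function $\mu(x,y)=x-\nu(x,y)=x-\tfrac12(V^+-V^-)+c$ is therefore H\"older-continuous and lies in $\PBMOx{1}+\PBMOx{-1}$; the standard logarithmic-growth bound for BMO functions (Lemma~\ref{lemma:BMO-logarithmic-growth}) then gives the claimed logarithmic estimate for the zero set of $\nu(\cdot,y)$, which is within $O(\|u\|_\Hcrit)$ of $\alpha(y)$ and of $\gamma(y)$. This BMO layer is the new ingredient you are missing.

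\textbf{Uniqueness.} You correctly identify this as the main obstacle, but the Harnack/maximum-principle route you suggest is not what the paper does (and would be delicate for a potential $u\in\Hcrit$). The paper's argument is purely by weighted energy estimates and never passes through positivity of $\psi$. First one upgrades the a priori assumption (H4), $v=w+\tanh_\sigma$ with $w\in L^3$, $\sigma_y\in L^2$, to the stronger exponentially weighted control (H1)--(H2): writing $z=w-\wv^+$ and $\omega=e^{x-\sigma(y)}z$, one derives an equation for $\omega$ with a \emph{damping term} $+\omega$, and uses the improved kernel $(\partial_y-\partial_x^2+1)^{-1}\in L^1(\RR)$ to close a fixed-point-type bound showing $\cosh_\sigma(v-G_\sigma)\in C(\R_y,L^2)$ vanishing as $y\to-\infty$ (Proposition~\ref{prop:unconditional_uniqueness_miura}). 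Second, for two such solutions $v^1,v^2$ with the same decomposition, the integral $I=\int_\R(v^2-v^1)\,dx$ is finite and $y$-independent; by the orthogonality convention $\int\omega\,dx=0$ one has $|\alpha^1(y)-\alpha^2(y)|\sim|I|$ for all $y$, so if $\alpha^1(y_0)=\alpha^2(y_0)$ for one $y_0$ then $I=0$ and $\alpha^1\equiv\alpha^2$. Third, when $I=0$ the difference $w=v^2-v^1$ satisfies a linear equation and the same $\cosh_\alpha$-weighted energy estimate yields exponential decay forward in $y$,
\[
\|e^{\eps_0(y-y_0)}\cosh_\alpha w\|_{L^\infty L^2\cap L^2H^1}\lesssim \|\cosh_{\alpha(y_0)}w(\cdot,y_0)\|_{L^2},
\]
which forces $w\equiv 0$ upon sending $y_0\to-\infty$ (Lemmas~\ref{lemma:uniqueness_1}--\ref{lemma:uniqueness_2}, Corollary~\ref{cor:uniquenessOfMiura}). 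This conserved-integral/exponential-decay mechanism is the key idea; it replaces any maximum-principle reasoning and is what actually pins down the one-parameter family.
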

Morally, the map $\dtsmV$ is the inverse of the Miura map $M_-$ (more precisely, $\dtsmV$ is a right inverse of the map $(M_-,v\mapsto\gamma_0)$ defined on a suitable domain). Note that the products $u\psi$, $u\psi^\pm$ are well-defined with the regularity assumptions of the involved distributions.

The explicit formula in the end is motivated in Section \ref{sec:preliminaries} and it essentially relies on the Cole--Hopf transformation to turn Burgers' equation into a linear heat equation. Simple solutions of \eqref{eq:miuraEquation} can be combined using the linearity of the heat equation to obtain new solutions: in particular, solutions with different limits at infinity can be written as a nonlinear superposition of some special, atomic solutions which we call \emph{elementary solutions}, which are constant at infinity. This idea is especially useful to treat the time-dependent problem in Theorem \ref{theorem:theorem_2}. Note that, as discussed in Section \ref{sec:preliminaries},
\[
L_u=\de_y-\de_x^2+u
\]
is the Lax operator of the KP-II equation.

Thanks to Theorem \ref{theorem:theorem_1}, we can define the soliton addition map $\samB$. We give an additional definition that is needed to state the bounds on $\samB$.
\begin{definition}[The soliton addition map]\label{def:soliton_addition_map_and_L^2_phi(RR)}
We define the line-soliton addition map (or B\"acklund transform) of KP-II as
\begin{equation}\label{eq:backlund-transformation-definition-introduction}
    \samB(u,\gamma_0):=u-2\partial_x\dtsmV(u,\gamma_0),
\end{equation}
with $\dtsmV$ as in Theorem \ref{theorem:theorem_1}. Moreover, for $\wu\in \mathscr D'(\RR)$, we define
    \als{
        |\wu|^2_{L^2_\ph(\RR)}&:=\inf\left\{\|w\|_{L^2(\RR)}^2+\|\sigma_y\|^2_{L^2(\R_y)}\,\big|\,\wu=w+\ph_\sigma\right\},\\
        L^2_\ph(\RR)&:=\left\{\wu\in\mathscr D'(\RR)\,|\,|\wu|_{L^2_\ph(\RR)}<\infty\right\}.
        }
\end{definition}
With some more work, we obtain the following Corollary of Theorem \ref{theorem:theorem_1}.
\begin{ABCcorollary}\label{cor:almost-conservation-L2-norm-around-line-soliton}
    In the hypotheses of Theorem \ref{theorem:theorem_1}, if in addition $u\in L^2(\RR)$ is small enough, then $\wu:=\samB(u,\gamma_0)\in L^2_\ph(\RR)$ and it holds the double $L^2$-estimate
    \[
    \|u\|_{L^2(\RR)}\sim |\wu|_{L^2_\ph(\RR)}.
    \]
\end{ABCcorollary}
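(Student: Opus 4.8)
To prove Corollary \ref{cor:almost-conservation-L2-norm-around-line-soliton}, the plan is to establish the two bounds $|\wu|_{L^2_\ph(\RR)}\lesssim\|u\|_{L^2(\RR)}$ and $\|u\|_{L^2(\RR)}\lesssim|\wu|_{L^2_\ph(\RR)}$ separately, in each case by upgrading the $\Hcrit$-valued estimates of Theorem \ref{theorem:theorem_1} to genuine $L^2$ energy estimates for the forced viscous Burgers equation \eqref{eq:miuraEquation}. This is available because the nonlinearity $(v^2)_x$ is in divergence form and so drops out of the basic energy identity, while terms of the form $v\,\sech^2_\sigma$ contribute with a favourable sign after integration by parts.

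For the upper bound I would combine $\ph_\alpha=-2\partial_x\tanh_\alpha$, the relation $\wu=M_+(v)=M_-(v)-2v_x=u-2v_x$, and the decomposition of $v$ from Theorem \ref{theorem:theorem_1} into the exact identity
\[
\wu-\ph_\alpha=u-2\partial_x\bigl(\eta^+_\alpha\cdot(v^+-1)+\eta^-_\alpha\cdot(v^-+1)+\omega\bigr).
\]
Under the extra hypothesis $u\in L^2(\RR)$, Corollary \ref{cor:estimates-for-wtv-part-2} gives $\|v^\pm\mp1\|_{C_0L^2\cap L^2\dot H^1}\lesssim\|u\|_{L^2(\RR)}$, and the analogous $L^2$ energy estimate for the equation satisfied by $\omega$ — together with the normalisation $\int_\R\omega(x,y)\,dx=0$ defining $\alpha$, which expresses $\alpha_y$ as a projection of $u$ plus quadratic terms — gives $\|\cosh_\alpha\omega\|_{C_0L^2\cap L^2\dot H^1}+\|\alpha_y\|_{L^2(\R_y)}\lesssim\|u\|_{L^2(\RR)}$. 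Since $\eta^\pm_\alpha,\partial_x\eta^\pm_\alpha$ are bounded and $\cosh_\alpha\ge1$, these yield $\|\wu-\ph_\alpha\|_{L^2(\RR)}\lesssim\|u\|_{L^2(\RR)}$, and testing the infimum in the definition of $|\cdot|_{L^2_\ph(\RR)}$ against $\wu=(\wu-\ph_\alpha)+\ph_\alpha$ gives $|\wu|_{L^2_\ph(\RR)}^2\le\|\wu-\ph_\alpha\|_{L^2(\RR)}^2+\|\alpha_y\|_{L^2(\R_y)}^2\lesssim\|u\|_{L^2(\RR)}^2$.

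For the lower bound, observe that $\wu=\samB(u,\gamma_0)=u-2v_x$ with $v=\dtsmV(u,\gamma_0)$, so $M_+(v)=M_-(v)-2v_x=\wu$. Given any admissible decomposition $\wu=w+\ph_\sigma$ I would set $p:=v-\tanh_\sigma$ and use $M_+(\tanh_\sigma)=\ph_\sigma+\sigma_y b_\sigma$ (with $b_\sigma$ bounded and $\partial_x b_\sigma=\sech^2_\sigma$, independently of the antiderivative convention) together with $v^2-\tanh^2_\sigma=p^2+2p\tanh_\sigma$ to obtain
\[
p_y+p_{xx}=\partial_x\bigl(p^2+2p\tanh_\sigma\bigr)-w_x+\sigma_y\sech^2_\sigma .
\]
Reflecting in $y$ turns this into a forced viscous Burgers equation run forward, in whose energy identity the cubic term vanishes, $2\partial_x(p\tanh_\sigma)$ produces $-\|p\,\sech_\sigma\|_{L^2_x}^2\le0$ (which also absorbs the forcing $\sigma_y\sech^2_\sigma$), and $\int p_x\,w$ is handled by Young's inequality; integrating from the end at which $\|p(y)\|_{L^2_x}\to0$ along a sequence yields $\sup_y\|p(y)\|_{L^2_x}^2+\|p_x\|_{L^2(\RR)}^2\lesssim\|w\|_{L^2(\RR)}^2+\|\sigma_y\|_{L^2(\R_y)}^2$. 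Since $u=\wu+2v_x=w+2p_x$ (using $2v_x=-\ph_\sigma+2p_x$), taking the infimum over admissible decompositions gives $\|u\|_{L^2(\RR)}\lesssim|\wu|_{L^2_\ph(\RR)}$.

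The main obstacle, in both directions, is the behaviour at $y=\pm\infty$ needed to close the energy identities: we deal with eternal solutions, so each identity is integrated from one end, where decay of the relevant quantity must be justified by a limiting argument over compact $y$-intervals. For the lower bound this is the genuinely delicate point, because an admissible shift $\sigma$ only satisfies $\sigma_y\in L^2(\R_y)$, which need not force $\tanh_\sigma$ to have an $L^2_x$-limit as $y\to\pm\infty$, so that a priori $p=v-\tanh_\sigma$ decays at neither end. The resolution is to use that $\wu-\ph_\alpha\in L^2(\RR)$ (established in the upper bound), whence $\ph_\sigma-\ph_\alpha=(\wu-\ph_\alpha)-w\in L^2(\RR)$; since $a\mapsto\|\ph(\cdot-a)-\ph\|_{L^2_x}^2$ vanishes only at $a=0$ and is bounded away from $0$ for large $|a|$, this forces $\sigma(y)-\alpha(y)\to0$ along a sequence, and combining this with the decay of $v-\tanh_\alpha$ supplied by Theorem \ref{theorem:theorem_1} gives $\|p(y)\|_{L^2_x}\to0$ along a common sequence, which is exactly what closes the estimate.
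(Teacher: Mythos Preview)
Your lower-bound argument is essentially the paper's: you write the equation for $p=v-\tanh_\sigma$ and close an $L^2$ energy identity, and your endpoint justification (pass from $\ph_\sigma-\ph_\alpha\in L^2(\RR)$ to $\sigma-\alpha\to0$ along sequences, then use $v-\tanh_\alpha\in C_0L^2$) is a legitimate variant. The paper phrases the same step slightly differently: it shows $z=p\in C_0(\R_y,L^2_x)$ outright, by introducing a second decomposition $v=\tanh_\beta+w$ with $\int w\,\sech^2_\beta=0$ (constructed in the appendix on the $L^2$ Miura map), proving $w\in C_0L^2$, and then observing that $2z_x=u-g$ and $w_x\in L^2(\RR)$ force $\beta-\sigma\in H^1(\R_y)$.

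For the upper bound your route differs from the paper's, and one point in your sketch is inaccurate. You claim that the normalisation $\int_\R\omega\,dx=0$ ``expresses $\alpha_y$ as a projection of $u$ plus quadratic terms''. It does not: in the $\omega$-equation (see the energy estimate in Proposition~\ref{prop:newExpEstimates}) the forcing $u_x$ has been entirely absorbed into the equations for $\wv^\pm$, and integrating in $x$ gives $\alpha_y$ purely in terms of $\wv^\pm$ and $V_\alpha$---never $u$ directly. To obtain $\|\alpha_y\|_{L^2}+\|\cosh_\alpha\omega\|_{L^2H^1}\lesssim\|u\|_{L^2}$ you must therefore feed in the $L^2$-level bounds $\|\wv^\pm\|_{C_0L^2\cap L^6}+\|\jap{x-\alpha}^{-1}\wv^\pm\|_{L^2}\lesssim\|u\|_{L^2}$ from \eqref{eq:estimates-for-wv-lambdaiszero-L2}, and treat the quadratic sources $\sech^2_\alpha\bigl((\wv^+)^2-(\wv^-)^2\bigr)$ and $\sech^2_\alpha(\wv^+-\wv^-)^2$ by splitting one factor in $L^3$ (controlled by $\|u\|_\Hcrit$, small) and the other in $L^6$ or a weighted $L^2$ (controlled by $\|u\|_{L^2}$). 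This can be made to work, but it is not the one-line step you describe.

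The paper sidesteps this by using the second decomposition $v=\tanh_\beta+w$ just mentioned. Because the $w$-equation keeps the forcing $-u_x$ explicitly, a single $L^2$ energy estimate (Lemma~\ref{lemma:global-bounds-L2-small-data} and Proposition~\ref{proposition:existenceOfAnEternalSolution}) gives $\|\beta_y\|_{L^2}+\|w_x\|_{L^2}\lesssim\|u\|_{L^2}$ directly; then $\wu-\ph_\beta=u-2w_x$ yields $|\wu|_{L^2_\ph}\lesssim\|u\|_{L^2}$ immediately. The price is having to identify the eternal solution from Theorem~\ref{theorem:theorem_1} with the one built by compactness in the appendix, which the paper does by approximation.
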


The next Theorem states that the transformation $\samB$ in \eqref{eq:backlund-transformation-definition-introduction} commutes with the flow of \eqref{eq:KP-II}, up to the choice of the additional 1-dimensional parameter. The statement relies on the global well-posedness theory of KP-II for small data in $\Hcrit$ proved in \cite{hadacHerrKoch2009wellPosednessKP-IIinCriticalSpace}, see Theorem \ref{theorem:global-well-posedness-KP-II-HHK09}.

\begin{ABCtheorem}\label{theorem:theorem_2}
    Let $u_0\in\Hcrit$ be small enough, and $\gamma_{0,0}\in\mathbb R$. Let $u\in C_b([0,\infty),\Hcrit)$ be the global solution of \eqref{eq:KP-II} with $u|_{t=0}=u_0$. There exists a continuous function $t\mapsto\gamma_0(t)$, $\gamma_0(0)=\gamma_{0,0}$ such that the curve $\wu(t):=\samB(u,\gamma_0(t))$ lies in $L^2_\loc([0,\infty)\times \RR)$, is a solution of the KP-II equation in distributional form, and can be decomposed as $\wu(t)=\ph(x-\alpha(t,y))+u(t)+w(t)$, with the estimates
    \[ \sup_{t\geq 0}\left[\sup_{y_1,y_2\in\R}\frac{|\alpha(t,y_2)-\alpha(t,y_1)|}{\log(2+|y_2-y_1|)}+\| \alpha_y(t,\cdot)\|_{L^2_y}+\|w(t,\cdot,\cdot)\|_{H^{-\aha,0}(\RR)}\right]\lesssim \|u_0\|_{\Hcrit},
    \]
    \[
    \left\|\fr d{dt} \gamma_0-4\right\|_{L^2_\unif(0,\infty)}\lesssim\|u_0\|_{\Hcrit}.
    \]
    If in addition $u_0\in L^2(\RR)$ and is small enough, then $\wu-\ph(x-\alpha(0,y)-4t)\in C([0,\infty),L^2(\RR))$, it holds the double estimate
    \[
    |\wu(t)|_{L^2_\ph(\RR)}\sim|\wu(0)|_{L^2_\ph(\RR)},\qquad t\geq 0,
    \]
    and $\wu$ is the solution of KP-II coming from the well-posedness theory (see Proposition \ref{prop:KP-IIglobalWellPosednessAroundSlackSoliton}).
\end{ABCtheorem}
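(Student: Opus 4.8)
The strategy is to lift the frozen-profile construction of Theorem~\ref{theorem:theorem_1} to a genuinely time-dependent one, using the Lax-pair structure of \eqref{eq:KP-II} that is already implicit in its Cole--Hopf presentation. Recall that $L_u=\de_y-\de_x^2+u$ is the first operator of the KP-II Lax pair and that Theorem~\ref{theorem:theorem_1} produces $\psi^\pm,\psi$ with $L_u\psi^\pm=L_u\psi=0$ and $v^\pm=\de_x\log\psi^\pm$, $v=\de_x\log\psi$. To bring in time I would in addition impose the second Lax equation $\de_t\psi^\pm=P_u\psi^\pm$, where $P_u$ is the third-order operator with coefficients built from $u$, $u_x$, $\demu u_y$ characterised by the compatibility $[\de_t-P_u,L_u]=0$ being equivalent to $u$ solving \eqref{eq:KP-II}; the remaining additive-constant ambiguity in $P_u$ is harmless, since it only multiplies $\psi$ by a positive function of $t$ alone, which $\de_x\log$ annihilates. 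Because $u(t)$ solves \eqref{eq:KP-II} the compatibility holds, so the constraint $L_{u(t)}\psi^\pm(t)=0$ propagates in $t$; then $v^\pm(t)=\de_x\log\psi^\pm(t)$ solves a fixed Galilean boost of \eqref{eq:mKP-II}, and, keeping the constant $\theta=\theta(\gamma_{0,0},u_0)\in(0,1)$ of Theorem~\ref{theorem:theorem_1} frozen, $\psi(t):=(1-\theta)\psi^+(t)+\theta\psi^-(t)$ solves the same compatible linear system. By the Cole--Hopf computation $v(t):=\de_x\log\psi(t)$ satisfies $M_-(v(t))=u(t)$ for all $t$; and by Proposition~\ref{prop:miura-maps-mKP-to-KP}, once the Galilean boost relating $v$ to a true \eqref{eq:mKP-II} solution is fixed so that $M_-(v)=u$ solves \eqref{eq:KP-II} (as it must, by hypothesis), the function $\wu(t):=M_+(v(t))=u(t)-2\de_x v(t)$ also solves \eqref{eq:KP-II}; this is the asserted new solution.

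To make this rigorous at the critical regularity of $\Hcrit$, rather than developing a bespoke well-posedness theory for the linear flow $\de_t\psi=P_u\psi$, I would argue by approximation. Pick $u_0^{(n)}\in\mathscr D(\RR)$ with $u_0^{(n)}\to u_0$ in $\Hcrit$ (also small in $L^2(\RR)$ for the last part), and let $u^{(n)}$ be the corresponding smooth global solutions of \eqref{eq:KP-II}, with $\|u^{(n)}\|_{C_b\Hcrit}\lesssim\|u_0^{(n)}\|_{\Hcrit}$ uniformly in $n$ by Theorem~\ref{theorem:global-well-posedness-KP-II-HHK09}. For smooth $u^{(n)}$ the construction above is classical: $\psi^{(n),\pm}(t),\psi^{(n)}(t)$ exist and are smooth, $v^{(n)}(t):=\de_x\log\psi^{(n)}(t)$ solves $M_-(v^{(n)}(t))=u^{(n)}(t)$ and the boosted \eqref{eq:mKP-II}, $\wu^{(n)}(t):=u^{(n)}(t)-2v^{(n)}_x(t)$ solves \eqref{eq:KP-II}, and since $v^{(n)}(t)$ is a solution of $M_-(\cdot)=u^{(n)}(t)$ close to a modulated kink (uniformly small), Theorem~\ref{theorem:theorem_1} identifies it with $\dtsmV(u^{(n)}(t),\gamma_0^{(n)}(t))$ for a unique continuous $\gamma_0^{(n)}$, with $\gamma_0^{(n)}(0)=\gamma_{0,0}$ once the initial $\theta$ is chosen accordingly. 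All estimates of Theorem~\ref{theorem:theorem_1} and Corollary~\ref{cor:almost-conservation-L2-norm-around-line-soliton} hold at each $t$ with constants depending only on $\|u^{(n)}(t)\|_{\Hcrit}\lesssim\|u_0\|_{\Hcrit}$, hence uniformly in $n$ and $t$. Using continuous dependence for \eqref{eq:KP-II} (so $u^{(n)}\to u$ in $C_b\Hcrit$), the continuity of $\dtsmV$ into $L^3_\loc(\RR)$, Arzel\`a--Ascoli for the equi-Lipschitz family $\gamma_0^{(n)}$ on compact time intervals, and the uniform bounds, I would extract $\psi(t),v(t),\wu(t),\gamma_0(t)$ in the limit; all stated estimates pass to the limit, $v(t)=\dtsmV(u(t),\gamma_0(t))$, and the decomposition $\wu=\ph_\alpha+u+w$ with its bounds on $\alpha,\alpha_y,w$ follows from Theorem~\ref{theorem:theorem_1}.

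For the quantitative drift $\|\tfrac{d}{dt}\gamma_0-4\|_{L^2_\unif(0,\infty)}\lesssim\|u_0\|_{\Hcrit}$, I would track $\gamma_0(t)$ explicitly: in a fixed normalisation the evolved $\psi^\pm(t)$ differ from the reference eigenfunctions of Theorem~\ref{theorem:theorem_1} for $u(t)$ by positive scalars $c^\pm(t)$, so the effective parameter governing the kink location is $\theta c^-(t)/((1-\theta)c^+(t)+\theta c^-(t))$, and $\gamma_0(t)$ is its image under the $\theta\leftrightarrow\gamma_0$ bijection of Theorem~\ref{theorem:theorem_1} for $u(t)$. When $u\equiv0$ one computes $c^\pm(t)=e^{\mp4t}$, whence $\tfrac{d}{dt}\gamma_0\equiv4$; for small $u$ the deviation $\tfrac{d}{dt}\gamma_0-4$ is controlled linearly by $u(t)$ up to harmless errors, and the global $L^2_t$-type space-time bound on $u$ furnished by the Hadac--Herr--Koch theory (Theorem~\ref{theorem:global-well-posedness-KP-II-HHK09}), with constant $\lesssim\|u_0\|_{\Hcrit}$, yields the $L^2_\unif$ estimate. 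Finally, when $u_0\in L^2(\RR)$ is small, Corollary~\ref{cor:almost-conservation-L2-norm-around-line-soliton} at each $t$ combined with the conservation $\|u(t)\|_{L^2}=\|u_0\|_{L^2}$ along \eqref{eq:KP-II} gives $|\wu(t)|_{L^2_\ph(\RR)}\sim\|u_0\|_{L^2}\sim|\wu(0)|_{L^2_\ph(\RR)}$; continuity in $t$ of the construction promotes this to $\wu-\ph(x-\alpha(0,y)-4t)\in C([0,\infty),L^2(\RR))$, and as this puts $\wu$ in the class where \eqref{eq:KP-II} is well posed around the translating line soliton (Proposition~\ref{prop:KP-IIglobalWellPosednessAroundSlackSoliton}), uniqueness identifies $\wu$ with the well-posedness solution.

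The main obstacle is the limit passage at critical regularity: $L^3_\loc$-continuity of $\dtsmV$ only yields distributional convergence of $v^{(n)}_x$, whereas to pass to the limit in the quadratic term $\tfrac12\de_x(\wu^{(n)})^2$ of \eqref{eq:KP-II} one needs strong $L^2_\loc$ convergence of $\wu^{(n)}$. This is where the uniform $L^2H^1$-type control of the remainder $\omega$ and of $v^\pm\mp1$ in Theorem~\ref{theorem:theorem_1}, combined with the $\de_y$-regularity built into equation \eqref{eq:miuraEquation} itself (an Aubin--Lions-type compactness argument, uniform in $n$), must be used; the same bounds are also what make sense of the products appearing in $\wu=u-2\de_x v\in L^2_\loc$. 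A secondary subtlety, responsible for the speed-$4$ drift of $\gamma_0$ (the KdV-soliton velocity), is the bookkeeping of the Galilean boost relating $v$ to a genuine \eqref{eq:mKP-II} solution: it must be tracked carefully to certify that $\wu$ itself, and not a boosted copy, solves \eqref{eq:KP-II}.
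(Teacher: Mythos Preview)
Your strategy is essentially the paper's: build time-dependent elementary eigenfunctions $\psi^\pm$ via the Lax compatibility, superpose them with a time-independent coefficient $\theta$, read off $v(t)=\de_x\log\psi(t)$ and $\wu=u-2v_x$, and approximate by smooth data. The paper packages this as Proposition~\ref{prop:eleVt_superposition-of-elementary-functions} (the map $\eleVt$) together with Lemma~\ref{lemma:existenceOfPrimitiveSolutionTimeDependent}, whose point is precisely that the primitive $V$ is unique up to a constant independent of $(t,x,y)$ --- this is what makes ``freezing $\theta$'' rigorous and what drives the computation of $\tfrac{d}{dt}\gamma_0$; your tracking of the scalars $c^\pm(t)$ is the same thing in different language, and the paper's Lemma~\ref{lemma:primitive-solutions-time-dependent-are-nice} is exactly the estimate you need. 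For the limit in the nonlinearity the paper does not use Aubin--Lions but shows directly that $(u_0,\vec c)\mapsto v$ is continuous with $v\in L^6_\loc$, $v_x\in L^2_\loc$ in space-time (Proposition~\ref{prop:eleVt_superposition-of-elementary-functions}), which suffices.

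The one place you understate the work is the final identification of $\wu$ with the well-posedness solution when $u_0\in L^2$. Saying ``continuity of the construction promotes this to $C_tL^2$'' and ``uniqueness identifies $\wu$'' is not enough: uniqueness in Proposition~\ref{prop:KP-IIglobalWellPosednessAroundSlackSoliton} is stated in the Bourgain space $X_T^{1/2+\eps,b_1,0}$, not in $C_tL^2$, so you must place $\wu-\ph_{\bar\beta}$ there. The paper does this by first proving, for $u_0\in\de_xH^\infty$, a separate claim that $v_x-\sech^2(x-4t-a)\in C_tL^2$ (using the explicit Cole--Hopf formula for $v$ and the weighted $\de_x^{-1}\wv^\pm$ estimate \eqref{eq:estimates-for-wv-lambdaiszero-H^-1} from Corollary~\ref{cor:estimates-for-wtv-part-2}), which gives enough regularity to match $\wu$ with the Bourgain-space solution by energy uniqueness; then for general $u_0\in L^2\cap\Hcrit$ it passes to the limit via a uniform $X_T^{1/2+\eps,b_1,0}$ bound (Remark~\ref{rk:bourgain-space-norm-depends-monotonically-on-the-norm-of-alpha}) and weak-$*$ compactness. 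Your sketch would need an argument of this kind to close.
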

The second part of the above theorem shows that $|\cdot|_{L^2_\ph}$ is an almost-conserved quantity for small solutions of \eqref{eq:KP-II} around a modulated line soliton, at least for those solutions generated by our B\"acklund transform $\samB$. We conjecture that the same holds for all small $L^2$ perturbations of $\ph$.

Next, we provide information on the range of $\samB$. The following Theorem provides necessary and sufficient conditions in the restricted setting of small output data in a mildly weighted space. For $\eps>0$, define the weighted space $L^1_{\sech^2,\eps}(\RR)$ with norm
\[
\|f\|_{L^1_{\sech^2,\eps}(\RR)}:=\|(1+|y|)^{\eps}\sech^2(x)f\|_{L^1(\RR)},
\]
and the Banach space
\[
Y_\eps(\RR):=\Hcrit\cap L^2(\RR)\cap L^1(\RR)\cap L^1_{\sech^2,\eps}(\RR).
\]
\begin{ABCtheorem}[Codimension-1 manifold in the range of $\samB$]\label{theorem:range_of_samB_contains_manifold_of_cod_1}
    There exists an analytic map
    \[
    \Phi:L^1(\RR)\cap L^2(\RR)\to\R
    \]
    with $\Phi(0)=0$, $D\Phi(0)\cdot \dot g=\aha\int_{\RR}\ph\dot g\,dx$, with non-vanishing differential everywhere, and invariant under the reflection symmetry \eqref{eq:reflection-symmetry-KP-II}, such that the following holds.\\
    Let $\eps>0$. Let $g\in Y_\eps(\RR)$ be small enough (depending on $\eps$). The following are equivalent:
    \begin{itemize}
        \item $\Phi(g)=0$,
        \item there exists a small $u\in \Hcrit$ and $\gamma_0\in\R$ such that $g+\ph=\samB(u,\gamma_0).$
    \end{itemize}
\end{ABCtheorem}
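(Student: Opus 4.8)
\emph{Approach.} The plan is to characterize the range of $\samB$ by \emph{inverting} it, i.e.\ by constructing the soliton-removal map directly. Unwinding Theorem~\ref{theorem:theorem_1} and the definition $\samB(u,\gamma_0)=u-2\de_x\dtsmV(u,\gamma_0)$, together with $M_-=M_++2\de_x$, one sees that a small perturbation $g+\ph$ lies in the range of $\samB$ if and only if there is a $v$ of the form $Q_\sigma+w$ with $w\in L^3(\RR)$, $\sigma_y\in L^2(\R_y)$ (and the finer structure of Theorem~\ref{theorem:theorem_1}) solving $M_+(v)=g+\ph$ and such that $u:=M_-(v)\in\Hcrit$ is small: given such a $v$, the uniqueness part of Theorem~\ref{theorem:theorem_1} forces $v=\dtsmV(u,\gamma_0)$ for the $\gamma_0$ it determines, hence $\samB(u,\gamma_0)=M_-(v)-2\de_x v=M_+(v)=g+\ph$; conversely $v=\dtsmV(u,\gamma_0)$ does the job. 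Thus everything reduces to inverting the Miura map $M_+$ near a modulated kink.

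I would solve $M_+(v)=g+\ph$ perturbatively. Writing $v=Q_{\alpha(y)}+\omega$ with a modulation $\alpha$ and a remainder $\omega$ normalized as in Theorem~\ref{theorem:theorem_1}, and applying $\de_x$, the equation becomes a quasilinear system $\de_y\omega+\mathcal A_{\alpha}\omega=N(\omega,\alpha_y,g)$, where $\mathcal A_\alpha=\de_x^2-2\de_x(\tanh_\alpha\,\cdot\,)$ is the linearization of $M_+$ at $Q_\alpha$ and $N$ collects the source $-\de_x g$, the quadratic term $\de_x(\omega^2)$ and the modulation terms. This is \emph{backward}-parabolic, so it has to be solved through a Duhamel formula run from $y=+\infty$. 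The decisive structural point is that $\mathcal A_0$, on the $\cosh_\alpha$-weighted remainder space of Theorem~\ref{theorem:theorem_1}, is injective with a two-dimensional cokernel: $h$ lies in its range iff $\int_\R h\,dx=0$ and $\int_\R h\tanh\,dx=0$ (equivalently, the adjoint $\de_x^2+2\tanh\de_x$ kills $\operatorname{span}\{\sech^2,\sech^2\tanh\}$). For the $y$-dependent equation, pairing with $\sech^2$ produces a condition satisfied automatically by admissible sources ($x$-derivatives of $L^1_x$ functions integrate to zero), while pairing with $\sech^2\tanh$ and integrating over $\R_y$ leaves exactly one scalar solvability condition. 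A Lyapunov--Schmidt / contraction argument then yields, for small $g$, a unique solution $(\omega,\alpha)$ of the projected system obeying the bounds of Theorem~\ref{theorem:theorem_1} (with $\alpha$ of at most logarithmic growth), depending analytically on $g$; the full equation $M_+(v)=g+\ph$ holds iff the remaining scalar obstruction vanishes. One also checks that when it vanishes $u:=M_-(v)$ is genuinely small in $\Hcrit$, and this is where the mild $(1+|y|)^\eps\sech^2$ weight of $Y_\eps(\RR)$ is needed, to supply the $y$-integrability/decay that membership in $\Hcrit$ demands.

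The functional $\Phi$ is then the obstruction, whose leading part is $\int_\R\langle -\de_x g,\tanh\rangle_{L^2_x}\,dy$; one extends it to an explicit analytic functional on $L^1(\RR)\cap L^2(\RR)$ built from the same multilinear expansion. Then $\Phi(0)=0$, since $g=0$ admits the solution $(\omega,\alpha)=(0,0)$; and since $-\de_x g$ is the only $g$-linear term of $N$ and $\alpha=O(g)$,
\[D\Phi(0)\cdot\dot g=\int_\R\langle-\de_x\dot g,\tanh\rangle_{L^2_x}\,dy=\int_{\RR}\dot g\,\sech^2\,dx\,dy=\aha\int_{\RR}\ph\,\dot g\]
up to the sign fixed by the orientation of the cokernel direction. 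Analyticity of $\Phi$ comes from that of the fixed-point map (a composition of the entire nonlinearity $v\mapsto v^2$ with linear solution operators in a scale of Banach spaces) via an analytic implicit function theorem; $D\Phi$ never vanishes because, tested against a fixed direction such as multiples of $\sech^2$, it equals $D\Phi(0)$ up to $O(g)$ and stays bounded away from $0$. Reflection invariance is immediate: $\Rcal\colon y\mapsto-y$ commutes with $M_\pm$, with \eqref{eq:miuraEquation}, and with $Q$ and $\ph$ (both $y$-independent), and sends $(\omega(x,y),\alpha(y))$ to $(\omega(x,-y),\alpha(-y))$, leaving the $dy$-integral defining $\Phi$ unchanged. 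Finally, the equivalence holds: if $g+\ph=\samB(u,\gamma_0)$ then $v=\dtsmV(u,\gamma_0)$ solves the inverse problem with the Theorem~\ref{theorem:theorem_1} structure, hence coincides with the constructed solution, which forces $\Phi(g)=0$; conversely $\Phi(g)=0$ produces $v$ with $M_+(v)=g+\ph$ and $u:=M_-(v)$ small in $\Hcrit$, so $\samB(u,\gamma_0)=M_-(v)-2\de_x v=M_+(v)=g+\ph$.

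\emph{Main obstacle.} The hardest step is the construction of $(\omega,\alpha)$ in the second paragraph: one is solving a backward-parabolic equation globally in $y$ while the modulation $\alpha$ itself drifts logarithmically, so the good mapping properties of $\mathcal A_\alpha$ transverse to the two resonant directions --- valid at a \emph{frozen} $\alpha$ --- must be exploited along a moving center. This requires a coupled fixed point for $(\omega,\alpha)$ in $\cosh_{\alpha}$-weighted spaces with uniform control of the projection onto the resonant direction $(\sech^2\tanh)_{\alpha(y)}$, and is essentially the ``reverse'' of the analysis behind Theorem~\ref{theorem:theorem_1}, inheriting its difficulty. A subtler secondary point is making $\Phi$ rigorous: since $\tanh\notin L^2$, the scalar condition must be obtained by integrating the parabolic identity over all of $\R_y$ and verifying that the boundary terms at $y=\pm\infty$ vanish --- which is again exactly where the $(1+|y|)^\eps$-weight in $Y_\eps(\RR)$ enters.
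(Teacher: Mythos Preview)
Your approach is genuinely different from the paper's. The paper does \emph{not} run a Lyapunov--Schmidt on the nonlinear equation for $\omega$; instead it applies the Cole--Hopf change $z\mapsto\psi=e^{\int z\,dx}$ (after the reflection $z=-\Rcal w$, $h=\Rcal g$), which turns $M_+(v)=\wu$ into the \emph{linear} equation $\psi_y-\psi_{xx}+2\tanh\psi_x=-h\psi$. This equation has, for \emph{every} $h\in L^1\cap L^2$ (no smallness), a unique positive bounded solution converging to $1$ at $y\to-\infty$, by a maximum-principle argument. The functional is then the explicit integral $\Phi(h)=-\tfrac12\int\sech^2 h\,\psi$, equivalently $\lim_{y\to\infty}\tfrac12\int\sech^2\psi\,dx-1$; its global analyticity and everywhere-nonvanishing differential follow directly. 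The implication $\Phi(g)=0\Rightarrow g+\ph\in\operatorname{ran}\samB$ is obtained by showing that the bad piece $\tfrac12\de_x^{-1}(\Gamma^--\Gamma^+)(\sech^2 h\psi)$ of the explicit inverse $T=(\de_y-\de_x^2+2\tanh\de_x)^{-1}$ becomes controllable precisely when $\sech^2 h\psi$ has mean zero, via parabolic Hardy-space estimates (this is exactly where the $Y_\eps$-weight enters). The converse is proved by showing $\Phi(g)\neq0$ forces $w\in L^{3,\infty}\setminus L^3$, contradicting the structure from Theorem~\ref{theorem:theorem_1}.

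Your sketch has two real gaps. First, the theorem demands $\Phi$ analytic with nonvanishing differential on \emph{all} of $L^1\cap L^2$, and your obstruction is defined only after solving the projected fixed-point problem, which you can do only for small $g$; ``extends to an explicit analytic functional on $L^1\cap L^2$ built from the same multilinear expansion'' is not justified --- the Neumann series of a contraction has finite radius, and without Cole--Hopf you have no substitute for the paper's global, nonperturbative construction of $\psi$. Second, your reflection-invariance argument is incorrect: $\Rcal$ does \emph{not} commute with $M_\pm$ --- it exchanges $M_+$ and $M_-$ (after $v\mapsto-\Rcal v$), so the solution of $M_+(v)=\Rcal g+\ph$ is not $\Rcal$ of the original one. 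The paper proves $\Phi(\Rcal h)=\Phi(h)$ by the bilinear identity $\tfrac{d}{dy}\int\sech^2\psi\,(\Rcal\phi)\,dx=0$ for the two linear solutions. A smaller slip: the kernel of the adjoint $\de_x^2+2\tanh\de_x$ is $\operatorname{span}\{1,\tanh\}$, not $\{\sech^2,\sech^2\tanh\}$ (your own formula $\int\langle-\de_x\dot g,\tanh\rangle\,dy$ tacitly uses the correct pairing with $\tanh$).
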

The functional $\Phi$ is constructed in Section \ref{sec:range}. It involves solving a parabolic equation with the input $g$ as multiplicative potential, and its power series around $g=0$ can be computed explicitly. The functional $\Phi$ is connected to the KP-II scattering transform: the number $\Phi(g)$ corresponds to the coefficient multiplying a singular term in the continuous scattering data of the function $\wu=\ph+g$ (the nonlinear Fourier transform of $\wu$) at the two spectral parameters corresponding to the line soliton, which are set to $\pm 1$ in this paper. These singularities are described by Wu in \cite{Derchyi-Wu-2020-direct-scattering-Gr12-KP-II-soliton}*{equation (3.16)} (the reader can compare $\gamma=\gamma_1$ in \cite{Derchyi-Wu-2020-direct-scattering-Gr12-KP-II-soliton}*{equation (3.8)} with $\Phi(g)$ in our Definition \ref{def:Phi} and note that they essentially coincide). The condition $\Phi(g)=0$ is thus roughly equivalent to the scattering transform of $\ph+g$ being non-singular. We remark that the above is in clear contrast with the KdV scattering transform, where all the information concerning solitons is contained in the discrete scattering data, and the continuous scattering data of sufficiently localized solutions, including perturbed solitons, are non-singular.

By combining Theorems \ref{theorem:theorem_2} and \ref{theorem:range_of_samB_contains_manifold_of_cod_1} with the conservation of the $L^2$ norm along the KP-II flow, we obtain an $L^2$ stability result for the line soliton in a codimension-1 manifold of data of sharp ($L^2$) regularity.
\begin{ABCcorollary}\label{cor:codimension_1_stability}
    For every $\eps>0$, there exists $\delta=\delta(\eps)>0$ such that the line soliton of KP-II is modulationally stable in $L^2$, in the sense of Definition \ref{def:modulational_orbital_stability}, under perturbations in the manifold
    \[
    \Ncal=\{g\in Y_\eps(\RR)\;|\;\Phi(g)=0,\,\|g\|_{Y_\eps(\RR)}<\delta\},
    \]
    which is an analytic, regular codimension-1 submanifold of $Y_\eps(\RR)$ containing the origin.
\end{ABCcorollary}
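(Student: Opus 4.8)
The plan is to deduce Corollary~\ref{cor:codimension_1_stability} by chaining Theorems~\ref{theorem:theorem_1}, \ref{theorem:theorem_2} and \ref{theorem:range_of_samB_contains_manifold_of_cod_1} with the conservation of the $L^2$ norm along the \eqref{eq:KP-II} flow, after fixing $\delta=\delta(\eps)>0$ small enough to meet every smallness hypothesis invoked below. The submanifold structure of $\Ncal$ is read off directly from Theorem~\ref{theorem:range_of_samB_contains_manifold_of_cod_1}: the inclusion $Y_\eps(\RR)\hookrightarrow L^1(\RR)\cap L^2(\RR)$ is continuous, linear, and has dense image since it contains $\mathscr D(\RR)$, so the restriction $\Phi|_{Y_\eps(\RR)}$ is real-analytic with nowhere-vanishing differential; it is therefore a submersion onto $\R$ whose kernel, having codimension $1$, is complemented, and the analytic implicit function theorem in Banach spaces turns $\{\Phi=0\}$ into a regular analytic codimension-$1$ submanifold of $Y_\eps(\RR)$. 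Intersecting with the open ball $\{\|g\|_{Y_\eps(\RR)}<\delta\}$ preserves all of this, and $0\in\Ncal$ because $\Phi(0)=0$.

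Given an initial datum $u_0=\ph+g$ with $g\in\Ncal$, the equation $\Phi(g)=0$ and the smallness of $g$ let me invoke the nontrivial implication of Theorem~\ref{theorem:range_of_samB_contains_manifold_of_cod_1} to write $u_0=\samB(u(0),\gamma_{0,0})$ for some small $u(0)\in\Hcrit$ and some $\gamma_{0,0}\in\R$. Since $g\in Y_\eps(\RR)\subset L^2(\RR)$ we have $u_0\in\ph+L^2(\RR)\subset L^2_\ph(\RR)$ with $|u_0|_{L^2_\ph(\RR)}\le\|u_0-\ph\|_{L^2(\RR)}$, and the two-sided $L^2$-estimate of Corollary~\ref{cor:almost-conservation-L2-norm-around-line-soliton} then places $u(0)$ in $L^2(\RR)$ with $\|u(0)\|_{L^2(\RR)}\lesssim\|u_0-\ph\|_{L^2(\RR)}$. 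Feeding $u(0)$ and $\gamma_{0,0}$ into Theorem~\ref{theorem:theorem_2} yields a continuous $t\mapsto\gamma_0(t)$ with $\gamma_0(0)=\gamma_{0,0}$ such that $\wu(t):=\samB(u(t),\gamma_0(t))$ is exactly the \eqref{eq:KP-II} solution with initial datum $\wu(0)=u_0$ provided by the well-posedness theory near the line soliton; since $u(0)\in L^2(\RR)$ is small, the same theorem also gives the almost-conservation $|\wu(t)|_{L^2_\ph(\RR)}\sim|\wu(0)|_{L^2_\ph(\RR)}$ for all $t\ge0$, whence $\sup_{t\ge0}|\wu(t)|_{L^2_\ph(\RR)}\lesssim\|u_0-\ph\|_{L^2(\RR)}$. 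This already identifies $\wu$ with the solution appearing in Definition~\ref{def:modulational_orbital_stability}.

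It remains to read off the modulation parameters from the uniform $|\cdot|_{L^2_\ph(\RR)}$ bound. For each $t$ I would unfold the infimum defining $|\wu(t)|_{L^2_\ph(\RR)}$ to produce a decomposition $\wu(t)=r(t)+\ph\big(x-x(t,y)\big)$ with $\|r(t)\|_{L^2(\RR)}+\|x_y(t,\cdot)\|_{L^2(\R_y)}\lesssim|\wu(t)|_{L^2_\ph(\RR)}\lesssim\|u_0-\ph\|_{L^2(\RR)}$, and take the amplitude parameter to be $\lambda(t,y)\equiv1$, since $\samB$ always attaches a soliton of the amplitude of $\ph=\ph^1$. With such a choice,
\[
    \sup_{t>0}\big\|\wu(t,x,y)-\ph^{\lambda(t,y)}\big(x-x(t,y)\big)\big\|_{L^2(\RR)}
    =\sup_{t>0}\|r(t)\|_{L^2(\RR)}
    \lesssim_{\,\eps}\|u_0-\ph\|_{L^2(\RR)},
\]
which is precisely modulational stability of $\ph$ under perturbations in $\Ncal$ in the sense of Definition~\ref{def:modulational_orbital_stability}, with $\mc N=\Ncal$.

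The one delicate point — and what I expect to be the main obstacle — is that this last step needs a modulation $x(t,y)$ that is genuinely continuous in $t$ (the infimum in $|\cdot|_{L^2_\ph(\RR)}$ need not be attained, and a bare near-minimizer need not depend continuously on $\wu(t)$) together with a remainder $r(t)$ controlled in the honest $L^2(\RR)$ norm, whereas the decomposition in Theorem~\ref{theorem:theorem_2} only asserts a priori an $H^{-\aha,0}(\RR)$ bound on its error term. I plan to resolve this by choosing $x(t,\cdot):=\alpha(t,\cdot)$, the shift furnished by Theorem~\ref{theorem:theorem_2} (which is continuous in $t$), so that $r(t)=u(t)+w(t)$ with $w(t)$ the corresponding error term, and then upgrading $r(t)$ to $L^2(\RR)$ with norm $\lesssim\|u_0-\ph\|_{L^2(\RR)}$ by combining the explicit structure of $\dtsmV$ from Theorem~\ref{theorem:theorem_1} (in particular the weighted $L^2$ bounds on the pieces $v^\pm$ and $\omega$), the $L^2$-conservation $\|u(t)\|_{L^2(\RR)}=\|u(0)\|_{L^2(\RR)}$ along the \eqref{eq:KP-II} flow, and once more the two-sided estimate of Corollary~\ref{cor:almost-conservation-L2-norm-around-line-soliton}.
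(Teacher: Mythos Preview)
Your overall route is the one the paper intends: Theorem~\ref{theorem:range_of_samB_contains_manifold_of_cod_1} to land in the range of $\samB$, Corollary~\ref{cor:almost-conservation-L2-norm-around-line-soliton} for the two-sided $L^2$ estimate, Theorem~\ref{theorem:theorem_2} to propagate in time, and $L^2$-conservation to close. Your handling of the manifold structure and your identification of the ``delicate point'' (extracting a continuous modulation with an honest $L^2$ remainder) are both accurate; for the latter, note that the proof of Corollary~\ref{cor:almost-conservation-L2-norm-around-line-soliton} already produces an explicit shift $\beta$ (via Proposition~\ref{proposition:existenceOfAnEternalSolution} and the smooth decomposition of Lemma~\ref{lemma:decompositionWAlpha}) with $\|\wu(t)-\ph_\beta\|_{L^2}+\|\beta_y\|_{L^2}\lesssim\|u(t)\|_{L^2}$, so the infimum in $|\cdot|_{L^2_\ph}$ need not be invoked abstractly.

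There is, however, one genuine circularity. You write that ``the two-sided $L^2$-estimate of Corollary~\ref{cor:almost-conservation-L2-norm-around-line-soliton} then places $u(0)$ in $L^2(\RR)$'', but that corollary is stated (and its converse direction is proved) under the \emph{hypothesis} $u\in L^2$ small: the claim $z\in C_0(\R_y,L^2(\R_x))$ there relies on $\wv^\pm\in C_0L^2$, which comes from Corollary~\ref{cor:estimates-for-wtv-part-2} only when $u\in L^2$. So you cannot use Corollary~\ref{cor:almost-conservation-L2-norm-around-line-soliton} to \emph{obtain} the qualitative $u(0)\in L^2$. The fix is to read it off from the construction in the proof of Theorem~\ref{theorem:range_of_samB_contains_manifold_of_cod_1}: since $u(0)=g+2w_x$ with $w_x=\Rcal(\psi_x^2/\psi^2-\psi_{xx}/\psi)$, it suffices to show $\psi_x\in L^4(\RR)$ and $\psi_{xx}\in L^2(\RR)$. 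Using the splitting $\psi-1={\rm I}+{\rm II}$ there, the term ${\rm I}$ obeys these bounds by Proposition~\ref{prop:spacetime_estimate_heat_general} ($\de_x^2\Gamma^\pm\colon L^2\to L^2$ and $\de_x\Gamma^\pm\colon L^{12/7}\to L^4$, with $h\psi\in L^1\cap L^2$); for ${\rm II}$ one interpolates between the Hardy-space endpoint (kernels satisfying the pointwise bounds in the proof of Lemma~\ref{lemma:heat_kernel_and_Hardy_space_H1} send $\PHux{\pm1}$ to $L^3$, resp.\ $L^{3/2}$) and the $L^2$-input bounds $\Gamma^\pm\colon L^{6/5}\to L^6$, $\de_x\Gamma^\pm\colon L^2\to L^6$ from Proposition~\ref{prop:spacetime_estimate_heat_general}. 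This yields $u(0)\in L^2$ (qualitatively), after which Corollary~\ref{cor:almost-conservation-L2-norm-around-line-soliton} gives the quantitative $\|u(0)\|_{L^2}\lesssim|\wu(0)|_{L^2_\ph}\le\|g\|_{L^2}$ you need, and the rest of your argument goes through.
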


The assumptions in Corollary \ref{cor:codimension_1_stability} are sharp in terms of regularity, unlike previous results on the stability of the KP-II line soliton on $\RR$. Although we do not provide details here, there seem to be no obstructions in using the properties of $\samB$ to prove asymptotic stability of the line soliton under perturbations in the above manifold by making use of scattering of small $\Hcrit$ solutions of \eqref{eq:KP-II}, proved in \cite{hadacHerrKoch2009wellPosednessKP-IIinCriticalSpace}.

The stability theorems for the line soliton proved by Mizumachi do not assume any finite-codimension condition. In fact, we expect the codimension-1 condition $\Phi=0$ in Corollary \ref{cor:codimension_1_stability} to be removable. A key difference between our works is that in this paper we do not consider modulations in the scaling parameter $\lambda$ of the line soliton, which could be related to this discrepancy.

On the other hand, as Theorem \ref{theorem:range_of_samB_contains_manifold_of_cod_1} states, the condition $\Phi=0$ is necessary for a perturbation of the line soliton to fall in the range of the soliton addition map $\samB$. The proof and the conclusion of Theorem \ref{theorem:range_of_samB_contains_manifold_of_cod_1} seem to not depend sensibly on the function space used to define $\samB$ as long as the space is scaling critical, a condition that in turn is natural to have uniqueness of solutions of \eqref{eq:miuraEquation} and a well-defined map $\samB$. Moreover, as noted after Theorem \ref{theorem:range_of_samB_contains_manifold_of_cod_1}, the functional $\Phi$ is directly linked to the scattering transform. In particular, the condition $\Phi=0$ is a no-singularity condition for the continuous scattering data of $\ph+g$. We conclude that this `missing degree of freedom' in the soliton addition map is an intrinsic feature of the B\"acklund transform and the KP-II equation itself, arising from the interplay between analytic and algebraic properties of the integrable structure. This is remarkable, and contrasts with the common intuition, which has been shown to be valid at least for several 1-dimensional models, that solitons of integrable PDEs can be thought as being entirely independent, and in fact easily removable from the rest of the solution. The failure of this description is linked to the unbounded nature of KP-II solitons. We state a conjecture on the connection between this degree of freedom and the long time behavior of perturbed line solitons in Section \ref{sec:range}, which we did not see in previous works on the line soliton.

It remains unclear whether there exists a well-behaved generalization of the soliton addition map that can describe generic perturbations of the line soliton, without the codimension-1 condition. On the other hand, we think that some modification of the arguments needed to prove Corollary \ref{cor:codimension_1_stability} can lead to a complete stability result.

We conclude by noting that the approach to the stability of solitons using B\"acklund transforms shows promise for potential generalizations to the multisoliton case. In section \ref{sec:six} we derive the analogous B\"acklund transform for a subclass of KP-II multisolitons, and we plan to discuss analogous stability results to that in Corollary \ref{cor:codimension_1_stability} in a follow-up paper.

\subsubsection{Structure of the paper}
Our plan for the present article is as follows. In section \ref{sec:preliminaries}, we discuss the formalism of the Lax equation, its connection with \eqref{eq:KP-II}, \eqref{eq:mKP-II} and the Miura map, and all the tools and heuristics that we derive from the Lax equation in order to study the KP-II equation. We also focus on the construction of multisolitons and give a heuristic discussion on the elementary solutions of system \eqref{eq:systemForlittlev-miura+mKP-II}.

In Section \ref{sec:space}, we study the Miura map for fixed time. We prove Theorem \ref{theorem:theorem_1}, which allows to define the B\"acklund transform in Definition \ref{def:soliton_addition_map_and_L^2_phi(RR)}, and Corollary \ref{cor:almost-conservation-L2-norm-around-line-soliton}. We provide there the main tools needed for the construction of the elementary solutions.

In Section \ref{sec:time}, we look at the time-dependent problem and prove Theorem \ref{theorem:theorem_2}. We first review the well-posedness and regularity properties of solutions of the KP-II equation, then we define the elementary solutions of \eqref{eq:systemForlittlev-miura+mKP-II} and show their basic properties. Before the proof of Theorem \ref{theorem:theorem_2}, we state the nonlinear superposition of elementary solutions in Proposition \ref{prop:eleVt_superposition-of-elementary-functions}, which allows to select the time-dependent parameter $\gamma_0$ in Theorem \ref{theorem:theorem_2}.

In section \ref{sec:range}, we prove Theorem \ref{theorem:range_of_samB_contains_manifold_of_cod_1} and discuss a conjecture on the codimension-1 condition $\Phi=0$.

In section \ref{sec:six}, we briefly discuss the multisoliton addition map.

\subsection*{Acknowledgments}

The author wishes to express his deepest gratitude to Professor Herbert Koch for his constant guidance, support, and exceptional generosity. The author is indebted to Professor Tetsu Mizumachi for his kind hospitality and the invaluable discussions during a visit to Hiroshima.

The author acknowledges the support of the DAAD through the program `Graduate School Scholarship Programme, 2020' (57516591), and of the Hausdorff Center for Mathematics under Germany's Excellence Strategy--EXC--2047/1-390685813.

\subsection{Notation}\label{subsec:notations}
\begin{itemize}
    \item $x,y,t$ are the space-time variables, with corresponding frequency variables $\xi,\eta,\tau$. We will often use the space variables as subscripts, for instance $\R_x$, $\RR_{x,y}$, whenever we are working with functions that depend only on some of the variables.
    \item (Subscript notation; cf. Notation \ref{notation:f_alpha}) For a function $f$ of the variable $x$ and $\alpha\in\R$, we will denote by $f_\alpha$ the function
    \[
        f_\alpha(x):=f(x-\alpha).
    \]
    More generally, $f$ and $\alpha$ will be allowed to depend on $y$ or $t,y$: in that case, $f_\alpha$ will denote
        \begin{equation}
            f_\alpha(t,x,y):=f(t,x-\alpha(t,y),y).
        \end{equation}
    The only exception to this notation rule is the function $G_\alpha$ in Definition \ref{def:VandG}.
    \item Throughout this article, we define the two functions of the $x$ variable $$ \eta^+(x):=\aha(1+\tanh(x)),\quad \eta^-(x):=\aha(1-\tanh(x)). $$
    \item (Convention on hyperbolic functions). The following statements are written for the function `$\tanh$', and apply as well to all other hyperbolic functions ($\cosh,\sech,...$) and to $\ph$, $Q$, $\eta^\pm$.
    \begin{itemize}[label=\textopenbullet]
        \item When we want to evaluate the function at a given number $x$, we will write $\tanh(x)$. We will \emph{never} write `$\tanh x$'.
        \item When taking products between $\tanh$ and functions inside brackets, we will write for instance $\tanh\cdot (f+g)$.
        \item When not given an argument as input, we will consider $\tanh$ as a function of the $x$ variable. When comparing it to functions with more variables, we will think of $\tanh$ as being constant in the other variables.
    \end{itemize}
    \underline{Examples:} for $w=w(x,y)$ and $\alpha=\alpha(y)$:
    \begin{itemize}[label=\textopenbullet]
        \item the expression `$\tanh\sech^2$' denotes the map $x\mapsto \tanh(x)\sech^2(x)$,
        \item the expression `$\cosh w$' denotes the map $(x,y)\mapsto \cosh(x)w(x,y)$,
        %\item the expression `$w+\ph$' denotes the map $(x,y)\mapsto w(x,y)+\ph(x)$,
        \item the expression `$\cosh_\alpha w+\ph_\alpha$' denotes $(x,y)\mapsto \cosh(x-\alpha(y))w(x,y)+\ph(x-\alpha(y))$.
    \end{itemize}
    \item $C(A,B)$ denotes the space of continuous maps from $A$ to $B$. For $\Omega$ a metric space and $X$ a Banach space, we denote by $C_b(\Omega,X),C_0(\Omega,X)\subset C(\Omega,X)$ the Banach spaces of continuous functions that are respectively bounded, and small outside compact sets (more precisely, $C_0(\Omega,X)$ is the closure of the space $C_c(\Omega,X)$ of compactly supported continuous functions). We will omit $X$ when $X=\R$. In one case, $X$ will be a Fr\'echet space, and we will rely on the notion of boundedness in such spaces.
    %\item $\mc F$, $\Hil$ denote the Fourier and the Hilbert transforms. We will also use $\hat u:=\mathcal F u$. It will be specified by subscripts when the Fourier transform is taken with respect to some of the variables.
    \item In $\R^n$, with variable $z$ and frequency variable $\zeta$, we define the usual smooth Littlewood--Paley projector $P_\lambda$ on dyadic annuli with frequencies $\zeta\sim \lambda\in 2^\Z$, as for instance in \cite{bahouri-chemin-danchin-2011-fourier-analysis-and-nonlinear-PDEs}. We define $P_{\leq \lambda}$, $P_{>\lambda}$ similarly.
    \item If $z=(x,y,\dots)$, $\zeta=(\xi,\eta,\dots)$, we denote by $P_\lambda^x$ the Littlewood--Paley projection with respect to the frequency variable $\xi$ only (analogously for $P_{\leq \lambda}^x$, $P_{>\lambda}^x$).
    \item We define the homogeneous and inhomogeneous Besov norms as
    $$ \|u\|^q_{\dot B^s_{p,q}}:=\sum_{\lambda\in 2^\Z}\|P_\lambda u\|^q_{L^p}, \qquad \|u\|^q_{B^s_{p,q}}:=\|P_{\leq 1}u\|_{L^p}^q+\sum_{\lambda\in 2^\Z,\,\lambda>1}\|P_\lambda u\|^q_{L^p}, $$
    with the obvious modification for $q=\infty$, and denote by $\dot B^s_{p,q}(\R^n)$ and $B^s_{p,q}(\R^n)$ the respective Besov spaces. When $p=q=2$, we will write $H^s$ instead of $B^s_{2,2}$.
    \item Given a Banach space $X$ and an interval $I$, the norm of the associated Bochner space $L^p(I,X)$ is denoted by
    \[
    \|u\|_{L^p X}^p=\int_I \|u(s)\|_X^pds.
    \]
    When the interval is not $\R$ or $[0,\infty)$, or when $I=[0,T]$, the norm will be denoted respectively by $\|u\|_{L^p_I X}$, $\|u\|_{L^p_T X}$.
    \item The spaces $H^{s_1,s_2}(\R^2)$ are defined by the norm
    $$ \|u\|_{H^{s_1,s_2}(\R^2)}^2=\int_{\RR} \jap{\xi}^{2s_1}\jap{\eta}^{2s_2}|\wh u(\xi,\eta)|^2d\xi \,d\eta, $$
    with the obvious modification for the homogeneous version (cf. \cite{hadac2008well-posednessKP-II}, \cite{hadacHerrKoch2009wellPosednessKP-IIinCriticalSpace}).
    \item The anisotropic Besov spaces $\dot B_{p,q}^{s,t}(\RR)$ are defined by the norm
    $$\|u\|_{\dot B_{p,q}^{s,t}}=\left(\sum_{\lambda\in 2^\Integers} \lambda^{qs}\| P_\lambda^x|D_y|^t u\|_{L^p(\RR)}^q\right)^{1/q},$$
    with the obvious modification for $q=\infty$. Note that the norm of $\dot B_{p,q}^{s,0}$ is invariant under the KP-II Galilean symmetry.
    \item For an open set $\Omega\subset\Real^d$, we define $L^p_\unif (\Omega)$ as the Banach space induced by
    \[ \|g\|_{L^p_{\unif}(\Omega)}:= \sup_Q \|g\|_{L^p(Q\cap\Omega)}, \]
    where the supremum is taken over all dyadic cubes of $\Real^d$ with side $1$.
\end{itemize}

\subsection*{Cross-reference list}

\begin{itemize}
    \item The maps $M_\pm^\lambda$ and the transformations $\Bcal,\Gcal^\KP,\Gcal^\KdV$ are defined in the introduction. $\Gcal^\mKP$ is defined in Subsection \ref{subsec:manySystems}

    \item $|\cdot|_{L^2_\ph(\RR)},\,\, L^2_\ph(\RR)$, see Definition \ref{def:soliton_addition_map_and_L^2_phi(RR)}.
    
    \item $Y_\eps(\RR)$ is defined before Theorem \ref{theorem:range_of_samB_contains_manifold_of_cod_1}.

    \item The map $\dtsmV$ is defined in Theorem \ref{theorem:theorem_1}, and characterized explicitly in Lemma \ref{lemma:change-of-variables-c-alpha_0-gamma_0}.
    
    \item The map $\samB$ (soliton addition map, or KP-II B\"acklund transform), see Definition \ref{def:soliton_addition_map_and_L^2_phi(RR)}.

    \item The map $\eleV$ is a generalization of $\dtsmV$ up to a change of variables, see Proposition \ref{prop:kink-addition_map_eleV}.
    
    \item The map $\eleVt$ is a time-dependent version of $\eleV$, see Proposition \ref{prop:eleVt_superposition-of-elementary-functions}.

    \item $v^\pm$, $\wv^\pm$, see Corollary \ref{cor:estimates-for-wtv-part-2}.
    
    \item $G_\alpha$, see Definition \ref{def:VandG}

    \item $\Gamma,\Gamma^{(c)},\Gamma^\pm$, see Definition \ref{def:heat-operators-in-appendix}.

    \item The maps $\eleB$, $\eleBt$ are generalizations of $\samB$ to $(k,1)$-multisolitons, see Section \ref{sec:six}.

    \item Equations \eqref{eq:KP-II}, \eqref{eq:mKP-II}, see the beginning of the introduction.

    \item Equations \eqref{eq:mKP-II}, \eqref{eq:miuraEquation}, see Subsection \ref{subsec:description}.

    \item System \eqref{eq:systemForlittlev-miura+mKP-II}, see Subsection \ref{subsec:manySystems}.

    \item $C^{0,\alpha}_\unif$, see Lemma \ref{lemma:holder_regularity_for_difference_of_primitive_solutions}
\end{itemize}

\section[Preliminaries: the Miura map and the integrability]{Preliminaries: the role of the integrability and the Miura map}\label{sec:preliminaries}

\subsection{Lax pair and compatibility condition}
The KP-II equation is a completely integrable PDE, with a structure resembling the one of the KdV equation.
\subsubsection{Lax-pair formulation}
Fix $\sigma\in\{\pm 1\}$. The KP-II equation is related to the operators defined below called the \emph{Lax pair}, the first of which is called \emph{Lax operator}:
\begin{align*}
    L_{u(t)}&:=\sigma\partial_y-\partial_{xx}+u(t),\\
    P_{u(t)}&:=-4\partial_{xxx}+3\left(u(t)\partial_x+\partial_xu(t)\right)+3\sigma(\partial_x^{-1}u_y(t)),
\end{align*}
where $u(t),u_x(t),(\partial_x^{-1}u_y(t))$ are meant as the multiplication operators by the same functions. Under the hypothesis that $\partial_x^{-1}u_y(t)$ is well-defined in a suitable sense, the function $u$ solves \eqref{eq:KP-II} if and only if the associated Lax pair satisfies the equation
\[\frac{d}{dt}L_{u(t)}=[P_{u(t)},L_{u(t)}],\]
where $[P,L]:=PL-LP$ is the commutator. As already anticipated, solutions $u$ of \eqref{eq:KP-II} generally lie in Banach spaces of functions on which the operator $\demu\partial_y$ is well-defined (see Remark \ref{rk:Z-aha-is-in-L2-loc}), and $\demu\partial_y u$ coincides with the term appearing in the KP-II equation.
\subsubsection{Compatibility condition}
As for the KdV equation, the KP-II equation is formally equivalent to the compatibility condition for the \emph{Lax system}
\begin{equation}\label{eq:systemCompatibility-original}
    \left\{
    \begin{aligned}
    &\sigma\psi_y-\psi_{xx}+u\psi=-\lambda^2\psi\\
    &\psi_t=-4\psi_{xxx}+6u\psi_x+3u_x\psi+3\sigma(\de_x^{-1}u_y)\psi,
    \end{aligned}
    \right.
\end{equation}
for a given $\lambda\in\C$, which says how generalized eigenfunctions of the Lax operator evolve in time. This formulation is related to the Miura map as shown in the next subsection. We note that in system \eqref{eq:systemCompatibility-original} we can make use of the transformation $\psi\mapsto e^{\lambda^2 y}\psi$ to set $\lambda=0$ (this feature is not present in the Lax operator of \eqref{eq:kdv}). Moreover, the choice of $\sigma$ corresponds to a reflection in the $y$ coordinate. We will thus mostly consider the case $\lambda=0$, $\sigma=1$,
\begin{equation*}\label{eq:systemCompatibility}\tag{{\theequation}$'$}
    \left\{
    \begin{aligned}
    &\psi_y-\psi_{xx}+u\psi=0\\
    &\psi_t=-4\psi_{xxx}+6u\psi_x+3u_x\psi+3(\de_x^{-1}u_y)\psi.
    \end{aligned}
    \right.
\end{equation*}
When referring to the Lax operator, we will always think of the one with $\sigma=1$.

\subsection{Relation between the Lax pair, the Miura map and mKP-II}\label{subsec:manySystems}
\subsubsection{The mKP-II equation in system form}
Unlike \eqref{eq:KP-II}, \eqref{eq:mKP-II} needs an auxiliary function to be written down without using antiderivatives. The mKP-II equation in system form reads
\begin{equation}\label{eq:mKP-II-systemForm}
        \left\{
    \begin{aligned}
    &w_x=v_y\\
    &v_t+v_{xxx}-6v^2v_x+3w_y +6v_xw=0.
    \end{aligned}
    \right.
\end{equation}
There is another clever rewriting that, interestingly enough, can be derived from the Lax equation. In fact, system \eqref{eq:systemCompatibility} is equivalent, under the change of variables $\psi=e^V$, to the system
    \begin{equation}\label{eq:systemForCapitalV}
        \left\{
    \begin{aligned}
    &V_y-V_{xx}=V_x^2-u\\
    &V_t+4V_{xxx}+4V_x^3+12V_xV_{xx}-6uV_x-3u_x-3\de_x^{-1}u_y=0.
    \end{aligned}
    \right.
    \end{equation}
    Derivating with respect to the $x$ variable and setting $v=V_x$, we obtain the following system:
    \begin{equation}\label{eq:systemForlittlev-miura+mKP-II}\tag{$\Mcal$--mKP-II}
        \left\{
    \begin{aligned}
    &v_y-v_{xx}=(v^2)_x-u_x\\
    &v_t+4v_{xxx}+(12vv_{x}+4v^3-6uv-3u_x)_x-3u_y=0.
    \end{aligned}
    \right.
    \end{equation}
It is an easy exercise to show that systems \eqref{eq:mKP-II-systemForm} and \eqref{eq:systemForlittlev-miura+mKP-II} are equivalent for $u,v,w\in L^p_\loc$ with $p$ large enough, under the change of variables $w=-u+v^2+v_x$. In other words, system \eqref{eq:systemForlittlev-miura+mKP-II}, which in turn is just the compatibility system \eqref{eq:systemCompatibility} which describes the evolution of the Lax operator, is nothing but a reformulation of \eqref{eq:mKP-II} in system form which contains equation \eqref{eq:miuraEquation}. This fact provides insights on the link between the two dispersive models and is going to be useful.
Coming back to the discussion on the eigenvalue $-\lambda^2$, we also note that $v=\partial_x\log(\psi)$, so that the transformation $\psi\mapsto e^{\lambda^2 y}\psi$ used to change the value of $\lambda$ leaves the function $v$ unchanged, this is one more reason why we should not worry about the eigenvalue $\lambda$ in the equation and set it to be zero. Note that if we had to take into account $\lambda^2$, the function $V$ would be shifted by the additive term $-\lambda^2 y$.

We note that the map $\psi\mapsto V=\log(\psi)$ is bijective, assuming $\psi>0$. The map $V\mapsto v=V_x$ is clearly not injective a priori without further conditions, although it turns out (see Lemma \ref{lemma:existenceOfPrimitiveSolution} and Lemma \ref{lemma:existenceOfPrimitiveSolutionTimeDependent}) that $V$ is determined from $v$ only up to an additive constant which is independent of space and time variables (hence, $\psi$ is determined from $v$ up to a multiplicative constant), assuming $V$ satisfies system \eqref{eq:systemForCapitalV}. The free undetermined constant is natural due to the linearity of the Lax system \eqref{eq:systemCompatibility}.

\subsubsection{Properties of the mKP-II equation in system form, \eqref{eq:systemForlittlev-miura+mKP-II}}
We first understand the symmetries of the mKP-II equation in a broad sense, that is, we look for symmetries of the system \eqref{eq:systemForlittlev-miura+mKP-II}. The following two symmetries can be both guessed from the KP-II symmetries by looking at the Miura map:
\begin{equation}\label{eq:mKP-II-symmetry}
(u,v)\mapsto(\Gcal^{\KP}_\sigma u,\Gcal^{\mKP}_\sigma v),
\end{equation}
\begin{equation}\label{eq:KdV-symmetry-for-mKP-II}
(u,v)\mapsto(\Gcal^{\KdV}_\mu u,\Bcal_\mu v),
\end{equation}
where $\Gcal^\mKP_\sigma v=\Gcal^\KP_\sigma v-\frac{\sigma}{2}$. These translate into symmetries of mKP-II in system form \eqref{eq:mKP-II-systemForm}, since the two systems are equivalent. The two symmetries do not necessarily correspond to symmetries of \eqref{eq:mKP-II} in a well-posed setting, especially the second one. These symmetries are possible due to the arbitrary choice of an additive constant in the term $\partial_x^{-1}v_y$, which in a general setting need not be determined univocally and canonically from $v$.

Now we give a precise version of the property of the Miura map to conjugate the mKP-II flow and the KP-II one, which is stated and proved in \cite{kenigMartel2006mKP-IIwellPosedness} in terms of system \eqref{eq:mKP-II-systemForm}.
\begin{proposition}\label{prop:miura-maps-mKP-to-KP}
    Let $\Omega\subset \R^3$ open. Assume $v\in L^4_\loc(\Omega)$, $v_x,w\in L^2_\loc(\Omega)$ satisfy the system \eqref{eq:mKP-II-systemForm}. Then, $u:=-w+v^2+v_x$ and $\wu:=-w+v^2-v_x$ belong to $L^2_\loc(\Omega)$ and solve the KP-II equation distributionally, that is,
    \begin{equation}\label{eq:KP-II-distributional}
        (u_t-3(u^2)_x+u_{xxx})_x+3u_{yy}=0,
    \end{equation}
    and the same holds for $\wu$. Equivalently, if $v$ as above and $u\in L^2_\loc(\Omega)$ satisfy the system \eqref{eq:systemForlittlev-miura+mKP-II}, then $u$ and $\wu:=u-2v_x$ satisfy \eqref{eq:KP-II-distributional}.
\end{proposition}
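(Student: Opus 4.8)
The plan is to reduce to the case of smooth $v$, where the statement is the classical Miura identity, and then pass to the limit at the stated regularity — which is the only real work. First I would record that the outputs are well-defined: since $v\in L^4_\loc(\Omega)$ we have $v^2\in L^2_\loc(\Omega)$, and $v_x,w\in L^2_\loc(\Omega)$, so $u=-w+v^2+v_x$ and $\wu=u-2v_x=-w+v^2-v_x$ lie in $L^2_\loc(\Omega)$; hence $u^2\in L^1_\loc(\Omega)$ and every term of \eqref{eq:KP-II-distributional} is a well-defined distribution. By the equivalence between \eqref{eq:mKP-II-systemForm} and \eqref{eq:systemForlittlev-miura+mKP-II} noted before the statement, I may assume that $v$ and $u$ satisfy \eqref{eq:systemForlittlev-miura+mKP-II}, i.e. $v_y-v_{xx}=(v^2)_x-u_x$ together with the divergence-form evolution equation
\[
v_t+4v_{xxx}+\big(12vv_x+4v^3-6uv-3u_x\big)_x-3u_y=0 ,
\]
and must prove \eqref{eq:KP-II-distributional} for $u$ and for $\wu$; the statement for $\wu$ follows from the same computation with the sign of $v_x$ reversed throughout, so I focus on $u$.

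For smooth $v$ (hence smooth $u$) one substitutes $u=-w+v^2+v_x$, uses $w_x=v_y$ and the evolution equation, and checks that $\big(u_t-3(u^2)_x+u_{xxx}\big)_x+3u_{yy}$ simplifies to $0$; this is the classical Miura computation, carried out e.g. in \cite{kenigMartel2006mKP-IIwellPosedness}. Conceptually it is forced by the Lax structure of Section \ref{sec:preliminaries}: with $\psi=e^{V}$, $V_x=v$, the stationary equation becomes $(\partial_y-\partial_x^2+u)\psi=0$, while the evolution equation of \eqref{eq:systemForlittlev-miura+mKP-II} is the spatial derivative of the second line of the $V$-system \eqref{eq:systemForCapitalV}, i.e. the Lax evolution $\psi_t=P_u\psi$; the compatibility of these two linear equations then forces $u$ to satisfy KP-II. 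It should be noted that the cancellation uses not only the two equations but also differentiated and multiplied consequences of them (a Miura-type first integral of the stationary part), which is exactly why the low-regularity statement cannot be obtained by manipulating the equations directly in $\mathscr D'$.

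For the general case I would mollify. Let $\rho_\eps$ be a mollifier in $(x,y,t)$ and set $v_\eps:=\rho_\eps*v$, $w_\eps:=\rho_\eps*w$, $u_\eps:=-w_\eps+v_\eps^2+v_{\eps,x}$. Then $u_\eps\to u$ in $L^2_\loc(\Omega)$, so the left-hand side of \eqref{eq:KP-II-distributional} at $u_\eps$ converges to that at $u$ in $\mathscr D'(\Omega)$ (the KP-II operator is continuous $L^2_\loc\to\mathscr D'$, the quadratic term included, since $u\mapsto u^2$ is continuous $L^2_\loc\to L^1_\loc$). Now $w_{\eps,x}=v_{\eps,y}$ holds exactly, while the evolution equation holds for $(v_\eps,u_\eps)$ up to a source $g_\eps$ built from the commutators of $\rho_\eps*$ with the nonlinearities. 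Keeping every nonlinear term in divergence form — $v^2v_x=\frac13(v^3)_x$, the term $uv$, and $v_xw=(vw)_x-\frac12(v^2)_y$, whose building blocks $v^2,v^3,uv,vw$ all lie in $L^1_\loc$ or better by Hölder — each commutator has the shape $\partial\big(\rho_\eps*f-(\text{product of mollifications})\big)$ with $f\in L^p_\loc$, $p\ge1$, hence tends to $0$ in a local negative-order space. Feeding this source through the smooth Miura computation in place of $0$ expresses $\big(u_{\eps,t}-3(u_\eps^2)_x+u_{\eps,xxx}\big)_x+3u_{\eps,yy}$ as a fixed low-order differential operator with coefficients polynomial in $v_\eps$ applied to $g_\eps$; combined with the divergence structure of the commutators this tends to $0$ in $\mathscr D'(\Omega)$. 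With $u_\eps\to u$ in $L^2_\loc$ this gives \eqref{eq:KP-II-distributional} for $u$, and the sign-reversed version gives it for $\wu$.

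The hard part is this last step: organizing the computation so that each nonlinearity is genuinely in divergence form — so that the mollification commutators are small in a negative-order norm, not merely bounded — and verifying that the error produced by running $g_\eps$ through the nonlinear Miura identity really vanishes in $\mathscr D'$, since intermediate products of the various limiting factors are only borderline integrable. This is where the precise exponents $v\in L^4_\loc$, $v_x,w\in L^2_\loc$ are used, and it is the technical core of \cite{kenigMartel2006mKP-IIwellPosedness}.
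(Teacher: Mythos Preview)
Your proposal is correct and follows the approach of the cited reference \cite{kenigMartel2006mKP-IIwellPosedness}. Note that the paper does not give its own proof of this proposition: it simply states the result and attributes it to \cite{kenigMartel2006mKP-IIwellPosedness}, so your mollification sketch is exactly the argument being invoked.
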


\subsection{Multisolitons and elementary solutions}\label{subsec:multisolitons}
The following is a way of constructing solutions of the KP-II equation and is described for instance in \cite{ChakravartyKodama2007ClassificationOTlineSoliton} and \cite{kodama2017solitonsAndGrassmannians}. Let $f_1,\dots,f_N$ be positive solutions of the system
\begin{equation}\label{eq:linear-system-to-generate-f_j's}
    \left\{
\begin{aligned}
    &\partial_yf=\partial_x^2f,\\
    &\partial_tf=-4\partial_x^3 f.
\end{aligned}
\right.
\end{equation}
Define the $\tau$ function as the Wronskian determinant
\begin{equation}\label{eq:wronskian-determinant}
    \tau(x, y, t)=\operatorname{Wr}\left(f_1, \ldots, f_N\right)=\left|\begin{array}{cccc}
f_1 & f_2 & \cdots & f_N \\
f_1^{\prime} & f_2^{\prime} & \cdots & f_N^{\prime} \\
\vdots & \vdots & & \vdots \\
f_N^{(N-1)} & f_2^{(N-1)} & \cdots & f_N^{(N-1)}
\end{array}\right|,
\end{equation}
where $f_k^{(j)}:=\partial_x^j f_k$. It is known that the function
\begin{equation}\label{eq:tau-generates-u-sol-of-KP-II}
    \wu(t,x,y):=-2\partial_x^2\log(\tau(x,y,t))
\end{equation}
formally solves the KP-II equation as long as the $\tau$ function does not vanish.

It is known (see e.g. the paper \cite{lin-zhang-2019-ancient-solutions-heat-equaiton}) that any non-negative solution of the heat equation on the whole $\R_x\times\R_y$ can be achieved as an integral sum of exponential solutions $e^{\lambda x+\lambda^2 y}$, in particular any positive solution of system \eqref{eq:linear-system-to-generate-f_j's} has to have the form
\[
f(x,y,t)=\int_\R e^{\lambda x+\lambda^2y-4\lambda^3 t} d\mu(\lambda),
\]
where $\mu$ is a non-negative Borel measure on $\R$. The multisolitons of KP-II can be generated using formula \eqref{eq:wronskian-determinant} by taking the functions $f_j$ as finite sums of the above exponential solutions: we describe below the procedure, which loosely follows the notation of \cite{ChakravartyKodama2007ClassificationOTlineSoliton}. First choose two positive integers $0<N<M$, a matrix $A=(a_{nm})\in \operatorname{Mat}_{N\times M}(\R)$, real spectral parameters $\lambda_1<\lambda_2<\dots<\lambda_M$, and phase parameters $\theta_{1,0},\dots,\theta_{M,0}$. Let
\[
\theta_m(x,y,t):=\lambda_mx+\lambda_m^2y-4\lambda_m^3t+\theta_{m,0}\,, \qquad f_n(x,y,t)=\sum_{m=1}^M a_{nm}e^{\theta_m(x,y,t)}.
\]
Assuming $\operatorname{Rank(A)}=N$ and that all $N\times N$ minors of $A$ are non-negative and some irreducibility condition on $A$ (see \cite{ChakravartyKodama2007ClassificationOTlineSoliton}*{Condition 2.2}), the function $\tau$ in \eqref{eq:wronskian-determinant} is strictly positive, and the function $\wu$ in equation \eqref{eq:tau-generates-u-sol-of-KP-II} is a $(M-N,N)$-multisoliton. The term means that for fixed time, the solution looks like a sum of $M$ tilted `half line' solitons coming from infinity, which exhibit a nontrivial interaction in a compact region of space. Moreover, of the $M$ half lines interacting, $N$ come from $y=+\infty$, and $M-N$ come from $y=-\infty$. The directions of the $M$ solitons are given by the relations between the spectral parameters $\lambda_1,\dots,\lambda_M$. If we take $N=1$, it is not possible to construct all multisolitons, but only the so called $(k,1)$-multisolitons, $k=M-1\geq 1$, which possess one line as $y\to+\infty$ and $k$ lines as $y\to-\infty$. We called them \emph{tree-shaped} solitons. For $M=2$, one obtains the line soliton, possibly transformed under the KP-II Galilean symmetry \eqref{eq:KP-IIgalileanSymmetry}. Taking $M=3$, the corresponding solution is the $(2,1)$-soliton, which is known as \emph{Miles resonance solution}, or as `$Y$-shaped' soliton. The reflection symmetry \eqref{eq:reflection-symmetry-KP-II} can be used to exchange the number of outgoing solitons at $y\to-\infty$ and $y\to\infty$.

\subsubsection{Solutions of \eqref{eq:miuraEquation} as superpositions of elementary solutions}
The following simplified discussion is enough to present one of the key ideas in this work. If $u$ is small in a suitable sense, one can expect that for a given parameter $\lambda_m\in\R$, there exists a time-dependent eigenfunction $\psi^{(m)}$ of the Lax operator, solution of \eqref{eq:systemCompatibility}, which behaves like $e^{\theta_m(x,y,t)}$ up to multiplicative lower order terms. We call such functions $\psi^{(m)}$ \emph{elementary Lax-eigenfunctions} and are essentially the \emph{Jost solutions} of the Lax operator with potential $u$. Thanks to the equivalence between systems \eqref{eq:systemCompatibility}, \eqref{eq:systemForCapitalV}, \eqref{eq:systemForlittlev-miura+mKP-II}, the problem of constructing $\psi^{(m)}$ essentially reduces to that of finding solutions $v^{(m)}:=\partial_x\log(\psi^{(m)})$ to \eqref{eq:systemForlittlev-miura+mKP-II} which converge to constants $\lambda_m\in\R$ at infinity. The precise reduction is made possible by to Lemmas \ref{lemma:existenceOfPrimitiveSolutionTimeDependent} and \ref{lemma:primitive-solutions-time-dependent-are-nice}. These \emph{elementary solutions} $v^{(m)}$ of \eqref{eq:systemForlittlev-miura+mKP-II} are the building blocks that can be used to construct perturbations of the line soliton and multisolitons. For fixed time $t$, the function $v^{(m)}$ is a solution to \eqref{eq:miuraEquation} with datum $u(t)$ that is equal to the constant $\lambda_m$ at spacial infinity. The existence and uniqueness of such a function is essentially given by Corollary \ref{cor:estimates-for-wtv-part-2}. The elementary solutions are introduced in Definition \ref{def:elementary-solutions-of-system-Miura+mKP}.

If $(v^{(m)})_m$ are elementary solutions associated to the spectral parameters
$(\lambda_m)_m$, it follows from the linearity of system \eqref{eq:systemCompatibility} that formally
\[
v:=\partial_x\log\Big(\sum_{m=1}^Me^{\int v^{(m)}dx}\Big)
\]
solves system \eqref{eq:systemForlittlev-miura+mKP-II}. Consequently, $\wu=u-2\partial_x v$ is a solution of KP-II by the property of the Miura map in Proposition \ref{prop:miura-maps-mKP-to-KP}. This motivates the explicit formula in Theorem \ref{theorem:theorem_1}.

\section{The Miura map of the KP-II equation}\label{sec:space}

In this section we will prove Theorem \ref{theorem:theorem_1}, in particular we work with fixed, small $u\in \dot H^{-\aha,0}(\R^2)$, which is a scaling-critical space for both equations \eqref{eq:miuraEquation} and \eqref{eq:KP-II}. As mentioned in the introduction, a satisfactory global well-posedness theory of KP-II for small data in this space holds, see Theorem \ref{theorem:global-well-posedness-KP-II-HHK09}, in particular the smallness in $\Hcrit$ is preserved by the KP-II flow.

The Miura map equation in PDE form, which we rewrote as
\begin{equation}\tag*{(\ref*{eq:miuraEquation})}
    v_y-v_{xx}=(v^2)_x-u_x
\end{equation}
is a viscous Burgers equation on the whole space-time $\R_{x,y}^2$ with external forcing. The parameter $\lambda^2=1$ in the definition of $M_-$ does not appear in the equation, but it will play a role in terms of the boundary conditions. As discussed in the introduction, formally
\[
M_-^\lambda(Q^\lambda(\cdot-\gamma))=0\qquad\forall\gamma\in\R,
\]
so $Q(x-\gamma)=\tanh(x-\gamma)$ solves \eqref{eq:miuraEquation} with $u\equiv 0$ for all $\gamma\in\R$, and we expect that all solutions of \eqref{eq:miuraEquation} with the same space asymptotics of $Q$ form a 1-parameter family. As a preliminary remark, the boundary condition
\begin{equation}\label{eq:miuraEquation-loose-boundary-conditions-CauchyProblem}
    v(x,y)-\tanh(x)\to 0\quad\text{as}\quad x\to\pm\infty.
\end{equation}
is too weak to be equipped to equation \eqref{eq:miuraEquation} to have just a 1-parameter family of solutions, since viscous shocks with different velocities can come from $y=-\infty$ with different speeds. An example is given by the `multikink' solutions
\begin{equation}\label{eq:counterexample-multikink}
    u\equiv 0,\qquad v(x,y)=\partial_x\log(e^{x+y+a}+1+e^{-x+y+b}),
\end{equation}
which are all different for $a,b\in\R$. A more restrictive condition, the one that we are interested in, is to impose that $v$ is close to $Q$ for all values of $y$, so what we ask for is that
\begin{equation}
    \exists \alpha:\R\to\R \text{ such that } v(x,y)-\tanh(x-\alpha(y))\to 0\,\,\text{as}\,\, x,y\to\infty,
\end{equation}
possibly in some averaged sense. We will ask for $v-\tanh(x-\alpha(y))\in L^3(\RR)$. 

Looking at the evolution in the $y$ variable, since $u$ decays to zero, we can see that the asymptotic profiles of the solution are expected to be translations of the viscous shock $\tanh(x)$ and there should be stability of this profile at $y\to+\infty$. Defining $z:=v-\tanh(x-\gamma)$, $\gamma\in\R$, and using Notation \ref{notation:f_alpha}, the new variable $z$ solves
\begin{equation}
    z_y-z_{xx}-2(\tanh_\gamma z)_x-(z^2)_x=-u_x.
\end{equation}
This way it is clear that in \eqref{eq:miuraEquation} we have transport towards the center of the shock, and a phenomenon of accumulation of mass close to the shock itself that corresponds to a shift of the position of the kink.

\subsection{Decomposition of the solution and uniqueness}

A first guess would be to study the equation \eqref{eq:miuraEquation} in some new variables $w,\alpha$, where
$$v=w+\tanh_\alpha$$
and $\alpha=\alpha(y)$ is chosen in a suitable way so that it represents the `position' of the kink at ordinate $y$. Having chosen to work in the critical space $\Hcrit$, we will see that the decomposition will have to be a little more sophisticated to prove uniqueness of solutions. In Appendix \ref{sec:existence_of_solutions_with_L2_data} we outline the same setting with data $u\in L^2(\RR)$ for which the decomposition is fixed simply by choosing
$$ \int_\R\sech^2(x-\alpha(y))w(x,y)\dx=0\quad\forall y, $$
although a uniqueness theory in $\RR$ is missing due to the strict subcriticality of the data.

\subsubsection{Solutions of \eqref{eq:miuraEquation} that are constant at infinity}
The following Lemma shows the well-posedness of equation \eqref{eq:miuraEquation} on the whole $\RR$ in the simple case where solutions go to zero at infinity, and collects some estimates that will be useful later on. We mention that with the same techniques, it is possible to show the well-posedness of the initial value problem associated to equation \eqref{eq:miuraEquation}, that is, assigning an initial datum $v_0=v|_{y=0}\in \dot H^{-\aha}(\R)$ and a forcing $u\in \dot H^{-\aha,0}(\R\times(0,\infty))$. We refer to the definition of the anisotropic Sobolev and Besov spaces $H^{s,t}(\RR)$, $B^{s,t}_{p,q}(\RR)$ in the notations. We will write $\jap{c(x-\alpha)}^{-1}$ to denote the function $(x,y)\mapsto\jap{c(x-\alpha(y))}^{-1}$.

\begin{lemma}\label{lemma:estimates-for-wtv} Let $c\neq 0$ and $\alpha\in C(\R_y)$ be such that $\|\alpha_y-c\|_{L^2(\Real_y)}\lesssim 1$. Let $u\in \dot H^{-\aha,0}(\RR)$ small. There exists a unique solution $v\in L^3(\RR)$ to equation \eqref{eq:miuraEquation}, and it satisfies the bounds
    \begin{equation}\label{eq:estimates-for-wv-lambdaiszero}
    \begin{aligned}
        \|v\|_{C_0\dot H^{-\aha}_x\cap\dot H^{\fr12,0}\cap\dot H^{0,\fr14}\cap L^3(\RR)}+\|\demu v_y\|_{\dot B^{-1/2,0}_{2,\infty}(\RR)}\QQQ&\\
        +|c|^\aha\|\jap{c(x-\alpha)}^{-1}v\|_{L^2_{x,y}}&\lesssim \|u\|_{\Hcrit}.
    \end{aligned}
    \end{equation}
    Moreover, we have the additional estimates
    \begin{align}
        \|v\|_{C_0 L^2}+\|v_x\|_{L^2(\RR)}+|c|\|\jap{c(x-\alpha)}^{-1} v\|_{L^2(\RR)}&\lesssim \|u\|_{L^2(\RR)},\label{eq:estimates-for-wv-lambdaiszero-L2}\\
        \| v_x\|_{H^k(\RR)}+\|\demu v_y \|_{H^k(\RR)}&\lesssim_k \|u\|_{H^k(\RR)},\quad k\geq 0,\label{eq:estimates-for-wv-lambdaiszero-H^k}\\
        \|v\|_{L^2(\RR)}+\|\partial_x^{-1}v\|_{C_0L^2\cap L^6(\RR)}+|c|\|\jap{c(x-\alpha)}^{-1}&\demu v\|_{L^2(\RR)}\lesssim \|u\|_{\dot H^{-1,0}(\RR)} \label{eq:estimates-for-wv-lambdaiszero-H^-1}
    \end{align}
    whenever the right-hand sides are finite and $\|u\|_\Hcrit$ is small enough. The data-to-solution map $u\mapsto v$ is analytic in all the above topologies.
\end{lemma}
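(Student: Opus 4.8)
\emph{Plan.} Equation \eqref{eq:miuraEquation} is parabolic in the $y$-variable, so the natural ansatz is the forward-in-$y$ Duhamel representation
\[
v(\cdot,y)=\int_{-\infty}^{y}e^{(y-s)\partial_x^2}\bigl(-\partial_x u(\cdot,s)+\partial_x(v^2)(\cdot,s)\bigr)\,ds,
\]
and I would first handle the linear piece. Writing $\Gamma g$ for this operator applied to a forcing of the form $\partial_x g$, one has $\widehat{\Gamma g}(\xi,\eta)=\tfrac{i\xi}{\xi^2+i\eta}\,\widehat g(\xi,\eta)$, and the symbol $m=\tfrac{i\xi}{\xi^2+i\eta}$ satisfies $|\xi|\,|m|^2\le|\xi|^{-1}$ and, by Young's inequality (since $|\xi|^3|\eta|^{1/2}=(\xi^4)^{3/4}(\eta^2)^{1/4}\le\tfrac34\xi^4+\tfrac14\eta^2$), also $|\eta|^{1/2}|m|^2\le|\xi|^{-1}$. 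Hence $\|\Gamma g\|_{\dot H^{1/2,0}\cap\dot H^{0,1/4}}\lesssim\|g\|_{\Hcrit}$. Since $\||D_x|^{1/2}w\|_{L^2}^2+\||D_y|^{1/4}w\|_{L^2}^2=\int(|\xi|+|\eta|^{1/2})\,|\widehat w|^2$ is exactly the squared homogeneous Sobolev norm of order $\tfrac12$ adapted to the parabolic scaling (homogeneous dimension $1+2=3$), parabolic Sobolev embedding gives $\dot H^{1/2,0}\cap\dot H^{0,1/4}\hookrightarrow L^3(\RR)$, so the $L^3$ bound comes for free. The $C_0\dot H^{-1/2}_x$ bound follows from Cauchy--Schwarz in $s$ in the partial-Fourier representation $\widehat{\Gamma g}(\xi,y)=i\xi\int_{-\infty}^{y}e^{-(y-s)\xi^2}\widehat g(\xi,s)\,ds$, which yields $|\xi|^{-1}|\widehat{\Gamma g}(\xi,y)|^2\le\tfrac12|\xi|^{-1}\int_{\R}|\widehat g(\xi,s)|^2\,ds$ uniformly in $y$, with continuity and decay as $y\to\pm\infty$ by dominated convergence. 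Taking $g=u$ disposes of every linear estimate on the left of \eqref{eq:estimates-for-wv-lambdaiszero}.

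For the nonlinearity I would close a contraction in a small ball of $L^3(\RR)$. The one-dimensional heat smoothing $\|\partial_x e^{t\partial_x^2}f\|_{L^3_x}\lesssim t^{-2/3}\|f\|_{L^{3/2}_x}$ combined with $\|v^2(\cdot,s)\|_{L^{3/2}_x}=\|v(\cdot,s)\|_{L^3_x}^2$ gives $\|\Gamma(v^2)(\cdot,y)\|_{L^3_x}\lesssim\int_{-\infty}^{y}(y-s)^{-2/3}\|v(\cdot,s)\|_{L^3_x}^2\,ds$, and the one-dimensional Hardy--Littlewood--Sobolev inequality (fractional integration of order $\tfrac13$ maps $L^{3/2}_y\to L^3_y$) then yields $\|\Gamma(v^2)\|_{L^3(\RR)}\lesssim\|v\|_{L^3(\RR)}^2$ together with the bilinear Lipschitz bound. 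Since $\|\Gamma u\|_{L^3}\lesssim\|u\|_{\Hcrit}$, the map $v\mapsto-\Gamma u+\Gamma(v^2)$ is a contraction once $\|u\|_{\Hcrit}$ is small, producing a solution with $\|v\|_{L^3}\lesssim\|u\|_{\Hcrit}$; feeding it back into the linear estimates above produces the remaining norms, and $\|\demu v_y\|_{\dot B^{-1/2,0}_{2,\infty}}$ is obtained by using the equation to write $\demu v_y=v_x+v^2-u$ and invoking Bernstein. Analyticity (hence continuity) of $u\mapsto v$ is automatic from the convergent power series of the fixed point.

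To obtain \emph{uniqueness in all of} $L^3(\RR)$, and not just in a small ball, the key ingredient is a Liouville property: a solution $h$ of the free heat equation on $\R_x\times(-\infty,Y)$ with $h\in L^3$ must vanish, because $\|e^{t\partial_x^2}\|_{L^3_x\to L^\infty_x}\lesssim t^{-1/6}$ forces $\|h(\cdot,y)\|_{L^\infty_x}\lesssim(y-y_0)^{-1/6}\|h(\cdot,y_0)\|_{L^3_x}\to0$ along $y_0\to-\infty$. Hence every $L^3$ solution of \eqref{eq:miuraEquation} satisfies the Duhamel equation exactly; since $\|v\|_{L^3(\R\times(-\infty,Y))}\to0$ as $Y\to-\infty$, on some half-space $\{y<Y_0\}$ the solution is small, therefore equal to the one just constructed, and forward-in-$y$ uniqueness of the (locally well-posed) forced Burgers Cauchy problem propagates the equality to all $y$. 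The genuinely delicate estimate is the weighted bound $|c|^{1/2}\|\jap{c(x-\alpha)}^{-1}v\|_{L^2_{x,y}}\lesssim\|u\|_{\Hcrit}$, which is where $c,\alpha$ enter (note $v$ itself depends only on $u$, so this controls the local $L^2$ size of $v$ along the curve $x=\alpha(y)$). By parabolic scaling one reduces to $c=1$; the change of variables $\widetilde v(x,y):=v(x+\alpha(y),y)$ turns \eqref{eq:miuraEquation} into a forced viscous Burgers equation with an extra transport term $\alpha_y\widetilde v_x$, and one proves a Kato-type local-smoothing estimate $\|\jap{x}^{-1}\widetilde v\|_{L^2_{x,y}}\lesssim\|u\|_{\Hcrit}$ by testing against $\jap{x}^{-2}\widetilde v$ and absorbing the commutator terms, the hypothesis $\|\alpha_y-c\|_{L^2_y}\lesssim1$ guaranteeing that this transport term is a bounded perturbation of the expected drift $c\,\widetilde v_x$, which moves mass toward the origin and hence is favorable in the energy identity. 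The $L^2$, $H^k$ and $\dot H^{-1}$ estimates of the second part are obtained by rerunning the same Duhamel/multiplier/bilinear scheme in the corresponding scaling-adapted spaces — now sub- or supercritical, so with room to spare — using Bernstein to move between Lebesgue and Sobolev scales.

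I expect the two real obstacles to be (i) the unconditional $L^3$-uniqueness, resting on the Liouville property and the forward-propagation argument above, and (ii) the weighted estimate, whose proof must actually use the equation (plain H\"older loses the critical power of $|c|$) and must exploit the sign of the drift generated by passing to the moving frame $x\mapsto x-\alpha(y)$.
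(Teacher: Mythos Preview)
Your overall architecture matches the paper's: fixed point in $L^3(\RR)$ via heat-smoothing estimates, unconditional uniqueness by restricting to a far-back half-space where both solutions are small, and reading off $\demu v_y$ from the equation itself. The Fourier-multiplier computation and the parabolic Sobolev embedding $\dot H^{1/2,0}\cap\dot H^{0,1/4}\hookrightarrow L^3$ are a clean way to package what the paper obtains by quoting its Proposition~\ref{prop:spacetime_estimate_heat_general}.

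The gap is in the weighted bound $|c|^{1/2}\|\jap{c(x-\alpha)}^{-1}v\|_{L^2}\lesssim\|u\|_{\Hcrit}$. Your plan is to pass to the moving frame and test the full nonlinear equation against $\jap{x}^{-2}\widetilde v$; the drift term is indeed favorable, but the forcing term does not close at critical regularity. After integration by parts it becomes $\int(a\widetilde v)_x\,\widetilde u$, and since $\widetilde u$ is only in $\dot H^{-1/2,0}$ you would need control of $\||D_x|^{3/2}(a\widetilde v)\|_{L^2}$, which the energy identity does not supply (it only yields the weighted $L^2$ of $\widetilde v$ and the $L^2$ of $\widetilde v_x$). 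The paper resolves this by first splitting $v=\partial_x\Gamma u+\partial_x\Gamma(v^2)$. The nonlinear piece is handled trivially: $\partial_x\Gamma:L^{3/2}\to L^2_yL^\infty_x$, so H\"older against $\|\jap{c(x-\alpha)}^{-1}\|_{L^\infty_yL^2_x}\sim|c|^{-1/2}$ gives the bound with a spare power of $\|u\|_{\Hcrit}$. For the \emph{linear} piece $w=\partial_x\Gamma u$ one cannot run the energy argument on $w$ directly for the same reason; instead the paper (Lemma~\ref{lemma:weighted-estimates-heat}) applies the energy estimate to $\mathcal V:=|\partial_x|^{-1/2}w=\partial_x\Gamma(|\partial_x|^{-1/2}u)$, whose forcing is now genuinely in $L^2$, obtaining $\|\mathcal V_x\|_{L^2}+|c|\|\jap{c(x-\alpha)}^{-1}\mathcal V\|_{L^2}\lesssim\|u\|_{\Hcrit}$, and then combines these two quantities via a localization-and-summation trick to recover the weighted $L^2$ of $w=|\partial_x|^{1/2}\mathcal V$ itself. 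So your intuition about the sign of the drift is correct, but the argument must be run one half-derivative below $v$, and only on the linear part.
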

We will extensively use Proposition \ref{prop:spacetime_estimate_heat_general}, which proves the estimate
\[
\||\partial_x|^s(\de_y-\de_x^2)^{-1} u\|_{L^rL^\sigma}\lesssim \|u\|_{L^pL^q}
\]
for suitable $1\leq p,q,r,\sigma\leq\infty$, $s\in[0,2]$, as well as other linear estimates from Appendix \ref{appendix:heat-equation}.
\begin{proof}
    The existence and uniqueness of a small $v$ follows by a standard fixed point argument in the $L^3(\RR)$ topology using Proposition \ref{prop:spacetime_estimate_heat_general}. In fact, call $\Gamma$ the operator $(\partial_y-\partial_x^2)^{-1}$ defined as the integral operator on $\RR$ with respect to the heat kernel, as in Appendix \ref{appendix:heat-equation}. Then one has
    \[
    \|\Gamma \partial_x f\|_{L^3(\RR)}\lesssim \|f\|_{\Hcrit}, \qquad\|\Gamma\partial_x f\|_{L^3(\RR)}\lesssim \|f\|_{L^{3/2}(\RR)}.
    \]
    Since obviously $\|f^2\|_{L^{3/2}(\RR)}=\|f\|_{L^3(\RR)}^2$, we find that the map
    $$F: v\mapsto \Gamma\partial_x(v^2-u)$$
    is a contraction in a small ball $\overline B_{\eps_0}(0)\subset L^3(\RR)$ assuming $u\in \Hcrit$ is small enough, in particular there exists a unique small $v \in L^3(\RR)$ such that $F(v)=v$, hence the claim. The data-to-solution map is analytic due to the fact that the solution is obtained using the Banach fixed-point theorem.

    The uniqueness for large $v\in L^3$ is a consequence of the uniqueness for small $v$, translation invariance, and the fact that for small solutions $v$ one has that $v|_{\{y<-M\}}$ depends only on $u|_{\{y<-M\}}$, for any given $M\in \R$. In fact, given $M\in\R$, one can prove an analogous uniqueness statement for small $v$ with $\RR$ replaced by $\{y<-M\}$, using the same estimates as before. By translation invariance, the smallness condition does not depend on $M$. Now given two solutions $v_1$ and $v_2$ in $L^3(\RR)$, one has $\|v_1\|_{L^3(\{y<-M\})}$ and $\|v_2\|_{L^3(\{y<-M\})}$ are arbitrarily small for $M$ large enough, so by the inequality
    \als{
        \|v_2-v_1\|_{L^3(\{y<-M\})}&= \|\partial_x\Gamma(v_2^2-v_1^2)\|_{L^3(\{y<-M\})}\\
        &\lesssim \|v_2+v_1\|_{L^3(\{y<-M\})}\|v_2-v_1\|_{L^3(\{y<-M\})},
        }
    it holds that $v_1=v_2$ a.e. on ${L^3(\{y<-M\})}$ for $M$ large enough. The same is true for all $M\in \R$ by a bootstrap argument, assuming $v_1$ is the unique small solution.
    
    The estimate on the first term on the left hand side of \eqref{eq:estimates-for-wv-lambdaiszero} follows from the estimates of Proposition \ref{prop:spacetime_estimate_heat_general} and the last estimate of Remark \ref{rk:estimates_for_the_heat_operator}, since $v=\partial_x\Gamma (v^2-u)$ and 
    \[ \|v^2\|_{L^{3/2}(\RR)}\lesssim \|u\|_\Hcrit. \]

    The estimate \eqref{eq:estimates-for-wv-lambdaiszero-L2}, except for the weighted estimate, comes by Proposition \ref{prop:spacetime_estimate_heat_general} and a fixed point argument in $L^3(\RR)\cap L^6(\RR)$, which yields
    \begin{align*}
    \|\Gamma\partial_x(v^2-u)\|_{L^6(\RR)\cap C_0 L^2 \cap L^2_y \dot H^1_x}&\lesssim \|v^2\|_{L^2(\RR)}+ \|u\|_{L^2(\RR)}\\
    &\leq \|v\|_{L^6(\RR)}\|v\|_{L^3(\RR)}+\|u\|_{L^2(\RR)}.
    \end{align*}
    The weighted estimate in \eqref{eq:estimates-for-wv-lambdaiszero-L2} follows from inequality \eqref{eq:z-024} of Lemma \ref{lemma:weighted-estimates-heat}, since $v=\partial_x\Gamma(v^2-u)$, with $u,v^2\in L^2(\RR)$. For the weighted estimate in \eqref{eq:estimates-for-wv-lambdaiszero} instead, we note that $v=w+z$, $w:=\partial_x\Gamma u$, $z:=\partial_x\Gamma v^2$. The weighted estimate for $w$ follows precisely from inequality \eqref{eq:z-025} of Lemma \ref{lemma:weighted-estimates-heat} after a linear change of coordinates. For $z$, Proposition \ref{prop:spacetime_estimate_heat_general} and H\"older's inequality give us
    \als{
    |c|^\aha\|\jap{c(x-\alpha)}^{-1} z\|_{L^2(\RR)}&\leq |c|^\aha\|\jap{c(x-\alpha)}^{-1} \|_{L^\infty_yL^2_x}
    \|\partial_x\Gamma v^2\|_{L^2_yL^\infty_x}\\
    &\lesssim \|v^2\|_{L^{3/2}(\RR)}\\
    &=\|v\|_{L^3(\RR)}^2\\
    &\lesssim \|u\|_{\Hcrit}^2.
    }
    
    For the second term in estimate \eqref{eq:estimates-for-wv-lambdaiszero}, we note that we can simply recover the antiderivative of $v_y$ from equation \eqref{eq:miuraEquation},
    $$ \demu v_y=v_x+v^2-u, $$
    and $u,v_x\in \Hcrit\hookrightarrow \dot B^{-\aha,0}_{2,\infty}(\RR)$. Moreover, it holds the chain of continuous embeddings
    \[
    \dot H^{0,\fr14}(\RR)\hookrightarrow L^2_xL^4_y\hookrightarrow L^4_yL^2_x, \qquad L^2_yL^1_x\hookrightarrow \dot B^{-\aha,0}_{2,\infty}(\RR),
    \]
    so that $v^2\in B^{-\aha,0}_{2,\infty}(\RR)$ and the estimate is proved.

    Estimate \eqref{eq:estimates-for-wv-lambdaiszero-H^k} follows with standard techniques of well-posedness at higher regularity similarly as for estimate \eqref{eq:estimates-for-wv-lambdaiszero-L2}, or simply by the analyticity and translation invariance of the data-to-solution map $u\mapsto v$. The first part of estimate \eqref{eq:estimates-for-wv-lambdaiszero-H^-1} follows similarly, this time making use of the simple estimates
    \[
    \|v^2\|_{L^{6/5}(\RR)}\leq\|v\|_{L^{3}(\RR)}\|v\|_{L^{2}(\RR)}, \qquad \|\de_x\Gamma f\|_{L^2(\RR)}\lesssim \|f\|_{L^{6/5}(\RR)}.
    \]
    For the second term in \eqref{eq:estimates-for-wv-lambdaiszero-H^-1}, since $v\in L^3(\RR)$ is small and in addition $\|v\|_{L^2}\lesssim \|u\|_{\dot H^{-1,0}}$, we can simply set $V:=\Gamma(v^2-u)$ and note that
    \als{
    \|V\|_{C_0L^2\cap L^6(\RR)}&\lesssim \|u\|_{\de_xL^2(\RR)}+\|v^2\|_{L^{6/5}(\RR)}\\
    &\lesssim \|u\|_{\dot H^{-1,0}(\RR)}(\|u\|_{\Hcrit}+1)\\
    &\lesssim \|u\|_{\dot H^{-1,0}(\RR)}
    }
    for small $u$ in $\Hcrit$, where we used the estimates of Proposition \ref{prop:spacetime_estimate_heat_general} in the first inequality. Since $v=\partial_x\Gamma(v^2-u)$, we have $V_x=v$, in particular $v$ is the derivative of a function that belongs to $C_0L^2\cap L^6(\RR)$, with the desired bounds from above. Finally, the bound on the third term follows from the linear estimate \eqref{eq:z-024} in Lemma \ref{lemma:weighted-estimates-heat} with $s=0,1$, since $\demu v=\Gamma (v^2-u)$.
\end{proof}
The next Corollary proves similar estimates for solutions to \eqref{eq:miuraEquation} with constant boundary conditions at infinity. More precisely, we look for solutions $v\in L^3(\RR)+\lambda$, $\lambda\in\R$. Note that if $v$ is one such solution, then $\wv:=v-\lambda\in L^3(\RR)$ is a solution to
\begin{equation}\label{eq:wtvlambda}
    \wv_y-\wv_{xx}- 2\lambda\wv_x=(\wv^2)_x-u_x,
\end{equation}
so this Corollary is equivalently proving estimates for `tilted' solutions to equation \eqref{eq:miuraEquation}. We use the tilde above `$v$' to express the fact that we are removing the leading part (in this case, the constant $\lambda$) from a solution $v$ of \eqref{eq:miuraEquation}. We will try to keep this convention consistent throughout the article\footnote{For example, an analogous notation will be used for solutions $V$ of equation \eqref{eq:primitiveEquation}, whose leading part will be $\lambda x+\lambda^2 y$, so we will have $\wV:=V-(\lambda x+\lambda^2 y)$.}.

\begin{corollary}[Solutions of \eqref{eq:miuraEquation} close to a constant $\lambda\in\R$]\label{cor:estimates-for-wtv-part-2}
\hspace{3cm}

    {\rm(a)} Let $c\neq-2\lambda$, $u\in \dot H^{-\aha,0}(\RR)$ small and $\alpha\in C(\R_y)$ be such that $\|\alpha_y-c\|_{L^2(\Real_y)}\lesssim 1$. There exists a unique solution $v\in L^3(\RR)+\lambda$ to equation \eqref{eq:miuraEquation}. The function $\wv:=v-\lambda$ is the unique solution in $L^3(\RR)$ to equation \eqref{eq:wtvlambda}, and it satisfies the bounds
    \begin{equation*}
        \|\wv\|_{C_0\dot H^{-\aha}\cap L^3(\RR)}+\|\wv_x\|_{\Hcrit}+|c+2\lambda|^\aha\|\jap{|c+2\lambda|(x-\alpha)}^{-1}\wv\|_{L^2(\RR)}\lesssim \|u\|_\Hcrit.
    \end{equation*}
    Moreover, the estimates \eqref{eq:estimates-for-wv-lambdaiszero-L2}, \eqref{eq:estimates-for-wv-lambdaiszero-H^k}, \eqref{eq:estimates-for-wv-lambdaiszero-H^-1} hold for $\wv$ as well (with $\demu \wv_y$ replaced by $\demu \wv_y-2\lambda \wv$ and $c$ replaced by $c+2\lambda$), and the data to solution map is analytic in all the involved topologies.

    {\rm(b)} Let $u\in \dot H^{-\aha,0}(\RR)$ small and let $\lambda_1\neq \lambda_2$. The two solutions $\wv^{(1)}, \wv^{(2)}$ to equation \eqref{eq:wtvlambda} with $\lambda=\lambda_1,\lambda_2$ given by part (a) satisfy $\wv^{(1)}-\wv^{(2)}\in C_0(\R_y,L^2(\R_x))$ and
    \begin{equation*}
        \|\wv^{(1)}-\wv^{(2)}\|_{C_0 L^2\cap L^6(\RR)}\lesssim |\lambda_1-\lambda_2|^\aha \|u\|_{\Hcrit}.
    \end{equation*}
\end{corollary}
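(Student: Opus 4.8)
\emph{Part (a).} I would deduce this from Lemma \ref{lemma:estimates-for-wtv} by a shear change of variables. First, a direct computation shows that $v\in L^3(\RR)+\lambda$ solves \eqref{eq:miuraEquation} iff $\wv:=v-\lambda\in L^3(\RR)$ solves \eqref{eq:wtvlambda}. Setting $W(\xi,y):=\wv(\xi-2\lambda y,y)$, the shear $(\xi,y)=(x+2\lambda y,y)$ turns $\de_y-\de_x^2-2\lambda\de_x$ into $\de_y-\de_\xi^2$, so $\wv$ solves \eqref{eq:wtvlambda} exactly when $W$ solves \eqref{eq:miuraEquation} with datum $\tilde u(\xi,y):=u(\xi-2\lambda y,y)$. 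This shear acts trivially on the $x$-frequency variable and fixes each horizontal line while translating within it; hence it is an isometry of $L^3(\RR)$, of $\dot H^{s,0}(\RR)$ for every $s$ (so $\|\tilde u\|_\Hcrit=\|u\|_\Hcrit$), of $C_0\dot H^{-\aha}_x$, of $L^p(\RR)$ and $H^k(\RR)$, it maps $\{y<-M\}$ to itself, and it carries the weight $\jap{|c+2\lambda|(x-\alpha)}^{-1}$ to $\jap{|c+2\lambda|(\xi-\beta)}^{-1}$ with $\beta:=\alpha+2\lambda y$, where $\|\beta_y-(c+2\lambda)\|_{L^2}=\|\alpha_y-c\|_{L^2}\lesssim1$ and $c+2\lambda\neq0$ thanks to the hypothesis $c\neq-2\lambda$. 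Applying Lemma \ref{lemma:estimates-for-wtv} to $(\tilde u,\beta,c+2\lambda)$ and undoing the shear then yields existence, uniqueness (including large $L^3$ data, via the $\{y<-M\}$ argument of the Lemma, which is preserved by the shear), all the stated estimates (with $\demu\wv_y-2\lambda\wv=\wv_x+\wv^2-u$ playing the role of $\demu W_y$), and analyticity of the data-to-solution map.

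\emph{Part (b), reduction.} Write $\Gamma^{(\mu)}:=(\de_y-\de_x^2-2\mu\de_x)^{-1}$ for the forward tilted heat propagator (so $\Gamma^{(0)}=\Gamma$; cf.\ Definition \ref{def:heat-operators-in-appendix}). By part (a) each $\wv^{(j)}$ satisfies $\wv^{(j)}=\Gamma^{(\lambda_j)}\de_x\big((\wv^{(j)})^2-u\big)$, so subtracting and using $(\wv^{(1)})^2-(\wv^{(2)})^2=(\wv^{(1)}+\wv^{(2)})(\wv^{(1)}-\wv^{(2)})$, the difference $z:=\wv^{(1)}-\wv^{(2)}$ solves
\[ z=\Gamma^{(\lambda_1)}\de_x\big((\wv^{(1)}+\wv^{(2)})\,z\big)+\big(\Gamma^{(\lambda_1)}-\Gamma^{(\lambda_2)}\big)\de_x\big((\wv^{(2)})^2-u\big). \]
By Proposition \ref{prop:spacetime_estimate_heat_general} (conjugated by the shear to pass from $\Gamma$ to $\Gamma^{(\lambda_1)}$) and H\"older ($\|fg\|_{L^{3/2}}\le\|f\|_{L^3}\|g\|_{L^3}$, $\|fg\|_{L^2}\le\|f\|_{L^3}\|g\|_{L^6}$), the map $z\mapsto\Gamma^{(\lambda_1)}\de_x\big((\wv^{(1)}+\wv^{(2)})z\big)$ is bounded on $L^3(\RR)\cap C_0L^2\cap L^6(\RR)$ with norm $\lesssim\|\wv^{(1)}+\wv^{(2)}\|_{L^3}\lesssim\|u\|_\Hcrit$, hence a contraction for $\|u\|_\Hcrit$ small; and the source is in $L^3(\RR)$ with norm $\lesssim\|u\|_\Hcrit$ (each $\Gamma^{(\lambda_j)}\de_x$ maps $L^{3/2}\to L^3$ and $\Hcrit\to L^3$). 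What remains is to bound the source in $C_0L^2\cap L^6$ with a gain of $|\lambda_1-\lambda_2|^\aha$.

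\emph{Part (b), the difference estimate.} The crux is the continuity-in-the-tilt bound $\big\|\big(\Gamma^{(\lambda_1)}-\Gamma^{(\lambda_2)}\big)\de_x h\big\|_{C_0L^2\cap L^6}\lesssim|\lambda_1-\lambda_2|^\aha\|h\|_X$, with $X=\Hcrit$ for $h=u$ and $X$ the space in which $v^2$ is placed in the proof of Lemma \ref{lemma:estimates-for-wtv} for $h=(\wv^{(2)})^2$ (so $\|(\wv^{(2)})^2\|_X\lesssim\|u\|_\Hcrit^2$). For $X=\Hcrit$ one uses the Fourier-side resolvent identity $\Gamma^{(\lambda_1)}-\Gamma^{(\lambda_2)}=2\int_{\lambda_2}^{\lambda_1}\Gamma^{(\lambda)}\de_x\Gamma^{(\lambda)}\,d\lambda$, Minkowski's inequality, and the uniform-in-$\lambda$ bound $\|\Gamma^{(\lambda)}\de_x g\|_{C_0L^2\cap L^6}\lesssim\|g\|_{L^2(\RR)}$ (Proposition \ref{prop:spacetime_estimate_heat_general} plus the shear), to get
\[ \big\|\big(\Gamma^{(\lambda_1)}-\Gamma^{(\lambda_2)}\big)\de_x h\big\|_{C_0L^2\cap L^6}\lesssim\int_{\lambda_2}^{\lambda_1}\|\Gamma^{(\lambda)}\de_x h\|_{L^2}\,d\lambda\le|\lambda_1-\lambda_2|^\aha\Big(\int_{\lambda_2}^{\lambda_1}\|\Gamma^{(\lambda)}\de_x h\|_{L^2}^2\,d\lambda\Big)^\aha, \]
and then $\int_{\lambda_2}^{\lambda_1}\|\Gamma^{(\lambda)}\de_x h\|_{L^2}^2\,d\lambda\lesssim\|h\|_\Hcrit^2$ because $\int_{\lambda_2}^{\lambda_1}\tfrac{\xi^2\,d\lambda}{\xi^4+(\eta-2\lambda\xi)^2}\le\int_{\R}\tfrac{\xi^2\,dw}{2|\xi|(\xi^4+w^2)}=\tfrac{\pi}{2|\xi|}$ (substituting $w=\eta-2\lambda\xi$). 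The quadratic input is treated the same way, using for $(\wv^{(2)})^2$ the embedding chain used for $v^2$ in Lemma \ref{lemma:estimates-for-wtv}. It is essential to measure the difference in $C_0L^2$ and not in $L^2(\RR)$: at the near-resonant frequencies $|\xi|\sim|\lambda_1-\lambda_2|$ the symbol of $\big(\Gamma^{(\lambda_1)}-\Gamma^{(\lambda_2)}\big)\de_x$ has size $|\lambda_1-\lambda_2|^{-1}$, so the full $L^2(\RR)$-norm actually \emph{blows up} as $\lambda_1\to\lambda_2$; the gain is available only because this near-resonant part is simultaneously low-frequency in $\eta$, hence nearly $y$-independent, so its contribution to $\sup_y\|\cdot(\cdot,y)\|_{L^2_x}$ is of size $|\lambda_1-\lambda_2|^\aha$.

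\emph{Conclusion and main obstacle.} With the source in $L^3(\RR)\cap C_0L^2\cap L^6$ — with $L^3$-norm $\lesssim\|u\|_\Hcrit$ and $C_0L^2\cap L^6$-norm $\lesssim|\lambda_1-\lambda_2|^\aha\|u\|_\Hcrit$ — the contraction mapping theorem produces a unique $z_\ast$ in that space; running the same fixed point on $L^3(\RR)$ alone (where the map contracts the whole Banach space) and using its uniqueness there identifies $z_\ast$ with $\wv^{(1)}-\wv^{(2)}$, which a priori is only known to lie in $L^3(\RR)$. Feeding $\|z\|_{C_0L^2\cap L^6}\le C\|u\|_\Hcrit\|z\|_{C_0L^2\cap L^6}+C|\lambda_1-\lambda_2|^\aha\|u\|_\Hcrit$ back in then gives $\|\wv^{(1)}-\wv^{(2)}\|_{C_0L^2\cap L^6}\lesssim|\lambda_1-\lambda_2|^\aha\|u\|_\Hcrit$ for $\|u\|_\Hcrit$ small, and $\wv^{(1)}-\wv^{(2)}\in C_0(\R_y,L^2(\R_x))$ is contained in the $C_0L^2$ bound. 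I expect the one genuinely delicate step to be the difference estimate in the $C_0L^2\cap L^6$ topology — in particular proving it for the quadratic input at critical regularity while retaining the full power $|\lambda_1-\lambda_2|^\aha$: the naive $L^2(\RR)$-based argument is not just lossy but false (it diverges as the tilts merge), so one must really exploit the $y$-localization of the near-resonant frequencies and match the low-regularity spaces housing $(\wv^{(2)})^2$ and $u$. Everything else is a routine perturbation off of Lemma \ref{lemma:estimates-for-wtv} and part (a).
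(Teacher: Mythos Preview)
Your treatment of part (a) is exactly the paper's: the shear $(x,y)\mapsto(x+2\lambda y,y)$ together with subtraction of $\lambda$ conjugates \eqref{eq:wtvlambda} to \eqref{eq:miuraEquation} and carries all the relevant norms along.

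For part (b) your route diverges from the paper's, and the obstacle you single out is real. Your resolvent--integral identity $\Gamma^{(\lambda_1)}-\Gamma^{(\lambda_2)}=2\int_{\lambda_2}^{\lambda_1}\partial_x(\Gamma^{(\lambda)})^2\,d\lambda$, together with the clean Fourier square--function bound $\int_{\lambda_2}^{\lambda_1}\|\Gamma^{(\lambda)}\partial_x h\|_{L^2}^2\,d\lambda\le \tfrac{\pi}{2}\|h\|_\Hcrit^2$, does give the $|\lambda_1-\lambda_2|^{1/2}$ gain for $h=u$. But the same bound for $h=(\wv^{(2)})^2$ would need $(\wv^{(2)})^2\in\Hcrit=\dot H^{-1/2,0}$, and this fails at critical regularity: from $\wv^{(2)}\in L^3\cap\dot H^{1/2,0}\cap\dot H^{0,1/4}$ one only gets $(\wv^{(2)})^2\in L^{3/2}$ or $L^2_yL^1_x\hookrightarrow \dot B^{-1/2,0}_{2,\infty}$, which is strictly larger than $\dot H^{-1/2,0}$. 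So ``treated the same way'' does not go through, and you are right that this is where the difficulty sits.

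The paper avoids this in two moves. First, it uses the scaling symmetry (plus a Galilean shift) to normalize $\lambda_1=1$, $\lambda_2=-1$; the factor $|\lambda_1-\lambda_2|^{1/2}$ is then recovered for free when undoing the scaling, so one only needs a bound by $\|u\|_\Hcrit$. Second --- and this is the key algebraic step you are missing --- it uses the \emph{partial fractions} identity with two distinct tilts,
\[
T:=\partial_x\Gamma\,\partial_x\Gamma^-\;=\;-\tfrac12\,\partial_x\bigl[\Gamma-\Gamma^-\bigr],
\]
rather than your integral formula with a repeated tilt $(\Gamma^{(\lambda)})^2$. The difference form exhibits $T$ as a $\partial_x(\text{heat})$ operator, while the composition form $T=\Gamma\cdot(\partial_x^2\Gamma^-)$ (with $\partial_x^2\Gamma^-$ bounded on $L^p$) exhibits $T$ as a plain heat operator; combining the two, $T$ maps $L^{6/5}+L^2$ (hence $L^{3/2}$) into $C_0L^2\cap L^6$. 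Crucially, the source $k^\pm=2\partial_x\Gamma\wv^\pm=2T\bigl((\wv^\pm)^2-u\bigr)$ is handled as a whole by $T$, so one never has to place $(\wv^\pm)^2$ in $\Hcrit$. Equivalently, in your notation the partial fractions collapse reads $(\Gamma^{(\lambda_1)}-\Gamma^{(\lambda_2)})\partial_x\bigl((\wv^{(2)})^2-u\bigr)=2(\lambda_1-\lambda_2)\,\partial_x\Gamma^{(\lambda_1)}\wv^{(2)}$, because $\partial_x\Gamma^{(\lambda_2)}\bigl((\wv^{(2)})^2-u\bigr)=\wv^{(2)}$; this reuse of the nonlinear equation is exactly what your integral--in--$\lambda$ approach lacks.
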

\begin{proof}
    Part (a) is a direct consequence of Lemma \ref{lemma:estimates-for-wtv} thanks to the change of coordinates $(x,y)\mapsto (x+2\lambda y,y)$, which conjugates equations \eqref{eq:miuraEquation} and \eqref{eq:wtvlambda} in $L^3(\RR)$ while keeping the $L^3$-norm and the $\Hcrit$-norm of $u$ invariant, and from the operation of subtracting a constant $\lambda$ from $v$, which also conjugates the same equations in the spaces $L^3(\RR)+\lambda$ and $L^3(\RR)$ respectively. We thus restrict our attention to part (b). By the scaling symmetry
    \[v\mapsto \lambda v(\lambda\cdot,\lambda^2\cdot), \qquad u\mapsto \lambda^2 u(\lambda\cdot,\lambda^2\cdot) \]
    and the above change of coordinates with suitable $\lambda$, we may assume $\lambda_1=1$, $\lambda_2=-1$ without loss of generality, and call $\wv^{(1)}=\wv^+$, $\wv^{(2)}=\wv^-$. Consider the function $w:=\wv^+-\wv^-$. It satisfies the equation
    \begin{equation}\label{eq:step_to_prove_regularity_gain_for_v+-v-___w}
        w_y-w_{xx}=((\wv^++\wv^-)w)_x+2(\wv^++\wv^-)_x.
    \end{equation}
    Consider the two solutions $k^\pm$ of the linear equation
    \begin{equation}\label{eq:step_to_prove_regularity_gain_for_v+-v-___kpm}
        k^\pm_y-k^\pm_{xx}=2\wv^\pm_x.
    \end{equation}
    Then the difference $z:=w-(k^++k^-)=:w-k$ satisfies
    \begin{equation}\label{eq:step_to_prove_regularity_gain_for_v+-v-___z}
        z_y-z_{xx}-((\wv^++\wv^-)z)_x=((\wv^++\wv^-)k)_x.
    \end{equation}
    Now, from Proposition \ref{prop:spacetime_estimate_heat_general} we know that
    \[
    \|(\partial_y-\partial_{xx})^{-1}\partial_x g\|_{C_0L^2\cap L^6(\RR)}\lesssim \|g\|_{L^2(\RR)}.
    \]
    Note that $\|\wv^\pm\|_{L^3(\RR)}\lesssim \|u\|_{\Hcrit}$, so $(\wv^++\wv^-)$ is small and lies in a scaling critical space of sufficient regularity. By Proposition \ref{prop:spacetime_estimate_heat_general},
    \als{
        \|(\partial_y-\partial_{xx}-\partial_x(\wv^++\wv^-)) z\|_{\partial_x L^2(\RR)}&\geq \|(\partial_y-\partial_{xx})z\|_{\partial_x L^2(\RR)}\\
        &\QQQ-\|(-\partial_x(\wv^++\wv^-))z\|_{\partial_x L^2(\RR)}\\
        &\gtrsim \|z\|_{C_0L^2\cap L^6(\RR)}-\|(\wv^++\wv^-)\|_{L^3}\|z\|_{L^6(\RR)},
        }
    so for small enough $u\in\Hcrit$, we can upgrade the previous estimate to
    \[
    \|(\partial_y-\partial_{xx}-\partial_x(\wv^++\wv^-))^{-1}\partial_x g\|_{C_0L^2\cap L^6(\RR)}\lesssim \|g\|_{L^2(\RR)}.
    \]
    By looking again at equation \eqref{eq:step_to_prove_regularity_gain_for_v+-v-___z}, it is then clear that we only need to show that $k\in C_0L^2\cap L^6$, because then the same holds for $z$ thanks to the above estimate (considering $g=(\wv^++\wv^-)k\in L^2(\RR)$), and for $w=z+k$ simply by summation. Let's thus consider $k^+$ without loss of generality. Considering $w^+:=\wv^+-k^+$, it holds
    $$ w^+_y-w^+_{xx}=((\wv^+)^2)_x-u_x.$$
    In particular, we have
    \als{
        \|k^+\|_{L^3}&\leq \|\wv^+\|_{L^3}+\|w^+\|_{L^3} \\
        &\lesssim\|(\wv^+)^2\|_{L^{3/2}}+\|u\|_{\dot H^{-\aha,0}}\\
        &\lesssim \|u\|_{\dot H^{-\aha,0}}
        }
    due to the estimate $\||\de_x|\Gamma f\|_{L^3}\lesssim \|f\|_{L^{3/2}+\Hcrit}$ of the heat kernel given by Proposition \ref{prop:spacetime_estimate_heat_general}. But we also have $k=2\partial_x\Gamma \wv^+=2T((\wv^+)^2-u)$, where
    $$ T=\partial_x\Gamma\partial_x\Gamma^-=-\aha\partial_x[\Gamma-\Gamma^-], $$
    where the last equality can be checked by means of the Fourier transform, using the notation $\Gamma^\pm:=(\partial_y-\partial_x^2\pm 2\partial_x)^{-1}$ as in Appendix \ref{appendix:heat-equation}. The above operator behaves at least as well as a heat kernel and an $x$-derivative of the heat kernel, since $\partial_x^2\Gamma$ is bounded on $L^p(\RR)$, $1<p<\infty$ (see Proposition \ref{prop:spacetime_estimate_heat_general}), so in the end $k\in C_0L^2\cap L^p(\RR)$ for any $3\leq p<\infty$, with the desired bound on the norms.
\end{proof}

\subsubsection{Decomposition in terms of simpler solutions}
Throughout the rest of the subsection, we will assume $u\in \dot H^{-\aha,0}(\RR)$ small, and consider $v^\pm\in L^3(\RR)\pm 1$ the unique solutions of \eqref{eq:miuraEquation} given by Corollary \ref{cor:estimates-for-wtv-part-2}. Recall that $\wv^\pm:=v^\pm \mp 1\in L^3(\RR)$ are solutions to \eqref{eq:wtvlambda} for $\lambda=\pm 1$, that is,
\begin{equation}\label{eq:wtvpm}
    \wv^\pm_y-\wv^\pm_{xx}\mp 2\wv^\pm_x=((\wv^\pm)^2)_x-u_x.
\end{equation}
We will write $f_\alpha(x,y):=f(x-\alpha(y),y)$ as in Notation \ref{notation:f_alpha} several times for the rest of the section.
\begin{definition}\label{def:VandG}
    Consider the two real functions defined in the introduction
    $$ \eta^\pm(x):=\aha(1\pm\tanh(x)). $$
    For $u\in\Hcrit$, let $v^\pm$ as above, and define
    \als{
    G(x,y;\alpha)&:= \eta^+(x-\alpha)v^+(x,y)+\eta^-(x-\alpha)v^-(x,y)\\
    &=\tanh(x-\alpha)+\eta^+(x-\alpha)\wv^+(x,y)+\eta^-(x-\alpha)\wv^-(x,y).
    }
    Using the subscript Notation \ref{notation:f_alpha} to denote translations in the $x$ variable, we will write
    \als{
    G(\cdot,\cdot;\alpha)&=\eta^+_\alpha v^+ +\eta^-_\alpha v^-\\
    &=\tanh_\alpha+\eta^+_\alpha \wv^+ +\eta^-_\alpha \wv^-.
    }
    Finally, through an abuse of our own Notation \ref{notation:f_alpha}, we will simply write $G_\alpha$ to denote the function $(x,y)\mapsto G(x,y;\alpha)$. Depending on the context, $\alpha$ might denote a function of the variable $y$, in which case $G_\alpha$ will have the obvious definition $G_\alpha(x,y):=G_{\alpha(y)}(x,y)$.
\end{definition}
In the following Lemma we momentarily drop the dependence of $\wv^\pm,G$ on the variable $y$ and prove some properties of $G(\cdot,y;\cdot)$ for fixed $y\in\R$.
Note that, by a straightforward computation, the quantity $G_{a}-G_{b}$ can be rewritten in terms of $a,b$ and $\wv^+-\wv^-$ only:
\begin{equation}\label{eq:z-022}
    G_{a}-G_{b}=(\tanh_{a}-\tanh_{b})+(\eta^+_{a}-\eta^+_{b})(\wv^+-\wv^-).
\end{equation}

\begin{lemma}[Properties of $G_\alpha$]\label{lemma:expDecomposition}
    Let $\wv^\pm\in H^{-1/2}(\R_x)$ such that $\wv^+-\wv^-=: h\in L^2(\R_x)$ is small enough. For $a\in\R$, let
    \[ G_a(x)=\tanh(x-a)+\eta^+(x-a)\wv^+(x)+\eta^-(x-a)\wv^-(x). \]
    \begin{enumerate}[label=\rm{(\alph*)}]
        \item It holds $G_a-G_b\in L^2(\Real;\cosh^2(x)dx)$ for any fixed $a,b\in \R$, and we have the bounds \label{item:expDec-001}
    $$ (b-a)\leq\int_\R (G_a-G_b)dx\leq 3(b-a) \quad \forall\, a,b\in\R. $$
        %\item The map $(a,b)\mapsto G_a-G_b$ is smooth on $\RR$ with values in $L^2(\R;\cosh^2(x)dx)$. \label{item:expDec-002}
        \item Let $v\in L^2(\Real;\cosh^2(x)dx)+G_0$. There exists a unique $\alpha\in\R$ such that the decomposition $v=\omega+G_\alpha$ satisfies \label{item:expDec-003}
        $$ \int_\R \omega(x)dx=0. $$
        \item The map \label{item:expDec-004}
        \begin{align*}
        L^2(\Real;\cosh^2(x)dx)\times L^2(\R)&\to L^2(\Real;\cosh^2(x)dx)\times\Real\\
        (z, h)&\mapsto (\omega,\alpha),
        \end{align*}
        with $\alpha$ and $\omega$ as in \ref{item:expDec-003} and $v=z+G_0$, is well-defined and smooth.
        \item If $v=\omega+G_\alpha$ as in \ref{item:expDec-003}, it holds \label{item:expDec-005}
        \[
        \|\cosh_\alpha\omega\|_{L^2(\R)}\lesssim \inf_{a\in\R}\|\cosh_a (v-G_a)\|_{L^2(\R)}
        \]
        whenever the right-hand side is small enough.
    \end{enumerate}
\end{lemma}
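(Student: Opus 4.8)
The plan is to reduce all four parts to one \emph{mass-transport identity} for the reduced family $\{G_\alpha\}$. The starting point is the elementary rewriting \eqref{eq:z-022}, $G_a-G_b=(\tanh_a-\tanh_b)+(\eta^+_a-\eta^+_b)h$ with $h=\wv^+-\wv^-\in L^2(\R_x)$. Since $\tanh_a-\tanh_b$ is smooth and decays like $e^{-2|x|}$, it lies in $L^2(\R;\cosh^2 dx)$; and because $\eta^+_a-\eta^+_b=\aha(\tanh_a-\tanh_b)$ is a bounded, exponentially decaying factor, the product $(\eta^+_a-\eta^+_b)h$ also lies in $L^2(\R;\cosh^2 dx)$ — this is the membership claim in \ref{item:expDec-001}. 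Differentiating $G_\alpha=\tanh_\alpha+\eta^+_\alpha\wv^++\eta^-_\alpha\wv^-$ in $\alpha$ and using $(\eta^\pm)'=\pm\aha\sech^2$ gives the pointwise formula $\partial_\alpha G_\alpha=-\sech^2_\alpha(1+\aha h)$; differentiating under the integral sign (legitimate, since the difference quotients are dominated by an $L^1$ function built from $\sech^2_\alpha$ and $\sech^2_\alpha h$) yields
\[
\frac{d}{d\alpha}\int_\R(G_a-G_\alpha)\,dx=\int_\R\sech^2(x)\big(1+\tfrac12 h(x+\alpha)\big)\,dx=:\rho(\alpha),
\]
with $|\rho(\alpha)-2|\le\aha\|\sech^2\|_{L^2}\|h\|_{L^2}$, so $\rho(\alpha)\in[1,3]$ uniformly in $\alpha$ once $\|h\|_{L^2}$ is small. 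Since $\int_\R(G_a-G_a)dx=0$, integrating $\rho$ from $a$ to $b$ gives $\int_\R(G_a-G_b)dx=\int_a^b\rho(s)\,ds\in[(b-a),3(b-a)]$ for $a\le b$, which finishes \ref{item:expDec-001}.

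For \ref{item:expDec-003}, write $v=z+G_0$ with $z\in L^2(\R;\cosh^2 dx)\hookrightarrow L^1$. The condition $\int\omega=0$ for $\omega=v-G_\alpha=z+(G_0-G_\alpha)$ reads $\int_\R(G_0-G_\alpha)\,dx=-\int_\R z\,dx$; since $\alpha\mapsto\int_\R(G_0-G_\alpha)dx$ is $C^1$, strictly increasing (derivative $\rho\ge 1$) and onto $\R$, there is a unique such $\alpha$, and then $\omega\in L^2(\R;\cosh^2 dx)$ by \ref{item:expDec-001}. For \ref{item:expDec-004} the key observation is that $G_0-G_\alpha$, and hence $\omega$ and the equation defining $\alpha$, depends on $\wv^\pm$ only through $h$: explicitly $\alpha$ solves $\Psi(\alpha,z,h):=\int_\R z\,dx+2\alpha+\int_\R(\eta^+_0-\eta^+_\alpha)h\,dx=0$, with $\partial_\alpha\Psi=\rho(\alpha)\ne0$, and $\Psi$ is smooth (indeed real-analytic) in $(\alpha,z,h)$ because $\alpha\mapsto\eta^+_0-\eta^+_\alpha$ is a smooth, exponentially localized $L^2$-valued curve. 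The implicit function theorem then gives $\alpha=\alpha(z,h)$ smooth, and $\omega=z+(\tanh_0-\tanh_\alpha)+(\eta^+_0-\eta^+_\alpha)h$ is a smooth composition into $L^2(\R;\cosh^2 dx)$.

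For \ref{item:expDec-005}, I would fix the decomposition $v=\omega+G_\alpha$ of \ref{item:expDec-003}, pick a near-minimizer $a^\ast$ of the right-hand side with $M:=\|\cosh_{a^\ast}(v-G_{a^\ast})\|_{L^2}\le 2\inf_a\|\cosh_a(v-G_a)\|_{L^2}$ (finite — take $a=0$ — and small by hypothesis), and write $\omega=(v-G_{a^\ast})+(G_{a^\ast}-G_\alpha)$. The first step is to control the shift: from $\int\omega=0$, the bound $|\int_\R(v-G_{a^\ast})dx|\le\|\sech_{a^\ast}\|_{L^2}M=\sqrt2\,M$, and the lower bound $|\int_\R(G_{a^\ast}-G_\alpha)dx|=\big|\int_{a^\ast}^\alpha\rho\big|\ge|a^\ast-\alpha|$, one obtains $|a^\ast-\alpha|\le\sqrt2\,M\le1$ for $M$ small. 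The pointwise inequality $\cosh(x-\alpha)\le e^{|\alpha-a^\ast|}\cosh(x-a^\ast)$ then gives $\|\cosh_\alpha(v-G_{a^\ast})\|_{L^2}\le eM$. Finally $\|\cosh_\alpha(G_{a^\ast}-G_\alpha)\|_{L^2}\lesssim|a^\ast-\alpha|\lesssim M$, using the elementary weighted bound $\|\cosh_\alpha(\tanh_{a^\ast}-\tanh_\alpha)\|_{L^2\cap L^\infty}\lesssim|a^\ast-\alpha|$ valid for $|a^\ast-\alpha|\le1$ (because $y\mapsto\cosh(y)\sup_{|s|\le1}\sech^2(y-s)$ is bounded and square-integrable), the $L^\infty$ part of this bound also handling the term $(\eta^+_{a^\ast}-\eta^+_\alpha)h=\aha(\tanh_{a^\ast}-\tanh_\alpha)h$ via $\|\cdot\|_{L^\infty}\|h\|_{L^2}$. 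Summing, $\|\cosh_\alpha\omega\|_{L^2}\lesssim M\lesssim\inf_a\|\cosh_a(v-G_a)\|_{L^2}$.

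The only genuine bookkeeping is in \ref{item:expDec-004} and \ref{item:expDec-005}. In \ref{item:expDec-004} one must check that $\alpha\mapsto\eta^+_0-\eta^+_\alpha$, together with its multiplication by $h$, defines a smooth map both into the $\cosh^2$-weighted $L^2$ space and into the space of multipliers $L^2\to L^2(\R;\cosh^2 dx)$ (i.e.\ $(\eta^+_0-\eta^+_\alpha)\cosh\in L^\infty$); this reduces to smoothness of translation on these exponentially localized function classes, which deserves a little care since translation is not continuous on $L^\infty$ in general but is unproblematic for the Schwartz-type functions occurring here. In \ref{item:expDec-005} the crux is the shift estimate $|a^\ast-\alpha|\lesssim M$; once this is in hand the rest is a triangle inequality together with the uniform weight comparison $\cosh_\alpha\le e^{|\alpha-a^\ast|}\cosh_{a^\ast}$.
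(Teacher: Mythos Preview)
Your proof is correct and follows the same strategy as the paper for (a)--(c): the derivative identity $\partial_\alpha\int(G_a-G_\alpha)\,dx=\int\sech^2_\alpha(1+\tfrac12 h)\,dx$ for (a), its bijectivity for (b), and the implicit function theorem (applied to essentially the same scalar equation) for (c). For (d) the paper takes a slightly shorter route---it reads off $\|\cosh\,\omega\|_{L^2}+|\alpha|\lesssim\|\cosh\cdot(v-G_0)\|_{L^2}$ directly from the smoothness in (c) at the origin and then invokes translation invariance---whereas you bound the shift $|a^\ast-\alpha|$ by hand via the lower bound in (a) and Cauchy--Schwarz; both arguments then close with the same weight comparison $\cosh_\alpha\le e^{|\alpha-a^\ast|}\cosh_{a^\ast}$.
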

Note that by \ref{item:expDec-001}, for any $a\in\R$, the property $v\in L^2(\Real;\cosh^2(x)dx)+G_a$ is equivalent to $v\in L^2(\Real;\cosh^2(x)dx)+G_0$.
\begin{proof}
    We first write
    \[
    G_a-G_b=(\tanh_a-\tanh_b)+(\eta^+_a-\eta^+_b)(\wv^+-\wv^-)
    \]
    and note that the first term and the first factor in the product are bounded by $\sech^2(x)$ up to a multiplicative constant, in particular the integral is well-defined. We can compute its derivative with respect to $a$, which reads
    \als{
        \frac{d}{da}\int_\R(G_a-G_b)\,dx&=-\int_\R\aha\sech_a^2\cdot\,(2+\wv^+-\wv^-)\,dx\\
        &=-\int_\R\aha\sech_a^2\cdot\,(2+h)\,dx,
        }
    and using the smallness of $h\in L^2(\R)$, we get that this derivative lies between $-1$ and $-3$, proving \ref{item:expDec-001}. Note that being the above quantity a convolution with a Schwartz function, the map
    \[
    a\mapsto \int_\R (G_a-G_b)\,dx
    \]
    is smooth, and has a global smooth inverse by the strict sign-definiteness of the derivative. Part \ref{item:expDec-003} is a direct consequence of part \ref{item:expDec-001}, in particular of the bijectivity of the above map for fixed $b=0$.
    
    Concerning part \ref{item:expDec-004}, we first show that the map is well-defined. That is, we show that $\omega$ and $\alpha$ only depend on $z$ and $h=\wv^+-\wv^-$, and not on more information on $\wv^+,\wv^-$. We have $z=v-G_0=\omega+(G_\alpha-G_0)$, so, $\omega=z-(G_\alpha-G_0)$. The number $\alpha$ is thus the unique number such that $\int_\R z-(G_\alpha-G_0)dx=0$. By \eqref{eq:z-022}, for fixed $\alpha$ the integrand depends only on $z$ and $h$, so $\alpha$ is indeed only determined by them. The same holds for $\omega$, since $\omega=z-(G_\alpha-G_0)$, and $G_\alpha-G_0$ depends only on $\alpha$ and $h$ by \eqref{eq:z-022}. For the smoothness, first note that the map
    $$L^2(\R)\times\R\to L^2(\R;\cosh^2(x)dx),$$
    $$(h,a)\mapsto G_a-G_b$$
    is smooth, as it can be verified directly from equation \eqref{eq:z-022} (note in particular that the map is affine in the variable $h$). The dependence on $(h,a,b)$ is smooth as well since $G_a-G_b=(G_a-G_0)-(G_b-G_0)$. Now, call $z=v-G_0$, and let $v=\omega+G_\alpha$ be the unique decomposition of $v$. From what we have just shown, the map
    \[
    F:(h,\omega,\alpha)\mapsto \omega+G_\alpha-G_0
    \]
    is well-defined and smooth, where $\alpha\in\R$, $h$ ranges in $L^2(\R)$, $\omega$ ranges over the closed subspace of functions $f\in L^2(\R;\cosh^2(x)dx)$ such that $\int_\R f\;dx=0$. Moreover, the differential with respect to $(\omega,\alpha)$ is
    \[
    D_{\omega,\alpha}F(h,\omega,\alpha)\cdot(\dot\omega,\dot\alpha)=\dot\omega-\dot\alpha \sech_{\alpha}^2\cdot(1+h/2).
    \]
    It follows that $D_{\omega,\alpha}F(h,\omega,\alpha)$ is invertible on $L^2(\R;\cosh^2(x)dx)$, since $\sech^2_\alpha\cdot(1+h/2)$ has non-zero mean for small $h$, and $\dot\omega$ is a generic vector with zero mean. Part \ref{item:expDec-004} thus follows by the implicit function theorem and the uniqueness of the decomposition.

    Finally, we prove \ref{item:expDec-005}. From the smoothness statement in part \ref{item:expDec-004}, we have
    \[
    \|\cosh \omega\|_{L^2}+|\alpha| \lesssim \|\cosh \cdot (v-G_0)\|_{L^2}
    \]
    when the right-hand side is small enough. With the same smallness assumption, we can assume $|\alpha|\leq 1$, so
    \[
    \|\cosh_{\alpha} \omega\|_{L^2} \lesssim \|\cosh \cdot ( v-G_0)\|_{L^2}
    \]
    follows by monotonicity, since $\cosh(x-\alpha)\leq e^{|\alpha|}\cosh(x)$. Estimate \ref{item:expDec-005} follows from the above estimate by translation invariance.
\end{proof}

\subsubsection{Uniqueness}
We now start working for the uniqueness of solutions of \eqref{eq:miuraEquation}. The main issue is that the position of the kink depends sensibly on the source term at very negative $y$, so the difference of two generic solutions is hard to control. The first idea is to consider the new variable $\omega:=v-G_\alpha$, which is expected to decay exponentially as $x\to\pm\infty$. The transport towards the kink converts the decay in $x$ into exponential decay in $y$, after fixing $\alpha$ via a suitable orthogonality condition. This leaves room for all the needed a priori estimates.

Secondly, two solutions of \eqref{eq:miuraEquation} with the ansatz $v^j=G_{\alpha^j}+\omega^j$ will turn out to be comparable due to the fact that the quantity
\[
\int_\R v^1-v^2\,dx=:I
\]
does not depend on $x$, and in fact it can be used as a measure of the distance between the kinks of the two solutions. When $I=0$, the two solutions will share the same $\alpha$ due to a good choice of the orthogonality condition, and the argument described above proves estimates for the difference of two solutions, and shows that $v^1=v^2$. When $I\neq 0$, the distance between the two kinks will be approximately $I$ for all $y\in\R$, and the two will be different solutions. The number $I$, or equivalently the value of $\alpha$ at any fixed $y$, will thus be the real parameter that describes the family of solutions.

As a side effect of the above argument, we will see a gain of regularity in the perturbation $\omega$, essentially due to the fact that $\wv^+-\wv^-$ is more regular than $\wv^\pm$, by Corollary \ref{cor:estimates-for-wtv-part-2} part (b).\\

We first give existence, a decomposition and a priori bounds on the solution of the initial value problem with initial time $y=0$ associated to equation \eqref{eq:miuraEquation} with conditions $\pm 1$ at $\pm\infty$, and satisfying the above ansatz. To facilitate the reader, we recall that for $\alpha\in C(\R_y)$,
\als{
G_0&=G_0(x,y):=\eta^+(x)v^+(x,y)+\eta^-(x)v^-(x,y),\\
G_\alpha&=G_\alpha(x,y):=\eta^+(x-\alpha(y))v^+(x,y)+\eta^-(x-\alpha(y))v^-(x,y).
}

\begin{proposition}\label{prop:newExpEstimates}
    In the hypothesis $u\in \dot H^{-\aha,0}(\R\times(0,\infty))$ small, let $v^\pm,\wv^\pm,G$ be as in Definition \ref{def:VandG}. Let $v_0\in L^2(\R_x;\cosh^2(x)dx)+G_0(\cdot,0)$ be such that
    \[
    \|\cosh_{a_0} \cdot (v_0-G_{a_0}(\cdot,0))\|_{L^2_x}
    \]
    is small enough for some $a_0\in \Real$. There exists a unique solution $v\in C([0,\infty),H^{-\aha}(\R_x))+\tanh(x)$ of equation \eqref{eq:miuraEquation} with initial datum $v_0$ such that
    \[
    v-G_0\in C([0,\infty),L^2(\Real_x;\cosh^2(x)dx)).
    \]
    Moreover, the decomposition
    \[
    v=\omega+G_{\alpha},
    \]
    given for every $y$ by Lemma \ref{lemma:expDecomposition}, is such that $\alpha\in C([0,\infty))$ and
    $$\|\cosh_\alpha \omega\|_{L^\infty L^2\cap L^2H^1}+\|\alpha_y\|_{L^2}\lesssim \|u\|_{\dot H^{-\aha,0}}+\|\cosh_{a_0} \cdot (v_0-G_{a_0}(\cdot,0))\|_{L^2_x}.$$
\end{proposition}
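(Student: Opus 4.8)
\emph{Existence and the a posteriori decomposition.} Since $Q=\tanh$ and each $v^\pm$ solve \eqref{eq:miuraEquation} (with forcing $0$ and $u$ respectively), I would first construct $v$ in the \emph{fixed} form $v=G_0+\wt\omega$. Using $\eta^++\eta^-=1$ and that $v^\pm$ solve \eqref{eq:miuraEquation}, one computes a defect identity $\de_yG_0-\de_x^2G_0-\de_x(G_0^2)+u_x=\de_x\mathcal R_0$, where $\mathcal R_0$ is a finite sum of terms each carrying a factor $\de_x\eta^+=\tfrac12\sech^2$ or $\eta^+\eta^-$, hence localized near $x=0$ and bilinear-plus-linear in $(\wv^+-\wv^-,\de_x(\wv^+-\wv^-))$. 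Thus $\wt\omega$ solves the forced linearized Burgers equation
\[
\wt\omega_y-\wt\omega_{xx}-2(G_0\wt\omega)_x-(\wt\omega^2)_x=-\de_x\mathcal R_0,\qquad \wt\omega|_{y=0}=v_0-G_0(\cdot,0)\in L^2(\cosh^2 dx),
\]
whose linear part generates a good parabolic semigroup (the coefficient $G_0$ being bounded with the right constant limits at $x=\pm\infty$) and whose nonlinearity is subcritical in weighted $L^2$; a standard fixed-point argument yields a unique local $\wt\omega\in C([0,T),L^2(\cosh^2 dx))$, and then $v:=G_0+\wt\omega$ lies in the stated class ($v-\tanh\in C([0,T),H^{-\aha}_x)$ following from Corollary \ref{cor:estimates-for-wtv-part-2}). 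Globality will come from the a priori bounds below. Given $v$, Lemma \ref{lemma:expDecomposition}(b)--(c) produces a continuous modulation parameter $\alpha$ with $v=\omega+G_\alpha$ and $\int_\R\omega(\cdot,y)\,dx=0$, and part (e) gives $\|\cosh_{\alpha(0)}\omega(\cdot,0)\|_{L^2_x}\lesssim\|\cosh_{a_0}(v_0-G_{a_0}(\cdot,0))\|_{L^2_x}$.

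\emph{Equations for $\omega$ and $\alpha$.} Repeating the defect computation with $G_\alpha$ in place of $G_0$, and using $\de_y\eta^+_\alpha=-\alpha_y\tfrac12\sech_\alpha^2$, one gets $\de_yG_\alpha-\de_x^2G_\alpha-\de_x(G_\alpha^2)+u_x=-\alpha_y\mathcal R_1+\de_x\mathcal R_2$ with $\mathcal R_1=\tfrac12\sech_\alpha^2(v^+-v^-)$ and $\mathcal R_2$ localized near $x=\alpha$, controlled as before by $h:=\wv^+-\wv^-$ and $\de_xh$; the crucial point is that, by Corollary \ref{cor:estimates-for-wtv-part-2}(b), $h\in C_0L^2\cap L^6(\RR)$ is far more integrable than $\wv^\pm$ individually, which makes $\mathcal R_2$ an admissible error. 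Hence $\omega$ solves
\[
\omega_y-\omega_{xx}-2(G_\alpha\omega)_x-(\omega^2)_x=\alpha_y\mathcal R_1-\de_x\mathcal R_2.
\]
Differentiating the constraint $\int_\R\omega(\cdot,y)\,dx=0$ in $y$, using that $\int_\R\wt\omega\,dx$ is constant (all terms on the right of the $\wt\omega$-equation are $x$-derivatives) and the identity $\int_\R(G_\alpha-G_0)\,dx=-2\alpha+\int_\R(\eta^+_\alpha-\eta^+)h\,dx$, one obtains the modulation ODE
\[
\alpha_y\Big(2+\tfrac12\!\int_\R\sech_\alpha^2 h\,dx\Big)=\int_\R(\eta^+_\alpha-\eta^+)\,h_y\,dx,
\]
whose left coefficient lies in $[1,3]$ for $h$ small (cf.\ Lemma \ref{lemma:expDecomposition}(a)). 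Substituting the equation \eqref{eq:step_to_prove_regularity_gain_for_v+-v-___w} for $h$ to trade $h_y$ for $x$-derivatives and integrating by parts against the Schwartz weight $\eta^+_\alpha-\eta^+$, the right-hand side is bounded in $L^2_y$ by $\|u\|_\Hcrit\big(\|u\|_\Hcrit+\|\cosh_\alpha\omega\|_{L^2_{y,x}}\big)$.

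\emph{Weighted energy estimates.} This is the core of the argument. The linearization of \eqref{eq:miuraEquation} about the kink, $\de_y-\de_x^2-2\de_x(\tanh_\alpha\,\cdot)$, is \emph{not} dissipative on $L^2(\cosh_\alpha^2 dx)$ because of the translational zero mode $\sech_\alpha^2$; but the orthogonality $\int\omega=0$ lets me pass to the primitive $\Omega:=\de_x^{-1}\omega$ (well defined, with exponential decay at $x=\pm\infty$) and exploit the factorization $\de_x^2+2\tanh_\alpha\de_x=\cosh_\alpha^{-2}\de_x(\cosh_\alpha^2\de_x\,\cdot)$, which is self-adjoint and $\le 0$ in $L^2(\cosh_\alpha^2 dx)$. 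Pairing the resulting equation for $\Omega$ with $\cosh_\alpha^2\Omega$ then produces genuine dissipation $-c\|\cosh_\alpha\omega\|_{L^2_x}^2$, modulo: a moving-weight term $O(|\alpha_y|\,\|\cosh_\alpha\Omega\|_{L^2_x}^2)$; the cubic term and the contribution of $G_\alpha-\tanh_\alpha$, both small in a scaling-critical norm and hence absorbable; and the $\mathcal R_1,\mathcal R_2$ errors controlled by $\|u\|_\Hcrit$. A Gr\"onwall/absorption argument over $y\in(0,Y)$, using the initial bound from Lemma \ref{lemma:expDecomposition}(e), gives (uniformly in $Y$) $\sup_y\|\cosh_\alpha\Omega(\cdot,y)\|_{L^2_x}+\|\cosh_\alpha\omega\|_{L^2_{y,x}}\lesssim \|u\|_\Hcrit+\|\cosh_{a_0}(v_0-G_{a_0}(\cdot,0))\|_{L^2_x}$. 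A second energy estimate, pairing the $\omega$-equation itself with $\cosh_\alpha^2\omega$, produces the dissipation $-\|\cosh_\alpha\omega_x\|_{L^2_x}^2$ together with a destabilizing term $\lesssim\|\cosh_\alpha\omega\|_{L^2_x}^2$ which is now harmless because its $y$-integral is already controlled by the previous step; this upgrades the estimate to $\|\cosh_\alpha\omega\|_{L^\infty_yL^2_x}+\|\cosh_\alpha\omega_x\|_{L^2_{y,x}}$, i.e.\ the claimed $\|\cosh_\alpha\omega\|_{L^\infty L^2\cap L^2H^1}$ bound; feeding this back into the ODE yields $\|\alpha_y\|_{L^2_y}$. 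All three estimates close simultaneously by a continuity argument, since every error carries either a small scaling-critical norm or one extra power of the quantities being estimated, and globality follows. Uniqueness in the stated class is obtained by running the same weighted energy estimates on the difference of two solutions.

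\emph{Main obstacle.} The delicate point is the weighted energy estimate: one must extract dissipation from the linearized viscous-shock operator, which is possible only after passing to the primitive $\de_x^{-1}\omega$ — legitimate precisely because $\int\omega=0$ — and recognizing the conjugated self-adjoint form $\cosh_\alpha^{-2}\de_x(\cosh_\alpha^2\de_x\,\cdot)$. Once this structural fact is in place, the moving weight $\cosh_\alpha^2$, the localized errors $\mathcal R_2$ (where the regularity gain of Corollary \ref{cor:estimates-for-wtv-part-2}(b) is essential), and the coupling with the modulation ODE are bookkeeping with small constants, but organizing the simultaneous closure of the $\Omega$-, $\omega$- and $\alpha$-estimates still requires care.
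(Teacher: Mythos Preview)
Your plan is sound and would close, but the paper's route is more direct at both key points. For the weighted energy estimate it does \emph{not} pass to the primitive $\Omega=\partial_x^{-1}\omega$: instead it pairs the $\omega$-equation directly with $\cosh_\alpha^2\omega$, which after integration by parts produces the quadratic form $-\int(\cosh_\alpha\omega)_x^2-\int(\cosh_\alpha\omega)^2+2\int\sech_\alpha^2(\cosh_\alpha\omega)^2$, and then invokes the spectral lower bound of Lemma~\ref{lemma:lowerBoundOnTheQuadraticForm} for the P\"oschl--Teller operator $-\partial_x^2-2\sech^2\ge 0$ on $\{\sech\}^\perp$ --- the orthogonality $\int\omega\,dx=0$ being exactly $\cosh_\alpha\omega\perp\sech_\alpha$. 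This yields coercivity $\gtrsim\|\cosh_\alpha\omega\|_{H^1}^2$ in a single energy identity, replacing your two-step $\Omega\to\omega$ scheme; your factorization $\partial_x^2+2\tanh_\alpha\partial_x=\cosh_\alpha^{-2}\partial_x(\cosh_\alpha^2\partial_x\,\cdot)$ is of course the same supersymmetric structure viewed from the other side. For the modulation law the paper is also simpler: it integrates the $\omega$-equation itself over $\R_x$ (rather than going through $\int(\eta^+_\alpha-\eta^+)h_y\,dx$ and the $h$-equation), obtaining an explicit formula for $\alpha_y$ as a ratio involving only $\sech_\alpha^2\cdot((\wv^+)^2-(\wv^-)^2)$ and $\sech_\alpha^2 V_\alpha$, with no $\omega$-dependence whatsoever; the bound $\|\alpha_y\|_{L^2_y}\lesssim\|u\|_\Hcrit$ then follows \emph{decoupled} from the energy estimate via the weighted bound $\|\sech_\alpha\wv^\pm\|_{L^2_{x,y}}\lesssim\|u\|_\Hcrit$ in Corollary~\ref{cor:estimates-for-wtv-part-2}(a), so the coupling with $\|\cosh_\alpha\omega\|_{L^2_{y,x}}$ you introduce is unnecessary. (Incidentally, the initial bound you cite as Lemma~\ref{lemma:expDecomposition}(e) is part~\ref{item:expDec-005}, i.e.\ (d).)
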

Here we think of $u$ as a distribution in $\Hcrit$ supported on $\{y\geq 0\}$, so all we said so far makes sense, including the definition of $v^\pm$ and $G$. The corresponding functions $\wv^\pm$ will also be identically zero for negative $y$.\\
The decomposition is obtained by applying Lemma \ref{lemma:expDecomposition} part \ref{item:expDec-003} at each $y\in\R$. In particular, $\int \omega(x,y)\,dx$ for all $y\geq 0$, and $\alpha(y)$ is uniquely determined by this condition.
\begin{proof}
    Consider $z:=v-G_0$, and call $V_\alpha:=\eta^+_\alpha \wv^++\eta^-_\alpha \wv^-=G_\alpha-\tanh_\alpha$. The equation for $z$ reads
    \begin{align*}
        z_y-z_{xx}-2(\tanh z)_x-2(V_0 z)_x&=(z^2)_x+\aha\sech^2\!\cdot\,((\wv^+)^2-(\wv^-)^2)\\
        &\QQQ-\frac{1}{4}\left[\sech^2\!\cdot\,(\wv^+-\wv^-)^2\right]_x+2\sech^2 V_0,
    \end{align*}
    so a unique solution $z$ exists and belongs to $C([0,T],L^2(\Real_x;\cosh^2(x)dx))$ for a short time $T>0$ thanks to Lemma \ref{lemma:localWellPosednessExponentiallyLocalized}. The global existence of $z$ follows from the estimate we still need to prove and the blowup alternative. By the continuity of the decomposition map in Lemma \ref{lemma:expDecomposition}, we can assume that $v_0$, $u$ are test functions, so that the solution belongs to $C^\infty([0,T],\sech(x)H^\infty)$.
    
    We now prove the estimate on the solution. Using the equations for $\wv^\pm$ and the smoothness of the decomposition map, $\omega$ satisfies the equation
    \begin{align*}
        \omega_y-\omega_{xx}-2(\tanh_\alpha \omega)_x-2(V_\alpha \omega)_x&=(\omega^2)_x+\aha\sech^2_\alpha\cdot\,((\wv^+)^2-(\wv^-)^2)\\
        &\QQQ-\frac{1}{4}\left[\sech^2_\alpha\cdot\,(\wv^+-\wv^-)^2\right]_x\\
        &\QQQ+2\sech^2_\alpha V_\alpha+\frac{\alpha_y}{2}\sech^2_\alpha\cdot\,(2+\wv^+-\wv^-).
    \end{align*}
    The estimate on $\alpha_y$ follows integrating the equation in $x$:
    $$ |\alpha_y(y)|=\left|\frac{\int_{\R}\sech^2_\alpha\cdot\,((\wv^+)^2-(\wv^-)^2)\,dx+4\int_{\R}\sech_\alpha^2\cdot\,V_\alpha \,dx}{(\int_{\R}\sech^2_\alpha\cdot\,(2+\wv^+-\wv^-)) \,dx}\right| $$
    and the estimate $\|\alpha_y\|_{L^2_y}\lesssim \|u\|_\Hcrit$ follows from Corollary \ref{cor:estimates-for-wtv-part-2}, by part (b) and the weighted estimate of part (a), since the denominator of the above fraction is bounded away from $0$ for small $u$, uniformly in $y$.\\
    Integrating against $\cosh^2_\alpha \omega$ and integrating by parts, we get the weighted energy estimate
    \begin{align*}
        \aha \frac{d}{dy} \left[\int_{\R} (\cosh_\alpha \omega)^2 dx\right] &=-\int_{\R}(\cosh_\alpha \omega)_x^2\,dx-\int_{\R}(\cosh_\alpha \omega)^2 dx +2\int_{\R}\sech^2_\alpha\cdot\,(\cosh_\alpha \omega)^2 dx\\
        &\QQQ+ 2\int_{\R} V_\alpha (\cosh_\alpha\omega)[(\cosh_\alpha\omega)_x+\tanh_\alpha\cdot\,(\cosh_\alpha\omega)]\,dx\\
        &\QQQ-\fr23 \int_{\R} \tanh_\alpha\sech_\alpha \cdot\,(\cosh_\alpha \omega)^3 dx\\
        &\QQQ+\fr14\int_{\R}\sech_\alpha\cdot\,(\wv^+-\wv^-)^2(\cosh_\alpha \omega)_x \,dx\\
        &\QQQ+ \int_{\R}\sech_\alpha f\cdot\,(\cosh_\alpha \omega) \dx,
    \end{align*}
    where
    \[
    f=\fr12((\wv^+)^2-(\wv^-)^2)+\fr14\tanh_\alpha \cdot\,(\wv^+-\wv^-)^2+2V_\alpha+\fr{\alpha_y}{2} (2+\wv^++\wv^-). 
    \]
    Looking at the first three terms on the right hand side, we can bound the positive term with the two negative ones by making use of the orthogonality condition and the fact that the self-adjoint operator $-\partial_x^2-2\sech^2$ is non-negative on the subspace of $L^2$ functions that are orthogonal to the function $\sech$, by Lemma \ref{lemma:lowerBoundOnTheQuadraticForm}. More precisely, under the condition $\int\omega\,dx=0$, it holds
    \[ \int_{\R}(\cosh_\alpha \omega)_x^2 dx+\int_{\R}(\cosh_\alpha \omega)^2 dx-2\int_{\R}\sech^2_\alpha\cdot\,(\cosh_\alpha \omega)^2 dx \gtrsim\|\cosh_\alpha \omega\|_{H^1}. \]
    Integrating the weighted estimate in time from $0$ to $T$ thus gives
    \begin{align*}
        \|\cosh_\alpha\omega\|^2_{L^\infty L^2}+\|\cosh_\alpha\omega\|^2_{L^2H^1}&\lesssim \|\cosh_{\alpha_0}\omega_0\|^2_{L^2}+(\|\wv^+\|_{L^3L^3}+\|\wv^-\|_{L^3L^3})\|\cosh_\alpha\omega\|_{L^6L^6}\\
        &\QQQ\Qq\times(\|(\cosh_\alpha\omega)_x\|_{L^2L^2}+\|\cosh_\alpha\omega\|_{L^2L^2})\\
        &\QQQ+\|\cosh_\alpha\omega\|_{L^3L^3}^3\\
        &\QQQ+\|\cosh_\alpha\omega\|_{L^3L^3}(\|\wv^+\|_{L^3L^3}+\|\wv^-\|_{L^3L^3})^2\\
        &\QQQ+\|(\cosh_\alpha\omega)_x\|_{L^2L^2}\|\wv^+-\wv^-\|_{L^4L^4}^2+\\
        &\QQQ+(\|\sech_\alpha \wv^+\|_{L^2L^2}+\|\sech_\alpha \wv^-\|_{L^2L^2})\\
        &\QQQ\Qq\times(\|\cosh_\alpha\omega\|_{L^2L^2}+\|\alpha_y\|_{L^2_y}\|\cosh_\alpha\omega\|_{L^\infty L^2})\\
        &\QQQ +\|\alpha_y\|_{L^2_y}\|\cosh_\alpha\omega\|_{L^2L^2}.
    \end{align*}
    The cubic term can be controlled by the left hand side assuming $\omega$ remains small: by a continuity argument, the latter follows by the assumption we have on the smallness of $\|\cosh_{a_0}\cdot (v_0-G_{a_0}(\cdot,0))\|_{L^2}$, which is equivalent to the smallness of $\|(\cosh_{\alpha}\omega)|_{y=0}\|_{L^2}$ by Lemma \ref{lemma:expDecomposition} part \ref{item:expDec-005}. All the remaining terms on the right hand side are controlled by what is on the left hand side, the $L^2$-norm of $\alpha_y$, and the norm of $u$ in $\Hcrit$, thanks to the estimates of Corollary \ref{cor:estimates-for-wtv-part-2} (note in particular that by Corollary \ref{cor:estimates-for-wtv-part-2} we have the bound $\|\wv^\pm\|_{L^3L^3}+\|\wv^+-\wv^-\|_{L^6L^6}+\|\sech_\alpha \wv^\pm\|_{L^2L^2}\lesssim \|u\|_\Hcrit$), where the multiplicative constants do not depend on $T$. So the a priori estimate holds uniformly in $T>0$, thus it holds on $[0,\infty)$ as well.
\end{proof}
\begin{lemma}\label{lemma:uniqueness_1}
    Let $v^1$, $v^2$ be two solutions of \eqref{eq:miuraEquation} with initial data $v_0^1$, $v_0^2$ as in Proposition \ref{prop:newExpEstimates}. Then the quantity
	$$ \int_{\R}(v^2-v^1)dx=:I  $$
	is well defined, finite and does not depend on $y$. Moreover, if $v^j=\omega^j+G_{\alpha^j}$ as in Lemma \ref{lemma:expDecomposition} part \ref{item:expDec-003}, we have that $|I|\leq|\alpha^1-\alpha^2|\leq 3|I|$ for all $y\in\R$.
\end{lemma}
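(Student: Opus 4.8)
The plan is to split the statement into three parts: (i) $I$ is finite for every $y$; (ii) $I$ does not depend on $y$; (iii) $|I|$ is comparable to $|\alpha^1-\alpha^2|$, with the constants of Lemma \ref{lemma:expDecomposition} part \ref{item:expDec-001}. Write both solutions in the form provided by Proposition \ref{prop:newExpEstimates}, $v^j=\omega^j+G_{\alpha^j}$, and note that the profile $G_0$ is common to both, since it is built only from the shared forcing $u$. Then part (i) is immediate:
$$ v^2-v^1=(v^2-G_0)-(v^1-G_0)\in C\big([0,\infty),L^2(\R_x;\cosh^2(x)\dx)\big)\hookrightarrow C\big([0,\infty),L^1(\R_x)\big), $$
so $I(y)=\int_\R(v^2-v^1)\dx$ is finite and — as we shall also use — continuous in $y$. (Concretely the decay comes from the weighted $L^\infty_yL^2_x$ bound on $\omega^j$ of Proposition \ref{prop:newExpEstimates} together with \eqref{eq:z-022}, which exhibits $G_{\alpha^2}-G_{\alpha^1}$ as an $O(\sech^2)$ term plus an $O(\sech^2)$-type factor times $\wv^+-\wv^-\in C_0(\R_y,L^2(\R_x))$, the latter from Corollary \ref{cor:estimates-for-wtv-part-2}(b).)

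For (ii), subtracting the two instances of \eqref{eq:miuraEquation},
$$ (v^2-v^1)_y-(v^2-v^1)_{xx}=\big((v^2+v^1)(v^2-v^1)\big)_x, $$
and integrating in $x$ over $\R$ gives, formally, that $\partial_y I(y)$ equals the boundary term $\big[(v^2-v^1)_x+(v^2+v^1)(v^2-v^1)\big]_{x=-\infty}^{x=+\infty}$, which vanishes as soon as $v^2-v^1$ and $(v^2-v^1)_x$ tend to $0$ as $x\to\pm\infty$ (note $v^2+v^1$ is bounded); thus $I(y)\equiv I(0)$. I expect the legitimization of this last step, i.e.\ the vanishing of the spatial boundary contributions, to be the main obstacle, and I would handle it by the density reduction already used in the proof of Proposition \ref{prop:newExpEstimates}: approximate $v_0^j-G_0(\cdot,0)$ in $L^2(\R_x;\cosh^2(x)\dx)$ and $u$ in $\Hcrit$ by test functions, so that the corresponding solutions lie in $C^\infty\big([0,\infty),\sech(x)H^\infty\big)$, where the boundary terms are literally zero and $I$ is constant; then pass to the limit, using continuity of the data-to-solution maps of Corollary \ref{cor:estimates-for-wtv-part-2} and Proposition \ref{prop:newExpEstimates} and of the decomposition map of Lemma \ref{lemma:expDecomposition} part \ref{item:expDec-004}, which give $\cosh\cdot(v^2_n-v^1_n)\to\cosh\cdot(v^2-v^1)$ in $C_\loc\big([0,\infty),L^2_x\big)$ and hence $I_n(y)\to I(y)$ uniformly on bounded time intervals. (An $x$-cutoff in place of the density step would still require pointwise decay of $(v^2-v^1)_x$, which the available $L^2H^1$-type bounds do not directly give, so the smoothing route seems cleaner.)

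For (iii), the orthogonality condition built into the decomposition of Lemma \ref{lemma:expDecomposition} part \ref{item:expDec-003}, namely $\int_\R\omega^j(x,y)\dx=0$ for every $y$, cancels the contribution of the $\omega^j$, leaving
$$ I=I(y)=\int_\R\big(G_{\alpha^2(y)}-G_{\alpha^1(y)}\big)\dx. $$
Since $u$ is small, Corollary \ref{cor:estimates-for-wtv-part-2}(b) makes $h=\wv^+-\wv^-$ small in $L^2(\R_x)$ for every $y$, so Lemma \ref{lemma:expDecomposition} part \ref{item:expDec-001} applies with $a=\alpha^2(y)$, $b=\alpha^1(y)$ and converts the last identity into the asserted two-sided comparison between $|I|$ and $|\alpha^1(y)-\alpha^2(y)|$; combined with step (ii) it also pins $\alpha^1-\alpha^2$ to within a fixed multiplicative factor of the constant $I$ at every $y$, which is the form in which the lemma is used afterwards. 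Steps (i) and (iii) are short; the only real work is in legitimizing the integration by parts in $x$ of step (ii).
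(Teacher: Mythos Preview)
Your proof is correct and follows essentially the same approach as the paper: density reduction to smooth data, integration in $x$ of the difference equation $w_y-w_{xx}=((v^1+v^2)w)_x$ to obtain $y$-independence of $I$, and use of the orthogonality $\int\omega^j\,dx=0$ together with Lemma~\ref{lemma:expDecomposition}\ref{item:expDec-001} to compare $I$ with $\alpha^1-\alpha^2$. Your treatment is slightly more explicit about the limiting argument and the embedding $L^2(\R;\cosh^2(x)\,dx)\hookrightarrow L^1(\R)$, but the substance is the same.
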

\begin{proof}
    We can assume the data $u$ and $v_0^j-G_0(\cdot,0)$ to be test functions as in the proof of the previous Proposition. It holds $w:=v^2-v^1=\omega^2-\omega^1+(G_{\alpha^2}-G_{\alpha^1})$. The finiteness of the above integral is given by the estimates in the previous Proposition and Lemma \ref{lemma:expDecomposition} part \ref{item:expDec-001}. The quantity $w$ satisfies the equation
    \begin{equation}\label{eq:z-002}
        w_y-w_{xx}=((v^1+v^2)w)_x,
    \end{equation}
    and the independence of $y$ of the integral in the statement follows by integrating the equation in $x$. The last statement follows by Lemma \ref{lemma:expDecomposition} part \ref{item:expDec-001}, since
    \[
    \int w\,dx=\int (G_{\alpha^2}-G_{\alpha^1})\,dx.\qedhere
    \]
\end{proof}

\begin{lemma}\label{lemma:uniqueness_2}
	In the hypotheses of the previous Lemma, calling $w:=v^2-v^1$, assuming $I=0$, we have the estimate
	$$ \|e^{\eps_0 y}\cosh_\alpha w\|_{L^\infty L^2\cap L^2H^1}\lesssim \|\cosh_{a_0} w(\cdot,0)\|_{L^2}, $$
	where $\alpha=\alpha^1=\alpha^2$ is shared by both solutions thanks to the previous Lemma, $a_0\in\R$, and where $\eps_0>0$ is a universal constant.
\end{lemma}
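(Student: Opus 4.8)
The plan is to run a weighted energy estimate for $w=v^2-v^1$ directly on the linear equation \eqref{eq:z-002}, namely $w_y-w_{xx}=((v^1+v^2)w)_x$, after conjugating by the weight $e^{\eps_0 y}\cosh_\alpha$, and to exploit the spectral gap of the shifted P\"oschl--Teller operator $-\partial_x^2+1-2\sech^2$ on the orthogonal complement of its ground state $\sech$. First I would record the algebraic setup. Since $I=0$, Lemma \ref{lemma:uniqueness_1} gives $\alpha^1=\alpha^2=\alpha$, hence $w=\omega^2-\omega^1$ inherits the exponential $x$-decay of the $\omega^j$ from Proposition \ref{prop:newExpEstimates}; in particular $Z:=e^{\eps_0 y}\cosh_\alpha w$ lies a priori in $C([0,T],L^2_x)\cap L^2_{[0,T]}H^1_x$ for every finite $T$, and $\int_\R Z(\cdot,y)\sech_\alpha\,dx=e^{\eps_0 y}\int_\R w\,dx=0$ for all $y$. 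Writing $v^1+v^2=2\tanh_\alpha+P$ with $P:=2V_\alpha+\omega^1+\omega^2$ and $V_\alpha:=G_\alpha-\tanh_\alpha$ as in Definition \ref{def:VandG}, substituting $w=e^{-\eps_0 y}\sech_\alpha Z$ into \eqref{eq:z-002} and using $\partial_y\sech_\alpha=\alpha_y\tanh_\alpha\sech_\alpha$, a direct computation yields
\[
Z_y-Z_{xx}+(1-\eps_0-2\sech_\alpha^2)Z=-\alpha_y\tanh_\alpha Z+\cosh_\alpha\partial_x\bigl(P\sech_\alpha Z\bigr).
\]
The crucial feature is that the transport term $2(\tanh_\alpha w)_x$ in \eqref{eq:z-002} has been absorbed into the potential $1-2\sech_\alpha^2$, i.e. exactly the conjugated form of the ground-state-shifted operator, while the $y$-dependence of $\alpha$ contributes only the harmless lower order term $-\alpha_y\tanh_\alpha Z$.

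Next comes the energy estimate. Pairing the equation with $Z$ in $L^2_x$ and integrating by parts gives
\[
\tfrac12\tfrac{d}{dy}\|Z\|_{L^2_x}^2+\|Z_x\|_{L^2_x}^2+(1-\eps_0)\|Z\|_{L^2_x}^2-2\|\sech_\alpha Z\|_{L^2_x}^2=-\int_\R\alpha_y\tanh_\alpha Z^2\,dx+\int_\R Z\,\cosh_\alpha\partial_x\bigl(P\sech_\alpha Z\bigr)\,dx.
\]
Because $Z(\cdot,y)\perp\sech_\alpha$ in $L^2_x$, Lemma \ref{lemma:lowerBoundOnTheQuadraticForm} bounds the left-hand side, minus the time-derivative term, below by $(c_0-\eps_0)\|Z\|_{H^1_x}^2$ for a universal $c_0>0$; choosing $\eps_0:=c_0/2$ fixes a universal exponent and leaves $\tfrac{c_0}{2}\|Z\|_{H^1_x}^2$ on the left. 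For the right-hand side, after integration in $y$ the $\alpha_y$ term is controlled by $\|\alpha_y\|_{L^2_y}\|Z\|_{L^\infty_y L^2_x}\|Z\|_{L^2_y L^2_x}$, which is small by the bound on $\|\alpha_y\|_{L^2_y}$ in Proposition \ref{prop:newExpEstimates}; and, after integrating by parts in $x$ to move the derivative off the coefficients, the second term becomes a sum of trilinear expressions in $Z$ with coefficients $V_\alpha$ (small in $L^3_{x,y}$ by Corollary \ref{cor:estimates-for-wtv-part-2}) and $\omega^j$ (small with $\cosh_\alpha\omega^j$ in $L^\infty_y L^2_x\cap L^2_y H^1_x$ by Proposition \ref{prop:newExpEstimates}), which are estimated by H\"older together with the embeddings $L^\infty_y L^2_x\cap L^2_y H^1_x\hookrightarrow L^4_{x,y},\,L^6_{x,y},\,L^2_yL^\infty_x$ exactly as in the proof of Proposition \ref{prop:newExpEstimates}. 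Since $\|u\|_\Hcrit$ and the initial data are small, all of these terms are absorbed into the left-hand side.

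Finally I would integrate in $y$ over $[0,T]$, run the usual continuity argument in $T$ (legitimate since $\|Z\|_{L^\infty_{[0,T]}L^2_x\cap L^2_{[0,T]}H^1_x}$ is a priori finite) to justify the absorption, and take $T\to\infty$. Noting $Z(\cdot,0)=\cosh_{\alpha(0)}w(\cdot,0)$, whose $L^2_x$ norm is comparable to $\|\cosh_{a_0}w(\cdot,0)\|_{L^2}$ because the shifts are uniformly bounded (so $\cosh_{\alpha(0)}\lesssim\cosh_{a_0}$ pointwise, cf. Lemma \ref{lemma:expDecomposition}\ref{item:expDec-005}), we obtain $\|e^{\eps_0 y}\cosh_\alpha w\|_{L^\infty L^2\cap L^2H^1}\lesssim\|\cosh_{a_0}w(\cdot,0)\|_{L^2}$, as claimed.

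The main obstacle is the conjugation step together with the verification that $\eps_0$ is universal: one must check that conjugating \eqref{eq:z-002} by $\cosh_\alpha$ turns the transport term into precisely the potential $1-2\sech_\alpha^2$, so that the coercivity of Lemma \ref{lemma:lowerBoundOnTheQuadraticForm} applies with a constant independent of $u$ and of the data, and that the $\alpha_y$-contribution is genuinely lower order. Once this coercive structure is in place, what remains is a routine perturbative energy estimate mirroring Proposition \ref{prop:newExpEstimates}.
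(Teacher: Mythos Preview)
Your proof is correct and follows essentially the same route as the paper's: the paper integrates equation \eqref{eq:z-002} directly against $\cosh_\alpha^2 w$, multiplies by $e^{2\eps y}$ and sets $z:=e^{\eps y}w$, arriving at the same energy identity you obtain by first passing to $Z=e^{\eps_0 y}\cosh_\alpha w$ and then pairing with $Z$; both then invoke the orthogonality $\int w\,dx=0$ together with Lemma~\ref{lemma:lowerBoundOnTheQuadraticForm} for coercivity and control the remaining terms perturbatively via the bounds on $\wv^\pm,\omega^j,\alpha_y$ from Corollary~\ref{cor:estimates-for-wtv-part-2} and Proposition~\ref{prop:newExpEstimates}.
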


\begin{proof}
    We perform the same approximation argument as in the previous Lemma. We follow the same strategy as in the proof of Proposition \ref{prop:newExpEstimates}, and we define again $V_\alpha:=\eta^+_\alpha\wv^++\eta^-_\alpha\wv^-$. Integrating the equation \eqref{eq:z-002} against $\cosh^2_\alpha w$ and multiplying by $e^{2\eps y}$, calling $z:=e^{\eps y}w$, we get
	\begin{align*}
		\aha\fr{d}{dy}\left[\int_{\R}(\cosh_\alpha z)^2 dx\right]&=-\int_{\R}(\cosh_\alpha z)_x^2 dx-(1-\eps)\int_{\R}(\cosh_\alpha z)^2 dx\\
            &\QQQ+2\int_{\R}\sech_\alpha^2\cdot\,(\cosh_\alpha z)^2 dx\\
            &\QQQ-\int_{\R}(2V_\alpha+\omega^1+\omega^2)(\cosh_\alpha z)(\cosh_\alpha z)_x \,dx\\
            &\QQQ-\int_{\R}(2V_\alpha+\omega^1+\omega^2)\tanh_\alpha\cdot\,(\cosh_\alpha z)^2 dx\\
            &\QQQ-\alpha_y\int_{\R}\tanh_\alpha\cdot\,(\cosh_\alpha z)^2 dx.
	\end{align*}
    The third term on the right-hand side is controlled by the first two terms thanks to the orthogonality condition $\int w\,dx=0$ as we already discussed in the proof of Proposition \ref{prop:newExpEstimates}: the only difference is the factor $(1-\eps)$ in front of the second term, which does not change the argument assuming $\eps$ is smaller than a suitable $\eps_0>0$. For the remaining terms, we proceed as in Proposition \ref{prop:newExpEstimates}, so we give short details. It is enough to estimate $\wt v^\pm,\omega^j$ in $L^3(\RR)$, $\tanh_\alpha\alpha_y$ in $L^2_yL^\infty_x$, and $\alpha_y$ in $L^2_y$, since we will have bounds for $\cosh_\alpha z$ in $L^2_{x,y}\cap L^6_{x,y}\cap L^4_yL^2_x$ and $(\cosh_\alpha z)_x$ in $L^2_{x,y}$. The former quantities are controlled by the $\Hcrit$-norm of $u$, thanks to the estimates we have on $v^\pm$ in Corollary \ref{cor:estimates-for-wtv-part-2}, and to the estimate for $\omega^j$ of Proposition \ref{prop:newExpEstimates}. After integrating in $y$, and substituting $z$ with $e^{\eps y}w$, we thus obtain the desired inequality.
\end{proof}

For small $u\in\Hcrit$ and $v$ being a solution of \eqref{eq:miuraEquation}, we consider the following properties:
\begin{gather}
    v-G_0\in C(\R_y,L^2(\R_x;\cosh^2(x)dx)), \label{eq:v_minus_G_0_is_in_exponentially_weighted_L^2}\tag{H1} \\
    \lim_{y\to-\infty}\inf_{\gamma\in\R}\|\cosh_\gamma \cdot\,(v(\cdot,y)-G_{\gamma}(\cdot,y))\|_{L^2(\R)}=0, \label{eq:v_minus_G_0_is_small_at_minus_infinity}\tag{H2}\\
    \lim_{y\to+\infty}\inf_{\gamma\in\R}\|\cosh_\gamma \cdot\,(v(\cdot,y)-G_{\gamma}(\cdot,y))\|_{L^2(\R)}=0, \label{eq:v_minus_G_0_is_small_at_plus_infinity}\tag{H3}\\
    v=w+\tanh_\sigma, \quad \exists w\in L^3(\RR),\,\sigma_y\in L^2(\R_y).\label{eq:v_is_in_L^3_plus_tanh_sigma}\tag{H4}
\end{gather}
with $G$ as in Definition \ref{def:VandG}.

\begin{corollary}\label{cor:uniquenessOfMiura}
    Let $u\!\in\! \dot H^{-\aha,0}(\RR)$ small. Let $v$ be a solution of \eqref{eq:miuraEquation} on $\RR$ satisfying \eqref{eq:v_minus_G_0_is_in_exponentially_weighted_L^2}, \eqref{eq:v_minus_G_0_is_small_at_minus_infinity}.
    Consider the unique decomposition $v=\omega+G_{\alpha}$ with $\int \omega\,dx=0$ $\forall y\in\Real$ given by Lemma \ref{lemma:expDecomposition}. Then, $v$ satisfies \eqref{eq:v_minus_G_0_is_small_at_plus_infinity}, \eqref{eq:v_is_in_L^3_plus_tanh_sigma}, and it holds
        \[ \|\cosh_{\alpha^j} \omega\|_{L^\infty L^2\cap L^2H^1}+\|\alpha_y\|_{L^2}\lesssim\|u\|_{\dot H^{-\aha,0}(\RR)}.\]
    Moreover, if $v^1$, $v^2$ are two such solutions, with decompositions $v^j=\omega^j+G_{\alpha^j}$, then either $v^1\equiv v^2$ or $\alpha^1(y)-\alpha^2(y)\neq 0$ $\forall y\in\Real$.
\end{corollary}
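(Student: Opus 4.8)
The plan is to transfer the half-line analysis of Proposition~\ref{prop:newExpEstimates} and Lemmas~\ref{lemma:uniqueness_1}--\ref{lemma:uniqueness_2} to the whole line, using hypothesis~\eqref{eq:v_minus_G_0_is_small_at_minus_infinity} as a substitute for an initial condition at $y=-\infty$. First I would take the decomposition $v=\omega+G_\alpha$ given by Lemma~\ref{lemma:expDecomposition}, which is available at every $y$ thanks to~\eqref{eq:v_minus_G_0_is_in_exponentially_weighted_L^2}, and observe --- via~\eqref{eq:v_minus_G_0_is_small_at_minus_infinity} and Lemma~\ref{lemma:expDecomposition}, part~\ref{item:expDec-005} --- that $\|\cosh_{\alpha(-M)}\omega(\cdot,-M)\|_{L^2_x}\to 0$ as $M\to\infty$. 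Integrating the weighted energy identity for $\omega$ from the proof of Proposition~\ref{prop:newExpEstimates} over the strip $\{-M<y<T\}$ (with the global $v^\pm$ of Corollary~\ref{cor:estimates-for-wtv-part-2} entering $G_\alpha$, the computation being otherwise unchanged: the orthogonality $\int\omega\,dx=0$ and the quadratic-form bound of Lemma~\ref{lemma:lowerBoundOnTheQuadraticForm} still absorb the indefinite term, and the coefficient estimates are those of Corollary~\ref{cor:estimates-for-wtv-part-2}), the only boundary contribution is $\|\cosh_{\alpha(-M)}\omega(\cdot,-M)\|_{L^2_x}^2$; a continuity argument in $\omega$, again as in that proof, then gives, with constants independent of $-M<T$,
\[
\|\cosh_\alpha\omega\|_{L^\infty((-M,T),L^2)\cap L^2((-M,T),H^1)}+\|\alpha_y\|_{L^2(-M,T)}\lesssim \|u\|_{\dot H^{-\aha,0}(\RR)}+\|\cosh_{\alpha(-M)}\omega(\cdot,-M)\|_{L^2_x},
\]
the $\alpha_y$-estimate coming, as in Proposition~\ref{prop:newExpEstimates}, from integrating the $\omega$-equation in $x$ and using parts (a)--(b) of Corollary~\ref{cor:estimates-for-wtv-part-2}. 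Letting $M\to\infty$ and $T\to\infty$ yields the asserted global bound.

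Granting this bound, \eqref{eq:v_is_in_L^3_plus_tanh_sigma} holds with $\sigma=\alpha$, since $v-\tanh_\alpha=\omega+\eta^+_\alpha\wv^++\eta^-_\alpha\wv^-$ with $\wv^\pm\in L^3(\RR)$ by Corollary~\ref{cor:estimates-for-wtv-part-2} and $\omega\in L^3(\RR)$ by the Gagliardo--Nirenberg embedding $L^\infty_yL^2_x\cap L^2_yH^1_x\hookrightarrow L^3(\RR)$, while $\alpha_y\in L^2$. For~\eqref{eq:v_minus_G_0_is_small_at_plus_infinity} I would set $f(y):=\|\cosh_{\alpha(y)}\omega(\cdot,y)\|_{L^2_x}^2$: then $f\in L^1(\R_y)$ because $\cosh_\alpha\omega\in L^2_yH^1_x$, and by the energy identity $f'$ is a sum of terms each integrable over $y$ in view of the global bound and Corollary~\ref{cor:estimates-for-wtv-part-2} --- this is precisely the estimation carried out in the proof of Proposition~\ref{prop:newExpEstimates}. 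Hence $f$ is absolutely continuous with $f'\in L^1$, so $\lim_{y\to+\infty}f(y)=0$; since $\inf_\gamma\|\cosh_\gamma(v(\cdot,y)-G_\gamma(\cdot,y))\|_{L^2}\le f(y)^{1/2}$, this gives~\eqref{eq:v_minus_G_0_is_small_at_plus_infinity}.

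For the rigidity statement let $v^1,v^2$ be two such solutions. By the global bound and the computation in Lemma~\ref{lemma:uniqueness_1}, the integral $I:=\int_\R(v^2-v^1)\,dx$ is finite, independent of $y$, and $|I|\le|\alpha^1(y)-\alpha^2(y)|\le 3|I|$ for every $y$; if $I\neq0$, then $\alpha^1-\alpha^2$ never vanishes and we are done. If $I=0$, then $\alpha^1\equiv\alpha^2=:\alpha$, so $v^2-v^1=\omega^2-\omega^1$, and repeating the exponentially weighted estimate of Lemma~\ref{lemma:uniqueness_2}, now integrated from $y=-M$ and with base point $\alpha(-M)$, gives for each fixed $y_0$
\[
\|\cosh_\alpha(v^2-v^1)(\cdot,y_0)\|_{L^2_x}\lesssim e^{-\eps_0(y_0+M)}\sum_{j=1,2}\|\cosh_{\alpha(-M)}\omega^j(\cdot,-M)\|_{L^2_x};
\]
the right-hand side tends to $0$ as $M\to\infty$ by~\eqref{eq:v_minus_G_0_is_small_at_minus_infinity} and Lemma~\ref{lemma:expDecomposition}, part~\ref{item:expDec-005}, whence $v^1\equiv v^2$.

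I expect the case $I=0$ to be the main obstacle: one has to make the forward-in-$y$ exponential decay of Lemma~\ref{lemma:uniqueness_2} and the decay as $y\to-\infty$ encoded in~\eqref{eq:v_minus_G_0_is_small_at_minus_infinity} cooperate, and to verify that all the constants are uniform in the base point $-M$ --- which is legitimate because~\eqref{eq:miuraEquation} and every norm involved are invariant under translation in $y$. The remaining work (the weighted energy identities, the Gagliardo--Nirenberg bounds, and the approximation by test functions needed to justify the identities, all as in Proposition~\ref{prop:newExpEstimates} and Lemmas~\ref{lemma:uniqueness_1}--\ref{lemma:uniqueness_2}) is routine.
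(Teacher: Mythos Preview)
Your proof is correct and follows essentially the same route as the paper's: translate the half-line estimate of Proposition~\ref{prop:newExpEstimates} to $(-M,\infty)$, use \eqref{eq:v_minus_G_0_is_small_at_minus_infinity} via Lemma~\ref{lemma:expDecomposition}\ref{item:expDec-005} to kill the boundary term as $M\to\infty$, then deduce \eqref{eq:v_is_in_L^3_plus_tanh_sigma} from $\wv^\pm\in L^3$ and handle rigidity through Lemmas~\ref{lemma:uniqueness_1}--\ref{lemma:uniqueness_2}. The only notable difference is your argument for \eqref{eq:v_minus_G_0_is_small_at_plus_infinity}: the paper picks a sequence $y_n\to+\infty$ along which $\|\cosh_\alpha\omega(\cdot,y_n)\|_{L^2}\to 0$ (available because $\cosh_\alpha\omega\in L^2_yL^2_x$) and reapplies the a priori estimate on $(y_n,\infty)$, whereas you argue that $f(y)=\|\cosh_\alpha\omega(\cdot,y)\|_{L^2}^2$ lies in $L^1$ with $f'\in L^1$ --- both are valid and equally short.
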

\begin{proof}
    By Lemma \ref{lemma:expDecomposition} part \ref{item:expDec-005}, the assumption \eqref{eq:v_minus_G_0_is_small_at_minus_infinity} is equivalent to
    \[
    \| (\cosh_{\alpha^j} \omega^j)|_{y=y_0}\|_{L^2(\R)}\to 0\quad\text{ as }y_0\to-\infty.
    \]
    The a priori estimate follows by Proposition \ref{prop:newExpEstimates} after a translation in the $y$ variable, truncating first the solutions on $\R_x\times(y_0,\infty)$, and then sending the parameter $y_0$ to $-\infty$. Using Lemma \ref{lemma:expDecomposition} part \ref{item:expDec-005}, the assumption \eqref{eq:v_minus_G_0_is_small_at_plus_infinity} can be verified from the same a priori estimate by sending $y_0$ to $+\infty$ along a suitable sequence, since $\cosh_\alpha \omega \in L_y^\infty L^2_x\cap L^2_yL^2_x$. Assumption \eqref{eq:v_is_in_L^3_plus_tanh_sigma} follows from the a priori estimate of this Corollary on $\omega$, $\alpha$, the definition of $G$, and the fact that $\wv^\pm\in L^3(\RR)$.

    For the last statement, assume $\alpha^1-\alpha^2=0$ for some $y$. Then by Lemma \ref{lemma:uniqueness_1} it holds $\alpha^1\equiv\alpha^2=:\alpha\equiv 0$, $I=0$, and by Lemma \ref{lemma:uniqueness_2} it holds
    $$ \|e^{\eps_0 (y-y_0)}\cosh_\alpha w\|_{L^\infty_{(y_0,\infty)} L^2\cap L^2_{(y_0,\infty)}H^1}\lesssim \|\cosh_{\alpha(y_0)} w(\cdot,y_0)\|_{L^2}. $$
    Sending $y_0$ to $-\infty$ shows that $w=0$, and the claim follows.
\end{proof}

\begin{proposition}[Uniqueness of solutions of \eqref{eq:miuraEquation}]\label{prop:unconditional_uniqueness_miura}
    Let $u\in \Hcrit$ small enough. Let $v$ be a solution to \eqref{eq:miuraEquation} satisfying \eqref{eq:v_is_in_L^3_plus_tanh_sigma}. Then, $v$ satisfies \eqref{eq:v_minus_G_0_is_in_exponentially_weighted_L^2}, \eqref{eq:v_minus_G_0_is_small_at_minus_infinity}, \eqref{eq:v_minus_G_0_is_small_at_plus_infinity}. In particular, by Corollary \ref{cor:uniquenessOfMiura}, $v$ is uniquely determined by $u$ and by the value of $\alpha$ at any fixed $y$, where $v=\omega+G_\alpha$ is the decomposition as in Lemma \ref{lemma:expDecomposition}.
\end{proposition}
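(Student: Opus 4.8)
The plan is to promote the weak ansatz \eqref{eq:v_is_in_L^3_plus_tanh_sigma} to the exponentially localized picture \eqref{eq:v_minus_G_0_is_in_exponentially_weighted_L^2}--\eqref{eq:v_minus_G_0_is_small_at_plus_infinity} and then read off the conclusion from Corollary \ref{cor:uniquenessOfMiura}. First I would measure $v$ against $G$ rather than against the bare kink: since $\wv^\pm\in L^3(\RR)$ by Corollary \ref{cor:estimates-for-wtv-part-2}, hypothesis \eqref{eq:v_is_in_L^3_plus_tanh_sigma} is equivalent to $\tilde w:=v-G_\sigma=w-\eta^+_\sigma\wv^+-\eta^-_\sigma\wv^-\in L^3(\RR)$ with $\sigma\in C(\R_y)$, $\sigma_y\in L^2(\R_y)$; note that $\sigma_y\in L^2(\R_y)$ forces $|\sigma(y)|\lesssim 1+|y|^{1/2}$, so the weights $\cosh^2(x)$ and $\cosh^2_\sigma$ differ, for each fixed $y$, only by a finite factor. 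The function $\tilde w$ solves the equation derived in the proof of Proposition \ref{prop:newExpEstimates} with $\alpha$ replaced by $\sigma$, and the decisive structural point is that in that equation every term other than $\tilde w$ itself --- the coefficients $\tanh_\sigma$, $V_\sigma=G_\sigma-\tanh_\sigma=\eta^+_\sigma\wv^++\eta^-_\sigma\wv^-$, and all source terms --- is either bounded and of size $\|u\|_\Hcrit$, or exponentially localized about $x=\sigma(y)$ (carrying a factor $\sech_\sigma$ or $\sech^2_\sigma$).

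The heart of the matter is to show that $\tilde w$, a priori only in $L^3(\RR)$, actually lies in an exponentially weighted $L^2$ space, i.e. to establish \eqref{eq:v_minus_G_0_is_in_exponentially_weighted_L^2} together with the quantitative bound of Corollary \ref{cor:uniquenessOfMiura}. The mechanism is the one already exploited in Proposition \ref{prop:newExpEstimates}: the linear part $\partial_y-\partial_x^2-2\partial_x(\tanh_\sigma\,\cdot\,)$ transports mass towards the kink from both sides, where $L^3$ embeds into $L^2$ locally, while away from the kink the remainder decays exponentially in $y$ by the lower bound for the quadratic form of $-\partial_x^2-2\sech^2$ on the $L^2$-orthogonal complement of $\sech$ (Lemma \ref{lemma:lowerBoundOnTheQuadraticForm}); the single non-decaying direction, $\partial_\sigma G_\sigma\approx-\sech^2_\sigma$, is eliminated by passing from the shift $\sigma$ to the shift $\alpha$ determined by $\int_\R\omega\,dx=0$, $\omega:=v-G_\alpha$, which is precisely the condition $\langle\cosh_\alpha\omega,\sech_\alpha\rangle_{L^2}=0$ that unlocks the gap. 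Concretely, I would truncate to a half-space $\{y>y_0\}$: since $\tilde w\in L^3(\RR)$ one may pick $y_0\to-\infty$ along a sequence with $\|\tilde w\|_{L^3(\R_x\times(y_0,y_0+1))}\to 0$, and a local parabolic-smoothing/continuity argument for the above equation (using Lemma \ref{lemma:localWellPosednessExponentiallyLocalized}, the transport towards the kink and the localization of the forcing) upgrades $\tilde w$ to $v(\cdot,y_1)-G_0(\cdot,y_1)\in L^2(\R_x;\cosh^2(x)\,dx)$ at some $y_1\in(y_0,y_0+1)$, with norm $\lesssim\|\tilde w\|_{L^3(\R_x\times(y_0,y_0+1))}+\|u\|_\Hcrit$; at $y_1$ the decomposition $v=\omega+G_\alpha$ of Lemma \ref{lemma:expDecomposition} is available, and the weighted energy estimate of Proposition \ref{prop:newExpEstimates}, run forward on $(y_1,\infty)$, propagates $\|\cosh_\alpha\omega\|_{L^\infty L^2\cap L^2 H^1}+\|\alpha_y\|_{L^2}\lesssim\|u\|_\Hcrit$ with a constant independent of $y_0$. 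Letting $y_0\to-\infty$ gives \eqref{eq:v_minus_G_0_is_in_exponentially_weighted_L^2} and the global a priori estimate.

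With this in hand, \eqref{eq:v_minus_G_0_is_small_at_minus_infinity} and \eqref{eq:v_minus_G_0_is_small_at_plus_infinity} follow at once: $g(y):=\|\cosh_\alpha\omega(\cdot,y)\|_{L^2_x}^2$ is integrable on $\R_y$ (its integral is $\|\cosh_\alpha\omega\|_{L^2_{x,y}}^2<\infty$) and has integrable $y$-derivative by the energy identity, hence $g(y)\to0$ as $y\to\pm\infty$; taking $\gamma=\alpha(y)$ in the infima defining \eqref{eq:v_minus_G_0_is_small_at_minus_infinity}, \eqref{eq:v_minus_G_0_is_small_at_plus_infinity} yields the claim. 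Thus $v$ satisfies every hypothesis of Corollary \ref{cor:uniquenessOfMiura}, which gives the stated estimate on $(\omega,\alpha)$, and if $v^1,v^2$ are two such solutions with $\alpha^1(y_0)=\alpha^2(y_0)$ for some $y_0$, the dichotomy in that Corollary forces $v^1\equiv v^2$.

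The main obstacle is exactly the step producing \eqref{eq:v_minus_G_0_is_in_exponentially_weighted_L^2} from membership in $L^3(\RR)$: $L^3$ does not embed into $L^2(\cosh^2\,dx)$, not even locally, so no amount of plain heat smoothing upgrades $\tilde w$ --- the gain must come from the contraction of the linearized flow towards $x=\sigma(y)$ and the spatial localization of the forcing, with the one non-contracting mode absorbed into a modulation of the shift. Setting up this continuity argument carefully --- so that the parameter $\alpha$ and the orthogonality $\int_\R\omega\,dx=0$ can be introduced before any weighted bound is known, and so that the constants do not degenerate as $y_0\to-\infty$ --- is the technical core; everything else is the machinery already developed in this section (Lemmas \ref{lemma:expDecomposition}, \ref{lemma:localWellPosednessExponentiallyLocalized}, \ref{lemma:lowerBoundOnTheQuadraticForm} and Proposition \ref{prop:newExpEstimates}).
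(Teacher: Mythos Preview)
Your overall architecture is right: reduce \eqref{eq:v_is_in_L^3_plus_tanh_sigma} to \eqref{eq:v_minus_G_0_is_in_exponentially_weighted_L^2}--\eqref{eq:v_minus_G_0_is_small_at_minus_infinity} and then invoke Corollary~\ref{cor:uniquenessOfMiura}. You also correctly identify the crux --- upgrading $\tilde w:=v-G_\sigma\in L^3(\RR)$ to $\cosh_\sigma\tilde w\in C(\R_y,L^2_x)$ --- and you honestly flag it as ``the main obstacle''. But your proposed mechanism for this step does not work. Smallness of $\|\tilde w\|_{L^3(\R_x\times(y_0,y_0+1))}$ gives no weighted-$L^2$ control at any $y_1$ in that slab: a bump of $L^3$-size $\eps$ sitting at $x=N$ has $\|\cosh\cdot\tilde w\|_{L^2}\sim e^{N}\eps$, and parabolic smoothing on a unit $y$-interval cannot manufacture the missing factor $e^{-N}$. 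The transport towards the kink needs time of order $N$ to act, so the gain must be extracted from the full half-line $(-\infty,y_0]$, not from a unit slab --- and you cannot run the weighted energy estimate of Proposition~\ref{prop:newExpEstimates} there because that estimate \emph{presupposes} a weighted-$L^2$ initial datum, which is exactly what is unknown. Lemma~\ref{lemma:localWellPosednessExponentiallyLocalized} has the same defect for your purpose.

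The paper closes this gap by a different device: instead of the two-sided weight $\cosh_\sigma$, it treats the one-sided differences $z^\pm:=w-\wv^\pm$ and conjugates by $e^{\pm(x-\sigma)}$. Writing $\omega:=e^{x-\sigma}(w-\wv^+)$, the $-2\partial_x$ coming from $\tanh_\sigma\approx 1$ on the right of the kink is absorbed into the conjugation, and the equation becomes
\[
\omega_y-\omega_{xx}+\omega=(\partial_x-1)\big[(w+\wv^+)\omega-2\sech_\sigma w\big]+\sigma_y\,e^{x-\sigma}\sech^2_\sigma-\sigma_y\,\omega.
\]
The point is the $+\omega$ on the left: the inverse $\GammaExp=(\partial_y-\partial_x^2+1)^{-1}$ has kernel $e^{-y}\Gamma(x,y)\in L^1(\RR)$, so in addition to the usual heat estimates it is bounded $L^pL^q\to L^pL^q$. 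This lets one close a perturbative fixed-point estimate for $\omega$ in $C_0L^2\cap L^6$ on the half-plane $\{y<y_0\}$ using only the $L^3$-smallness of $w,\wv^+$ and the $L^2$-smallness of $\sigma_y$ there; no weighted initial datum and no modulation are needed. One then propagates to all of $\R_y$ by iteration on finite strips, repeats for $e^{-(x-\sigma)}(w-\wv^-)$, and combines via
\[
v-G_\sigma=\eta^+_\sigma(w-\wv^+)+\eta^-_\sigma(w-\wv^-),\qquad \cosh_\sigma\,\eta^\pm_\sigma\le e^{\pm(x-\sigma)},
\]
to obtain $\cosh_\sigma(v-G_\sigma)\in C(\R_y,L^2_x)$ with norm vanishing as $y\to-\infty$. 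That is \eqref{eq:v_minus_G_0_is_in_exponentially_weighted_L^2} and \eqref{eq:v_minus_G_0_is_small_at_minus_infinity}; \eqref{eq:v_minus_G_0_is_small_at_plus_infinity} then follows from Corollary~\ref{cor:uniquenessOfMiura} as you say. The idea you were missing is precisely this one-sided conjugation that trades the transport for a spectral gap in the linear operator.
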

\noindent Lemma \ref{lemma:change-of-variables-c-alpha_0-gamma_0} will later tell us that $v=\eleV^{(-1,1)}(u,c)$ (see Proposition \ref{prop:kink-addition_map_eleV}) for a suitable $c\in \R$ that is uniquely determined by $u$ and $\alpha(0)$.

\begin{proof}
    Consider $v^\pm\in L^3(\RR)\pm 1$ as in Definition \ref{def:VandG}, and $\wv^\pm:=v^\pm\mp 1$. For $w$ as in the statement of the Proposition, after some rewriting, it holds
    \[
    w_y-w_{xx}-2w_x=-4(\eta^-_\sigma w)_x+(w^2)_x-u_x+\sigma_y\cdot \sech_\sigma^2.
    \]
    The function $z:=w-\wv^+$ solves
    \[
    z_y-z_{xx}-2z_x=((w+\wv^+)z)_x-4(\eta^-_\sigma w)_x+\sigma_y\cdot \sech^2_\sigma.
    \]
    The function $\omega(x,y):=e^{x-\sigma(y)}z(x,y)$ satisfies
    \begin{equation}\label{eq:z-023}
    \omega_y-\omega_{xx}+\omega=(\de_x-1)[(w+\wv^+)\omega-2\sech_\sigma w]+\sigma_y e^{x-\sigma}\sech^2_\sigma-\sigma_y\,\omega.
    \end{equation}
    The operator $\GammaExp:=(\de_y-\de_x^2+1)^{-1}$ satisfies all the estimates satisfied by $\Gamma$ stated in Proposition \ref{prop:spacetime_estimate_heat_general} with similar proofs, with in addition
    \[
    \|\GammaExp f\|_{L^pL^q}\lesssim \|f\|_{L^pL^q}
    \]
    due to the fact that the kernel $\GammaExp(x,y)=e^{-y}\Gamma(x,y)$ belongs to $L^1(\RR)$. We thus have the estimate
    \als{
    \|\omega\|_{C_0L^2\cap L^6_{x,y}}&\lesssim \|(w+\wv^+)\omega\|_{L^2_{x,y}}+\|\sech_\sigma w\|_{L^3_yL^2_x}\\
    &\QQQ+\|\sigma_y e^{x-\sigma}\sech^2_\sigma\|_{L^2_{x,y}}+\|\sigma_y\,\omega\|_{L^2_{x,y}}\\
    &\lesssim (\|\wv^+\|_{L^3_{x,y}}+\|w\|_{L^3_{x,y}}+\|\sigma_y\|_{L^2_y})\|\omega\|_{C_0L^2\cap L^6_{x,y}}\\
    &\QQQ+\|w\|_{L^3_{x,y}}+\|\sigma_y\|_{L^2_y},
}
    The estimate holds on any half plane $\R_x\times (-\infty,y_0]$, $y_0\in \R_y$, since the kernel $\GammaExp$ is identically zero for negative $y$. In particular, choosing $y_0$ to be negative enough so that all the quantities multiplying $\|\omega\|$ on the right hand side are small enough, it holds
    \[
    \|\omega\|_{C_0L^2\cap L^6_{x,y}}\lesssim \|w\|_{L^3_{x,y}+L^6_{x,y}}+\|\sigma_y\|_{L^2_y}
    \]
    on that given half plane, provided that the left hand side is finite. By a density argument and the uniqueness of solutions for the initial value problem associated to the heat equation, the finiteness condition of the left hand side can be removed assuming $z\in L^3(\RR)$, so for our original $z$ we have
    \[
    \|e^{x-\sigma(y)}z\|_{C_0L^2\cap L^6_{x,y}}\lesssim \|w\|_{L^3_{x,y}+L^6_{x,y}}+\|\sigma_y\|_{L^2_y}. \]
    Thus, $e^{x-\sigma(y)}(w-\wv^+)\in C_0((-\infty,y_0],L^2(\R_x))\cap L^6(\R\times (-\infty,y_0))$. An analogous energy estimate
    \als{
     \|\omega\|_{L^\infty_IL^2\cap L^6_IL^6}&\lesssim \|\omega|_{y=y_0}\|_{L^2(\R)}+(\|\wv^+\|_{L^3_IL^3_x}+\|w\|_{L^3_IL^3_x}+\|\sigma_y\|_{L^2_I})\|\omega\|_{L^\infty_IL^2\cap L^6_IL^6_x}\\
    &\QQQ+\|w\|_{L^3_IL^3_x}+\|\sigma_y\|_{L^2_I}
    }
    holds on finite strips of the form $\R\times I$, $I=(y_0,y_1)$. The above argument applied iteratively on finitely many small intervals $I$ shows that 
    \[
    e^{x-\sigma(y)}(w-\wv^+)\in C(\R_y,L^2(\R_x))\cap L^6(\RR),
    \]
    with the $L^2$-norm going to zero as $y\to-\infty$. The same holds for $e^{-(x-\sigma(y))}(w-\wv^+)$, with an identical proof. Now note that it holds
    \als{
    v-G_\sigma&=w-\eta_\sigma^+\wv^+-\eta_\sigma^-\wv^-\\
    &=\eta_\sigma^+\cdot (w-\wv^+)+\eta_\sigma^-\cdot (w-\wv^-),
    }
    and $\cosh_\sigma\eta^\pm_\sigma\leq e^{\pm(x-\sigma(y))}$, so by the two estimates we proved,
    \[
    \cosh_\sigma\cdot (v-G_\sigma)\in C(\R_y,L^2(\R_x))\cap L^6(\RR),
    \]
    with vanishing $L^2$-norm as $y\to-\infty$. This directly implies \eqref{eq:v_minus_G_0_is_in_exponentially_weighted_L^2}, and by Lemma \ref{lemma:expDecomposition} part \ref{item:expDec-005}, condition \eqref{eq:v_minus_G_0_is_small_at_minus_infinity} follows as well. The rest of the statement follows from Corollary \ref{cor:uniquenessOfMiura}.
\end{proof}

Following what we do in Appendix \ref{sec:existence_of_solutions_with_L2_data}, we could now run a compactness argument to obtain the existence of eternal solutions with the above bounds (this will work due to the a priori estimates we have on $v$). It turns out that we can avoid this, since we are able to find explicit formulas for the solutions in terms of $v^\pm$.

\subsection{Exact formula for solutions of \texorpdfstring{$($\ref{eq:miuraEquation}$)$}{(M)}}\label{subsec:exact_solutions}
The main idea to construct solutions of \eqref{eq:miuraEquation} is simple. Burgers' equation is formally equivalent to a heat equation with potential, through the Cole--Hopf transformation. So there is a way of taking superpositions of different solutions by the linearity of the heat equation. In particular, we are allowed to interpolate between $v^+$ and $v^-$, which can be seen as the limit of the solution family as $\alpha\to\pm\infty$: note, in fact, that $v^+$ and $v^-$ satisfy equation \eqref{eq:miuraEquation} with boundary conditions $\pm 1$ as $(x,y)\to\infty$. We make things slightly more general by considering an arbitrary combination of solutions of \eqref{eq:miuraEquation} from Corollary \ref{cor:estimates-for-wtv-part-2}: this is the content of Proposition \ref{prop:kink-addition_map_eleV}, which we prove at the end of this subsection. We will show in the next subsection how the solutions constructed in this way are the ones we are looking for (in particular, they satisfy the assumptions of Proposition \ref{prop:unconditional_uniqueness_miura}).

One conceptual difficulty is the fact that the derivative in the Cole--Hopf transformation is not a bijective operation, and its inverse introduces undetermined $y$-dependent constants. However, there is a natural way of defining the antiderivative of a solution of \eqref{eq:miuraEquation} up to a constant that does not depend on $x$ nor on $y$. To see this, we consider the natural primitive equation of \eqref{eq:miuraEquation},
\begin{equation}\label{eq:primitiveEquation}
    V_y-V_{xx}=V_x^2-u.
\end{equation}
\begin{lemma}\label{lemma:existenceOfPrimitiveSolution}
    Let $v\in L^2_\loc(\RR)$ be a distributional solution to equation \eqref{eq:miuraEquation}. Then, there exists a solution $V$ to equation \eqref{eq:primitiveEquation} such that $\partial_x V=v$. Moreover, $V$ is unique up to an additive constant.
\end{lemma}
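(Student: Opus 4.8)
The plan is to read this as a distributional Poincar\'e lemma on $\R^2$. If a solution $V$ with $V_x=v$ is to satisfy \eqref{eq:primitiveEquation}, then necessarily $V_y=V_{xx}+V_x^2-u=v_x+v^2-u$, so the only candidate for the pair $(V_x,V_y)$ is $(v,g)$ with
\[
g:=v_x+v^2-u\in\mathscr{D}'(\RR),
\]
which is a well-defined distribution precisely because $v\in L^2_\loc(\RR)$ forces $v^2\in L^1_\loc(\RR)$. First I would observe that the Miura equation \eqref{eq:miuraEquation} is exactly the closedness of this pair: $\partial_x g=v_{xx}+(v^2)_x-u_x=v_y=\partial_y v$ in $\mathscr{D}'(\RR)$; equivalently, the $1$-form $v\,dx+g\,dy$ is closed in the sense of distributions.

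Next I would construct the potential using the product structure of $\R^2$, rather than invoking a general Poincar\'e lemma. Since $v\in L^1_\loc(\RR)$, the function $U(x,y):=\int_0^x v(s,y)\,ds$ lies in $L^1_\loc(\RR)$ and satisfies $\partial_x U=v$ distributionally (more abstractly, $\partial_x$ is onto on $\mathscr{D}'(\RR)$). Then $\partial_x(\partial_y U-g)=\partial_y v-\partial_x g=0$, so $\partial_y U-g$ is independent of $x$, i.e. of the form $1\otimes R$ with $R\in\mathscr{D}'(\R_y)$. Taking a primitive $P\in\mathscr{D}'(\R_y)$ of $R$ and setting $V:=U-1\otimes P$, one gets $\partial_x V=v$ (as $P$ does not depend on $x$) and $\partial_y V=\partial_y U-1\otimes R=g$, hence
\[
V_y-V_{xx}=g-\partial_x v=v^2-u=V_x^2-u,
\]
which is \eqref{eq:primitiveEquation}.

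For uniqueness, if $V_1,V_2$ both solve \eqref{eq:primitiveEquation} with $\partial_x V_1=\partial_x V_2=v$, then $W:=V_1-V_2$ has $\partial_x W=0$, so $W=1\otimes\tilde R$ for some $\tilde R\in\mathscr{D}'(\R_y)$; subtracting the two copies of \eqref{eq:primitiveEquation} and using $W_{xx}=0$ gives $\partial_y W=0$, so $\tilde R$ is a constant and $W$ is a constant.

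I do not anticipate a genuine obstacle here: the only ingredients are the existence of a one-variable primitive in $\mathscr{D}'$, the identification of distributions annihilated by $\partial_x$, and the fact that every term appearing ($v^2$, $V_x^2$, $u$) is a bona fide distribution under the hypothesis $v\in L^2_\loc(\RR)$. The one point deserving a line of care is exactly this last one, together with checking that all the identities above are legitimate in $\mathscr{D}'(\RR)$ and not merely formal. If one preferred to bypass the abstract distribution-theoretic facts, the identical construction can be carried out by mollifying $v$ and $g$, solving the smooth closed-form problem with the normalization $V_\varepsilon(0,0)=0$, and passing to the limit in $\mathscr{D}'(\RR)$.
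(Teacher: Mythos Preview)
Your proof is correct and follows essentially the same route as the paper's: take an $x$-antiderivative of $v$ (the paper invokes its Lemma~\ref{lemma:distributional-antiderivative}, you write the integral explicitly and mention the same abstract fact), observe the primitive equation is satisfied up to an $x$-independent error, and absorb that error with a $y$-antiderivative; the uniqueness argument is identical. Your framing via the closed $1$-form $v\,dx+g\,dy$ is a pleasant conceptual packaging but does not change the substance.
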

\begin{proof}
    Assume $v\in L^2_\loc(\RR)$ is a solution to equation \eqref{eq:miuraEquation}. By Lemma \ref{lemma:distributional-antiderivative}, there exists $\wV\in \mathscr D'(\RR)$ that satisfies $\partial_x\wV=v$. It follows that
    \[
    \wV_y-\wV_{xx}=\wV_x^2-u-g,
    \]
    where $g\in \Dscr(\R_y)$ (that is, $g\in \Dscr(\RR)$ such that $\partial_x g=0$). Consider $G\in \Dscr(\R_y)$ a primitive of $g$ which is still independent on $y$ and define $V:=\wV+G$. Then, it is immediate to see that $V$ satisfies \eqref{eq:primitiveEquation} and $\partial_x V=v$. Moreover, assume $T$ has the same properties of $V$. Then, calling $W:=T-V$,
    \begin{gather*}
        W_x=v-v=0,\\
        W_y=T_y-V_y=(v_x+v^2-u)-(v_x+v^2-u)=0,
    \end{gather*}
    so that $W$ is constant.
\end{proof}

\subsubsection{Estimates in parabolic BMO spaces}
By scaling considerations, and from the fact that we have uniqueness only up to an additive constant, we can guess that the right space for $V$ is some space of BMO type, but we need to take into account the scaling symmetry of equation \eqref{eq:miuraEquation}, which is parabolic and incompatible with the usual, Euclidean $\BMO(\RR)$. We give the definition of BMO spaces with parabolic metric, after a brief, self-contained overview of the general theory of metric measure spaces of homogeneous type. We refer in particular to Coifman--Weiss \cite{coifman-weiss-1977-extension-hardy-spaces}, and to the other classical references \cites{stein-1970-singular-integrals,stein-1993-harmonic-analysis}.
\begin{definition}[Spaces of homogeneous type and BMO, {\cite{coifman-weiss-1977-extension-hardy-spaces}}]\label{def:spaces_of_homogeneous_type_and_BMO}
    Let $(X,d)$ be a metric\footnote{One can more generally consider quasi-metrics on $X$, where the triangle inequality only holds up to an absolute multiplicative constant. We will always assume that the metric is finite.} space, with open balls denoted by $\Bcal_r(x)$, $r\geq 0$, $x\in X$, and $\mu$ be a Borel measure on $X$ such that $\mu(\Bcal_r(x))>0$ for any $r>0$, $x\in X$. The triple $(X,d,\mu)$ is a \emph{space of homogeneous type} if the measure $\mu$ is \emph{doubling}, i.e., there exists $A>0$ such that
    \[
    \mu(\Bcal_r(x))\leq A\mu(\Bcal_{r/2}(x)),\quad\forall \,r>0,\,x\in X.
    \]
    We define $\BMO^0(X,d,\mu)$, or simply $\BMO^0(X)$, as the space of all (classes of a.e. equal) measurable functions such that the seminorm
    \[ \|f\|_{\BMO}:= \sup_\Bcal \fint_\Bcal |f-f_\Bcal|d\mu \]
    is finite, where the supremum is taken over all balls $\Bcal$ with respect to the distance $d$, and where $f_\Bcal:=\fint_\Bcal f d\mu:=\frac{1}{\mu(\Bcal)}\int_\Bcal f d\mu$. We then define\footnote{When $\mu(X)<\infty$, it is common to define $\BMO(X)$ simply as $\BMO^0(X)$, and to set
    \[
    \|f\|_{\BMO}= \sup_\Bcal \fint_\Bcal |f-f_\Bcal|d\mu+\left|\fint_X f\,d\mu\right|.
    \]
    We do not adopt this distinction here for the sake of uniformity. This has minor importance, since the applications of this theory in this article will only concern the case $\mu(X)=\infty$.
    } $\BMO(X):=\BMO^0(X)/\R$ (note that $\|\cdot\|_{\BMO}$ is well-defined on $\BMO(X)$).
\end{definition}
The above is simply the usual definition of the space $\BMO$ in the context of metric measure spaces of homogeneous type (see the discussion in \cite{coifman-weiss-1977-extension-hardy-spaces}*{§2}). From the classical theory, we know that the space $(\BMO(X),\|\cdot\|_{\BMO})$ is a Banach space, and it is immediate to verify that $\BMO^0(X)$ is a Banach space as well when equipped with the norm
\[
\|f\|_{\BMO^0_\phi}:=\|f\|_{\BMO}+\left|\int_X f\phi\,d\mu\right|,
\]
for some $\phi\in L^\infty_c(X)$ such that $\int_X \phi \,d\mu \neq 0$ (all such norms are equivalent), and embeds continuously into\footnote{From here we assume that closed balls in $X$ are compact, so that $L^p_\loc(X)$ is a Fr\'echet space with all the usual properties. This will hold for the parabolic BMO spaces. It holds in general when $X$ is complete, see \cite{coifman-weiss-1977-extension-hardy-spaces}*{§4, second paragraph}.} $L^1_\loc(X)$. From here on, for a function $f\in \BMO^0(X)$, we will denote with the same name the class of functions $[f]_\R=\{f+c\,|\,c\in\R\}\in \BMO(X)$ when there is no ambiguity.

One property of BMO functions in spaces of homogeneous type is the John--Nirenberg Lemma. The proof is analogous to that of the Euclidean case, which was originally proved in \cite{john-nirenberg-1961-on-functions-of-BMO} (see also \cite{coifman-weiss-1977-extension-hardy-spaces}*{footnote 22}).
\begin{lemma}[John--Nirenberg inequality]
    Let $(X,d,\mu)$ be a space of homogeneous type. There exist constants $C,\gamma>0$, depending on the doubling constant $A$, such that for every ball $\Bcal$ it holds
    \[
    \fint_\Bcal e^{\frac{\gamma}{\|f\|_{\BMO}}|f(x)-f_\Bcal|}d\mu\leq C.
    \]
\end{lemma}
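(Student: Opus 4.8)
The plan is to run the classical John--Nirenberg iteration, transplanted to the space of homogeneous type $(X,d,\mu)$, the only genuinely new ingredient being a Calder\'on--Zygmund decomposition adapted to $(X,d,\mu)$. First I would reduce to a distributional estimate: by homogeneity we may assume $\|f\|_{\BMO}=1$, and it suffices to prove
\[
\mu\big(\{x\in\Bcal:|f(x)-f_{\Bcal}|>\lambda\}\big)\leq C\,e^{-\gamma\lambda}\,\mu(\Bcal),\qquad\lambda\geq 0,
\]
uniformly over all balls $\Bcal$: the conclusion then follows from the layer-cake formula, since $\fint_{\Bcal}e^{\gamma'|f-f_{\Bcal}|}d\mu=1+\gamma'\int_0^\infty e^{\gamma'\lambda}\,\mu(\{|f-f_{\Bcal}|>\lambda\})\mu(\Bcal)^{-1}\,d\lambda$ is finite and uniformly bounded for any $\gamma'<\gamma$. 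It is convenient to work with
\[
\phi(\lambda):=\sup_{\Bcal}\frac{1}{\mu(\Bcal)}\,\mu\big(\{x\in\Bcal:|f(x)-f_{\Bcal}|>\lambda\}\big),
\]
which is non-increasing, bounded by $1$, and, by Chebyshev's inequality and $\fint_{\Bcal}|f-f_{\Bcal}|\,d\mu\leq\|f\|_{\BMO}=1$, satisfies $\phi(\lambda)\leq\lambda^{-1}$; the goal is exponential decay of $\phi$.

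The engine of the proof is a stopping-time decomposition. Fixing a ball $\Bcal$, writing $g:=f-f_{\Bcal}$ and a large threshold $\lambda_0=\lambda_0(A)$, I would produce a countable family of balls $\{\Bcal_j\}$ such that (i) $|g|\leq\lambda_0$ $\mu$-a.e. outside $\bigcup_j\Bcal_j$; (ii) $\lambda_0<\fint_{\Bcal_j}|g|\,d\mu\leq c_1(A)\,\lambda_0$, whence $|f_{\Bcal_j}-f_{\Bcal}|\leq c_1(A)\,\lambda_0$; and (iii) $\sum_j\mu(\Bcal_j)\leq\frac{c_2(A)}{\lambda_0}\,\mu(\Bcal)\leq\tfrac12\mu(\Bcal)$ once $\lambda_0$ is chosen large. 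This can be set up exactly as over $\R^n$: either through Christ's dyadic cubes on $(X,d,\mu)$, the upper bound in (ii) then being the comparability of a cube with its parent, a direct consequence of the doubling property; or, working directly with balls, by applying a Vitali covering lemma to the superlevel set of a local maximal function of $g$, where one first observes that for $\lambda_0$ large enough every ball on which $|g|$ has average $>\lambda_0$ is trapped inside a fixed dilate of $\Bcal$, because averages of $|g|=|f-f_{\Bcal}|$ over balls comparable to or larger than $\Bcal$ are controlled by a constant depending only on $A$ (telescoping over dyadic dilates and using doubling and $\|f\|_{\BMO}=1$). Both constructions are standard, and the second has the advantage of avoiding any ball-versus-cube friction.

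The last step is to iterate the decomposition generation by generation: inside each $\Bcal_j$ apply the same construction to $f-f_{\Bcal_j}$, and recurse. After $m$ steps one obtains a family of balls of total measure at most $2^{-m}\mu(\Bcal)$ by (iii), and on its complement within the $(m-1)$-st generation the telescoping of (ii) along the chain of ancestors gives $|f(x)-f_{\Bcal}|\leq m\,c_1(A)\,\lambda_0$ for $\mu$-a.e. $x$. Hence $\mu\big(\{x\in\Bcal:|f-f_{\Bcal}|>m\,c_1\lambda_0\}\big)\leq 2^{-m}\mu(\Bcal)$ for all $m\in\N$, and comparing with $\lambda\in[m\,c_1\lambda_0,(m+1)c_1\lambda_0)$ (using $\phi\leq1$ on the first interval) yields the distributional bound with $\gamma=\frac{\log 2}{c_1(A)\,\lambda_0(A)}$ and, say, $C=2$.

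I expect the only real obstacle to be the construction of $\{\Bcal_j\}$ with the \emph{nesting} needed to pass between generations: over $\R^n$ this is free from the dyadic grid, whereas on a general space of homogeneous type it must be supplied either by Christ's dyadic cubes or by the Vitali/maximal-function argument sketched above. This is standard bookkeeping rather than a new idea; once it is in place the remainder of the argument is verbatim the Euclidean proof, and the routine estimates may be omitted.
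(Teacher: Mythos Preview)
Your proposal is correct and is precisely the classical argument the paper has in mind: the paper does not give a proof at all, only the remark that ``the proof is analogous to that of the Euclidean case'' with references to John--Nirenberg and Coifman--Weiss. Your sketch of the stopping-time iteration, with the Calder\'on--Zygmund/Vitali decomposition supplied either via Christ cubes or a local maximal function, is exactly the standard adaptation to homogeneous type that those references carry out.
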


\begin{corollary}\label{cor:exponential-integrability-of-BMO-functions}
    It holds $\BMO^0(X)\hookrightarrow L^p_\loc(X)$ for all $p<\infty$. Moreover, $e^f\in L^p_\loc(X)$ whenever $p\leq \gamma /\|f\|_{\BMO}$.
\end{corollary}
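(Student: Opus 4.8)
The plan is to deduce both assertions directly from the John--Nirenberg inequality, together with the fact---recalled just before the statement---that $\BMO^0(X)\hookrightarrow L^1_\loc(X)$, which guarantees that for every ball $\Bcal$ the average $f_\Bcal:=\fint_\Bcal f\,d\mu$ is a well-defined finite real number and that $\mu(\Bcal)<\infty$. The case $\|f\|_{\BMO}=0$ is trivial ($f$ then agrees a.e.\ with a constant), so one may assume $\|f\|_{\BMO}>0$ and set $a:=\gamma/\|f\|_{\BMO}>0$.

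For the exponential integrability claim, I would fix a ball $\Bcal$ and write $e^{pf}=e^{pf_\Bcal}e^{p(f-f_\Bcal)}$. When $p\le a$ one has $p(f-f_\Bcal)\le a|f-f_\Bcal|$ pointwise, so John--Nirenberg gives
\[
\int_\Bcal e^{pf}\,d\mu\le e^{pf_\Bcal}\int_\Bcal e^{a|f-f_\Bcal|}\,d\mu\le C\,e^{pf_\Bcal}\,\mu(\Bcal)<\infty;
\]
since every compact subset of $X$ is covered by finitely many balls, this yields $e^f\in L^p_\loc(X)$. For the $L^p_\loc$-embedding claim, I would combine John--Nirenberg with the elementary bound $t^p\le C(p,a)e^{at}$ for $t\ge0$ (the function $t\mapsto t^pe^{-at}$ being continuous and vanishing at infinity) to obtain
\[
\fint_\Bcal|f-f_\Bcal|^p\,d\mu\le C(p,a)\fint_\Bcal e^{a|f-f_\Bcal|}\,d\mu\le C(p,a)\,C
\]
for every ball $\Bcal$, whence $f\in L^p(\Bcal)$ (as $f_\Bcal$ is a constant and $\mu(\Bcal)<\infty$), and therefore $f\in L^p_\loc(X)$.

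To upgrade the last statement to a genuine continuous embedding $\BMO^0(X)\hookrightarrow L^p_\loc(X)$, recall that $L^p_\loc(X)$ is Fréchet, so it suffices to bound $\|f\|_{L^p(\Bcal)}$ by $\|f\|_{\BMO^0_\phi}$ for each fixed ball $\Bcal$. The oscillation part $\|f-f_\Bcal\|_{L^p(\Bcal)}$ is $\lesssim_\Bcal\|f\|_{\BMO}$ by the displayed estimate above, and $|f_\Bcal|$ is controlled via the standard telescoping estimate $|f_{\Bcal'}-f_{\Bcal''}|\lesssim\|f\|_{\BMO}$ for comparable balls---an immediate consequence of the doubling property---chained from $\Bcal$ to a fixed ball $\Bcal_0$ containing $\supp\phi$, together with $\bigl|f_{\Bcal_0}-\tfrac1{\int\phi\,d\mu}\int f\phi\,d\mu\bigr|\lesssim_{\phi,\Bcal_0}\|f\|_{\BMO}$.

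The whole argument is routine once the John--Nirenberg Lemma is in hand; the only step that amounts to more than a one-line computation---and hence the closest thing to an obstacle---is the bookkeeping for continuity of the embedding, i.e.\ estimating the averages $f_\Bcal$ on a fixed ball in terms of the normalized norm $\|f\|_{\BMO^0_\phi}$, which is exactly the standard doubling/telescoping argument just indicated.
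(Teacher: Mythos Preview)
Your proof is correct and follows the standard route implicit in the paper, which states the corollary without proof as an immediate consequence of the John--Nirenberg inequality. One small point worth making explicit: when you claim $\|f-f_\Bcal\|_{L^p(\Bcal)}\lesssim_\Bcal\|f\|_{\BMO}$ from the displayed bound $\fint_\Bcal|f-f_\Bcal|^p\le C(p,a)\,C$, the constant $C(p,a)=\sup_{t\ge0}t^pe^{-at}=(p/a)^pe^{-p}$ depends on $a=\gamma/\|f\|_{\BMO}$, so one should observe that $C(p,a)^{1/p}=(p/\gamma)e^{-1}\|f\|_{\BMO}$ is linear in $\|f\|_{\BMO}$, which is exactly what makes the embedding continuous rather than merely well-defined.
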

We state the classical logarithmic-growth bound for BMO functions, the proof of which can be found in \cite{grafakos-2014-modern-fourier-analysis}*{§3.1} for the Euclidean case, as a guided exercise.
\begin{lemma}\label{lemma:BMO-logarithmic-growth}
    Let $(X,d,\mu)$ be a space of homogeneous type. The following inequality holds for functions in $\BMO(X)$:
    \[
    |f_{\Bcal_{r}(x_1)}-f_{\Bcal_{r}(x_2)}|\lesssim \log\left(2+\frac{d(x_1,x_2)}{r}\right)\|f\|_{\BMO}.
    \]
\end{lemma}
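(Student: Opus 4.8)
The plan is to prove the estimate by telescoping the ball averages of $f$ along a chain of concentric balls of geometrically increasing radii, the classical Euclidean argument carried over to the homogeneous setting using only the doubling property.

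The first step is the elementary comparison estimate: if $\Bcal\subset\Bcal'$ are balls with $\mu(\Bcal')\le K\mu(\Bcal)$, then
\[
|f_\Bcal-f_{\Bcal'}|=\left|\fint_\Bcal(f-f_{\Bcal'})\,d\mu\right|\le\frac{1}{\mu(\Bcal)}\int_{\Bcal'}|f-f_{\Bcal'}|\,d\mu=\frac{\mu(\Bcal')}{\mu(\Bcal)}\fint_{\Bcal'}|f-f_{\Bcal'}|\,d\mu\le K\|f\|_{\BMO}.
\]
Iterating the doubling hypothesis gives $\mu(\Bcal_{2^j r}(x))\le A^j\mu(\Bcal_r(x))$ for every $j\ge0$ and every $x$, and in particular $|f_{\Bcal_{2^j r}(x)}-f_{\Bcal_{2^{j+1} r}(x)}|\le A\|f\|_{\BMO}$ for all $j$ and $x$, since two consecutive concentric balls in such a chain have measure ratio at most $A$.

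Next I set $R:=d(x_1,x_2)$ and let $N$ be the least nonnegative integer with $2^N r\ge R$, so that $N+1\lesssim\log(2+R/r)$. I build the two concentric chains $\Bcal_r(x_i)\subset\Bcal_{2r}(x_i)\subset\cdots\subset\Bcal_{2^N r}(x_i)$, $i=1,2$, and bridge their top balls through $\Bcal_{2^{N+1} r}(x_2)$: since $R\le 2^N r$, the triangle inequality gives $\Bcal_{2^N r}(x_1)\subset\Bcal_{2^{N+1} r}(x_2)\subset\Bcal_{2^{N+2} r}(x_1)$, so these two top balls have measure ratio at most $A^2$, and the comparison estimate yields $|f_{\Bcal_{2^N r}(x_1)}-f_{\Bcal_{2^N r}(x_2)}|\lesssim\|f\|_{\BMO}$. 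Telescoping along the two chains and through this bridge,
\[
|f_{\Bcal_r(x_1)}-f_{\Bcal_r(x_2)}|\le\sum_{j=0}^{N-1}\bigl|f_{\Bcal_{2^j r}(x_1)}-f_{\Bcal_{2^{j+1} r}(x_1)}\bigr|+\bigl|f_{\Bcal_{2^N r}(x_1)}-f_{\Bcal_{2^N r}(x_2)}\bigr|+\sum_{j=0}^{N-1}\bigl|f_{\Bcal_{2^{j+1} r}(x_2)}-f_{\Bcal_{2^j r}(x_2)}\bigr|,
\]
and each of the $2N+1$ terms on the right is $\lesssim\|f\|_{\BMO}$ with constants depending only on $A$, hence the total is $\lesssim(N+1)\|f\|_{\BMO}\lesssim\log(2+R/r)\|f\|_{\BMO}$, as claimed.

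No genuine obstacle is expected here; the argument is routine. The only points that want a little care are the bridging step — one must check that $\Bcal_{2^N r}(x_1)$ and $\Bcal_{2^N r}(x_2)$ each sit inside a fixed dyadic dilate of the other, which is immediate from $R\le 2^N r$ and the triangle inequality — and the degenerate regime $R\le r$, where $N=0$, the two sums are empty, and the asserted bound reduces to a constant multiple of $\|f\|_{\BMO}$, consistent with $\log(2+R/r)\ge\log 2>0$. If one prefers to work with a quasi-metric in place of a metric, the triangle inequality is used only in its quasi-form, which merely enlarges the fixed power of $A$ in the two places above and leaves the structure of the proof intact.
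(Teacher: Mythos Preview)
Your proof is correct and is precisely the classical telescoping-along-dyadic-chains argument. The paper does not give its own proof of this lemma but cites Grafakos for the Euclidean case as a guided exercise; your write-up is exactly that standard argument adapted to the homogeneous-type setting, so there is nothing to compare.
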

We now apply the general theory above to a very special case, considering \emph{parabolic BMO spaces} as follows.
\begin{definition}[The space $\PBMOx\lambda(\RR)$]\label{def:parabolic_BMO}
    Define the parabolic norm\footnote{We will call it norm, although it is homogeneous only with respect to a parabolic rescaling of coordinates.} on $\RR$
    \[
    |(x,y)|_{\rm p,\lambda}:=\max\{|x+2\lambda y|,|y|^{\aha}\}
    \]
    and the corresponding \emph{parabolic metric} as
    \[ d_{\rm p,\lambda}(p,q):=|p-q|_{\rm p,\lambda}. \]
    We denote by $\PBMOx\lambda(\RR):=\BMO(\RR,d_{\rm p,\lambda},\mu)$ the BMO space with respect to the space of homogeneous type $(\RR,d_{\rm p,\lambda},\mu)$, where $\mu$ is the Lebesgue measure. We will denote by $\PBMOx\lambda^0(\RR)$ the Banach space $\BMO^0(\RR,d_{\rm p,\lambda},\mu)\hookrightarrow \mathscr S'(\RR)$, and call $\PBMO(\RR):=\PBMOo(\RR)$.
\end{definition}

The space $(\R^2,d_{\rm p,\lambda},\mu)$ is indeed a space of homogeneous type ($d_{\rm p,\lambda}$ is a metric) with doubling constant $A=8$, since $\mu(\Bcal_r(x,y))=r^3$. The distance $d_{\rm p,\lambda}$ is the `tilted' version of the natural parabolic metric $d_{\rm p}=d_{\rm p,0}$, that is, $d_{\rm p,\lambda}$ is the composition between $d_{\rm p,0}$ and the linear transformation $(x,y)\mapsto (x+2\lambda y,y)$.

We use the notation $\Gamma^{(c)}:=(\de_y-\de_x^2+c\de_x)^{-1}$ as in Appendix \ref{appendix:heat-equation}.

\begin{lemma}
    The operator $\Gamma^{(-2\lambda)}$ is bounded from $L^{3/2}(\RR)+\dot H^{-\aha,0}(\RR)$ to $\PBMOl(\RR)$, with uniform constants in $\lambda$.
\end{lemma}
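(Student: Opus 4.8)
The plan is to first strip off the parameter $\lambda$ by a linear change of variables, and then obtain the two pieces of the claim --- the $L^{3/2}$-estimate and the $\dot H^{-\aha,0}$-estimate --- as two applications of a single endpoint estimate for parabolic fractional integration. The affine, volume-preserving shear $\Phi_\lambda\colon(x,y)\mapsto(x+2\lambda y,y)$ conjugates $\partial_y-\partial_x^2-2\lambda\partial_x$ into $\partial_y-\partial_x^2$ (the same change of coordinates already used in the proof of Corollary~\ref{cor:estimates-for-wtv-part-2}), hence $\Gamma^{(-2\lambda)}=\Phi_\lambda^{*}\,\Gamma\,(\Phi_\lambda^{*})^{-1}$ with $\Phi_\lambda^{*}f:=f\circ\Phi_\lambda$; moreover, by the very definition of $d_{\rm p,\lambda}$, $\Phi_\lambda$ is an isometry from $(\RR,d_{\rm p,\lambda})$ onto $(\RR,d_{\rm p,0})$, so $\Phi_\lambda^{*}$ is an isometric bijection $\PBMO(\RR)\to\PBMOl(\RR)$. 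Since $\Phi_\lambda^{\pm1}$ preserves the $L^{3/2}$-norm and, by the Galilean invariance of $\dot H^{-\aha,0}(\RR)$ (on the Fourier side it merely shifts $\hat f(\xi,\eta)\mapsto\hat f(\xi,\eta\mp2\lambda\xi)$), also the $\dot H^{-\aha,0}$-norm, the statement reduces, with constants uniform in $\lambda$, to: $\Gamma$ is bounded $L^{3/2}(\RR)\to\PBMO(\RR)$ and $\dot H^{-\aha,0}(\RR)\to\PBMO(\RR)$. The outputs lie in $L^1_\loc(\RR)$ by Proposition~\ref{prop:spacetime_estimate_heat_general}, so they do define elements of $\PBMO(\RR)$; by density it suffices to prove the two a priori bounds for smooth inputs.

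Both bounds are instances of the following parabolic endpoint estimate: \emph{if $K\in C^\infty(\RR\setminus\{0\})$ is homogeneous of parabolic degree $\beta-3$, $0<\beta<3$ (i.e.\ $K(\delta x,\delta^2 y)=\delta^{\beta-3}K(x,y)$), with $|\partial_x^j\partial_y^k K(w)|\lesssim|w|_{\rm p,0}^{\beta-3-j-2k}$ --- equivalently $\int_{\R}|\partial_x^j\partial_y^k K(x,y)|\,dx\lesssim|y|^{(\beta-1-j-2k)/2}$ --- then $f\mapsto K*f$ is bounded from $L^{3/\beta}(\RR)$ into $\PBMO(\RR)$.} This is the space-of-homogeneous-type analogue of $I_{d/p}\colon L^p\to\BMO$, and I would prove it by the classical argument (cf.\ \cite{coifman-weiss-1977-extension-hardy-spaces}): for a parabolic ball $B=\Bcal_r(z_0)$ split $f=f\mathbbm 1_{2B}+f\mathbbm 1_{(2B)^c}=:f_1+f_2$. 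For $f_1$, Fubini together with $\int_{\Bcal_{3r}(0)}|K|\lesssim r^{\beta}$ and Hölder on $2B$ gives $\|K*f_1\|_{L^1(B)}\lesssim r^{\beta}\,\mu(2B)^{1-\beta/3}\|f\|_{L^{3/\beta}}\sim r^{3}\|f\|_{L^{3/\beta}}$, whence $\fint_B|K*f_1|\lesssim\|f\|_{L^{3/\beta}}$. For $f_2$, the anisotropic mean-value bound $|K(z-z')-K(\tilde z-z')|\lesssim\frac{r}{|z_0-z'|_{\rm p,0}}\,|z_0-z'|_{\rm p,0}^{\beta-3}$ (valid for $z,\tilde z\in B$, $z'\notin 2B$) lets one sum over the dyadic parabolic annuli $A_m=\{2^m r\le|z_0-z'|_{\rm p,0}<2^{m+1}r\}$, $m\ge1$, each contributing $\lesssim 2^{-m}(2^m r)^{\beta-3}\mu(A_m)^{1-\beta/3}\|f\|_{L^{3/\beta}}\sim 2^{-m}\|f\|_{L^{3/\beta}}$; the geometric series converges and controls $\fint_B|K*f_2-(K*f_2)_B|$.

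It then remains to identify the two kernels. For the $L^{3/2}$-bound take $K=\Gamma$, the heat kernel $(4\pi y)^{-1/2}e^{-x^2/4y}\mathbbm 1_{y>0}$: it is homogeneous of parabolic degree $-1$ (so $\beta=2$, $3/\beta=3/2$) and $\int_\R|\partial_x^j\partial_y^k\Gamma(x,y)|\,dx\lesssim|y|^{-1/2-j/2-k}$ is immediate from the Gaussian; hence $\|\Gamma f\|_{\PBMO}\lesssim\|f\|_{L^{3/2}}$. For the $\dot H^{-\aha,0}$-bound, write $f=|D_x|^{\aha}g$ with $g\in L^2(\RR)$, $\|g\|_{L^2}=\|f\|_{\dot H^{-\aha,0}}$, so that $\Gamma f=(\Gamma|D_x|^{\aha})g$; using $\tfrac1{i\eta+\xi^2}=\int_0^\infty e^{-s(i\eta+\xi^2)}\,ds$ one finds that $\Gamma|D_x|^{\aha}$ is convolution with
\[
K(x,y)=c\,y^{-3/4}\,\Psi\!\big(x/\sqrt y\,\big)\,\mathbbm 1_{y>0},\qquad \Psi:=\mathcal F^{-1}_\xi\big(|\xi|^{\aha}e^{-\xi^2}\big).
\]
This $K$ is homogeneous of parabolic degree $-3/2$ (so $\beta=3/2$, $3/\beta=2$), and since $|\xi|^{\aha}e^{-\xi^2}$ is smooth and rapidly decaying away from $0$ while having a $|\xi|^{\aha}$-type singularity at the origin, one gets $\Psi\in C^\infty(\R)$ with $|\Psi^{(j)}(z)|\lesssim\langle z\rangle^{-3/2-j}$; this yields $|\partial_x^j\partial_y^k K(x,y)|\lesssim y^{-3/4-j/2-k}\langle x/\sqrt y\rangle^{-3/2-j-2k}\lesssim|(x,y)|_{\rm p,0}^{-3/2-j-2k}$ and $\int_\R|\partial_x^j\partial_y^k K(x,y)|\,dx\lesssim|y|^{-1/4-j/2-k}$, i.e.\ precisely the hypotheses of the mechanism with $\beta=3/2$. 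Hence $\|\Gamma f\|_{\PBMO}=\|(\Gamma|D_x|^{\aha})g\|_{\PBMO}\lesssim\|g\|_{L^2}=\|f\|_{\dot H^{-\aha,0}}$. Summing the two estimates and undoing the conjugation by $\Phi_\lambda$ gives the lemma.

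The main obstacle is the anisotropic kernel-difference estimate used for the far part: displacing the base point inside a parabolic ball $\Bcal_r$ moves $x$ by $O(r)$ but $y$ by $O(r^2)$, so the mean-value theorem must be applied with the $\partial_x$- and $\partial_y$-gains weighted according to the parabolic scaling --- this is the role of the weight $j+2k$ in the kernel bounds, and it is what makes the dyadic-annuli sum geometric rather than logarithmically divergent. Verifying that $\Gamma|D_x|^{\aha}$ has a kernel of the displayed form, and in particular establishing the decay $|\Psi^{(j)}(z)|\lesssim\langle z\rangle^{-3/2-j}$ from the regularity and decay of $|\xi|^{\aha}e^{-\xi^2}$, is the one computation that needs a little care; everything else is routine bookkeeping with the parabolic homogeneity.
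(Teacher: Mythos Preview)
Your argument is correct in spirit and is exactly the ``direct'' route the paper sketches as its alternative (the parabolic analogue of the endpoint $(-\Delta)^{-n/2p}\colon L^p\to\BMO$). The paper's primary approach is different: it quotes the predual estimates $|\partial_x|^{\pm\aha}\Gamma^{(-2\lambda)}\colon\PHux\lambda(\RR)\to L^2,L^6$ of Lemma~\ref{lemma:heat_kernel_and_Hardy_space_H1} and passes by $\Hcal^1$--$\BMO$ duality (together with the reflection $y\mapsto-y$, which turns the adjoint $\Gamma^*$ back into $\Gamma$). The two arguments are dual to one another --- both rest on the same parabolic Calder\'on--Zygmund kernel bounds \eqref{eq:z-010} --- and yours has the virtue of being self-contained, while the paper's simply recycles a lemma proved later.

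One point to tighten. Your second kernel $K=|D_x|^{\aha}\Gamma$ does \emph{not} lie in $C^\infty(\RR\setminus\{0\})$: since $\Psi(u)\sim c|u|^{-3/2}$ with $c\neq0$, one has $K(x,0^+)=c|x|^{-3/2}\neq0=K(x,0^-)$, so $K$ jumps across $\{y=0\}$ and the mean-value step cannot be run across that line. Relatedly, the claim ``$|\Psi^{(j)}|\lesssim\langle\cdot\rangle^{-3/2-j}$ yields $|\partial_y K|\lesssim y^{-7/4}\langle x/\sqrt y\rangle^{-7/2}$'' is not correct as written: the two terms in $\partial_y\big(y^{-3/4}\Psi(x/\sqrt y)\big)$ individually decay only like $\langle u\rangle^{-3/2}$. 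Both issues are fixed easily: (i) use the heat equation $\partial_y K=\partial_x^2 K$ on $\{y>0\}$ to get the correct pointwise $\partial_y$-bound there, and (ii) in the far-part sum, treat the thin slab $\{|y'-y_{z_0}|\le Cr^2\}\cap A_m$ (the only place a sign change of $y-y'$ can occur) by the crude bound $|K|\lesssim R^{\beta-3}$; this slab has measure $\sim 2^m r^3$, so H\"older on it contributes $\lesssim 2^{-2m(1-\beta/3)}\|f\|_{L^{3/\beta}}$, which is still summable.
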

By the change of coordinates $(x,y)\mapsto(x-cy,y)$, we can assume $\lambda=0$. The estimate follows from the estimates of Lemma \ref{lemma:heat_kernel_and_Hardy_space_H1} by the $\Hcal^1$-BMO duality. Alternatively, a direct proof involves linear estimates for the operators $(\partial_y-\partial_x^2)^{-1}$, $|\partial_x|^\aha(\partial_y-\partial_x^2)^{-1}$, and is analogous to the boundedness of the Riesz potential operator $(-\Delta)^{-\frac{s}{2}}$, $s=\frac{n}{p}$, from $L^p(\R^n)$ to $\BMO(\R^n)$ in the Euclidean case, often referred to as `endpoint Sobolev embedding', see \cite{stein-1970-singular-integrals}*{V, §6.17}.
\begin{corollary}\label{cor:BMO-bounds-for-Vlambda}
    Let $\lambda\in\R$. Let $v\in L^3(\RR)+\lambda$ be the unique solution to \eqref{eq:miuraEquation} given by Corollary \ref{cor:estimates-for-wtv-part-2}, and let $V$ the solution of \eqref{eq:primitiveEquation}, $V_x=v$, defined up to a constant by Lemma \ref{lemma:existenceOfPrimitiveSolution}. It holds the bound
    \begin{equation}\label{eq:BMO-bounds-for-Vlambda}
        \|\wV\|_{\PBMOl(\RR)}\lesssim \|u\|_{\dot H^{-\aha,0}(\RR)},
    \end{equation}
    where $\wV:=V-(\lambda x+\lambda^2 y)$. Moreover, if $u\in\Hcrit$ is small enough, then $\psi:=e^V,1/\psi\in L^6_\loc(\RR)$, $\partial_x\psi=v\psi\in L^2_\loc(\RR)$, and it holds
    \begin{equation}\label{eq:psi_is_in_the_kernel_of_Lax_operator}
        (\partial_y-\partial_{xx}+u)\psi=0.
    \end{equation}
\end{corollary}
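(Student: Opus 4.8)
The plan is to identify $\wV$, up to an additive constant, with the image of $\wv^2-u$ under the tilted heat operator $\Gamma^{(-2\lambda)}$, deduce \eqref{eq:BMO-bounds-for-Vlambda} from the Lemma immediately preceding this Corollary, and then extract the properties of $\psi=e^V$ from the John--Nirenberg inequality together with an approximation argument. First I would recall from Corollary \ref{cor:estimates-for-wtv-part-2} (via Lemma \ref{lemma:estimates-for-wtv}) that $\wv:=v-\lambda\in L^3(\RR)$ is the unique solution of \eqref{eq:wtvlambda}, constructed as a fixed point so that $\wv=\partial_x\Gamma^{(-2\lambda)}(\wv^2-u)$; since $\partial_x$ commutes with $\Gamma^{(-2\lambda)}$, the function $\wV_0:=\Gamma^{(-2\lambda)}(\wv^2-u)$ satisfies $\partial_x\wV_0=\wv$ and $(\partial_y-\partial_x^2-2\lambda\partial_x)\wV_0=(\partial_x\wV_0)^2-u$, whence $V_0:=\wV_0+\lambda x+\lambda^2 y$ solves the primitive equation \eqref{eq:primitiveEquation} with $\partial_x V_0=v$; by the uniqueness up to a constant in Lemma \ref{lemma:existenceOfPrimitiveSolution}, $V$ and $V_0$ differ by a constant, so $\wV=\wV_0$ in $\PBMOl(\RR)$. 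The bound \eqref{eq:BMO-bounds-for-Vlambda} then follows at once: the preceding Lemma bounds $\Gamma^{(-2\lambda)}$ from $L^{3/2}(\RR)+\Hcrit$ to $\PBMOl(\RR)$ with constants uniform in $\lambda$, and $\|\wv^2\|_{L^{3/2}}=\|\wv\|_{L^3}^2\lesssim\|u\|_{\Hcrit}^2$ by Corollary \ref{cor:estimates-for-wtv-part-2}, so $\|\wV\|_{\PBMOl(\RR)}\lesssim\|u\|_{\Hcrit}^2+\|u\|_{\Hcrit}\lesssim\|u\|_{\Hcrit}$ for $u$ small.

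For the properties of $\psi$, I would use the John--Nirenberg machinery. The space $(\RR,d_{\rm p,\lambda},\mu)$ has doubling constant $A=8$ independent of $\lambda$, so the John--Nirenberg constant $\gamma$ of Corollary \ref{cor:exponential-integrability-of-BMO-functions} is uniform in $\lambda$; since $V=\wV+(\lambda x+\lambda^2 y)$ with $e^{\pm(\lambda x+\lambda^2 y)}$ locally bounded, and $-\wV$ has the same $\BMO$ seminorm as $\wV$, that Corollary yields $\psi=e^V$ and $1/\psi=e^{-V}$ in $L^6_\loc(\RR)$ as soon as $6\|\wV\|_{\PBMOl}\le\gamma$, i.e.\ once $\|u\|_{\Hcrit}$ is small enough (parabolic and Euclidean balls being comparable, the two notions of $L^6_\loc$ agree). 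The identity $\partial_x\psi=v\psi$ is the one-dimensional chain rule: $\partial_x V=v\in L^3_\loc$ forces $V(\cdot,y)$ to admit, for a.e.\ $y$, a locally absolutely continuous representative in $x$ with derivative $v(\cdot,y)$, and integrating in $y$ gives $\partial_x\psi=v\psi$ on $\RR$, which lies in $L^3_\loc\cdot L^6_\loc\subset L^2_\loc(\RR)$ by H\"older; in particular $\psi,\partial_x\psi\in L^2_\loc(\RR)$.

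For \eqref{eq:psi_is_in_the_kernel_of_Lax_operator} I would argue by approximation. Take test functions $u_n\to u$ in $\Hcrit$; by the continuity of the data-to-solution maps (Lemma \ref{lemma:estimates-for-wtv}, Corollary \ref{cor:estimates-for-wtv-part-2}), $\wv_n\to\wv$ in $L^3(\RR)$ and locally, and $\wV_n\to\wV$ in $\PBMOl$, so by the uniform local exponential integrability from John--Nirenberg (extract an a.e.\ convergent subsequence and apply Vitali) $\psi_n:=e^{V_n}\to\psi$ in $L^6_\loc$, and since $\partial_x\psi_n=v_n\psi_n\to v\psi$ in $L^2_\loc$, also $\partial_x\psi_n\to\partial_x\psi$ there. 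Each $\psi_n$ is smooth and positive, and because $V_n$ solves \eqref{eq:primitiveEquation} with forcing $u_n$ we have $L_{u_n}\psi_n=(\partial_y V_n-\partial_x^2 V_n-(\partial_x V_n)^2+u_n)\psi_n=0$ classically. Since $\psi,\partial_x\psi\in L^2_\loc(\RR)$ while $u\in L^2_y\dot H^{-\aha}_x$, the products $u\psi$ and $u_n\psi_n$ make sense in $\mathscr D'(\RR)$ (locally they lie in $L^2_yH^{-\aha}_x$, using that $H^1(\R_x)$ multiplies $H^{-\aha}(\R_x)$), with $u_n\psi_n\to u\psi$ there; passing to the limit in $L_{u_n}\psi_n=0$ gives $L_u\psi=0$.

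The main obstacle is this last step: giving rigorous meaning to $u\psi$ at the negative $x$-regularity of $u$, and controlling the convergence $\psi_n\to\psi$ in a topology strong enough to pass to the limit in the product $u_n\psi_n$; both rest on the regularity gain $\psi\in H^1_{x,\loc}$ provided by $\partial_x\psi=v\psi$ and on the exponential integrability of small-$\BMO$ functions. A secondary point to keep in mind is that all constants --- the bound on $\Gamma^{(-2\lambda)}$ and the John--Nirenberg constant --- are uniform in $\lambda$, which holds because the doubling constant of $d_{\rm p,\lambda}$ does not depend on $\lambda$.
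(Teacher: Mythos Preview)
Your proof is correct and follows essentially the same route as the paper: identify $\wV$ (up to a constant) with $\Gamma^{(-2\lambda)}(\wv^2-u)$, apply the preceding Lemma together with $\|\wv\|_{L^3}^2\lesssim\|u\|_{\Hcrit}^2$ to get the $\PBMOl$ bound, and then use Corollary~\ref{cor:exponential-integrability-of-BMO-functions} plus the chain rule to get $\psi,1/\psi\in L^6_\loc$ and $\partial_x\psi=v\psi\in L^2_\loc$. The paper's own proof is terser on the last point: it does not spell out the approximation argument for \eqref{eq:psi_is_in_the_kernel_of_Lax_operator}, treating it as a formal consequence of \eqref{eq:primitiveEquation} once the regularity of $\psi$ is established (the well-definedness of $u\psi$ is only noted in the remark preceding the proof), whereas you carry this out carefully via smooth approximants and convergence in $L^6_\loc$ using Vitali.
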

Estimate \eqref{eq:BMO-bounds-for-Vlambda} is an extension of Corollary \ref{cor:estimates-for-wtv-part-2} part (a). Note that the product $u\psi$ is well defined in $L^1(\R_y,B^{-\aha}_{1,2\;\loc}(\R_x))\subset B^{-1/2,0}_{1,2\;\loc}(\RR)$ with the above regularity hypotheses on $\psi$.
\begin{proof}
    Let $\wv:=v-\lambda$ as in Corollary \ref{cor:estimates-for-wtv-part-2}. From equation \eqref{eq:primitiveEquation}, the function $\wV:=V-(\lambda x+\lambda^2 y)$ satisfies
    \[ \wV_y-\wV_{xx}-2\lambda \wV_x=(\wV_x)^2-u, \]
    \[ \wV_x=\wv, \]
    so the bound \eqref{eq:BMO-bounds-for-Vlambda} follows from the linear estimates of the previous Lemma and from the bound $\|\wv\|_{L^3(\RR)}\lesssim \|u\|_{\Hcrit}$ of Corollary \ref{cor:estimates-for-wtv-part-2}. Since $V-(\lambda x+\lambda^2 y)\in \PBMOl(\RR)$, by Corollary \ref{cor:exponential-integrability-of-BMO-functions} we have $\psi=e^V,1/\psi=e^{-V}\in L^6_\loc(\RR)$ if $\|u\|_{\Hcrit}$ is small enough, and it is easy to verify that $\partial_x\psi=v\psi\in L^2_\loc(\RR)$ thanks to the $L^3$-bound on $\wv$.
\end{proof}
The next Lemma shows that the difference of two solutions of \eqref{eq:primitiveEquation} is more regular than BMO, in analogy with Corollary \ref{cor:estimates-for-wtv-part-2}, part (b). To state it, we define the semi-normed space
    \[
    C^{0,\alpha}_\unif(\RR):=\left\{f\in C(\RR)\,|\,|f|_{C^{0,\alpha}_\unif}:=\sup_{u,v\in\RR,\,|u-v|\leq 1}\frac{|f(u)-f(v)|}{|u-v|^{\alpha}}<\infty\right\},
    \]
where the function $|\cdot|$ denotes the Euclidean norm on $\RR$. Clearly, the space $\faktor{C^{0,\alpha}_\unif(\RR)}{\R}$ is a Banach space when equipped with $|\cdot|_{C^{0,\alpha}_\unif}$.
\begin{lemma}\label{lemma:holder_regularity_for_difference_of_primitive_solutions}
    Let $u\in\Hcrit$ small, and $\lambda_1<\lambda_2$. Consider the solutions $v^{(j)}\in L^3(\RR)+\lambda_j$ of \eqref{eq:miuraEquation} given by Corollary \ref{cor:estimates-for-wtv-part-2}. Let $V^{(j)}$ as in Corollary \ref{cor:BMO-bounds-for-Vlambda}, and $\wV^{(j)}=V^{(j)}-(\lambda_jx+\lambda_j^2y)$. It holds
    \[
    |\wV^{(2)}-\wV^{(1)}|_{C^{0,1/4}_\unif(\RR)}\lesssim_{\lambda_1,\lambda_2} \|u\|_{\Hcrit}
    \]
    and the map $u\mapsto \wV^{(2)}-\wV^{(1)}$ is analytic with values in $\faktor{C^{0,\alpha}_\unif(\RR)}{\R}$.
\end{lemma}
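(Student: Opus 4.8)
The plan is to follow the same strategy as in the proof of Corollary~\ref{cor:estimates-for-wtv-part-2}(b), but now keeping track of antiderivatives rather than the functions themselves, and measuring regularity in the parabolic H\"older scale. By the scaling symmetry and the change of coordinates $(x,y)\mapsto(x+2\lambda y,y)$ (which conjugates the relevant equations while leaving the $\Hcrit$-norm of $u$ invariant), I may assume $\lambda_1=1$, $\lambda_2=-1$ and write $\wv^+,\wv^-$ for the corresponding solutions of \eqref{eq:wtvpm}, with $\wV^+,\wV^-$ their primitives. The object of interest is $W:=\wV^+-\wV^-$, whose $x$-derivative is $W_x=\wv^+-\wv^-=:h$; by Corollary~\ref{cor:estimates-for-wtv-part-2}(b) we already know $h\in C_0L^2\cap L^6(\RR)$ with $\|h\|_{C_0L^2\cap L^6}\lesssim\|u\|_\Hcrit$. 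So the whole point is to promote this gain of one derivative for $h$ into a gain for $W$ of the corresponding $1/4$ of a parabolic derivative, which is exactly the endpoint $\dot B^{1/4,0}_{6,\infty}\hookrightarrow C^{0,1/4}_\unif$-type embedding, parabolically rescaled.

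First I would derive the equation satisfied by $W$. Subtracting the primitive equations \eqref{eq:primitiveEquation} for $V^{(1)}$ and $V^{(2)}$ (after removing the explicit leading parts $\lambda_j x+\lambda_j^2 y$), and using $\wV^\pm_x=\wv^\pm$, one gets a linear parabolic equation of the schematic form
\[
W_y-W_{xx}-2(\wv^++\wv^-)W_x = (\wv^++\wv^-)\cdot(\text{bounded in }\Hcrit)\cdot\text{(something)}+\text{lower order},
\]
more precisely $W_y-W_{xx} = (\wv^++\wv^-)W_x + \big((\wv^+)^2-(\wv^-)^2\big) + (\text{constant terms from the }\lambda_j^2\text{ shift})$, where I would be careful to write the right-hand side so that every factor sits in a scaling-critical space of sufficient regularity. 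Since $(\wv^++\wv^-)$ is small in $L^3(\RR)$, the operator $(\partial_y-\partial_x^2-\partial_x(\wv^++\wv^-))$ is invertible on the relevant spaces exactly as in the proof of Corollary~\ref{cor:estimates-for-wtv-part-2}(b) (a Neumann series / contraction argument). Hence it suffices to bound $\Gamma$ (or the perturbed heat operator) applied to the right-hand side. The right-hand side is a sum of terms of the form (a product of two $L^3$ functions, hence $L^{3/2}$) plus (a product of an $L^3$ function with $h\in L^6$ landing in $L^2$), i.e. it lies in $L^{3/2}(\RR)+L^2(\RR)$. Then the mapping properties of the heat operator from Proposition~\ref{prop:spacetime_estimate_heat_general} give $\Gamma:L^{3/2}(\RR)+L^2(\RR)\to \dot B^{1/4,0}_{6,\infty}$-type spaces; combined with the parabolic embedding $\dot B^{1/4,0}_{6,\infty}(\RR)\hookrightarrow C^{0,1/4}_\unif(\RR)$ (this is the parabolically-scaled version of Morrey embedding, and the seminorm is taken modulo constants, which is harmless since $W$ is only defined up to a constant), this yields $|W|_{C^{0,1/4}_\unif}\lesssim\|u\|_\Hcrit$. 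Undoing the normalization $\lambda_1=1,\lambda_2=-1$ reintroduces constants depending on $\lambda_1,\lambda_2$, accounting for the $\lesssim_{\lambda_1,\lambda_2}$. Analyticity of $u\mapsto W$ follows because each step — the solutions $\wv^\pm$ via Corollary~\ref{cor:estimates-for-wtv-part-2}, the inversion of the perturbed heat operator, the bilinear right-hand side — is built from bounded multilinear maps and Banach fixed points, hence analytic, in the relevant topologies.

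The main obstacle I anticipate is the correct choice of function spaces at the endpoint: one needs the heat operator to map the quadratic source term into precisely a space embedding into $C^{0,1/4}_\unif$ modulo constants, and this forces working with $\dot B^{1/4,0}_{6,\infty}$ (or a closely related anisotropic Besov space) rather than an $L^p$-based space, so I would need the inhomogeneous/endpoint version of Proposition~\ref{prop:spacetime_estimate_heat_general} and of the weighted estimates — and to check that the perturbative term $\partial_x(\wv^++\wv^-)$ is still admissible on that scale, which should hold since $(\wv^++\wv^-)\in L^3(\RR)$ is exactly scaling-critical for this operator. A secondary technical point is tracking the constant (zero-mode) terms arising from the $\lambda_j^2 y$ pieces of the leading profiles so that they only contribute to the $\R$-ambiguity in $W$ and not to the seminorm; writing everything in the quotient $\faktor{C^{0,1/4}_\unif(\RR)}{\R}$ from the start sidesteps this.
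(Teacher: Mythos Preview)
Your equation for $W$ is wrong, and the error hides exactly the main difficulty. Subtracting the primitive equations $\wV^\pm_y-\wV^\pm_{xx}=(\wv^\pm)^2\pm 2\wv^\pm-u$ gives
\[
W_y-W_{xx}=(\wv^++\wv^-)W_x+2(\wv^++\wv^-),
\]
with no leftover constants from the $\lambda_j^2$ shift (they cancel). You wrote the right-hand side as $(\wv^++\wv^-)W_x+((\wv^+)^2-(\wv^-)^2)$, but these two expressions are the \emph{same} term (since $(\wv^+)^2-(\wv^-)^2=(\wv^++\wv^-)W_x$), so you have double-counted the bilinear piece and entirely omitted the linear piece $2(\wv^++\wv^-)$. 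There is also no perturbative inversion to perform: the right-hand side is already explicit in $\wv^\pm$, so the Neumann-series step you propose is unnecessary.

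The linear term $2(\wv^++\wv^-)$ is the whole obstruction. It lies only in $L^3(\RR)$, which is precisely the excluded endpoint in Lemma~\ref{lemma:heat_kernel_sends_Lp_to_Hoelder}: $\Gamma$ maps $L^3$ to $\PBMO$, not to any parabolic H\"older class, so your plan of applying $\Gamma:L^{3/2}+L^2\to C^{0,1/4}_\unif$ does not cover it. The paper's fix is to feed in the integral representation $\wv^\pm=\partial_x\Gamma^\mp((\wv^\pm)^2-u)$, so that $\Gamma(\wv^\pm)=S^\mp((\wv^\pm)^2-u)$ with $S^\pm:=\Gamma\partial_x\Gamma^\pm=-\tfrac12[\Gamma-\Gamma^\pm]$. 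This resolvent-type identity is the key gain: each $S^\pm$ now maps $L^{3/2}\cap\Hcrit$ (which contains $(\wv^\pm)^2-u$) into a sum of tilted parabolic H\"older classes $C^{0,1/2}(\RR,d_{{\rm p}})+C^{0,1/2}(\RR,d_{{\rm p},\mp1})$, and since all three parabolic metrics are comparable to $|z|^{1/2}$ at unit scale, this sum embeds into $C^{0,1/4}_\unif$. Without this step your argument does not close.
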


\begin{proof}
    Set $\wv^{(j)}=v^{(j)}-\lambda_j$. We consider the special case $\lambda_1=1$ and $\lambda_2=-1$ without loss of generality as in the proof of part (b) of Corollary \ref{cor:estimates-for-wtv-part-2}, and we use the same notation $\wv^\pm$ of that proof. The analyticity of the map follows from the Banach fixed point theorem as in Lemma \ref{lemma:estimates-for-wtv}, so we focus on the bound. The functions $\wV^\pm$, given up to additive constants by Lemma \ref{lemma:existenceOfPrimitiveSolution}, satisfy
    \[
    \left\{\begin{aligned}
        & \wV^\pm_y-\wV^\pm_{xx}=\pm 2\wV^\pm_x+(\wV_x^\pm)^2-u,\\
        & \partial_x \wV^\pm=\wv^\pm.
    \end{aligned}\right.
    \]
    By subtracting the two equations, calling $W=\wV^+-\wV^-$, and noting that $\wv^\pm=\Gamma\de_x\Gamma^\mp ((\wv^\pm)^2-u)$, one has
    \als{
        W&=\Gamma((\wv^++\wv^-)(W_x+2))\\
        &=\Gamma((\wv^++\wv^-)W_x)+2S^-((\wv^+)^2-u)+2S^+((\wv^-)^2-u),
    }
    where
    \[
        S^\pm:=\Gamma\de_x\Gamma^\pm=-\fr12 [\Gamma-\Gamma^\pm],
    \]
    where the last identity is checked via the Fourier transform. The heat operator $\Gamma$ extends to a map
    \[
    \Gamma\colon L^2(\RR)\to \faktor{ C^{0,1/2}(\RR,d_{\rm p})}{\R},
    \]
    which is well-defined and bounded by Lemma \ref{lemma:heat_kernel_sends_Lp_to_Hoelder}, where $\R\subset C^{0,1/2}(\RR,d_{\rm p})$ is the subspace of constant functions. By Lemma \ref{cor:estimates-for-wtv-part-2}, $W_x=\wv^+-\wv^-\in L^6(\RR)$, so $(\wv^++\wv^-)W_x\in L^2(\RR)$. In particular,
    \[
    |\Gamma((\wv^++\wv^-)W_x)|_{\dot C^{0,1/2}(\RR,d_{\rm p})}\lesssim \|u\|_{\Hcrit}.
    \]
    For the remaining two terms, we use the fact that $S^\pm$ can be written in two different ways, as above. Combining Proposition \ref{prop:spacetime_estimate_heat_general} and Lemma \ref{lemma:heat_kernel_sends_Lp_to_Hoelder}, from the two above ways of writing $S^\pm$, we get respectively that
    \als{
    S^\pm &: L^{6/5}(\RR)\cap \partial_x L^2(\RR)\to C^{0,1/2}(\RR,d_{\rm p})\cap C^{0,1/2}(\RR,d_{\rm p,\mp 1}),\\
    S^\pm &
    : L^2(\RR)\to C^{0,1/2}(\RR,d_{\rm p})+ C^{0,1/2}(\RR,d_{\rm p,\mp 1}).
    }
    By interpolation,
    \[
    S^\pm : L^{3/2}(\RR)\cap \Hcrit\to C^{0,1/2}(\RR,d_{\rm p})+ C^{0,1/2}(\RR,d_{\rm p,\mp 1}).
    \]
    Putting together the above bounds, we have obtained that
    \[
    W\in C^{0,1/2}(\RR,d_{\rm p}) + C^{0,1/2}(\RR,d_{\rm p, 1}) + C^{0,1/2}(\RR,d_{\rm p, -1}),
    \]
    with bound from above by $\|u\|_{\Hcrit}$. The claim follows noting that for $|z|\leq 1$, it holds
    \[
    |z|_{\rm p}\sim |z|_{\rm p,1}\sim |z|_{\rm p,-1}\leq |z|^{1/2}.\qedhere
    \]
\end{proof}

As anticipated at the beginning of the subsection, we now construct new solutions of \eqref{eq:miuraEquation} by combining those coming from Corollary \ref{cor:estimates-for-wtv-part-2}, using the Cole--Hopf transformation. In the following, $\rho\in C^\infty_c(\RR)$ is again a standard mollifier. For a distribution $V\in\Dscr'(\RR)/\R$ defined up to an additive constant, by an abuse of notation, we will refer to the unique distribution $V\in\Dscr'(\RR)$ in its equivalence class satisfying the normalization condition
\begin{equation}\label{eq:normalization-condition}
    \int_{\RR} V(x,y)\rho(x,y) \,dx\,dy=0
\end{equation}
as a `normalization' of $V$.
\begin{proposition}\label{prop:kink-addition_map_eleV}
    Let $u\in\Hcrit$ small enough, $M\geq 1$, and $\vec \lambda\in \R^M$ such that $\lambda_1<\dots<\lambda_M$. Let $v^{(j)}\in L^3(\RR)+\lambda_j$, $1\leq j\leq M$ be the unique solutions to equation \eqref{eq:miuraEquation} with $\lambda=\lambda_j$, given by Corollary \ref{cor:estimates-for-wtv-part-2}. Let $V^{(j)}$ the corresponding solutions to equation \eqref{eq:primitiveEquation} as in Corollary \ref{cor:BMO-bounds-for-Vlambda}, normalized as in \eqref{eq:normalization-condition}. Given $\vec c:=(c_1,\dots,c_M)\in \R^M$, the functions
    \[
    \psi:=\frac{1}{\sum_{j=1}^M e^{c_j}}\sum_{j=1}^M e^{V^{(j)}+c_j},\qquad V:=\log\psi,\qquad v:=\partial_x V
    \]
    are all well-defined distributions, with $\psi,1/\psi\in L^6_\loc(\RR)$, $\partial_x\psi\in L^2_\loc(\RR)$, $V\in L^p_\loc(\RR)\,\forall p<\infty$, $v\in L^3_\unif(\RR)$. The function $\psi$ solves equation \eqref{eq:psi_is_in_the_kernel_of_Lax_operator}, and $v$ solves \eqref{eq:miuraEquation} distributionally. The map 
    \[
    \eleV^{\vec\lambda}\colon(u,\vec c)\mapsto v
    \]
    is uniformly continuous\footnote{Every topological vector space has a natural uniform structure by the translation invariance of the topology. The statement is equivalent to uniform continuity with values in $L^3(K)$ when restricting to any compact $K\subset\RR$.} from $B_{\eps_0}^{\Hcrit}(0)\times \R^M$ to $L^3_\loc(\RR)$.
\end{proposition}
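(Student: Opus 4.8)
The plan is to verify each assertion in turn, relying systematically on the estimates already established for the constituent solutions $v^{(j)}$ and their primitives $V^{(j)}$. The starting point is Corollary \ref{cor:estimates-for-wtv-part-2}, which gives existence, uniqueness, and the quantitative bound $\|v^{(j)}-\lambda_j\|_{L^3(\RR)}\lesssim\|u\|_\Hcrit$, together with Corollary \ref{cor:BMO-bounds-for-Vlambda}, which gives $\|\wV^{(j)}\|_{\PBMOx{\lambda_j}(\RR)}\lesssim\|u\|_\Hcrit$ for $\wV^{(j)}=V^{(j)}-(\lambda_jx+\lambda_j^2y)$, exponential local integrability of $e^{\pm V^{(j)}}$, and the fact that $\psi^{(j)}:=e^{V^{(j)}}$ lies in the kernel of the Lax operator. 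First I would address well-definedness: each $e^{V^{(j)}}$ is a positive locally $L^6$ function by Corollary \ref{cor:exponential-integrability-of-BMO-functions} (for $\|u\|_\Hcrit$ small the BMO norm is below the John--Nirenberg threshold $\gamma$), so $\psi$ is a positive locally $L^6$ function, hence $V=\log\psi$ makes sense as an element of $L^p_\loc$ for all $p<\infty$. The key structural point is that $\psi$, being a convex combination (after renormalizing the weights $e^{c_j}/\sum_k e^{c_k}$) of solutions of the \emph{linear} equation \eqref{eq:psi_is_in_the_kernel_of_Lax_operator}, is itself a solution of \eqref{eq:psi_is_in_the_kernel_of_Lax_operator} — here I would check that the products $u\psi^{(j)}$, hence $u\psi$, are well-defined distributions in $L^1(\R_y,B^{-\aha}_{1,2\,\loc}(\R_x))$ exactly as noted after Corollary \ref{cor:BMO-bounds-for-Vlambda}, and that the linear combination commutes with the distributional equation. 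Then $v=\partial_xV=\partial_x\log\psi=\psi_x/\psi$, and the Cole--Hopf computation (carried out rigorously using $\psi,1/\psi\in L^6_\loc$, $\psi_x\in L^2_\loc$) shows $v$ solves \eqref{eq:miuraEquation}; one must also verify $\partial_x\psi=\psi_x\in L^2_\loc(\RR)$, which follows because locally $\psi$ is comparable to some fixed $e^{V^{(j_0)}}$ and $\partial_x e^{V^{(j)}}=v^{(j)}e^{V^{(j)}}$ with $v^{(j)}\in L^3_\loc+\lambda_j$ and $e^{V^{(j)}}\in L^6_\loc$.

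For the claim $v\in L^3_\unif(\RR)$, the idea is to write $v=\sum_j\mu_j(x,y)v^{(j)}$ where $\mu_j:=e^{V^{(j)}+c_j}/\sum_k e^{V^{(k)}+c_k}\in[0,1]$ is a partition of unity, so $v$ is a convex combination of the $v^{(j)}$ \emph{pointwise}. Since $v^{(j)}=\wv^{(j)}+\lambda_j$ with $\wv^{(j)}\in L^3(\RR)$ (hence $L^3_\unif$) and the constants $\lambda_j$ are trivially in $L^3_\unif$, and $|\mu_j|\le1$, we get $|v|\le\sum_j(|\wv^{(j)}|+|\lambda_j|)$ pointwise, which is in $L^3_\unif(\RR)$ with norm controlled by $\|u\|_\Hcrit+\max_j|\lambda_j|$. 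One subtlety: the $\mu_j$ are not smooth, so "convex combination of solutions solving \eqref{eq:miuraEquation}" is only a heuristic for $v$ itself; the rigorous statement that $v$ solves \eqref{eq:miuraEquation} must go through $\psi$ and Cole--Hopf as above, not through the $\mu_j$ decomposition. The $\mu_j$ decomposition is used only for the $L^3_\unif$ pointwise bound.

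The main obstacle — and the part deserving the most care — is the uniform continuity of $\eleV^{\vec\lambda}\colon(u,\vec c)\mapsto v$ from $B^\Hcrit_{\eps_0}(0)\times\R^M$ to $L^3_\loc(\RR)$. The $\R^M$-dependence through $\vec c$ is the delicate one: uniform continuity must hold for \emph{all} $\vec c\in\R^M$, including when some $c_j\to-\infty$ or $c_j\to+\infty$, in which case $\psi$ degenerates toward a combination of fewer terms. The plan is: (i) reduce to the normalized weights $p_j:=e^{c_j}/\sum_k e^{c_k}\in\Delta^{M-1}$ (the closed simplex), noting that $(x,y)\mapsto\log(\sum_j p_je^{V^{(j)}(x,y)})$ depends on $\vec c$ only through $\vec p$, and that $\vec c\mapsto\vec p$ is uniformly continuous onto the \emph{open} simplex but the map $\vec p\mapsto v$ extends continuously to the closed simplex (a vertex recovers a single $v^{(j)}$); (ii) on a fixed compact $K\subset\RR$, estimate $\|v_1-v_2\|_{L^3(K)}$ where $v_i$ corresponds to data $(u_i,\vec p\,^{(i)})$, splitting $v_i=\psi_{i,x}/\psi_i$ and using $\psi_{i},1/\psi_i\in L^6_\loc$, $\psi_{i,x}\in L^2_\loc$ with bounds uniform over $B^\Hcrit_{\eps_0}(0)$ and over the simplex (the degenerate cases are fine because removing a term only \emph{increases} $\psi$ pointwise by a bounded factor locally, or passes to a subsum that still contains at least one positive term with the same exponential lower bounds); (iii) combine the analyticity/continuity of $u\mapsto v^{(j)}$ in $L^3_\loc$ and $u\mapsto\wV^{(j)}$ in $L^p_\loc$ from Corollaries \ref{cor:estimates-for-wtv-part-2} and \ref{cor:BMO-bounds-for-Vlambda} with Lipschitz-type estimates for $\exp$ and for reciprocals on $L^6_\loc$, obtaining a modulus of continuity depending only on $K$, $\vec\lambda$, $\eps_0$. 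The one genuinely annoying point is controlling $\psi$ from \emph{below} on $K$ uniformly — i.e., that $1/\psi\in L^6(K)$ with a uniform bound — since $V^{(j)}$ is only BMO and $e^{-V^{(j)}}$ could be large; this is handled by the John--Nirenberg bound of Corollary \ref{cor:exponential-integrability-of-BMO-functions} once $\eps_0$ is chosen small enough that $\|\wV^{(j)}\|_{\PBMOx{\lambda_j}}<\gamma$, and by the fact that on $K$ the weights $\lambda_jx+\lambda_j^2y$ are bounded, so $e^{-V^{(j)}}\lesssim_K e^{-\wV^{(j)}}\in L^6(K)$ with a uniform bound, and then $1/\psi\le(\min_j p_j)^{-1}e^{-V^{(j_0)}}$ fails at the degenerate faces — so instead I would argue $1/\psi\le e^{-V^{(j)}}$ for \emph{any single} $j$ with $p_j$ bounded below, and handle the faces where all $p_j$ but a few vanish by passing to the corresponding subsimplex, which contains at least one index with weight $\ge1/M$.
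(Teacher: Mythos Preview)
Your treatment of well-definedness, the regularity of $\psi$, $V$, $v$, the fact that $\psi$ solves \eqref{eq:psi_is_in_the_kernel_of_Lax_operator} by linearity, that $v$ solves \eqref{eq:miuraEquation} via Cole--Hopf, and the $L^3_\unif$ bound through the pointwise convex combination $v=\sum_j\mu_j v^{(j)}$, is correct and essentially identical to the paper's argument.

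The gap is in the uniform continuity. Your plan (ii)--(iii) routes the estimate through the quotient $v_i=\psi_{i,x}/\psi_i$ together with the $L^6_\loc$ bounds on $\psi_i,1/\psi_i$ and the $L^2_\loc$ bound on $\psi_{i,x}$. But $L^2\cdot L^6$ lands you in $L^{3/2}(K)$, not $L^3(K)$; even sharpening $\psi,1/\psi\in L^p(K)$ for large $p$ (by shrinking $\eps_0$) only gives $\psi_x\in L^{3-}(K)$ and hence $v_1-v_2\in L^{3-}(K)$. If instead you try to run the difference through the convex combination $v=\sum_j\zeta^{(j)}v^{(j)}$, the term $(\zeta_1^{(j)}-\zeta_2^{(j)})\wv_1^{(j)}$ requires $\zeta_1^{(j)}-\zeta_2^{(j)}\to 0$ in $L^\infty(K)$, because $\wv_1^{(j)}\in L^3(\RR)$ and no better (this is exactly the critical embedding). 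Your step (iii) only supplies $\wV^{(j)}\to$ continuity in $L^p_\loc$, which via your ``Lipschitz-type estimates for $\exp$'' gives $\zeta^{(j)}$-continuity in $L^q(K)$ for large finite $q$, not in $L^\infty(K)$, and uniform equi-integrability of the family $\{|\wv^{(j)}|^3\}$ over $B_{\eps_0}^\Hcrit(0)$ is not available.

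The paper closes this gap with an ingredient you do not invoke: Lemma~\ref{lemma:holder_regularity_for_difference_of_primitive_solutions}, which shows that the \emph{differences} $\wV^{(k)}-\wV^{(j)}$ enjoy a genuine regularity gain over the individual BMO pieces, namely $\wV^{(k)}-\wV^{(j)}\in C^{0,1/4}_\unif(\RR)$ with analytic dependence on $u$. Writing $\zeta^{(j)}=\big(\sum_k e^{(V^{(k)}-V^{(j)})+(c_k-c_j)}\big)^{-1}$ and differentiating, one gets $d\zeta^{(j)}=-\zeta^{(j)}\sum_k\zeta^{(k)}\,d\big((V^{(k)}-V^{(j)})+(c_k-c_j)\big)$; since $|\zeta^{(k)}|\le 1$ and the differences are Lipschitz in $L^\infty(K)$ as functions of $(u,\vec c)$ (via the H\"older bound plus the normalization \eqref{eq:normalization-condition}), the map $(u,\vec c)\mapsto\zeta^{(j)}$ is Lipschitz into $L^\infty(K)$. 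That, together with the analyticity of $u\mapsto v^{(j)}$ into $L^3$, gives the $L^3(K)$ continuity directly and bypasses your simplex/face analysis entirely.
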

\begin{proof}
    By Corollary \ref{cor:BMO-bounds-for-Vlambda}, $\psi\in L^6_\loc(\RR)$ and $\partial_x\psi=\frac{1}{\|e^{\vec c}\|_{\ell^1}}\sum_{j=1}^M v^{(j)}e^{V^{(j)}+c_j}\in L^2_\loc(\RR)$. The function $V=\log \psi$ is finite a.e., and it holds
    \[
    \min_j\{V^{(j)}\}\leq V \leq \max_j\{V^{(j)}\}.
    \]
    Since the functions $V^{(j)}$ are in $\PBMOx{\lambda_j}^0(\RR)$, they lie in $L^p_\loc(\RR)$ for any $p<\infty$ by Lemma \ref{cor:exponential-integrability-of-BMO-functions}. Thus, $V$ is also in $L^p_\loc(\RR)$, $p<\infty$, so one can set $v=\partial_x V$. Moreover, it holds
    \[
    v=\sum_{j=1}^M \zeta^{(j)}v^{(j)},\qquad \zeta^{(j)}:=\frac{e^{V^{(j)}+c_j}}{\sum_{k=1}^Me^{V^{(k)}+c_k}},
    \]
    in particular $v\in L^3_\unif(\RR)$, since $v^{(j)}\in L^3(\RR)+\lambda_j$ for all $j$, and $0\leq \zeta^{(j)}\leq 1$ a.e..

    Consider the map $(u,\vec c)\mapsto \zeta^{(j)}$. Rewrite $\zeta^{(j)}$ as
    \[
    \zeta^{(j)}:=\frac{1}{\sum_{k=1}^Me^{(V^{(k)}-V^{(j)})+(c_k-c_j)}}.
    \]
    By computing the differential explicitly, using the normalization \eqref{eq:normalization-condition} of the functions $V^{(j)}$, as well as the analyticity of the map $u\mapsto V^{(k)}-V^{(j)}$ with values in $C^{0,1/4}_\unif$ coming from Lemma \ref{lemma:holder_regularity_for_difference_of_primitive_solutions}, and the fact that $|\zeta^{(k)}|\leq 1$, it follows that the map $(u,\vec c)\mapsto \zeta^{(j)}$ is Lipschitz continuous from $B_{\eps_0}^{\Hcrit}(0)\times \R^M$ to $L^\infty(K)$ for any compact $K\subset \RR$. That is, the same map is uniformly continuous with values in $L^\infty_\loc(\RR)$. Combining this with the analyticity of the maps $u\mapsto v^{(j)}$ by Corollary \ref{cor:estimates-for-wtv-part-2} yields the uniform continuity of $\eleV^{\vec\lambda}$.
    
    Finally, by Corollary \ref{cor:BMO-bounds-for-Vlambda} and by linearity, it follows that $\psi$ solves equation \eqref{eq:psi_is_in_the_kernel_of_Lax_operator}, and by the Cole--Hopf transformation $v=\partial_x\log(\psi)$, $v$ solves equation \eqref{eq:miuraEquation} (this last step is true by direct computation when $u$ is smooth, and the statement extends to all small enough $u\in\Hcrit$ by continuity).
\end{proof}

\subsection{Proofs of Theorem \ref{theorem:theorem_1} and Corollary \ref{cor:almost-conservation-L2-norm-around-line-soliton}}

Now we are ready to give an explicit characterization of the solutions and of the map $\dtsmV$ in Theorem \ref{theorem:theorem_1}.
\begin{definition}\label{def:VcAndvc}
    Given a small $u\in\Hcrit$, let $v^{\pm}\in L^3(\RR)\pm 1$ the solutions of \eqref{eq:miuraEquation} given by Corollary \ref{cor:estimates-for-wtv-part-2} with $\lambda=\pm 1$. Define $V^\pm$ as the solutions of \eqref{eq:primitiveEquation} given by Lemma \ref{lemma:existenceOfPrimitiveSolution} corresponding to $v^\pm$, normalized by the condition \eqref{eq:normalization-condition}. Finally, for $c\in\R$, define
    $$ V^c:=\log\left( \frac{e^{V^+-c}+e^{V^-+c}}{e^c+e^{-c}} \right),\qquad v^c:=\partial_x V^c=\frac{v^+e^{V^+-c}+v^-e^{V^-+c}}{e^{V^+-c}+e^{V^-+c}}. $$
    In other words, $v^c=\eleV^{\vec \lambda}(u,\vec c)$, with $\eleV$ as in Proposition \ref{prop:kink-addition_map_eleV}, $\vec \lambda=(-1,1)$, and $\vec c=(c,-c)$.
\end{definition}

\begin{proposition}\label{prop:vcAreSolutions}
    The functions $\{v^c\}_{c\in\R}$ in Definition \ref{def:VcAndvc} solve equation \eqref{eq:miuraEquation} and satisfy the four assumptions \eqref{eq:v_minus_G_0_is_in_exponentially_weighted_L^2}--\eqref{eq:v_is_in_L^3_plus_tanh_sigma} stated before Corollary \ref{cor:uniquenessOfMiura}. In particular, $v^c$ satisfies the assumptions of both Corollary \ref{cor:uniquenessOfMiura} and Proposition \ref{prop:unconditional_uniqueness_miura}.
\end{proposition}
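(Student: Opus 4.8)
The plan is to reduce the four boundary conditions to just two of them and then quote the work already done. First, $v^c$ solves \eqref{eq:miuraEquation} distributionally: this is contained in Proposition \ref{prop:kink-addition_map_eleV}, since by Definition \ref{def:VcAndvc} one has $v^c=\eleV^{\vec\lambda}(u,\vec c)$ with $\vec\lambda=(-1,1)$ and $\vec c=(c,-c)$. Next I would note that it is enough to verify \eqref{eq:v_minus_G_0_is_in_exponentially_weighted_L^2} and \eqref{eq:v_minus_G_0_is_small_at_minus_infinity}: Corollary \ref{cor:uniquenessOfMiura} then automatically produces \eqref{eq:v_minus_G_0_is_small_at_plus_infinity} and \eqref{eq:v_is_in_L^3_plus_tanh_sigma}, together with the quantitative bound on the decomposition $v^c=\omega+G_\alpha$, and once all of \eqref{eq:v_minus_G_0_is_in_exponentially_weighted_L^2}--\eqref{eq:v_is_in_L^3_plus_tanh_sigma} hold, the hypotheses of Corollary \ref{cor:uniquenessOfMiura} (namely \eqref{eq:v_minus_G_0_is_in_exponentially_weighted_L^2}--\eqref{eq:v_minus_G_0_is_small_at_minus_infinity}) and of Proposition \ref{prop:unconditional_uniqueness_miura} (namely \eqref{eq:v_is_in_L^3_plus_tanh_sigma}) are both satisfied. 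So the whole statement boils down to \eqref{eq:v_minus_G_0_is_in_exponentially_weighted_L^2} and \eqref{eq:v_minus_G_0_is_small_at_minus_infinity}.

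The working tool will be an explicit description of $v^c$. Writing $V^\pm=(\pm x+y)+\wV^\pm$ with $\wV^\pm$ normalized by \eqref{eq:normalization-condition}, Corollary \ref{cor:BMO-bounds-for-Vlambda} gives $\wV^\pm\in\PBMOx{\pm1}(\RR)$ with norm $\lesssim\|u\|_\Hcrit$, so $\wV^\pm$, hence $W:=\wV^+-\wV^-$, grow at most logarithmically (indeed $W\in C^{0,1/4}_\unif(\RR)/\R$ with small seminorm by Lemma \ref{lemma:holder_regularity_for_difference_of_primitive_solutions}), while $\partial_x W=\wv^+-\wv^-=:h$ lies in $L^3(\RR)\cap L^6(\RR)\cap C_0(\R_y,L^2(\R_x))$ by Corollary \ref{cor:estimates-for-wtv-part-2}(b). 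A direct computation from Definition \ref{def:VcAndvc} yields, a.e.,
\[
v^c=\tanh\!\big(x-c+\tfrac12 W(x,y)\big)+\zeta^+\wv^++\zeta^-\wv^-,\qquad v^c-G_a=(\zeta^+-\eta^+_a)(2+h)\quad\forall a,
\]
where $\zeta^\pm:=e^{V^\pm\mp c}/(e^{V^+-c}+e^{V^-+c})\in(0,1)$, $\zeta^++\zeta^-=1$ (the second identity also using \eqref{eq:z-022}). In particular $\zeta^+=(1+e^{-2x-W(x,y)+2c})^{-1}$, a sigmoid whose ``centre'' is governed by $W$.

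For \eqref{eq:v_minus_G_0_is_in_exponentially_weighted_L^2} I would take $a=0$: the sub-linear (small-slope) growth of $W$ in $x$ gives $e^{\pm W(x,y)}\lesssim e^{W(0,y)}e^{C\|u\|_\Hcrit|x|}$, so a crude bound on the two sigmoids forming $\zeta^+-\eta^+_0$ gives $\cosh^2(x)\,|\zeta^+-\eta^+_0|\lesssim e^{W(0,y)}e^{-(1-C\|u\|_\Hcrit)|x|}$; multiplying by $\sech(x)(2+h)$ and using $h\in C_0(\R_y,L^2(\R_x))$ produces $\cosh(x)(v^c-G_0)(\cdot,y)\in L^2(\R_x)$ for every $y$, with norm locally bounded in $y$, and continuity in $y$ follows from that of $y\mapsto(\zeta^+,\eta^+_0,h)$. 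For \eqref{eq:v_minus_G_0_is_small_at_minus_infinity}, fix $y$ and choose $\gamma(y)$ with $2\gamma(y)+W(\gamma(y),y)=2c$ (such a point exists since $x\mapsto 2x+W(x,y)$ is continuous and tends to $\pm\infty$ as $x\to\pm\infty$); then, setting $s=x-\gamma(y)$ and $\delta(s,y):=W(\gamma(y)+s,y)-W(\gamma(y),y)$, one has
\[
\zeta^+-\eta^+_{\gamma}=\frac{1}{1+e^{-2s-\delta(s,y)}}-\frac{1}{1+e^{-2s}},\qquad |\delta(s,y)|\le\|h(\cdot,y)\|_{L^2(\R_x)}\,|s|^{1/2},
\]
so that $\cosh(s)\,|\zeta^+-\eta^+_{\gamma}|\lesssim|\delta|\,e^{-|s|+2|\delta|}$; combining this with $h(\cdot,y)\in L^2(\R_x)$ and integrating in $s$ gives $\|\cosh_{\gamma}(v^c-G_{\gamma})(\cdot,y)\|_{L^2(\R_x)}\lesssim\|h(\cdot,y)\|_{L^2(\R_x)}$, which tends to $0$ as $y\to-\infty$ because $h\in C_0 L^2$. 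That is \eqref{eq:v_minus_G_0_is_small_at_minus_infinity}, and with Corollary \ref{cor:uniquenessOfMiura} the proof is complete.

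I expect the delicate point to be \eqref{eq:v_minus_G_0_is_small_at_minus_infinity}: one must genuinely see that the localized profile $v^c-G_\gamma$ collapses onto a pure kink as $y\to-\infty$, and this hinges on controlling the oscillation of $W$ near its centre, available only through the parabolic-BMO/Hölder bounds on $\wV^\pm$ (Corollary \ref{cor:BMO-bounds-for-Vlambda}, Lemma \ref{lemma:holder_regularity_for_difference_of_primitive_solutions}) and, crucially, the $C_0(\R_y,L^2)$ decay of the \emph{difference} $\wv^+-\wv^-$ from Corollary \ref{cor:estimates-for-wtv-part-2}(b) --- the individual $\wv^\pm$ carry no such decay. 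A secondary nuisance is keeping the estimates for \eqref{eq:v_minus_G_0_is_in_exponentially_weighted_L^2} uniform enough in $y$, for which it is convenient that the norms in Corollaries \ref{cor:BMO-bounds-for-Vlambda} and \ref{cor:estimates-for-wtv-part-2} and in Lemma \ref{lemma:holder_regularity_for_difference_of_primitive_solutions} are global on $\RR$.
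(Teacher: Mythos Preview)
Your proof is correct and follows essentially the same route as the paper: reduce to \eqref{eq:v_minus_G_0_is_in_exponentially_weighted_L^2}--\eqref{eq:v_minus_G_0_is_small_at_minus_infinity} via Corollary \ref{cor:uniquenessOfMiura}, write $v^c-G_a=(\zeta^+-\eta^+_a)(2+h)$ (the paper writes this equivalently as $(\eta^+\!\circ\nu-\eta^+_a)(2+h)$ with $\nu=\tfrac12(V^+-V^-)-c$, so $\zeta^+=\eta^+\!\circ\nu$), and control both conditions via the square-root oscillation bound on $W$ coming from $\partial_x W=h\in C_0(\R_y,L^2(\R_x))$, invoking Lemma \ref{lemma:holder_regularity_for_difference_of_primitive_solutions} for the continuity in $y$ and choosing $\gamma(y)$ as a zero of $\nu(\cdot,y)$ for \eqref{eq:v_minus_G_0_is_small_at_minus_infinity}. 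One small slip: your displayed bound on $\cosh^2(x)\,|\zeta^+-\eta^+_0|$ is off by a factor of $\cosh(x)$ (the quantity with the stated decay $e^{-(1-C\|u\|_\Hcrit)|x|}$ is $\cosh(x)\,|\zeta^+-\eta^+_0|$, not $\cosh^2(x)\,|\zeta^+-\eta^+_0|$), and correspondingly one then multiplies by $(2+h)$ rather than $\sech(x)(2+h)$ --- this does not affect the conclusion.
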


\begin{proof}
    The fact that $v^c$ satisfies equation \eqref{eq:miuraEquation} distributionally is a consequence of Proposition \ref{prop:kink-addition_map_eleV}. It is enough to show \eqref{eq:v_minus_G_0_is_in_exponentially_weighted_L^2}, \eqref{eq:v_minus_G_0_is_small_at_minus_infinity} by Corollary \ref{cor:uniquenessOfMiura}. Consider $\wV^\pm:=V^\pm-(\pm x+y)$, and set 
    \[
    \nu:=\aha (V^+-V^--2c),\quad \mu:=x-\nu.
    \]
    Let
    \[
    \tanh\hspace{-0.5pt}\circ\hspace{2pt}\nu(x,y):=\tanh(\nu(x,y)),\quad \eta^\pm\!\!\circ\nu(x,y):=\eta^\pm(\nu(x,y)).
    \]
    Then, one can write $v^c$ as
    \[
    v^c:=G_0+(\tanhcnu-\tanh)+(\etapcnu-\eta^+)\cdot(\wv^+-\wv^-).
    \]
    It is clear that $\partial_x\mu=-\fr12(\wv^+-\wv^-)$, which lies in $C_0(\R_y,L^2(\R_x))$ by Corollary \ref{cor:estimates-for-wtv-part-2}. In particular, it holds
    \begin{equation}\label{eq:z-001}
    |\mu(x_1,y)-\mu(x_2,y)|\leq C_y|x_1-x_2|^\aha,
    \end{equation}
    where $C_y=\fr12 \|\wv^+(\cdot,y)-\wv^-(\cdot,y)\|_{L^2(\R_x)} \lesssim \|u\|_{\Hcrit}$ $\forall y$. This immediately implies that
    \[
    (\tanhcnu-\tanh)(\cdot,y)\,\,\,,\,\,\,\,\, \Big((\etapcnu-\eta^+)\cdot(\wv^+-\wv^-)\Big)(\cdot,y)\,\,\,\in\, L^2(\R_x;\cosh^2(x)dx)\,\,\,\,\forall y\in\R.
    \]
    Moreover, by Lemma \ref{lemma:holder_regularity_for_difference_of_primitive_solutions}, $\mu$ is H\"older-continuous on the whole $\RR$, so by dominated convergence it follows that $v^c-G_0\in C(\R_y,L^2(\R_x;\cosh^2(x)dx))$. Finally, since $C_y\to 0$ as $y\to \pm\infty$, assumption \eqref{eq:v_minus_G_0_is_small_at_minus_infinity} is satisfied by choosing $\gamma=\gamma(y)$ such that $\gamma=\mu(\gamma,y)$, which always exists due to \eqref{eq:z-001}, and the same condition holds for $y\to+\infty$.
\end{proof}
Recall that, by Lemma \ref{lemma:expDecomposition} part \ref{item:expDec-005}, \eqref{eq:v_minus_G_0_is_small_at_minus_infinity} and \eqref{eq:v_minus_G_0_is_small_at_plus_infinity} are equivalent to
    \[
    \lim_{y\to\pm\infty}\|\cosh_{\alpha(y)}\omega(\cdot,y)\|_{L^2(\R_x)}=0.
    \]

In principle, we have many ways of parametrizing the family of solutions $\{v^{c}\}_{c\in\R}$: by the parameter $c$, by the phase shift $\alpha_0:=\alpha(0)$ at ordinate $y=0$, and by the quantity $\int v^\alpha-v^0dx$, where the first and the third are in a way more canonical but only up to an additive constant. Here we also introduce the parameter $\gamma_0$, which we used to formulate Theorem \ref{theorem:theorem_1}, and can be defined with very few assumptions on a general solution of \eqref{eq:miuraEquation}.
\begin{lemma}[Change of parameter]\label{lemma:change-of-variables-c-alpha_0-gamma_0}
    Let $u\in\Hcrit$ small, and consider $\{v^c\}_{c\in \R}$ as in Definition \ref{def:VcAndvc}. There exist invertible, smooth changes of variables on $\R$, $c\mapsto\alpha_0$, $c\mapsto\gamma_0$, determined uniquely by the conditions
    \begin{equation}\label{eq:definition_of_alpha_0}
        \int_\R v^c(x,0)-G_{\alpha_0}(x,0)dx=0,
    \end{equation}
    \begin{equation}\label{eq:definition-of-gamma_0}
        \int_{\RR} \rho(x-\gamma_0,y) v^c(x,y) dxdy=0.
    \end{equation}
    In particular, the map $c\mapsto v^c$ is injective. Moreover, it holds
    \begin{align}
        |\gamma_0-\alpha_0|&\lesssim \|u\|_\Hcrit,\label{eq:gamma_0_and_alpha_0_are_uniformly_close}\\
        |\gamma_0-c|+|\alpha_0-c|&\lesssim (1+|c|)\|u\|_{\Hcrit}.\label{eq:c_is_close_to_alpha_0_and_gamma_0_for_small_c}
    \end{align}
    Finally, the map $(u,c)\mapsto(u,\gamma_0)$ is bi-Lipschitz on bounded sets. In particular, the map $\dtsmV$ defined as
    \[
    \dtsmV(u,\gamma_0):=\eleV^{(-1,1)}(u,(c,-c))=v^c
    \]
    coincides with $\eleV^{(-1,1)}$, up to a homeomorphism and a projection from $\RR$ to $\{(c_1,c_2)\,|\,c_1+c_2=0\}$.
\end{lemma}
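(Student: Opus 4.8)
The plan is to prove the two claimed changes of variables by an implicit function theorem argument, and then deduce injectivity of $c\mapsto v^c$ and the quantitative estimates. Throughout, one works with $v^c = \eleV^{(-1,1)}(u,(c,-c))$ as in Definition \ref{def:VcAndvc}, so that $v^c = G_0 + (\tanhcnu - \tanh) + (\etapcnu - \eta^+)(\wv^+-\wv^-)$ with $\nu = \aha(V^+-V^--2c)$, as computed in the proof of Proposition \ref{prop:vcAreSolutions}.

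\textbf{Step 1: the map $c\mapsto\alpha_0$.} Set $F(u,c,\alpha):=\int_\R v^c(x,0) - G_\alpha(x,0)\,dx$. By \eqref{eq:z-022}, $G_\alpha - G_0$ depends only on $\alpha$ and $h:=\wv^+-\wv^-$, and by Lemma \ref{lemma:expDecomposition}\ref{item:expDec-001} the map $\alpha\mapsto \int_\R(G_\alpha-G_0)(x,0)\,dx$ is smooth with derivative in $[-3,-1]$, hence a smooth bijection of $\R$. On the other hand $\int_\R (v^c - G_0)(x,0)\,dx = \int_\R[(\tanhcnu-\tanh) + (\etapcnu-\eta^+)h](x,0)\,dx$, which is smooth and bounded in $c$ (both bracketed terms are pointwise $\lesssim \sech^2$ times a bounded factor by the H\"older bound \eqref{eq:z-001}). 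So for each $(u,c)$ there is a unique $\alpha_0 = \alpha_0(u,c)$ solving $F=0$; smoothness and invertibility of $c\mapsto\alpha_0$ follow from the implicit function theorem since $\partial_c v^c$ can be differentiated explicitly ($\partial_c\nu = -1$, so $\partial_c v^c = -(\wv^+-\wv^-)\cdot$ positive Schwartz-type weight, giving $\partial_c F$ of definite sign bounded away from $0$ for small $u$).

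\textbf{Step 2: the map $c\mapsto\gamma_0$.} Set $H(u,c,\gamma):=\int_{\RR}\rho(x-\gamma,y)\,v^c(x,y)\,dx\,dy$. Since $\rho$ is a fixed mollifier and $v^c\in L^3_\unif(\RR)$ with $\eleV^{(-1,1)}$ uniformly continuous (Proposition \ref{prop:kink-addition_map_eleV}), $H$ is well-defined, continuous, and smooth in $(c,\gamma)$; again $\partial_\gamma H = -\int\rho_x(x-\gamma,y)v^c\,dx\,dy$ is, after integrating by parts, $\int \rho(x-\gamma,y) v^c_x\,dx\,dy$, which is $\approx \int\rho(x-\gamma,y)\sech^2(x-\alpha(y))\,dx\,dy + O(\|u\|_\Hcrit)$ and is bounded away from $0$ uniformly when restricted near the kink — more precisely one uses that $v^c$ is within $O(\|u\|_\Hcrit)$ of the kink profile $\tanh_{\alpha}$ in a local averaged sense, so its local average increases in $\gamma$ across the shock. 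Thus $\gamma\mapsto H(u,c,\gamma)$ is strictly monotone and proper, giving a unique $\gamma_0=\gamma_0(u,c)$, and the implicit function theorem gives smoothness and invertibility of $c\mapsto\gamma_0$. Injectivity of $c\mapsto v^c$ is then immediate: if $v^{c_1}=v^{c_2}$ the associated $\gamma_0$ agree, and $c\mapsto\gamma_0$ is injective.

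\textbf{Step 3: the estimates.} For \eqref{eq:gamma_0_and_alpha_0_are_uniformly_close}: both $\gamma_0$ and $\alpha_0$ are, to leading order, the location of the shock of $v^c$ near $y=0$; writing $v^c = \tanh_{\alpha_0} + (\text{error of size }\|u\|_\Hcrit\text{ in the relevant local norm})$ and noting that the defining conditions \eqref{eq:definition_of_alpha_0}, \eqref{eq:definition-of-gamma_0} both pin down this location up to an $O(\|u\|_\Hcrit)$ ambiguity, the difference $|\gamma_0-\alpha_0|$ is controlled by $\|u\|_\Hcrit$ via the quantitative monotonicity from Steps 1--2 (divide the mismatch of the two functionals by the lower bound on the derivatives). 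For \eqref{eq:c_is_close_to_alpha_0_and_gamma_0_for_small_c}: when $u\equiv 0$ one has $V^\pm = \pm x + y$, so $\nu = x-c$ and $v^c = \tanh_c$ exactly, whence $\alpha_0 = \gamma_0 = c$; perturbing in $u$, the explicit formula for $v^c$ shows $v^c - \tanh_c$ is $O(\|u\|_\Hcrit)$ but with the $c$-dependent weights $\etapcnu$ only costing a factor $1+|c|$ (since $\nu$ differs from $x-c$ by the bounded quantity $\wV^+-\wV^-$, and the relevant integrals pick up at most linear growth in $c$ from the shift of the window against the $O(\|u\|_\Hcrit)$ tails of $\wv^\pm$). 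Plugging into the defining equations and using the lower bounds on the derivatives gives $|\alpha_0-c|+|\gamma_0-c|\lesssim (1+|c|)\|u\|_\Hcrit$. Finally, bi-Lipschitz continuity of $(u,c)\mapsto(u,\gamma_0)$ on bounded sets follows by combining the uniform lower bound $|\partial_c H|\gtrsim 1$ with the Lipschitz dependence of $v^c$ on $(u,c)$ in $L^3_\loc$ from Proposition \ref{prop:kink-addition_map_eleV} and of $\gamma_0$ on $u$ via the implicit function theorem; the last sentence of the Lemma is then just a restatement that $\dtsmV(u,\gamma_0):=\eleV^{(-1,1)}(u,(c(\gamma_0),-c(\gamma_0)))$.

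\textbf{Main obstacle.} The delicate point is Step 2 — showing $\partial_\gamma H$ is bounded away from zero uniformly in $c$ and in small $u$. The functional $H$ averages $v^c$ against a translate of a fixed bump, and $v^c$ is only uniformly (not integrably) close to the kink, with a $c$-dependent profile; one must argue that across the shock the local mean of $v^c$ genuinely increases at a rate independent of $c$, exploiting that $\tanhcnu$ has a definite jump and that the correction terms contribute a small, sign-controlled perturbation because $\partial_x\nu = 1 + O(\|u\|_\Hcrit)$ pointwise in the averaged sense. Getting the $c$-uniformity here (as opposed to a bound degenerating as $|c|\to\infty$) is what the careful choice of the normalization \eqref{eq:definition-of-gamma_0} against a compactly supported $\rho$ is designed to make possible, and it is the step I would write out in the most detail.
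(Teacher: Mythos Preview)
Your overall strategy matches the paper's: implicit function theorem for both maps, derivative computations, then the quantitative bounds. But several steps are sketched too loosely to be a proof, and one contains an actual error.

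\textbf{The $(1+|c|)$ estimate.} You write that ``$\nu$ differs from $x-c$ by the bounded quantity $\wV^+-\wV^-$''. This is false: $\wV^+-\wV^-$ is only in $\faktor{C^{0,1/4}_\unif(\RR)}{\R}$ by Lemma~\ref{lemma:holder_regularity_for_difference_of_primitive_solutions}, and it grows. What makes it \emph{small near the origin} is precisely the normalization \eqref{eq:normalization-condition}, which you never invoke. The paper's route is cleaner and you should follow it: first prove $\sup_c|\partial_c\alpha_0-1|\lesssim\|u\|_\Hcrit$ and $\sup_c|\partial_c\gamma_0-1|\lesssim\|u\|_\Hcrit$, which requires comparing $\sech^2\!\circ\,\nu(\cdot,y_0)$ to $\sech^2_{x_0}$ for a zero $x_0$ of $\nu(\cdot,y_0)$ via the bound $|\nu(x,0)-(x-x_0)|\lesssim\|u\|_\Hcrit|x-x_0|^{1/2}$; then integrate from $c=0$ and separately bound $|\alpha_0(0)|,|\gamma_0(0)|\lesssim\|u\|_\Hcrit$ using the normalization of $V^\pm$ together with the H\"older regularity to show $|\nu(0,0)|\lesssim\|u\|_\Hcrit$. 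Your heuristic about ``linear growth in $c$ from the shift of the window against tails'' does not capture this and would not produce the estimate.

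\textbf{The monotonicity in Step~2.} You claim $\gamma\mapsto H(u,c,\gamma)$ is globally strictly monotone. The paper does not prove this and it is not obvious: $\partial_\gamma H=\int\rho_{\gamma}v^c_x$ is only shown to exceed $3/4$ \emph{when $|H|$ is already small}, i.e.\ near a zero. This, together with $H\to\pm 1$ as $\gamma\to\pm\infty$, suffices for existence and uniqueness of the zero, but your argument for the lower bound (``$\approx\int\rho\sech^2_\alpha+O(\|u\|)$'') tacitly assumes $\gamma$ is already near $\alpha_0$, which is circular unless you first establish $|\gamma_0-\alpha_0|\lesssim\|u\|_\Hcrit$ for \emph{any} zero $\gamma_0$ --- the paper does this step explicitly before invoking the derivative bound.

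\textbf{The bi-Lipschitz bound.} Proposition~\ref{prop:kink-addition_map_eleV} gives only uniform continuity of $\eleV^{(-1,1)}$ with values in $L^3_\loc$, not Lipschitz, so you cannot just cite it. The paper's argument for $|\gamma_{0,1}-\gamma_{0,2}|\lesssim(1+|c|)\|u_1-u_2\|_\Hcrit$ at fixed $c$ uses instead the analyticity of $u\mapsto\wV^+-\wV^-$ in $C^{0,1/4}_\unif$ from Lemma~\ref{lemma:holder_regularity_for_difference_of_primitive_solutions} and the cancellation $\int\rho(\nu_1-\nu_2)=0$ coming from the shared normalization, to control $\nu_1-\nu_2$ pointwise on the support of $\rho_{\gamma_{0,1}}$ by $|\gamma_{0,1}|\cdot\|u_1-u_2\|_\Hcrit$. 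This is where the $(1+|c|)$ factor in the Lipschitz constant comes from.

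Your identification of the ``main obstacle'' is in the right spirit, but the tools you need to resolve it --- the $C^{0,1/4}_\unif$ regularity of $V^+-V^-$ and the normalization condition --- are not the ones you name.
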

\noindent Note that for small $u$, estimate \eqref{eq:c_is_close_to_alpha_0_and_gamma_0_for_small_c} implies
\[
|\alpha_0-c|\lesssim (1+|\alpha_0|)\|u\|_{\Hcrit},\quad |\gamma_0-c|\lesssim (1+|\gamma_0|)\|u\|_{\Hcrit}.
\]
The proof is moved to Appendix \ref{subsec:proofs}.

\begin{remark}\label{rk:definition-of-gamma(y)}
    Similarly as above, we can define $\gamma(y)$ for all $y\in \R$ as the number determined uniquely by
    \[ \int_{\RR} \rho(x'-\gamma(y),y'-y) v^c(x',y') dx'dy'=0. \]
    By translation invariance in $y$, \eqref{eq:gamma_0_and_alpha_0_are_uniformly_close} implies
    \[ \|\gamma-\alpha\|_{L^\infty(\R_y)}\lesssim \|u\|_\Hcrit. \]
\end{remark}
\begin{corollary}\label{cor:log-bound-on-alpha}
    Let $v^c$ as in Definition \ref{def:VcAndvc}. Consider the associated phase shift $\alpha$ given by Lemma \ref{lemma:expDecomposition} thanks to Proposition \ref{prop:vcAreSolutions}. Then,
    \[ |\alpha(y_2)-\alpha(y_1)|\lesssim \log(2+|y_2-y_1|)\|u\|_\Hcrit. \]
\end{corollary}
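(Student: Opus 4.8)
The plan is to locate the kink of $v^c$ very explicitly for each fixed $y$ and then read off its $y$-dependence from the parabolic-BMO bounds of Corollary \ref{cor:BMO-bounds-for-Vlambda}. Throughout, $\|u\|_\Hcrit$ is assumed small. Recall from Definition \ref{def:VcAndvc} and the proof of Proposition \ref{prop:vcAreSolutions} that, setting $\wV^\pm:=V^\pm-(\pm x+y)$, $W:=\wV^+-\wV^-$, $\mu:=c-\tfrac12 W$ and $\nu:=x-\mu$, one has $v^c=\tanhcnu+(\etapcnu)\wv^++(\etamcnu)\wv^-$; moreover $W\in C^{0,1/4}_\unif(\RR)$ and $\wv^+-\wv^-\in C_0(\R_y,L^2(\R_x))$, both with norm $\lesssim\|u\|_\Hcrit$, by Lemma \ref{lemma:holder_regularity_for_difference_of_primitive_solutions} and Corollary \ref{cor:estimates-for-wtv-part-2}(b). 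First I would fix, for each $y$, a point $\beta(y)$ with $\beta(y)=\mu(\beta(y),y)$ -- such a point exists because the small H\"older bound on $\mu$ makes $x\mapsto x-\mu(x,y)$ tend to $\pm\infty$ as $x\to\pm\infty$, exactly as in the proof of Proposition \ref{prop:vcAreSolutions}.

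The heart of the argument is the integral identity
\begin{align*}
\int_\R\big(v^c(x,y)-G_a(x,y)\big)\,dx&=\int_\R\big(\tanh(x-\mu(x,y))-\tanh(x-a)\big)\,dx\\
&\qquad+\int_\R\big(\eta^+(x-\mu(x,y))-\eta^+(x-a)\big)\big(\wv^+(x,y)-\wv^-(x,y)\big)\,dx,
\end{align*}
valid for every $a\in\R$ (using $\eta^++\eta^-\equiv1$; all integrals converge thanks to the exponential tails of $\eta^\pm$ and the at-most-linear growth of $\mu$). Using $\int_\R\big(\tanh(x-p)-\tanh(x-q)\big)\,dx=2(q-p)$ together with $\mu(\beta(y),y)=\beta(y)$ and the H\"older smallness of $\mu$ -- which gives that $\tanh(x-\mu(x,y))-\tanh(x-\beta(y))$ is $O(\|u\|_\Hcrit)$ on a bounded set around $\beta(y)$ and decays exponentially away from it -- the first integral equals $2(a-\beta(y))+O(\|u\|_\Hcrit)$; by Cauchy--Schwarz and $\|\wv^+(\cdot,y)-\wv^-(\cdot,y)\|_{L^2_x}\lesssim\|u\|_\Hcrit$, the second integral is $O\big((1+|a-\beta(y)|)^{1/2}\|u\|_\Hcrit\big)$. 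Taking $a=\alpha(y)$, where $v^c=\omega+G_\alpha$ with $\int_\R\omega(\cdot,y)\,dx=0$, the left-hand side vanishes, and the resulting relation forces $\sup_y|\alpha(y)-\beta(y)|\lesssim\|u\|_\Hcrit$. Taking instead $a=\alpha(y_1)$ and $y=y_2$, the left-hand side equals $\int_\R\big(G_{\alpha(y_2)}(\cdot,y_2)-G_{\alpha(y_1)}(\cdot,y_2)\big)\,dx$, whose modulus is comparable to $|\alpha(y_2)-\alpha(y_1)|$ by Lemma \ref{lemma:expDecomposition}(a). Combining these, and writing $\Delta:=|\alpha(y_2)-\alpha(y_1)|$, $b:=|\beta(y_2)-\beta(y_1)|$, I obtain $\Delta\lesssim b+\|u\|_\Hcrit+(1+b)^{1/2}\|u\|_\Hcrit$.

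It remains to bound $b$ logarithmically in $|y_2-y_1|$, and this is the step I expect to be the main obstacle. Since $\beta(y_i)=c-\tfrac12 W(\beta(y_i),y_i)$, we have $b=\tfrac12|W(p_2)-W(p_1)|$ with $p_i:=(\beta(y_i),y_i)$. The function $W$ is merely H\"older-$\tfrac14$, which at large scales yields only a $|p_2-p_1|^{1/2}$-type bound -- far too weak; a genuine logarithm is available only because $W=\wV^+-\wV^-$ with $\wV^+\in\PBMOx{1}(\RR)$ and $\wV^-\in\PBMOx{-1}(\RR)$ (norms $\lesssim\|u\|_\Hcrit$, Corollary \ref{cor:BMO-bounds-for-Vlambda}), i.e.\ BMO spaces attached to two \emph{different} parabolic metrics. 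The idea is to pass, using the continuity of $W$, to averages over parabolic balls of unit radius: for such a ball $\Bcal$ one has $W_\Bcal=(\wV^+)_\Bcal-(\wV^-)_\Bcal$ and $|W(p)-W_{\Bcal_1(p)}|\lesssim\|u\|_\Hcrit$; the $\wV^+$-averages are then compared through Lemma \ref{lemma:BMO-logarithmic-growth} in the $d_{\rm p,1}$-metric, while the $\wV^-$-averages are compared by first switching -- at bounded cost, the two metrics being equivalent at unit scale and $\mu$ being doubling -- from a unit $d_{\rm p,1}$-ball to a bounded $d_{\rm p,-1}$-ball and then applying Lemma \ref{lemma:BMO-logarithmic-growth} in $d_{\rm p,-1}$. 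This gives $|W(p)-W(q)|\lesssim\|u\|_\Hcrit\log(2+|p-q|)$ for all $p,q$, hence $b\lesssim\|u\|_\Hcrit\log(2+b+|y_2-y_1|)$; since the right-hand side grows only logarithmically in $b$, an elementary dichotomy (on whether $b\le|y_2-y_1|$) upgrades this, for small $\|u\|_\Hcrit$, to $b\lesssim\|u\|_\Hcrit\log(2+|y_2-y_1|)$. Substituting into the bound for $\Delta$ from the previous paragraph completes the proof.
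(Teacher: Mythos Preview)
Your proposal is correct and follows essentially the same strategy as the paper: locate, for each $y$, the zero $\beta(y)$ (the paper calls it $\sigma(y)$) of $\nu(\cdot,y)$, and then read off its logarithmic oscillation from the fact that $\mu=c-\tfrac12 W$ lies in $C^{0,1/4}_\unif\cap(\PBMOx{1}+\PBMOx{-1})$ together with Lemma~\ref{lemma:BMO-logarithmic-growth}. The only difference is in how you pass from $\alpha$ to $\beta$: the paper invokes the already-proven bound $\|\gamma-\alpha\|_{L^\infty}\lesssim\|u\|_\Hcrit$ of Remark~\ref{rk:definition-of-gamma(y)} and then the H\"older bound of Lemma~\ref{lemma:holder_regularity_for_difference_of_primitive_solutions} to link $\gamma$ to $\sigma$, whereas you bypass $\gamma$ and establish $|\alpha-\beta|\lesssim\|u\|_\Hcrit$ directly via the integral identity $\int(v^c-G_a)\,dx=2(a-\beta)+O(\|u\|_\Hcrit)+O((1+|a-\beta|)^{1/2}\|u\|_\Hcrit)$. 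Both routes are short; the paper's is terser because Remark~\ref{rk:definition-of-gamma(y)} is already on hand, while yours is self-contained. Your second application of the integral identity (with $a=\alpha(y_1)$, $y=y_2$) is in fact redundant once $\sup_y|\alpha-\beta|\lesssim\|u\|_\Hcrit$ is known, since then $\Delta\le b+O(\|u\|_\Hcrit)$ directly by the triangle inequality.
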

\begin{proof}
    By Remark \ref{rk:definition-of-gamma(y)}, we can replace $\alpha$ with $\gamma$ in the above statement. We recall that $v^c$ can be written as
    \[
        v^c=\tanhcnu+(\etapcnu)\wv^++(\etamcnu)\wv^-,
    \]
    where $\nu(x,y):=\aha (V^+(x,y)-V^-(x,y)-2c)$.
    It is then a straightforward consequence of the bound from Lemma \ref{lemma:holder_regularity_for_difference_of_primitive_solutions} that, in order to show the logarithmic bound on $\gamma$, it is enough to prove the same bound for any function $\sigma=\sigma(y)$ such that $\nu(\sigma(y),y)=0$.
    
    Now, consider $\mu(x,y):=x-\nu(x,y)$. By Corollary \ref{cor:BMO-bounds-for-Vlambda} and Lemma \ref{lemma:holder_regularity_for_difference_of_primitive_solutions}, $\mu\in C^{0,\alpha}_\unif(\RR)\cap(\PBMOx 1(\RR)+\PBMOx {-1}(\RR))$. From this and Lemma \ref{lemma:BMO-logarithmic-growth}, it holds that $\mu$ grows at most logarithmically in $\RR$, that is,
    \[
    |\mu(x_1,y_1)-\mu(x_2,y_2)|\lesssim \log(2+|(x_1,y_1)-(x_2,y_2)|)\|u\|_{\Hcrit}.
    \]
    The logarithmic bound on $y\mapsto \sigma(y)$ is a direct consequence of the above bound.
\end{proof}

\begin{proof}[Proof of Theorem \ref{theorem:theorem_1}]
    The proof follows combining previous results. We first claim that the data to solution map $\dtsmV$ is the same we defined in Lemma \ref{lemma:change-of-variables-c-alpha_0-gamma_0}. In particular, we set $v=\dtsmV(u,\gamma_0)=\eleV^{(-1,1)}(u,(c,-c))$, with $c$ that depends univocally on $u$ and on $\gamma_0$ as in the Lemma. This $v$ indeed satisfies the localization condition
    \[
    \int_{\RR} \rho(x-\gamma_0,y) v(x,y) \,dx\,dy=0
    \]
    by the definition of the map $c\mapsto \gamma_0$ in Lemma \ref{lemma:change-of-variables-c-alpha_0-gamma_0}, and satisfies \eqref{eq:v_is_in_L^3_plus_tanh_sigma} by Proposition \ref{prop:vcAreSolutions}, so the existence statement is proved. The uniqueness is given by Proposition \ref{prop:unconditional_uniqueness_miura} and the bijectivity of the change of variables $\alpha_0\mapsto \gamma_0$ in Lemma \ref{lemma:change-of-variables-c-alpha_0-gamma_0}. The decomposition is given by Lemma \ref{lemma:expDecomposition} part \ref{item:expDec-003}, which can be applied since $v$ satisfies \eqref{eq:v_minus_G_0_is_in_exponentially_weighted_L^2} by Proposition \ref{prop:vcAreSolutions}. The same Proposition says that Corollary \ref{cor:uniquenessOfMiura} applies to $v$, and this proves the estimate on $\alpha,\omega$. The bounds on $v^\pm$ follow directly from Corollary \ref{cor:estimates-for-wtv-part-2}. The additional bound on $\alpha$ follows by Corollary \ref{cor:log-bound-on-alpha}. The continuity of the map and the explicit formula both follow from Proposition \ref{prop:kink-addition_map_eleV} and the bi-Lipschitz change of variables $(u,c)\mapsto(u,\gamma_0)$ from Lemma \ref{lemma:change-of-variables-c-alpha_0-gamma_0}.
\end{proof}

\begin{remark}%\label{rk:L2-bound-remainder-from-thm-1-not-pure}
    From the decomposition in Theorem \ref{theorem:theorem_1}, it follows in addition that if $u\in L^2(\RR)$, then
    \[
    \|(v-\tanh_\alpha)_x\|_{L^2(\RR)}\lesssim \|u\|_{\Hcrit\cap L^2(\RR)}.
    \]
    In other words, the B\"acklund transform $\samB$ defined in \eqref{eq:backlund-transformation-definition-introduction} satisfies
    \[
    \|\samB(u,\gamma_0)-\ph_\alpha\|_{L^2(\RR)}\lesssim \|u\|_{\Hcrit\cap L^2(\RR)}.
    \]
    In fact, it holds
    \[
    (v-\tanh_{\alpha})_x=(\eta^+_\alpha \wv^+_x+\eta^-_\alpha \wv_x^-)+\aha\sech^2_\alpha\cdot\,(\wv^+-\wv^-)+\omega_x,
    \]
    and we can estimate all the three terms on the right hand side by the second and third estimate in \eqref{eq:estimates-for-wv-lambdaiszero-L2} using Corollary \ref{cor:estimates-for-wtv-part-2}, and by Theorem \ref{theorem:theorem_1} respectively.
\end{remark}

\begin{proof}[Proof of Corollary \ref{cor:almost-conservation-L2-norm-around-line-soliton}]
First of all, if $u\in L^2(\RR)\cap \Hcrit$ is small enough in both norms, the solution $v$ from Theorem \ref{theorem:theorem_1} coincides with the one given by Proposition \ref{proposition:existenceOfAnEternalSolution} and its proof, for some $\beta_0\in\R$. This is immediate if $u$ is compactly supported in $\RR$ thanks to the well-posedness of the initial-value problem for Burgers' equation, and it extends to all $u$ as above by approximation using the continuity of $\dtsmV$ (note that the solution in Proposition \ref{proposition:existenceOfAnEternalSolution} is constructed as a weak limit of solutions whose data are restrictions of the datum $u$ on a half plane $\{y>y_N\}$, with $y_N\to-\infty$). In particular, $v$ satisfies the estimates of the Proposition.

We have to prove two estimates. The estimate $|\wu|_{L^2_\ph(\RR)}\lesssim \|u\|_{L^2(\RR)}$ is an immediate consequence of the estimate $\|\beta_y\|_{L^2(\R_y)}+\|w_x\|_{L^2(\RR)}\lesssim \|u\|_{L^2(\RR)}$ of Proposition \ref{proposition:existenceOfAnEternalSolution}, where $v=w+\tanh_\beta$ is the decomposition of $v$ given by the Proposition, and of the identity $\wu=u-2w_x+\ph_\beta$.               
We now prove the converse estimate. Note that $\wu=u-2v_x$ satisfies the equation
\[
    v_y+v_{xx}=(v^2)_x-\wu_x.
\]
Let $\sigma\in C(\R_y)$ such that $\|\sigma_y\|_{L^2(\R)}<\infty$ and $\wu-\ph_\sigma\in L^2(\RR)$. Let $z:=v-\tanh_\sigma$, $g:=\wu-\ph_\sigma$.

We claim that $z\in C_0(\R_x,L^2(\R_x))$. For this, note that the three following decompositions hold:
\begin{equation*}
    v=w+\tanh_\beta=G_\alpha+\omega=z+\tanh_\sigma,
\end{equation*}
where $w,\beta$ are the functions from the decomposition from Lemma \ref{lemma:decompositionWAlpha} and $\omega, \alpha$ are as in the decomposition of Lemma \ref{lemma:expDecomposition}. We know that $v=v^c$ for some $c\in\R$ as in Definition \ref{def:VcAndvc}, and by Corollary \ref{cor:estimates-for-wtv-part-2}, $\|\wv^\pm\|_{C_0L^2}\lesssim \|u\|_{L^2(\RR)}$. Also, by Proposition \ref{prop:vcAreSolutions}, $\cosh_{\alpha}\omega\in C_0(\R_y,L^2(\R_x))$, so it follows $v-\tanh_{\alpha}\in C_0(\R_y,L^2(\R_x))$. By the definition of $w$ in the decompositon of Lemma \ref{lemma:decompositionWAlpha}, it follows immediately that $w\in C_0(\R_y,L^2(\R_x))$. Finally, since both $z_x$ and $w_x$ belong to $L^2(\RR)$, it has to hold $\beta-\sigma\in H^1(\R_y)$, so $z=w+(\tanh_\beta-\tanh_\sigma)\in C_0(\R_y,L^2(\R_x))$ and the claim is proved.

Now, the functions $z,\sigma$ satisfy
\[
z_y+z_{xx}-2(\tanh_\sigma z)_x=(z^2)_x-g_x-\sigma_y\sech^2_\sigma.
\]
By the usual energy estimates obtained multiplying the equation by $z$ and integrating in space, it holds for smooth enough functions
\[
\fr12 \fr{d}{dy}\left[\int_\R z^2dx\right]-\int_\R z_x^2dx-\int_\R \sech^2_{\sigma} z^2dx=\int_\R z_xg\,dx-\sigma_y\int_\R\sech^2_\sigma z\,dx.
\]
The last term can be controlled by $\sqrt 2\|\sigma_y\|_{L^2(\R_y)}\|\sech_\sigma z\|_{L^2(\RR)}$ when integrated in time, and the first term on the right-hand side is bounded by $\|z_x\|_{L^2(\RR)}\|g\|_{L^2(\RR)}$. By an approximation argument with smooth functions, using the fact that $z\in C_0(\R_y,L^2(\R_x))$, the above yields the bound
\[
\|z\|_{L^\infty L^2}^2+\|z_x\|_{L^2(\RR)}^2+\|\sech_\sigma z\|_{L^2(\RR)}^2\lesssim \|g\|^2_{L^2(\RR)}+\|\sigma_y\|^2_{L^2(\R_y)}.
\]
Since $u=\wu+2v_x=g+2z_x$, we obtain
\[
\|u\|_{L^2(\RR)}\lesssim \|\wu-\ph_\sigma\|_{L^2(\RR)}+\|\sigma_y\|_{L^2(\R_y)},
\]
and the bound is proved by taking the infimum over all $\sigma$.
\end{proof}

\section{The time-dependent B\"acklund transform}\label{sec:time}
We now go back to space-time equations. This time we will consider a time-dependent $u$ satisfying \eqref{eq:KP-II}, and instead of studying equation \eqref{eq:miuraEquation}, we will study the whole system \eqref{eq:systemForlittlev-miura+mKP-II}, as well as its relations with \eqref{eq:mKP-II} and the Lax system \eqref{eq:systemCompatibility}.\\
The goal of this section is to prove Theorem \ref{theorem:theorem_2}. To do this, we give definition and properties of the \emph{elementary solutions} of system \eqref{eq:systemForlittlev-miura+mKP-II} and prove a general nonlinear superposition principle in Proposition \ref{prop:eleVt_superposition-of-elementary-functions} that allows to combine such solutions.

\subsection{The well-posedness theory}
\subsubsection{Well-posedness around the zero solution}
We have a satisfactory well-posedness theory of the \eqref{eq:KP-II} equation on $\mathbb R^2$ in high regularity spaces (see \cite{hadac2008well-posednessKP-II} and references therein). The following Proposition can be seen as a special case of \cite{molinetSautTzvetkov2011non-localized}*{Theorem 1.2}.

\begin{definition}[\cite{molinetSautTzvetkov2011non-localized}]\label{def:Bourgain_type_spaces_X^s,b}
We define the spaces $X^{b,b_1,s}$ as the Banach spaces with norm
$$
\|u\|_{X^{b, b_1, s}}^2=\int_{\mathbb{R}_{\xi, \eta}^2 \times \mathbb{R}_\tau}\left\langle\frac{\langle\sigma(\tau, \xi, \eta)\rangle}{\langle\xi\rangle^3}\right\rangle^{2 b_1}(1+\xi^2+\eta^2)^s\langle\sigma(\tau, \xi, \eta)\rangle^{2 b}|\hat{u}(\tau, \xi, \eta)|^2 \, d\tau\, d\xi\, d\eta,
$$
where $\sigma(\tau, \xi, \eta)=\tau-4\pi^2\xi^3+3\eta^2 / \xi$. For any $T>0$, the norm in the localized version $X_T^{b, b_1, s}$ is given by
$$
\|u\|_{X_T^{b, b_1, s}}=\inf \left\{\|w\|_{X^{b, b_1, s}}\,|\; w(t)=u(t) \text { on }(0, T)\right\}.
$$
\end{definition}
Note that $X^{b,b_1,s}_T\hookrightarrow C([0,T],H^s(\RR))$ when $b>\aha$.

\begin{proposition}\label{prop:well-posedness-of-KP-II-L2}
    There exists an $\eps_0>0$ such that the following holds. Fix $k\in \Natural$, $\eps<\eps_0$, $1/4<b_1<3/8$. Let $u_0\in H^k(\RR)$. There exists a unique solution $u\in C([0,\infty),H^k(\RR))$ of \eqref{eq:KP-II} such that $u(0)=u_0$ and $u|_{[0,T]}\in X^{1/2+\eps,b_1,k}_T$ for all $T>0$. The data-to-solution map is analytic.
\end{proposition}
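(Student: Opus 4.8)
The plan is to follow the standard contraction-mapping strategy in Bourgain-type spaces adapted to the KP-II equation, as in Molinet--Saut--Tzvetkov. First I would record the bilinear estimate underlying the local theory: for the nonlinearity $\partial_x(u^2)$ one has a bound of the form
\[
\|\partial_x(uv)\|_{X^{-1/2+\eps,b_1,k}_T}\lesssim T^{\delta}\|u\|_{X^{1/2+\eps,b_1,k}_T}\|v\|_{X^{1/2+\eps,b_1,k}_T}
\]
for a suitable $\delta>0$ and all $0<T\le 1$, together with the linear/energy estimate $\|S(t)u_0\|_{X^{1/2+\eps,b_1,k}_T}\lesssim\|u_0\|_{H^k(\RR)}$ and the Duhamel estimate $\|\int_0^t S(t-s)F(s)\,ds\|_{X^{1/2+\eps,b_1,k}_T}\lesssim\|F\|_{X^{-1/2+\eps,b_1,k}_T}$, where $S(t)=e^{-t(\partial_x^3+3\partial_x^{-1}\partial_y^2)}$. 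These are exactly the estimates proved in the references cited before the statement, so I may invoke them. Note the $H^k$-algebra structure (for $k\ge1$, Moser-type estimates; for $k=0$ the bilinear estimate handles the loss directly), which is why the nonlinearity closes.

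Next I would set up the fixed point: on the ball $\{u:\|u\|_{X^{1/2+\eps,b_1,k}_T}\le R\}$ with $R\sim\|u_0\|_{H^k}$ and $T$ chosen so that $T^{\delta}R\ll1$, the map $\Psi(u)=S(t)u_0-3\int_0^t S(t-s)\partial_x(u^2)(s)\,ds$ is a contraction, giving a unique local solution on $[0,T]$ with $u|_{[0,T]}\in X^{1/2+\eps,b_1,k}_T\hookrightarrow C([0,T],H^k(\RR))$. Analyticity of the data-to-solution map is automatic from the Banach fixed point theorem with a parameter (the solution is obtained as a convergent power series in $u_0$). The key point specific to this statement, however, is \emph{global} existence: I would invoke the conservation of the $L^2$ norm along the KP-II flow, $\|u(t)\|_{L^2(\RR)}=\|u_0\|_{L^2(\RR)}$, so the local existence time $T=T(\|u_0\|_{H^k})$ does not shrink to zero when $k=0$; and for $k\ge1$ I would combine this with the conserved Hamiltonian and the standard persistence-of-regularity argument (Gronwall on $\frac{d}{dt}\|u(t)\|_{H^k}^2$ controlled via the bilinear estimates and the $L^2$ bound) to propagate the $H^k$ bound to all times, so the local solution can be iterated on $[0,\infty)$. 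Since the text takes $u_0\in H^k(\RR)$ small and the constant $\eps_0$ is the smallness threshold from the global well-posedness theory, I would simply note that for small data the globalization is immediate from the $L^2$ conservation alone, without needing the Hamiltonian coercivity.

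The main obstacle is really bookkeeping rather than a new idea: one must make sure the bilinear estimate holds in the $X^{b,b_1,s}_T$ scale with the \emph{same} exponents as in Definition \ref{def:Bourgain_type_spaces_X^s,b} (the extra weight $\langle\langle\sigma\rangle/\langle\xi\rangle^3\rangle^{b_1}$ with $1/4<b_1<3/8$ is there precisely to absorb the low-$\xi$ resonances of the KP-II resonance function $\sigma(\tau,\xi,\eta)=\tau-4\pi^2\xi^3+3\eta^2/\xi$), and that the time-localization gains a positive power $T^\delta$. All of this is in \cite{molinetSautTzvetkov2011non-localized}*{Theorem 1.2}, of which the statement is a special case, so in the write-up I would reduce everything to citing that theorem and the $L^2$ conservation law, and only spell out the globalization step: uniqueness on $[0,\infty)$ follows by iterating the local uniqueness on consecutive intervals of length $T$, and the full statement follows.
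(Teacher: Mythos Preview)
Your approach is correct and matches the paper's, which does not give a proof at all but simply records the statement as a special case of \cite{molinetSautTzvetkov2011non-localized}*{Theorem 1.2}; your sketch of the contraction in $X^{1/2+\eps,b_1,k}_T$ plus globalization via the conserved $L^2$ norm is exactly the content of that reference.

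Two small corrections. First, you have misread $\eps_0$: it is \emph{not} a smallness threshold on the initial datum $u_0$ (the proposition is a large-data result), but the upper bound on the Bourgain-space exponent parameter $\eps$ in $X^{1/2+\eps,b_1,k}$, needed for the bilinear estimate to close. Second, the mention of the conserved Hamiltonian for the $H^k$ globalization is off for KP-II: the energy is not coercive in a way that controls $H^1$, and the argument in \cite{molinetSautTzvetkov2011non-localized} (echoed in the paper's proof of Proposition~\ref{prop:KP-IIglobalWellPosednessAroundSlackSoliton}) proceeds purely by $L^2$-subcriticality---the local existence time depends only on $\|u_0\|_{L^2}$, which is conserved, so one iterates indefinitely, and persistence of regularity (your Gronwall remark) upgrades to $H^k$.
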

The next Lemma addresses the time regularity of the solutions and can be proved by directly looking at the Duhamel formulation of \eqref{eq:KP-II}.
\begin{lemma}\label{lemma:time-differentiability-}
     In the assumptions of Proposition \ref{prop:well-posedness-of-KP-II-L2}, if in addition $u_0\in \partial_x^j H^{k}(\RR)$ with $k-3j\geq 0$, it holds $u\in C^j([0,\infty),H^{k-3j}(\RR))$.
\end{lemma}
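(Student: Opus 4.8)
The plan is to bootstrap in time regularity directly from the Duhamel formula, using the equation to trade time derivatives for (at most three) spatial derivatives. Recall that $u$ solves the integral equation
\[
u(t)=e^{-tS}u_0-\int_0^t e^{-(t-s)S}\,\partial_x\bigl(3u(s)^2\bigr)\,ds,\qquad S:=\partial_x^3+3\partial_x^{-1}\partial_y^2,
\]
where $e^{-tS}$ is the unitary group on every $H^m(\RR)$. First I would treat the base case $j=1$: assuming $u_0\in\partial_x H^k$ with $k\geq 3$, so that $u_0\in H^{k}$ and, by Proposition \ref{prop:well-posedness-of-KP-II-L2}, $u\in C([0,\infty),H^k(\RR))\cap X^{1/2+\eps,b_1,k}_T$. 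The linear term $e^{-tS}u_0$ is differentiable in $t$ with $\tfrac{d}{dt}e^{-tS}u_0=-Se^{-tS}u_0$, and this lies in $C([0,\infty),H^{k-3}(\RR))$ precisely because $u_0\in\partial_x H^k$ makes $\partial_x^{-1}\partial_y^2 u_0\in H^{k-1}\subset H^{k-3}$ while $\partial_x^3 u_0\in H^{k-3}$. For the Duhamel integral, one differentiates under the integral sign: the boundary contribution is $-\partial_x(3u(t)^2)$, which is in $C([0,\infty),H^{k-1}(\RR))\subset C([0,\infty),H^{k-3}(\RR))$ by the algebra property of $H^{k-1}$ (valid since $k-1\geq 2>1$), and the remaining term is $-\int_0^t Se^{-(t-s)S}\partial_x(3u(s)^2)\,ds=-\int_0^t e^{-(t-s)S}S\partial_x(3u(s)^2)\,ds$; here $S\partial_x(u^2)$ involves at most four $x$-derivatives minus the antiderivative, i.e.\ lands in $H^{k-4}$ — one checks $\partial_x^{-1}\partial_y^2\partial_x(u^2)=\partial_y^2(u^2)\in H^{k-3}$ and $\partial_x^4(u^2)\in H^{k-4}$, so the worst term is in $H^{k-4}$, hence the integral is in $C([0,\infty),H^{k-4})$; combined with the boundary term one obtains $u_t\in C([0,\infty),H^{k-3}(\RR))$ after absorbing the loss, or more cleanly one simply reads off $u_t=-S u-\partial_x(3u^2)$ from the equation, with $Su\in C([0,\infty),H^{k-3})$ (again using $u\in\partial_x H^k + (\text{Duhamel term})$, and checking the Duhamel term also lies in $\partial_x H^{k}$ so that $\partial_x^{-1}\partial_y^2$ of it is controlled) and $\partial_x(u^2)\in C([0,\infty),H^{k-1})$.

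The induction step is the same argument applied to $u_t$ in place of $u$: if $u\in C^j([0,\infty),H^{k-3j}(\RR))$ and $k-3(j+1)\geq 0$, I would show $u\in C^{j+1}([0,\infty),H^{k-3(j+1)}(\RR))$ by differentiating the relation $u_t=-Su-\partial_x(3u^2)$ a further $j$ times in $t$: each time derivative of $-Su$ costs three spatial derivatives (once one knows the relevant Duhamel-type term stays in a range of $\partial_x H^{m}$ so that $\partial_x^{-1}\partial_y^2$ makes sense with the right regularity), and each time derivative of the nonlinear term $\partial_x(3u^2)$ costs at most one spatial derivative by Leibniz and the algebra property of $H^{m}$ for $m=k-3j-1\geq 2$. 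The only point requiring care is that the antiderivative $\partial_x^{-1}$ appearing in $S$ is applied to things that genuinely lie in $\partial_x$ of an $H^{m}$-space; this is guaranteed because both $u_0$ and the Duhamel integral $\int_0^t e^{-(t-s)S}\partial_x(3u^2)\,ds$ are manifestly in $\partial_x H^{m}$ for the appropriate $m$ (the latter has an explicit $\partial_x$ in front), so $\partial_x^{-1}\partial_y^2$ acting on $u(t)$, and on each of its time derivatives, is well-defined and bounded with the stated loss of one spatial derivative.

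The main obstacle — really the only subtlety — is bookkeeping the nonlocal operator $\partial_x^{-1}\partial_y^2$: one must verify at each stage that the functions it is applied to are in the range of $\partial_x$ on the relevant Sobolev scale and that no additional $x$-derivative loss sneaks in beyond the advertised three-per-time-derivative. This is exactly the content of the remark in the excerpt that $\partial_x^{-1}\partial_y^2 u$ coincides with the term appearing in \eqref{eq:KP-II} and is well-defined on the solution space (cf.\ Remark \ref{rk:Z-aha-is-in-L2-loc}); with that in hand the estimates are the routine product and group-boundedness estimates, and the induction closes.
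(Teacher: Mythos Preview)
Your approach is correct and is exactly what the paper indicates: it states only that the lemma ``can be proved by directly looking at the Duhamel formulation of \eqref{eq:KP-II}'' and gives no further details. Your cleaner route---first observing that both $u_0$ and the Duhamel term lie in $\partial_x H^k$ so that $Su\in C([0,\infty),H^{k-3})$, then reading $u_t=-Su+\partial_x(3u^2)$ directly from the equation and iterating---is precisely the intended argument; the brief detour through differentiating the integral (which momentarily lands in $H^{k-4}$) is unnecessary and best discarded in favor of that direct reading.
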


\begin{remark}\label{rk:time_smoothness_and_de_x_infty}
    Note that solutions of \eqref{eq:KP-II} are not necessarily smooth in time, even for $u_0\in H^\infty(\RR)$. This is true even for the linear flow, and can be checked by looking at the space-time Fourier transform of the solution of the linear KP equation. In order to have solutions that are smooth in space-time, we need $u_0\in\partial_x^\infty H^\infty(\RR):=\cap_{k\geq 0}\partial_x^kH^{2k}(\RR)$.
\end{remark}

The first well-posedness result in a scaling-critical space is due to Hadac--Herr--Koch \cite{hadacHerrKoch2009wellPosednessKP-IIinCriticalSpace}, who proved global well-posedness for small initial data in $\dot H^{-\aha,0}(\RR)$, as well as local well-posedness for data in the inhomogeneous version of the space. We state here a short version of the main theorem from the article, with the definition of the solution space in Appendix \ref{appendix:upvp}, Definition \ref{def:adapted-space-Zs}.
\begin{theorem}[{\cite{hadacHerrKoch2009wellPosednessKP-IIinCriticalSpace}*{Theorem 1.1}}]\label{theorem:global-well-posedness-KP-II-HHK09}
    Let $u_0 \in\Hcrit$ small enough. There exists a unique solution
$$
u \in \dot{Z}^{-\frac{1}{2}}((0, \infty)) \hookrightarrow C_b([0, \infty),\dot{H}^{-\frac{1}{2}, 0}(\mathbb{R}^2))
$$
of \eqref{eq:KP-II} on $[0, \infty)$. The data to solution map $u_0\mapsto u$ is analytic from a small ball in $\Hcrit$ centered at zero, to $\dot{Z}^{-\frac{1}{2}}((0, \infty))$.
\end{theorem}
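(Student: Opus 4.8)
\emph{Proof strategy (sketch).} The plan is to reproduce the argument of Hadac--Herr--Koch: a Picard iteration in the adapted function space $\dot Z^{-\frac12}((0,\infty))$ of Definition \ref{def:adapted-space-Zs}, which is a $\dot V^2$-type space built from the $U^p/V^p$ calculus and carries the $|D_x|^{-\frac12}$ weight in $L^2_{x,y}$. First I would write \eqref{eq:KP-II} in Duhamel form
\[
u(t) = e^{-tS}u_0 + 3\int_0^t e^{-(t-s)S}\partial_x\big(u^2\big)(s)\,ds,\qquad S:=\partial_x^3+3\demu\partial_y^2,
\]
where $e^{-tS}$ is the linear KP-II group, unitary on every $\dot H^{s,0}(\RR)$. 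The definition of the $U^p/V^p$ spaces then supplies for free the two linear ingredients: the homogeneous estimate $\|e^{-tS}u_0\|_{\dot Z^{-1/2}((0,\infty))}\lesssim\|u_0\|_{\Hcrit}$, the Duhamel (energy) estimate $\big\|\int_0^t e^{-(t-s)S}F(s)\,ds\big\|_{\dot Z^{-1/2}((0,\infty))}\lesssim\|F\|_{\mathcal N}$ for the associated dual-type norm $\mathcal N$, and the embedding $\dot Z^{-\frac12}((0,\infty))\hookrightarrow C_b([0,\infty),\Hcrit)$, which follows from $\dot V^2\hookrightarrow L^\infty_t$.

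Everything then reduces to the single \emph{bilinear estimate}
\[
\|\partial_x(uv)\|_{\mathcal N}\lesssim \|u\|_{\dot Z^{-1/2}}\,\|v\|_{\dot Z^{-1/2}}.
\]
To prove it I would decompose the $x$-frequencies of the two inputs and of the output into dyadic scales $N_1,N_2,N_3$, note that $N_3\lesssim\max(N_1,N_2)$ from the convolution structure, and run a case analysis on the interaction geometry. The decisive input is the KP-II resonance identity: writing $\omega(\xi,\eta)$ for the (rescaled) symbol of $S$,
\[
\Omega:=\omega(\xi_1,\eta_1)+\omega(\xi_2,\eta_2)-\omega(\xi_1+\xi_2,\eta_1+\eta_2),\qquad
|\Omega|\gtrsim |\xi_1\xi_2(\xi_1+\xi_2)|+\frac{(\eta_1\xi_2-\eta_2\xi_1)^2}{|\xi_1\xi_2(\xi_1+\xi_2)|},
\]
with \emph{no} cancellation between the two contributions — the feature that distinguishes KP-II from KP-I. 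In any configuration not already resolved by transversality this forces one of the three factors to carry a large modulation $\sigma=\tau-\omega$, and one then trades $\dot V^2$ control for $\dot U^p$ control ($p>2$) and invokes the $L^4_{t,x,y}$ and bilinear Strichartz estimates for free KP-II solutions, which themselves reduce to a Jacobian computation in the resonant Fourier integral. I expect the main obstacle to lie exactly here: balancing the derivative gain from $\partial_x(u^2)$ against the $|D_x|^{-\frac12}$ weight uniformly as $\xi\to0$, and recovering dyadic summability in the high$\times$high$\to$low and low$\times$high$\to$high regimes, for which a Whitney-type decomposition in the angle between $(\xi_1,\eta_1)$ and $(\xi_2,\eta_2)$ is the right tool; summing over the remaining dyadic blocks uses the off-diagonal decay built into the $U^p/V^p$ estimates.

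With the linear and bilinear estimates in hand, the map $\Lambda(u):=e^{-tS}u_0+3\int_0^t e^{-(t-s)S}\partial_x(u^2)\,ds$ is a contraction on a small ball of $\dot Z^{-\frac12}((0,\infty))$ once $\|u_0\|_{\Hcrit}$ is small, giving a unique fixed point; uniqueness in the full space follows from a short continuity/bootstrap argument, and the embedding above yields $u\in C_b([0,\infty),\Hcrit)$. Analyticity of $u_0\mapsto u$ is then automatic: the fixed point is the sum of a power series whose $n$-th term is an explicit $n$-linear expression in $u_0$ assembled from $e^{-tS}$ and the Duhamel operator and bounded by $C^n\|u_0\|_{\Hcrit}^n$ via the same estimates, so the series converges in $\dot Z^{-\frac12}((0,\infty))$ on a small ball and defines an analytic map there.
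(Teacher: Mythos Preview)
The paper does not prove this theorem: it is quoted verbatim from \cite{hadacHerrKoch2009wellPosednessKP-IIinCriticalSpace}*{Theorem 1.1} and used as a black box, so there is nothing to compare against. Your sketch is a faithful outline of the original Hadac--Herr--Koch argument. One minor inaccuracy: in the paper's Definition~\ref{def:adapted-space-Zs} the space $\dot Z^{-1/2}$ is built from $U^2_S$ atoms, not $V^2$ (the $V^2$ space enters only through duality in the bilinear estimate), and the embedding into $C_b([0,\infty),\Hcrit)$ comes from $U^2\hookrightarrow L^\infty_t L^2$ rather than from $V^2$.
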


\subsubsection{Well-posedness around the line soliton}
The well-posedness of \eqref{eq:KP-II} around the line soliton was first studied and proved in \cite{molinetSautTzvetkov2011non-localized}*{Theorem 1.2} by Molinet--Saut--Tzvetkov for data in $H^s(\RR)$, $s\in \N$ plus a non-modulated line soliton. Since the phase shift of the line solitons produced by our B\"acklund transform $\samB$ is not necessarily zero or vanishing rapidly at infinity, we need to prove a slightly modified version of Theorem 1.2 in \cite{molinetSautTzvetkov2011non-localized}.

Let $u$ be a solution to the KP-II equation with moving frame of reference
$$ u_t-cu_x+u_{xxx}-6uu_x+3\partial_x^{-1}u_{yy} = 0.$$
We fix the scaling parameter of the line soliton, so we set $c=4$. Using Notation \ref{notation:f_alpha}, we consider the ansatz
\[
u=v+\ph_\alpha,
\]
$\ph_\alpha:=\ph(x-\alpha(y))$, where $s\in\N$, $u\in H^s(\Real^2)$, $\alpha_y\in H^{s+1}(\R_y)$, and $\ph(x):=-2\sech^2(x)$ is the line soliton, which solves
$$ -4\ph_x+\ph_{xxx}-6\ph\ph_x = 0.$$
Note that by moving a low-regularity remainder inside $v$, it is always possible to assume $\alpha_y\in H^\infty(\R_y)$ with no harm to the following analysis. The function $v$ satisfies
\begin{equation}\label{eq:perturbedLineSoliton}
    v_t-4v_x+v_{xxx}-6vv_x-6(\ph_\alpha v)_x+3\partial_x^{-1}v_{yy} =(\alpha_y)^2\ph_{x,\alpha}-\alpha_{yy}\ph_\alpha.
\end{equation}

In the following, the definition of $X^{b,b_1,s}$ is modified by setting $\sigma(\tau,\xi,\eta)=\tau-4\xi-4\pi^2\xi^3+3\eta^2 / \xi$ to take into account the moving frame of reference. We first note that Strichartz estimates show that the solution of the linearized equation is in $X_T^{1/2+\eps,b_1,s}$ when the forcing is in $X_T^{-1/2+\eps,b_1,s}$. With this in mind, following the proof in \cite{molinetSautTzvetkov2011non-localized}*{Theorem 1.2} and adapting it to \eqref{eq:perturbedLineSoliton}, we see that:
\begin{enumerate}
    \item The nonlinearity can be treated in the same way as in the cited paper. In particular, for $\eps$ small enough and $1/4<b_1<3/8$, it holds
    \als{
        \|6vv_x\|_{X_T^{-1/2+\eps,b_1,s}}&\leq T^\nu \|6vv_x\|_{X_T^{-1/2+2\eps,b_1,s}}\\
        &\lesssim T^\nu \|v\|^2_{X^{1/2+\eps,b_1,s}_T},
    }
    for some $\nu>0$ (see \cite{molinetSautTzvetkov2011non-localized}*{Proposition 4.3 and equation (50)}).
    \item The term $(\ph_\alpha)_xv$ can be treated in the same way as in the paper (see \cite{molinetSautTzvetkov2011non-localized}*{Lemma 4.2}) with a slight modification:
    \begin{align*}
        \|(\ph_\alpha)_xv\|_{X_T^{0,0,s}}&\lesssim \|\partial_x\ph_\alpha\|_{W^{s,\infty}(\RR)}\|v\|_{L^\infty_T H^s(\RR)}\\
        &\lesssim\|\ph\|_{W^{s+1,\infty}(\R)}(1+\|\alpha_y\|_{ W^{s-1,\infty}(\R)})^s\|v\|_{X_T^{1/2+\eps,0,s}},
    \end{align*}
    where the estimate on $\|\partial_x\ph_\alpha\|_{W^{s,\infty}}$ can be checked directly for integer $s$. Note that the space $X_T^{0,0,s}$ works well here because $X_T^{0,0,s}\hookrightarrow X_T^{-1/2+\eps,b_1,s}$ if $b_1<1/2-\eps$.
    \item The terms $(\alpha_y)^2\ph_{x,\alpha}$ and $\alpha_{yy}\ph_\alpha$ are independent of time and lie in $H^s(\RR)$ assuming $\|\alpha_y\|_{H^{s+1}(\R)}<\infty$, in particular they belong to $X_T^{0,0,s}$ as well.
\end{enumerate}
The only term left to estimate is $\ph_\alpha v_x$. In the original paper, the term $\ph v_x$ is controlled thanks to a smoothing estimate for KP-II:
\als{
    \|\ph v_x\|_{X_T^{0,0,s}}&\lesssim (\sum_{k=1}^s\|\partial_x^k\ph\|_{L^2_xL^\infty_y})(\sum_{|\beta|\leq s}\|\partial_x\partial_{x,y}^\beta v\|_{L^\infty_x L^2_{y,t}})\\
    &\lesssim \|\ph\|_{H^s(\R)}\|v\|_{X_T^{1/2+\eps,0,s}}.
    }
In our case, for an arbitrary $\alpha$ such that $\alpha_y\in H^\infty(\R_y)$, the norm $\|\ph_\alpha\|_{L^2_xL^\infty_y}$ is infinite in general. We thus need a slight modification of local smoothing for the KP-II equation that takes into account the modulation of $\ph$ to adapt the result of Molinet--Saut--Tzvetkov to our case, where the soliton is modulated.

\begin{lemma}[Local smoothing with modulated weight]\label{lemma:modifiedLocalSmoothing}
    Let $c>0$, $u_0\in L^2(\RR)$, and $\alpha_y\in L^2(\R_y)$. It holds the estimate
    $$ c^{\fr14}\|\jap{c^\aha(x-\alpha)}^{-1}\partial_x e^{tS}u_0\|_{L^2_TL^2}+c^{\fr14}\|\jap{c^\aha(x-\alpha)}^{-1}\partial_x^{-1}\partial_y e^{tS}u_{0}\|_{L^2_TL^2}\lesssim L \|u_0\|_{L^2}, $$
    where $S=-\partial_x^3+c\partial_x-3\partial_y^2\partial_x^{-1}$, and $L=1+c^{3/4}\|\alpha_y\|_{L^2}^2$.
\end{lemma}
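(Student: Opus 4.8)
The plan is to reduce the modulated estimate to the standard, unmodulated local smoothing estimate for the linear KP-II group, which is the well-known Kato-type smoothing that follows from the dispersive structure of $S=-\partial_x^3+c\partial_x-3\partial_y^2\partial_x^{-1}$. Concretely, for $\alpha\equiv 0$ one has
\[
c^{1/4}\|\jap{c^{1/2}x}^{-1}\partial_x e^{tS}u_0\|_{L^2_tL^2_{x,y}}+c^{1/4}\|\jap{c^{1/2}x}^{-1}\partial_x^{-1}\partial_y e^{tS}u_0\|_{L^2_tL^2_{x,y}}\lesssim\|u_0\|_{L^2},
\]
with the $c$-dependence forced by the parabolic scaling of the weight; this is proved by a $TT^*$ argument and oscillatory integral bounds in the $x$-frequency variable, exactly as in the KdV local smoothing estimate, and it is already implicit in the well-posedness framework (e.g.\ the Strichartz and smoothing machinery underlying \cite{hadacHerrKoch2009wellPosednessKP-IIinCriticalSpace} or \cite{molinetSautTzvetkov2011non-localized}). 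First I would state this $\alpha\equiv 0$ version as the base case, noting that the $\partial_x^{-1}\partial_y$ term is handled identically to the $\partial_x$ term since $\partial_x^{-1}\partial_y e^{tS}u_0$ solves the same linear equation with data $\partial_x^{-1}\partial_y u_0$ and the symbol $|\eta/\xi|$ is controlled in the relevant frequency regimes by the same oscillatory-integral estimate (the only subtlety, the region $|\xi|$ small, being absorbed because $e^{tS}$ is unitary on $L^2$ and the weight is bounded).

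The core of the argument is to pass from the fixed weight $\jap{c^{1/2}x}^{-1}$ to the moving weight $\jap{c^{1/2}(x-\alpha(y))}^{-1}$ at the cost of the factor $L=1+c^{3/4}\|\alpha_y\|_{L^2}^2$. The key observation is pointwise: for any $y$,
\[
\jap{c^{1/2}(x-\alpha(y))}^{-1}\le \jap{c^{1/2}x}^{-1}\,\jap{c^{1/2}\alpha(y)},
\]
but $\jap{c^{1/2}\alpha(y)}$ is not uniformly bounded, so a cruder global comparison fails. Instead I would localize in $y$ on unit-length (in the parabolic scale) intervals $I_k$: on each $I_k$ the oscillation of $\alpha$ is $\le\|\alpha_y\|_{L^2(I_k)}\cdot|I_k|^{1/2}$, so up to a fixed translation of the weight by $\alpha$ evaluated at the center of $I_k$ — which is absorbed by translation invariance of the flow in $x$ — the moving weight is comparable to the fixed weight on $I_k$ provided $c^{1/2}\|\alpha_y\|_{L^2(I_k)}|I_k|^{1/2}\lesssim 1$. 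Summing the local $L^2_tL^2$ estimates over $k$ (these sum because $L^2$ is additive over a disjoint cover, and the smoothing estimate localizes in $y$ up to acceptable tails from the kernel of $e^{tS}$, which decay fast), one recovers $\|u_0\|_{L^2}$ on the right with a loss governed by how many intervals require rescaling, i.e.\ by $\sum_k(1+c^{3/4}\|\alpha_y\|^2_{L^2(I_k)})\lesssim 1+c^{3/4}\|\alpha_y\|_{L^2}^2=L$. The bookkeeping of the loss — showing the loss is exactly $L$ and not $L^2$ — is where I would be most careful: one wants to use that the bad intervals, where $c^{1/2}\|\alpha_y\|_{L^2(I_k)}|I_k|^{1/2}\gtrsim 1$, are few (their number is $\lesssim c^{3/4}\|\alpha_y\|_{L^2}^2$ by Cauchy–Schwarz and the constraint $|I_k|\sim c^{-3/2}$), and on each such interval one simply uses the trivial bound coming from unitarity of $e^{tS}$ on $L^2$ together with boundedness of the weight, which contributes $\lesssim\|u_0\|_{L^2}$ per interval but with the saving that summing the *squares* over the few bad intervals is what enters.

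The main obstacle I anticipate is making the $y$-localization clean: the linear KP-II propagator does not localize perfectly in $y$, so cutting $\alpha$ into pieces and comparing weights requires controlling the interaction between the $y$-cutoffs and the nonlocal operator $\partial_x^{-1}\partial_y$, as well as the error terms from commuting the smooth partition of unity in $y$ through $e^{tS}$. I expect this to be manageable because the smoothing estimate already comes with a gain of half a derivative in $x$ and is essentially local in $(x,y)$ at the parabolic scale $c^{-1/2}\times c^{-3/2}$, so a Littlewood–Paley-type argument in the $y$-frequency — or directly a square-function estimate summing the contributions of the intervals $I_k$ — closes the bound. An alternative, possibly cleaner, route is to conjugate the equation by $e^{i\theta(x,y,t)}$ for a suitable real phase chosen so that the moving weight becomes stationary, turning the modulation into a bounded perturbation of the symbol of $S$; the price is a lower-order term proportional to $\alpha_y$ and $\alpha_{yy}$, which one absorbs perturbatively using the already-established smoothing estimate, and tracking the resulting constant again produces the factor $L$. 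I would present the $y$-localization proof as the main argument and mention the conjugation approach as a remark.
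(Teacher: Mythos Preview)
Your main approach has a genuine gap at the $y$-localization step. You claim the smoothing estimate ``localizes in $y$ up to acceptable tails from the kernel of $e^{tS}$, which decay fast,'' but this is not true: the group $e^{tS}$ involves the nonlocal operator $\partial_x^{-1}\partial_y^2$, its kernel is purely oscillatory with no pointwise decay in $y$, and there is no mechanism by which a smoothing estimate of Kato type localizes in the transverse variable. Commuting a partition of unity in $y$ through $e^{tS}$ produces commutators with $\partial_x^{-1}\partial_y^2$ that are at the same derivative order as the terms you are trying to estimate, so the errors cannot be absorbed. You flag this yourself as ``the main obstacle,'' and indeed it is fatal for that line of argument. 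Your alternative conjugation-by-$e^{i\theta}$ idea is also off: the equation is real and the multiplier should be a real weight, not a unimodular phase.

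The paper's proof is much more direct and avoids any localization in $y$. The idea is to run the Kato multiplier argument \emph{with the modulated weight already built in}: set $a(x)=\pi+\arctan(c^{1/2}x)$ and multiply the equation by $a(x-\alpha(y))\,u$. Integration by parts produces exactly the unmodulated smoothing terms plus one new cross term,
\[
-3\int \alpha_y\, a_x(x-\alpha)\,u\,\partial_x^{-1}u_y,
\]
arising from $\partial_y$ hitting $\alpha(y)$. This term is bounded by $c^{1/4}\|\alpha_y\|_{L^\infty}$ times the quantities on the left, so it can be absorbed provided $\delta:=c^{1/4}\|\alpha_y\|_{L^\infty}$ is small. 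That smallness is arranged in a preliminary step by replacing $\alpha$ with a mollification $\alpha_\eps$ at scale $\eps\sim c^{1/2}\|\alpha_y\|_{L^2}^2$: then $c^{1/4}\|\alpha_{\eps,y}\|_{L^\infty}\lesssim 1$ is small, while $c^{1/2}\|\alpha-\alpha_\eps\|_{L^\infty}\lesssim c^{3/4}\|\alpha_y\|_{L^2}^2$, and the pointwise weight comparison $\jap{c^{1/2}(x-\alpha)}^{-1}\lesssim L\,\jap{c^{1/2}(x-\alpha_\eps)}^{-1}$ is exactly where the factor $L=1+c^{3/4}\|\alpha_y\|_{L^2}^2$ enters. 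No decomposition in $y$, no summation over intervals, no kernel tails.
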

The case $\alpha\equiv 0$ yields the usual local smoothing estimate. The above is simply a modification that allows the level sets of the weight to be unbounded in $x$. The estimate for $S=-\partial_x^3-3\partial_y^2\partial_x^{-1}$ (with $c=1$ in the weights) holds with a constant that grows with the length of the time interval. The proof is moved to Appendix \ref{subsec:proofs}.

\begin{proposition}[Well-posedness of KP-II around a modulated line soliton]\label{prop:KP-IIglobalWellPosednessAroundSlackSoliton}
Fix $s\in \N$, and let $\eps,b_1$ as in Proposition \ref{prop:well-posedness-of-KP-II-L2}. Let $\alpha_y\in H^{s+1}(\R_y)$. For every $v_0\in H^s(\RR)$, there exists a unique global solution $v\in C([0,\infty),H^s(\RR))$ of equation \eqref{eq:perturbedLineSoliton} such that $v|_{t=0}=v_0$ and $u|_{[0,T]}\in X^{1/2+\eps,b_1,s}_T$ for all $T>0$. The data-to-solution map is analytic.
\end{proposition}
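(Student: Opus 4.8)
The plan is to prove Proposition \ref{prop:KP-IIglobalWellPosednessAroundSlackSoliton} by a fixed-point argument on the Duhamel formulation of \eqref{eq:perturbedLineSoliton}, following \cite{molinetSautTzvetkov2011non-localized}*{Theorem 1.2} but inserting the modulated-weight local smoothing estimate of Lemma \ref{lemma:modifiedLocalSmoothing} wherever the original argument uses the unweighted estimate $\|\ph\|_{L^2_xL^\infty_y}<\infty$. First I would reduce, as noted in the excerpt, to the case $\alpha_y\in H^\infty(\R_y)$: any finite-regularity tail of $\alpha$ can be absorbed into the remainder $v$ at the cost of changing $v_0$ by an $H^s$ function (and shifting the forcing terms, which remain in $H^s\hookrightarrow X^{0,0,s}_T$). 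Then I would set up the contraction map $\Lambda$ sending $v$ to the Duhamel solution of the linear KP-II equation (in the moving frame, with the modified $\sigma(\tau,\xi,\eta)=\tau-4\xi-4\pi^2\xi^3+3\eta^2/\xi$) with forcing
\[
6vv_x+6(\ph_\alpha v)_x-(\alpha_y)^2\ph_{x,\alpha}+\alpha_{yy}\ph_\alpha,
\]
and initial datum $v_0$, and show $\Lambda$ is a contraction on a ball in $X^{1/2+\eps,b_1,s}_T$ for $T$ small.

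The estimates for the individual pieces are already laid out in the excerpt: the genuinely nonlinear term $6vv_x$ is handled exactly as in \cite{molinetSautTzvetkov2011non-localized}*{Proposition 4.3}, picking up a factor $T^\nu$; the term $(\ph_\alpha)_x v$ is estimated in $X^{0,0,s}_T$ by $\|\partial_x\ph_\alpha\|_{W^{s,\infty}}\|v\|_{L^\infty_TH^s}$, with $\|\partial_x\ph_\alpha\|_{W^{s,\infty}}\lesssim \|\ph\|_{W^{s+1,\infty}}(1+\|\alpha_y\|_{W^{s-1,\infty}})^s$ by the chain rule (finite since $\alpha_y\in H^\infty$); and the two forcing terms $(\alpha_y)^2\ph_{x,\alpha}$, $\alpha_{yy}\ph_\alpha$ are time-independent and in $H^s(\RR)$, hence in $X^{0,0,s}_T\hookrightarrow X^{-1/2+\eps,b_1,s}_T$ once $b_1<1/2-\eps$. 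The one term requiring the new input is $\ph_\alpha v_x$: I would estimate it in $X^{0,0,s}_T$ by distributing derivatives via Leibniz, bounding the $\ph_\alpha$-factors in a modulated weighted $L^2_xL^\infty_y$ norm — i.e.\ $\|\jap{x-\alpha}^{-N}\partial_x^k\ph_\alpha\|_{L^2_xL^\infty_y}$, which is finite and controlled by $\|\ph\|_{H^{s+N}}$ and norms of $\alpha_y$ — and the $v_x$-factor (with its $y,t$-derivatives) in $\|\jap{x-\alpha}^{N}\partial_x\partial_{x,y}^\beta v\|_{L^\infty_xL^2_{y,t}}$, which is exactly what Lemma \ref{lemma:modifiedLocalSmoothing} provides (applied with $c=4$ and the weight centered at $\alpha$). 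Summing the resulting contributions gives $\|\ph_\alpha v_x\|_{X^{0,0,s}_T}\lesssim_{s,\ph,\alpha}\|v\|_{X^{1/2+\eps,0,s}_T}$, with constant polynomial in $\|\alpha_y\|_{H^{s+1}}$.

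Putting these together yields $\|\Lambda v\|_{X^{1/2+\eps,b_1,s}_T}\lesssim \|v_0\|_{H^s}+\|\alpha_y\|_{H^{s+1}}^{O(1)}(1+T^\nu\|v\|_{X}+T^{1/2}\|v\|_X)$ and a matching difference estimate, so for $T=T(\|v_0\|_{H^s},\|\alpha_y\|_{H^{s+1}},s)$ small enough $\Lambda$ contracts on a suitable ball, giving local existence and uniqueness in $X^{1/2+\eps,b_1,s}_T\hookrightarrow C([0,T],H^s(\RR))$; analyticity of the data-to-solution map is automatic from the Banach fixed-point construction since $\Lambda$ depends analytically (indeed polynomially) on $v$ and affinely on $v_0$. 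Global existence then follows from the conservation-type a priori bound: $v+\ph_\alpha$ being (up to the moving frame) a genuine KP-II solution, the conserved $L^2$ mass of $u$ together with the already-established control of $\alpha_y$ bounds $\|v(t)\|_{L^2}$ uniformly in $t$, and the usual persistence-of-regularity argument (differentiating the equation and reapplying the local theory, using that the time of existence depends only on lower norms plus $\|\alpha_y\|_{H^{s+1}}$) propagates the $H^s$ bound, so the local solutions extend to $[0,\infty)$. The main obstacle is the term $\ph_\alpha v_x$: without the modulated weight the local smoothing estimate is unavailable because $\|\ph_\alpha\|_{L^2_xL^\infty_y}=\infty$ for generic (e.g.\ linearly growing) $\alpha$, so the whole argument hinges on matching the decay rate $N$ of the weight $\jap{x-\alpha}^{-N}$ absorbed into $\ph_\alpha$ against the growth $\jap{x-\alpha}^{N}$ that Lemma \ref{lemma:modifiedLocalSmoothing} can afford on the $v$-side — one must check that a single choice of $N$ (and the implied constant, polynomial in $\|\alpha_y\|_{L^2}$) works uniformly, which is where the bulk of the bookkeeping lies.
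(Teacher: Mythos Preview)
Your overall strategy matches the paper's: follow \cite{molinetSautTzvetkov2011non-localized} and use Lemma \ref{lemma:modifiedLocalSmoothing} for the single problematic term $\ph_\alpha v_x$. But two steps as written do not go through.

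First, the pairing you describe for $\ph_\alpha v_x$ is backwards and misreads the lemma. Lemma \ref{lemma:modifiedLocalSmoothing} gives a \emph{decaying-weight $L^2_TL^2_{x,y}$} estimate, namely $\|\jap{x-\alpha}^{-1}\partial_x e^{tS}u_0\|_{L^2_TL^2}\lesssim \|u_0\|_{L^2}$; it does not give an $L^\infty_xL^2_{y,t}$ estimate, let alone one with a growing weight $\jap{x-\alpha}^{N}$. Moreover the norm you propose for the other factor, $\|\jap{x-\alpha}^{-N}\ph_\alpha\|_{L^2_xL^\infty_y}$, is infinite whenever $\alpha$ is unbounded (which is allowed here, since only $\alpha_y\in H^{s+1}$ is assumed): the product $\jap{x-\alpha}^{-N}\ph_\alpha$ is a function of $x-\alpha(y)$ alone, so its $L^\infty_y$ is a positive constant independent of $x$. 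The correct pairing is the opposite one: put the growing factor $\jap{x-\alpha}$ on $\ph_\alpha$ (so $\jap{x-\alpha}\ph_\alpha\in L^\infty_{x,y}$, since $\ph$ is Schwartz) and the decaying weight on $\partial_x v$. After upgrading the lemma to the Bourgain-space bound $\|\jap{x-\alpha}^{-1}(1-\Delta_{x,y})^{s/2}\partial_x v\|_{L^2_TL^2}\lesssim \|v\|_{X^{1/2+\eps,0,s}}$ by the standard transference argument, this yields
\[
\|\ph_\alpha\partial_x v\|_{X^{0,0,s}_T}\sim\|\ph_\alpha\partial_x v\|_{L^2_TH^s}\lesssim \|v\|_{X^{1/2+\eps,0,s}}\leq \|v\|_{X^{1/2+\eps,b_1,s}},
\]
which is exactly the paper's argument.

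Second, the globality argument via ``conserved $L^2$ mass of $u=v+\ph_\alpha$'' cannot work: $\ph_\alpha\notin L^2(\RR)$, so there is no $L^2$ norm of $u$ to conserve. Instead one tests \eqref{eq:perturbedLineSoliton} against $v$ directly; the term $6(\ph_\alpha v)_x$ produces $-3\int(\ph_\alpha)_x v^2$ after integration by parts, giving $\frac{d}{dt}\|v\|_{L^2}^2\lesssim \|\ph_x\|_{L^\infty}\|v\|_{L^2}^2+C_\alpha$ and hence the exponential a priori bound $\|v(t)\|_{L^2}^2\lesssim e^{Ct}(1+\|v_0\|_{L^2}^2)$. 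This (not a uniform-in-$t$ bound) together with $L^2$-subcriticality is what the paper uses to globalize.
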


\begin{proof}
    By a standard argument for Bourgain type spaces, we can upgrade the modified local smoothing in Lemma \ref{lemma:modifiedLocalSmoothing} to the estimate
    \[
    \|\braket{x}_\alpha^{-1} (1-\Delta_{x,y})^{s/2}\partial_x v\|_{L^2_TL^2}\lesssim \|v\|_{X^{1/2+\eps,0,s}}.
    \]
    This immediately yields the estimate
    \als{
        \|\ph_\alpha \partial_x v\|_{X^{0,0,s}_T}&\sim\|\ph_\alpha \partial_x v\|_{L^2_TH^s(\RR)}\\
        &\lesssim \|v\|_{X^{1/2+\eps,0,s}}\\
        &\leq \|v\|_{X^{1/2+\eps,b_1,s}}.
        }
    The rest of the proof is analogous to that of \cite{molinetSautTzvetkov2011non-localized}*{Theorem 1.2}, with the use of the estimates summarized in this subsection and the above estimate. The globality of the solution in $H^s$ follows from the $L^2$ a priori estimate
    \[
    \|v(t)\|_{L^2(\RR)}^2\lesssim \exp(t\|\ph_x\|_{L^\infty(\R)})(1+\|v_0\|_{L^2}^2),
    \]
    and the $L^2$-subcriticality of the equation, analogously as in \cite{molinetSautTzvetkov2011non-localized}.
\end{proof}
\begin{remark}\label{rk:moving-low-regularity-remainder-in-v}
    The assumption $\alpha_y\in H^{s+1}(\R_y)$ is technical, and is only needed to close the fixed point argument in $X_T^{1/2+\eps,b_1,s}$. It is possible to relax the assumption to $\alpha_y\in H^{\max\{0,s-1\}}(\R_y)$ in the following way: first consider a standard regularization $\bar \alpha$ of $\alpha$ and note that the low-regularity remainder $r:=\ph_{\alpha}-\ph_{\bar\alpha}$ belongs to $H^s(\RR)$. Then, consider $\bar v_0:=v_0+r\in H^s(\RR)$ and let $\bar v\in X_T^{1/2+\eps,b_1,s}$ be the solution of \eqref{eq:perturbedLineSoliton} with $\alpha$ replaced by $\bar\alpha$ given by the Proposition, with initial datum $\bar v_0$. Then, $v:=\bar v-r$ is a solution of \eqref{eq:perturbedLineSoliton} with initial datum $v_0$. The price to pay, though, is that in general $v\not\in X_T^{1/2+\eps,b_1,s}$, although $v+r$ does indeed belong to the function space.
\end{remark}
\begin{remark}\label{rk:bourgain-space-norm-depends-monotonically-on-the-norm-of-alpha}
    Since the whole argument for the well-posedness of \eqref{eq:perturbedLineSoliton} only needs estimates from above on the $H^{s+1}$ norm of $\alpha_y$, for every time $T>0$ and $s\in\N$ it holds 
    \[
    \|v\|_{X_T^{1/2+\eps,b_1,s}}\leq C(\|v_0\|_{H^s},\|\alpha_y\|_{H^{s+1}(\R_y)},T,s),
    \]
    where $C$ is non-decreasing in the first three arguments. 
\end{remark}

Finally, we mention that solutions which are initially in $\partial_xL^2(\R^2)$ stay in that space for all times, and perturbations of the line soliton obey the same law.
\begin{proposition}\label{prop:the-KP-II-flow-preserves-D_xL^2}
    Let $u_0\in L^2(\RR)$, $u$ be the solution of \eqref{eq:KP-II} with initial datum $u_0$, and $v$ be the solution of \eqref{eq:KP-II} with initial datum $u_0+\ph(x)$. If in addition $u_0\in\partial_x L^2$, then $u,v-\ph(x-4t)\in C([0,\infty),\partial_xL^2(\RR))$, and it holds the estimate
    \[
    \|u(t)\|_{\partial_xL^2(\RR)}\leq \|u_0\|_{\partial_xL^2(\RR)}+C\sqrt t \|u_0\|_{L^2(\RR)}^2
    \]
    for some universal constant $C$.
\end{proposition}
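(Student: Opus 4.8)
The plan is to derive a weighted $L^2$ energy identity for the $x$-antiderivative of the solution and to close it using the global $L^4_{t,x,y}$ Strichartz estimates that are built into the well-posedness theory. Throughout, write $W:=\partial_x^{-1}u$, interpreted via the Fourier multiplier $(2\pi i\xi)^{-1}$, so that ``$u\in\partial_xL^2(\RR)$'' means exactly $W\in L^2(\RR)$ and $\|u\|_{\partial_xL^2}=\|W\|_{L^2}$. All computations below are first carried out for smooth, spatially decaying solutions: one approximates $u_0$ by data $u_0^{(n)}\in\partial_x^\infty H^\infty(\RR)$ converging to $u_0$ in $L^2\cap\partial_xL^2$, for which Proposition \ref{prop:well-posedness-of-KP-II-L2}, Lemma \ref{lemma:time-differentiability-} and Remark \ref{rk:time_smoothness_and_de_x_infty} guarantee space--time smoothness and rapid decay, so that every integration by parts is justified; the final bounds are uniform in $n$, and one passes to the limit using analyticity of the data-to-solution map together with conservation of $\|u\|_{L^2}$.

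Applying $\partial_x^{-1}$ to \eqref{eq:KP-II} gives $W_t-3W_x^2+W_{xxx}+3\partial_x^{-1}W_{yy}=0$. Pairing with $W$ in $L^2(\RR)$, the dispersive terms disappear: $\int_{\RR}W_{xxx}W\,dx\,dy=0$ after integrating by parts in $x$, and $\int_{\RR}(\partial_x^{-1}W_{yy})W\,dx\,dy=0$ because $\partial_x^{-1}$ is skew-adjoint and $\int_{\RR}(\partial_x^{-1}g)\,g\,dx\,dy=0$ for every real $g$ (here $g=W_y$). Hence
\[
\tfrac12\,\frac{d}{dt}\,\|W\|_{L^2(\RR)}^2=3\int_{\RR}u^2\,\partial_x^{-1}u\,dx\,dy ,
\]
and this term is \emph{not} controlled by the conserved $L^2$ norm alone, which is the source of the growth in $t$. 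I would bound it crudely by Cauchy--Schwarz,
\[
\Big|\int_{\RR}u^2\,\partial_x^{-1}u\,dx\,dy\Big|\le\|u^2\|_{L^2(\RR)}\,\|W\|_{L^2(\RR)}=\|u\|_{L^4(\RR)}^2\,\|W\|_{L^2(\RR)},
\]
so that $\frac{d}{dt}\|W\|_{L^2}\lesssim\|u\|_{L^4(\RR)}^2$. Integrating on $[0,t]$ and using Cauchy--Schwarz in time,
\[
\|W(t)\|_{L^2}\le\|W(0)\|_{L^2}+C\int_0^t\|u(s)\|_{L^4(\RR)}^2\,ds\le\|W(0)\|_{L^2}+C\sqrt{t}\;\|u\|_{L^4([0,t]\times\RR)}^2 ,
\]
and the global Strichartz bound $\|u\|_{L^4([0,t]\times\RR)}\lesssim\|u_0\|_{L^2}$ for the \eqref{eq:KP-II} flow (part of the framework behind Theorem \ref{theorem:global-well-posedness-KP-II-HHK09} and Proposition \ref{prop:well-posedness-of-KP-II-L2}) gives the claimed estimate with a universal constant. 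Continuity $t\mapsto W(t)$ in $L^2$ follows from the Duhamel formula $W(t)=U(t)W(0)+3\int_0^tU(t-s)\,u(s)^2\,ds$, where $U$ is the linear propagator of \eqref{eq:KP-II}: its symbol is purely imaginary, so $U(\cdot)$ is a modulus-one Fourier multiplier preserving $L^2(\RR)$, and the integrand is continuous in $L^2$ because $u\in C([0,\infty),L^2)$ and $u\in L^4([0,T]\times\RR)$ for every $T$.

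For the line-soliton perturbation, set $\tilde v(t,x,y):=v(t,x+4t,y)-\ph(x)$; it solves the perturbed equation \eqref{eq:perturbedLineSoliton} with $\alpha\equiv0$, namely $\tilde v_t-4\tilde v_x+\tilde v_{xxx}-6\tilde v\tilde v_x-6(\ph\tilde v)_x+3\partial_x^{-1}\tilde v_{yy}=0$, with initial datum $\tilde v(0)=u_0\in\partial_xL^2$. Running the identical computation for $\tilde W:=\partial_x^{-1}\tilde v$, the linear term $-4\tilde v_x$ contributes $-4\int_{\RR}\tilde W_x\tilde W\,dx\,dy=0$, the dispersive terms vanish as before, and the only new term is
\[
6\int_{\RR}(\partial_x^{-1}\tilde v)(\ph\tilde v)\,dx\,dy=-3\int_{\RR}\ph_x\,(\partial_x^{-1}\tilde v)^2\,dx\,dy ,
\]
which is bounded by $3\|\ph_x\|_{L^\infty(\R)}\|\tilde W\|_{L^2}^2$ since $\ph_x$ is $y$-independent and bounded. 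Thus $\frac{d}{dt}\|\tilde W\|_{L^2}^2\lesssim\|\tilde v\|_{L^4(\RR)}^2\|\tilde W\|_{L^2}+\|\tilde W\|_{L^2}^2$; Gr\"onwall's inequality, together with $\tilde v\in C([0,\infty),L^2(\RR))$ and $\tilde v\in L^4([0,T]\times\RR)$ for every $T$ coming from Proposition \ref{prop:KP-IIglobalWellPosednessAroundSlackSoliton} with $s=0$ (and $\alpha\equiv0$, so Remark \ref{rk:moving-low-regularity-remainder-in-v} is not needed), shows $\tilde W\in C([0,\infty),L^2(\RR))$, i.e.\ $v-\ph(x-4t)\in C([0,\infty),\partial_xL^2(\RR))$; no quantitative bound is asserted here, so the exponential-in-$t$ factor from Gr\"onwall is harmless.

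The main obstacle is the global $L^4_{t,x,y}$ control of the nonlinear \eqref{eq:KP-II} solution with a constant depending only on $\|u_0\|_{L^2(\RR)}$: for arbitrary $L^2$ data this is the one genuinely non-elementary input and must be quoted from the global Strichartz/bilinear theory of KP-II rather than obtained by a soft argument. Everything else is routine once the smooth-approximation scheme is in place; the only subtlety there is justifying the skew-adjointness and integration-by-parts identities, which the density argument handles.
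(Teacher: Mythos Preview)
Your energy-identity route for $W=\partial_x^{-1}u$ and the paper's Duhamel route are equivalent: both yield
\[
\|W(t)\|_{L^2}\le\|W(0)\|_{L^2}+C\int_0^t\|u(s)\|_{L^4(\RR)}^2\,ds .
\]
The real issue is how you control the right-hand side. You assert a \emph{global} nonlinear Strichartz bound $\|u\|_{L^4([0,t]\times\RR)}\lesssim\|u_0\|_{L^2}$ with constant independent of $t$, but neither reference you cite supplies this: Theorem~\ref{theorem:global-well-posedness-KP-II-HHK09} concerns small $\dot H^{-1/2,0}$ data, and Proposition~\ref{prop:well-posedness-of-KP-II-L2} only controls $u$ on finite time intervals through $X^{1/2+\eps,b_1,0}_T$. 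For arbitrarily large $L^2$ data a uniform-in-$t$ $L^4$ bound is not available in the paper's framework, so this step is a genuine gap.

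The paper closes it differently. By time-translation invariance and $L^2$ conservation, the Bourgain-space well-posedness gives $\|u\|_{L^4([k,k+1]\times\RR)}\lesssim\|u_0\|_{L^2}$ on each unit interval, hence $\|u\|_{L^4([0,t]\times\RR)}^2\lesssim(1+\sqrt t)\|u_0\|_{L^2}^2$ and
\[
\|u(t)\|_{\partial_xL^2}\le\|u_0\|_{\partial_xL^2}+C\sqrt t\,(1+\sqrt t)\,\|u_0\|_{L^2}^2 .
\]
The extra factor is then removed using the scaling symmetry $\Scal^\KP_\lambda$: the quantities $\|u(t)\|_{\partial_xL^2}$, $\|u_0\|_{\partial_xL^2}$ and $\sqrt t\,\|u_0\|_{L^2}^2$ all scale like $\lambda^{-1/2}$, whereas the spurious term $t\,\|u_0\|_{L^2}^2$ scales like $\lambda^{-2}$ and vanishes as $\lambda\to\infty$. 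Once you replace your global-Strichartz appeal by these two ingredients (unit-interval $L^4$ control plus scaling), your argument is complete and agrees with the paper's. Your treatment of $v-\ph(x-4t)$ is fine and is exactly the ``analogous'' argument the paper leaves to the reader.
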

The above Proposition is a refinement of \cite{mizumachi2018linesoli2}*{Lemma 3.1}.
\begin{proof}
    We consider $u$ and look at the Duhamel formulation
    \[
    u(t)=e^{tS}u_0-3\partial_x\int_0^t e^{(t-s)S}u^2(s)ds,
    \]
    where $S=-\partial_x^3-3\demu\partial_y^2$. By time translation invariance and the conservation of the $L^2$-norm, we have $\|u\|_{X^{1/2+\eps,b_1,0}_{[t,t+1]}}\lesssim \|u_0\|_{L^2(\RR)}$ for any $t\geq 0$, with $\eps,b_1$ as in Propositon \ref{prop:well-posedness-of-KP-II-L2}. By standard arguments involving Bourgain-type spaces and the Strichartz estimates for the group $e^{-t(\partial_x^3+3\partial_x^{-1}\partial_y^2)}$ (see \cite{koch-tataru-visan-2014-dispersive-equations-nonlinear-waves}), we have for every $t>0$
    \[
    \|u\|_{L^4_{[t,t+1]} L^4}\lesssim \|u\|_{X^{1/2+\eps,b_1,0}_{[t,t+1]}}\lesssim \|u_0\|_{L^2(\RR)}.
    \]
    In particular,
    \[
    \|u^2\|_{L^2_{[0,t]}L^2}\lesssim (1+\sqrt t)\|u_0\|_{L^2(\RR)}^2.
    \]
    After using H\"older's inequality in time, the above bound plugged into the Duhamel formulation yields
    \[
    \|u(t)\|_{\partial_xL^2(\RR)}\lesssim \|u_0\|_{\partial_xL^2(\RR)}+\sqrt t(1+\sqrt t) \|u_0\|_{L^2(\RR)}^2,
    \]
    which can be upgraded to the bound stated in the Lemma using the scaling symmetry \eqref{eq:scaling-symmetry-KP-II}.
    The statement for $v$ is proved analogously (cf. \cite{mizumachi2018linesoli2}*{Lemma 3.1}).
\end{proof}

\subsection{Elementary Lax-eigenfunctions and elementary solutions of mKP-II}

In this subsection we essentially replicate what we did in Subsection \ref{subsec:exact_solutions} to construct explicit solutions of equation \eqref{eq:miuraEquation}, adding a time dependence. This time we consider $u$ to be time-dependent and solving \eqref{eq:KP-II}, and the solutions of \eqref{eq:miuraEquation} that we are interested in will satisfy system \eqref{eq:systemForlittlev-miura+mKP-II}.

We motivate the definition of the elementary solutions as follows. The main difficulty in defining a solution of \eqref{eq:KP-II} via the B\"acklund transform is that the parameter $\gamma_0$ has to be chosen in a suitable way for all times $t>0$. Our strategy to solve this problem relies on two facts: first, while $\tanh$-like solutions of \eqref{eq:miuraEquation} as in Theorem \ref{theorem:theorem_1} are unique only up to 1 degree of freedom $\gamma_0$, solutions of \eqref{eq:miuraEquation} that are constant at infinity are well-defined with no further choice of parameters (see Lemma \ref{lemma:estimates-for-wtv} and Corollary \ref{cor:estimates-for-wtv-part-2}). Secondly, Lemma \ref{lemma:existenceOfPrimitiveSolutionTimeDependent} ensures that $x$-antiderivatives of these solutions are canonically well-defined up to a constant that depends neither on space nor on time. As a result, it follows that when using the Cole--Hopf transformation to combine the solutions of \eqref{eq:miuraEquation}, the parameters needed to interpolate those solutions are to be chosen once for all times. This is what will fix the parameter $\gamma_0=\gamma_0(t)$ in the statement of Theorem \ref{theorem:theorem_2}.

We start by showing that the function constructed in Lemma \ref{lemma:estimates-for-wtv} is a solution of \eqref{eq:mKP-II} if the datum $u$ is a time-dependent solution of \eqref{eq:KP-II}.
\begin{proposition}[Nonlinear existence of solutions of mKP-II]\label{prop:nonlinear-well-posedness-mKP-II-at-critical-regularity}
    Let $u_0\in\Hcrit$ be small enough and $v_0\in \dot H^{0,\fr14}(\RR)\cap \dot H^{\aha,0}(\RR)$ be the small solution of \eqref{eq:miuraEquation} given by Lemma \ref{lemma:estimates-for-wtv} with datum $u_0$. Let $u(t)$ be the solution of \eqref{eq:KP-II} given by Theorem \ref{theorem:global-well-posedness-KP-II-HHK09} and $v(t)$ the solution of \eqref{eq:miuraEquation} given by Lemma \ref{lemma:estimates-for-wtv}. Then, $v\in C_b([0,\infty),H^{0,\fr14}(\RR)\cap\dot H^{\aha,0}(\RR))$, $(u,v)$ solve system \eqref{eq:systemForlittlev-miura+mKP-II}, and $v$ is a limit of strong solutions of the \eqref{eq:mKP-II} equation from the well-posedness theory (\cite{kenigMartel2006mKP-IIwellPosedness}). Moreover, it holds the estimate
    \[
    \|v\|_{L^\infty_t (\dot H^{0,\fr14}(\RR)\cap \dot H^{\aha,0}(\RR))}+\|\demu v_y-v^2\|_{L^\infty_t\Hcrit }\lesssim \|u_0\|_\Hcrit
    \]
    and the map $u_0\mapsto v$ is continuous.
\end{proposition}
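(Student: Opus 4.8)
The plan is to transfer the spatial (fixed-time) theory of Lemma \ref{lemma:estimates-for-wtv} into the space-time setting by combining it with the well-posedness of \eqref{eq:KP-II} from Theorem \ref{theorem:global-well-posedness-KP-II-HHK09} and the mKP-II well-posedness theory of Kenig--Martel \cite{kenigMartel2006mKP-IIwellPosedness}. First I would set up an approximation scheme: take $u_0^{(n)}\to u_0$ in $\Hcrit$ with $u_0^{(n)}$ smooth and decaying (say Schwartz, or at least in $\partial_x^\infty H^\infty(\RR)$ so that the corresponding KP-II solutions are smooth in space-time by Remark \ref{rk:time_smoothness_and_de_x_infty}); let $u^{(n)}(t)$ be the solution of \eqref{eq:KP-II} from Proposition \ref{prop:well-posedness-of-KP-II-L2} (which also lies in $\Hcrit$ with the smallness preserved by Theorem \ref{theorem:global-well-posedness-KP-II-HHK09}). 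For each fixed $t$, Lemma \ref{lemma:estimates-for-wtv} produces a unique small $v^{(n)}(t)\in L^3(\RR)$ solving \eqref{eq:miuraEquation} with datum $u^{(n)}(t)$; by the analyticity (hence smoothness) of the data-to-solution map $u\mapsto v$ in that Lemma and the space-time smoothness of $u^{(n)}$, the curve $t\mapsto v^{(n)}(t)$ is smooth, and the Miura pair $(u^{(n)},v^{(n)})$ is a classical solution. Here the key algebraic input is that, for smooth data, $w^{(n)}:=-u^{(n)}+(v^{(n)})^2+v^{(n)}_x$ together with $v^{(n)}$ satisfies the mKP-II system \eqref{eq:mKP-II-systemForm}: this is exactly the statement that the second equation of \eqref{eq:systemForlittlev-miura+mKP-II} is the compatibility condition for the Lax evolution, which can be verified by a direct computation using that $u^{(n)}$ solves \eqref{eq:KP-II} and $v^{(n)}$ solves \eqref{eq:miuraEquation}, differentiating the Miura relation in $t$ and using the KP-II equation. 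By uniqueness in the Kenig--Martel theory, $v^{(n)}$ coincides with their strong solution, so $v$ (once we pass to the limit) is a limit of such solutions.

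The second step is the uniform-in-$n$ bound. By estimate \eqref{eq:estimates-for-wv-lambdaiszero} of Lemma \ref{lemma:estimates-for-wtv} applied on each time-slab $\R_{x,y}\times\{y>0\}$ — more precisely, applying that Lemma with the time variable $t$ playing no role and the estimate holding uniformly in $t$ because the $\Hcrit$-norm of $u^{(n)}(t)$ is uniformly bounded by Theorem \ref{theorem:global-well-posedness-KP-II-HHK09} — one gets
\[
\sup_t\big(\|v^{(n)}(t)\|_{\dot H^{0,1/4}\cap\dot H^{1/2,0}(\RR)}+\|\demu v^{(n)}_y(t)-(v^{(n)})^2(t)\|_{\Hcrit}\big)\lesssim \sup_t\|u^{(n)}(t)\|_\Hcrit\lesssim\|u_0\|_\Hcrit,
\]
where for the second term I would read off from \eqref{eq:miuraEquation} directly that $\demu v_y-v^2=v_x-u$, and both $v_x$ and $u$ are controlled in $\Hcrit=\dot H^{-1/2,0}$ (for $v_x$ by the $\dot H^{1/2,0}$ bound). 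This yields weak-$*$ compactness in $L^\infty_t(\dot H^{0,1/4}\cap\dot H^{1/2,0}(\RR))$; extract a limit $v$, and use the contraction/fixed-point structure of Lemma \ref{lemma:estimates-for-wtv} (the data-to-solution map is analytic, in particular Lipschitz on small balls) together with the convergence $u^{(n)}\to u$ in $C_b([0,\infty),\Hcrit)$ coming from the analyticity statement in Theorem \ref{theorem:global-well-posedness-KP-II-HHK09} to identify $v(t)$ as the fixed-time solution from Lemma \ref{lemma:estimates-for-wtv} for each $t$, and to upgrade weak convergence to strong convergence in $L^p_\loc$, hence $v\in C_b$ in the claimed topology (continuity in $t$ follows from continuity of $u(t)$ and continuity of the data-to-solution map). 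The continuity of $u_0\mapsto v$ is then inherited from the same Lipschitz bound on the fixed-time map composed with the continuity of $u_0\mapsto u$ from Theorem \ref{theorem:global-well-posedness-KP-II-HHK09}.

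The third step is verifying that the limit $(u,v)$ actually solves the evolution equation in \eqref{eq:systemForlittlev-miura+mKP-II} (the first equation being \eqref{eq:miuraEquation}, already known for each $t$). Since each $(u^{(n)},v^{(n)})$ solves the system classically, and both $u^{(n)}\to u$, $v^{(n)}\to v$ in $L^p_\loc(\R_t\times\RR)$ with the nonlinear terms $(v^{(n)})^2$, $(v^{(n)})^3$, $u^{(n)}v^{(n)}$ converging in $L^1_\loc$ by the uniform bounds and interpolation (e.g. $v^{(n)}$ bounded in $L^\infty_t L^3_{x,y}$ locally plus the $\dot H^{1/2,0}$ bound gives enough integrability), one passes to the limit in the distributional formulation of the second equation of \eqref{eq:systemForlittlev-miura+mKP-II} term by term. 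The statement that $v$ is a limit of strong mKP-II solutions is then immediate from the construction. The main obstacle I anticipate is the interchange of the fixed-time Miura inversion with the time evolution at the level of regularity of the critical space: concretely, showing that the compatibility computation (that $v^{(n)}$ built slice-by-slice from $u^{(n)}(t)$ genuinely solves the \emph{time-dependent} mKP-II system, not just \eqref{eq:miuraEquation} at each instant) is legitimate — this is clean for smooth data but requires the approximation argument to be set up carefully so that the limit procedure does not lose the second equation; equivalently, one must ensure that the uniqueness in the Kenig--Martel theory applies in a class that contains our approximants, which is why starting from smooth, spatially-decaying data (where their theory and Proposition \ref{prop:well-posedness-of-KP-II-L2} both apply and give classical solutions) is essential.
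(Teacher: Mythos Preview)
Your approach is essentially correct and parallels the paper's, but the paper organizes the key step (your ``main obstacle'') more economically, and your phrase ``direct computation'' undersells what is needed there.

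You propose to build $v^{(n)}(t)$ slice-by-slice from $u^{(n)}(t)$ via Lemma \ref{lemma:estimates-for-wtv}, and then verify by differentiating the Miura relation in $t$ that $(u^{(n)},v^{(n)})$ solves the full system \eqref{eq:systemForlittlev-miura+mKP-II}. But differentiating $M_-(v^{(n)}(t))=u^{(n)}(t)$ only gives a \emph{linearized} Miura equation satisfied by $v^{(n)}_t$; to identify $v^{(n)}_t$ with the mKP-II expression you still need (i) a computation showing the mKP-II expression satisfies the same linearized equation when $u^{(n)}$ solves KP-II, and (ii) uniqueness for that linearized equation in the relevant class. Both are available (the first is essentially the content of Proposition \ref{prop:miura-maps-mKP-to-KP}, the second comes from the contraction in Lemma \ref{lemma:estimates-for-wtv}), so your argument closes, but it is not a single algebraic identity.

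The paper bypasses this by reversing the logic: for smooth $u_0$, take $\overline v$ to be the Kenig--Martel mKP-II solution with initial datum $v_0$; by Proposition \ref{prop:miura-maps-mKP-to-KP} (the forward direction of the Miura property), $M_-(\overline v)$ solves KP-II with initial datum $u_0$; by uniqueness of KP-II solutions, $M_-(\overline v(t))=u(t)$; by the uniqueness statement in Lemma \ref{lemma:estimates-for-wtv}, $\overline v(t)=v(t)$ for each $t$. This uses only already-established results and avoids any new compatibility computation. Your approximation, uniform-bound, and limit steps are the same as the paper's; the difference is solely in how the smooth-data compatibility is verified, and the paper's route is cleaner precisely because it never needs the reverse implication ``$M_-(v)$ solves KP-II $\Rightarrow$ $v$ solves mKP-II'' directly.
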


\begin{remark}
     The assumptions on $u_0,v_0$ can be rewritten as `Let $v_0\in \dot H^{0,\fr14}(\RR)\cap \dot H^{\aha,0}(\RR)$ small enough such that $\demu v_{0,y}-v_0^2\in \Hcrit$ is also small, and call $u_0:=v_{0,x}+v_0^2-\partial_x v_{0,y}$'. Note that the nonlinear term $\demu v_y-v^2$ is precisely one of the two terms appearing in the energy functional of the mKP-II equation
     \[
     E(v(t))=\int_{\RR}|\partial_xv(t)|^2+|\partial_x^{-1}\partial_y v(t)-v(t)^2|^2dx\,dy,
     \]
     which is formally conserved by the mKP-II flow (see \cite{kenigMartel2006mKP-IIwellPosedness}*{§1}). The function $v$ is a solution of the mKP-II equation in the sense that $(u,v)$ solve system \eqref{eq:systemForlittlev-miura+mKP-II}: the latter is related to the distributional mKP-II equation \eqref{eq:mKP-II-systemForm} as we noted in Section \ref{sec:preliminaries}.
\end{remark}
\begin{proof}
    The regularity and the bound on $v$ both follow from Lemma \ref{lemma:estimates-for-wtv} and the uniform-in-time smallness of $u$. The fact that $v$ is a limit of strong solutions of \eqref{eq:mKP-II} will be clear from the rest of the proof, so we only need to show that $v$ is a solution of system \eqref{eq:systemForlittlev-miura+mKP-II}. The map $u_0\mapsto v$ is continuous from $\Hcrit$ to $C_bL^3$, so by approximation it suffices to show the statement assuming $u_0\in\partial_x H^\infty(\RR)$. In particular, in these hypotheses we have $v_0,v_{0,x},\demu v_{0,y}\in H^\infty(\RR)$ by Lemma \ref{lemma:estimates-for-wtv}, so we fall in the range of applicability of the well-posedness theory of the mKP-II equation, as in \cite{kenigMartel2006mKP-IIwellPosedness}*{Theorem 1}. It follows that there exists a unique solution $\overline v \in C([0,\infty), H^\infty)$ to mKP-II with initial datum $v_0$ such that $\overline v_x,\demu \overline v_y \in C([0,\infty), H^\infty)$, and by the mapping property of the Miura map of Proposition \ref{prop:miura-maps-mKP-to-KP} and the uniqueness of the solution $u$ of KP-II (see also \cite{kenigMartel2006mKP-IIwellPosedness}*{Remark 1}) it holds
    \[
    -\demu \overline v(t)+\overline v^2(t)+\overline v_x(t)=u(t).
    \]
    By the uniqueness of the solution given by Lemma \ref{lemma:estimates-for-wtv}, it follows that $v\equiv \overline v$, so $v$ is a strong solution of \eqref{eq:mKP-II}. In particular, by the equivalence of systems \eqref{eq:systemForlittlev-miura+mKP-II} and \eqref{eq:mKP-II-systemForm}, it follows that $(u,v)$ solve system \eqref{eq:systemForlittlev-miura+mKP-II}.
\end{proof}
    Using symmetry \eqref{eq:mKP-II-symmetry} of system \eqref{eq:systemForlittlev-miura+mKP-II}, the content of Proposition \ref{prop:nonlinear-well-posedness-mKP-II-at-critical-regularity} can be easily extended to cover the cases where the initial data $(u_0,v_0)$ solve \eqref{eq:miuraEquation} and $v_0\in L^3(\RR)+\lambda$ for some constant $\lambda\in\R$.
\begin{corollary}\label{cor:v-(j)-is-a-solution-to-system-Miura+mKP}
    Let $u_0\in \Hcrit$ be small enough and let $u\in \dot Z^{-\aha}((0,\infty))$ be the solution of \eqref{eq:KP-II} given by Theorem \ref{theorem:global-well-posedness-KP-II-HHK09}. Fix $\lambda\in\R$. For all times $t\geq 0$, let $v(t)\in L^3(\RR)+\lambda$ be the solution of \eqref{eq:miuraEquation} given by Corollary \ref{cor:estimates-for-wtv-part-2}. Then, $v\in C([0,\infty),L^3(\RR))+\lambda$ and $(u,v)$ solves system \eqref{eq:systemForlittlev-miura+mKP-II}.
\end{corollary}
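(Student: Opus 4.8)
The plan is to reduce to the already-established case $\lambda=0$ via the symmetry \eqref{eq:mKP-II-symmetry} of system \eqref{eq:systemForlittlev-miura+mKP-II}, as anticipated in the remark preceding the statement. Set $\sigma:=2\lambda$. Since \eqref{eq:mKP-II-symmetry} is a symmetry, a pair solves \eqref{eq:systemForlittlev-miura+mKP-II} if and only if its image under $(\Gcal^\KP_\sigma,\Gcal^\mKP_\sigma)$ does, and the same holds for the inverse transformation $(\Gcal^\KP_{-\sigma},\Gcal^\mKP_{-\sigma})$. Moreover, since $\Gcal^\KP_\sigma$ is a symmetry of \eqref{eq:KP-II} compatible with the $\dot Z^s$-based well-posedness theory (it intertwines the linear propagators and preserves the relevant function spaces; cf. the introduction and Theorem \ref{theorem:global-well-posedness-KP-II-HHK09}), the curve $\tilde u:=\Gcal^\KP_\sigma u$ is precisely the solution of \eqref{eq:KP-II} furnished by Theorem \ref{theorem:global-well-posedness-KP-II-HHK09} with the small datum $\tilde u_0:=(\Gcal^\KP_\sigma u)|_{t=0}\in\Hcrit$.

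Next I would identify the curve $\tilde v:=\Gcal^\mKP_\sigma v$. At a fixed time $t$, the transformation $\Gcal^\mKP_\sigma$ acts on $v(t)$ as the composition of the affine change of variables $(x,y)\mapsto(x-\sigma y-3\sigma^2 t,\,y+6\sigma t)$ with the subtraction of the constant $\sigma/2=\lambda$; by the same computation used to prove Corollary \ref{cor:estimates-for-wtv-part-2}(a) (a shear conjugating \eqref{eq:wtvlambda} to \eqref{eq:miuraEquation}, together with translations, which are harmless at fixed $t$), this sends the solution $v(t)\in L^3(\RR)+\lambda$ of \eqref{eq:miuraEquation} with datum $u(t)$ to a small solution in $L^3(\RR)$ of \eqref{eq:miuraEquation} with datum $\tilde u(t)$. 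By the uniqueness in Lemma \ref{lemma:estimates-for-wtv}, $\tilde v(t)$ therefore coincides with the solution of \eqref{eq:miuraEquation} with datum $\tilde u(t)$ given by that Lemma; in particular $\tilde v(0)$ is the Lemma \ref{lemma:estimates-for-wtv} solution with datum $\tilde u_0$, which by estimate \eqref{eq:estimates-for-wv-lambdaiszero} lies in $\dot H^{0,\fr14}(\RR)\cap\dot H^{\aha,0}(\RR)$ and is small.

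Now Proposition \ref{prop:nonlinear-well-posedness-mKP-II-at-critical-regularity} applies with datum $\tilde u_0$: it gives that precisely the curve $t\mapsto\tilde v(t)$ (the Lemma \ref{lemma:estimates-for-wtv} solution with datum $\tilde u(t)$) belongs to $C_b([0,\infty),L^3(\RR))$ and that $(\tilde u,\tilde v)$ solves \eqref{eq:systemForlittlev-miura+mKP-II}. Applying the inverse symmetry $(\Gcal^\KP_{-\sigma},\Gcal^\mKP_{-\sigma})$ to $(\tilde u,\tilde v)=(\Gcal^\KP_\sigma u,\Gcal^\mKP_\sigma v)$ then shows that $(u,v)$ solves \eqref{eq:systemForlittlev-miura+mKP-II}. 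Finally, $v=\Gcal^\mKP_{-\sigma}\tilde v$ and $\Gcal^\mKP_{-\sigma}$ acts at each fixed $t$ as a bounded affine map of $L^3(\RR)$ depending continuously on $t$, so $v\in C([0,\infty),L^3(\RR))+\lambda$.

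I expect the only delicate point to be the assertion that the Galilean-transformed solution $\Gcal^\KP_\sigma u$ is exactly the solution delivered by Theorem \ref{theorem:global-well-posedness-KP-II-HHK09} — equivalently, that the adapted spaces $\dot Z^s$ are invariant under $\Gcal^\KP$ — which is what lets one quote Proposition \ref{prop:nonlinear-well-posedness-mKP-II-at-critical-regularity} verbatim; everything else is a routine transport of statements through the symmetry and the fixed-time uniqueness results. Should one prefer to avoid this, the same conclusion can be obtained by repeating the proof of Proposition \ref{prop:nonlinear-well-posedness-mKP-II-at-critical-regularity} directly, replacing the use of the mKP-II well-posedness theory of \cite{kenigMartel2006mKP-IIwellPosedness} by that theory combined with the KdV-type symmetry \eqref{eq:KdV-symmetry-for-mKP-II} of system \eqref{eq:systemForlittlev-miura+mKP-II} to accommodate the constant background $\lambda$, together with Proposition \ref{prop:miura-maps-mKP-to-KP} and the uniqueness of Corollary \ref{cor:estimates-for-wtv-part-2}.
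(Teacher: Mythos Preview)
Your proposal is correct and follows essentially the same approach as the paper: the paper's proof is the single sentence preceding the corollary, which invokes exactly the symmetry \eqref{eq:mKP-II-symmetry} to reduce to Proposition \ref{prop:nonlinear-well-posedness-mKP-II-at-critical-regularity}. You have supplied considerably more detail than the paper, and your flagging of the $\dot Z^s$-invariance of $\Gcal^\KP_\sigma$ as the only subtle point is apt (the paper simply takes it for granted).
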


\begin{definition}[Elementary solutions]\label{def:elementary-solutions-of-system-Miura+mKP}
    Fix $u_0\in \Hcrit$ small enough and let $u\in \dot Z^{-\aha}((0,\infty))$ be as in Theorem \ref{theorem:global-well-posedness-KP-II-HHK09}. Let $\lambda_1\in \R$. The solution $v^{(1)}=\wv+\lambda_1$ of \eqref{eq:systemForlittlev-miura+mKP-II} as in Corollary \ref{cor:v-(j)-is-a-solution-to-system-Miura+mKP} is called the \emph{elementary solution} of \eqref{eq:systemForlittlev-miura+mKP-II} with parameter $\lambda_1$ associated to $u$.
\end{definition}
Next, given a solution $v$ of \eqref{eq:systemForlittlev-miura+mKP-II}, we construct a solution $V$ of \eqref{eq:systemForCapitalV} such that $V_x=v$ and establish bounds on $V$. We first prove that such a solution $V$ is unique up to an additive constant, which turns out to be independent of space and time.
\begin{lemma}\label{lemma:existenceOfPrimitiveSolutionTimeDependent}
    Let $\emptyset\neq I\subset \R_t$ be an open interval. Assume $u\in L^2_\loc(I\times\R^2)$, $v\in L^3_\loc(I\times\R^2)$, $w\in \Dscr'(I\times\R^2)$ are space-time distributions, such that $w_x=u_y$, $(u,v)$ solves the system \eqref{eq:systemForlittlev-miura+mKP-II}, and $(u,w)$ solves the KP-II equation, in the sense that
    \[
    u_t-6uu_x+u_{xxx}+3w_y=0.
    \]
    Then, there exists a unique $V\in\mathscr D'(I\times\R^2)$ up to an additive constant (independent of $t,x,y$) which solves system \eqref{eq:systemForCapitalV} with $\demu u_y=w$, and such that $V_x=v$. If $u,v,w$ are smooth, then $V$ is smooth.
\end{lemma}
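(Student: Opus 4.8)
The plan is to follow the proof of Lemma \ref{lemma:existenceOfPrimitiveSolution}, but now with one privileged variable ($x$) and two auxiliary variables ($y$ and $t$). First, as in Lemma \ref{lemma:distributional-antiderivative} (applied on $I\times\R^2$), I would choose $\wt V\in\mathscr D'(I\times\R^2)$ with $\partial_x\wt V=v$; such an $x$-primitive always exists and may be taken smooth when $v$ is. Set
\[
g_1:=\wt V_y-\wt V_{xx}-(\wt V_x)^2+u,\qquad g_2:=\wt V_t+4\wt V_{xxx}+4(\wt V_x)^3+12\,\wt V_x\wt V_{xx}-6u\wt V_x-3u_x-3w,
\]
which are well-defined distributions (e.g.\ $12\wt V_x\wt V_{xx}=6\partial_x(v^2)$ with $v^2\in L^{3/2}_{\loc}$, $(\wt V_x)^3=v^3\in L^1_{\loc}$, $u\wt V_x=uv\in L^{6/5}_{\loc}$). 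Since system \eqref{eq:systemForlittlev-miura+mKP-II} is exactly the $x$-derivative of system \eqref{eq:systemForCapitalV}, the two equations of \eqref{eq:systemForlittlev-miura+mKP-II} say precisely $\partial_x g_1=0$ and $\partial_x g_2=0$; as $\R_x$ is connected, $g_1$ and $g_2$ are pullbacks of distributions $g_1(t,y),g_2(t,y)$ on $I\times\R_y$.

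The crux is the compatibility identity $\partial_t g_1=\partial_y g_2$ on $I\times\R_y$. Granting it, the Poincar\'e lemma for distributions on the simply connected set $I\times\R_y$ produces $H=H(t,y)$, unique up to an additive constant, with $H_y=-g_1$ and $H_t=-g_2$; then $V:=\wt V+H$ satisfies $V_x=v$ and, by the definitions of $g_1,g_2$, solves both equations of \eqref{eq:systemForCapitalV} with $\demu u_y=w$. To establish the identity I would compute $F:=\partial_t g_1-\partial_y g_2$: the only term in $F$ involving a non-$\partial_x$ derivative of $\wt V$ beyond $\wt V_x$, namely $\wt V_{ty}$, occurs with opposite signs in the two pieces and cancels, so $F$ is a differential polynomial in $v,u,w$ alone, and $F=0$ then follows from differentiating \eqref{eq:miuraEquation} and the second equation of \eqref{eq:systemForlittlev-miura+mKP-II}, combined with the KP-II equation $u_t-6uu_x+u_{xxx}+3w_y=0$ and $w_x=u_y$. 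Conceptually this is just the statement that \eqref{eq:systemForCapitalV} (equivalently the Lax system \eqref{eq:systemCompatibility}) is compatible precisely because $u$ solves KP-II: the two integration constants $g_1,g_2$ assemble into a closed $1$-form on $I\times\R_y$.

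Uniqueness is as in Lemma \ref{lemma:existenceOfPrimitiveSolution}: if $V_1,V_2$ both solve \eqref{eq:systemForCapitalV} with $\partial_x V_i=v$, then $W:=V_1-V_2$ has $W_x=0$; subtracting the first equations gives $W_y-W_{xx}=W_y=0$ and subtracting the second gives $W_t=0$ (all the $V_x$-dependent terms coincide), so $W$ is constant on the connected set $I\times\R^2$. The smoothness assertion is then immediate: for smooth $u,v,w$ one takes $\wt V(t,x,y)=\int_0^x v(t,s,y)\,ds$, so $g_1,g_2$ are smooth, the compatibility identity holds classically, $H$ can be written as an iterated integral of $g_1,g_2$, and hence $V=\wt V+H$ is smooth.

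I expect the main obstacle to be carrying out the compatibility computation rigorously at the stated low regularity ($v\in L^3_{\loc}$, $u,w\in L^2_{\loc}$ locally, $\wt V$ only a distribution): naively applying the Leibniz rule turns several terms of $F$ into (an $L^p_{\loc}$ function)$\,\times\,$(a derivative of a distribution), which is meaningless, so the computation must be organized so that every product is one of the legitimate combinations above and the PDEs are used only in the forms $\partial_x\big(v_y-v_{xx}-(v^2)_x+u_x\big)=0$ and the analogous one for the second equation — or, equivalently, one tests $F$ against $\chi(x)\psi(t,y)$ and integrates by parts, reducing everything to a finite sum of genuine dual pairings. The remaining ingredients (existence of the $x$-primitive, the structure of $x$-independent distributions, and the Poincar\'e lemma for distributions on a product of intervals) are standard.
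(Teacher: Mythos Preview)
Your proposal is correct and follows essentially the same route as the paper: pick an $x$-primitive $\wt V$, record the two $x$-independent defects $g_1,g_2$ (the paper's $g,h$), verify the compatibility $\partial_t g_1=\partial_y g_2$ using the KP-II equation for $(u,w)$, and then integrate to produce the correction $H(t,y)$---the paper does this last step by two applications of Lemma~\ref{lemma:distributional-antiderivative} rather than invoking the Poincar\'e lemma by name, which amounts to the same thing. Your caution about the low-regularity justification of the compatibility identity is well placed; the paper's proof is equally brief on this point and, as you suggest, the clean way is to write every occurrence of $\wt V_x,\wt V_{xx},\wt V_{xxx}$ as $v,v_x,v_{xx}$ so that all products and derivatives live in the stated $L^p_\loc$ spaces.
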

The function $w$ is morally the term $\demu\partial_y u$ appearing in the KP-II equation, and we simply assume that it is well-defined.
\begin{proof}
    The proof for the uniqueness is analogous to the proof of Lemma \ref{lemma:existenceOfPrimitiveSolution}. For the existence, let $\wV$ be such that $\partial_x\wV=v$. Then, integrating system \eqref{eq:systemForlittlev-miura+mKP-II},
    \begin{equation}\label{eq:systemForCapitalV-distorted}
        \left\{
    \begin{aligned}
    &\wV_y-v_x=v^2-u+g\\
    &\wV_t+4v_{xx}+4v^3+12vv_x-6uv-3u_x-3w=h,
    \end{aligned}
    \right.
    \end{equation}
    where $g,h$ are distributions that are independent of $x$. Derivating the first equation of system \eqref{eq:systemForCapitalV-distorted} with respect to $t$ and the second equation with respect to $y$, after combining them and using the fact that $(u,w)$ solve KP-II, we find that $g_t=h_y$. This in turn implies, by applying Lemma \ref{lemma:distributional-antiderivative} twice, that $g=F_y$, $h=F_t$ for a third distribution $F$ independent of $x$. From here, the argument is analogous to the proof of Lemma \ref{lemma:existenceOfPrimitiveSolution}.
\end{proof}
Recall that for $u\in \dot Z^{-\aha}((0,\infty))$, the distribution $\demu u_y$ is well-defined thanks to Remark \ref{rk:Z-aha-is-in-L2-loc}.
\begin{lemma}\label{lemma:primitive-solutions-time-dependent-are-nice}
    Let $u_0\in \Hcrit$ be small and let $u\in \dot Z^{-\aha}((0,\infty))$ be the unique small solution of KP-II given by Theorem \ref{theorem:global-well-posedness-KP-II-HHK09}. Let $v\in C_b([0,\infty), L^3(\RR))+\lambda$ be the elementary solution of system \eqref{eq:systemForlittlev-miura+mKP-II} with parameter $\lambda\in\R$ associated to $u$, as in Definition \ref{def:elementary-solutions-of-system-Miura+mKP}. Then, the distribution $V$ solving system \eqref{eq:systemForCapitalV} and $V_x=v$, given (up to an additive constant) by Lemma \ref{lemma:existenceOfPrimitiveSolutionTimeDependent}, satisfies
    \[V(t,x,y)-(\lambda x + \lambda^2 y - 4\lambda^3 t)\in C([0,\infty),\PBMOx {\lambda}^0(\R^2)),\]
    and for all $t$, $V(t)$ coincides with the function obtained from $v(t)$ by Lemma \ref{lemma:existenceOfPrimitiveSolution} (up to a time-dependent additive constant). Moreover, it holds
    \begin{equation}\label{eq:z-030}
    \left\|\frac{d}{dt}\left(\int_{\RR} V\rho\,dx\,dy \right)+4\lambda^3\right\|_{L^2_\unif((0,\infty))}\lesssim \|u_0\|_{\Hcrit}.
    \end{equation}
\end{lemma}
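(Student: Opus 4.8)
The plan is to combine the fixed-time statements of Corollaries~\ref{cor:BMO-bounds-for-Vlambda} and~\ref{cor:estimates-for-wtv-part-2} with the second equation of~\eqref{eq:systemForCapitalV}, which governs the evolution in $t$ of the single free constant in $V$. As in the proof of Proposition~\ref{prop:nonlinear-well-posedness-mKP-II-at-critical-regularity}, one first reduces by approximation --- using the continuity of the data-to-solution map and of the primitive maps --- to $u_0\in\partial_x H^\infty(\RR)$, so that $u,v,V$ are smooth and the manipulations below are classical; the general case then follows by passing to the limit in the topologies in which continuity has been established. Throughout one uses that $u\in C_b([0,\infty),\Hcrit)$ stays small (Theorem~\ref{theorem:global-well-posedness-KP-II-HHK09}) and that $v(t)-\lambda$ stays small in $L^3(\RR)$ (Corollary~\ref{cor:v-(j)-is-a-solution-to-system-Miura+mKP}), uniformly in $t$, and we normalize the mollifier by $\int_{\RR}\rho\,\dxdy=1$.

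\emph{Step 1 (fixed-time primitive and identification).} For fixed $t\ge 0$, $(u(t),v(t))$ solves~\eqref{eq:miuraEquation} with $v(t)\in L^3(\RR)+\lambda$, so Corollary~\ref{cor:BMO-bounds-for-Vlambda} furnishes a solution $V_{\mathrm{sp}}(t)$ of~\eqref{eq:primitiveEquation} with $\partial_x V_{\mathrm{sp}}(t)=v(t)$, which we normalize by $\int_{\RR}V_{\mathrm{sp}}(t)\,\rho\,\dxdy=0$ and which obeys
\[
\bigl\|V_{\mathrm{sp}}(t)-(\lambda x+\lambda^2 y)\bigr\|_{\PBMOx{\lambda}^0(\RR)}\lesssim\|u(t)\|_{\Hcrit}\lesssim\|u_0\|_{\Hcrit}
\]
uniformly in $t$. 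Since the first equation of~\eqref{eq:systemForCapitalV} says that $V(t)$ itself solves~\eqref{eq:primitiveEquation} with $\partial_x V(t)=v(t)$, the uniqueness in Lemma~\ref{lemma:existenceOfPrimitiveSolution} gives, for every $t$,
\[
V(t,\cdot)=V_{\mathrm{sp}}(t)+c(t),\qquad c(t):=\int_{\RR}V(t,\cdot)\,\rho\,\dxdy,
\]
which is exactly the asserted coincidence of $V(t)$ with the Lemma~\ref{lemma:existenceOfPrimitiveSolution} function up to a time-dependent constant. Moreover $t\mapsto V_{\mathrm{sp}}(t)-(\lambda x+\lambda^2 y)$ is continuous into $\PBMOx{\lambda}^0(\RR)$: by the proof of Corollary~\ref{cor:BMO-bounds-for-Vlambda} it equals, up to the normalizing constant, $\Gamma^{(-2\lambda)}(\wv(t)^2-u(t))$, and $t\mapsto\wv(t)\in L^3(\RR)$ and $t\mapsto u(t)\in\Hcrit$ are continuous while $\Gamma^{(-2\lambda)}$ is bounded from $L^{3/2}(\RR)+\Hcrit$ to $\PBMOx{\lambda}(\RR)$. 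Hence it only remains to prove that $c$ is locally absolutely continuous with $\tfrac{d}{dt}c+4\lambda^3$ small in $L^2_\unif$; then the splitting $V(t)-(\lambda x+\lambda^2 y-4\lambda^3 t)=\bigl(V_{\mathrm{sp}}(t)-(\lambda x+\lambda^2 y)\bigr)+\bigl(c(t)+4\lambda^3 t\bigr)$ yields at once the continuity into $\PBMOx{\lambda}^0(\RR)$ and the estimate~\eqref{eq:z-030}.

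\emph{Step 2 (evolution of $c$).} Pairing the second equation of~\eqref{eq:systemForCapitalV} with $\rho$ and using $V_x=v$, $V_{xx}=v_x$, $V_{xxx}=v_{xx}$,
\[
\tfrac{d}{dt}c(t)=\int_{\RR}V_t\,\rho\,\dxdy=-\int_{\RR}\bigl(4v_{xx}+4v^3+12vv_x-6uv-3u_x-3\demu u_y\bigr)\rho\,\dxdy.
\]
Writing $v=\wv+\lambda$, the only contribution carrying neither a power of $\wv$, a factor $u$, nor a derivative that can be moved onto $\rho$ is $-4\lambda^3\int_{\RR}\rho\,\dxdy=-4\lambda^3$, from $-4v^3$; hence $\tfrac{d}{dt}c(t)+4\lambda^3$ is an integral whose terms are, after at most one integration by parts putting derivatives on $\rho$, all $\lesssim\|u_0\|_{\Hcrit}$: the terms polynomial in $\wv$ are estimated by H\"older's inequality with $\|\wv(t)\|_{L^3(\RR)}\lesssim\|u_0\|_{\Hcrit}$ (Corollary~\ref{cor:estimates-for-wtv-part-2}, using the smallness of $\|u_0\|_{\Hcrit}$ for the quadratic/cubic pieces); the terms $\int u_x\rho$ and $\int uv\rho$ by the $\Hcrit$--$\dot H^{\aha,0}(\RR)$ duality, using that $\wv(t)\in\dot H^{\aha,0}(\RR)\cap L^3(\RR)$ (from $\wv_x\in\Hcrit$) so that $\wv(t)\rho\in\dot H^{\aha,0}(\RR)$ with norm $\lesssim\|u_0\|_{\Hcrit}$ by a fractional Leibniz estimate; and the term $\int_{\RR}\demu u_y(t)\,\rho\,\dxdy$ by pairing directly in $L^2$, invoking Remark~\ref{rk:Z-aha-is-in-L2-loc} together with the fact that, by time-translation invariance and uniform smallness, $\|\demu u_y\|_{L^2((t,t+1)\times\supp\rho)}\lesssim\|u_0\|_{\Hcrit}$ uniformly in $t$. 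All terms except the last are in fact bounded in $L^\infty_t$, and the $\demu u_y$ term lies in $L^2_\unif((0,\infty))$ with the stated bound, whence $\bigl\|\tfrac{d}{dt}c+4\lambda^3\bigr\|_{L^2_\unif((0,\infty))}\lesssim\|u_0\|_{\Hcrit}$; in particular $c\in W^{1,1}_\loc((0,\infty))$ is continuous, completing Step~1.

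\emph{Main obstacle.} The pairing estimates of Step~2 are routine. The genuinely delicate points are: (i) that the space-time distribution $V$ of Lemma~\ref{lemma:existenceOfPrimitiveSolutionTimeDependent} has a representative continuous in $t$ with values in $\mathscr D'(\RR)$ --- needed to even speak of $V(t,\cdot)$ and of $c(t)$ --- which holds because its $t$-derivative, read off from the second line of~\eqref{eq:systemForCapitalV}, lies in $L^1_\loc(\R_t;\mathscr D'(\RR))$; and (ii) upgrading the conclusion from ``modulo constants'' --- the natural output of the parabolic-$\BMO$ theory and of $\Gamma^{(-2\lambda)}$ --- to the genuine Banach space $\PBMOx{\lambda}^0(\RR)$, which is precisely what forces one to track the scalar $c(t)$ through the evolution equation and not merely the $\PBMO$ seminorm. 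The term $\int_{\RR}\demu u_y\,\rho\,\dxdy$ is also what makes the $L^2_\unif$ norm (rather than $L^\infty$) appear in~\eqref{eq:z-030}, since $\demu\partial_y u$ is controlled for $\dot Z^{-\aha}$ solutions only in $L^2$ over time windows.
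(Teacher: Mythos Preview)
Your proof is correct and follows essentially the same strategy as the paper: identify $V(t)$ with the fixed-time primitive from Corollary~\ref{cor:BMO-bounds-for-Vlambda} up to a scalar $c(t)$, then control $c'(t)$ by pairing the second equation of~\eqref{eq:systemForCapitalV} with $\rho$. Two minor differences are worth recording. First, the paper opens by invoking the symmetry~\eqref{eq:mKP-II-symmetry} to set $\lambda=0$, so the constant $-4\lambda^3$ never appears and one bounds $\tfrac{d}{dt}\int V\rho$ directly; you instead keep $\lambda$ general and subtract the leading piece by hand, which is equivalent. Second, for the terms containing $u$ (namely $\int uv\rho$ and $\int u_x\rho$) the paper does not use the $\Hcrit$--$\dot H^{\aha,0}$ duality: it simply applies H\"older on the support of $\rho$ and then the embedding $\dot Z^{-\aha}\hookrightarrow L^2_\unif((0,\infty)\times\RR)$ from Remark~\ref{rk:Z-aha-is-in-L2-loc} to get $\|u(\cdot)\|_{L^{3/2}(B)}\in L^2_\unif(t)$. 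Your duality argument is a legitimate alternative and in fact yields $L^\infty_t$ control of those particular terms; either way, the $\demu u_y$ contribution forces the final estimate down to $L^2_\unif$. A small imprecision: for the approximation step you write $u_0\in\partial_x H^\infty(\RR)$ and claim $u,v,V$ are smooth, but full time-smoothness requires $u_0\in\partial_x^\infty H^\infty(\RR)$ (cf.\ Remark~\ref{rk:time_smoothness_and_de_x_infty}); with only one $\partial_x$ you get $C^1_t$, which still suffices for the argument but is not ``smooth''.
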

We recall that here $\PBMOx \lambda^0(\RR)\hookrightarrow\mathscr S'(\RR)$ is simply the Banach space of all functions in $\PBMOx \lambda(\RR)$, to which one can equip the norm
\[
\|f\|_{\PBMOx{\lambda,\rho}^0(\RR)}=\|f\|_{\PBMOx \lambda(\RR)}+\left|\int_{\RR}f\rho\;dx\,dy\right|,
\]
where $\rho$ is a standard mollifier centered at the origin (see Definition \ref{def:spaces_of_homogeneous_type_and_BMO}). In particular, $\PBMOx \lambda^0$ embeds into $L^p_\loc(\RR)$ for all $p<\infty$ (see Corollary \ref{cor:exponential-integrability-of-BMO-functions}).
\begin{proof}
    First, by symmetry \eqref{eq:mKP-II-symmetry}, it suffices to show the statement for $\lambda=0$. Assume first that $u_0\in \partial_x^\infty H^\infty(\RR)$ (see Remark \ref{rk:time_smoothness_and_de_x_infty}). Then, $u$ is in $C^\infty([0,\infty),H^\infty(\RR))$. It follows from Lemma \ref{lemma:estimates-for-wtv} that $v\in C^\infty([0,\infty),H^\infty(\RR))$, and thus $V$ is smooth by the previous Lemma. Since $\partial_xV(t,\cdot,\cdot)=v(t,\cdot,\cdot)$, and since $V(t,\cdot,\cdot)$ solves equation \eqref{eq:primitiveEquation} with datum $u(t)$ for all $t\geq 0$, the function $V(t)$ agrees with the one given by Lemma \ref{lemma:existenceOfPrimitiveSolution} (with datum $u(t)$) for all $t\geq 0$ up to an additive constant which depends on $t$, by the uniqueness statement therein. By Corollary \ref{cor:BMO-bounds-for-Vlambda}, it holds the estimate
    \[
    \|V\|_{C_b\PBMO(\RR)}\lesssim \|u_0\|_{\Hcrit}.
    \]
    
    The above estimate does not give control on the time evolution of any additive constant, so we need an additional estimate. Consider the second equation in system \eqref{eq:systemForCapitalV}. Set $w:=\demu u_y$, which is well-defined by Remark \ref{rk:Z-aha-is-in-L2-loc}. Multiplying both sides by $\rho$ and integrating over space, we obtain the bound
    \begin{align*}
        \left|\frac{d}{dt}\int_{\RR} V\rho\,dx\,dy\right|&=\left|\int_{\RR} \rho\cdot\, (4v^3+4v_{xx}+12vv_x-6uv-3u_x-3w)dx\,dy\right|\\
        &\lesssim\|v\|_{L^3(B)}+\|v\|_{L^3(B)}^3+\|v\|_{L^3(B)}^2+\|u\|_{L^{3/2}(B)}\|v\|_{L^3(B)}\\
        &\QQQ+\left|\int_{\RR} \rho w\,\,dx\,dy\right|,
    \end{align*}
    where $B=B_1((0,0))$ is the support of $\rho$. The $L^3$ norm of $v$ is controlled by $\|u\|_\Hcrit$ by Lemma \ref{lemma:estimates-for-wtv} (remember that we assumed $\lambda=0$). For the norm of $u$ in $L^{3/2}(B)$, we note that from Theorem \ref{theorem:global-well-posedness-KP-II-HHK09} we have the bound
    \[
    \|u\|_{\dot Z^{-\aha}((0,\infty))}\lesssim\|u_0\|_\Hcrit,
    \]
    with $\dot Z^s$ as in Definition \ref{def:adapted-space-Zs}, and thanks to Remark \ref{rk:Z-aha-is-in-L2-loc} we can estimate $u$ locally in $L^2$ in space-time. The last term is also $L^2$-integrable in time by the bound in Remark \ref{rk:Z-aha-is-in-L2-loc}. It follows
    \begin{equation}\label{eq:z-029}
    \left\|\frac{d}{dt}\int_{\RR} V\rho\,dx\,dy\right\|_{L^2_\unif((0,\infty))}\lesssim \|u_0\|_{\Hcrit},
    \end{equation}
    which implies $V\in C([0,\infty),\PBMO^0(\RR))$ together with the above estimate. For general $u_0\in \Hcrit$, an approximation argument is enough to conclude thanks to the above a priori estimates.
\end{proof}

\subsection{The time-dependent B\"acklund transform - Proof of Theorem \ref{theorem:theorem_2}}

In this subsection we prove Theorem \ref{theorem:theorem_2}. First, we state a nonlinear superposition principle which allows to construct solutions of \eqref{eq:systemForlittlev-miura+mKP-II} from its elementary solutions associated to the same solution $u$ of \eqref{eq:KP-II}. This is a direct time-dependent analogue of Proposition \ref{prop:kink-addition_map_eleV}. We call the map below $\eleVt$ because its output is the forward time evolution of the output of $\eleV$ along the mKP-II flow.

\begin{proposition}[Nonlinear superposition of elementary solutions]\label{prop:eleVt_superposition-of-elementary-functions}
    Let $u_0\in\Hcrit$ be small enough, $M\geq 1$, and $\vec \lambda\in \R^M$ such that $\lambda_1<\dots<\lambda_M$. Let $u$ be the solution of \eqref{eq:KP-II} given by Theorem \ref{theorem:global-well-posedness-KP-II-HHK09} with $u|_{t=0}=u_0$. Let $v^{(j)}\in C_b([0,\infty),L^3(\RR))+\lambda_j$, $1\leq j\leq M$ be the corresponding elementary solutions of \eqref{eq:systemForlittlev-miura+mKP-II} associated to $u$. Let $V^{(j)}$ be the corresponding primitive solutions of system \eqref{eq:systemForCapitalV} given by Proposition \ref{lemma:existenceOfPrimitiveSolutionTimeDependent} and \ref{lemma:primitive-solutions-time-dependent-are-nice}, normalized as in \eqref{eq:normalization-condition} at $t=0$. Given $\vec c:=(c_1,\dots,c_M)\in \R^M$, the functions
    \begin{equation}\label{eq:nonlinear_superposition_elementary_sol_def_of_psi-v-wu}
        \psi:=\frac{1}{\sum_{j=1}^M e^{c_j}}\sum_{j=1}^M e^{V^{(j)}+c_j},\qquad V:=\log\psi,\qquad v:=\partial_x V,\qquad \wu:=u-2\partial_x v
    \end{equation}
    are well-defined and they satisfy\footnote{The `$b$' in $C_b$ this time refers to the notion of boundedness in Fr\'echet spaces. We are asking for the above functions to be bounded with values in $L^p(K)$ when restricted to any compact $K\subset \RR$.}
    \als{
        &\psi,1/\psi \in C([0,\infty), L^6_\loc(\RR)),\quad \psi_x\in C([0,\infty),L^2_\loc(\RR)),\\
        &V\in C_b([0,\infty),L^p_\loc(\RR))\quad\forall\,p<\infty,\\
        &v\in C_b([0,\infty),L^3_\loc(\RR))\cap L^6((0,\infty)\times\RR),\quad v_x\in L^2_\unif((0,\infty)\times\RR),\\
        &\wu\in L^2_\loc([0,\infty)\times \RR).
    }
    The function $v$ solves system \eqref{eq:systemForlittlev-miura+mKP-II}, and $\wu$ solves the KP-II equation in distributional form \eqref{eq:KP-II-distributional}. The map
    \[
    \eleVt^{\vec\lambda}:(u_0,\vec c)\mapsto v
    \]
    is continuous from $B_{\eps_0}^{\Hcrit}(0)\times \R^M$ to $C([0,\infty),L^3_\loc(\RR))\cap L^6_\loc([0,\infty)\times\RR)$ with $v_x\in L^2_\loc([0,\infty)\times \RR)$. It holds
    \begin{equation}\label{eq:eleVt_is_eleV_composed_with_a_continuous_curve_in_R^M}
    v(t)=\eleV^{\vec\lambda}(u(t),\vec {\bm c}(t))
    \end{equation}
    for all $t\geq 0$, for a $C^{0,\aha}$ curve $t\mapsto\vec {\bm c}(t)$, $\vec {\bm c}(0)=\vec c$. In particular, $\eleV^{\vec \lambda}=\eleVt^{\vec\lambda}|_{t=0}$.
\end{proposition}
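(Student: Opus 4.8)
The plan is to mirror the proof of Proposition \ref{prop:kink-addition_map_eleV}, replacing the space-only objects by their time-dependent analogues constructed in Corollary \ref{cor:v-(j)-is-a-solution-to-system-Miura+mKP} and Lemmas \ref{lemma:existenceOfPrimitiveSolutionTimeDependent}, \ref{lemma:primitive-solutions-time-dependent-are-nice}. First I would fix the elementary solutions $v^{(j)}(t) \in L^3(\RR)+\lambda_j$ and their primitives $V^{(j)}$, normalized at $t=0$ by \eqref{eq:normalization-condition}; by Lemma \ref{lemma:primitive-solutions-time-dependent-are-nice} we have $V^{(j)}(t) - (\lambda_j x + \lambda_j^2 y - 4\lambda_j^3 t) \in C([0,\infty),\PBMOx{\lambda_j}^0(\RR))$ with a uniform-in-time bound by $\|u_0\|_\Hcrit$, and by Corollary \ref{cor:exponential-integrability-of-BMO-functions} each $e^{V^{(j)}(t)}$, $e^{-V^{(j)}(t)}$ lies in $C([0,\infty),L^6_\loc(\RR))$ once $u_0$ is small enough. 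The pointwise sandwich $\min_j V^{(j)} \leq V \leq \max_j V^{(j)}$ then gives $\psi,1/\psi \in C([0,\infty),L^6_\loc)$, $\psi_x = \tfrac{1}{\|e^{\vec c}\|_{\ell^1}}\sum_j v^{(j)} e^{V^{(j)}+c_j} \in C([0,\infty),L^2_\loc)$, and $V = \log\psi \in C_b([0,\infty),L^p_\loc)$ for all $p<\infty$, exactly as in the stationary case. Writing $v = \sum_j \zeta^{(j)} v^{(j)}$ with $\zeta^{(j)} = e^{V^{(j)}+c_j}/\sum_k e^{V^{(k)}+c_k} \in [0,1]$ and using that $v^{(j)} \in L^3_\unif + \lambda_j$ uniformly in time yields $v \in C_b([0,\infty),L^3_\loc)$; the $L^6$ and $\partial_x L^2$ global-in-spacetime membership comes from the corresponding bounds in Corollary \ref{cor:estimates-for-wtv-part-2} applied to $\wv^{(j)} := v^{(j)}-\lambda_j$, carried along by $\|u_0\|_\Hcrit$-smallness uniformly on $[0,\infty)$. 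Finally $\wu = u - 2\partial_x v \in L^2_\loc([0,\infty)\times\RR)$ follows once we know $\wu$ solves KP-II distributionally, via Proposition \ref{prop:miura-maps-mKP-to-KP}, or directly since $u \in \dot Z^{-\aha}$ is in $L^2_\loc$ by Remark \ref{rk:Z-aha-is-in-L2-loc} and $v_x \in L^2_\loc$.

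Next I would verify that $(u,v)$ solves system \eqref{eq:systemForlittlev-miura+mKP-II}. By continuity of the data-to-solution maps (Theorem \ref{theorem:global-well-posedness-KP-II-HHK09}, Lemma \ref{lemma:estimates-for-wtv}, Corollary \ref{cor:estimates-for-wtv-part-2}, Lemma \ref{lemma:primitive-solutions-time-dependent-are-nice}) and the continuity of the superposition, it suffices to treat smooth data $u_0 \in \partial_x^\infty H^\infty(\RR)$, for which $u,v^{(j)},V^{(j)}$ are all smooth in space-time. In that case, since each $\psi^{(j)} := e^{V^{(j)}}$ solves the Lax system \eqref{eq:systemCompatibility} by the second equation of system \eqref{eq:systemForCapitalV} combined with $V^{(j)}_x = v^{(j)}$, the linear combination $\psi = \tfrac{1}{\|e^{\vec c}\|_{\ell^1}}\sum_j \psi^{(j)} e^{c_j}$ again solves \eqref{eq:systemCompatibility}; applying the equivalence of systems \eqref{eq:systemCompatibility}, \eqref{eq:systemForCapitalV}, \eqref{eq:systemForlittlev-miura+mKP-II} established in Subsection \ref{subsec:manySystems} (with $V = \log\psi$, $v = V_x = \partial_x\log\psi$) shows $(u,v)$ solves \eqref{eq:systemForlittlev-miura+mKP-II}, and Proposition \ref{prop:miura-maps-mKP-to-KP} then gives that $\wu = u - 2v_x$ solves \eqref{eq:KP-II-distributional}. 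The general case follows by the density/continuity argument, noting that \eqref{eq:KP-II-distributional} and system \eqref{eq:systemForlittlev-miura+mKP-II} are closed under $L^p_\loc$-convergence with the regularity we have. Continuity of $\eleVt^{\vec\lambda}$ in the stated topologies is obtained as in Proposition \ref{prop:kink-addition_map_eleV}: one rewrites $\zeta^{(j)} = \big(\sum_k e^{(V^{(k)}-V^{(j)})+(c_k-c_j)}\big)^{-1}$ and uses the (time-dependent, uniform-in-$t$) analytic dependence of $u \mapsto V^{(k)}-V^{(j)}$ in $C^{0,1/4}_\unif$ — which here requires a time-dependent version of Lemma \ref{lemma:holder_regularity_for_difference_of_primitive_solutions}, obtained by applying that lemma at each fixed $t$ together with the continuity in $t$ from Lemma \ref{lemma:primitive-solutions-time-dependent-are-nice} — to deduce $(u_0,\vec c) \mapsto \zeta^{(j)}$ is continuous into $C([0,\infty),L^\infty_\loc)$, and combines with analyticity of $u_0 \mapsto v^{(j)}$.

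It remains to establish \eqref{eq:eleVt_is_eleV_composed_with_a_continuous_curve_in_R^M}, i.e. that for each fixed $t$ the function $v(t)$ is of the form $\eleV^{\vec\lambda}(u(t),\vec{\bm c}(t))$ for a Hölder-$\tfrac12$ curve $\vec{\bm c}$ with $\vec{\bm c}(0) = \vec c$. At a fixed $t$, both $\eleV^{\vec\lambda}(u(t),\cdot)$ and the present construction build $v(t)$ as $\partial_x\log$ of a convex combination $\sum_j e^{W^{(j)}+d_j}$ of the \emph{same} primitive profiles $W^{(j)}$ solving \eqref{eq:primitiveEquation} with datum $u(t)$, normalized by \eqref{eq:normalization-condition}; the only discrepancy is that $\eleV$ uses the space-normalization of $W^{(j)}$ while here we propagate the $t=0$ normalization. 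By the uniqueness-up-to-additive-constant in Lemma \ref{lemma:existenceOfPrimitiveSolution}, $V^{(j)}(t)$ and the $\eleV$-normalized profile differ by a constant $a_j(t)$ depending only on $t$, namely $a_j(t) = \int_{\RR} V^{(j)}(t)\rho\,dx\,dy$, so one sets $\bm c_j(t) := c_j + a_j(t)$, and $\vec{\bm c}(0) = \vec c$ by the $t=0$ normalization. The regularity $\vec{\bm c} \in C^{0,1/2}$ is exactly \eqref{eq:z-030} of Lemma \ref{lemma:primitive-solutions-time-dependent-are-nice}: the derivative $\tfrac{d}{dt}\int V^{(j)}\rho + 4\lambda_j^3$ lies in $L^2_\unif(0,\infty)$, hence each $\bm c_j$ is absolutely continuous with $L^2_\unif$ derivative and therefore $C^{0,1/2}$. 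The setting $t=0$ in \eqref{eq:eleVt_is_eleV_composed_with_a_continuous_curve_in_R^M} gives $\eleV^{\vec\lambda} = \eleVt^{\vec\lambda}|_{t=0}$.

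The main obstacle I anticipate is the continuity of $\eleVt^{\vec\lambda}$ in $u_0$ uniformly in time: unlike the stationary case, one must control $V^{(k)}(t) - V^{(j)}(t)$ in $C^{0,1/4}_\unif$ with a bound and modulus of continuity that do not degenerate as $t\to\infty$, which forces one to track carefully how the time-independent additive constants in Lemma \ref{lemma:existenceOfPrimitiveSolutionTimeDependent} interact with the BMO estimates, and to upgrade the fixed-time analyticity of Lemma \ref{lemma:holder_regularity_for_difference_of_primitive_solutions} to a statement uniform on $[0,\infty)$ using the global-in-time smallness preserved by the KP-II and mKP-II flows. Everything else is a routine transcription of the stationary arguments.
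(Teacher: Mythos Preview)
Your overall architecture matches the paper's, but there is a genuine gap in the step where you claim the space--time bounds $v\in L^6((0,\infty)\times\RR)$ and $v_x\in L^2_\unif((0,\infty)\times\RR)$. You say these ``come from the corresponding bounds in Corollary~\ref{cor:estimates-for-wtv-part-2} applied to $\wv^{(j)}$, carried along by $\|u_0\|_\Hcrit$-smallness uniformly on $[0,\infty)$''. But Corollary~\ref{cor:estimates-for-wtv-part-2} is a \emph{fixed-time} statement: applying it at each $t$ with input $u(t)\in\Hcrit$ yields only $\wv^{(j)}\in L^\infty_t L^3_{x,y}$ and $\wv^{(j)}_x\in L^\infty_t\Hcrit$. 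Neither implies $\wv^{(j)}\in L^6_{t,x,y}$ nor $\wv^{(j)}_x\in L^2_\unif$ in space--time (and $\Hcrit\not\hookrightarrow L^2_\loc(\RR)$, so even $v_x(t)\in L^2_\loc$ at fixed time is not available from that corollary). Without $v_x\in L^2_\loc([0,\infty)\times\RR)$ you cannot conclude $\wu\in L^2_\loc$, and Proposition~\ref{prop:miura-maps-mKP-to-KP} then does not apply.

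The paper singles out exactly this point as ``the main technicality'' and handles it differently: it uses that the KP-II solution lies in $\dot Z^{-\aha}((0,\infty))$, which by Corollary~\ref{cor:Z^aha_is_in_L6_de_x^aha_L3} and Remark~\ref{rk:Z-aha-is-in-L2-loc} embeds into $L^6((0,\infty),|\de_x|^\aha L^3(\RR))\cap L^2_\unif((0,\infty)\times\RR)$. This genuine \emph{space--time} integrability of $u$ (not just $C_b\Hcrit$) is fed into a refined fixed-point argument for the Burgers equation~\eqref{eq:wtvlambda}, using the heat estimates $\|\de_x\Gamma^{(c)}f\|_{L^6}\lesssim\|f\|_{|\de_x|^\aha L^3+L^2}$ and $\|\de_x^2\Gamma^{(c)}f\|_{L^2}\lesssim\|f\|_{L^2}$, to obtain directly that $u_0\mapsto v^{(j)}$ is analytic into $C_b([0,\infty),L^3)\cap L^6((0,\infty)\times\RR)+\lambda_j$ with $v^{(j)}_x\in L^2_\unif((0,\infty)\times\RR)$. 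Once these space--time bounds on the elementary solutions are in hand, the rest (regularity of $\psi,V,v$, continuity of $\eleVt^{\vec\lambda}$, the Lax-system argument for smooth data, and the formula for $\vec{\bm c}(t)$) indeed follows as in Proposition~\ref{prop:kink-addition_map_eleV}, just as you outline. So the obstacle you flag (uniform-in-$t$ H\"older control of $V^{(k)}-V^{(j)}$) is not the crux; the missing ingredient is the $\dot Z^{-\aha}$-based space--time estimate for $\wv^{(j)}$ and $\wv^{(j)}_x$.
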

\begin{proof}
    The main technicality is proving an estimate on $\wu$ to ensure it lies in $L^2_\loc([0,\infty)\times \RR)$, with continuous dependence on $u_0$. For this, by Theorem \ref{theorem:global-well-posedness-KP-II-HHK09}, Corollary \ref{cor:Z^aha_is_in_L6_de_x^aha_L3}, and Remark \ref{rk:Z-aha-is-in-L2-loc}, we know that
    \[
    u\in \dot Z^{-\aha}((0,\infty)) \hookrightarrow L^6((0,\infty),|\de_x|^\aha L^3(\RR))\cap L^2_\unif((0,\infty)\times \RR)
    \]
    Using the product estimate $\|fg\|_{L^6_tL^2_{x,y}}\lesssim \|f\|_{L^\infty_t L^3_{x,y}}\|g\|_{L^6_{x,y,t}}$ and the estimates
    \[
    \|\de_x\Gamma^{(c)}f\|_{L^6(\RR)}\lesssim \|f\|_{|\de_x|^\aha L^3(\RR)+L^2(\RR)},
    \]
    \[
    \|\de_x^2\Gamma^{(c)}f\|_{L^2(\RR)}\lesssim \|f\|_{L^2(\RR)},
    \]    
    $\Gamma^{(c)}=(\de_y-\de_x^2+c\de_x)^{-1}$ from Proposition \ref{prop:spacetime_estimate_heat_general}, it is straightforward to refine the fixed point argument in Lemma \ref{lemma:estimates-for-wtv} to show that for fixed $\lambda_1\in\R$, the map
    \[
    u_0\mapsto v^{(1)}
    \]
    as in Definition \ref{def:elementary-solutions-of-system-Miura+mKP} is analytic from a small ball $B_{\eps_0}(0)\subset\Hcrit$ to
    \[
    C_b([0,\infty),L^3(\RR))\cap L^6((0,\infty)\times\RR)+\lambda_1,
    \]
    with
    \[
    v^{(1)}_x\in C_b([0,\infty),\Hcrit)\cap L^2_\unif((0,\infty)\times\RR).
    \]
    The regularity of the functions defined in the statement and the continuity of the map follow analogously as in the proof of Proposition \ref{prop:kink-addition_map_eleV}, using in addition the analyticity of the above map and of the data to solution map in Theorem \ref{theorem:global-well-posedness-KP-II-HHK09} from $u_0\in\Hcrit$ to $u\in \dot Z^{-1/2}((0,\infty))$.
     
    For the remaining statements, except \eqref{eq:eleVt_is_eleV_composed_with_a_continuous_curve_in_R^M}, we can assume $u_0\in \partial_x^\infty H^\infty(\RR)$ by continuity, so all the functions appearing in the statement are smooth. The functions
    \[
    \psi^{(m)}:=e^{V^{(m)}}
    \]
    solve system \eqref{eq:systemCompatibility} as observed in Section \ref{sec:preliminaries}. By linearity,
    \[
    \psi:=\sum_{m=1}^M \psi^{(m)}
    \]
    solves system \eqref{eq:systemCompatibility}. Again, by the equivalences of Section \ref{sec:preliminaries}, $V:=\log(\psi)$ solves system \eqref{eq:systemForCapitalV}, and $v:=\partial_x V$ solves system \eqref{eq:systemForlittlev-miura+mKP-II}. The function $\wu$ solves the KP-II equation thanks to Proposition \ref{prop:miura-maps-mKP-to-KP}.

    Since the functions $v^{(m)}$, $V^{(m)}$ solve systems \eqref{eq:systemForlittlev-miura+mKP-II} and \eqref{eq:systemForCapitalV} respectively, they solve respectively \eqref{eq:miuraEquation} and \eqref{eq:primitiveEquation} with $u=u(t)$ for each $t\geq 0$, and $v^{(m)}(t)$ is the unique solution of \eqref{eq:miuraEquation} in $L^3(\RR)+\lambda_m$ as in Corollary \eqref{cor:estimates-for-wtv-part-2}. By Lemma \ref{lemma:existenceOfPrimitiveSolution}, called $\wt V^{(m)}(t)$ the unique solution of \eqref{eq:primitiveEquation} with $x$-derivative $v^{(m)}(t)$ normalized as in \eqref{eq:normalization-condition}, it holds
    \[
        V^{(m)}(t)=\wt V^{(m)}(t)+\int_{\RR}\rho \,V^{(m)}(t)\,\dxdy.
    \]
    In particular, by the definition of $\eleV$ in Proposition \ref{prop:kink-addition_map_eleV}, \eqref{eq:eleVt_is_eleV_composed_with_a_continuous_curve_in_R^M} holds with
    \[
    {\bm c}_m(t)=c_m+\int_{\RR}\rho\, V^{(m)}(t)\,\dxdy.
    \]
    Since $V^{(m)}$ is normalized as \eqref{eq:normalization-condition} at $t=0$, it holds $\vec{\bm c}(0)=\vec c$. The H\"older regularity of $\vec{\bm c}$ comes from estimate \eqref{eq:z-030} in Lemma \ref{lemma:primitive-solutions-time-dependent-are-nice}.
\end{proof}

Now we are ready to prove Theorem \ref{theorem:theorem_2}. We restate it here in a more detailed form.

\begin{customtheorem}{\ref{theorem:theorem_2}$'$}\label{theorem:theorem_2_reloaded}
    Let $u_0\in\Hcrit$ be small, $u\in C_b([0,\infty),\Hcrit)$ the global solution of \eqref{eq:KP-II} given by Theorem \ref{theorem:global-well-posedness-KP-II-HHK09}, and $\gamma_{0,0}\in\mathbb R$. Let $v_0:=\dtsmV(u_0,\gamma_{0,0})$ as in Theorem \ref{theorem:theorem_1}. There exists a continuous function $t\mapsto\gamma_0(t)$, $\gamma_0(0)=\gamma_{0,0}$ such that $(u,v)$ solves system \eqref{eq:systemForlittlev-miura+mKP-II}, where $v\in C([0,\infty),L^3_\loc(\RR))$ is defined as
    $$ v(t):=\dtsmV(u(t),\gamma_0(t)).$$
    Moreover, the curve $\wu(t):=\samB(u,\gamma_0(t))$ is a solution to the KP-II equation in distributional form and can be decomposed as $\wu(t)=\ph(x-\alpha(t,y))+u(t)+w(t)$, with the estimates
    \[ \sup_{t\geq 0}\left[\sup_{y_1,y_2\in\R}\frac{|\alpha(t,y_2)-\alpha(t,y_1)|}{\log(2+|y_2-y_1|)}+\|\alpha_y(t,\cdot)\|_{L^2_y}+\|w(t,\cdot,\cdot)\|_{H^{-\aha,0}(\RR)}\right]\lesssim \|u_0\|_{\Hcrit},
    \]
    \[
    \left\|\fr d{dt} \gamma_0-4\right\|_{L^2_\unif(0,\infty)}\lesssim\|u_0\|_{\Hcrit}.
    \]
    If in addition $u_0\in L^2(\RR)$ is small enough, then $\wu-\ph(x-\alpha(0,y)-4t)\in C([0,\infty),L^2(\RR))$ with the estimate
    \[
    |\wu(t)|_{L^2_\ph(\RR)}\sim|\wu(0)|_{L^2_\ph(\RR)}\sim \|u_0\|_{L^2(\RR)},\qquad t\geq 0,
    \]
    and $\wu$ is the solution of \eqref{eq:KP-II} coming from the well-posedness theory (see Proposition \ref{prop:KP-IIglobalWellPosednessAroundSlackSoliton}).
\end{customtheorem}

\begin{proof}
First, by Theorem \ref{theorem:theorem_1}, we can write $v_0$ as
\als{
v_0&=\partial_x\log(e^{V_0^+-c}+e^{V_0^-+c})\\
&=\eleV^{(-1,1)}(u_0,(c,- c)),
}
where $V_0^+$, $V_0^-$ are solutions of \eqref{eq:primitiveEquation} with datum $u_0$ as in Definition \ref{def:VcAndvc}, normalized as in \eqref{eq:normalization-condition}, and where $c\in\R$ depends bijectively on $\gamma_{0,0}$ as in Lemma \ref{lemma:change-of-variables-c-alpha_0-gamma_0}. Let $v^{(\pm 1)}=:v^\pm$ be the elementary solutions of system \eqref{eq:systemForlittlev-miura+mKP-II} with parameters $\lambda=\pm 1$ associated to $u$, as in Definition \ref{def:elementary-solutions-of-system-Miura+mKP}. Let $V^\pm$ be the corresponding solutions of \eqref{eq:systemForCapitalV} given by Lemmas \ref{lemma:existenceOfPrimitiveSolutionTimeDependent}, \ref{lemma:primitive-solutions-time-dependent-are-nice}, and normalize them as in \eqref{eq:normalization-condition} at $t=0$. By Proposition \ref{prop:eleVt_superposition-of-elementary-functions}, the function
\als{
v&=\partial_x\log(e^{V^+-c}+e^{V^-+c})\\
&=\eleVt^{(-1,1)}(u_0,(c,-c)),
}
solves system \eqref{eq:systemForlittlev-miura+mKP-II} with $v|_{t=0}=v_0$, and $\wu=u-2\partial_xv$ solves KP-II in distributional form. By Lemma \ref{lemma:primitive-solutions-time-dependent-are-nice}, it holds
\[
V^\pm|_{t=0}=V^\pm_0,
\]
and there exists a continuous $\bm c:[0,\infty)\to\R$ such that
\[
v(t)=\eleV^{(-1,1)}(u(t),(\bm c(t),-\bm c(t))).
\]
By the second change of variables in Lemma \ref{lemma:change-of-variables-c-alpha_0-gamma_0} and the bi-Lipschitz continuity result therein, there exists a continuous $\gamma_0=\gamma_0(t)$ with $\gamma_0(0)=\gamma_{0,0}$ such that $v(t)=\dtsmV(u(t),\gamma_0(t))$ for each time $t\geq 0$, where we defined $\dtsmV$ in Lemma \ref{lemma:change-of-variables-c-alpha_0-gamma_0} and coincides with the map in Theorem \ref{theorem:theorem_1}.

The curve $\wu:=u-2\partial_xv$ solves \eqref{eq:KP-II} distributionally by Proposition \ref{prop:eleVt_superposition-of-elementary-functions}. Taking $\alpha$ as in Theorem \ref{theorem:theorem_1} for each time, the decomposition and the estimates on $\alpha,w$ follow from the estimates of Theorem \ref{theorem:theorem_1}, since
\[
w=u-2(\omega+\eta^+_\alpha\wv^++\eta^-_\alpha\wv^-)_x.
\]

We now prove the continuity and the a priori bound on $\gamma_0(\cdot)$. By an approximation argument, using Ascoli--Arzelà theorem, the well-posedness of KP-II given by Theorem \ref{theorem:global-well-posedness-KP-II-HHK09} and the continuity of the map $\dtsmV$ in Theorem \ref{theorem:theorem_1}, we can assume $u_0\in\partial_x^\infty H^\infty(\RR)$, so that $u,v$ are smooth by Remark \ref{rk:time_smoothness_and_de_x_infty}. To show the a priori estimate, we differentiate with respect to $t$ the equation defining $\gamma_0$ \eqref{eq:definition-of-gamma_0} and obtain, after substituting the second equation in \eqref{eq:systemForlittlev-miura+mKP-II},
    \begin{align*}
    \fr d{dt}\gamma_0(t)\int_{\RR}\rho_{\gamma_0}v_x\,dx\,dy&=-\int_{\RR}\rho_{\gamma_0(t)}v_t\,dx\,dy\\
    &=\int_{\RR} \rho_{\gamma_0(t)}[(4v_{xx}+4v^3+12vv_x-6uv-3u_x)_x-3u_y]\,dx\,dy.
    \end{align*}
    Subtracting the number $4$,
    \[
    \frac d{dt}\gamma_0(t)-4=\frac{\int_{\RR} \rho_{\gamma_0(t)}[(4v_{xx}+4v^3+6(v^2)_x-4v-6uv-3u_x)_x-3u_y]\,dx\,dy}{\int_{\RR}\rho_{\gamma_0}v_x\,dx\,dy}.
    \]    
    As in the proof of Lemma \ref{lemma:change-of-variables-c-alpha_0-gamma_0}, we note that the denominator is positive and bounded away from zero, so it can be ignored. We now proceed as in the proof of \eqref{eq:z-029} in Lemma \ref{lemma:primitive-solutions-time-dependent-are-nice}. This time, $v$ is not small in $L^3$ of a given ball, because of the contribution from the modulated kink. Nevertheless, plugging $v=\tanh_\alpha$ yields
    \[
    4v_{xx}+4v^3+6(v^2)_x-4v=0,
    \]
    so after considering $v=\wv+\tanh_\alpha$, it is straightforward to show that
    \[
    |d\gamma_0/dt(t)-4|\lesssim \|\wv\|_{L^1(B)}+\|\wv\|_{L^2(B)}^2+\|\wv\|_{L^3(B)}^3+\|u\|_{L^{3/2}(B)}\|v\|_{L^3(B)}+\|u\|_{\Hcrit},
    \]
    where $B=B_1((\gamma_0(t),0))$ is the support of $\rho_{\gamma_0(t)}$. Since $\alpha$ is the same given by Theorem \ref{theorem:theorem_1}, the bounds in the Theorem imply that
    \[
    \|\wv(t)\|_{L^3(B)}\lesssim \|u(t)\|_{\Hcrit},
    \]
    and the bound follows analogously as for estimate \eqref{eq:z-029} by integrating in time, using that
    \[
    \|u\|_{L^2_\unif((0,\infty)\times \RR))}+\|u\|_{L^\infty_t\dot H^{-\aha,0}}\lesssim \|u_0\|_{\Hcrit}.
    \]
    
    Finally, we look at the last part of the Theorem. The $L^2_\ph$ estimate is a direct consequence of Corollary \ref{cor:almost-conservation-L2-norm-around-line-soliton} and the conservation of the $L^2$ norm for solutions of \eqref{eq:KP-II}:
    \[
    \|u(t)\|_{L^2(\RR)}=\|u_0\|_{L^2(\RR)}.
    \]
    Assume first that $u_0\in\partial_x H^\infty(\RR)$, so that $u\in C([0,\infty),\partial_x H^\infty(\RR))$, and is uniformly small in $\Hcrit$. We first show the following claim.
    
    \begin{claim}
    We have $v_x-\sech^2(x-4t-a)\in C([0,\infty),L^2(\RR))$ for some $a\in\R$. In particular, $\wu-\ph(x-4t-a)\in C([0,\infty),L^2(\RR))$.
    \end{claim}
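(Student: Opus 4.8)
The plan is to read $v_x$ off the Cole--Hopf formula for $v$, peel off a genuine modulated-soliton profile whose complement already lies in $L^2(\RR)$, and then trade the modulated soliton for the rigid profile $\sech^2(x-4t-a)$ at the cost of another $L^2(\RR)$ error. Throughout I work with $u_0\in\partial_x H^\infty(\RR)$ as assumed, so all functions below are smooth, and I use that $u(t)\in\partial_x H^\infty(\RR)$ for every $t$ by Proposition \ref{prop:the-KP-II-flow-preserves-D_xL^2} together with the well-posedness theory, with $\|u(t)\|_{\Hcrit}\lesssim\|u_0\|_{\Hcrit}$ uniformly in $t$.

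By the construction in this proof, $v(t)=\eleVt^{(-1,1)}(u_0,(c,-c))=\partial_x\log\bigl(e^{V^+-c}+e^{V^-+c}\bigr)$, where by Lemma \ref{lemma:primitive-solutions-time-dependent-are-nice} the functions $\wV^\pm:=V^\pm-(\pm x+y-4(\pm1)^3t)$ lie in $C([0,\infty),\PBMOx{\pm1}^0(\RR))$, satisfy $\partial_x\wV^\pm=\wv^\pm$, and at each fixed $t$ coincide, up to a constant, with the stationary objects of Corollaries \ref{cor:estimates-for-wtv-part-2} and \ref{cor:BMO-bounds-for-Vlambda} and Lemma \ref{lemma:holder_regularity_for_difference_of_primitive_solutions} associated to $u(t)$; hence those estimates hold at each time, uniformly in $t$, with continuity in $t$ following from Corollary \ref{cor:v-(j)-is-a-solution-to-system-Miura+mKP}, Theorem \ref{theorem:global-well-posedness-KP-II-HHK09}, and a bootstrap of the fixed-point argument as in the proof of Proposition \ref{prop:eleVt_superposition-of-elementary-functions}. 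Writing $\nu:=\aha(V^+-V^-)-c=x-4t-c+\aha(\wV^+-\wV^-)$, the same computation as in the proof of Proposition \ref{prop:vcAreSolutions} gives $v=\tanhcnu+(\etapcnu)\wv^++(\etamcnu)\wv^-$, and, since $\partial_x\nu=1+\aha(\wv^+-\wv^-)$, differentiation produces
\[
v_x=\sechscnu+R,
\]
where $R$ is a finite sum of terms of the shape (pointwise bounded)$\,\times(\wv^+-\wv^-)$, (pointwise bounded)$\,\times(\wv^+-\wv^-)^2$, and (pointwise bounded)$\,\times\wv^\pm_x$. Estimates \eqref{eq:estimates-for-wv-lambdaiszero-L2} and \eqref{eq:estimates-for-wv-lambdaiszero-H^-1}, which apply since $u(t)\in L^2(\RR)\cap\dot H^{-1,0}(\RR)$ (locally in $t$), give $\wv^\pm_x\in C([0,\infty),L^2(\RR))$ and $\wv^\pm,\ \wv^+-\wv^-\in C([0,\infty),L^2(\R^2_{x,y}))$, together with $\wv^+-\wv^-\in C([0,\infty),L^6(\R^2_{x,y}))$; interpolating the last two gives $(\wv^+-\wv^-)^2\in C([0,\infty),L^2(\R^2))$. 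Hence $R\in C([0,\infty),L^2(\RR))$ and so $v_x-\sechscnu\in C([0,\infty),L^2(\RR))$.

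It then remains to compare $\sechscnu$ with $\sech^2(x-4t-a)$, and this is where the real work lies. Put $W:=\wV^+-\wV^-$, so $\mu:=x-\nu=4t+c-\aha W$, and note $W$ has uniformly small $C^{0,1/4}_\unif(\RR)$-seminorm and at most logarithmic growth. As in the proof of Proposition \ref{prop:vcAreSolutions}, for each $(t,y)$ choose $\sigma(t,y)$ with $\nu(t,\sigma(t,y),y)=0$ (existence from the sublinear growth of $\mu$, uniqueness up to a harmless bounded error from the small Hölder seminorm of $\mu$). With $m(t,y):=\|(\wv^+-\wv^-)(t,\cdot,y)\|_{L^2_x}$, so that $\|m(t,\cdot)\|_{L^2(\R_y)}=\|(\wv^+-\wv^-)(t)\|_{L^2(\R^2)}$, the bound $|\mu(t,x,y)-\mu(t,\sigma(t,y),y)|\le\aha|x-\sigma(t,y)|^{1/2}m(t,y)$ exhibits $\nu(t,\cdot,y)$ as an $L^2$-small perturbation of the translation $x\mapsto x-\sigma(t,y)$, which yields $\|\sechscnu(t,\cdot,y)-\sech^2(\cdot-\sigma(t,y))\|_{L^2_x}\lesssim m(t,y)$ and hence $\sechscnu-\sech^2(\cdot-\sigma)\in C([0,\infty),L^2(\RR))$. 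Finally, from $\sigma(t,y)=4t+c-\aha W(t,\sigma(t,y),y)$ one reduces to showing that $W(t)$ levels off at spatial infinity to a single constant $W_\infty(t)$ with $W(t)-W_\infty(t)\in L^2(\R^2)$: the first holds because the forcing $u(t,\cdot,y)$ decays as $y\to\pm\infty$, so $W_x,W_y,W_t$ all vanish there, and the second by integrating $W_x=\wv^+-\wv^-\in L^2(\R^2)$ in $x$; controlling $W_\infty(t)-W_\infty(0)$ by estimate \eqref{eq:z-030} and setting $a:=c-\aha W_\infty(0)$ gives $\sigma(t,y)-4t-a\in L^2(\R_y)$, continuously in $t$, whence $\sech^2(\cdot-\sigma(t,y))-\sech^2(x-4t-a)\in C([0,\infty),L^2(\RR))$. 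Summing the three $L^2$-differences proves the first assertion, and since $\wu=u-2v_x$, $u\in C([0,\infty),L^2(\RR))$ and $\ph=-2\sech^2$, the statement for $\wu$ follows. The main obstacle is precisely this last step — upgrading BMO/Hölder control of the modulated kink position $\sigma(t,y)$ to an $L^2(\R_y)$ statement and pinning down the rigid reference $a$ — and it is why the claim is established first for $u_0\in\partial_x H^\infty(\RR)$, where $\wv^\pm$ genuinely decay in $y$ (not merely lie in $C_0L^2_x$) and $W$ admits an asymptotic value; the general case of the theorem is then recovered by approximation and the continuity of $\eleVt$ and $\dtsmV$.
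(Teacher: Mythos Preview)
Your decomposition $v_x=\sechscnu+R$ with $R\in C([0,\infty),L^2(\RR))$ is correct and matches the paper. The genuine gap is in the final step, where you pass through a $y$-dependent modulation $\sigma(t,y)$ and then claim $\sigma(t,y)-4t-a\in L^2(\R_y)$. Your justification ``integrating $W_x=\wv^+-\wv^-\in L^2(\R^2)$ in $x$'' for $W-W_\infty\in L^2(\R^2)$ is incorrect: integrating an $L^2$ function does not return $L^2$. With the estimates actually available from \eqref{eq:estimates-for-wv-lambdaiszero-H^-1} (applied since $u(t)\in\partial_xL^2=\dot H^{-1,0}$), the antiderivatives $\wW^\pm:=\partial_x^{-1}\wv^\pm$ lie only in $C_0L^2\cap L^6(\RR)$ together with a \emph{weighted} $L^2$ bound, not unweighted $L^2$; and restricting a $C_0L^2$ or $L^6(\RR)$ function to a curve $x=\sigma(t,y)$ does not yield $L^2(\R_y)$. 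Moreover, your appeal to \eqref{eq:z-030} does not show that $W_\infty(t)$ is independent of $t$: that estimate controls $\frac{d}{dt}\int V\rho$, a local average near the origin, not the asymptotic value of $W$ at spatial infinity, and the two differ by a quantity that varies with $t$.

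The paper avoids the intermediate $\sigma$ entirely. Because $u(t)\in\partial_xL^2$, estimate \eqref{eq:estimates-for-wv-lambdaiszero-H^-1} furnishes canonical antiderivatives $\wW^\pm(t)\in C_0L^2\cap L^6(\RR)$; then $W^\pm:=\wW^\pm+(\pm x+y)$ differ from $V^\pm$ by $\mp 4t$ plus \emph{time-independent} constants $a^\pm$ (this is where the uniqueness in Lemma \ref{lemma:existenceOfPrimitiveSolutionTimeDependent} is used, and it is how $a:=a^--a^+$ is pinned down). One then has exactly $\nu=(x-4t-a)+\aha(\wW^+-\wW^-)$, and a single mean-value bound gives
\[
|\sechscnu-\sech^2(x-4t-a)|\lesssim\jap{x-4t-a}^{-1}|\wW^+-\wW^-|.
\]
The right-hand side is placed in $C([0,\infty),L^2(\RR))$ directly by the \emph{weighted} part of \eqref{eq:estimates-for-wv-lambdaiszero-H^-1}, namely $\jap{x-4t-a}^{-1}\wW^\pm\in L^2(\RR)$. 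This weighted $L^2$ estimate on the antiderivative is the ingredient you are missing: it simultaneously identifies the constant $a$ and supplies the localization needed to compare $\sechscnu$ with the rigid profile in one step, without ever needing $\sigma(t,y)-4t-a\in L^2(\R_y)$.
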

    
    \begin{claimproof}[of the Claim]
    By Corollary \ref{cor:estimates-for-wtv-part-2}, since the curve $t\mapsto u(t)\in \partial_xL^2(\RR)$ is continuous, the corresponding solutions $v^\pm(t)\in L^3(\RR)\pm 1$ of equation \eqref{eq:miuraEquation} are such that
    \[
    \wv^\pm:=v^\pm\mp 1\in C_b([0,\infty),L^2(\RR)\cap \de_x L^6(\RR)).
    \]
    This means that for each $t\geq 0$, $\wv^\pm(t)$ have well-defined $x$-antiderivatives in $L^6(\RR)$. This is equivalent to $v^\pm(t)$ having well-defined antiderivatives in $L^6(\RR)+(\pm x+y)$, which have to solve equation \eqref{eq:primitiveEquation} with datum $u(t)$. We will call these primitives $W^\pm(t)$. By Lemma \ref{lemma:primitive-solutions-time-dependent-are-nice}, $W^\pm(t)-V^\pm(t)$ is a constant depending only on time. In particular, since $(u,V^\pm)$ solve system \eqref{eq:systemForCapitalV} by construction (note that the leading part of the second equation gives $V^\pm_t\approx -4(V^\pm_x)^3=-4(v^\pm)^3\approx \mp 4$), there must exist $a^\pm\in\R$ such that
    \[ V^\pm\mp c=W^\pm\mp4t+a^\pm. \]
    Now set $a=a^--a^+$. Call $\wW^\pm:=W^\pm-(\pm x+y)\in C([0,\infty),L^6(\RR))$. We have
    \begin{equation}\label{eq:z-006}
        \begin{aligned}
            v&=\de_x\log\left(e^{W^+-a-4t}+e^{W^-+a+4t}\right)\\
            &=\de_x\log\left(e^{\wW^++(x-4t-a)}+e^{\wW^--(x-4t-a)}\right)\\
            &=\tanh\circ\nu+(\eta^+\!\!\circ\nu)\wv^++(\eta^-\!\!\circ\nu)\wv^-,
        \end{aligned}
    \end{equation}
    where $\eta^\pm(x)=(1\pm\tanh(x))/2$ as usual, and
    \als{
    \nu&=\frac 12(V^+-V^-)-c\\
    &=\fr 12 (\wW^+-\wW^-)+(x-4t-a)
    }
    (note that $\de_x\log(y)=0$). Taking a further $x$-derivative, we obtain
    \begin{equation}\label{eq:z-005}
        \begin{aligned}
            \partial_xv-\sech^2(x-4t-a)&=(\sech^2\!\circ\hspace{2pt}\nu-\sech^2(x-4t-a))\\
            &\QQQ+(\etapcnu)\wv^+_x+(\etamcnu)\wv^-_x+\sech^2\!\circ\hspace{2pt}\nu\cdot\,(\wv^+-\wv^-).
        \end{aligned}
    \end{equation}
    Now, since $u\in C([0,\infty),\partial_x H^\infty(\RR))$, by the continuity of the data to solution map in Corollary \ref{cor:estimates-for-wtv-part-2} in all the involved function spaces, we deduce that $\nu\in C([0,\infty)\times\RR)$. We also have the estimate
    \begin{align*}
    |\sech^2(\nu(x,y))-\sech^2(x-4t-a)|&\leq \aha\left|\int_0^1 (\sech^2)_x(x-4t-a+s(\wW^+-\wW^-)/2)ds\right|\\
    &\QQQ\times|\wW^+-\wW^-|\\
    &\lesssim\braket{x-4t-a}^{-1}|\wW^+-\wW^-|.
    \end{align*}
   Since $\wv^\pm,\wv^\pm_x\in C([0,\infty),L^2(\RR))$, and since $\braket{x-4t-a}^{-1} \wW^\pm\in C([0,\infty),L^2(\RR))$ thanks to the weighted estimate in \eqref{eq:estimates-for-wv-lambdaiszero-H^-1} in Corollary \ref{cor:estimates-for-wtv-part-2}, the right-hand side of \eqref{eq:z-005} lies in $C([0,\infty),L^2(\RR))$ by the dominated convergence theorem. This concludes the proof of the claim.
   \end{claimproof}

    By the estimates of Lemma \ref{cor:estimates-for-wtv-part-2} and by repeatedly differentiating the explicit formula for $v$ in \eqref{eq:z-006}, one verifies that $v(t)-\tanh(x-4t-a)$ is bounded in $H^k(\RR)$ locally in time for all $k\geq 0$, which implies that $\wu-\ph(x-4t-a)\in C([0,\infty),H^\infty(\RR))$ by the above claim. From this, and since $\wu$ solves KP-II distributionally,
    \[
    \wu(t,x+4t,y)-\ph\in C([0,\infty),H^\infty(\RR))
    \]
    and solves equation \eqref{eq:perturbedLineSoliton} distributionally, with $\alpha\equiv 0$. Due to the high regularity, the above solution has to coincide with the solution given by the well-posedness theory of Proposition \ref{prop:KP-IIglobalWellPosednessAroundSlackSoliton} with $\alpha\equiv 0$, as it can be seen via a standard use of energy estimates for the difference of two solutions. In particular, by Proposition \ref{prop:KP-IIglobalWellPosednessAroundSlackSoliton}, it holds $\wu(t,x+4t,y)-\ph\in X_T^{1/2+\eps,b_1,0}$ for all $T>0$.\\
    In addition, as we noted in the proof of Corollary \ref{cor:almost-conservation-L2-norm-around-line-soliton}, $v_0$ coincides with one of the solutions in Proposition \ref{proposition:existenceOfAnEternalSolution}, with datum $u_0$. So, let $\beta=\beta(y)$ be the shift given by the Proposition, and let $\bar\beta:=\beta*\rho$ a regularization of $\beta$. It holds
    \als{
        \|\bar\beta_y\|_{H^2(\R_y)}+\|\beta-\bar\beta\|_{H^1(\R_y)}&\lesssim \|\beta_y\|_{L^2(\R_y)}\\
        &\lesssim \|u_0\|_{L^2(\RR)},\\[0.5em]
        \|\wu_0-\ph_\beta\|_{L^2(\RR)}&\leq \|u_0\|_{L^2(\RR)}+2\|(v-\tanh_\beta)_x\|_{L^2}\\
        &\lesssim \|u_0\|_{L^2(\RR)},
        }
    where the last inequalities in both estimates come from Proposition \ref{proposition:existenceOfAnEternalSolution}. Furthermore,
    \als{
        \|\wu_0-\ph_{\bar\beta}\|_{L^2(\RR)}&\leq \|\wu_0-\ph_{\beta}\|_{L^2(\RR)}+\|\ph_x\|_{L^2(\R_x)}\|\beta-\bar\beta\|_{L^2(\R_y)}\\
        &\lesssim \|u_0\|_{L^2(\RR)}.
        }
    The function $\wu(t,x+4t,y)-\ph_{\bar\beta}$ is a solution of \eqref{eq:perturbedLineSoliton} with $\alpha=\bar\beta$, and it also coincides with the one given by the well-posedness theory in this setting since $\bar\beta_y\in H^1(\R_y)$. By Remark \ref{rk:bourgain-space-norm-depends-monotonically-on-the-norm-of-alpha}, it holds the estimate
    \als{
    \|\wu(t,x+4t,y)-\ph_{\bar\beta}\|_{X_T^{1/2,b_1,0}}&\leq C(\|\wu_0-\ph_{\bar\beta}\|_{L^2(\RR)},\|\bar\beta_y\|_{H^1(\R_y)},T,0)\\
    &\leq C(\|u_0\|_{L^2(\RR)\cap \Hcrit},T)
    }
    for some function $C$ that is non-decreasing in both variables.
    
    For general $u_0\in\Hcrit\cap L^2(\RR)$ small, we consider an approximating sequence of data $\partial_xH^\infty(\RR)\ni u_{0,n}\to u_0$ in $\Hcrit\cap L^2(\RR)$ such that $\|u_{0,n}\|_{L^2}\leq 2\|u_0\|_{L^2}$. Let $\wu_n$ be the corresponding solutions of KP-II with initial data $\wu_{0,n}:=\wu_n|_{t=0}=\samB(u_{0,n},\gamma_0)$, with corresponding phase shifts at time zero $\beta_n$ given by Proposition \ref{proposition:existenceOfAnEternalSolution}, and note that $\|\bar\beta_{n,y}\|$ is uniformly bounded in $n$ thanks to the above estimates. Moreover, it is straightforward to show that $|\beta_n(0)|\lesssim 1+|\gamma_0|$. By the estimate \eqref{eq:c_is_close_to_alpha_0_and_gamma_0_for_small_c} in Lemma \ref{lemma:change-of-variables-c-alpha_0-gamma_0}, the sequence $(c_n)_{n}$ of real numbers such that $\wu_{0,n}=u_{0,n}-2\partial_x\eleV^{(-1,1)}(u_{0,n},(c_n,-c_n))$ is bounded. By reducing $(u_{0,n})$ to a subsequence, we can assume $c_n$ converges to $c\in\R$. By the continuity of the maps $\dtsmV$ in Theorem \ref{theorem:theorem_1}, $\eleV$ in Proposition \ref{prop:kink-addition_map_eleV}, and by the global well-posedness of \eqref{eq:KP-II} in $\Hcrit$ of Theorem \ref{theorem:global-well-posedness-KP-II-HHK09}, we have $\wu_{0}=u_{0}-2\partial_x\eleV^{(-1,1)}(u_{0},(c,-c))$ (that is, $c$ is the same constant as the one at the beginning of the proof), and
    \[
    \wu_n\to \wu\text{ in }\mathscr D'((0,T)\times\RR).
    \]
    Since $(\beta_{n,y})_n$ is bounded in $L^2(\R_y)$ and $\beta_n(0)$ is bounded in $\R$, up to extracting a new subsequence, we can assume $\beta_n\to \beta$ uniformly by Arzelà--Ascoli, so $\bar\beta_n\to \bar\beta$ in $L^\infty$. By continuity, this $\beta$ is the same shift assigned to the pair $(u_0,v_0)$ given by Proposition \ref{proposition:existenceOfAnEternalSolution}, although we do not need this fact here. The above implies
    \[
    \wu_n(t,x+4t,y)-\ph_{\bar\beta_n}\to \wu(t,x+4t,y)-\ph_{\bar\beta}\quad\text{in }\,\mathscr D'((0,T)\times\RR).
    \]
    By weak-$*$ compactness, the limit lies in $X_T^{1/2,b_1,0}$ due to the uniform bound
    \[
    \|\wu_n(t,x+4t,y)-\ph_{\bar\beta_n}\|_{X_T^{1/2,b_1,0}}\lesssim_T C(2\|u_0\|_{L^2(\RR)\cap \Hcrit},T)
    \]
    proved above for smooth solutions. By the uniqueness statement in Proposition \ref{prop:KP-IIglobalWellPosednessAroundSlackSoliton},
    \[
    \wu(t,x+4t,y)-\ph_{\bar\beta}
    \]
    also agrees with the solution given by the well-posedness theory as in the Proposition, with $\alpha=\bar\beta$.
\end{proof}

\section[The range of the soliton addition map]{The range of the soliton addition map}\label{sec:range}

By Theorem \ref{theorem:theorem_1}, we have constructed a B\"acklund transform
$$ \samB\colon (u,\gamma_0)\mapsto \wu $$
for the KP-II equation in Definition \ref{def:soliton_addition_map_and_L^2_phi(RR)}, which acts on small functions $u\in \Hcrit$ and a parameter $\gamma_0\in\R$ that determines the position of the line soliton at $y=0$. Our aim is to characterize a sufficiently large subset of the image of $\samB$ through $B_{\eps_0}^{\dot H^{-\aha,0}}(0)\times \R$, where $\eps_0$ is the smallness constant. In this section we will prove Theorem \ref{theorem:range_of_samB_contains_manifold_of_cod_1}, which characterizes the intersection between the range and a small ball in a suitable weighted space at $L^2$ regularity. The Theorem implies codimension-1 modulational stability of the line soliton.

\subsection{Premise}\label{subsec:change-of-var-w-psi}

Given a datum $u$ and the output $\wu$ of the B\"acklund transform, the two functions solve the system of equations given by the Miura map(s)
\begin{equation}\label{eq:system_miura_maps_backlund_section_range}
    \left\{\begin{aligned}
        &v_y-v_{xx}=(v^2)_x-u_x,\\
        &v_y+v_{xx}=(v^2)_x-\wu_x,\\
        &u=\wu+2v_x,
    \end{aligned}\right.
\end{equation}
a rigorous way of writing the system
\begin{equation*}
    \left\{\begin{aligned}
        M_+(v)=\wu,\\
        M_-(v)=u.
    \end{aligned}\right.
\end{equation*}
In this section, we want to solve for $v$, given $\wu$ a suitable perturbation of the line soliton, to derive sufficient conditions under which $\wu$ lies in the range of the soliton addition map. We set
\[
g:=\wu-\ph,\qquad w:=v-\tanh(x),
\]
where $\ph(x)=-2\sech^2(x)$ is the non-modulated line soliton, and perform some reductions. The second equation in \eqref{eq:system_miura_maps_backlund_section_range} becomes \begin{equation}\label{eq:equation_for_w_unvariabled_section_range}
w_y+w_{xx}-2(\tanh w)_x=(w^2)_x-g_x,
\end{equation}
and since we removed the leading parts $\ph$ and $\tanh$, we want to find solutions $w$ that approach zero at infinity, for given perturbations $g$ that are localized and smooth enough. Recall the reflection symmetry
\[
\Rcal f(x,y):=f(x,-y).
\]
By the change of variables
\begin{equation*}
    \begin{aligned}
        w&\mapsto -\Rcal w=:\w,\\
        g&\mapsto \Rcal g=:\g,
    \end{aligned}
\end{equation*}
the above equation is equivalent to
\begin{equation}\label{eq:perturbed-line-soliton-adjoint-Miura-map}
    \w_y-\w_{xx}+2(\tanh \w)_x=(\w^2)_x-\g_x.
\end{equation}
We will thus look for a solution to \eqref{eq:perturbed-line-soliton-adjoint-Miura-map} with $\g\in\Hcrit$, that is equivalent to $g\in\Hcrit$. Since $u=g+2w_x$, we will look for additional assumption on $g$ to make $w_x\in\Hcrit$ small. By the Cole--Hopf transformation $\w\mapsto e^{\int \w \,dx}=:\psi$, we can reduce the problem to that of finding positive solutions to
\begin{equation}\label{eq:psi-from-g}
    \psi_y-\psi_{xx}+2\tanh\psi_x=-\g\psi.
\end{equation}

\begin{remark}[The codimension-1 condition and the Lax eigenfunctions]
Before we continue, here is a brief explanation on why we expect that the range of $\samB$ has codimension 1 in a suitable vector space. By the property of the Cole--Hopf transformation, the function
\[
\Psi(x,y):=e^y\cosh(x)\psi^{-1}(x,-y)
\]
is a $0$-eigenfunction of the Lax operator $L_u=\de_y-\de_x^2+u$ with potential $u$, since
\als{
\de_x\log(e^y\cosh\Rcal\psi^{-1})&=\tanh-\Rcal \w\\
&=v
}
is a solution of \eqref{eq:miuraEquation}. Since $\de_x\log(\Psi)=v$, $L_u\Psi=0$, and since $\Psi$ has the expected asymptotics at infinity, it is reasonable to expect that $\Psi$ coincides, up to a positive scalar, with the eigenfunction of $L_u$ in Theorem \ref{theorem:theorem_1} (called `$\psi$' there), if $v$ is a solution that comes from the Theorem. By looking at the properties of $v$ in Theorem \ref{theorem:theorem_1}, in combination with Corollary \ref{cor:estimates-for-wtv-part-2}, it can be seen that if $u\in \Hcrit$ is localized and regular enough, it holds
\[
\Psi(x,y)=e^y\cosh(x)m(x,y),
\]
where $m$ converges to a positive constant at infinity in all directions. On the other hand, it is evident that for localized $g$, solutions of equation \eqref{eq:psi-from-g} with $\g=\Rcal g$ that approach a constant $a^->0$ at $y\to-\infty$, in general will converge to a different constant $a^+>0$ as $y\to+\infty$ due to the transport term. The condition that allows $g$ to lie in the range of $\samB$ is precisely that $a^-=a^+$.
\end{remark}

\subsection{Linearization of the problem in the Cole--Hopf variables}\label{subsec:kernel-of-linearized-psi-from-g}

To study equation \eqref{eq:psi-from-g}, we look at the linearized equation around the constant solution $\psi= 1,\g=0$:
$$ \psi_y-\psi_{xx}+2\tanh\psi_x=f. $$
By Proposition \ref{prop:explicit-kernels}, a solution $\psi$ is given by $\psi=Tf+1$, where the operator $T$ is an integral operator with explicit kernel that inverts $\partial_y-\partial_{xx}+2\tanh\partial_x$:
\begin{equation}\label{eq:operator-T-transport-outwards}
T=K_{\rm tr+}:=\Gamma^+\Mult_{\eta^+}+\Gamma^-\Mult_{\eta^-}+\frac{1}{2}\partial_x^{-1} (\Gamma^--\Gamma^+)\Mult_{\sech^2},
\end{equation}
where we recall that $\Gamma^{\pm}=(\partial_y\pm 2\partial_x-\partial_x^2)^{-1}$, $\Mult_{\eta^\pm}$ is the multiplication by $\eta^\pm(x)$, and $\Mult_{\sech^2}$ is the multiplication by $\sech^2(x)$, as in Definition \ref{def:heat-operators-in-appendix}. The operator $T$ is therefore a sum of four terms: two (tilted) heat operators composed with multiplication operators by bounded smooth functions, and two antiderivatives of (tilted) heat operators composed on the right with the multiplication operator by $\sech^2(x)$. The difference $\partial_x^{-1}(\Gamma^+-\Gamma^-)$ is a convolution operator with the function
\begin{align}
\demu(\Gamma^--\Gamma^+)(x,y)&=\demu\Gamma(x+2y,y)-\demu\Gamma(x-2y,y)\\
&=\int_{x-2y}^{x+2y} \Gamma(x',y)dx,
\end{align}
whose kernel decays to zero for fixed $y$ and is identically zero for negative $y$, but it converges to the constant $1$ for positive $y$.

\subsection{Solving the equation \texorpdfstring{$M_+(v)=\wu$}{M+(v)=u}}

We turn to the study of equation \eqref{eq:psi-from-g}.

\begin{proposition}\label{prop:well-posedness-of-psi-to-g}
    Let $\g\in L^1(\R^2)\cap L^2(\R^2)$. There exists a unique solution $\psi\in L^\infty(\RR)$ to equation \eqref{eq:psi-from-g} such that $\|\psi-1\|_{L^\infty(\R\times (-\infty,y])}$ goes to zero\footnote{The kernel of the operator $\partial_y-\partial_{xx}+2\tanh\partial_x$ in $L^\infty$ coincides with the space of constant functions, this is why we need such a condition. Since the equation is linear, the Proposition shows that the solution is unique up to a multiplicative constant.} as $y\to-\infty$. It holds $\psi\in C_b(\RR)$, $\inf_{x,y\in\R}\psi(x,y)>0$, and
    \[
    \|\psi\|_{L^\infty(\RR)}+\|1/\psi\|_{L^\infty(\RR)}\lesssim \exp\left(C\|\g\|_{L^1(\RR)\cap L^2(\RR)}\right),
    \]
    \[
    \|\psi-1\|_{L^\infty(\RR)}\lesssim \exp\left(C\|\g\|_{L^1(\RR)\cap L^2(\RR)}\right)\|\g\|_{L^1(\RR)\cap L^2(\RR)}.
    \]
    For a universal $C>0$. The data-to-solution map is analytic.
\end{proposition}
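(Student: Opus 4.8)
The plan is to work entirely from the integral reformulation of \eqref{eq:psi-from-g} based on the explicit Green operator $T=K_{\rm tr+}$ of \eqref{eq:operator-T-transport-outwards}. First I would show that, for $\psi\in L^\infty(\R^2)$, solving \eqref{eq:psi-from-g} together with $\|\psi-1\|_{L^\infty(\R\times(-\infty,y])}\to0$ as $y\to-\infty$ is equivalent to the fixed-point identity $\psi=1+T(-\g\psi)$. The direction $(\Leftarrow)$ is immediate since $\partial_y-\partial_{xx}+2\tanh\partial_x$ annihilates constants and $T$ inverts it (Proposition \ref{prop:explicit-kernels}); for $(\Rightarrow)$ the difference $\phi:=\psi-1-T(-\g\psi)$ is a bounded solution of the homogeneous equation vanishing at $y=-\infty$, hence $\phi\equiv0$ by the parabolic maximum principle (equivalently: the semigroup generated by $-\partial_x^2+2\tanh(x)\partial_x$ is positivity preserving, fixes constants, and is an $L^\infty$-contraction). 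The key linear estimate is
\[
\|T(\g h)\|_{L^\infty(\R^2)}\le C_0\,\|\g\|_{L^1\cap L^2(\R^2)}\,\|h\|_{L^\infty(\R^2)},
\]
obtained by estimating the four pieces of $T$ separately. For the two tilted heat pieces $\Gamma^{\pm}\Mult_{\eta^\pm}$ I split the Duhamel integral at time-lag one: for lag $\ge1$ use $\|e^{t(\partial_x^2\mp2\partial_x)}\|_{L^1_x\to L^\infty_x}\lesssim t^{-1/2}\le1$, which pairs with $\g\in L^1$; for lag $<1$ use $\|e^{t(\partial_x^2\mp2\partial_x)}\|_{L^2_x\to L^\infty_x}\lesssim t^{-1/4}$ together with $\|t^{-1/4}\|_{L^2(0,1)}<\infty$ and Cauchy--Schwarz in $s$, which pairs with $\g\in L^2$. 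For the two $\sech^2$ pieces I use that the convolution kernel $\tfrac12\partial_x^{-1}(\Gamma^--\Gamma^+)$ is a bounded (indeed continuous, nonnegative) function on all of $\R^2$ — it tends to the constant $1$ as $y\to+\infty$ and vanishes for $y\le0$ — so it acts boundedly $L^1\to L^\infty$ by Young's inequality, combined with $\|\sech^2\g h\|_{L^1}\le\|\g\|_{L^1}\|h\|_\infty$. Since all four kernels are causal in $y$, the same computation restricted to a half-plane $\{y<Y\}$ gives the bound with $\|\g\|_{L^1\cap L^2}$ replaced by $\|\g\|_{L^1\cap L^2(\R\times(-\infty,Y))}\to0$; this produces the decay condition at $-\infty$, and applied to the difference of two solutions, uniqueness by a bootstrap in $Y$ using absolute continuity of $\g\mapsto\|\g\|_{L^1\cap L^2}$ over thin strips. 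Continuity $\psi\in C_b(\R^2)$ follows because $-\g\psi\in L^1\cap L^2(\R^2)$ and $T$ maps this space into $C_b$.

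For existence and the quantitative bounds, when $\|\g\|_{L^1\cap L^2}$ is small the map $\psi\mapsto1+T(-\g\psi)$ is a contraction on a ball of $C_b(\R^2)$ and analyticity of $\g\mapsto\psi$ is read off the Neumann series. For general $\g$ I would invoke a Feynman--Kac representation: \eqref{eq:psi-from-g} rewrites as $\psi_y=\psi_{xx}-2\tanh(x)\psi_x-\g\psi$, a uniformly parabolic equation with bounded smooth drift, whose associated diffusion $dX=\sqrt2\,dW-2\tanh(X)\,dt$ is conservative; running it backward from $(x,y)$ yields $\psi(x,y)=\mathbb E\big[\exp(-\int_{-\infty}^y\g(X_s,s)\,ds)\big]$. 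Since $\big|{-\int\g}\big|\le B:=\int_{-\infty}^y|\g(X_s,s)|\,ds$, both $\|\psi\|_\infty$ and $\|1/\psi\|_\infty$ are bounded by $\sup\mathbb E[e^{B}]$ (for $1/\psi$ use Jensen: $1/\psi\le\mathbb E[e^{+\int\g}]\le\mathbb E[e^{B}]$) and $\inf\psi\ge1/\sup\mathbb E[e^{B}]>0$. The bound $\sup\mathbb E[e^{B}]\le\exp(C\|\g\|_{L^1\cap L^2})$ is a Khasminskii-type estimate: using $\|p_t(x,\cdot)\|_{L^\infty_x}\lesssim t^{-1/2}$ (which pairs with $\|\g(s)\|_{L^1_x}$ and is $\le1$ for $t\ge1$) and $\|p_t(x,\cdot)\|_{L^2_x}\lesssim t^{-1/4}$ (which pairs with $\|\g(s)\|_{L^2_x}$ for $t\le1$), one gets $\sup_{(x,y)}\mathbb E\big[\int_{I\times\R}|\g|\text{ along the path}\big]\lesssim\|\g\|_{L^1\cap L^2(I\times\R)}$ for every $y$-interval $I$; splitting $\R_y$ into $O(\|\g\|_{L^1\cap L^2})$ intervals on each of which this quantity is $\le\tfrac12$ and iterating the Markov property gives $\mathbb E[e^{B}]\le2^{O(\|\g\|_{L^1\cap L^2})}$. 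Deterministically this is the same as the Dyson-series bound for the Neumann expansion $\psi=\sum_n(-1)^nT(\g\,T(\g\cdots))1$: the $n$-fold nested (hence time-ordered) integral of $\g$-factors carries a $1/n!$-type gain that upgrades the geometric series into $\exp(C\|\g\|_{L^1\cap L^2})$; I would present whichever is cleaner. One then verifies, by regularizing $\g$ and passing to the limit with the estimates above, that this $\psi$ indeed solves \eqref{eq:psi-from-g}, and that the solution map is analytic.

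The main obstacle is exactly this step: treating non-small $\g$ while keeping the sharp dependence $\exp(C\|\g\|_{L^1\cap L^2})$. A plain contraction or a Gr\"onwall loop only closes for small data, and the naive remedy of partitioning $\R_y$ into strips with small $\g$-mass does produce a solution but loses the exponential bound, because $\sum_k\|\g\|_{L^2(I_k\times\R)}$ is not controlled by $\|\g\|_{L^2(\R^2)}$; it is the Feynman--Kac/Khasminskii (equivalently Dyson-series) structure that makes the exponential constant appear naturally. The technical heart is the pair of heat-kernel smoothing estimates — the $t^{-1/2}$ bound into $L^\infty_x$ via $L^1_x$ for large time-lag, and the $t^{-1/4}$ bound via $L^2_x$, with $\|t^{-1/4}\|_{L^2(0,1)}<\infty$, for small lag — which is precisely what forces the hypothesis $\g\in L^1\cap L^2$ rather than membership in a single Lebesgue space. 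A secondary point needing care is the $\sech^2$-term of $T$: one must exploit that its kernel is genuinely bounded on all of $\R^2$ (approaching $1$ as $y\to+\infty$), so it is handled by Young's inequality against $L^1$, and that it is causal so it does not disturb the propagation-from-$-\infty$ structure.
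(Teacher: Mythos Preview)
Your plan is sound and, for small $\g$ and for the equivalence with the fixed-point equation $\psi=1-T(\g\psi)$, it coincides with the paper's argument. The route diverges for large $\g$. The paper stays deterministic: it first proves, as a standalone lemma (Lemma~\ref{lemma:operator_l_tr_+_g_bounds}), that the perturbed linear operator $L_{{\rm tr}+,\g}=\partial_y-\partial_{xx}+2\tanh\partial_x+\g$ is invertible from $T(L^1\cap L^2)$ to $L^1\cap L^2$ for \emph{every} $\g\in L^1\cap L^2$, by Neumann series on a far-left half-plane and a Duhamel/Gr\"onwall extension; then $\psi=1-(L_{{\rm tr}+,\g})^{-1}\g$ gives existence, uniqueness, the bound on $\psi-1$, and analyticity in one stroke. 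Strict positivity is obtained by observing that $\wt\psi:=1/\psi$ satisfies the differential \emph{inequality} $(\partial_y-\partial_{xx}+2\tanh\partial_x)\wt\psi\le \g\,\wt\psi$, so the same linear machinery (nonnegativity of the kernel of $T$ plus the Gr\"onwall bound) controls $\sup\wt\psi$. Your Feynman--Kac representation together with Jensen is a clean probabilistic substitute for this last step---it yields both $\|\psi\|_\infty$ and $\|1/\psi\|_\infty\le\sup\mathbb E[e^{B}]$ directly---and Khasminskii is the probabilistic mirror of the paper's ``small on a half-plane, then propagate'' scheme. Your approach makes positivity immediate; the paper's makes analyticity immediate from the explicit formula.

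Two points to tighten. First, the ``$1/n!$ from time-ordering'' heuristic in the Dyson series is only valid for the bounded-kernel piece $\tfrac12\partial_x^{-1}(\Gamma^--\Gamma^+)\Mult_{\sech^2}$ of $T$; the tilted-heat pieces carry factors $\sim(y_{k-1}-y_k)^{-1/2}$ that couple consecutive variables, so no symmetrization is available and you are genuinely back to Khasminskii there. Second, the interval-splitting you describe yields $N=O(\|\g\|_{L^1}+\|\g\|_{L^2}^2)$ intervals, because forcing $\|\g\|_{L^2(I_k\times\R)}$ to be small on each $I_k$ costs $O(\|\g\|_{L^2}^2)$ pieces (only $\sum_k\|\g\|_{L^2(I_k)}^2$, not $\sum_k\|\g\|_{L^2(I_k)}$, is controlled by $\|\g\|_{L^2}^2$). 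So the bound you actually get is $\exp\!\big(C(\|\g\|_{L^1}+\|\g\|_{L^2}^2)\big)$ rather than $\exp(C\|\g\|_{L^1\cap L^2})$; the paper's Gr\"onwall sketch has the same feature. This does not affect any downstream use of the proposition, which is only applied with small $\g$, but be aware that the linear exponent as literally stated needs a sharper argument than plain Khasminskii.
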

\begin{proof}
    Let $T=(\partial_y-\partial_x^2+2\tanh\partial_x)^{-1}$ be the integral operator described above. By Lemma \ref{lemma:operator_l_tr_+_g_bounds}, the operator $T$ is bounded from $Z:=L^1(\RR)\cap L^2(\RR)$ to $L^\infty(\RR)$, and the Banach space $X:=T(Z)\hookrightarrow L^\infty(\RR)$ is such that
    \[
    f\in X\quad\implies\quad f|_{\R\times (-\infty,y_0]}\in C_0(\R\times(-\infty,y_0])\,\,\forall y_0\in\R
    \]
    (we defined $C_0$ in Subsection \ref{subsec:notations}). The operator $T$ is a right inverse of $L=\partial_y-\partial_x^2+2\tanh\partial_x$, whose kernel in $L^\infty(\RR)$ is exactly the $1$-dimensional space of constant functions. Our assumptions then imply $\psi\in T(Z)+1$, and equation \eqref{eq:psi-from-g} is equivalent to
    \[
    \psi-1=-T(\psi \g).
    \]
    Since, as we recall later, the operator $L_{\g}:=\partial_y-\partial_x^2+2\tanh\partial_x+\g$ is invertible from $T(Z)$ to $Z$, the uniqueness follows by necessity, since by \eqref{eq:psi-from-g}, it has to hold
    \[
    \psi=-(L_{\g})^{-1}\,\g+1.
    \]
    
    Now, let
    \[N:=\left\{\phi\in T(Z)\,|\, \inf_{\RR}\phi>-1\right\},\] 
    which is a convex, open subset of $T(Z)$. Consider the map 
    \begin{equation*}
        \begin{gathered}
            F\colon N\to Z\\
            \phi\mapsto -\frac{1}{1+\phi}(\de_y-\de_x^2+2\tanh\de_x)\phi.
        \end{gathered}
    \end{equation*}
    It is well-defined and analytic due to the restriction $\inf_{\RR}\phi>-1$ and the boundedness of $T$ from $Z$ to $L^\infty(\RR)$. The Proposition will follow after establishing the invertibility and additional properties of the map $F$ by setting $\psi:=F^{-1}(\g)+1$.
    
    The differential of $F$ is
    \begin{align*}
        D F(\phi)\cdot \dot\phi&=-\frac{1}{1+\phi}(\de_y-\de_x^2+2\tanh\de_x)\,\dot\phi+\frac{\dot\phi}{(1+\phi)^2}(\de_y-\de_x^2+2\tanh\de_x)\,\phi=\\
        &=-\frac{1}{1+\phi}(\de_y-\de_x^2+2\tanh\de_x+\g)\,\dot\phi,
    \end{align*}
    where in the last equality, $\g:=F(\phi)$ is meant as a multiplication operator. By Lemma \ref{lemma:operator_l_tr_+_g_bounds} part (b), $T_{\g}:=(\partial_y-\partial_x^2+2\tanh\partial_x+\g)^{-1}$ is well-defined and bounded from $Z$ to $T(Z)$, in particular $F$ has invertible differential everywhere. We claim that $F$ is also surjective, and a preimage of $\g\in Z$ is given by
    \[
    F^{-1}(\g)=-T_{\g}\,\g.
    \]
    In fact, $-T_{\g}\,\g\in T(Z)$ by Lemma \ref{lemma:operator_l_tr_+_g_bounds}, and we claim that $\inf \phi >-1$. For that it is enough to consider $\psi:=1+\phi$, and note that the function $\wt\psi:=1/\psi$ solves
    \[
    \wt\psi_y-\wt\psi_{xx}+2\tanh\wt\psi_x=-2\psi\wt\psi^2+\g\wt\psi\leq \g\wt\psi.
    \]
    Since $\psi\in T(Z)+1$, the functions $\psi,\wt\psi$ converge to the constant $1$ uniformly as $y\to-\infty$, and since the kernel of $T$ is non-negative, we can follow the same steps as in \eqref{eq:z-008} replacing the $L^\infty$ norm with the supremum of $\wt\psi$, so after an approximation and a continuity argument, the a priori estimate
    \[
    \sup_{\RR}1/\psi\lesssim \exp\left(C\|\g\|_{L^1(\RR)\cap L^2(\RR)}\right)
    \]
    is proved. Since $\inf \psi >0$, it follows that $\psi$ solves $\psi=-T(\psi \g)+1$, i.e., $F(\psi-1)=\g$. Thus, $F$ is an analytic diffeomorphism.
    
    The same a priori estimate for $\psi$ is proved in complete analogy, since $\psi$ solves equation \eqref{eq:psi-from-g}. Finally, the estimate on $\psi-1$ follows from Lemma \ref{lemma:operator_l_tr_+_g_bounds} part (b) and the identity $\psi-1=-T_{\g}\g$.
\end{proof}
    We recall that setting $w:=-\Rcal\de_x\log(\psi)$, we get a solution of \eqref{eq:equation_for_w_unvariabled_section_range}, and we want to understand what to assume on $g=\Rcal\g$ to have $w_x\in\Hcrit$.

\subsection{The functional \texorpdfstring{$\Phi$}{Phi}}

There is one mechanism that prevents $w_x$ from being in $\Hcrit$, even for extremely localized $g$: \emph{mass unbalance between the left and right regions} delimited by the line soliton. To intuitively illustrate this phenomenon, assume for simplicity that $g\in\partial_x\mathscr D(\RR)$ (so that it also lies in $\Hcrit$). Let $\supp(g)\subset [-R,R]^2$. The diffusive and transport natures of equation \eqref{eq:psi-from-g} (with $\g=\Rcal g$) suggest that, for large $y\gg R$, the solution $\psi(\cdot,y)$ will converge to a constant on a growing time interval $[-2y+O(\sqrt y),2y+O(\sqrt y)]$, or more explicitly,
$$ \psi(x,y)\approx 1+ c\int_{-\infty}^x \Gamma(x'+2y,y)-\Gamma(x'-2y,y) \,dx' $$
for some $c\in\R$ (up to translations in the $y$ variable). If this constant is not zero, we argue that $w\not\in L^3(\RR)$. The reason is that from the above heuristics, $\psi_x$ will behave like
$$ \psi_x\approx c\,(\Gamma(x+2y,y)-\Gamma(x-2y,y)), $$
which cannot lie in $L^3(\RR)$ because the heat kernel simply doesn't (it belongs to $L^{3,\infty}(\RR)$). Then we simply note that $w=\frac{\psi_x}{1+\psi}$, so the same holds for $w$. It is then not possible to have $w_x\in\Hcrit$, since we already know from section \ref{sec:space} that we would have $w\in L^3(\RR)$. These heuristics suggest that in order to aim at $w_x\in \Hcrit$, we need some cancellation condition to ensure that $c=0$ in the above asymptotics. In other words, $\psi$ should decay to zero as $y\to+\infty$.

To rephrase this condition in a convenient way, we note that the equation
$$ \psi_y-\psi_{xx}+2\tanh \psi_x=0 $$
preserves the hyperplane $\{\sech^2\}^\perp$ of $L^2(\R)$, so a natural condition is to impose that 
$$\aha\int \psi(x,y)\sech^2(x)\,dx\to 1$$
as $y\to+\infty$. For localized $\g$ and solutions $\psi$ to \eqref{eq:psi-from-g}, by testing the equation against $\sech^2(x)$, the latter condition is equivalent to
\begin{equation}\label{eq:orthogonality-condition-g}
    -\aha\int_{\RR} \sech^2(x)\g(x,y)\psi(x,y)\,dx\,dy=0.
\end{equation}
The above serves as a motivation for the following definition and the subsequent analysis.
\begin{definition}\label{def:Phi}
We define the functional
$$ \Fun:L^1(\RR)\cap L^2(\RR)\to \R, $$
\als{
\Fun(\g):=&-\aha\int_{\RR} \sech^2(x)\g(x,y)\psi(x,y)\dxdy\\
=&\lim_{y\to+\infty} \aha\int_{\R}\sech^2(x)\psi(x,y)\dx -1,
}
where $\psi\in L^\infty(\RR)$ is the unique solution to $\eqref{eq:psi-from-g}$ that converges uniformly to $1$ as $y\to -\infty$ given by Proposition \ref{prop:well-posedness-of-psi-to-g}. The second equality is verified by integrating \eqref{eq:psi-from-g} against $\sech^2(x)$.
\end{definition}

\begin{remark}\label{rk:Phi_is_invariant_under_reflections}
The above functional is analytic on $L^1\cap L^2(\RR)$. Its differential at $\g= 0$ is
$$ D\Phi(0)\cdot z=-\aha\int_{\RR}\sech^2(x)z(x,y) \dxdy,$$
(so the requirement $\Phi(\g)=0$ is somewhat transversal to the requirement $\g\in\Hcrit$). It is also invariant under the reflection $\Rcal$. In fact, if $\psi$ solves \eqref{eq:psi-from-g} and $\phi$ solves
\[
\phi_y-\phi_{xx}+2\tanh\phi_x=-(\Rcal h)\phi,
\]
and are both given by Proposition \eqref{prop:well-posedness-of-psi-to-g}, it is easily verified that
\[
\frac{d}{dy} \left[ \aha \int_{\R}\sech^2 \psi(\Rcal\phi)\dx \right]=0,
\]
which implies
\[
\lim_{y\to+\infty} \aha\int_{\R}\sech^2(x)\psi(x,y)\dx = \lim_{y\to+\infty} \aha\int_{\R}\sech^2(x)\phi(x,y)\dx
\]
since both solutions converge uniformly to $1$ as $y\to-\infty$.
\end{remark}

As remarked in the introduction, the above functional appears naturally in the scattering transform of \eqref{eq:KP-II} for perturbations of the line soliton.

It is clear from the above, although not rigorously proved, that if $g$ is a `good perturbation' that falls in the image of our B\"acklund transform, it must hold $\Phi(\Rcal g)=0$, that is, $\Phi(g)=0$. In the following, we look for additional conditions on $g$ to prove the reverse implication.

\subsection{Estimates in a parabolic Hardy space}
The property $\Phi(\g)=0$ appears naturally when writing $\psi$ as an integral operator applied to $-\g\psi$. Assume that we have
\[
\g\in L^1(\RR)\cap L^2(\RR)\cap\Hcrit
\]
such that $\Phi(\g)=0$. For the solution $\psi\in L^\infty(\RR)$ to equation \eqref{eq:psi-from-g} given by Proposition \ref{prop:well-posedness-of-psi-to-g}, it holds
$$ \psi=T(-h(1+\psi))+1, $$
with $T=(\partial_y-\partial_x^2+2\tanh\partial_x)^{-1}$ as in \eqref{eq:operator-T-transport-outwards}. Since we want $w_x=-\Rcal(\psi_x/\psi)_x\in\Hcrit$, by fractional chain rule, we need $|D_x|^{3/2}\psi\in L^2(\RR)$. When applying $T$ to $-h\psi\in L^1\cap L^2(\RR)$, the contribution from the first two terms in \eqref{eq:operator-T-transport-outwards} have the desired bounds by the estimate
\[
|D_x|^{3/2}\Gamma^\pm \colon L^{3/2}(\RR)\mapsto L^2(\RR),
\]
which is covered by Proposition \ref{prop:spacetime_estimate_heat_general}. To control the remaining term
\[
-\aha|D_x|^{1/2}(\Gamma^--\Gamma^+)(\sech^2 h\psi)
\]
in $L^2$, scaling suggests that the argument of $\Gamma^\pm$ must be in a space that scales like $L^1(\RR)$. This introduces a complication, because it means that we need $\sech^2 h\psi$ to lie in a Hardy space $\Hcal^1$ adapted to the operators $\Gamma^\pm$. In particular we need it to have zero mean, hence one further motivation for the condition $\Phi(g)=0$.

What follows is a brief (pre -)dual treatment to that of the BMO spaces in Subsection \ref{subsec:exact_solutions}, and all the results can be found in the same references.

\begin{definition}[\cite{coifman-weiss-1977-extension-hardy-spaces}]\label{def:Hardy_space_H1}
    Let $(X,d,\mu)$ be a doubling metric measure space of homogeneous type.
    \begin{itemize}[itemsep=0.5ex,topsep=0ex]
        \item Let $1<q\leq\infty$. A $q$-atom $a:X\to \R$ is a measurable function such that\footnote{When $\mu(X)<\infty$, one often assumes the constant $a=\mu(X)^{-1}$ to be an atom as well, which results in adding the constant functions to the space $\Hcal^1(X)$. As we did in the definition of $\BMO$, we give a definition that does not depend on whether $\mu(X)$ is finite or infinite.}:
        \setlist{nolistsep}
        \begin{enumerate}[noitemsep]%[topsep=-3pt,itemsep=0ex,partopsep=0ex,parsep=0ex]
            \item the support of $a$ is contained in a ball $\Bcal_r(x_0)$,
            \item $ \mu(\Bcal_r(x_0))^{1/q'}\|a\|_{L^q(X)}\leq 1 $,
            \item $\int_X a\,d\mu=0$.
        \end{enumerate}
        
        \item The Hardy space $\Hcal^1(X)$ is the vector space of functions $f$ such that there exists a sequence $\lambda\in \ell^1(\N)$ and a sequence of $\infty$-atoms $a_0,a_1,\dots$ such that it holds a.e.
        \[f=\sum_{j\in\N} \lambda_j a_j.\]
        \item We equip $\Hcal^1(X)$ with the norm
        \[
        \|f\|_{\Hcal^1(X)}:=\inf_{\lambda,(a_j)_j} \|\lambda\|_{\ell^1},
        \]
        where the infimum is taken over all representations of $f$ as in the previous point.
    \end{itemize}
    Consider the space $(X,d,\mu)=(\RR,d_\lambda,\mu)$ with the tilted parabolic metric $d_\lambda$ as in Definition \ref{def:parabolic_BMO}, and $\mu$ the Lebesgue measure. We denote the associated Hardy space $\Hcal^1(X)$ by $\PHux\lambda(\RR)$.
\end{definition}

\begin{remark}\label{rk:q-atoms,H1-BMO_duality}
    It follows from the definition that $\Hcal^1(X)\subset L^1(X)$ with continuous embedding. It is well-known that $\Hcal^1(X)$ is a Banach space, and $\BMO(X)$ is the dual of the Hardy space $\Hcal^1(X)$, where the pairing is given by the integral of the product (extended by density). Finally, we will use the nontrivial fact (see \cite{coifman-weiss-1977-extension-hardy-spaces}*{Theorem A}) that in the definition of $\Hcal^1(X)$ we can equivalently consider $q$-atoms instead of $\infty$ atoms, yielding the same vector space and the same norm up to equivalences.
\end{remark}

We will need two simple Lemmas.

\begin{lemma}[Decay + integrability + zero mean, imply $\Hcal^1$]\label{lemma:Lp+weight/decay+zero_mean_implies_Hardy_H1}
    Let $(X,d,\mu)$ a doubling metric measure space with doubling constant $A$, and let $1<p\leq\infty$, $\eps>0$, and $x_0\in X$. It holds
    \[
    \|f\|_{\Hcal^1(X)}\lesssim_{\mu,d,x_0,p,\eps} \|wf\|_{L^p(X)}
    \]
    for all $f$ such that $\int_X f\,d\mu=0$, where $w(x)=(1+ d(x_0,x))^{D/p'+\eps}$, $D=\log_2A$.
    
\end{lemma}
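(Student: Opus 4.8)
The statement is the standard ``decay + integrability + cancellation $\Rightarrow$ atomic Hardy space'' fact, adapted to a space of homogeneous type. The plan is to decompose $f$ dyadically into pieces supported on annuli around $x_0$, renormalize each piece into a multiple of an atom, and sum the resulting $\ell^1$ coefficients. First I would set $B_k:=\Bcal_{2^k}(x_0)$ for $k\in\Z$ (truncated appropriately if $X$ is bounded), and write $f=\sum_{k} f_k$ where $f_k:=f\mathbbm 1_{A_k}$ and $A_k:=B_k\setminus B_{k-1}$ is the $k$-th dyadic annulus (with $A_0:=B_0$). Each $f_k$ is supported in the ball $B_k$, but it does not have zero mean, so it is not yet an atom. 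To fix this I would subtract a correction: let $m_k:=\int_X f_k\,d\mu$ and replace $f_k$ by $g_k:=f_k-\mu(B_k)^{-1}m_k\mathbbm 1_{B_k}$; then $\int g_k\,d\mu=0$ and $g_k$ is supported in $B_k$. The correction terms $c_k:=\mu(B_k)^{-1}m_k\mathbbm 1_{B_k}$ telescope: since $\int_X f\,d\mu=\sum_k m_k=0$, the partial sums $M_K:=\sum_{k\le K}m_k$ satisfy $M_K\to 0$ as $K\to\pm\infty$, and one reorganizes $\sum_k c_k$ into a telescoping sum of differences $M_k\,(\mu(B_k)^{-1}\mathbbm 1_{B_k}-\mu(B_{k+1})^{-1}\mathbbm 1_{B_{k+1}})$, each of which is a mean-zero function supported in $B_{k+1}$ with explicitly controlled $L^\infty$ norm, hence a bounded multiple of an atom.

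The core of the estimate is then to bound the $\ell^1$ norm of the atomic coefficients. For $g_k$: by Hölder on $B_k$, $\|g_k\|_{L^1(X)}\le 2\|f_k\|_{L^1(X)}\le 2\mu(B_k)^{1/p'}\|f_k\|_{L^p(X)}$, so $\lambda_k:=\mu(B_k)^{1/p'}\|g_k\|_{L^q(X)}\lesssim \|f_k\|_{L^1(X)}$ after renormalizing $g_k$ (I would actually work directly with $q=p$, which is allowed by the $q$-atom characterization recalled in Remark \ref{rk:q-atoms,H1-BMO_duality}). Now use the weight: on $A_k$ one has $w(x)\gtrsim 2^{k(D/p'+\eps)}$, while $\mu(B_k)\le C 2^{kD}$ by the doubling property ($D=\log_2 A$ forces $\mu(\Bcal_{2r})\le A\mu(\Bcal_r)$ to iterate to $\mu(B_k)/\mu(B_0)\le A^k=2^{kD}$, modulo adjusting for the lower index range). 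Therefore
\[
\|f_k\|_{L^1(X)}=\int_{A_k}|f|\,d\mu\le \Big(\sup_{A_k}w^{-1}\Big)\int_{A_k}w|f|\,d\mu\lesssim 2^{-k(D/p'+\eps)}\Big(\int_{A_k}(w|f|)^p\,d\mu\Big)^{1/p}\mu(A_k)^{1/p'},
\]
and since $\mu(A_k)^{1/p'}\le \mu(B_k)^{1/p'}\lesssim 2^{kD/p'}$, we get $\|f_k\|_{L^1(X)}\lesssim 2^{-k\eps}\|w f\|_{L^p(A_k)}$. Summing over $k\ge 0$ and applying Hölder in $k$ (the sequence $2^{-k\eps}$ is in $\ell^{p'}$, and $\|w f\|_{L^p(A_k)}$ is in $\ell^p$ with sum bounded by $\|wf\|_{L^p(X)}$) yields $\sum_{k\ge 0}\|f_k\|_{L^1(X)}\lesssim \|wf\|_{L^p(X)}$; the negative-$k$ annuli near $x_0$ contribute a bounded geometric tail handled the same way (or are absent if $X$ has no mass near $x_0$ at arbitrarily small scales — in the Euclidean/parabolic case they are simply $A_k=\emptyset$ below some index once one truncates at scale comparable to the support, but in general one just notes $\sum_{k<0}2^{-k\eps}$ may diverge, so one instead groups all annuli of radius $\le 1$ into a single ball and estimates that piece by a single atom, which is where the $x_0$-dependence of the implicit constant enters). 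For the correction/telescoping terms $M_k(\dots)$, one bounds $|M_k|\le\sum_{j\ge k}|m_j|$ (or $\sum_{j\le k}$, choosing the convergent direction) $\lesssim\sum_{j\ge k}\|f_j\|_{L^1}\lesssim\sum_{j\ge k}2^{-j\eps}\|wf\|_{L^p(A_j)}$, and the $\ell^1$-in-$k$ sum of these is again controlled by $\|wf\|_{L^p(X)}$ via a discrete Schur/Young estimate using $\eps>0$.

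\textbf{Main obstacle.} The genuinely delicate point is the bookkeeping of the correction terms — ensuring that the ``leftover mass'' $c_k$ created by forcing each $f_k$ to have zero mean is itself reassembled into an $\ell^1$-convergent atomic sum. This is where the hypothesis $\int_X f\,d\mu=0$ is essential (it makes the partial sums $M_K$ vanish at both ends, enabling the telescoping), and where one must be careful about the summation direction and about the case $\mu(X)=\infty$ versus $\mu(X)<\infty$ (in the latter the constant function is an atom and the argument is slightly easier; the statement is phrased to cover both). Everything else — the dyadic decomposition, the doubling estimate $\mu(B_k)\lesssim 2^{kD}$, the pointwise lower bound on $w$ on $A_k$, and the final Hölder-in-$k$ summation exploiting $\eps>0$ — is routine. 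I would also remark that the implicit constant's dependence on $x_0$ (and on $\mu,d$) is harmless because in all our applications $x_0$ is fixed (the origin, or the center of $\rho$), and the dependence on $p,\eps$ is likewise fixed once the ambient Banach space $Y_\eps$ is chosen.
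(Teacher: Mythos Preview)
Your proposal is correct and follows essentially the same approach as the paper: a dyadic decomposition around $x_0$, mean-zero correction of each piece, and summation via the $p$-atom characterization using the geometric decay afforded by the weight. The paper organizes the telescoping slightly more compactly by setting $f_j:=f\mathbbm 1_{B_j}-(\fint_{B_j}f)\mathbbm 1_{B_j}$ and decomposing $f=f_0+\sum_{j\ge 1}(f_j-f_{j-1})$, then uses the zero-mean identity $\int_{B_j}f=-\int_{B_j^c}f$ directly to bound the correction constant by $\|1/w\|_{L^{p'}(B_j^c)}\|wf\|_{L^p}\lesssim 2^{-j\eps}\|wf\|_{L^p}$, which avoids your separate handling of the telescoped $M_k$-sum; but this is bookkeeping, not a different idea.
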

If a function has zero mean, decays slightly better than $L^1$, and is slightly more integrable than $L^1$, then it lies in $\Hcal^1$. Note that the norm on the right-hand side controls the $L^1$ norm, so the integral of $f$ is well-defined. Note as well that it holds $D\geq 1$ as long as $X$ contains more than one point, see \cite{Soria_Tradacete_2019_The_least_doubling_constant}*{Theorem 3.1}.

\begin{lemma}\label{lemma:heat_kernel_and_Hardy_space_H1}
    The operator $\Gamma^{(c)}:=(\partial_y+c\de_x-\partial_x^2)^{-1}$ satisfies
    \[
    \||\de_x|^{-1/2}\Gamma^{(-2\lambda)}f\|_{L^6(\RR)}+\||\partial_x|^{1/2}\Gamma^{(-2\lambda)}f\|_{L^2(\RR)}\lesssim \|f\|_{\PHux\lambda(\RR)}.
    \]
\end{lemma}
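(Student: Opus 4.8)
The statement is a boundedness estimate for the operator $\Gamma^{(-2\lambda)} = (\partial_y - 2\lambda\partial_x - \partial_x^2)^{-1}$ from a parabolic Hardy space $\PHux\lambda(\RR)$ into $|D_x|^{1/2}L^6(\RR)$ and $|D_x|^{-1/2}L^2(\RR)$, i.e.\ one needs $\||D_x|^{-1/2}\Gamma^{(-2\lambda)}f\|_{L^6}$ and $\||D_x|^{1/2}\Gamma^{(-2\lambda)}f\|_{L^2}$ bounded by $\|f\|_{\PHux\lambda(\RR)}$. The natural route is duality: since $\PBMOl(\RR)$ is the dual of $\PHux\lambda(\RR)$ (Remark \ref{rk:q-atoms,H1-BMO_duality}), and the adjoint of $\Gamma^{(-2\lambda)}$ is (essentially, up to reflection in $y$) the operator $\Gamma^{(2\lambda)}$, the claimed estimates are the preduals of the already-established bound from the unnumbered Lemma just before Corollary \ref{cor:BMO-bounds-for-Vlambda}, which states that $\Gamma^{(-2\lambda)}$ maps $L^{3/2}(\RR) + \Hcrit$ boundedly into $\PBMOl(\RR)$. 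More precisely: the operator $|D_x|^{1/2}\Gamma^{(2\lambda)}$ is formally self-transpose-conjugate to $|D_x|^{1/2}\Gamma^{(-2\lambda)}$ after the change of variables $(x,y)\mapsto(x,-y)$ which also swaps $d_{\rm p,\lambda}$ with $d_{\rm p,-\lambda}$; since $|D_x|^{1/2}(\partial_y-\partial_x^2)^{-1}$ is bounded $L^{3/2}\to \mathrm{BMO}$ in the appropriate parabolic sense, taking adjoints gives $|D_x|^{1/2}(\partial_y-\partial_x^2)^{-1}: \Hcal^1 \to L^3$; but we need the finer statement into $L^2$ and into $|D_x|^{-1/2}L^6$, so a direct atomic argument is cleaner.

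\textbf{Atomic decomposition approach.} By the reduction $(x,y)\mapsto(x-cy,y)$ with $c=-2\lambda$, assume $\lambda=0$, so we study $\Gamma = (\partial_y-\partial_x^2)^{-1}$ and the standard parabolic Hardy space $\PHu(\RR) = \Hcal^1(\RR, d_{\rm p}, \mu)$. By Remark \ref{rk:q-atoms,H1-BMO_duality}, it suffices to prove the two estimates for a single $q$-atom $a$ (say $q=2$) with uniform constant, then sum using the $\ell^1$ bound on coefficients and the density of finite atomic sums. So let $a$ be supported in a parabolic ball $\Bcal_r(x_0,y_0)$ with $\|a\|_{L^2}\le \mu(\Bcal_r)^{-1/2} = r^{-3/2}$ and $\int a\,dx\,dy = 0$. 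For the ``local'' contribution — the part of $\Gamma a$ on the dilate $2\Bcal_r$ — one uses the Calderón--Zygmund-type bounds of Proposition \ref{prop:spacetime_estimate_heat_general} and Lemma \ref{lemma:heat_kernel_sends_Lp_to_Hoelder}: $|D_x|^{1/2}\Gamma$ maps $L^{3/2}\to L^2$ and $|D_x|^{-1/2}\Gamma$ maps $L^{3/2}\to L^6$, and on $2\Bcal_r$ Hölder's inequality converts the $L^2$-atom bound into an $L^{3/2}$ bound with the right power of $r$. For the ``tail'' — the part on $\RR\setminus 2\Bcal_r$ — one exploits the cancellation $\int a = 0$: write $|D_x|^{s}\Gamma a(z) = \int [k_s(z,z') - k_s(z, (x_0,y_0))]\,a(z')\,dz'$ where $k_s$ is the kernel of $|D_x|^s\Gamma$, and use the smoothness/decay estimate $|k_s(z,z') - k_s(z, \bar z)| \lesssim \frac{d_{\rm p}(z',\bar z)^{\delta}}{d_{\rm p}(z,\bar z)^{3+\delta-?}}$ valid for $d_{\rm p}(z',\bar z)\le \tfrac12 d_{\rm p}(z,\bar z)$, with the homogeneity bookkeeping dictated by the parabolic scaling (recall $\mu(\Bcal_r)=r^3$, and $|D_x|^{1/2}$ has parabolic order $1$, $|D_x|^{-1/2}$ has order $-1$). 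This yields an integrable tail whose $L^2$ (resp. $L^6$) norm over the complement of $2\Bcal_r$ is uniformly bounded.

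\textbf{Main obstacle.} The delicate point is the tail estimate: one must verify the correct Hölder-continuity-in-$z'$ bounds for the kernels of $|D_x|^{1/2}\Gamma$ and $|D_x|^{-1/2}\Gamma$ with the sharp parabolic homogeneity, and check that the resulting exponents make the tail sum (over parabolic annuli around $\Bcal_r$) converge and land in $L^2$, respectively $L^6$, rather than merely $L^2_{\rm loc}$ or $L^6_{\rm loc}$. In particular $|D_x|^{-1/2}\Gamma$ is a smoothing-of-order-$1$ operator in parabolic units, so its kernel decays slowly and one must be careful that the target space is exactly $L^6$ (the parabolic Sobolev exponent for a gain of $1$ from an $L^1$-scaling input in $3$ homogeneous dimensions) — this is the analogue of the endpoint $\Hcal^1 \to L^{n/(n-\alpha)}$ Sobolev embedding for Riesz potentials, and the proof mirrors that classical argument. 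The second obstacle, more bookkeeping than substance, is tracking the uniformity of all constants in $\lambda$ through the affine change of variables; but since that change is an isometry for the relevant parabolic structures and preserves Lebesgue measure, the constants transfer without loss, exactly as in the proof of the preceding (unnumbered) Lemma.
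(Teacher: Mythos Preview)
Your approach is essentially the same as the paper's: reduce to $\lambda=0$, argue on atoms, split into a local piece on $2\Bcal_r$ and a tail on its complement, and use the mean-zero cancellation together with kernel smoothness for the tail. The tail argument you outline matches the paper's exactly.

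There is, however, a genuine error in your local step. You claim that $|D_x|^{1/2}\Gamma$ maps $L^{3/2}\to L^2$ and $|D_x|^{-1/2}\Gamma$ maps $L^{3/2}\to L^6$; both are false by parabolic scaling. In fact Proposition~\ref{prop:spacetime_estimate_heat_general} gives $|D_x|^{1/2}\Gamma:L^{3/2}\to L^6$ (not $L^2$), and does not cover negative $s$ at all. The paper handles the local piece differently: it works with $\infty$-atoms normalized by scaling to the unit parabolic ball, and uses only the pointwise bound $|K(z)|\lesssim|z|_{\rm p}^{-\alpha}$ (with $\alpha=1/2$ for $|D_x|^{-1/2}\Gamma$ and $\alpha=3/2$ for $|D_x|^{1/2}\Gamma$) to get $K\in L^{3/\alpha,\infty}\subset L^1+L^\infty$, hence $\|Ka\|_{L^\infty(2B)}\lesssim 1$, which trivially gives the $L^p(2B)$ bound for any $p$. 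Your route can be salvaged (for $|D_x|^{1/2}\Gamma$, go $L^{3/2}\to L^6$ then H\"older down to $L^2(2\Bcal_r)$; for $|D_x|^{-1/2}\Gamma$, you need the pointwise kernel bound anyway), but as written the local step does not close.
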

The proofs use classical arguments, and we move them to Appendix \ref{subsec:proofs}.

\subsection{Proof of Theorem \ref{theorem:range_of_samB_contains_manifold_of_cod_1}}
Recall the definition of the parabolic norm $|z|_{\rm p,\lambda}$ in Definition \ref{def:parabolic_BMO}.
\begin{proof}[Proof of Theorem \ref{theorem:range_of_samB_contains_manifold_of_cod_1}]
    The map $\Phi$ is given by Definition \ref{def:Phi}. The property $\Phi(0)=0$ is immediate. Its differential is
    \[
    D\Phi(\g)\cdot \dot \g=-\aha\int_{\RR} \sech^2\cdot\,(\dot\g\psi+\g\dot\psi),
    \]
    \[
    \dot\psi:=-T_{\g}(\dot \g\psi)
    \]
    with $\psi$ as in Proposition \ref{prop:well-posedness-of-psi-to-g} (the operator $T_{\g}:=(\de_y-\de_x^2+2\tanh\de_x+\g)^{-1}$ is well-defined on $L^1\cap L^2$ by Lemma \ref{lemma:operator_l_tr_+_g_bounds} part (b)). The non-degeneracy of the differential at any $\g$ can be seen by choosing $\dot \g$ as sign-definite and supported on $y>M$, with $M$ large enough so that $\g$ is small on $y>M$ and $\dot\psi$ is identically zero for $y<M$. The property
    \[
    \Phi(\g)=\Phi(\Rcal \g)
    \]
    is shown in Remark \ref{rk:Phi_is_invariant_under_reflections}.
    
    Fix $\eps>0$. We first show that $\Phi(g)=0$ implies that $g$ is in the range of $\samB$. Let $g\in Y_\eps(\RR)$ be small enough. Consider $\psi\in C_b(\RR)$ the unique solution of
    \[
    \psi_y-\psi_{xx}+2\tanh\psi_x=-(\Rcal g)\psi
    \]
    converging to the constant $1$ uniformly as $y\to-\infty$ given by Proposition \ref{prop:well-posedness-of-psi-to-g}. By the change of variables discussed in Subsection \ref{subsec:change-of-var-w-psi}, the function
    \[
    w:=-\Rcal\left(\partial_x\log\psi\right)=-\Rcal\left(\frac{\psi_x}{\psi}\right)
    \]
    solves equation \eqref{eq:equation_for_w_unvariabled_section_range}, and the system \eqref{eq:system_miura_maps_backlund_section_range} is solved by
    \[
    \wu:=g+\ph,\qquad u:=g+2w_x,\qquad v:=w+\tanh(x).
    \]
    In particular, the pair $(u,v)$ solves \eqref{eq:miuraEquation}. For the claim to hold, it is enough to show that $w\in L^3(\RR)$, and that $w_x\in \Hcrit$ is small enough. In fact, the latter implies that $u\in \Hcrit$ is small, since $g$ is small in $Y_\eps(\RR)\subset \Hcrit$. Moreover, by the uniqueness statement in Theorem \ref{theorem:theorem_1}, the former implies that we indeed have $v=\dtsmV(u,\gamma_0)$ for some $\gamma_0\in\R$, and thus $\wu=\samB(u,\gamma_0)$ by the definition of $\samB$.

    We focus on proving $w_x\in\Hcrit$ small, since the other condition follows similarly. By the definition of $w$, with sufficient regularity we have
    \[
    w_x=\Rcal\left( \frac{\psi_x^2}{\psi^2}-\frac{\psi_{xx}}{\psi} \right),
    \]
    so the claims follows by fractional Leibniz and fractional chain rule, after establishing for small $g$
    \[
    \||\de_x|^{3/2}\psi\|_{L^2(\RR)}+\|\partial_x\psi\|_{L^3(\RR)}+\||\de_x|^{1/2}\psi\|_{L^6(\RR)}\lesssim \|g\|_{Y_\eps(\RR)}.
    \]
    The second term is bounded by the other two terms by interpolation. For the other terms, writing $h:=\Rcal g$ for brevity, we decompose $\psi$ into
    \als{
        \psi&=-T(h\psi)+1\\
        &=\left[-\Gamma^+(\eta^+ h\psi)-\Gamma^-(\eta^-h\psi)\right]-\aha\demu(\Gamma^--\Gamma^+)(\sech^2 h\psi)+1\\
        &=:\rm{I}+\rm{II}+1,
        }
    where we recall that $T$ is defined in \eqref{eq:operator-T-transport-outwards}, and the first equation is in the proof of Proposition \ref{prop:well-posedness-of-psi-to-g}. The contribution from $\rm I$ satisfies all the three bounds above, since by Proposition \ref{prop:spacetime_estimate_heat_general} one has
    \[
    \||\partial_x|^s\Gamma f\|_{L^p(\RR)}\lesssim \|f\|_{L^{3/2}(\RR)},\quad s\in(0,2],\;\frac{1}{p}=\frac{s}{3},
    \]
    and we can assume $\|\psi\|_{L^\infty}\leq 2$ by the estimates of Proposition \ref{prop:well-posedness-of-psi-to-g} and the smallness of $g$. For the term $\rm II$, it is enough by Lemma \ref{lemma:heat_kernel_and_Hardy_space_H1} to show that $\sech^2 h\psi$ belongs to the intersection of parabolic Hardy spaces $\PHux1(\RR)\cap \PHux{-1}(\RR)$. By Lemma \ref{lemma:Lp+weight/decay+zero_mean_implies_Hardy_H1}, this follows when $\sech^2h\psi$ has mean zero, which is granted by the condition $\Phi(g)=\Phi(h)=0$, and by the weighted estimates
    \[
    \||(x,y)|_{\p,\pm 1}^{3/p'+\delta}\sech^2g\psi\|_{L^p}\lesssim_{p,\eps,\delta} \|g\|_{L^2\cap L^1_{\sech^2,\eps}}.
    \]
    This estimate is true for $p$ close enough to $1$ and for $\delta>0$ small enough. In fact, by interpolation of weighted spaces it holds
    \[
    \|w^{\theta}f\|_{L^{p}(\RR)}\leq \|f\|_{L^1(\RR)}^{1-\theta}\|wf\|_{L^2(\RR)}^\theta,\quad\frac{1}{p}=\frac{1-\theta}{1}+\frac{\theta}{2},
    \]
    with $w(x,y)=\sech^2(x)(1+|y|)^\eps$. For $\theta\in (0,1)$ with $1>\theta>\frac{3}{3+2\eps}\iff \delta:=\theta\eps-3/p'>0$,
    \begin{align*}
    w^\theta&=\sech^2(x) \cosh^{2(1-\theta)}(x)(1+|y|)^{\theta\eps}\\
    &\gtrsim \sech^2(x) (1+|x|+|y|)^{\theta\eps}\\
    &\geq \sech^2(x)(1+|x|+|y|)^{3/p'+\delta}\\
    &\gtrsim \sech^2(x)|(x,y)|^{3/p'+\delta}_{\rm p,\pm 1}.
    \end{align*}

For the second part of the proof, assume $\Phi(g)\neq 0$. We focus first on what we know about $w$. Since $g\in Y_\eps(\RR)$, as we have just shown, $\sech^2 h\psi$ satisfies the assumptions of Lemma \ref{lemma:Lp+weight/decay+zero_mean_implies_Hardy_H1}, except for its integral being non-zero. It can thus be written as a function in the Hardy space $\PHux1(\RR)\cap \PHux{-1}(\RR)$ plus a scalar multiple of an arbitrary test function. A quick study of the kernel of $\Gamma^--\Gamma^+$ shows that because of this, $\rm {II}$ does not have an $x$-derivative in $L^3(\RR)$, unlike $\rm I$. In particular, derivating in $x$, we have $\psi_x,w\not\in L^3(\RR)$. However,
\[
\psi_x,w\in L^{3,\infty}(\RR)\setminus L^3(\RR),
\]
since $\Gamma^\pm\in L^{3,\infty}(\RR)$ and $\sech^2h\psi\in L^1(\RR)$. Moreover, it can be checked that
\[
\|\Gamma^\pm(x,\cdot)\|_{L^2(\R_y)}\lesssim \log^{1/2}_-(|x|)+\jap{x}^{-1/4},
\]
which gives for any $\eps>0$ the estimate
\begin{equation}\label{eq:z-026}
\|\sech^\eps(x)\Gamma^\pm(\sech^{\eps}f)\|_{L^2(\RR)}\lesssim_{\eps} \|f\|_{L^1(\RR)}.
\end{equation}
Considering again the equation $\psi=-T(h\psi)+1$, this estimate, together with estimate \eqref{eq:z-024} of Lemma \ref{lemma:weighted-estimates-heat} with $s=1$, and the identity $w=-\Rcal(\psi_x/\psi)$, implies that
\[
\sech(x)w\in L^2(\RR).
\]
We also know that $w$ satisfies \eqref{eq:equation_for_w_unvariabled_section_range}, as noted at the beginning of the proof.

Now assume by contradiction that $\wu$ is in the range of $\samB$. That is,
\als{
\wu&=\samB(u',\gamma_0')\\
&=u'-2\de_x\dtsmV(u',\gamma_0')
}
for some $u',\gamma_0'$ as in the assumptions of Theorem \ref{theorem:theorem_1}. Call $v'=\dtsmV(u',\gamma_0')$, and $w':=v'-\tanh$. Then, as discussed at the beginnning of the section, $w'$ solves equation \eqref{eq:equation_for_w_unvariabled_section_range}. We also have $w'\in L^3(\RR)$. In fact, let $v'=w''+\tanh_{\alpha}$ be the decomposition with $\alpha$ as in Theorem \ref{theorem:theorem_1}. In particular, $\alpha_y\in L^2(\R)$, $w''\in L^3(\RR)$, $w_x''\in H^{-\fr12,0}(\RR)$. Since as we said $u'-2v'_x=\wu=\ph+g$, and $-2v_x=-2w''_x+\ph_\alpha$, it holds
\[
\ph_\alpha-\ph=g-u'+2w''_x\in H^{-\aha,0}(\RR),
\]
which implies $\alpha\in L^2(\R)$ knowing that $\alpha_y\in L^2(\R)$. This implies
\[
w'=w''+(\tanh_\alpha-\tanh)\in L^3(\RR).
\]
Similarly, by the estimate of Theorem \ref{theorem:theorem_1} and the weighted estimates on the functions $v^\pm$ therein coming from Corollary \ref{cor:estimates-for-wtv-part-2} part (a), we have immediately $\sech_{\alpha} w''\in L^2(\RR)$, which by the same argument above implies
\[
\sech(x) w'\in L^2(\RR).
\]

To recap, we have $w,w'\in L^{3,\infty}(\RR)\cap \cosh(x)L^2(\RR)$, and they both solve \eqref{eq:equation_for_w_unvariabled_section_range}. We claim that they must coincide.

\begin{claim}
    Let $w^1,w^2\in L^{3,\infty}(\RR)\cap \cosh(x)L^2(\RR)$ solve equation \eqref{eq:equation_for_w_unvariabled_section_range} with $g\in \mathscr D'(\RR)$. Then, $w^1= w^2$.
\end{claim}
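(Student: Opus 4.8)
The plan is to reduce the claim to a vanishing statement for the difference and then run a weighted energy estimate tailored to the \emph{outward} transport in equation \eqref{eq:equation_for_w_unvariabled_section_range}.

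First I would pass to the reflected variables $z^j:=-\Rcal w^j$, so that $z^1,z^2\in L^{3,\infty}(\RR)\cap\cosh(x)L^2(\RR)$ solve \eqref{eq:perturbed-line-soliton-adjoint-Miura-map} with the \emph{same} datum $h=\Rcal g$, and set $\delta:=z^1-z^2$. Subtracting the two equations, the $h$-term drops out and $\delta$ solves the closed linear parabolic equation
\[
\delta_y-\delta_{xx}+2(\tanh\delta)_x=\big((z^1+z^2)\,\delta\big)_x ,
\]
so it suffices to prove $\delta\equiv 0$. Note $\delta\in L^{3,\infty}(\RR)\cap\cosh(x)L^2(\RR)$; equivalently $\omega:=\sech(x)\,\delta\in L^2(\RR)$, with $\cosh(x)\,\omega=\delta$.

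The key computation is a weighted energy identity. Multiplying the equation by $\sech^2(x)\,\delta$, integrating in $x$ and integrating by parts, one checks that the transport term $2(\tanh\delta)_x$ together with the derivatives of the weight produces \emph{exactly cancelling} contributions from the localized factors $\sech^2(x)$ and $\sech^4(x)$; rewriting everything in terms of $\omega$ one is left with
\[
\tfrac12\frac{d}{dy}\,\|\omega(\cdot,y)\|_{L^2_x}^2+\|\omega(\cdot,y)\|_{H^1_x}^2=\int_{\R}\tanh(x)\,q\,\omega^2\,dx-\int_{\R}q\,\omega_x\,\omega\,dx,\qquad q:=z^1+z^2.
\]
What makes this work, and what differs from the uniqueness arguments of Section \ref{sec:space}, is that the outward transport yields the \emph{full} coercive term $\|\omega\|_{H^1_x}^2$ on the left, so no spectral positivity on $\{\sech\}^{\perp}$ — hence no orthogonality condition of the type used in Lemma \ref{lemma:uniqueness_2} — is needed. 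It then remains to absorb the right-hand side and close a Gronwall inequality. I would work on half-spaces $\{y<y_0\}$ with $y_0\to-\infty$: since $q\in\cosh(x)L^2(\RR)$, absolute continuity of the $L^2$-norm gives $\|\sech(x)\,q\|_{L^2(\{y<y_0\})}\to 0$ while $\|q\|_{L^{3,\infty}}$ stays bounded, and moreover $\|\omega\|_{L^2(\{y<y_0\}\times\R_x)}^2=\int_{-\infty}^{y_0}\|\omega(\cdot,y)\|_{L^2_x}^2\,dy\to 0$. Using the one-dimensional Gagliardo--Nirenberg inequalities (in particular $H^1(\R)\hookrightarrow L^\infty(\R)$) and the identity $\cosh(x)\,\omega=\delta$, one estimates the two integrals on the right by $\tfrac12\|\omega\|_{H^1_x}^2$ plus a term that is small, as $y_0\to-\infty$, times $\sup_{y<y_0}\|\omega(\cdot,y)\|_{L^2_x}^2$. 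Since $\|\omega(\cdot,y)\|_{L^2_x}$ cannot grow exponentially as $y\to-\infty$ (again because $\delta\in L^{3,\infty}(\RR)\cap\cosh(x)L^2(\RR)$, so in particular $\|\omega(\cdot,y)\|_{L^2_x}\to 0$ along a sequence $y\to-\infty$), integrating the differential inequality over $\{y<y_0\}$ forces $\omega\equiv 0$ there. A bootstrap in $y_0$ — exactly as in the passage from Lemma \ref{lemma:uniqueness_2} to Corollary \ref{cor:uniquenessOfMiura} and Proposition \ref{prop:unconditional_uniqueness_miura} — then gives $\omega\equiv 0$ on all of $\RR$, i.e.\ $\delta\equiv 0$ and $w^1=w^2$.

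The main obstacle is this last step: the coefficient $q=z^1+z^2$ sits at the critical regularity $L^{3,\infty}(\RR)\cap\cosh(x)L^2(\RR)$ and is \emph{not} small in any clean scaling-critical norm, so it cannot be treated perturbatively in the usual way, and the naive bound $|\int q\,\omega_x\,\omega|\lesssim\|q\|_{L^{3,\infty}}\|\omega\|_{H^1_x}^2$ does not supply the needed smallness. The resolution is to exploit the $\cosh$-weight that is already present in the hypothesis — its $L^2$-mass is absolutely continuous and therefore concentrates away from $\{y<y_0\}$ — in conjunction with the $\sech^2$-weight appearing in the energy identity (for instance by writing $q=\cosh(x)\cdot(\sech(x)q)$ and distributing the $\cosh$ against the decay of $\omega$, or by integrating $\int q\,\omega_x\,\omega=-\tfrac12\int q_x\,\omega^2$ by parts after a regularization), so that a Gronwall inequality of the form $\frac{d}{dy}\|\omega(\cdot,y)\|_{L^2_x}^2\le -c\,\|\omega(\cdot,y)\|_{L^2_x}^2$ with $c>0$ genuinely closes on $\{y<y_0\}$ for $y_0$ sufficiently negative.
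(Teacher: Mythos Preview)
Your weighted energy identity is correct, and the observation that the outward transport produces the full $\|\omega\|_{H^1_x}^2$ on the left without any orthogonality condition is a nice one. The problem is exactly where you flag it, and your proposed fixes do not close the gap.

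The obstruction is the term $\int q\,\omega_x\,\omega$. The only globally available bound on $q=z^1+z^2$ is the \emph{critical} one $q\in L^{3,\infty}(\RR)$, and this norm is not absolutely continuous: $\|q\,\mathbbm 1_{\{y<y_0\}}\|_{L^{3,\infty}}$ need not tend to zero. Your first fix, writing $q=\cosh(x)\cdot(\sech(x)q)$, shifts a factor of $\cosh$ onto $\omega$ or $\omega_x$; but $\cosh(x)\,\omega=\delta\in L^{3,\infty}$ only, and $\cosh(x)\,\omega_x$ carries no useful bound at all, so the resulting trilinear integral $\int(\sech q)\,\omega_x\,\delta$ is not controlled by $\|\sech q\|_{L^2}$, $\|\omega_x\|_{L^2}$ and $\|\delta\|_{L^{3,\infty}}$. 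Your second fix, integrating by parts to $-\tfrac12\int q_x\,\omega^2$, requires information on $q_x=(z^1+z^2)_x$; since the Claim allows $g\in\mathscr D'(\RR)$ arbitrary, the equation for $z^j$ gives no control on $z^j_x$ whatsoever. In short, after absorbing what can be absorbed you are left with a contribution of size $\|q\|_{L^{3,\infty}}^2\,\|\omega\|_{L^\infty_yL^2_x}^2$ on the right with a coefficient that is \emph{not} small, and the Gronwall loop does not close.

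The paper avoids the energy route entirely. After the reflection, it first shows that any $L^{3,\infty}$ solution of the homogeneous equation $(\partial_y-\partial_x^2+2\tanh\partial_x)w=0$ vanishes, so that the difference satisfies the Duhamel identity $w=\partial_xT\big((w^1+w^2)w\big)$ with $T=K_{\mathrm{tr}+}$ the explicit integral operator from Proposition~\ref{prop:explicit-kernels}. The point is then a mapping/product scheme: one checks $\partial_xT:Y\to X$ with $X=L^{3,\infty}\cap\cosh^\eps(x)L^2$ and $Y=L^{3/2,\infty}\cap\cosh^{2-\eps}(x)L^1$, together with the algebra-type bound $\|(w^1+w^2)w\|_Y\lesssim\|w^1+w^2\|_X\|w\|_X$. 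Restricting to $\{y<M\}$ (which commutes with $\partial_xT$ since its kernel is supported in $\{y>0\}$), the factor $\|w^1+w^2\|_X$ becomes small for $M$ negative enough and the contraction forces $w=0$ there; a bootstrap then propagates this to all of $\RR$. The integral-operator framework is what lets one trade the derivative in $(q\delta)_x$ for smoothing from $\partial_xT$, precisely the mechanism your energy estimate cannot access.
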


\begin{claimproof}[of the Claim]
    We first verify that for any $\eps>0$,
    \[
    \de_xT: L^{3/2,\infty}(\RR)\cap \cosh^{2-\eps}(x)L^1(\RR)\to L^{3,\infty}(\RR)\cap \cosh^\eps(x)L^2(\RR)
    \]
    is well-defined and bounded, with $T$ as in \eqref{eq:operator-T-transport-outwards}. In particular, $\de_xT$ is the integral operator
    \[
    \de_x T= \de_x\Gamma^+\Mult_{\eta^+}+\de_x\Gamma^-\Mult_{\eta^-}+\fr 12 (\Gamma^--\Gamma^+)\Mult_{\sech^2}.
    \]
    The statement is true for the operators $\de_x\Gamma^\pm$: first, it holds
    \[
    \|\sech^\eps(x)\de_x\Gamma^\pm f\|_{L^2(\RR)}\lesssim_\eps \|f\|_{L^{3/2,\infty}(\RR)}
    \]
    by interpolation between estimate $\|\de_x\Gamma^\pm f\|_{L^2(\RR)}\lesssim \|f\|_{L^{6/5}(\RR)}$ from Proposition \ref{prop:spacetime_estimate_heat_general}, and estimate \eqref{eq:z-024} in Lemma \ref{lemma:weighted-estimates-heat} with $s=1$; second, their kernels belong to $L^{3/2,\infty}(\RR)$, and $L^{3/2,\infty}*L^{3/2,\infty}\subset L^{3,\infty}$ \cite{o_neil-1963-convolution_operators_and_Lpq}. The statement is also true for the operators $\Gamma^\pm\Mult_{\sech^2}$: in fact, since $\Gamma^\pm\in L^{3,\infty}(\RR)$, the two respective convolution operators map $L^1(\RR)$ to $L^{3,\infty}(\RR)$, and they map $\sech^{\eps}(x)L^1(\RR)$ to $\cosh^\eps(x)L^2(\RR)$ by \eqref{eq:z-026}. So the estimate for $\de_x T$ is proved by combining the above ones.
    
    Consider now $w:=w^1-w^2$. After a reflection in the $y$ variable, using the same names to denote the reflected versions of the respective functions, $w$ solves
    \[
    w_y-w_{xx}+2(\tanh w)_x=-((w^1+w^2)w)_x.
    \]
    By density and the uniqueness of solutions of the linear heat equation with prescribed initial data, a solution $z\in L^{3,\infty}(\RR)$ of the above equation with the right hand side equal to zero lies in $C(\R_y,L^1+L^\infty(\R))$, and satisfies the a priori estimate
    \[
    \|z|_{y=y_1}\|_{L^1+L^\infty(\R)}\lesssim \|z|_{y=y_0}\|_{L^1+L^\infty(\R)}
    \]
    with $y_0<y_1$ due to the diffusion and the vector field $\de_y+2\tanh(x)\de_x$ having positive divergence. In particular, $z$ must be zero by sending $y_0$ to $-\infty$. Since $w\in L^{3,\infty}(\RR)\cap \cosh(x)L^2(\RR)$, this implies that
    \[
    w=\de_xT((w^1+w^2)w).
    \]

    Calling $X:=L^{3,\infty}(\RR)\cap\cosh^\eps(x)L^2(\RR)$, $Y:=L^{3/2,\infty}(\RR)\cap\cosh^{2-\eps}(x)L^1(\RR)$, we have the estimate
    \als{
    \|w\|_X&=\|\de_xT((w^1+w^2)w)\|_X\\
    &\lesssim \|(w^1+w^2)w\|_Y\\
    &\lesssim \|w^1+w^2\|_X\|w\|_X.
    }
    This estimate holds when restricting all functions on any half plane $\R\times (-\infty,M)$, $y_0\in\R$, due to the fact that all the convolution kernels appearing in $\de_x T$ are supported for positive $y$. When $M$ is smaller than a suitable $M_0$ such that $\mathbbm 1_{\{y<M_0\}}(w^1+w^2)$ is small enough in $X$, the estimate implies
    \begin{equation}\label{eq:z-027}
    \mathbbm 1_{\{y<M\}}w=0.
    \end{equation}
    Equation \eqref{eq:z-027} then implies
    \begin{equation}\label{eq:z-028}
        w_y-w_{xx}+2(\tanh w)_x=-(\mathbbm 1_{\{y<M\}}(w^1+w^2)w)_x.
    \end{equation}
    The same argument above, with a bootstrap argument involving \eqref{eq:z-027} and \eqref{eq:z-028}, imply that \eqref{eq:z-027} holds for all $M\in\R$. The claim is thus proved.
\end{claimproof}
As we said, we also have $w'\in L^3(\RR)$, $w\in L^{3,\infty}(\RR)\setminus L^3(\RR)$, which yields a contradiction with the assumption that $\wu$ is in the range of $\samB$. This concludes the proof of the Theorem.
\end{proof}

\subsection{A conjecture on the range of the soliton addition map}\label{subsec:conjecture_on_Phi_and_h}

In \cite{mizumachi2019phase}*{Theorem 1.5}, Mizumachi proves that polinomially localized perturbations of the line soliton induce a finite, well-defined shift $h\in\R$ of the position of the soliton along the $x$ axis in a co-moving frame. Specifically, if $u$ is the solution of \eqref{eq:KP-II} with initial datum $u_0=\ph+g$ such that $\left\|\langle x\rangle(\langle x\rangle+\langle y\rangle) g\right\|_{H^1\left(\mathbb{R}^2\right)}$ is small enough, there exists $h\in\R$ such that suitable modulation parameters $x=x(t,y)$ and $\lambda=\lambda(t,y)$ describing the modulations of the line soliton ($u= \ph^{\lambda(t,y)}(x-x(t,y))+\OO_{L^2}(\|g\|)$ for a suitable norm $\|\cdot\|$) satisfy $\sup _{t \geq 0, y \in \mathbb{R}}\left|x(t, y)-4 t\right| \lesssim \left\|\langle x\rangle(\langle x\rangle+\langle y\rangle) g\right\|_{H^1\left(\mathbb{R}^2\right)}$, and
\[
\left
\{\begin{aligned}
&\lim _{t \rightarrow \infty}\left\|\lambda(t, \cdot)-1\right\|_{L^{\infty}(\R)}=0 \\
&\lim _{t \rightarrow \infty}\left\|x(t, \cdot)-4t-h\right\|_{L^{\infty}\left(|y| \leq(4-\delta) t\right)}=0 \\
&\lim _{t \rightarrow \infty}\left\|x(t, \cdot)-4t\right\|_{L^{\infty}\left(|y| \geq(4+\delta) t\right)}=0
\end{aligned}\right.
\]
for any $\delta>0$.

We conjecture that when $g$ is small enough in $(\langle x\rangle(\langle x\rangle+\langle y\rangle))^{-1}H^1(\RR)$ and $Y_\eps(\RR)$, the spaces involved in Theorem \cite{mizumachi2019phase}*{Theorem 1.5} and our Theorem \ref{theorem:range_of_samB_contains_manifold_of_cod_1} and Corollary \ref{cor:codimension_1_stability} respectively, it holds
\[
h=0\quad\iff\quad \Phi(g)=0,
\]
where $\Phi$ is the functional in Theorem \ref{theorem:range_of_samB_contains_manifold_of_cod_1}. More generally, we conjecture that $h$ is a function of $\Phi$.\\
As discussed in the introduction, the codimension-1 condition is natural, linked to the integrable structure, and of qualitative type. It is reasonable to suspect that the manifold contained in the range of $\samB$ follows special dynamics along the KP-II flow. The above equivalence says that this manifold corresponds to the set of perturbations such that the line soliton converges back to the non-perturbed soliton $\ph(x-4t)$ locally in space, along a co-moving frame. All other perturbations will converge to a soliton shifted by a finite, non-zero amount along the $x$ axis. Intuitively, if the perturbed soliton was the image of a localized solution of KP-II through $\samB$, the behavior of $x(t,y)$ described above would look too special to be compatible with the fact that general small solutions of KP-II in $\Hcrit$ scatter, unless the constant $h$ is zero. We did not find a proof of this before finishing this article, so we leave it as an open problem.

\section{A multisoliton addition map}\label{sec:six}

Recall the definitions of $\eleV$, $\eleVt$ in Propositions \ref{prop:kink-addition_map_eleV}, \ref{prop:eleVt_superposition-of-elementary-functions}.

\begin{definition}
    We define the (upgraded) \emph{B\"acklund transform for $(M-1,1)$-solitons} as follows. Let $u_0\in\Hcrit$ small enough, $M\geq 1$, and $\vec \lambda\in \R^M$ such that $\lambda_1<\dots<\lambda_M$.  For $\vec c\in \R^M$, we define the transformation for fixed time
    \[
    \eleB^{\vec\lambda}(u_0,\vec c):=u_0-2\de_x \eleV^{\vec\lambda}(u_0,\vec c),
    \]
    and the transformation that includes the time evolution of the image through the KP-II flow,
    \[
    \eleBt^{\vec\lambda}(u_0,\vec c):=u-2\de_x\eleVt^{\vec\lambda}(u_0,\vec c),
    \]
    where $u$ is the solution of KP-II with initial datum $u_0$.
\end{definition}

From the definition of multisolitons in Section \ref{sec:preliminaries} and from the aforementioned Propositions, we have the following results:

\begin{enumerate}
    \item Adding a scalar multiple of the vector $(1,\dots,1)$ to $\vec c$ leaves the image of $\eleB$, $\eleBt$ unchanged.
    \item For $M=2$, $\eleB^{(-1,1)}$ coincides with the soliton addition map $\samB$ we constructed in Definition \ref{def:soliton_addition_map_and_L^2_phi(RR)} up to a homeomorphic change of variables of the domain, and up to the symmetry of the previous point (this follows from Lemma \ref{lemma:change-of-variables-c-alpha_0-gamma_0}).
    \item Given $M\geq 2,\vec\lambda,\vec c$, the function $\eleB^{\vec\lambda}(0,\vec c)$ is a $(M-1,1)$-multisoliton (with $\eleBt^{\vec\lambda}(0,\vec c)$ being its time evolution along the KP-II flow). For fixed $M\geq 2$, the map $(\vec\lambda,\vec c)\mapsto \eleB^{\vec\lambda}(0,\vec c)$ is a bijective parametrization of the set of $(M-1,1)$-multisolitons, up to rescaling $\vec c$ as in the first point.
    \item The function $\wu:=\eleBt^{\vec\lambda}(u_0,\vec c)$ belongs to $L^2_\loc([0,\infty)\times\R^2)$ and solves the KP-II equation distributionally with initial datum $\eleB^{\vec\lambda}(u_0,\vec c)$ (the symbol `$\sto$' refers to the forward time evolution).
    \item For each $\vec\lambda,u_0,\vec c$, there exists a continuous function $\vec{\mathbf c}=\vec{\mathbf c}(t)$ with $\vec{\mathbf c}(0)=\vec c$ such that
    \[
    \eleBt^{\vec\lambda}(u_0,\vec c)(t)=\eleB^{\vec\lambda}(u(t),\vec{\mathbf c}(t)).
    \]
    \item The maps are continuous with values in suitable low regularity spaces.
\end{enumerate}
    
The above is the subclass of the `tree-shaped' multisolitons, with $N_+=1$, the number of outgoing solitons at $y=+\infty$. These are the multisolitons in which the size $N$ of the matrix in \eqref{eq:wronskian-determinant} equals $1$. For instance, choosing $M=3$, the map $\eleB$ allows the construction of solutions close to a modulated $Y$-shaped multisoliton.

We note that the transformation makes sense for $M=1$ and yields a nontrivial B\"acklund transform for solutions of KP-II without solitons. It is an immediate consequence of Corollary \ref{cor:estimates-for-wtv-part-2} that $\eleB^{\lambda}(\cdot,1)$, for $\lambda\in\R$, leaves the space $\Hcrit$ invariant for small data. This map can be seen as a limit of the map for $M=2$ where the one of the coordinates of the vector $\vec c$ goes to $+\infty$, which morally corresponds to adding a line soliton at $x=\infty$ or $x=-\infty$.

Finally, the B\"acklund transforms can be conjugated by the reflection symmetry \eqref{eq:reflection-symmetry-KP-II}. The conjugated maps add solitons that have one soliton at $y\to-\infty$ and $M-1$ solitons at $y\to\infty$.

\appendix

\section{Linear estimates and parabolic equations}\label{appendix:heat-equation}

\subsection{Linear operators, kernels, and estimates for the heat equation}
\begin{definition}\label{def:heat-operators-in-appendix}
    We define the following operators that act on suitable functions on $\Real^2$:
    \begin{enumerate}[label=-]
        \item $H:=\partial_y-\partial_x^2$
        \item $H^{(c)}:=\partial_y+c \partial_x-\partial_x^2$
        \item $L_{{\rm tr}\pm}:=\partial_y-\partial_x^2\pm 2\tanh\partial_x$
        \item $L_{{\rm co}\pm}:=\partial_y-\partial_x^2\pm 2\partial_x\tanh$
        \item $L_{{\rm tr}-}\partial_x^{-1}=\partial_x^{-1}L_{{\rm co}-}=\partial_x^{-1}\partial_y-\partial_x-2\tanh$,
    \end{enumerate}
where `$\tanh$' denotes the multiplication operator by the function $\tanh(x)$. Furthermore, let 
\[
G_t(x):=\mathbbm 1_{\{t>0\}} \frac{1}{\sqrt{4\pi t}} e^{-\frac{x^2}{4t}}
\]
be the heat kernel in 1 space dimension. We define the heat kernels
\begin{equation}\label{eq:heat-kernels-definition}
\begin{aligned}
&\Gamma(x,y):=G_{y}(x),&&\demu\Gamma(x,y):=\int_{0}^x G_y(x')dx',&&\Gamma^\pm:=\Gamma^{(\pm 2)},\\
&\Gamma^{(c)}(x,y):=\Gamma(x-cy,y),&&\demu\Gamma^{(c)}(x,y):=\demu\Gamma(x-cy,y),&& \demu\Gamma^\pm:=\demu\Gamma^{(\pm 2)}.
\end{aligned}
\end{equation}
For each of the above kernels $K=K(x,y)$, we will use the same symbol to denote the associated convolution operator
\[
K f(x,y):=\int_{\RR} K(x-x_0,y-y_0)f(x_0,y_0) \,dx_0\,dy_0.
\]
More generally, in what follows we will consider integral operators that are not translation-invariant:
\[
K f(x,y):=\int_{\RR} K(x,y;x_0,y_0)f(x_0,y_0) \,dx_0\,dy_0,
\]
with kernels $K=K(x,y;x_0,y_0)$.
Finally, let $\Mult_{\eta^\pm}=:\Mult^\pm$, $\Mult_{\sech^2}$ be the multiplication operators by the functions $(x,y)\mapsto \eta^\pm(x):=\frac{1}{1+e^{\pm 2x}}$, $(x,y)\mapsto\sech^2(x)$ respectively.
\end{definition}
\begin{proposition}[Explicit kernels]\label{prop:explicit-kernels}
	The operators in Definition \ref{def:heat-operators-in-appendix} admit respectively the following operators as right inverses:
            \begin{enumerate}[label=-]
                \item $\Gamma,$
                \item $\Gamma^{(c)},$
                \item $K_{{\rm tr}+}:=\Gamma^+\Mult^++\Gamma^-\Mult^- +\aha (\demu\Gamma^--\demu\Gamma^+) \Mult_{\sech^2},$
                \item $K_{{\rm tr}-}:=\Mult^+\Gamma^-+\Mult^- \Gamma^+,$
                \item $K_{{\rm co}+}:=\Gamma^+\Mult^++\Gamma^-\Mult^-,$
                \item $K_{{\rm co}-}:=\Mult^+\Gamma^-+\Mult^-\Gamma^+ +\aha \Mult_{\sech^2}(\demu\Gamma^--\demu\Gamma^+),$
                \item $DK_{{\rm tr}-}:=\Mult^+\partial_x \Gamma^-+\Mult^-\partial_x\Gamma^++\aha \Mult_{\sech^2}(\Gamma^--\Gamma^+).$
            \end{enumerate}
The above are all integral operators, the kernels of which will be called with the same symbols.
\end{proposition}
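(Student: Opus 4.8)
\textbf{Proof proposal for Proposition \ref{prop:explicit-kernels}.}

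The plan is to verify each of the seven identities by a direct computation at the level of Fourier transforms in $x$ and $y$ wherever the operators are convolution operators, and by an explicit ``product rule'' manipulation of the kernels for the remaining ones. First I would dispatch the two elementary cases: the fact that $\Gamma$ right-inverts $H=\partial_y-\partial_x^2$ is the classical statement that the heat kernel is a fundamental solution of the heat equation, and the case of $H^{(c)}$ follows from the case of $H$ by the change of coordinates $(x,y)\mapsto(x-cy,y)$, which conjugates $H^{(c)}$ to $H$ and carries $\Gamma$ to $\Gamma^{(c)}$. In particular, $\Gamma^\pm$ right-inverts $H^{(\pm 2)}=\partial_y\pm 2\partial_x-\partial_x^2$, and $\partial_x\Gamma^{(c)}$, $\partial_x^{-1}\Gamma^{(c)}$ have the obvious meaning as convolution against the indicated kernels.

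The heart of the matter is the operator $L_{{\rm tr}+}=\partial_y-\partial_x^2+2\tanh\partial_x$ and its proposed right inverse $K_{{\rm tr}+}=\Gamma^+\Mult^++\Gamma^-\Mult^-+\tfrac12(\partial_x^{-1}\Gamma^--\partial_x^{-1}\Gamma^+)\Mult_{\sech^2}$. The key algebraic observation is the decomposition $2\tanh=2\eta^+-2\eta^-$ together with $\partial_x\eta^\pm=\pm\tfrac12\sech^2$, and more importantly the operator identity
\[
L_{{\rm tr}+}=(\partial_y-\partial_x^2+2\partial_x)\,\Mult^+\;\text{-part}\;+\;(\partial_y-\partial_x^2-2\partial_x)\,\Mult^-\;\text{-part}
\]
in the following precise sense: since $\eta^++\eta^-=1$, one has $L_{{\rm tr}+}=H^{(2)}\Mult^{+}+H^{(-2)}\Mult^{-}+R$ where $R$ is the commutator correction, a zeroth-order multiplication operator coming from $[\partial_x,\eta^\pm]=\pm\tfrac12\sech^2$ and from the second derivative falling on $\eta^\pm$. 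The plan is to compute $R$ explicitly; one finds $R=-\aha\sech^2\!\cdot\partial_x^{-1}(\,\cdot\,)$... more carefully, $R$ involves exactly the $\sech^2$ factor, and the third term of $K_{{\rm tr}+}$, namely $\tfrac12(\partial_x^{-1}\Gamma^--\partial_x^{-1}\Gamma^+)\Mult_{\sech^2}$, is designed so that $L_{{\rm tr}+}$ applied to it cancels $R$ applied to the first two terms. Concretely I would check that $H^{(\pm2)}$ applied to $\partial_x^{-1}\Gamma^{\mp}$ produces a $\partial_x^{-1}\delta$-type contribution plus cross terms, and that summing the three pieces all cancellations occur, leaving the identity operator. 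It is cleanest to do this on test functions: apply $K_{{\rm tr}+}$ to $f$, then apply $L_{{\rm tr}+}$, expand $L_{{\rm tr}+}=H+2(\eta^+-\eta^-)\partial_x$, use $H\Gamma^\pm=H^{(\pm2)}\Gamma^\pm\mp2\partial_x\Gamma^\pm=\delta\mp2\partial_x\Gamma^\pm$, and collect terms grouped by the multiplier $\Mult^+$, $\Mult^-$, $\Mult_{\sech^2}$; the $\sech^2$-grouped terms vanish precisely because of the relation $\partial_x(\partial_x^{-1}\Gamma^\pm)=\Gamma^\pm$ and the matching of signs, and the $\eta^\pm$-grouped terms collapse to $\Mult^\pm f$, whose sum is $f$.

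The remaining four identities are variants of the same computation. For $L_{{\rm tr}-}=\partial_y-\partial_x^2-2\tanh\partial_x$ with right inverse $K_{{\rm tr}-}=\Mult^+\Gamma^-+\Mult^-\Gamma^+$, the ordering of multiplication and convolution is reversed, and the $\sech^2$-correction term is absent because the commutator correction vanishes for this particular ordering — this should be verified directly by the same expansion. The operators $L_{{\rm co}\pm}=\partial_y-\partial_x^2\pm2\partial_x\tanh$ are the formal adjoints (up to reflection) of $L_{{\rm tr}\mp}$, so $K_{{\rm co}+}=\Gamma^+\Mult^++\Gamma^-\Mult^-$ and $K_{{\rm co}-}=\Mult^+\Gamma^-+\Mult^-\Gamma^++\aha\Mult_{\sech^2}(\partial_x^{-1}\Gamma^--\partial_x^{-1}\Gamma^+)$ follow either by transposition from the previous identities or by a parallel direct check; I would do the direct check to keep signs transparent. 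Finally, $DK_{{\rm tr}-}$ is obtained by applying $\partial_x$ to $K_{{\rm tr}-}$ after writing $L_{{\rm tr}-}\partial_x^{-1}=\partial_x^{-1}L_{{\rm co}-}$, so it is a right inverse of $\partial_x^{-1}\partial_y-\partial_x-2\tanh$; here the $\aha\Mult_{\sech^2}(\Gamma^--\Gamma^+)$ term appears as $\partial_x$ of $\aha\Mult_{\sech^2}(\partial_x^{-1}\Gamma^--\partial_x^{-1}\Gamma^+)$, i.e. the $\partial_x$ hits the heat-kernel antiderivatives and not the $\sech^2$ because of the ordering. The main obstacle throughout is purely bookkeeping: keeping track of which terms are convolutions versus multiplications, the non-commutativity $[\partial_x,\tanh]=\sech^2\neq0$, and the signs in $\partial_x^{-1}$; once the decomposition $\tanh=\eta^+-\eta^-$, $\partial_x\eta^\pm=\pm\aha\sech^2$, $H\Gamma^\pm=\delta\mp2\partial_x\Gamma^\pm$ is set up, every identity reduces to matching three groups of terms, and no analytic subtlety arises since all computations can be justified first on Schwartz functions and then extended by the mapping properties recorded elsewhere in the appendix.
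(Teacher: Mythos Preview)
Your approach is correct: direct verification that $L K = \mathrm{Id}$ via the decomposition $\tanh = \eta^+ - \eta^-$, $\partial_x\eta^\pm = \pm\tfrac12\sech^2$, and the identities $H^{(\pm 2)}\Gamma^\pm = \delta$ does go through, and for the ``tr$-$'' case the cross terms indeed cancel exactly without a $\sech^2$ correction (one computes $L_{\rm tr-}(\eta^+\Gamma^- f) = \eta^+ f$ using $2\eta^+ - \sech^2 - 2\tanh\,\eta^+ = 0$, and symmetrically for $\eta^-\Gamma^+$).

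The paper, however, takes a different and more conceptual route. Rather than verifying the formulas, it \emph{derives} them from the conjugation relation
\[
L_{\rm tr-} = \Mult^{-1} H \Mult, \qquad \Mult = \text{multiplication by } e^y\cosh(x),
\]
which is nothing but the Cole--Hopf linearization underlying the whole paper. This immediately gives $K_{\rm tr-} = \Mult^{-1}\Gamma\Mult$, and one checks that the kernel $e^{-(y-y_0)}\tfrac{\cosh x_0}{\cosh x}\Gamma(x-x_0,y-y_0)$ equals $\eta^+(x)\Gamma^-(x-x_0,y-y_0) + \eta^-(x)\Gamma^+(x-x_0,y-y_0)$ via $\Gamma^\pm(x,y) = e^{\mp x - y}\Gamma(x,y)$. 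The remaining kernels are then obtained from $K_{\rm tr-}$ by the adjoint-plus-reflection relation (which you also use) together with $L_{\rm co-}\partial_x = \partial_x L_{\rm tr-}$, so in particular $DK_{\rm tr-} = \partial_x K_{\rm tr-}$ directly. Your derivation of the $\sech^2$ term in $DK_{\rm tr-}$ via $K_{\rm co-}$ is slightly garbled: the cleanest way is just $\partial_x(\eta^\pm\Gamma^\mp) = \pm\tfrac12\sech^2\,\Gamma^\mp + \eta^\pm\partial_x\Gamma^\mp$, with the $\sech^2$ arising from $\partial_x$ hitting the \emph{multiplier} $\eta^\pm$, not from any antiderivative.

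What the paper's route buys is an explanation of \emph{why} the kernels have this form, consistent with the Cole--Hopf theme used elsewhere; what your route buys is a self-contained check requiring no inspired conjugation, at the cost of more bookkeeping.
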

\noindent The proof is straightforward. The kernels can be deduced from the heat kernel using the following relations
\begin{align}\label{eq:conjurel}
	&L_{\rm tr-}=\Mult^{-1}H\Mult,&&L_{\rm co-}\partial_x=\partial_x L_{\rm tr-},&&(\partial_x+2\tanh)L_{\rm co-}=L_{\rm tr-}(\partial_x+2\tanh(x)),
\end{align}
where $\Mult$ is the multiplication operator by the function $e^y\cosh(x)$, and the fact that some operators are adjoint to others after a reflection in the $y$ variable (for instance, $K_{{\rm tr}-}(x,y;x_0,y_0)=K_{{\rm co}+}(x_0,y;x,y_0)$). Note also that it holds $DK_{{\rm tr}-}=\partial_x K_{{\rm tr}-}=-\partial_{x_0} K_{{\rm co}-}$.

\begin{proposition}[Estimates for the heat equation with forcing]\label{prop:spacetime_estimate_heat_general}
    Let $s\in[0,2]$, $p,q,r,\sigma\in[1,\infty]$ satisfying
    \begin{equation*}
	\left(\frac{2}{r}+\frac{1}{\sigma}\right)=\left(\frac{2}{p}+\frac{1}{q}\right)-2+s.
    \end{equation*}
Consider the integral operator $\Gamma$ as before:
$$ \Gamma f(x,y):=\int_{\RR}\Gamma(x-x_0,y-y_0)f(x_0,y_0)\,dx_0\,dy_0, $$ 
where $\Gamma(x,y)=\mathbbm 1_{\{y>0\}} \frac{1}{\sqrt{4\pi y}} e^{-\frac{x^2}{4y}}$. The estimate
\begin{equation*}
	\||\partial_x|^s \Gamma f\|_{L_y^rL_x^\sigma}\lesssim\|f\|_{L_y^pL_x^q}
\end{equation*}
holds whenever the right hand side is finite, in the following cases:
\begin{enumerate}
    \item $s\in[0,2)$, $1<p<r<\infty$ and  $1\leq q\leq \sigma\leq \infty$,
    \item $s=2$,  $1< p,q<\infty$, $(p,q)=(r,\sigma)$,
    \item $s\in[0,1]$, $(r,\sigma)=(\infty,2)$, $1\leq p,q,\leq 2$, $(p,q,s)\neq(2,1,1/2)$\\ (in this case, it holds $|\partial_x|^s\Gamma f\in C_0(\R_y,L^2_x)$).
\end{enumerate}
\end{proposition}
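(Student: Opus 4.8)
\textbf{Proof plan for Proposition \ref{prop:spacetime_estimate_heat_general}.} The plan is to treat the three cases separately, reducing everything to well-known properties of the heat semigroup on $\R_x$ together with Hardy--Littlewood--Sobolev in the $y$ variable. Throughout, write $\Gamma f(x,y) = \int_0^\infty e^{t\partial_x^2} f(\cdot, y-t)(x)\,dt$, so that $|\partial_x|^s \Gamma f(x,y) = \int_0^\infty \big(|\partial_x|^s e^{t\partial_x^2}\big) f(\cdot,y-t)(x)\,dt$, and recall the basic bounds $\||\partial_x|^s e^{t\partial_x^2} g\|_{L^\sigma_x}\lesssim t^{-s/2 - \frac12(1/q - 1/\sigma)}\|g\|_{L^q_x}$ for $1\le q\le\sigma\le\infty$ (from Young's inequality for the kernel $|\partial_x|^s G_t$, which has the scaling $\||\partial_x|^s G_t\|_{L^m_x}\sim t^{-s/2 - \frac12(1-1/m)}$), and the $L^q_x$-boundedness of $|\partial_x|^2 e^{t\partial_x^2}$ in the sense that $\partial_x^2 \Gamma = \partial_y\Gamma - \mathrm{Id}$ is a singular integral of Calder\'on--Zygmund type adapted to the parabolic scaling.

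\emph{Case (1): $s\in[0,2)$, $1<p<r<\infty$, $1\le q\le\sigma\le\infty$.} First I would dispose of the $x$-variable pointwise in $y$ using the dispersive bound above, which gives
\[
\||\partial_x|^s\Gamma f(\cdot,y)\|_{L^\sigma_x} \lesssim \int_0^\infty t^{-\beta}\, \|f(\cdot,y-t)\|_{L^q_x}\,dt, \qquad \beta := \tfrac{s}{2} + \tfrac12\Big(\tfrac1q - \tfrac1\sigma\Big).
\]
The scaling relation in the hypothesis is exactly $\frac2r = \frac2p - 2 + s - (\frac1\sigma - \frac1q)$, i.e. $\frac1r = \frac1p - (1-\beta)$, so $1-\beta = \frac1p - \frac1r \in (0,1)$ because $p<r$; also $\beta<1$ since $\beta = 1 - (\frac1p - \frac1r) < 1$, and $\beta>0$. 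Hence the $t$-integral is a one-sided Riesz potential $g\mapsto \int_0^\infty t^{-\beta} g(y-t)\,dt$ of order $1-\beta$ on $\R_y$, which maps $L^p_y\to L^r_y$ by the Hardy--Littlewood--Sobolev inequality precisely when $1<p<r<\infty$ and $\frac1r = \frac1p - (1-\beta)$. Applying this with $g(y)=\|f(\cdot,y)\|_{L^q_x}$ yields the claim. (The endpoint restriction $p>1$, $r<\infty$ is the usual HLS restriction; $q\le\sigma$ is what makes the $x$-estimate available.)

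\emph{Case (2): $s=2$, $1<p,q<\infty$, $(p,q)=(r,\sigma)$.} Here the scaling relation forces $2/p + 1/q = 2/p + 1/q$, automatically satisfied, and the operator $\partial_x^2\Gamma$ is no longer handled by absolute integration in $t$ (the integral $\int_0^\infty t^{-1}dt$ diverges logarithmically), so one needs genuine singular-integral theory. I would invoke the identity $\partial_x^2\Gamma = \partial_y\Gamma - \mathrm{Id}$ together with the fact that the kernel of $\partial_x^2\Gamma$ is a Calder\'on--Zygmund kernel with respect to the parabolic quasi-metric $d_{\rm p}$ on $\R^2$ (homogeneity $-3$, which equals the homogeneous dimension, and the right cancellation) — this is standard parabolic theory, see e.g. the references on spaces of homogeneous type already cited in the paper. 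By the Calder\'on--Zygmund theorem on the space of homogeneous type $(\R^2, d_{\rm p}, dx\,dy)$, $\partial_x^2\Gamma$ is bounded on $L^m(\R^2)$ for $1<m<\infty$; the mixed-norm statement $L^p_yL^q_x\to L^p_yL^q_x$ for $1<p,q<\infty$ then follows from the vector-valued Calder\'on--Zygmund theorem (extrapolation to $L^p_y(\ell$-valued$)$), again classical. Alternatively one can first prove the $L^q_x$ bound uniformly in $y$ with a parabolic kernel in $t$ and then apply the $L^p_y$ boundedness of the resulting maximal/singular operator in $t$.

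\emph{Case (3): $s\in[0,1]$, $(r,\sigma)=(\infty,2)$, $1\le p,q\le 2$, $(p,q,s)\ne(2,1,1/2)$.} The scaling relation reads $1/q' = 2/p + 1/q - 2 + s$... more precisely $1 = 2/p + 1/q - 2 + s$, i.e. $2 - s = 2/p + 1/q - 1 = 2/p - 1/q'$. The target being $L^\infty_y L^2_x$, I would estimate $\sup_y\||\partial_x|^s\Gamma f(\cdot,y)\|_{L^2_x}$ directly. Split $|\partial_x|^s\Gamma f(\cdot,y) = \int_0^\infty |\partial_x|^s e^{t\partial_x^2} f(\cdot,y-t)\,dt$ and use $\||\partial_x|^s e^{t\partial_x^2}g\|_{L^2_x}\lesssim t^{-s/2 - \frac12(1/q-1/2)}\|g\|_{L^q_x}$. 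Then with $g(y):=\|f(\cdot,y)\|_{L^q_x}\in L^p_y$ one needs $\sup_y \int_0^\infty t^{-\gamma} g(y-t)\,dt \lesssim \|g\|_{L^p_y}$, $\gamma := \frac{s}{2} + \frac12(\frac1q - \frac12)$; by H\"older this holds provided $t^{-\gamma}\in L^{p'}_y(0,\infty)$ near $0$ and $\infty$ — but a single power is never globally in $L^{p'}$, so instead one localizes in $t$ dyadically and uses Schur/Young across $y$-translates: for each dyadic block $t\sim 2^j$, $\|\int_{t\sim 2^j} t^{-\gamma} g(y-t)dt\|_{L^\infty_y}\lesssim 2^{j(1 - \gamma - 1/p)}\|g\|_{L^p_y}$ (Young with the $L^{p'}$ norm of $\mathbf 1_{t\sim 2^j}t^{-\gamma}$), and the exponent $1 - \gamma - 1/p$ is zero exactly by the scaling relation — wait, that gives a borderline $\ell^\infty_j$ sum, not summable, which is why this is the $C_0$ endpoint: one gets boundedness into $L^\infty_yL^2_x$ with a logarithmically-failing sum unless one is slightly off the endpoint. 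To close it cleanly I would instead use the $TT^*$/duality trick: the adjoint of $|\partial_x|^s\Gamma\colon L^p_yL^q_x\to L^\infty_yL^2_x$ is, after reflection in $y$, the operator $|\partial_x|^s\Gamma^*$ from $\mathcal M_y L^2_x$ (finite measures in $y$) into $L^{p'}_yL^{q'}_x$, and boundedness from point masses $\delta_{y_0}\otimes L^2_x$ into $L^{p'}_yL^{q'}_x$ is precisely the smoothing estimate for the free heat evolution $\||\partial_x|^s e^{t\partial_x^2}\phi\|_{L^{p'}_t L^{q'}_x}\lesssim\|\phi\|_{L^2_x}$, valid for $1\le p', q'$ in the stated range with the single excluded endpoint $(p,q,s)=(2,1,1/2)$ (the sharp Strichartz/smoothing endpoint for the 1D heat equation). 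The $C_0(\R_y, L^2_x)$ regularity then follows by density of Schwartz functions, for which $t\mapsto |\partial_x|^s e^{t\partial_x^2}$ is strongly continuous on $L^2_x$ and the tails vanish.

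\emph{Main obstacle.} The routine part is Cases (1) and (3)-via-duality, which are pure HLS / heat smoothing once the $x$-estimate is pointwise in $t$. The genuinely delicate point is Case (2), $s=2$: absolute integration in $t$ fails, so one must recognize $\partial_x^2\Gamma$ as a parabolic Calder\'on--Zygmund operator and invoke the (vector-valued) CZ theory on the space of homogeneous type $(\R^2,d_{\rm p})$ — the kernel cancellation and size estimates are where the care is needed — and secondarily, pinning down the excluded endpoint $(p,q,s)=(2,1,1/2)$ in Case (3), where the estimate genuinely fails and one must be sure the duality argument excludes exactly that triple and no more.
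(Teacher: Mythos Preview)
Your treatment of Cases (1) and (2) is correct and essentially matches the paper: dispersive decay in $x$ followed by Hardy--Littlewood--Sobolev in $y$ for (1), and parabolic Calder\'on--Zygmund theory for (2) (the paper simply cites Lemari\'e-Rieusset for the latter).

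For Case (3) you take a genuinely different route. The paper does \emph{not} argue by duality; it uses a direct energy estimate: writing $u=|\partial_x|^s\Gamma f$, so that $u_y-u_{xx}=|\partial_x|^s f$, pairing with $u$ gives
\[
\|u\|_{L^\infty_y L^2_x}^2+\|u_x\|_{L^2_yL^2_x}^2\le \||\partial_x|^s u\|_{L^{p'}_yL^{q'}_x}\|f\|_{L^p_yL^q_x},
\]
and then Gagliardo--Nirenberg in $x$ plus H\"older in $y$ give $\||\partial_x|^s u\|_{L^{p'}_yL^{q'}_x}\lesssim \|u\|_{L^\infty_yL^2_x}^{1-\theta}\|u_x\|_{L^2_yL^2_x}^\theta$ with $\theta=s+\tfrac12-\tfrac1{q'}$ and $p'=2/\theta$ (forced by scaling), which closes by Young's inequality. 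The excluded triple $(2,1,1/2)$ is exactly where the $x$-GN inequality fails (critical Sobolev into $L^\infty$).

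Your duality reduction is valid in principle---indeed, evaluating $|\partial_x|^s\Gamma f$ at a fixed $y_1$ and pairing with $\phi\in L^2_x$ shows that Case (3) is \emph{equivalent} to the homogeneous estimate $\||\partial_x|^s e^{t\partial_x^2}\phi\|_{L^{p'}_tL^{q'}_x}\lesssim\|\phi\|_{L^2_x}$, so the ``point masses'' phrasing, while slightly loose, lands on the right target. The gap is that you then \emph{assert} this homogeneous estimate as a known fact. It is not one of the standard Strichartz estimates, and interpolation from the easy vertices $(p',q',s)\in\{(\infty,2,0),(2,2,1),(4,\infty,0)\}$ does not cover the full quadrilateral. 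The cleanest proof of the homogeneous estimate is again GN in $x$ applied to $e^{t\partial_x^2}\phi$, together with $\|\partial_x e^{t\partial_x^2}\phi\|_{L^2_tL^2_x}^2=\tfrac12\|\phi\|_{L^2_x}^2$---i.e., exactly the ingredients of the paper's argument. So your approach is correct but circuitous: it reduces Case (3) to a statement whose proof is the paper's energy method in disguise. (Your first attempt via dyadic blocks in $t$ yields a scale-invariant bound with a non-summable $\ell^\infty_j$ sum; this is just the statement that the estimate is critical, not evidence of failure, so that detour can be dropped entirely.)
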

All these estimates hold for $\Gamma^{(c)}$ as well with uniform constants in $c$, thanks to the change of coordinates $(x,y)\mapsto(x-cy,y)$.

\begin{proof}
    It is straightforward to verify that for fixed $y>0$, all fractional $x$-derivatives of $\Gamma(\cdot,y)$ of non-negative order are bounded and in $L^1$ (they actually lie in the Hardy space $\mathcal H^1$ for $s>0$). Thus, simply by the scaling symmetry in the $y$ variable, it follows that
    \begin{equation}
        \||\partial_x|^s\Gamma(\cdot,y)\|_{L_x^p}\lesssim_s |y|^{-\aha(1+s)+\frac{1}{2p}}, \quad 1\leq p\leq\infty,\,s\in[0,\infty).
    \end{equation}
    By Young's convolution inequality, this in turn implies the following estimates for the fractional derivatives of the heat kernel:
    \[ \||\de_x|^s e^{y\partial_x^2} f\|_{L_x^\sigma}\lesssim|y|^{-\frac{k}{2}-\frac{1}{2}(\frac{1}{q}-\frac{1}{\sigma})}\|f\|_{L_x^q},\qquad y>0, \qquad s\geq 0.\]
    Part (1) is then a consequence of the Hardy--Littlewood--Sobolev inequality and the above $L^q-L^\sigma$ estimates of the heat propagator $e^{y\partial_x^2}$.
    
    A proof of part (2) is contained in \cite{Lemarie_Rieusset_2002_recent_developments_in_the_Navier_Stokes_problem}*{Chapter 7}.
    
    For part (3), consider the heat equation
    \[
    u_y-u_{xx}=|\partial_x|^sf.
    \]
    The standard energy estimates yield immediately
    \[
    \|u\|_{L^\infty_y L^2_x}^2+\|u_x\|_{L^2_yL^2_x}^2\leq \||\partial_x|^su\|_{L^{p'}_yL^{q'}_x}\|f\|_{L^p_yL^q_x}.
    \]
    The estimate then follows by the interpolation estimate
    \[
    \||\partial_x|^su\|_{L^{p'}_yL^{q'}_x}\lesssim \|u\|_{L^\infty_y L^2_x}^{1-\theta}\|u_x\|^\theta_{L^2_yL^2_x}
    \]
    with $\theta=s+(1/2-1/q')$ and the inequality $2ab\leq Ca^2+C^{-1}b^2$. Note that the above interpolation inequality fails precisely at the endpoint $s=1/2$, $(p',q')=(2,\infty)$ (for which $\theta=1$). By approximation with smooth functions, it holds $|\partial_x|^s\Gamma f\in C_0(\R_y,L^2_x)$.
\end{proof}

\begin{remark}\label{rk:estimates_for_the_heat_operator}
    With the same methods as in part (1), estimates with mixed derivatives are proved. For example, we will use the following one:
    \begin{equation}\label{eq:z-011}
    \||\de_y|^{\frac{1}{4}}\de_x \Gamma f\|_{L^2_{x,y}}\lesssim\|f\|_{L^{3/2}_{x,y}}.
    \end{equation}
\end{remark}
We refer to the definition of $d_{\rm p,\lambda}$ in Definition \ref{def:parabolic_BMO}.
\begin{lemma}\label{lemma:heat_kernel_sends_Lp_to_Hoelder}
    The heat operator $\Gamma^{(-2\lambda)}=(\de_y-\de_x^2-2\lambda\de_x)^{-1}$ extends to a bounded map
    \[
        \Gamma^{(-2\lambda)}:L^p(\RR)\to \faktor {C^{0,\alpha}(\RR,d_{{\rm p},\lambda})}{\R}\,\,, \qquad 1-\fr \alpha 3=\frac{1}{p}+\frac{1}{3},\quad p\in(3/2,3),
    \]
    where for $0<\alpha<1$ and $(X,d)$ metric space, we denote the semi-normed H\"older space by
    \[
    C^{0,\alpha}(X,d):=\left\{f\in C(X,d)\,\bigg|\,|f|_{ C^{0,\alpha}}:=\sup_{z_1,z_2\in X}\frac{|f(z_1)-f(z_2)|}{d(z_1,z_2)^{\alpha}}<\infty\right\},
    \]
    and where $\R\subset C^{0,\alpha}(X,d)$ is the subspace of constant functions.
\end{lemma}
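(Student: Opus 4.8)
The statement is a parabolic analogue of the classical endpoint Sobolev/Riesz-potential embedding $I_s\colon L^p(\R^n)\to C^{0,\alpha}$, and the plan is to reduce everything to the case $\lambda=0$ and then exploit the explicit heat kernel together with scaling. First I would remove $\lambda$: the change of coordinates $(x,y)\mapsto(x+2\lambda y,y)$ is an isometry from $(\RR,d_{\mathrm p,\lambda})$ to $(\RR,d_{\mathrm p,0})$, it has unit Jacobian (hence preserves all $L^p(\RR)$ norms), and it conjugates $\Gamma^{(-2\lambda)}$ with $\Gamma=\Gamma^{(0)}$; so it suffices to prove $\Gamma\colon L^p(\RR)\to C^{0,\alpha}(\RR,d_{\mathrm p})/\R$ for $p\in(3/2,3)$, $\alpha=3(2/3-1/p)$.

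\textbf{Key steps.} The second step is a pointwise kernel bound: for the heat kernel $\Gamma(x,y)=\mathbbm 1_{\{y>0\}}(4\pi y)^{-1/2}e^{-x^2/4y}$ one has, for any $\beta\in(0,1]$,
\begin{equation}
|\Gamma(z)-\Gamma(z')|\lesssim_\beta \frac{|z-z'|_{\mathrm p}^{\beta}}{(|z|_{\mathrm p}+|z'|_{\mathrm p})^{2+\beta}},
\end{equation}
which follows from the parabolic homogeneity $\Gamma(\mu x,\mu^2 y)=\mu^{-1}\Gamma(x,y)$ together with the fact that $\Gamma$ is smooth away from the origin with $|\nabla_{\mathrm p}\Gamma|\lesssim |z|_{\mathrm p}^{-2}$ on the parabolic unit sphere (here $\nabla_{\mathrm p}$ schematically denotes $\partial_x$ and $y\partial_y$, the derivatives that are homogeneous of parabolic degree $-1$). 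Granting this, the third step is the estimate on differences: writing $f=\Gamma g$ and fixing $z_1,z_2\in\RR$ with $r:=d_{\mathrm p}(z_1,z_2)$, split the convolution integral into the parabolic ball $B=\Bcal^{\mathrm p}_{2r}\big(\tfrac{z_1+z_2}{2}\big)$ and its complement. On $B$ one uses $|f(z_1)-f(z_2)|\le \int_B(|\Gamma(z_1-w)|+|\Gamma(z_2-w)|)|g(w)|\,dw$ and Hölder's inequality with the kernel bound $\|\Gamma(z_i-\cdot)\|_{L^{p'}(B)}\lesssim r^{-1+3/p'}$ (obtained by the same scaling, since $\int_{|z|_{\mathrm p}\le \rho}|\Gamma(z)|^{p'}dz\sim \rho^{3-p'}$ for $p'<3$, i.e. $p>3/2$); this yields a bound $\lesssim r^{3/p'-1}\|g\|_{L^p}=r^\alpha\|g\|_{L^p}$ by the arithmetic relation between $p,\alpha$. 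On $B^c$ one uses the difference bound above with $\beta$ chosen slightly larger than $\alpha$ and integrates $\int_{B^c}|z_1-z_2|_{\mathrm p}^{\beta}|w|_{\mathrm p}^{-2-\beta}|g(w)|\,dw\lesssim r^{\beta}\big(\int_{|w|_{\mathrm p}\ge r}|w|_{\mathrm p}^{-(2+\beta)p'}dw\big)^{1/p'}\|g\|_{L^p}\lesssim r^{\beta}\cdot r^{3/p'-2-\beta}\|g\|_{L^p}=r^{\alpha}\|g\|_{L^p}$, where convergence of the $w$-integral at infinity needs $(2+\beta)p'>3$, i.e. $\beta>3/p-1=\alpha/1$... more precisely $\beta>3(1/p-1/3)$, which together with $\beta<1$ forces exactly $p<3$; convergence near $|w|_{\mathrm p}=r$ is automatic. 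Finally one checks $f$ is continuous (hence the pointwise values make sense) by the same splitting with $z_1\to z_2$, and that $f$ is well-defined modulo constants because $\Gamma\notin L^{p'}$ globally but its \emph{differences} are controlled; this is why the target is the quotient space $C^{0,\alpha}/\R$. A density argument (first $g\in C_c^\infty$, then extend) makes all manipulations rigorous.

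\textbf{Main obstacle.} The delicate point is bookkeeping the two borderline integrability conditions simultaneously: the near-diagonal piece forces $p>3/2$ (so that $\Gamma(z_i-\cdot)\in L^{p'}_{\mathrm{loc}}$) while the far piece forces $p<3$ (so that the tail of $|w|_{\mathrm p}^{-2-\beta}$ is $L^{p'}$ for an admissible $\beta<1$), and one must verify that the same exponent $\alpha$ comes out of both estimates — this is precisely the scaling identity $1-\alpha/3=1/p+1/3$. A secondary technical nuisance is that the parabolic "gradient" mixing $\partial_x$ and $y\partial_y$ is not literally a gradient, so the mean-value-type kernel bound must be proved by interpolating between the cases $\beta=1$ (one derivative) and $\beta=0$ (trivial) along parabolic dilations rather than along straight segments; this is standard for spaces of homogeneous type but should be spelled out. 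Everything else is routine and parallels the Euclidean endpoint embedding cited as \cite{stein-1970-singular-integrals}*{V, §6.17}.
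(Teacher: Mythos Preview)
Your approach is essentially the same as the paper's: reduce to $\lambda=0$ by the shear $(x,y)\mapsto(x+2\lambda y,y)$, use pointwise parabolic bounds on the heat kernel, and run a near/far decomposition together with H\"older. The paper packages this slightly differently (it directly estimates $\|\Gamma(\cdot+w)-\Gamma(\cdot)\|_{L^{p'}}$ and applies H\"older once), but the substance is identical.

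There is, however, a consistent numerical slip that you should fix. Since $\Gamma$ is parabolically homogeneous of degree $-1$ and $|\nabla_{\mathrm p}\Gamma(z)|\lesssim|z|_{\mathrm p}^{-2}$, the correct difference bound is
\[
|\Gamma(z)-\Gamma(z')|\lesssim \frac{|z-z'|_{\mathrm p}^{\beta}}{(|z|_{\mathrm p}+|z'|_{\mathrm p})^{\,1+\beta}},\qquad \beta\in[0,1],
\]
with exponent $1+\beta$, not $2+\beta$. With your exponent the far-region integral produces $r^{3/p'-2}=r^{1-3/p}$, which is \emph{not} $r^{\alpha}$ (recall $\alpha=2-3/p$), and for $p<3$ this even blows up as $r\to0$; the equality you wrote there is false. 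With the correct exponent one gets $r^{\beta}\cdot r^{3/p'-(1+\beta)}=r^{3/p'-1}=r^{2-3/p}=r^{\alpha}$, matching the near region. Likewise, the convergence condition at infinity becomes $(1+\beta)p'>3$, i.e.\ $\beta>2-3/p=\alpha$, which is satisfiable by some $\beta\le1$ precisely when $\alpha<1$, i.e.\ $p<3$; your stated condition $\beta>3/p-1$ has the wrong sign and leads you to attribute the constraint $p<3$ to the wrong inequality. Once this exponent is corrected, your argument goes through verbatim.
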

\begin{proof}
    By the change of variables $(x,y)\mapsto (x+2\lambda y,y)$, we can assume $\lambda=0$. Set $|\cdot|_{\rm p}=|\cdot|_{\rm p,0}$ as in Definition \ref{def:parabolic_BMO}, and let $z=(x,y)$, $w=(x',y')$. The heat kernel $\Gamma$ satisfies
    \begin{equation}\label{eq:z-012}
        |\Gamma(z)|\lesssim |z|_{\rm p}^{-1},\qquad |\partial_x\Gamma(z)|\lesssim |z|_{\rm p}^{-2},\qquad |\partial_y\Gamma(z)|\lesssim |z|_{\rm p}^{-3}.
    \end{equation}
    As a consequence (see the proof of Lemma \ref{lemma:heat_kernel_and_Hardy_space_H1}), we have the estimate
    \[
    |\Gamma(z+w)-\Gamma(z)|_{\rm p}\lesssim
    \left\{
    \begin{aligned}
        &\frac{1}{|z+w|_{\rm p}}+\frac{1}{|z|_{\rm p}}&&\text{for }|z|_{\rm p}\leq 2|w|_{\rm p},\\
        &\frac{|w|_{\rm p}}{|z|^2_{\rm p}}&&\text{for }|z|_{\rm p}\geq 2|w|_{\rm p},\\
    \end{aligned}
    \right.
    \]
    By taking the $L^2$ norm and splitting the integral on the two regions, using that $\int_{r\leq|z|_{\rm p}\leq R}|z|_{\rm p}^{-s}dz\lesssim R^{3-s}-r^{3-s}$, we find
    \[
        \|\Gamma(\cdot+w)-\Gamma(\cdot)\|_{L^q{(\RR_z)}}\lesssim |w|_{\rm p}^{\alpha},\qquad 0<\alpha<1,\quad \fr 1q=\fr{1-\alpha}3 +\fr{2\alpha }{3}.
    \]
    We have $\Gamma f(w)-\Gamma f(0)=\int_{\RR}(\Gamma(z+w)-\Gamma(z))f(-z)dz$, $w\in \RR$, for any test function $f$. Choosing $q=p'$ in the previous estimate, the right hand side is bounded by
    \[
    \left|\int_{\RR}(\Gamma(z+w)-\Gamma(z))f(-z)dz\right|\lesssim |w|_{\rm p}^{\alpha} \|f\|_{L^p}
    \]
    for $\alpha$ as in the statement of the Lemma. The previous equation allows thus to extend $\Gamma$ to $f\in L^p$, with $\Gamma f$ being well-defined up to an additive constant. The $C^{0,\alpha}$ bound is given by
    \[
    |\Gamma f(w_1)-\Gamma f(w_2)|\lesssim |w_1-w_2|^{\alpha}_{\rm p}\|f\|_{L^p(\RR)},
    \]
    which follows by the previous bound by translation invariance.
\end{proof}

\begin{lemma}\label{lemma:weighted-estimates-heat}
    Let $c\in\R\setminus\{0\}$ and $\alpha\in C(\R_y)$ such that $\alpha_y\in L^2(\R_y)$. The following bounds hold:
    \begin{align}
        \|\braket{(x-\alpha(y))/L}^{-1}|D_x|^s\Gamma^{(c)}u\|_{L^2(\RR)}&\lesssim |c|^{-1} \|u\|_{L^p(\RR)},\quad s\in [0,1], \frac{1}{p}=\frac{(1-s)}{6/5}+\frac{s}{2},\label{eq:z-024}\\
        \|\braket{(x-\alpha(y))/L}^{-1}\partial_x\Gamma^{(c)}u\|_{L^2(\RR)}&\lesssim |c|^{-\aha} \|u\|_{\Hcrit},\label{eq:z-025}
    \end{align}
    where $\Gamma^{(c)}=(\partial_y-\partial_x^2+c\partial_x)^{-1}$ (see Definition \ref{def:heat-operators-in-appendix}), and $L=|c|^{-1}(1+\|\alpha_y\|_{L^2})^{-1}$.
\end{lemma}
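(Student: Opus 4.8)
The plan is to begin with two normalizations. First, a parabolic rescaling $(x,y)\mapsto(|c|x,|c|^2y)$ reduces both bounds to the case $|c|=1$: the powers of $|c|$ on the two sides cancel exactly because the relation $1/p=(1-s)\cdot 5/6+s/2$, and the weight $L^{-1}$, are calibrated for this, while the rescaled shift $\tilde\alpha(y):=|c|\,\alpha(y/c^2)$ satisfies $\|\tilde\alpha_y\|_{L^2(\R_y)}=\|\alpha_y\|_{L^2(\R_y)}$, so the new weight parameter is exactly $(1+\|\alpha_y\|_{L^2})^{-1}$; a reflection $x\mapsto-x$ then lets me assume $c=1$. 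Second, the shear $x'=x-\alpha(y)$ turns the moving weight $\langle(x-\alpha(y))/L\rangle^{-1}$ into the fixed weight $\langle x'/L\rangle^{-1}$, at the price of replacing the constant transport term by the $y$-dependent one $b(y)\partial_{x'}$ with $b(y)=1-\alpha_y(y)$, $\|b-1\|_{L^2(\R_y)}=\|\alpha_y\|_{L^2}=L^{-1}-1$. After these steps the task is: for $v$ solving $v_y-v_{xx}+b(y)v_x=u$, bound $\|\langle x/L\rangle^{-1}|D_x|^sv\|_{L^2(\RR)}$ by $\|u\|_{L^p(\RR)}$, and the $\partial_x$-case by $\|u\|_{\Hcrit}$.

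\textbf{The top-order estimate.} The heart of the matter is \eqref{eq:z-025}, equivalently \eqref{eq:z-024} at $s=1$, which is a weighted local smoothing estimate: one gains a full $x$-derivative in $\langle x/L\rangle^{-1}L^2$ precisely because of the transport term $b(y)v_x$. I would prove it by a weighted energy / positive-commutator computation: testing the equation against a multiplier built from $\psi(x):=\langle x/L\rangle^{-2}$ produces, after integrating over $\RR$ and using the parabolic dissipation together with the positivity carried by the drift, control of $\|\langle x/L\rangle^{-1}v_x\|_{L^2}$ (and of $\sup_y\int\psi v^2\,dx$). The error terms involve $\psi_{xx},\psi_{xxx}$ — harmless since $|\partial_x^k\psi|\lesssim L^{-k}\psi$ — and, crucially, the drift--weight cross term $\int b(y)\psi_x v^2\,dx$. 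Since only $\|\alpha_y\|_{L^2_y}$, not $\|\alpha_y\|_{L^\infty_y}$, is available, this term must be integrated by parts as $\int b\psi_x v^2\,dx=-2\int b\,\psi\,vv_x\,dx$ and estimated by Cauchy--Schwarz, so that $\|b-1\|_{L^2_y}$ pairs against a weighted $L^\infty_yL^2_x$-type norm of $v$; that norm is recovered from the same energy identity together with the unweighted $L^p_yL^q_x$ smoothing of $\Gamma^{(1)}$ from Proposition \ref{prop:spacetime_estimate_heat_general}, via a short bootstrap. This interplay is exactly what forces the scale $L=(1+\|\alpha_y\|_{L^2})^{-1}$, and it is the main obstacle of the proof. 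To obtain the $\Hcrit$-version of the right-hand side instead of $L^2$, I would in addition feed in the transversal, $\dot H^{0,\aq}$-type smoothing of Proposition \ref{prop:spacetime_estimate_heat_general}, which upgrades the $L^2(\RR)$ control of the forcing to $\dot H^{-\aha,0}(\RR)$.

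\textbf{The remaining range and the endpoint.} For $0\le s<1$ in \eqref{eq:z-024} I would interpolate between the $s=1$ bound just obtained and the $s=0$ endpoint $\|\langle x/L\rangle^{-1}\Gamma^{(c)}u\|_{L^2}\lesssim|c|^{-1}\|u\|_{L^{6/5}}$. For the latter a bare H\"older bound in $x$ is too lossy; instead one uses that the kernel $\Gamma^{(c)}$ lies in $L^{3,\infty}(\RR)$, so that convolution against it improves $L^{6/5}$ integrability by O'Neil's convolution inequality, together with a dyadic-in-$y$ splitting $\Gamma^{(c)}=\sum_k\Gamma^{(c)}_k$ over $\{y\sim 2^k\}$: on the piece $\Gamma^{(c)}_k$ the mass concentrates near the tilted parabola $x\approx cy$, which for $2^k\gtrsim L$ is far — in units of $L$ — from the region $|x|\lesssim L$ selected by the fixed weight, and this mismatch produces the decay in $k$ needed to sum, while the short scales $2^k\lesssim L$ are controlled crudely using $\|\Gamma^{(c)}_k\|_{L^1(\RR)}\sim 2^k$. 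Throughout, all estimates are proved first for $u\in\mathscr S(\RR)$, for which $v=\Gamma^{(c)}u$ decays enough to justify the integrations by parts and the vanishing of the boundary terms as $y\to\pm\infty$, and then extended to general $u$ by density.
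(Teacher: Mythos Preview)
Your high-level plan --- a positive-commutator energy estimate exploiting the drift --- is the right mechanism, but there is a genuine gap in how you handle the $\alpha_y$-dependence, and the paper's remedy is a trick you do not mention. After your reductions you test against a multiplier built from $\psi=\langle x/L\rangle^{-2}$; the commutator error then involves $A_{xx}=\psi_x$, and since $|\psi_x|\le L^{-1}\psi$, this error is of size $L^{-1}$ relative to the main drift term $\frac{1}{2}\int\psi v^2$. With $L^{-1}=1+\|\alpha_y\|_{L^2}$ possibly large, this is \emph{not} absorbable, so ``harmless since $|\partial_x^k\psi|\lesssim L^{-k}\psi$'' is exactly wrong when $\|\alpha_y\|_{L^2}$ is large. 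The paper avoids this by never putting the scale $L$ into the multiplier: instead it mollifies $\alpha$ at parameter $\eps\sim(\|\alpha_y\|_{L^2}/c)^2$, which gives simultaneously $\|\alpha_{\eps,y}\|_{L^\infty}\le c/6$ and $\|\alpha-\alpha_\eps\|_{L^\infty}\lesssim c^{-1}\|\alpha_y\|_{L^2}^2$. The second bound shows $\langle(x-\alpha)/L\rangle^{-1}\lesssim\langle c(x-\alpha_\eps)\rangle^{-1}$, so it suffices to prove the estimate with weight at scale $c^{-1}$ and with $\alpha_y$ \emph{pointwise} small. Then the multiplier $a=2-\arctan\!\big(\text{const}\cdot c(x-\alpha_\eps)\big)$ makes both the $a_{xx}$-error and the $\alpha_{\eps,y}$-perturbation pointwise dominated by the main drift term $-\tfrac{c}{2}\int a_x v^2$; no bootstrap and no auxiliary $L^\infty_yL^2_x$ norm are needed. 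The same energy computation handles both $s=0$ and $s=1$ (the only change is whether the forcing is $\int avu$ or $\int avu_x=-\int a_xvu-\int av_xu$), and interpolation gives $0<s<1$; your kernel/dyadic route for $s=0$ is therefore unnecessary, and the ``mismatch'' heuristic you propose is in any case problematic since the input $u\in L^{6/5}$ carries no spatial localization.

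For \eqref{eq:z-025} the paper's argument is also different and simpler than what you sketch. It sets $\mathcal U:=|\partial_x|^{-1/2}u\in L^2$ and $\mathcal V:=\partial_x\Gamma^{(c)}\mathcal U$, so that $v:=\partial_x\Gamma^{(c)}u=|\partial_x|^{1/2}\mathcal V$, and applies the already-proved $s=1$ case of \eqref{eq:z-024} at $p=2$ together with Proposition~\ref{prop:spacetime_estimate_heat_general} to obtain $\|\mathcal V\|_{C_0L^2}+\|\mathcal V_x\|_{L^2L^2}+c\|\langle c(x-\alpha)\rangle^{-1}\mathcal V\|_{L^2L^2}\lesssim\|u\|_{\Hcrit}$. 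The last two terms control $c^{1/2}\|\mathbbm 1_{|x-\alpha|\le c^{-1}}v\|_{L^2L^2}$ (half a derivative on a unit-$c^{-1}$ strip), and a summation over translates by $c^{-1}$ then yields the full weighted bound with the $|c|^{-1/2}$ prefactor; no separate $\dot H^{0,1/4}$ input is used.
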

\noindent Estimate \eqref{eq:z-025} shows that estimate \eqref{eq:z-024} is not optimal in terms of regularity. Note that the inequalities are invariant under the scaling $(x,y)\mapsto(\lambda x,\lambda^2 y)$, which leaves the quantity $\|\alpha_y\|_{L^2}$ unchanged.
\begin{proof}
    We can assume $c>0$ without loss of generality by symmetry. Let $\rho\in C^\infty_c(\R)$ a standard mollifier, $\rho_\eps(x):=\eps^{-1}\rho(\eps^{-1}x)$, $\eps>0$, and let $\alpha_\eps:=\rho_\eps*\alpha$. It is straightforward to show
    \[
    \|\alpha_{\eps,y}\|_{L^\infty}\lesssim \eps^{-\aha} \|\alpha_y\|_{L^2},\qquad \|\alpha_{\eps,y}-\alpha\|_{L^\infty}\lesssim \eps^{\aha}\|\alpha_y\|_{L^2},
    \]
    and choosing $\eps=\Big(\frac{6C_1\|\alpha_y\|_{L^2}}{c}\Big)^2$, where $C_1$ is the implicit constant in the first inequality, we have
    \[
    \|\alpha_{\eps,y}\|_{L^\infty}\leq\frac{c}{6},\qquad \|\alpha_\eps-\alpha\|_{L^\infty}\lesssim c^{-1}\|\alpha_y\|_{L^2}^2,
    \]
    and in particular,
    \[
    \braket{(x-\alpha(y))/L}^{-1}\lesssim \braket{(x-\alpha_\eps(y))/c^{-1}}^{-1}.
    \]
    This means that it is enough to prove the two estimates for $L=c^{-1}$, and assuming that $\|\alpha_y\|_{L^\infty}\leq \frac{c}{6}$.
    
    Consider the first estimate. Assume first $s=0$. We know by Proposition \ref{prop:spacetime_estimate_heat_general} that $v\in C_{0,y} L^2_x$. Let $a(x,y):=2-\arctan(3c(x-\alpha(y)))$. Considering the heat equation with transport
    \[
    (\partial_y-\partial_x^2+c\partial_x)v=u,
    \]
    multiplying by $a$ and integrating in $dx$ and by parts, we obtain the energy estimate
    \[
    \aha\fr{d}{dy}\int av^2dx+\int av_x^2dx-\frac{c}{2}\int a_xv^2dx+\fr12\alpha_y\int a_x v^2dx -\aha\int a_{xx}v^2dx=\int avf\,dx.
    \]
    The third term is non-negative due to $a$ being non-increasing, and by the definition of $a$ and the estimate $\|\alpha_y\|_{L^\infty}\leq \frac{c}{6}$, it is at least $3$ times larger than the absolute value of the fourth and fifth terms. Integrating in $y$, we thus obtain
    \als{
        \|\sqrt a v\|^2_{L^\infty L^2}+\|\sqrt a v_x\|^2_{L^2L^2}+\frac{c^2}{6}\|\braket{3c(x-\alpha)}^{-1}v\|^2_{L^2L^2}&\leq \left|\iint avf\,dx\,dy\right|\\
        &\leq C^{-1}\|\sqrt a v\|_{L^6}^2+C\|f\|_{L^{6/5}}^{2},
        }
    which proves the estimate for $C$ large enough by the interpolation inequality
    \[
    \|f\|_{L^{6}L^6}\lesssim\|f\|_{C_0L^2}^{2/3}\|f_x\|^{1/3}_{L^2L^2}.
    \]
    The estimate for $s=1$ follows with the same tools, this time with the right hand side of the energy estimate being
    \[
    \int avf_xdx=-\int a_xvf\,dx-\int av_xf\,dx.
    \]
    The estimate for $s\in(0,1)$ follows by interpolation.

    For the second estimate, we let $v=\partial_x\Gamma^{(c)} u$, and set $\U:=|\partial_x|^{-1/2}u\in L^2(\RR)$, $\V:=\partial_x\Gamma^{(c)}\U\in C_0L^2$. Clearly, it holds $\V:=|\partial_x|^{-1/2}v$.
    By the previous estimate with $s=1$ and Proposition \ref{prop:spacetime_estimate_heat_general}, which can be applied after a linear change of coordinates, we can establish the bounds
    $$ \|\mathcal V\|_{C_0 L^2}+\|\mathcal V_x\|_{L^2L^2}+c\|\jap{c(x-\alpha)}^{-1}\mathcal V \|_{L^2L^2}\lesssim\|\mathcal U\|_{L^2}=\|u\|_{\dot H^{-\aha,0}}. $$
    The last two terms on the left hand side control the quantity $c^{1/2}\|\mathbbm 1_{|x-\alpha(y)|\leq c^{-1}}v\|_{L^2L^2}$, and from that we can achieve the full bound stated in \eqref{eq:z-025} by a summation trick and by translation invariance.
\end{proof}

\begin{lemma}[Mapping properties of $(\de_y-\de_x^2+2\tanh\de_x+h)^{-1}$]\label{lemma:operator_l_tr_+_g_bounds}
    Consider $T:=K_{{\rm tr}+}$, the integral operator as in Proposition \ref{prop:explicit-kernels} that inverts the operator $L_{{\rm tr}+}=\partial_y-\partial_x^2+2\tanh\partial_x$ .
    \begin{enumerate}[label=(\alph*)]
        \item The operator $T$ is well-defined from $L^1(\RR)\cap L^2(\RR)$ to $L^\infty(\RR)$, and it holds
        \begin{equation}
            \|Tu\|_{L^\infty(\RR)}\lesssim \|u\|_{L^1(\RR)\cap L^2(\RR)}.
        \end{equation}
        The range lies in the subspace $C_0^-(\RR):=\{\psi\in L^\infty\,|\,\psi\in C_0(\R\times(-\infty,y]) \;\,\forall y\in\R\}$.
        \item Let $h\in L^1(\RR)\cap L^2(\RR)$. Define the operator
        $$ L_{{\rm tr}+,h}:=\partial_y-\partial_x^2+2\tanh\partial_x +h. $$
        The operator is invertible from the space $T(L^1(\RR)\cap L^2(\RR))$ to $L^1(\RR)\cap L^2(\RR)$. Denoting by $T_h:=(L_{{\rm tr}+,h})^{-1}$, we have for a universal constant $C>0$
        \[
        \|T_h u\|_{T(L^1(\RR)\cap L^2(\RR))}\lesssim \exp\left(C\|h\|_{L^1(\RR)\cap L^2(\RR)}\right) \|u\|_{L^1(\RR)\cap L^2(\RR)}.
        \]
    \end{enumerate}
\end{lemma}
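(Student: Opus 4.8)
The plan is to prove the two parts in sequence, starting from the explicit kernel formula for $T=K_{{\rm tr}+}$ in Proposition \ref{prop:explicit-kernels}, namely
\[
T=\Gamma^+\Mult^++\Gamma^-\Mult^-+\aha(\demu\Gamma^--\demu\Gamma^+)\Mult_{\sech^2}.
\]
For part (a), I would bound each of the four summands separately as an operator from $Z:=L^1(\RR)\cap L^2(\RR)$ to $L^\infty(\RR)$. The two terms $\Gamma^\pm\Mult^\pm$ are convolutions (after the trivial change of coordinates $(x,y)\mapsto(x\mp 2y,y)$) with the heat kernel $\Gamma$; since $\Gamma(\cdot,y)$ behaves like $|(x,y)|_{\rm p}^{-1}$, it lies in $L^{3,\infty}(\RR)$, hence in $L^{3/2}(\RR)+L^6(\RR)$, and by Young's inequality convolution with such a kernel maps $L^{3/2}(\RR)\cap L^6(\RR)\hookleftarrow Z$ (via interpolation: $Z\hookrightarrow L^p$ for all $p\in[1,2]$, and one also needs $L^6$) — more cleanly, I would use the Hölder-type bound $\|\Gamma^\pm * g\|_{L^\infty}\le \|\Gamma\|_{L^{3,\infty}}\|g\|_{L^{3/2,1}}$ and note $L^1\cap L^2\hookrightarrow L^{3/2,1}$. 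For the transport terms $\demu\Gamma^\mp\Mult_{\sech^2}$, the key observation is that $\demu\Gamma^\mp(x,y)=\int_{\mp 2y}^{x}\Gamma(x',y)\,dx'$ is a \emph{bounded} kernel (it is between $0$ and $1$, being an antiderivative of a positive $L^1$-in-$x$ function), so convolution with it maps $L^1(\RR)$ to $L^\infty(\RR)$ directly; the factor $\Mult_{\sech^2}$ is harmless since $\sech^2\in L^\infty$. This gives the stated $L^\infty$ bound. The membership of the range in $C_0^-(\RR)$ follows because all four kernels are supported in $\{y>0\}$ (so $Tu$ restricted to $\{y\le y_0\}$ only sees $u$ on $\{y<y_0\}$), combined with a density argument: for $u$ a test function all four pieces are continuous and decay, and the operator bound lets one pass to the limit in $L^\infty(\R\times(-\infty,y_0])$ for each fixed $y_0$.

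For part (b), I would argue by a Neumann-series / continuity-method argument. Write $\psi=T_h u$, which should solve $L_{{\rm tr}+,h}\psi=u$, i.e. $L_{{\rm tr}+}\psi=u-h\psi$, equivalently $\psi=T(u-h\psi)$, i.e. $(\mathrm{Id}+T\Mult_h)\psi=Tu$. Since $T:Z\to L^\infty$, multiplication by $h\in Z$ maps $L^\infty\to Z$ with norm $\lesssim\|h\|_Z$, so $T\Mult_h$ is bounded on the Banach space $X:=T(Z)$ with norm $\lesssim\|h\|_Z$; for $\|h\|_Z$ small, $\mathrm{Id}+T\Mult_h$ is invertible on $X$ by Neumann series and one immediately gets the bound with $\exp(C\|h\|_Z)$ replaced by a constant. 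To reach arbitrary $\|h\|_Z$ I would either (i) iterate over a partition of $\{y>0\}$... but more robustly, (ii) use the parabolic structure in $y$: split $\R_y$ into finitely many intervals $I_1<I_2<\dots<I_N$ (ordered, from $-\infty$) on each of which $\|h\mathbbm 1_{\R\times I_k}\|_Z$ is below the smallness threshold, solve the equation successively on each half-line $\R\times(-\infty,\sup I_k]$ using that the kernels are forward-in-$y$, and propagate the bound; at each step one picks up a multiplicative constant, and the number of steps is $\lesssim\|h\|_Z^2$ (since on each piece the $L^1\cap L^2$ mass is bounded below), giving the exponential $\exp(C\|h\|_Z)$. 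Alternatively (iii), a slicker route: run the a priori estimate used in the proof of Proposition \ref{prop:well-posedness-of-psi-to-g} — testing the equation for $\psi$ against suitable weights and using the positivity of the kernel of $T$ to get $\sup|\psi|\lesssim\exp(C\|h\|_Z)\sup_{y\to-\infty}|\psi|+\dots$ — but since here the equation is linear and we want the stronger statement that $\psi\in T(Z)$ with quantitative bound, I lean toward route (ii), which is essentially the same half-line iteration that appears throughout the paper (cf. the proofs of Lemma \ref{lemma:uniqueness_2} and Proposition \ref{prop:unconditional_uniqueness_miura}). The invertibility statement also needs that the kernel of $L_{{\rm tr}+,h}$ on $X$ is trivial, which follows from the same iteration (a solution of the homogeneous equation in $X$ vanishes on $\{y<M\}$ for $M$ small, then everywhere by bootstrap), and surjectivity onto $Z$, which is built into the construction $T_h u=(\mathrm{Id}+T\Mult_h)^{-1}Tu$ once one checks $L_{{\rm tr}+,h}$ applied to this gives back $u$ — true because $L_{{\rm tr}+}T=\mathrm{Id}$ on $Z$.

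I expect the main obstacle to be the bookkeeping in the large-$\|h\|_Z$ iteration of part (b): one must verify that the forward-in-$y$ support of all the kernels (including the antiderivative kernels $\demu\Gamma^\mp$, which do \emph{not} decay but converge to the constant $1$ as $y\to+\infty$ — this is exactly the feature that drives the codimension-$1$ phenomenon elsewhere in the paper) is compatible with solving on successive half-lines, and that the constant accumulated over $N\lesssim\|h\|_Z^2$ steps genuinely exponentiates rather than growing faster. A secondary technical point is pinning down the Banach-space structure on $X=T(Z)$ and checking $T\Mult_h$ is bounded on it with the claimed norm; this is routine given part (a) but requires noting that $X$ is normed by, say, $\|\psi\|_X:=\inf\{\|f\|_Z : Tf=\psi\}$ — one should check $T$ is injective on $Z$ (it is, since $L_{{\rm tr}+}T=\mathrm{Id}$) so this is a genuine norm and $X$ is complete. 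Everything else — the $L^{3,\infty}$ bound on $\Gamma$, boundedness of $\demu\Gamma^\mp$, harmlessness of the bounded multipliers $\eta^\pm$, $\sech^2$ — is standard and I would dispatch it quickly by citing Proposition \ref{prop:spacetime_estimate_heat_general} and elementary properties of the heat kernel.
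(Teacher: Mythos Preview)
Your plan for part (a) and for the small-$h$ case of part (b) is essentially the paper's proof: the paper uses exactly the kernel splitting $T=\Gamma^+\Mult^++\Gamma^-\Mult^-+\aha(\demu\Gamma^--\demu\Gamma^+)\Mult_{\sech^2}$, the bounds $\Gamma^\pm\in L^{3,\infty}\subset L^2+L^\infty$ and $\demu\Gamma^\pm\in L^\infty$ together with Young's inequality, and the density argument for $C_0^-$; for small $h$ it inverts $\mathrm{Id}+hT$ on $Z=L^1\cap L^2$ by Neumann series (your formulation on $X=T(Z)$ via $\mathrm{Id}+T\Mult_h$ is equivalent since $T$ is injective).

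For large $h$ the paper takes a different and cleaner route than your option (ii). It does \emph{not} iterate over many strips: instead it picks a single $y_0\ll 0$ so that $\|h\|_{Z}$ is small on $\R\times(-\infty,y_0]$, solves there by Neumann series, and then extends $\psi$ to $\R\times[y_0,\infty)$ by solving the \emph{initial value problem} $\psi_y-\psi_{xx}+2\tanh\psi_x=-h\psi+u$, $\psi(\cdot,y_0)=\psi_0$, via the Duhamel formula with kernel $K$. Writing out Duhamel and using the slice estimates $\|\Gamma^\pm(\cdot,y)\|_{L^p_x}\lesssim|y|^{-\frac12(1-1/p)}$ and $\|\demu\Gamma^\pm(\cdot,y)\|_{L^\infty_x}\lesssim 1$ gives a scalar integral inequality for $y\mapsto\|\psi(\cdot,y)\|_{L^\infty_x}$, and Gronwall yields $\|\psi\|_{L^\infty}\lesssim\exp(C\|h\|_Z)$ directly; the $T(Z)$ bound then follows from $\psi=T(u-h\psi)$ and $\|h\psi\|_Z\le\|h\|_Z\|\psi\|_{L^\infty}$.

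Your strip-iteration route (ii) is viable but contains an arithmetic slip worth noting. If each strip must satisfy $\|h\mathbbm 1_{\R\times I_k}\|_Z<\epsilon$, the $L^2$ part of $Z$ (for which \emph{squares} are additive across strips) forces $N\gtrsim\|h\|_{L^2}^2/\epsilon^2$; accumulating a fixed multiplicative constant over $N$ steps then gives $C_0^N=\exp(C\|h\|_Z^2)$, not $\exp(C\|h\|_Z)$. This weaker bound is harmless for every application in the paper, but it does not match the lemma as stated. The Duhamel--Gronwall argument avoids this loss because the dominant integral term involves $\int\|h(\cdot,s)\|_{L^1_x}\,ds=\|h\|_{L^1(\RR)}$, which enters the exponent linearly.
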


\begin{proof}
    Recall that the operator $T$ is the integral operator on $\RR_{x,y}$ with kernel
        \begin{align*}\label{eq:kernel-of-linearized-psi-from-g}
            K(x,y;x_0,y_0)&=\Gamma^+(x-x_0,y-y_0)\eta^+(x_0)+\Gamma^-(x-x_0,y-y_0)\eta^-(x_0)\\
            &\QQQ+\frac{1}{2}\partial_x^{-1}(\Gamma^-(x-x_0,y-y_0)-\Gamma^+(x-x_0,y-y_0))\sech^2(x_0),
        \end{align*}
    or equivalently, following the notation in Definition \ref{def:heat-operators-in-appendix},
    \[ T=\Gamma^+\Mult_{\eta^+}+\Gamma^-\Mult_{\eta^-}+\aha (\demu\Gamma^--\demu\Gamma^+) \Mult_{\sech^2}. \]
    The estimate in part (a) is then a consequence of the estimates
    \[
    \|\Gamma^\pm u\|_{L^\infty}\lesssim \|u\|_{L^1\cap L^2},\qquad \|\demu\Gamma^\pm u\|_{L^\infty}\lesssim \|u\|_{L^1},
    \]
    which come from Young's convolution inequality and the fact that the convolution kernels $\Gamma^\pm$, $\demu \Gamma^\pm$ belong respectively to $L^{3,\infty}(\RR)\subset L^2+L^\infty$, $L^\infty(\RR)$. The statement on the range is true for $u\in C^\infty_c(\RR)$ as the integral kernel is identically zero for $y<y_0$, and extends by density to all $u$.
    
    For part (b), we argue perturbatively. The statement for small $h$ follows by Neumann series inversion: the estimate
    \als{
    \|h\cdot Tu\|_{L^1\cap L^2(\RR)}&\leq \|h\|_{L^1\cap L^2(\RR)}\|Tu\|_{L^\infty(\RR)}\\
    &\lesssim \|h\|_{L^1\cap L^2(\RR)}\|u\|_{L^1\cap L^2(\RR)}
    }
    implies that the operator $L_{{\rm tr}+,h} T={\rm Id}+hT$ is invertible on $L^1\cap L^2(\RR)$.\\
    For large $h$, we can repeat the same argument on a subset $\R\times(-\infty,y_0]\subset \RR$, with $y_0=y_0(h)\ll 0$ so that the norm $\|h\|_{L^1\cap L^2(\R\times(-\infty,y_0])}$ is small enough, and then extend $\psi=(L_{{\rm tr}+,h})^{-1}u$ to the whole $\RR$ by solving the initial value problem of the linear PDE
    \[
    \left\{
    \begin{aligned}
        &\psi_y-\psi_{xx}+2\tanh \psi_x=-h\psi+u,\\
        &\psi(\cdot,y_0)=\psi_0
    \end{aligned}
    \right.
    \]
    with $\psi_0\in C_0(\R_x)$. The global bounds follow from the a priori estimate 
    \begin{equation}\label{eq:z-008}
    \begin{aligned}\|\psi(y)\|_{L^\infty(\R_x)}&\leq \|\psi_0\|_{L^\infty(\R_x)}+\|Tu\|_{L^\infty(\RR)}\\
    &\QQQ+2\int_{y_0}^{\min\{1,y\}} \|\Gamma(\cdot,y-s)\|_{L^{2}(\R_x)} \|h(s)\|_{L^2(\R_x)}\|\psi(s)\|_{L^\infty(\R_x)}ds\\
    &\QQQ+2\int_1^{\max\{1,y\}} \|\Gamma(\cdot,y-s)\|_{L^{\infty}(\R_x)} \|h(s)\|_{L^1(\R_x)}\|\psi(s)\|_{L^\infty(\R_x)}ds\\
    &\QQQ+\int_{y_0}^y \|\demu\Gamma(\cdot,y-s)\|_{L^\infty(\R_x)} \|h(s)\|_{L^1(\R_x)}\|\psi(s)\|_{L^\infty(\R_x)}ds
    \end{aligned}
    \end{equation}
    given by the Duhamel formulation of the problem involving the kernel $K$, from the estimates
    \[
    \|\Gamma^\pm (\cdot,y)\|_{L^p(\R_x)}\lesssim |y|^{-\aha(1-1/p)},\quad \|\demu\Gamma^\pm (\cdot,y)\|_{L^\infty(\R_x)}\lesssim 1,
    \]
    and from Gronwall's inequality.
\end{proof}

\subsection{Well-posedness for parabolic equations}
Here we state a well-posedness result for the initial value problem (on positive sub-intervals of $\R_y$) associated to equation \eqref{eq:miuraEquation}.

\begin{lemma}\label{lemma:miura-eq-GWP-in-y-L2}
Let $v_0\in L^2(\Real_x)+\tanh(x)$ and $u\in L^2(\R\times(0,\infty))$. There exists a unique solution $v\in C([0,\infty), L^2(\Real_x))+\tanh(x)$ to equation \eqref{eq:miuraEquation} such that $v(\cdot,y)=v_0$. The map $(v_0,u)\mapsto v$ is continuous (equipping the codomain with the compact-open topology).
\end{lemma}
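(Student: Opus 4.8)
The plan is to pass to the equation for the perturbation $w:=v-\tanh(x)$ of the kink background and treat it as an energy‑subcritical semilinear parabolic equation with $y$ in the role of time. Since $\tanh$ solves \eqref{eq:miuraEquation} with $u\equiv 0$ (one has $\tanh_y=0$ and $\tanh_{xx}+(\tanh^2)_x=0$), substituting $v=w+\tanh$ gives
\[
w_y-w_{xx}-2(\tanh w)_x-(w^2)_x=-u_x ,
\]
so the lemma becomes: for $w_0:=v_0-\tanh\in L^2(\R_x)$ and $u\in L^2(\R\times(0,\infty))$ there is a unique $w\in C([0,\infty),L^2(\R_x))$ with $w(\cdot,0)=w_0$, depending continuously on the data. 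I would work with the Duhamel formulation
\[
w(y)=e^{y\partial_x^2}w_0+\Gamma\partial_x\bigl(2\tanh w+w^2-u\bigr),
\]
where $\Gamma=(\partial_y-\partial_x^2)^{-1}$ is the forward heat operator of Appendix \ref{appendix:heat-equation}, acting on functions extended by $0$ for $y<0$ and with the equation then restricted to $(0,T)$.

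For local existence I would run a contraction in the parabolic energy space $Y_T:=C([0,T],L^2(\R_x))\cap L^2((0,T),\dot H^1(\R_x))$, on which one has the interpolation bound $\|w\|_{L^4((0,T)\times\R)}\lesssim T^{1/8}\|w\|_{Y_T}$. All the needed linear estimates come from Proposition \ref{prop:spacetime_estimate_heat_general}: part (3) with $s=1$, $p=q=2$ gives $\|\Gamma\partial_x F\|_{C([0,\infty),L^2_x)}\lesssim\|F\|_{L^2((0,\infty)\times\R)}$ — this is exactly the estimate that absorbs the merely $L^2$ forcing $u_x$ — and part (2) with $s=2$ gives $\|\partial_x\Gamma\partial_x F\|_{L^2_{y,x}}\lesssim\|F\|_{L^2_{y,x}}$ for the $\dot H^1_x$ component, while $e^{y\partial_x^2}w_0$ is controlled in $Y_T$ by $\|w_0\|_{L^2}$. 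With $F=2\tanh w+w^2-u$, the term $\tanh w$ is estimated in $L^2((0,T)\times\R)$ by $T^{1/2}\|w\|_{Y_T}$ (using $|\tanh|\le 1$) and $w^2$ by $\|w\|_{L^4}^2\lesssim T^{1/4}\|w\|_{Y_T}^2$, so choosing the ball radius $\sim\|w_0\|_{L^2}+\|u\|_{L^2}$ and then $T$ small (depending only on this radius) makes the map a contraction; the difference estimate is identical, giving a unique solution in $Y_T$ with Lipschitz dependence on $(w_0,u)$.

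Globalization rests on the energy identity obtained by pairing the equation with $w$:
\[
\tfrac12\tfrac{d}{dy}\|w\|_{L^2}^2+\|w_x\|_{L^2}^2=\int_\R\sech^2 w^2\,dx+\int_\R u\,w_x\,dx ,
\]
where the cubic term vanishes, $\int_\R (w^2)_x\,w\,dx=-\tfrac13\int_\R\partial_x(w^3)\,dx=0$, by the Burgers structure, and the transport term produced the harmless $\int_\R\sech^2 w^2\,dx\le\|w\|_{L^2}^2$. Absorbing $\tfrac12\|w_x\|_{L^2}^2$ and applying Gr\"onwall yields $\|w(\cdot,y)\|_{L^2}^2\lesssim e^{Cy}(\|w_0\|_{L^2}^2+\|u\|_{L^2((0,y)\times\R)}^2)$ and $w_x\in L^2((0,T)\times\R)$ for all $T$; this is justified for the $L^2$‑datum solution by regularizing $(v_0,u)$ (classical parabolic theory on smooth data) and passing to the limit, or via the Lions--Magenes lemma since $w\in Y_T$ has $w_y\in L^2((0,T),H^{-1}_x)$. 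As the local existence time at each restart depends only on the $L^2$‑norm of the datum at that time — bounded on $[0,\bar y]$ by the Gr\"onwall estimate — together with $\|u\|_{L^2((0,\infty)\times\R)}$, finitely many steps reach any $T$, producing the global solution.

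The two points needing a bit more care are the unconditional uniqueness and the precise form of continuity asserted. Uniqueness is claimed among all of $C([0,\infty),L^2_x)+\tanh$, not just within $Y_T$; I would show by a parabolic bootstrap on the Duhamel formula (first upgrading $w$ to $L^4((0,T)\times\R)$ using the $L^2\to L^4$ smoothing of the heat semigroup together with Proposition \ref{prop:spacetime_estimate_heat_general}, part (1), then to $L^2((0,T),\dot H^1_x)$ via part (2)) that every such solution automatically lies in $Y_T$, whence uniqueness reduces to the one already proved — equivalently, one then closes a Gr\"onwall estimate for the difference of two solutions using $\|w\|_{L^\infty_x}\lesssim\|w\|_{L^2_x}^{1/2}\|w_x\|_{L^2_x}^{1/2}$. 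The continuity in the compact--open topology of the codomain means, for each fixed $T$, continuity of $(w_0,u)\mapsto w$ from $L^2(\R_x)\times L^2((0,\infty)\times\R)$ into $C([0,T],L^2_x)$, which follows by patching the local Lipschitz bounds over the finitely many subintervals of the global iteration, whose number and lengths vary lower‑semicontinuously with the data. I expect no conceptual obstacle: the only non‑routine ingredient is handling the low‑regularity forcing $u_x$, which Proposition \ref{prop:spacetime_estimate_heat_general}, part (3), supplies, and the rest is the standard theory of energy‑subcritical semilinear parabolic equations adapted to the kink background.
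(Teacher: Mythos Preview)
Your proposal is correct and follows essentially the same strategy as the paper: subtract the kink, run a fixed-point argument for local existence, then globalize via the $L^2$ energy identity and Gr\"onwall. The implementation differs slightly: the paper absorbs the transport term $2(\tanh w)_x$ into the linear propagator and uses the explicit kernel $K_{\mathrm{co}-}$ from Proposition~\ref{prop:explicit-kernels}, running the contraction in $L^\infty_T L^2$ alone (with bounds $(T^{1/4}+T^{3/4})\|v\|_{L^\infty_T L^2}^2$ for the nonlinearity), whereas you keep the pure heat semigroup, treat $\tanh w$ as a forcing, and close in the energy space $C_TL^2\cap L^2_T\dot H^1_x$. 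Your route avoids the explicit kernels entirely and relies only on Proposition~\ref{prop:spacetime_estimate_heat_general}; it also makes the unconditional uniqueness in $C([0,\infty),L^2_x)+\tanh$ more explicit than the paper does, which simply invokes ``uniqueness properties of the linear heat equation'' to pass to the integral equation. Both choices are standard and neither offers a real advantage here.
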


\begin{proof}
The proof relies on a standard fixed point argument. First, calling $z(x):=v(x)-\tanh(x)$, we equivalently show the global well-posedness of equation
\begin{equation}\label{eq:miuraEquationZ}
    z_y-z_{xx}-2(\tanh(x)z)_x-(z^2)_x=-u_x.
\end{equation}
By the uniqueness properties of the linear heat equation, we can equivalently look for solutions satisfying the integral equation
\[
z(y)=e^{y(\partial_x^2+2\partial_x\tanh)}z_0+\int_0^ye^{(y-s)(\partial_x^2+2\partial_x\tanh)}\partial_x(z^2(s)-u(s))\,ds,
\]
where $e^{y(\partial_x^2+2\de_x\tanh)}$ and $e^{y(\partial_x^2+2\de_x\tanh)}\partial_x$ are the integral operators with kernels $K_{\rm{co}-}(\cdot,y)$ and $DK_{\rm{tr}-}(\cdot,y)$ defined in Proposition \ref{prop:explicit-kernels}. Consider the map
$$ \Phi(v)=e^{y(\partial_x^2+2\partial_x\tanh)}z_0+\int_0^ye^{(y-s)(\partial_x^2+2\partial_x\tanh)}\partial_x(v^2(s)-u(s))\,ds.$$
We note that, up to multiplications by bounded functions, $e^{y(\partial_x^2+2\tanh)}\partial_x$ is essentially a sum of heat kernels and derivatives of heat kernels, while $e^{y(\partial_x^2+2\tanh)}$ is a sum of heat kernels and a term whose $L^1$ norm grows linearly in $y$. Thus, we can easily obtain the bounds
$$\|\Phi(v)\|_{L_T^\infty L^2}\lesssim (1+T)\|z_0\|_{L^2}+(1+T)\|u\|_{L^2_{x,y}} + (T^{1/4}+T^{3/4})\|v\|_{L^\infty_TL^2}^2,$$
$$
\|\Phi(v)-\Phi(w)\|_{L_T^\infty L^2}\lesssim (T^{1/4}+T^{3/4})\|v+w\|_{L^\infty_TL^2}\|v-w\|_{L^\infty_T}.
$$
This is enough to prove local well-posedness using the Banach fixed point theorem and standard arguments. The global well-posedness follows from the standard energy estimate of equation \eqref{eq:miuraEquationZ},
\[
\|z\|_{L^\infty_TL^2}+\|z_x\|_{L^2_TL^2}\lesssim T^\aha \|z\|_{L^\infty_T L^2}+\|u\|_{L^2L^2},
\]
and Gronwall's inequality.
\end{proof}

\begin{lemma}\label{lemma:localWellPosednessExponentiallyLocalized}
Let $R>0$. There exists $T=T(R)>0$ such that, letting $z_0\in L^2(\Real_x;\cosh^2(x)dx)$, $V,f,h\in L^3(\Real_x\times(0,T)) $, $g\in L^3(\Real_x\times(0,T))$ with norms bounded by $R>0$, the Cauchy problem
\begin{equation*}
    \left\{\begin{aligned}
        &z_y-z_{xx}-2((\tanh(x)+U)z)_x=(z^2)_x+\sech^2(x)(f+g)+(\sech^2(x)h)_x\\
        &z(x,0)=z_0(x)
    \end{aligned}\right.
\end{equation*}
admits a unique solution $ z\in C([0,T],L^2(\Real;\cosh^2(x)dx)) $. Moreover, the map
$$ (z_0,U,f,g,h)\mapsto z $$
is continuous.
\end{lemma}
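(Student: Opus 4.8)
The plan is to reduce the problem, exactly as in the proof of Lemma \ref{lemma:miura-eq-GWP-in-y-L2}, to a fixed-point argument for an integral equation, but now carried out in the exponentially weighted space $C([0,T],L^2(\R_x;\cosh^2(x)\,dx))$. The key point is that conjugating by the weight $\cosh(x)$ turns the operator $\partial_y-\partial_x^2-2\tanh(x)\partial_x$ into a perturbation of the heat operator with good sign properties. First I would introduce the conjugated unknown $\omega(x,y):=\cosh(x)z(x,y)$. A direct computation (using $(\cosh)_x=\sinh$, $(\cosh)_{xx}=\cosh$, $\tanh\cdot\sinh=\cosh-\sech$) shows that the drift term $-2\tanh(x)\partial_x$ conjugates into $-\partial_x^2+1-2\sech^2(x)$ plus lower-order bounded terms, so that $\omega$ solves an equation of the schematic form
\[
\omega_y-\omega_{xx}+\omega=2\sech^2(x)\,\omega+\partial_x\bigl[\,\cosh(x)(z^2+2Uz)\bigr]-\partial_x[\sinh(x)(\cdots)]+\cosh(x)\sech^2(x)(f+g)+\partial_x[\cosh(x)\sech^2(x)h]+\ldots,
\]
where every explicit coefficient appearing is a bounded smooth function of $x$ (in particular $\cosh(x)\sech^2(x)=\sech(x)$ and $\cosh(x)\sech^2(x)$-type weights are bounded). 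The nonlinear and forcing terms, once multiplied by $\cosh(x)$, become: $\cosh(x)z^2 = \cosh^{-1}(x)\,\omega^2$ (so the nonlinearity is \emph{better} than quadratic in $L^2$ after the weight); $\cosh(x)\sech^2(x)f = \sech(x)f$, which is $L^2$-integrable because $\sech$ is bounded and $f\in L^3$ on a bounded time interval embeds into $L^2_{\mathrm{loc}}$ — more carefully, one uses $\|\sech(x) f\|_{L^2_{x,y}}\lesssim \|\sech(x)\|_{L^6_x}\|f\|_{L^3}$ on the strip $\R\times(0,T)$, which is the reason the hypotheses are phrased with $L^3$ rather than $L^2$.

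Next I would pass to the Duhamel formulation using the operator $\GammaExp=(\partial_y-\partial_x^2+1)^{-1}$ from Definition \ref{def:heat-operators-in-appendix} / the proof of Proposition \ref{prop:unconditional_uniqueness_miura}, whose kernel is $e^{-y}\Gamma(x,y)\in L^1(\RR)$ and which therefore satisfies all the estimates of Proposition \ref{prop:spacetime_estimate_heat_general} together with $\|\GammaExp g\|_{L^pL^q}\lesssim\|g\|_{L^pL^q}$. Writing
\[
\omega=e^{-y}e^{y\partial_x^2}\omega_0+\GammaExp\Bigl[2\sech^2\omega+\cosh^{-1}\!\partial_x(\cosh\,\cdot)\bigl(\omega^2+2(\cosh U)(\cosh^{-1}\omega)\bigr)+\sech(f+g)+\partial_x(\sech\,h)+\ldots\Bigr],
\]
the map $\Phi$ sending $\omega\mapsto$ (right-hand side) is shown to be a contraction on a ball in $C([0,T],L^2(\R_x))$ for $T=T(R)$ small: the $2\sech^2\omega$ term is linear and controlled with a factor $T$ (it has one power of the heat semigroup, no derivative loss, bounded coefficient); the quadratic term $\GammaExp\partial_x(\cosh^{-1}\omega^2)$ gains $T^{1/4}$ as in Lemma \ref{lemma:miura-eq-GWP-in-y-L2}; the term involving $U$ is treated as $\GammaExp\partial_x\bigl((\text{bdd})\,U\,\omega + (\text{bdd})\,(\cosh U)\,z\bigr)$ where $\|U\omega\|_{L^{3/2}_yL^2_x}\lesssim T^{1/6}\|U\|_{L^3}\|\omega\|_{L^\infty_TL^2}$ by Hölder in time, so it too comes with a small power of $T$; the forcing terms are estimated once and for all in $L^2_{x,y}$ (for the $f+g$ part, using $\sech\in L^6$) and in $\partial_x L^3_yL^2_x$ (for the $h$ part, using $\sech h\in L^3_yL^2_x\hookrightarrow$ the correct space after $\GammaExp\partial_x$, since $\GammaExp\partial_x:L^{3/2}(\RR)\to L^\infty_TL^2_x$ with an extra $T^{1/6}$ by time Hölder). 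Collecting these, $\|\Phi(\omega)\|\le C_R(T^\theta)(1+\|\omega\|+\|\omega\|^2)+C_R$ with $\theta>0$, and the Lipschitz estimate $\|\Phi(\omega_1)-\Phi(\omega_2)\|\lesssim C_R T^\theta(\|\omega_1\|+\|\omega_2\|)\|\omega_1-\omega_2\|$, so Banach's fixed point theorem applies for $T=T(R)$; undoing the weight gives $z=\cosh^{-1}\omega\in C([0,T],L^2(\cosh^2 dx))$. Continuity of $(z_0,U,f,g,h)\mapsto z$ is then automatic: the fixed point depends continuously (indeed Lipschitz-continuously, locally) on all the data entering $\Phi$ linearly or through fixed multilinear expressions, by the standard argument that the solution map of a uniform contraction depending continuously on parameters is continuous; one only needs to note that $C([0,T],L^2(\cosh^2 dx))$ continuously embeds into the functions that are locally in $L^3$, so the stated compact-open continuity follows a fortiori.

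I would also record the uniqueness: any two solutions in $C([0,T],L^2(\cosh^2 dx))$ satisfy the same integral equation (again by the uniqueness theory for the linear heat equation with prescribed initial data, since $\omega\in L^\infty_TL^2_x\hookrightarrow L^3(\R_x\times(0,T))$ on the bounded strip), hence coincide on a short interval by the contraction, and then on all of $[0,T]$ by a standard bootstrap dividing $[0,T]$ into finitely many subintervals of length $T(R)$ — here $R$ stays bounded because $\|\omega(y)\|_{L^2}$ is bounded on $[0,T]$ by the a priori bound, so the same local time $T(R)$ works at each step.

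\textbf{Main obstacle.} The only genuinely delicate bookkeeping is verifying that conjugation by $\cosh(x)$ produces, after collecting all terms generated by $\partial_x^2$ and $2\tanh\partial_x$ acting through the weight, \emph{exactly} the operator $\partial_y-\partial_x^2+1-2\sech^2(x)$ modulo terms with bounded coefficients and at most one $x$-derivative — and, crucially, with no unbounded coefficient multiplying $\partial_x\omega$. (A stray $\tanh(x)\partial_x\omega$ would be fine since $\tanh$ is bounded; a stray $\sinh(x)$-weighted quadratic term must be absorbed by writing it against $\cosh^{-1}$, since $\sinh(x)\cdot z^2=\sinh(x)\cosh^{-2}(x)\omega^2$ and $\sinh\cosh^{-2}$ is bounded.) I expect this computation to go through cleanly because it is the same conjugation already used implicitly in Proposition \ref{prop:newExpEstimates} and in the operator $K_{\mathrm{co}-}$ of Proposition \ref{prop:explicit-kernels}; the appearance of the genuinely positive self-adjoint piece $-\partial_x^2+1-2\sech^2$ is what one expects (cf. Lemma \ref{lemma:lowerBoundOnTheQuadraticForm}), and for the short-time existence statement one does not even need its positivity — the bounded perturbation $2\sech^2$ is simply carried along with a factor of $T$.
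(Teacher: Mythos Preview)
Your proposal is correct and follows exactly the same route as the paper: conjugate by $\cosh(x)$, so that the unknown $w=\cosh(x)z$ satisfies $w_y-w_{xx}+w-2\sech^2 w-2\cosh\cdot(\sech\,Uw)_x=\sech\,(w^2)_x-2\sech\tanh\,w^2+\cosh\cdot(\sech^2(f+g)+(\sech^2 h)_x)$, and then run a classical fixed-point argument as in Lemma~\ref{lemma:miura-eq-GWP-in-y-L2}. The paper's proof is in fact just this one-line reduction; your detailed bookkeeping of the estimates and your verification that the conjugation produces only bounded coefficients are correct and more explicit than what the paper records.
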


\begin{proof}
    Consider $w:=\cosh(x)z$. The equation for $w$ becomes
    \begin{align*}
        w_y-w_{xx}+w-2\sech^2 w-2\cosh \cdot\,(\sech Uw)_x&=\sech \cdot\, (w^2)_x-2\sech\tanh w^2\\
        &\QQQ+\cosh\cdot\,(\sech^2\cdot\,(f+g)+(\sech^2 h)_x)
    \end{align*}
    and the proof follows the lines of a classical fixed point argument as in Lemma \ref{lemma:miura-eq-GWP-in-y-L2}.
\end{proof}

\section{The Miura map with \texorpdfstring{$L^2$}{L2} data}\label{sec:existence_of_solutions_with_L2_data}
This appendix, originally a first attempt in the construction of solutions to \eqref{eq:miuraEquation}, describes the situation of small data $u\in L^2(\RR)$. This is a simplified setting which can be instructive for the reader, and we will rely to some extent on this subsection for the proof of Corollary \ref{cor:almost-conservation-L2-norm-around-line-soliton}. The problem with using exclusively this approach is that we lack a uniqueness theorem for the solutions of \eqref{eq:miuraEquation} with generic $L^2$ data.

The main idea is to prove a monotonicity estimate for the initial value problem of equation \eqref{eq:miuraEquation} to obtain global solutions with uniform bounds on any interval $[a,\infty)$, and then let $a\to-\infty$ to obtain a solution defined on the whole $\RR$ by compactness. We start with the following Lemma, which gives a simple decomposition of a function in $L^2(\R_x)+\tanh(x)$. Concerning this decomposition, we will use the letter $\beta$ to distinguish this shift parameter from the shift $\alpha$ in the rest of the paper, although they are both quantities that represent the positions of the kink.

We make use of Notation \ref{notation:f_alpha} throughout all this subsection: $f_\beta(x):=f(x-\beta)$ if $\beta\in\R$, and $f_\beta(x,y):=f(x-\beta(y))$ if $\beta:\R\to\R$. We define the quantity
\[
\vertiii{v_0}_{L^2(\R)}:=\min_{\gamma\in\R} \|v_0-\tanh_\gamma\|_{L^2(\R)}.
\]
\begin{lemma}\label{lemma:decompositionWAlpha}
    There exists $\theta_0>0$ and an analytic decomposition map $v\mapsto (w,\beta)$ from $A_{\theta_0}\subset L^2(\R)+\tanh(x)$ to $L^2\times\Real$, where $A_{\theta_0}:=\{v\,|\,\vertiii{v}_{L^2}<\theta_0\}$, such that $v=\tanh_\beta+w$ and
    \[
    \int_{\R} w\sech^2_\beta\,dx=0.
    \]
    Moreover, $(w,\beta)$ is uniquely determined by the above properties under the hypothesis that $v\in A_{\theta_0}$.
\end{lemma}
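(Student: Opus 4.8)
\textbf{Proof plan for Lemma \ref{lemma:decompositionWAlpha}.} The natural approach is an application of the implicit function theorem, applied to the map that measures the orthogonality defect. Define
\[
F : (L^2(\R)+\tanh) \times \R \longrightarrow \R, \qquad F(v,\beta) := \int_\R \bigl(v - \tanh_\beta\bigr)\,\sech^2_\beta \, dx,
\]
which is well-defined since $v - \tanh_\beta \in L^2(\R)$ and $\sech^2_\beta \in L^2(\R)$. First I would check that $F(\tanh, 0) = \int_\R (\tanh - \tanh)\sech^2\,dx = 0$, and more generally $F(\tanh_\gamma,\gamma)=0$ for all $\gamma$. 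The key computation is the partial derivative in $\beta$: differentiating under the integral sign,
\[
\partial_\beta F(v,\beta) = \int_\R \sech^2_\beta \cdot \sech^2_\beta\,dx + \int_\R (v - \tanh_\beta)\cdot\bigl(-2\,\sech^2_\beta\tanh_\beta\bigr)\,dx = \|\sech^2\|_{L^2(\R)}^2 + O\bigl(\vertiii{v}_{L^2}\bigr),
\]
using translation invariance of the $L^2$ norm for the first term (so that it equals a fixed positive constant independent of $\beta$) and Cauchy–Schwarz for the second. Hence for $\vertiii{v}_{L^2}$ small enough, $\partial_\beta F(v,\beta)$ is bounded away from zero, uniformly. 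Since $F$ is analytic in both arguments (it is affine in $v$ and the $\beta$-dependence is through the real-analytic shift $\beta \mapsto \tanh_\beta,\sech^2_\beta \in L^2$, which is real-analytic by the usual argument on the exponential decay of hyperbolic functions and their derivatives), the analytic implicit function theorem produces, near any reference point, a locally unique analytic solution $\beta = \beta(v)$ of $F(v,\beta)=0$, and then $w := v - \tanh_{\beta(v)}$ completes the decomposition.

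The next point is to upgrade this local statement to the global smallness ball $A_{\theta_0} = \{v : \vertiii{v}_{L^2} < \theta_0\}$. Given $v \in A_{\theta_0}$, by definition there is a $\gamma = \gamma(v)$ with $\|v - \tanh_\gamma\|_{L^2} < \theta_0$ (one should check a minimizer exists, or just work with a near-minimizer within factor $2$). Applying the implicit function theorem centered at the point $(\tanh_\gamma,\gamma)$, one obtains $\beta$ with $|\beta - \gamma|$ small and $F(v,\beta)=0$; the size of the neighbourhood of applicability depends only on the (uniform) lower bound for $\partial_\beta F$ and uniform bounds on the second derivatives of $F$, which are translation-invariant, so $\theta_0$ can be chosen independent of $\gamma$. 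This gives existence of the decomposition on all of $A_{\theta_0}$. Analyticity of $v \mapsto (w,\beta)$ on $A_{\theta_0}$ then follows because analyticity is a local property and the local branches agree wherever they overlap.

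For uniqueness within $A_{\theta_0}$, suppose $v = \tanh_{\beta_1} + w_1 = \tanh_{\beta_2} + w_2$ with both $w_i \perp \sech^2_{\beta_i}$ in $L^2$. Then $\tanh_{\beta_1} - \tanh_{\beta_2} = w_2 - w_1 \in L^2$, and since $\|w_i\|_{L^2} \le \|v - \tanh_\gamma\|_{L^2} + \|\tanh_\gamma - \tanh_{\beta_i}\|_{L^2}$ and the orthogonality forces $\|w_i\|_{L^2}$ to be comparable to $\vertiii{v}_{L^2}$ (it is the distance from $v-\tanh_{\beta_i}$ to the one-dimensional tangent line), both $w_i$ are small, hence $\|\tanh_{\beta_1}-\tanh_{\beta_2}\|_{L^2}$ is small, which forces $|\beta_1 - \beta_2|$ small by the coercivity $\|\tanh_a - \tanh_b\|_{L^2} \gtrsim \min\{|a-b|, \text{const}\}$. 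Then $F(v,\cdot)$ vanishes at two nearby points $\beta_1,\beta_2$ while $\partial_\beta F(v,\cdot)$ is bounded away from zero on the segment between them, so $\beta_1 = \beta_2$ and consequently $w_1 = w_2$.

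\textbf{Expected main obstacle.} The routine parts are the differentiation under the integral and the analyticity of the shift map. The one genuinely delicate point is making the implicit function theorem argument \emph{uniform} over the whole ball $A_{\theta_0}$ — that is, verifying that a single $\theta_0$ works regardless of where in $\R$ the kink of $v$ sits. This is handled by translation invariance: conjugating by the shift $x \mapsto x + \gamma$ reduces every point of $A_{\theta_0}$ to a uniformly bounded perturbation of the base point $(\tanh,0)$, so the quantitative constants in the implicit function theorem (lower bound on $\partial_\beta F$, upper bounds on $\partial_\beta^2 F$ and the mixed derivative) are all $\gamma$-independent. Once this reduction is set up cleanly, the rest is bookkeeping.
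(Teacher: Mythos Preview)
Your existence argument via the implicit function theorem on $F(v,\beta)=\int(v-\tanh_\beta)\sech^2_\beta\,dx$ matches the paper's exactly, including the use of translation invariance to make $\theta_0$ independent of the reference point $\gamma$.

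The uniqueness argument, however, has a gap. You assert that ``the orthogonality forces $\|w_i\|_{L^2}$ to be comparable to $\vertiii{v}_{L^2}$ (it is the distance from $v-\tanh_{\beta_i}$ to the one-dimensional tangent line),'' but this parenthetical is not a proof: $w_i=v-\tanh_{\beta_i}$ \emph{is} the full displacement, not a projection, and the orthogonality $w_i\perp\sech^2_{\beta_i}$ by itself says nothing about $\|w_i\|$. A priori $\beta_i$ could be very far from the minimizer $\gamma$, making $\|w_i\|$ large, and your subsequent chain of implications then collapses.

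The paper closes this gap by a global analysis of $\beta\mapsto F(v,\beta)$. After translating so that one decomposition (the one produced by the IFT, which \emph{does} have small $w$) sits at $\beta=0$, one computes
\[
F(v,\beta)=(w*\sech^2)(\beta)+(\tanh*\sech^2)(\beta).
\]
The second term is an explicit odd, strictly increasing function with range $(-2,2)$, while the first has $L^\infty$ norm and Lipschitz constant controlled by $\|w\|_{L^2}$. For $\|w\|_{L^2}$ small enough this forces $\beta=0$ to be the \emph{only} zero of $F(v,\cdot)$ on all of $\R$, not just locally. Your strategy can be repaired along the same lines: from the orthogonality $0=\langle v-\tanh_{\beta_i},\sech^2_{\beta_i}\rangle$ and the smallness of $\langle v-\tanh_\gamma,\sech^2_{\beta_i}\rangle$, deduce that $\langle\tanh_\gamma-\tanh_{\beta_i},\sech^2_{\beta_i}\rangle$ is small; this quantity equals $(\tanh*\sech^2)(\beta_i-\gamma)$, whose smallness forces $|\beta_i-\gamma|$ small. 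But this is precisely the paper's convolution computation, and once you have it your further reduction to local monotonicity is redundant.
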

\begin{proof}
    The first part is a consequence of the implicit function theorem. The map
    $$
    \begin{aligned}
    F&:(L^2(\R)+\tanh(x))\times\Real\to\Real,\\
    F&(v,\beta)=\int_{\R}(v-\tanh_\beta)\sech^2_\beta\dx
    \end{aligned}
    $$
    is such that $F(\tanh_\gamma,\gamma)=0$, $\gamma\in\R$, and has $\beta$-derivative at $(v,\beta)=(\tanh_\gamma,\gamma)$ equal to
    \[
    \de_\beta F(\tanh_\gamma,\gamma)=\|\sech^2_\gamma\|_{L^2(\R)}^2>0.
    \]
    By the implicit function theorem, there exists $\eps_1>0$ and an analytic function
    \[
    \beta\colon B_{\eps_1}^{L^2}(\tanh_\gamma)\to L^2\times \Real
    \]
    such that $F(v,\beta(v))=0$. Moreover, $\eps_1$ does not depend on $\gamma$ by translation invariance, and any two such maps agree where they overlap.
    
    For the second part, assume without loss of generality that $v=w+\tanh(x)$, $\int_{\R} w\sech^2(x)\dx=0$. Then,
     \[ \int_{\R} (w+\tanh-\tanh_\beta)\sech^2_\beta\dx=w*\sech^2(\beta)+\tanh*\sech^2(\beta). \]
    It is then clear that $w*\sech^2(0)=0$ and, for $\|w\|_{L^2}$ smaller than a suitable $\eps_2>0$, the function $w*\sech^2$ will have an $L^\infty$ norm and a Lipschitz constant so small that it will never be equal to $-\tanh*\sech^2(\beta)$ besides at $\beta=0$. The claim follows by choosing $\theta_0=\min\{\eps_1,\eps_2\}$.
\end{proof}
\begin{lemma}\label{lemma:global-bounds-L2-small-data}
Let $u\in L^2(\Real\times (0,\infty))$ small, and $v_0\in L^2(\R)+\tanh(x)$ be such that $\vertiii{v_0}_{L^2(\R)}$ is small enough. The solution $v\in C([0,\infty),L^2(\R))+\tanh(x)$ of equation \eqref{eq:miuraEquation} with initial datum $v|_{y=0}=v_0$ given by the global well-posedness theory (see Lemma \ref{lemma:miura-eq-GWP-in-y-L2}) satisfies
$$ \sup_{y\in[0,\infty)}\vertiii{v(y)}_{L^2(\R)}\lesssim \vertiii{v_0}_{L^2(\R)}+\|u\|_{L^2L^2}. $$
Moreover, called $w$ and $\beta$ the decomposition given by Lemma \ref{lemma:decompositionWAlpha}, we have the bounds
\[ \|w\|_{L^\infty L^2}+\|w_x\|_{L^2L^2}+\|\sech_\beta w\|_{L^2L^2}+\|\beta_y\|_{L^2}\lesssim\|u\|_{L^2_{x,y}}+\vertiii{v_0}_{L^2}.\]
Finally, called $h:=w-k$, with $k\in C([0,\infty),L^2(\R))$ being the unique solution to $k_y-k_{xx}=-u_x$ with $k(\cdot,0)\equiv 0$, we have the bounds
\[
\|h_y\|_{L^{3/2}+L^2(\Real\times(0,\infty))}+\|h_{xx}\|_{L^{3/2}+L^2(\Real\times(0,\infty))}\lesssim \|u\|_{L^2_{x,y}}+\vertiii{v_0}_{L^2}.
\]
\end{lemma}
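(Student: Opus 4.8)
\textbf{Proof plan for Lemma \ref{lemma:global-bounds-L2-small-data}.}

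The plan is to run a weighted energy estimate in the variable $\beta$ coming from the decomposition of Lemma \ref{lemma:decompositionWAlpha}, exactly in the spirit of Proposition \ref{prop:newExpEstimates} but in the subcritical $L^2$ setting with the lighter weight $\sech^2_\beta$ instead of $\cosh^2_\alpha$. First I would write $v=\tanh_\beta+w$ with $\int_\R w\sech^2_\beta\,dx=0$; by Lemma \ref{lemma:decompositionWAlpha} the map $v\mapsto(w,\beta)$ is analytic on the set where $\vertiii{v}_{L^2}$ is small, and a short continuity argument (using the global well-posedness of Lemma \ref{lemma:miura-eq-GWP-in-y-L2} and the smallness of the data) shows that this decomposition persists on all of $[0,\infty)$ as long as the a priori bound we are about to prove holds. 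Plugging the ansatz into \eqref{eq:miuraEquation} and differentiating the orthogonality condition in $y$ gives an ODE for $\beta_y$ of the form
\[
\beta_y(y)=\frac{\int_\R \big[(w^2)_x-2(\tanh_\beta w)_x-u_x\big]\sech^2_\beta\,dx - \tfrac{d}{dy}\!\big(\text{correction from }w\big)}{\int_\R \sech^2_\beta\cdot\sech^2_\beta\,dx+O(\|w\|_{L^2})},
\]
whose denominator is bounded away from zero for small data; integrating by parts moves all $x$-derivatives onto $\sech^2_\beta$, so $\|\beta_y\|_{L^2_y}$ is controlled by $\|u\|_{L^2_{x,y}}+\|w\|_{L^\infty L^2}+\|w\|_{L^2_yL^2_x}\|w_x\|_{L^2_yL^2_x}$ after Cauchy--Schwarz in $x$ and Hölder in $y$.

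Next I would derive the equation satisfied by $w$, which reads
\[
w_y-w_{xx}-2(\tanh_\beta w)_x=(w^2)_x+2\sech^2_\beta\,(\tanh_\beta)\cdot(\dots)-u_x+\beta_y\sech^2_\beta\cdot(1+\text{l.o.t.}),
\]
and test it against $\sech^2_\beta\,w$ (equivalently, run the energy estimate for $\sech_\beta w$). Integration by parts produces the leading dissipative terms $-\|(\sech_\beta w)_x\|_{L^2}^2$ together with a lower-order potential term; here the crucial point is that, under the orthogonality constraint $\int w\sech^2_\beta\,dx=0$, the quadratic form controls $\|\sech_\beta w\|_{H^1}$ from below — this is exactly the role played by Lemma \ref{lemma:lowerBoundOnTheQuadraticForm} in Proposition \ref{prop:newExpEstimates}, only now the spectral gap of $-\partial_x^2-c\sech^2$ restricted to the relevant subspace is used in its simplest incarnation. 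The nonlinear term $(w^2)_x$ is absorbed by the smallness of $\|w\|_{L^3}$ (via the interpolation $\|w\|_{L^6L^6}\lesssim\|w\|_{L^\infty L^2}^{2/3}\|w_x\|_{L^2L^2}^{1/3}$), the $\beta_y$-term is handled with the already-obtained bound on $\|\beta_y\|_{L^2_y}$ in a mutually-bootstrapping way, and the forcing $u_x$ pairs as $\int u\,(\sech_\beta w)_x$, bounded by $\|u\|_{L^2}\|(\sech_\beta w)_x\|_{L^2}$. Integrating in $y$ from $0$ to $T$ and taking $T\to\infty$ gives the stated bounds on $\|w\|_{L^\infty L^2}$, $\|w_x\|_{L^2L^2}$, $\|\sech_\beta w\|_{L^2L^2}$, and the auxiliary estimate then closes the bound on $\|\beta_y\|_{L^2}$; the bound on $\vertiii{v(y)}_{L^2}\le\|w(y)\|_{L^2}$ is then immediate.

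For the last assertion about $h=w-k$, note $k$ solves the pure heat equation with forcing $-u_x$, so $k\in C([0,\infty),L^2)$ with $\|k\|_{L^\infty L^2}+\|k_x\|_{L^2L^2}\lesssim\|u\|_{L^2_{x,y}}$ by the energy estimate of Proposition \ref{prop:spacetime_estimate_heat_general} (part (3)) after writing $k=\Gamma(-u_x)$; in particular $k_y-k_{xx}=-u_x$, hence $k_{xx}=k_y+u_x$, and one reads off $\|k_{xx}\|_{L^2+\dots}$ and $\|k_y\|_{L^2+\dots}$ from the same bounds together with $k_y=k_{xx}-u_x$. Subtracting, $h$ satisfies
\[
h_y-h_{xx}=2(\tanh_\beta w)_x+(w^2)_x-\beta_y\sech^2_\beta\cdot(1+\text{l.o.t.}),
\]
with no bare $u_x$ on the right-hand side: the term $\tanh_\beta w$ is in $L^2_{x,y}$ (it is a bounded multiple of $w$), $w^2$ is in $L^{3/2}_{x,y}$ since $w\in L^3_{x,y}$, and $\beta_y\sech^2_\beta\in L^2_{x,y}$ by the bound on $\|\beta_y\|_{L^2_y}$ and the fact that $\sech^2_\beta\in L^\infty_yL^2_x$. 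Therefore $h_{xx}=\Gamma^{-1}$ applied to an $L^{3/2}+L^2$ function is itself in $L^{3/2}+L^2$ by the boundedness of $\partial_x^2\Gamma$ on $L^p$, $1<p<\infty$ (Proposition \ref{prop:spacetime_estimate_heat_general}, part (2)), and $h_y=h_{xx}+(\text{RHS})$ inherits the same regularity; the norm bounds are the ones claimed. The main obstacle in the whole argument is the same coercivity issue as in Proposition \ref{prop:newExpEstimates}: making the weighted energy estimate close requires the orthogonality condition to be precisely the one adapted to the kernel of the linearized operator, and one must check carefully that the decomposition of Lemma \ref{lemma:decompositionWAlpha} — which uses the constraint $\int w\sech^2_\beta\,dx=0$ rather than $\int w\,dx=0$ — still gives a spectral gap for $-\partial_x^2-2\sech^2$ on the correct codimension-one subspace; once that is in hand, everything else is a routine (if lengthy) bootstrap.
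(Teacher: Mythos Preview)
There is a genuine gap in the main energy step. You propose to test the $w$-equation against $\sech^2_\beta\,w$, i.e.\ to run a weighted energy estimate with weight $\sech^2_\beta$. But $\sech^2_\beta$ is a \emph{decaying} weight: such an estimate can at best control $\|\sech_\beta w\|_{L^\infty L^2}$ and $\|(\sech_\beta w)_x\|_{L^2L^2}$, never the unweighted quantities $\|w\|_{L^\infty L^2}$ and $\|w_x\|_{L^2L^2}$ that the lemma claims (and that you need to close the bootstrap and to bound $\vertiii{v(y)}_{L^2}$). You seem to have transported the scheme of Proposition \ref{prop:newExpEstimates}, but there the weight is $\cosh^2_\alpha$, a \emph{growing} weight, so that testing against $\cosh^2_\alpha\omega$ gives \emph{stronger} information on $\omega$. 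Swapping $\cosh^2$ for $\sech^2$ reverses the inequality and leaves you with strictly less than required.

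The paper instead multiplies the $w$-equation by $w$ itself, with no weight. This is cleaner than you anticipate: the term $\beta_y\sech^2_\beta$ pairs to $\beta_y\int\sech^2_\beta w\,dx=0$ by the very orthogonality condition of Lemma \ref{lemma:decompositionWAlpha}, and the nonlinearity gives $\int(w^2)_x\,w\,dx=0$, so neither needs smallness to be absorbed. One is left with
\[
\tfrac12\tfrac{d}{dy}\|w\|_{L^2}^2+\|w_x\|_{L^2}^2-\int\sech^2_\beta\,w^2\,dx=\int u\,w_x\,dx,
\]
and Lemma \ref{lemma:lowerBoundOnTheQuadraticForm} under the hypothesis $\langle w,\sech^2_\beta\rangle=0$ gives $\|w_x\|^2\ge 2\int\sech^2_\beta w^2$, so the left side controls both $\|w_x\|^2$ and $\|\sech_\beta w\|^2$. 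Your treatment of $\beta_y$ and of $h=w-k$ is then essentially the paper's.
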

\begin{proof}
    Assume first that $v_0$ and $u$ are test functions. The solution $v$ given by Lemma \ref{lemma:miura-eq-GWP-in-y-L2} is then a classical solution in $C^\infty([0,\infty),H^\infty(\R)+\tanh)$, and for a maximal time $T^*>0$, the quantity $\vertiii{v(y)}_{L^2}$ remains small in $[0,T^*)$. It follows by Lemma \ref{lemma:decompositionWAlpha} that there exists a decomposition $w\in C^\infty L^2$, $\beta\in C^\infty([0,T^*))$ such that
    \[
    v=w+\tanh_\beta,\qquad\int_{\R} w\sech^2_{\beta}dx=0\quad\forall y\in[0,T^*).
    \]
    The equation for $w$ then reads
    $$w_y-w_{xx}-2(\tanh_\beta w)_x=(w^2)_x-u_x+\beta_y\sech^2_\beta.$$
    We multiply by $w$ and integrate integrating in $x$. Using Lemma \ref{lemma:lowerBoundOnTheQuadraticForm} with the orthogonality condition $\int \sech^2_\beta w\,dx=0$, integrating then in $y$ and using Cauchy--Schwarz, we obtain the estimate
    $$\|w\|^2_{L_{[0,T^*)}^\infty L^2}+\|w_x\|_{L_{[0,T^*)}^2L^2}^2+\|\sech_\beta w\|_{L_{[0,T^*)}^2L^2}^2\lesssim\|u\|_{L_{[0,T^*)}^2L^2}^2+\vertiii{v_0}_{L^2}^2.$$
    By the smallness assumptions, we have an a priori uniform bound on the quantity
    $\vertiii{v(y)}_{L^2},$
    which implies $T^*=+\infty$ by a continuity argument.
    
    The only term remained to estimate is the derivative of $\beta$. From the equation, we get
    \begin{align}\label{equationForTheDerivativeOfAlpha}
        0&=\frac{d}{dy}\int_{\R}w\sech^2_\beta\dx\\
        &=\int_{\R}w_y\sech^2_\beta\dx-\beta_y\int w\,(\sech^2_\beta)_x\dx\\
        &=\int_{\R}\left((\sech^2_\beta)_{xx}-2\tanh\cdot\,(\sech^2_\beta)_x\right)w\,dx+ \int_{\R} \sech^2_\beta (w^2)_x\,dx\\
        &\QQQ+\frac{4}{3}\beta_y+\int_{\R} u\sech^2_\beta dx-\beta_y\int_{\R}{w(\sech^2_\beta)_x}\,dx.
    \end{align}
    If $\|w\|_{L^\infty L^2}$ is small enough, we thus obtain
    \als{
        |\beta_y(y)|&\lesssim \int_{\R} (|w|+|w_x|)\sech^2_\beta\,dx+\int_{\R} |w||w_x|\sech^2_\beta\,dx+\int_{\R} |u|\sech^2_\beta\,dx\\
        \implies \|\beta_y\|_{L^2(\R_y)}^2&\lesssim \|v_0\|_{L^2(\R_x)}^2+\|u\|_{L^2(\RR_{x,y})}^2.
        }
    For general data $v_0$ and $u$, we can argue by density using the $C_T L^2$ continuity and the continuity of the decomposition map in Lemma \ref{lemma:decompositionWAlpha} to obtain the same conclusion and the desired bounds on $w$, $w_x$, $\beta_y$.
    
    Finally, we consider $h$ as defined in the statement. Then, $h$ satisfies the equation
    \[
    h_y-h_{xx}=(2\tanh_\beta w+w^2)_x+\beta_y\sech^2_\beta.
    \]
    Using the previous estimates, we note that the right hand side of the equation lies in $L^{3/2}_{x,y}+L^2_{x,y}.$ Thus, by the $L^p$ boundedness of the operator $\partial_x^2(\partial_y-\partial_x^2)^{-1}$ from Proposition \ref{prop:spacetime_estimate_heat_general} and the uniqueness properties of the heat operator, this same bound is transferred on $h_y$ and $h_{xx}$, hence the last statement.
\end{proof}
We now want to build an eternal solution, that is, a solution of \eqref{eq:miuraEquation} living in $\Real^2$. We will use the a priori bounds proved for the solution.
\begin{proposition}\label{proposition:existenceOfAnEternalSolution}
Let $u\in L^2(\Real^2)$ be small enough, and $\beta_0\in \R$. There exists an eternal solution $v\in C_0(\R_y,L^2(\R_x))$ of equation \eqref{eq:miuraEquation} such that
$$ \int_{\R}\sech^2(x-\beta_0)v(x,0)\,dx=0. $$
Moreover, it holds $\sup_{y\in\Real}\vertiii{v(y)}_{L^2}<\theta_0$ as in Lemma \ref{lemma:decompositionWAlpha}, and the unique decomposition $v=\tanh_\beta+w$ given for each $y\in\R$ by the Lemma satisfies $w\in C(\R_y,L^2(\R_x))$ and the bounds
$$ \|w\|_{L^\infty L^2}+\|w_x\|_{L^2(\RR)}+\|\sech_\beta w\|_{L^2(\RR)}+\|\beta_y\|_{L^2(\R_y)}\lesssim\|u\|_{L^2(\RR)}\quad. $$
\end{proposition}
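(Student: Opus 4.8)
The plan is a compactness argument: build solutions of \eqref{eq:miuraEquation} on the half-planes $\R_x\times(-N,\infty)$ with a suitably chosen kink position, control them uniformly using Lemma \ref{lemma:global-bounds-L2-small-data}, and pass to a limit as $N\to\infty$.

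\emph{Construction of the approximations.} For $N\in\N$ and $\gamma\in\R$, let $v_N^{(\gamma)}\in C([-N,\infty),L^2(\R_x))+\tanh(x)$ be the solution of \eqref{eq:miuraEquation} on $\R_x\times(-N,\infty)$ with forcing $u|_{\{y>-N\}}$ and datum $\tanh(x-\gamma)$ at $y=-N$, which exists by Lemma \ref{lemma:miura-eq-GWP-in-y-L2} after a translation in $y$. Since $\vertiii{\tanh_\gamma}_{L^2}=0$ and $\|u\|_{L^2(\R\times(-N,\infty))}\le\|u\|_{L^2(\RR)}$, Lemma \ref{lemma:global-bounds-L2-small-data} (again after a $y$-translation) gives, writing $v_N^{(\gamma)}=w_N^{(\gamma)}+\tanh_{\beta_N^{(\gamma)}}$ for the decomposition of Lemma \ref{lemma:decompositionWAlpha},
\[
\|w_N^{(\gamma)}\|_{L^\infty L^2}+\|w_{N,x}^{(\gamma)}\|_{L^2}+\|\sech_{\beta_N^{(\gamma)}}w_N^{(\gamma)}\|_{L^2}+\|\beta_{N,y}^{(\gamma)}\|_{L^2(\R_y)}\lesssim\|u\|_{L^2(\RR)},
\]
uniformly in $N$ and $\gamma$. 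As the datum at $y=-N$ has decomposition $(0,\gamma)$, one has $\beta_N^{(\gamma)}(-N)=\gamma$, hence $|\beta_N^{(\gamma)}(0)-\gamma|\le N^{1/2}\|\beta_{N,y}^{(\gamma)}\|_{L^2}\lesssim N^{1/2}\|u\|$, so $\beta_N^{(\gamma)}(0)\to\pm\infty$ as $\gamma\to\pm\infty$. Therefore the continuous function $\gamma\mapsto\int_\R\sech^2(x-\beta_0)v_N^{(\gamma)}(x,0)\,dx=\int_\R\sech^2_{\beta_0}w_N^{(\gamma)}(x,0)\,dx+\int_\R\sech^2_{\beta_0}\tanh_{\beta_N^{(\gamma)}(0)}\,dx$ equals a term of size $O(\|u\|)$ plus a term tending to $\mp2$ as $\gamma\to\pm\infty$; for $\|u\|_{L^2(\RR)}$ small it changes sign, so it vanishes at some $\gamma=\gamma_N$. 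Set $v_N:=v_N^{(\gamma_N)}$, and likewise $w_N,\beta_N$.

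\emph{Compactness and passage to the limit.} Testing the normalization $\int\sech^2_{\beta_0}v_N(\cdot,0)=0$ against $v_N(\cdot,0)=w_N(\cdot,0)+\tanh_{\beta_N(0)}$ and using that $c\mapsto\int\sech^2_{\beta_0}\tanh_c$ vanishes only at $c=\beta_0$ with nonzero derivative, together with $\|w_N(\cdot,0)\|_{L^2}\lesssim\|u\|$, gives $|\beta_N(0)-\beta_0|\lesssim\|u\|$. With $\|\beta_{N,y}\|_{L^2(\R_y)}\lesssim\|u\|$ the $\beta_N$ are uniformly bounded and uniformly $\tfrac12$-Hölder on compacts, so a subsequence converges locally uniformly to some $\beta\in C(\R_y)$ with $\beta_y\in L^2(\R_y)$, $\|\beta_y\|_{L^2}\lesssim\|u\|$. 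The $w_N$ are bounded in $L^\infty_yL^2_x\cap L^2_yH^1_x$, and from the $w_N$-equation $\partial_yw_N$ is bounded in $L^2_yH^{-1}_{x,\loc}$, so by Aubin--Lions--Simon a further subsequence converges to some $w$ strongly in $L^2_\loc(\RR)$, weakly-$*$ in $L^\infty_yL^2_x$ and weakly in $L^2_yH^1_x$. Put $v:=w+\tanh_\beta$, so $v_N\to v$ in $L^2_\loc(\RR)$. The linear terms of \eqref{eq:miuraEquation} pass to the limit by weak convergence and $(v_N^2)_x\to(v^2)_x$ in $\mathscr D'(\RR)$ by the strong $L^2_\loc$ convergence; hence $v$ solves \eqref{eq:miuraEquation} distributionally on $\RR$. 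The identities $\int w_N\sech^2_{\beta_N}\,dx=0$ and $\int\sech^2_{\beta_0}v_N(\cdot,0)\,dx=0$ pass to the limit, and the displayed bounds pass to the limit by weak lower semicontinuity, so in particular $\sup_y\vertiii{v(y)}_{L^2}<\theta_0$; then Lemma \ref{lemma:decompositionWAlpha} identifies $(w,\beta)$ as the unique decomposition of $v$ and the claimed estimates follow. Continuity $w\in C(\R_y,L^2(\R_x))$ (with the decay as $y\to-\infty$ that is the content of $v\in C_0(\R_y,L^2(\R_x))$ in the statement) is obtained from the energy balance $\tfrac12\tfrac{d}{dy}\|w\|_{L^2}^2=-\|w_x\|_{L^2}^2+\|\sech_\beta w\|_{L^2}^2+\langle w_x,u\rangle$ — the $\beta_y$-term drops by orthogonality — whose right-hand side lies in $L^1(\R_y)$, together with $\|w_N(\cdot,-N)\|_{L^2}=0$.

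\emph{Main obstacle.} The only genuinely delicate point is keeping the kink position of the approximations under control as $N\to\infty$: the total $y$-variation of $\beta_N^{(\gamma)}$ over $(-N,\infty)$ is only $O(N^{1/2})$, so the phase cannot be pinned by the datum at $y=-N$; instead one pins it by the intermediate-value selection of $\gamma_N$ enforcing the normalization at the fixed ordinate $y=0$. Everything else — the uniform bounds, the extraction of limits, and the identification of the limit — is a routine limiting procedure built on Lemmas \ref{lemma:miura-eq-GWP-in-y-L2}, \ref{lemma:global-bounds-L2-small-data}, \ref{lemma:decompositionWAlpha}.
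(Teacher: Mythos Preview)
Your proof is correct and follows essentially the same compactness strategy as the paper: construct approximants on half-planes $\{y>-N\}$ with initial datum a translated kink whose position is chosen so that the normalization at $y=0$ holds, invoke the uniform bounds of Lemma~\ref{lemma:global-bounds-L2-small-data}, and pass to a limit. The only noteworthy technical difference is in the compactness step: the paper subtracts the linear piece $k_N=(\partial_y-\partial_x^2)^{-1}(-\mathbbm 1_{\{y>-N\}}u_x)$ and uses the $L^{3/2}+L^2$ bounds on $(w_N-k_N)_y,(w_N-k_N)_{xx}$ from Lemma~\ref{lemma:global-bounds-L2-small-data} together with Rellich, whereas you keep $w_N$ intact, read off $\partial_y w_N\in L^2_{y,\loc}H^{-1}_{x,\loc}$ directly from the equation, and apply Aubin--Lions--Simon; both yield the same strong $L^2_\loc$ convergence. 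One small point worth tightening: your normalization $\int\sech^2_{\beta_0}v_N(\cdot,0)=0$ together with the smallness of $w_N(\cdot,0)$ and the uniqueness in Lemma~\ref{lemma:decompositionWAlpha} in fact forces $\beta_N(0)=\beta_0$ exactly (not merely $|\beta_N(0)-\beta_0|\lesssim\|u\|$), and it is this exact equality, carried through the locally uniform convergence $\beta_N\to\beta$, that cleanly gives the limiting normalization at $y=0$ --- otherwise ``the identity passes to the limit'' would require pointwise-in-$y$ convergence of $v_N(\cdot,0)$, which your $L^2_\loc(\RR)$ compactness does not directly provide.
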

\begin{proof}
By translation invariance, we assume $\beta_0=0$. By Lemma \ref{lemma:global-bounds-L2-small-data}, we can find solutions $v^N\in L^\infty L^2+\tanh(x)$ to \eqref{eq:miuraEquation} with initial times $y_N\to-\infty$ and initial data $v_0^N(x)=\tanh(x-\beta_0^N)$, with uniform bounds on the decompositions given by Lemma \ref{lemma:decompositionWAlpha}, which we will call $w^N$ and $\beta^N$. Using the estimate on $\beta^N_y$ in Lemma \ref{lemma:global-bounds-L2-small-data} and by continuity of the decomposition map of Lemma \ref{lemma:decompositionWAlpha}, we can choose $\beta_0^N$ such that $\beta^N(0)=0$, i.e.,
$$ \int_{\R}\sech^2(x)v^N(x,0)\,dx=0.$$
Now, we can use the uniform estimates for $v^N$ given by Lemma \ref{lemma:global-bounds-L2-small-data} to have that, up to subsequences and using a diagonal construction, $\beta^N$ converges locally uniformly to a function $\beta$ with $\beta(0)=0$ and $\|\beta_y\|_{L^2}\lesssim \|f\|_{L^2_{x,y}}$. The sequence $w^N$ converges $*$-weakly in $L^\infty L^2$ to a function $w\in L^\infty L^2$, the derivatives $w^N_x$ converge weakly to $w_x$ in $L^2(\Real^2)$, and the uniform bounds of $w^N$ are carried over to $w$. We will then call $v:=w+\tanh(x-\beta)$.\\
Moreover, by the previous Lemma, we can assume weak convergence of $(w^N-k^N)_y$ and $(w^N-k^N)_{xx}$ in $L^{3/2}+L^2(\RR)$ after removing $k^N:=-(\de_y-\de_x^2)^{-1}(\mathbbm 1_{\{y>y_N\}}u_x)$, which converges to $k:=-(\de_y-\de_x^2)^{-1}u_x$ in $L^6(\RR)$ by the estimates of Proposition \ref{prop:spacetime_estimate_heat_general}. By Rellich's compactness Theorem, $w^N$ converges strongly in $L^2_{\loc}(\RR)$ to $w$, which immediately implies that $v^N\to v$ in $L^2_\loc(\RR)$, and the same happens for the first order $x$-derivatives. It follows by continuity that $v$ satisfies the equation \eqref{eq:miuraEquation} distributionally. By the uniqueness properties of the heat equation and the a priori estimate, $v$ satisfies the Duhamel formulation of the equation, so the continuity in $y$ holds due to Proposition \ref{prop:spacetime_estimate_heat_general}. It is a consequence of the strong convergence that $(w,\beta)$ coincides with the decomposition given by Lemma \ref{lemma:decompositionWAlpha}.
\end{proof}

\section{\texorpdfstring{$U^p$}{Up} and \texorpdfstring{$V^p$}{Vp} spaces}\label{appendix:upvp}
We refer to \cite{candy-herr-2018-on-the-division-problem-wave-maps,hadac2008well-posednessKP-II,koch-tataru-visan-2014-dispersive-equations-nonlinear-waves} for the definitions in this appendix.

\subsection{Definition of the spaces \texorpdfstring{$U^p$}{Up} and \texorpdfstring{$V^p$}{Vp}}
Let $I=(a,b)\subset \R$ an open, possibly unbounded interval. Denote by $\bar I:=I\cup\{a,b\}\subset \R\cup\{\pm\infty\}$. Denote by $\mc R$ the set of $L^2(\RR)$-valued regulated functions on $I$, that is, bounded functions which admit left and right limits at any given point of the domain (and admit right limit and left limit at $a$, $b$ respectively). Let $\mc R_\rc\subset \mc R$ the subset of all right continuous functions $u$ such that $\lim_{t\to a^+}u(t)=0$. The spaces $\mc R, \mc R_\rc$ are Banach spaces when equipped with the supremum norm. Moreover, $\mc R_\rc$ embeds naturally into $\mathscr S'(I\times \RR)$.

Define the set of all partitions of $I$
\[\mb{P}= \{ \tau=(t_j)_{j=1}^{N} \mid N \in \N, \,\, t_j \in I, \,\, t_j< t_{j+1} \},\] corresponding to the decompositions of $I$ into subintervals. We say that $u$ is a \emph{step function} if there exists $\tau \in \mb{P}$ and $f_1,\dots,f_{|\tau|}\in L^2(\RR)$ such that
\[u(t) = \sum_{j=1}^{|\tau|} \mathbbm 1_{[t_j,t_{j+1})}(t) f_j,
\]
where we adopt as a convention $t_0=a,t_{|\tau|+1}=b$. We denote the set of step functions as $\mc S_\rc$ and note that $\mc S_\rc\subset \mc R_\rc$. A step function $u\in \mc S_\rc$ is a \emph{$U^p$-atom} if in the above definition the vectors $(f_j)_{j=1}^{|\tau|}$ satisfy
    $$ \Big( \sum_{j=1}^{|\tau|} \| f_j \|_{L^2(\RR)}^p \Big)^{\frac{1}{p}} = 1.$$
The space $U^p$ is defined to be the space
    $$ U^p := \left\{ u=\sum_{j=1}^\infty c_j u_j \s \Big| \s (c_j)_{j\geq 1} \in \ell^1(\N_+,\C),\s\text{$(u_j)_{j\geq 1}$ $U^p-$atoms } \right\},$$
with the norm
    $$ \| u \|_{U^p} = \inf\left\{\sum_{j=1}^\infty |c_j|\s\Big|\s\exists\; (u_j)_{j\geq 1} \text{ $U^p$-atoms}\s: \s u = \sum_{j\in \N} c_j u_j \right\}.$$
Since the $U^p-$atoms belong to $\mc R_\rc$ with bounded norm, and since the $U^p$ norm controls the supremum norm, the above sum is well-defined and one has $U^p\hookrightarrow \mc R_\rc$.

We define the spaces $V^p$ for completeness, although we will not use them. Define the $p-$variation of a function $v: I \rightarrow L^2$ as the seminorm
        $$ |v |_{V^p} = \sup_{(t_j)_{j=1}^{|\tau|} \in \mb{P}} \Big( \sum_{j=1}^{|\tau|-1}\| v(t_{j+1}) - v(t_j) \|_{L^2}^p \Big)^\frac{1}{p}$$
and $\| v \|_{V^p} =\|v\|_{L^\infty_t L^2_{x,y}}+|v |_{V^p}$. Note that all functions whose $V^p$ seminorm is finite belong to $\mc R$, but may contain functions which are identically zero outside a countable set. Let $V^p$ be the space of all functions $v\in\mc R_\rc$ such that $| v |_{V^p} < \infty$. On the space $V^p$, the seminorm $|\cdot|_{V^p}$ is in fact a norm, due to the condition at $-\infty$, and it is equivalent to the norm $\|\cdot\|_{V^p}$.

\begin{definition}[Adapted function space, \cite{hadacHerrKoch2009wellPosednessKP-IIinCriticalSpace}]\label{def:adapted-space-Zs}
    We define $U_S^p$ as the space of functions of the form $e^{tS}f(t)$, $f\in U^p$, with norm $\|u\|_{U_S^p}=\left\|e^{-t S} u\right\|_{U^p}$, where $S=-\partial_x^3-3\demu\partial_y^2$. Define $\dot Z^s(I)$ as the closure of all $u \in C(I,H^{1,1}(\mathbb{R}^2)) \cap U_S^2$ such that
$$
\|u\|_{\dot{Z}^s}:=\left(\sum_{\lambda\in 2^\Z} \lambda^{2 s}\left\|P^x_\lambda u\right\|_{U_S^2}^2\right)^{\frac{1}{2}}<\infty,
$$
in the space $C_b(I,\dot{H}^{s, 0}(\mathbb{R}^2))$ with respect to the $\dot Z^s-$norm.
\end{definition}

\begin{lemma}\label{lemma:smoothing-estimate-for-U^2_S}
    The following estimates hold:
    \[
    \|u\|_{L^p_tL^q_{x,y}}\lesssim_p \|u\|_{U^2_S},\quad \fr 1p+\fr 1q=\aha,\,\,\,p\in(2,\infty],
    \]
    \[
    \|\partial_x u\|_{L^\infty_xL^2_{t,y}}+\|\demu \partial_y u\|_{L^\infty_xL^2_{t,y}}\lesssim \|u\|_{U^2_S}.
    \]
\end{lemma}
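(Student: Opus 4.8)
\textbf{Proof plan for Lemma \ref{lemma:smoothing-estimate-for-U^2_S}.}
The strategy is the standard transference principle from linear propagator estimates to $U^2_S$-estimates, so the plan is to first establish both estimates for the free evolution $t\mapsto e^{tS}f$, $f\in L^2(\RR)$, and then invoke the atomic structure of $U^2$. First I would record the linear bounds. For the Strichartz estimate, the group $e^{tS}$ with $S=-\partial_x^3-3\demu\partial_y^2$ obeys the same dispersive/Strichartz theory as the Schr\"odinger group in two dimensions (this is alluded to in the Remark after \eqref{eq:KP-II} and in \cite{hadacHerrKoch2009wellPosednessKP-IIinCriticalSpace}, \cite{koch-tataru-visan-2014-dispersive-equations-nonlinear-waves}): one has $\|e^{tS}f\|_{L^p_tL^q_{x,y}}\lesssim_p \|f\|_{L^2(\RR)}$ for admissible pairs $\tfrac1p+\tfrac1q=\tfrac12$, $p\in(2,\infty]$. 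For the local-smoothing estimate, the symbol of $S$ is $\phi(\xi,\eta)=4\pi^2\xi^3+3\eta^2/\xi$ (up to constants), with $\partial_\xi\phi = 12\pi^2\xi^2 - 3\eta^2/\xi^2$; restricting to the $x$-direction and freezing $\eta$, a one-dimensional stationary-phase/Kato-smoothing computation gives $\|\partial_x e^{tS}f\|_{L^\infty_x L^2_{t,y}}\lesssim \|f\|_{L^2}$, and the same for $\demu\partial_y e^{tS}f$ since $\demu\partial_y$ commutes with the flow and its symbol $\eta/\xi$ is controlled by $|\xi|$ on the relevant region after an integration by parts in $\eta$ (alternatively, one reduces to the KdV-type local smoothing after a Galilean-type slicing in $\eta$).

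Next I would transfer these to $U^2_S$. Let $B$ denote either of the two Banach spaces $L^p_tL^q_{x,y}$ or $L^\infty_x L^2_{t,y}$ appearing in the statement, and let $A$ be the linear operator $u\mapsto u$, $u\mapsto \partial_x u$, or $u\mapsto \demu\partial_y u$ as appropriate. The linear estimates above say precisely that $\|A e^{tS}f\|_{B}\lesssim \|f\|_{L^2}$. Given a $U^2_S$-atom $u=\sum_{j=1}^{N}\mathbbm 1_{[t_j,t_{j+1})}(t)\, e^{tS}f_j$ with $\sum_j\|f_j\|_{L^2}^2=1$, I would bound $\|Au\|_B$. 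For $B=L^\infty_xL^2_{t,y}$ the time intervals are disjoint, so $\|Au\|_{L^\infty_xL^2_{t,y}}^2 = \sup_x \sum_j \|A e^{tS}f_j\|_{L^2_{[t_j,t_{j+1})\times\R_y}}^2 \le \sum_j \|A e^{tS}f_j\|_{L^\infty_xL^2_{t,y}}^2 \lesssim \sum_j\|f_j\|_{L^2}^2 = 1$. For $B=L^p_tL^q_{x,y}$ with $p\ge 2$, one uses disjointness of the time intervals together with $\ell^p\hookrightarrow\ell^2$ (i.e. $p\ge2$): $\|Au\|_{L^p_tL^q_{x,y}}^p = \sum_j \|A e^{tS}f_j\|_{L^p_{[t_j,t_{j+1})}L^q_{x,y}}^p \le \big(\sum_j \|A e^{tS}f_j\|_{L^p_tL^q_{x,y}}^2\big)^{p/2} \lesssim \big(\sum_j\|f_j\|_{L^2}^2\big)^{p/2}=1$. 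Either way, atoms are uniformly bounded in $B$ by an absolute constant, and since the linear operators commute with the time-cutoffs (they act in $x,y$ only), this passes to atoms cleanly. Finally, for a general $u=\sum_k c_k u_k\in U^2_S$ with $U^2_S$-atoms $u_k$, the triangle inequality gives $\|Au\|_B\le \sum_k|c_k|\,\|Au_k\|_B\lesssim \sum_k|c_k|$, and taking the infimum over atomic decompositions yields $\|Au\|_B\lesssim\|u\|_{U^2_S}$, which is the claim.

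The main obstacle is genuinely the linear local-smoothing estimate for $\demu\partial_y e^{tS}$: one must check that the factor $\eta/\xi$ in the symbol of $\demu\partial_y$ does not destroy the gain of one $x$-derivative provided by the third-order dispersion in $\xi$. The clean way around this is to exploit that on the support of a dyadic piece $|\xi|\sim\lambda$, the KP-II dispersion relation couples $\eta$ and $\xi$ so that the region where $|\eta|\gg|\xi|^{2}$ is non-stationary and can be handled by non-stationary phase (integration by parts in $\eta$), while on $|\eta|\lesssim|\xi|^2$ one has $|\eta/\xi|\lesssim|\xi|$ and the estimate reduces to the already-available bound for $\partial_x e^{tS}$; summing the dyadic pieces (orthogonally in $L^2_{t,y}$, using that the output is measured in $L^\infty_x L^2_{t,y}$ and Littlewood--Paley in $x$) closes the argument. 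All remaining steps — the admissible-range bookkeeping for the Strichartz pair and the $\ell^p\hookrightarrow\ell^2$ step — are routine. One should also note the restriction $p>2$ (the endpoint $p=2$, $q=\infty$ is excluded, consistent with the statement), which is exactly where the transference argument above requires $p\ge 2$ but the linear Strichartz estimate requires $p>2$ in two dimensions.
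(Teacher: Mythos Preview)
Your proposal is correct and follows essentially the same approach as the paper: reduce to the linear propagator estimates (Strichartz and Kato local smoothing for $e^{tS}$) and then transfer to $U^2_S$ via the atomic structure. The paper is terser, citing \cite{koch-tataru-visan-2014-dispersive-equations-nonlinear-waves} for the Strichartz estimate, \cite{kenig-ziesler-2005-LWP-for-mKP-II}*{Lemma 3.2} for the local-smoothing estimate (including the $\demu\partial_y$ part, so your dyadic sketch is not needed), and \cite{hadacHerrKoch2009wellPosednessKP-IIinCriticalSpace}*{Corollary 2.18} for the transference principle, whereas you spell out the atom-by-atom computation explicitly.
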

\begin{proof}
    By the definition of $U^2$ and arguing on $U^2$-atoms first (see \cite{hadacHerrKoch2009wellPosednessKP-IIinCriticalSpace}*{Corollary 2.18}), the estimates follow from the corresponding estimates for the linear flow $t\mapsto e^{tS}$. For a solution $u=e^{tS}u_0$, the usual Strichartz estimates hold
    \[
    \|u\|_{L^p_tL^q_{x,y}}\lesssim_p\|u_0\|_{L^2(\RR)},\quad \fr 1p+\fr 1q=\aha,\,\,\,p\in(2,\infty]
    \]
    (see \cite{koch-tataru-visan-2014-dispersive-equations-nonlinear-waves}), which imply the first estimate. Analogously, the smoothing estimate for linear solutions of KP-II,
    \[
    \|\de_xe^{tS}u_0\|_{L^\infty_xL^2_{t,y}}+\|\demu\partial_x e^{tS}u_0\|_{L^\infty_xL^2_{t,y}}\lesssim \|u_0\|_{L^2(\RR)},
    \]
    which is proved in \cite{kenig-ziesler-2005-LWP-for-mKP-II}*{Lemma 3.2}, implies the second estimate (the first part of the second estimate is actually already proved in \cite{hadacHerrKoch2009wellPosednessKP-IIinCriticalSpace}*{Corollary 2.18}).
\end{proof}
\begin{corollary}\label{cor:Z^aha_is_in_L6_de_x^aha_L3}
    It holds $\dot Z^{-1/2}((0,\infty)) \hookrightarrow C_b([0,\infty),\Hcrit)\cap L^6((0,\infty),|\de_x|^\aha L^3(\RR))$.
\end{corollary}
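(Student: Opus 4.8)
The embedding $\dot Z^{-1/2}((0,\infty))\hookrightarrow C_b([0,\infty),\dot H^{-1/2,0}(\RR))$ is already built into Definition \ref{def:adapted-space-Zs}, so only the inclusion into $L^6((0,\infty),|\de_x|^{1/2}L^3(\RR))$ requires argument. The natural route is to reduce, via Littlewood--Paley decomposition in the $\xi$ variable, to a square-summable estimate on dyadic pieces, and then on each piece invoke the Strichartz-type bound of Lemma \ref{lemma:smoothing-estimate-for-U^2_S}.

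First I would write, for $u\in\dot Z^{-1/2}((0,\infty))$,
\[
\||\de_x|^{1/2}u\|_{L^6_tL^3_{x,y}}^2 \lesssim \sum_{\lambda\in 2^\Z}\lambda\,\|P^x_\lambda u\|_{L^6_tL^3_{x,y}}^2,
\]
using that the frequency supports of the $P^x_\lambda u$ are almost disjoint in $\xi$ together with the fact that $L^6_tL^3_{x,y}$ has second exponent $\leq$ first exponent in the sense needed for the square-function estimate (Minkowski in the $\ell^2$--$L^{6/2}L^{3/2}$ ordering), after bounding $|\de_x|^{1/2}P^x_\lambda$ by $\lambda^{1/2}$ times a bounded Fourier multiplier. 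On the single dyadic block, Bernstein in $x$ alone gives $\|P^x_\lambda u\|_{L^3_x}\lesssim \lambda^{1/6}\|P^x_\lambda u\|_{L^2_x}$, hence $\|P^x_\lambda u\|_{L^6_tL^3_{x,y}}\lesssim \lambda^{1/6}\|P^x_\lambda u\|_{L^6_tL^2_{x,y}}$. Wait---this is not quite the right Bernstein exponent; the cleaner choice is to interpolate the admissible Strichartz pairs from Lemma \ref{lemma:smoothing-estimate-for-U^2_S}. The pair $(p,q)=(6,q)$ with $1/6+1/q=1/2$ forces $q=3$, so in fact $\|P^x_\lambda u\|_{L^6_tL^3_{x,y}}\lesssim \|P^x_\lambda u\|_{U^2_S}$ \emph{directly}, with no loss of derivative, by the first estimate of Lemma \ref{lemma:smoothing-estimate-for-U^2_S} applied to the frequency-localized function. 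Therefore
\[
\||\de_x|^{1/2}u\|_{L^6_tL^3_{x,y}}^2 \lesssim \sum_{\lambda\in 2^\Z}\lambda\,\|P^x_\lambda u\|_{U^2_S}^2 = \|u\|_{\dot Z^{-1/2}((0,\infty))}^2,
\]
which is exactly the claimed bound, the last equality being the definition of the $\dot Z^{-1/2}$ norm with $s=-1/2$. The continuity of $t\mapsto u(t)$ into $\dot H^{-1/2,0}$ and the boundedness in $t$ come for free from Definition \ref{def:adapted-space-Zs}.

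The only genuine subtlety---the step I would be most careful about---is justifying the interchange of the $\ell^2$ sum over $\lambda$ with the $L^6_tL^3_{x,y}$ norm, i.e. the square-function inequality $\|u\|_{L^6_tL^3_{x,y}}\lesssim \big\|(\sum_\lambda |P^x_\lambda u|^2)^{1/2}\big\|_{L^6_tL^3_{x,y}}$, which is the standard Littlewood--Paley estimate in the $x$ variable applied uniformly in $(t,y)$, followed by Minkowski's inequality moving $\ell^2$ outside $L^{3}_{x}\subset L^{3}$ and $L^{6}_t$ (legitimate since $3,6\geq 2$). This is classical (it is exactly the kind of estimate recorded in the references cited for Littlewood--Paley theory in the paper's notation section), and applying $P^x_\lambda$ commutes with the linear propagator $e^{tS}$ since $S$ is a Fourier multiplier, so $P^x_\lambda$ maps $U^2_S$ boundedly to itself. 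Assembling these pieces yields the corollary; no estimate beyond Lemma \ref{lemma:smoothing-estimate-for-U^2_S} and standard harmonic analysis is needed.
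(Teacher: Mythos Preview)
Your approach is exactly the paper's: apply the Strichartz estimate of Lemma \ref{lemma:smoothing-estimate-for-U^2_S} with $(p,q)=(6,3)$ on each dyadic block $P^x_\lambda u$, then sum in $\ell^2$ via the Littlewood--Paley square function in $x$ and Minkowski (since $6,3\geq 2$).

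There is, however, a sign error running through your computation. The space $|\de_x|^{1/2}L^3(\RR)$ is the \emph{image} of $L^3$ under $|\de_x|^{1/2}$, so $u\in L^6_t\,|\de_x|^{1/2}L^3_{x,y}$ means $|\de_x|^{-1/2}u\in L^6_tL^3_{x,y}$, not $|\de_x|^{+1/2}u$. Accordingly the dyadic weight should be $\lambda^{-1}$, not $\lambda$, and the correct chain is
\[
\||\de_x|^{-1/2}u\|_{L^6_tL^3_{x,y}}^2 \lesssim \sum_{\lambda\in 2^\Z}\lambda^{-1}\|P^x_\lambda u\|_{L^6_tL^3_{x,y}}^2 \lesssim \sum_{\lambda\in 2^\Z}\lambda^{-1}\|P^x_\lambda u\|_{U^2_S}^2 = \|u\|_{\dot Z^{-1/2}}^2.
\]
Your final line asserts $\sum_\lambda \lambda\,\|P^x_\lambda u\|_{U^2_S}^2=\|u\|_{\dot Z^{-1/2}}^2$, but by Definition \ref{def:adapted-space-Zs} with $s=-1/2$ this sum is $\|u\|_{\dot Z^{+1/2}}^2$. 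With the sign corrected, your proof coincides with the paper's.
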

\begin{proof}
    The first estimate is immediate by the definition of $U^2$ and Minkowski integral inequality. Note that a function in $\dot Z^{-\aha}$ is well-defined at $t=0$ because of the definition of $U^2$. For the second one, we first use Lemma \ref{lemma:smoothing-estimate-for-U^2_S} to estimate the quantity
    \[
    \left(\sum_{\lambda\in 2^\Z} \lambda^{-1}\left\|P^x_\lambda u\right\|_{L^6_tL^3_{x,y}}^2\right)^{\frac{1}{2}}\lesssim \|u\|_{\dot Z^{-\aha}}.
    \]
    Minkowski's integral inequality is then used multiple times to bring the summation on $\lambda$ inside the norm, and finally the square function characterization of the $L^p$ norm
    \[
    \|(-\de_x^2)^{s/2}f\|_{L^3(\R)}\sim \|(\sum_{\lambda\in 2^{\Z}} \lambda^s|P_\lambda f|^2)^\aha\|_{L^3(\R)}
    \]
    is enough to conclude.
\end{proof}

\begin{remark}\label{rk:Z-aha-is-in-L2-loc}
    It holds the embedding $\dot Z^{-\aha}((0,\infty))\hookrightarrow L^2_\unif((0,\infty)\times \RR_{x,y})$. In fact, combining the smoothing estimate of Lemma \ref{lemma:smoothing-estimate-for-U^2_S} with the embedding $U^2\hookrightarrow L^\infty L^2$ yields
    \begin{align*}
        \sum_{\lambda\in 2^\Z}\lambda^{-1}\|P^x_\lambda u\|^2_{U^2_S}&\gtrsim \|P^x_{\leq 1}u\|_{L^\infty_t L^2_{x,y}}^2+\sum_{\lambda\geq 1}\lambda \|P^x_\lambda u\|^2_{L^\infty_x L^2_{t,y}}\\
        &\gtrsim \|P^x_{\leq 1}u\|_{L^\infty_t L^2_{x,y}}^2+\|P^x_{\geq 1}u\|^2_{L^\infty_xL^2_{t,y}}.
    \end{align*}
    By interpolating between the two estimates in Lemma \ref{lemma:smoothing-estimate-for-U^2_S}, with the same argument, it is possible to show that $u\in L^p_\unif((0,\infty)\times\RR)$, $p<8/3$.\\    
    Moreover, the operator $\demu \partial_y$ extends to a bounded operator from $\dot Z^{-\aha}((0,\infty))$ to some Banach space of tempered distributions, since by Lemma \ref{lemma:smoothing-estimate-for-U^2_S} it holds
    \[
    \left(\sum_{\lambda\in 2^\Z}\lambda^{-1}\|P^x_\lambda \partial_x^{-1}\partial_y u\|^2_{L^\infty_x L^2_{t,y}}\right)^\aha\lesssim \|u\|_{\dot Z^{-\aha}}.
    \]
    The same statements hold for the space $X_T^{1/2+\eps,b_1,0}$ as in Definition \ref{def:Bourgain_type_spaces_X^s,b}, with different estimates. The spaces $X_T^{1/2+\eps,b_1,0}$, $0<T<\infty$ and $\dot Z^{-\aha}((0,\infty))$ contain all solutions of \eqref{eq:KP-II} with initial data in $L^2(\RR)$, $\Hcrit$ respectively (see Proposition \ref{prop:well-posedness-of-KP-II-L2} and Theorem \ref{theorem:global-well-posedness-KP-II-HHK09} and Proposition): in particular all solutions from the well-posedness theory with the above data lie in $L^2_\loc((0,\infty)\times\RR)$ and the operator $\partial_x^{-1}\partial_y$ is well-defined on those solutions. This means that all terms in \eqref{eq:KP-II} are well-defined.
\end{remark}

\section{Supplementary lemmas and proofs}

\subsection{Miscellaneous results}\label{sec:miscellaneous}

\begin{lemma}[Lower bound on a quadratic form]\label{lemma:lowerBoundOnTheQuadraticForm}
For every $w\in H^1(\R)$, the inequality
$$ \int_\R (w_x)^2\,dx-2\int_\R \sech^2(x)w^2\,dx\geq 0$$
holds, assuming one of the orthogonality conditions
$$ \braket{w,\sech}_{L^2}=0,\quad\text{or}\quad \braket{w,\sech^2}_{L^2}=0. $$
\end{lemma}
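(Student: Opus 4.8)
The statement is a standard spectral fact about the Schrödinger operator $L = -\partial_x^2 - 2\sech^2(x)$ on the line, which is a reflectionless potential with a single bound state. The plan is to exploit the explicit factorization of $L$. First I would recall that $-2\sech^2(x)$ is the first (after $0$) member of the Pöschl–Teller family $-\ell(\ell+1)\sech^2(x)$ with $\ell=1$, so that $L$ admits the supersymmetric factorization $L = A^*A$ with $A = \partial_x - \tanh(x)$, $A^* = -\partial_x - \tanh(x)$. Indeed $A^*A = -\partial_x^2 + \tanh^2(x) - \sech^2(x) = -\partial_x^2 + 1 - 2\sech^2(x)$, so $L = A^*A - 1$. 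From this, for $w\in H^1(\R)$,
\[
\int_\R (w_x)^2\,dx - 2\int_\R \sech^2(x)\,w^2\,dx = \|Aw\|_{L^2}^2 - \|w\|_{L^2}^2,
\]
and the problem reduces to showing $\|Aw\|_{L^2}^2 \geq \|w\|_{L^2}^2$ under either orthogonality condition, i.e. that $L = A^*A - 1 \geq 0$ on the relevant subspace.

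Next I would identify the spectrum of $L$. The kernel of $A$ is spanned by $e^{\int \tanh} = \cosh(x)$, which is not in $L^2$, so $A$ (hence $A^*A$) has trivial $L^2$-kernel; the bottom of the spectrum of $A^*A$ is attained by the ground state of $L$ shifted up by $1$. The ground state of $L$ is $\sech(x)$ with $L\sech = -\sech$ (direct check: $(\sech)_{xx} = \sech - 2\sech^3$, so $-(\sech)_{xx} - 2\sech^3 = -\sech$, wait — more carefully $-(\sech)_{xx} = -\sech + 2\sech^3$ and $-2\sech^2\cdot\sech = -2\sech^3$, giving $L\sech = -\sech$). Hence $A^*A\,\sech = 0$, consistent with $\sech = \cosh^{-1}$ and noting $A^*$ annihilates... actually the cleaner route: $A^*A$ is nonnegative with $\ker(A^*A)$ in $L^2$ spanned by $\sech$ (since $A^*A\,\sech = L\sech + \sech = 0$). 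The essential spectrum of $L$ is $[0,\infty)$, and by reflectionlessness $-1$ is the only eigenvalue. Therefore $L \geq 0$ on $\sech^\perp$, which immediately gives the inequality under the second orthogonality condition $\langle w,\sech^2\rangle_{L^2} = 0$ — no, that is the wrong orthogonality; $\sech^\perp$ corresponds to $\langle w,\sech\rangle = 0$. So the first condition $\langle w,\sech\rangle_{L^2}=0$ follows directly from the variational characterization: the second eigenvalue/bottom of the continuous spectrum is $0$, so $\langle Lw,w\rangle \geq 0$ whenever $w$ is orthogonal to the ground state $\sech$.

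For the second orthogonality condition $\langle w,\sech^2\rangle_{L^2}=0$, the argument is slightly different since $\sech^2$ is not the ground state. Here I would use the substitution $w = \cosh(x)\,v$ (Darboux/ground-state transform): writing $w = \sech^{-1} v$ one computes
\[
\int_\R (w_x)^2 - 2\int_\R \sech^2 w^2 = \int_\R \sech^{-2}(x)\,(v_x)^2\,dx \;-\; \big[\text{boundary/lower-order terms}\big],
\]
and after integration by parts the whole quadratic form becomes $\int \sech^{-2}(v_x)^2\,dx \geq 0$ with no constraint — but this requires $w/\sech = \cosh\cdot w \to 0$ suitably, i.e. $w$ must decay faster than $\sech$, which is where the orthogonality $\langle w,\sech^2\rangle=0$ (equivalently $\langle v,\sech\rangle = 0$, guaranteeing $v$ has no component along the constant-like mode of the transformed operator) enters to kill the arising boundary term. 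The cleanest version: the ground-state transformation conjugates $L$ to $\sech(x)\,L\,\sech^{-1}(x)\cdot(\text{after multiplying by }1) = -\sech^2\partial_x \sech^{-2}\partial_x$ acting on $v = w/\sech$... I would verify $\sech\, L\, (\sech\, v) = -\partial_x(\sech^2 \partial_x v) \cdot \sech^{-1}$... and then $\langle Lw,w\rangle = \int \sech^2 |v_x|^2 \geq 0$ identically on the natural domain, the orthogonality condition only being needed to ensure $v = w/\sech$ lies in that form domain (finiteness of $\int \sech^2|v_x|^2$ and vanishing of boundary terms at $\pm\infty$).

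\textbf{Main obstacle.} The genuine technical point — and the only nontrivial one — is the second orthogonality condition: justifying the ground-state substitution $w = \sech\cdot v$ rigorously for $w \in H^1(\R)$ (not just Schwartz), controlling the boundary terms at $x=\pm\infty$ that appear in the integration by parts, and checking that $\langle w,\sech^2\rangle_{L^2} = 0$ is exactly what is needed to make those terms vanish. For the first condition the result is immediate from the known spectral decomposition of the explicit Pöschl–Teller operator (ground state $\sech$ at eigenvalue $-1$, rest of spectrum $\geq 0$), which I would either cite or verify by the same factorization $L = A^*A - 1 \geq -1$ with equality only on $\mathrm{span}(\sech)$. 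An approximation argument (truncate $w$, mollify, then pass to the limit using $H^1$ convergence and the boundedness of $\sech^2$) handles the reduction from Schwartz to $H^1$ in both cases.
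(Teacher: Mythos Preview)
Your treatment of the first orthogonality condition $\langle w,\sech\rangle=0$ is correct and matches the paper's: both reduce to the spectral fact that $H_2=-\partial_x^2-2\sech^2$ has $\sech$ as its unique negative eigenfunction (eigenvalue $-1$), with the rest of the spectrum in $[0,\infty)$. Your factorization $H_2=A^*A-1$ with $A=\partial_x-\tanh$ is a clean way to see this.

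The second orthogonality condition $\langle w,\sech^2\rangle=0$ is where your proposal has a genuine gap. First, you oscillate between the substitutions $w=\cosh\cdot v$ and $w=\sech\cdot v$, which are inverse to one another; neither yields the identity you claim. The ground-state transform $w=\sech\cdot v$ gives exactly $\langle H_2 w,w\rangle=\int\sech^2(v_x)^2-\|w\|^2$, which is just the trivial bound $H_2\ge-1$ and uses no orthogonality. The reverse substitution $w=\cosh\cdot v$ (so $v=\sech\cdot w$) gives, after the integration by parts you suggest, $\langle H_2 w,w\rangle=\int\cosh^2(v_x)^2-\int(\cosh^2+2)v^2$, which is not manifestly nonnegative; the condition $\langle w,\sech^2\rangle=\langle v,\sech\rangle=0$ does not obviously help here, and there are no boundary terms to ``kill'' for $w\in H^1$. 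More fundamentally, positivity of $H_2$ on $\psi^\perp$ for a vector $\psi$ not proportional to the ground state is \emph{not} a consequence of the spectrum of $H_2$ alone: it depends on the specific choice of $\psi$, so some additional structure must be used.

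The paper's argument supplies exactly this missing structure by bringing in the deeper well $H_3=-\partial_x^2-6\sech^2$. The key observations are: (i) $\sech^2$ is the (even) ground state of $H_3$, and the only other negative eigenfunction of $H_3$ is $\tanh\sech$, which is odd; (ii) on odd functions $H_2\ge0$ automatically since its ground state is even; (iii) on even functions orthogonal to $\sech^2$, one is orthogonal to both negative eigenfunctions of $H_3$, hence $\langle H_3 w,w\rangle\ge0$ there, and then $H_2\ge H_3$ pointwise (since $-2\sech^2\ge-6\sech^2$) gives $\langle H_2 w,w\rangle\ge0$. This comparison-plus-parity trick is the idea your sketch is missing.
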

\begin{proof}
    From classical operator theory \cite{praQuantMechFlugge1999}, we know explicitly the negative energy states of the P\"osch--Teller type Schr\"odinger operators $H_n=-\de_x^2-n(n-1)\sech^2(x)$, $n\geq 2$. In particular, the operator
    \[ H_2=-\partial_x^2-2\sech^2(x) \]
    has its ground state $w_0=\sech(x)$ as the only negative energy state, with eigenvalue $-1$, while the operator
    \[ H_3=-\partial_x^2-6\sech^2(x) \]
    has two bound states, $v_0=\sech^2(x)$ and $v_1=\tanh(x)\sech(x)$, with energies that are respectively $-4$ and $-1$.
    The goal is to prove that the quadratic form $Q(\cdot):=\braket{\cdot,H_2\cdot}$ is non-negative on the hyperplanes $w_0^\perp$ and $v_0^\perp$.
    
    The first statement is immediate, since $w_0$ is the only negative eigenvector of $H_2$.
    Concerning the orthogonal of $v_0$, we argue as follows. Since $w_0$ is even, the quadratic form is positive on the subspace of odd functions. By the fact that the even and odd subspaces are invariant under $H_2$, we just need to prove that the form is positive on the space $M$ of even functions which are orthogonal to $v_0$.
    For that, we look at $H_3$. The state $v_1$ is odd, so functions in $M$ are orthogonal to both $v_0$ and $v_1$. Thus, the form $\braket{\cdot,H_3\cdot}$ is positive on $M$, which implies the same for $Q$ by monotonicity.
\end{proof}

\begin{lemma}[Any distribution admits an antiderivative]\label{lemma:distributional-antiderivative}
    Let $u\in\Dscr'(\R^n)$ and $x$ be one of the coordinates of $\R^n$. There exists $U\in \Dscr'(\R^n)$ such that $\partial_x U=u$.
\end{lemma}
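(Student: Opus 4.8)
The plan is to exhibit $U$ explicitly by integrating $u$ in the $x$-direction against a fixed normalized bump, reducing the problem to the classical one-dimensional fact that a distribution on $\R$ with vanishing pairing against all derivatives of test functions is itself a derivative. Without loss of generality write $\R^n = \R_x \times \R^{n-1}_{z}$. Fix $\chi \in \Dscr(\R_x)$ with $\int_\R \chi(x)\,dx = 1$. For $\phi \in \Dscr(\R^n)$ define the operator $A\phi(x,z) := \int_{-\infty}^x \bigl(\phi(t,z) - \chi(t)\,{\textstyle\int_\R \phi(s,z)\,ds}\bigr)\,dt$. One checks that $A\phi \in \Dscr(\R^n)$: the integrand has compact support in $z$ uniformly, and in $x$ it is supported in a fixed interval and has total $x$-integral zero, so the antiderivative $A\phi$ again has compact support; smoothness and the support bounds depend continuously on $\phi$, so $A$ is continuous on $\Dscr(\R^n)$. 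Moreover $\partial_x (A\phi) = \phi - \chi(x)\,\int_\R \phi(s,z)\,ds$ by the fundamental theorem of calculus.

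Now \emph{define} $U \in \Dscr'(\R^n)$ by $\langle U, \phi \rangle := -\langle u, A\phi \rangle$. This is well-defined and continuous since $A$ is. To verify $\partial_x U = u$, compute for $\phi \in \Dscr(\R^n)$:
\[
\langle \partial_x U, \phi \rangle = -\langle U, \partial_x \phi \rangle = \langle u, A(\partial_x\phi) \rangle.
\]
Applying the identity above to $\partial_x \phi$ in place of $\phi$, and using $\int_\R \partial_x\phi(s,z)\,ds = 0$, we get $A(\partial_x\phi) = \int_{-\infty}^x \partial_t\phi(t,z)\,dt = \phi$. Hence $\langle \partial_x U, \phi\rangle = \langle u, \phi\rangle$, i.e. $\partial_x U = u$.

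The argument is essentially routine; the only point requiring a little care is checking that $A$ genuinely maps $\Dscr(\R^n)$ into itself (compact support of $A\phi$ in all variables, and continuity), which I would treat as the ``main obstacle'' here — it is where the hypothesis that we integrate against test functions, together with the mean-zero correction term $\chi(t)\int\phi$, is used. The passage from $\R$ to $\R^n$ is harmless because everything is done slicewise in $z$ with uniform support control. One could alternatively invoke the structure theorem for distributions or a partition-of-unity patching argument, but the explicit operator $A$ gives the cleanest self-contained proof and will be reused implicitly wherever the excerpt speaks of ``the'' antiderivative being unique up to a function independent of $x$.
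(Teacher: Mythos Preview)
Your proof is correct. It is the standard transpose construction: you build a continuous right inverse $A$ of $\partial_x$ on test functions (after subtracting a mean-correction to preserve compact support) and then set $U = -A^* u$. The check that $A\phi$ has compact $x$-support because the integrand has zero $x$-mean is the only nontrivial point, and you identified it.

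The paper takes a different route. Instead of working by duality on test functions, it works directly on the distribution $u$: it cuts $u = \chi u + (1-\chi)u =: u^+ + u^-$ with a smooth cutoff $\chi = \chi(x)$, so that $u^+$ is supported in $\{x \geq -1\}$ and $u^-$ in $\{x \leq 1\}$, and then sets $U^+ = (H \otimes \delta_0^{n-1}) * u^+$ and $U^- = ((H-1)\otimes\delta_0^{n-1}) * u^-$, where $H$ is the Heaviside function. The one-sided support conditions make each convolution well-defined in $\mathscr D'(\R^n)$, and $\partial_x U^\pm = u^\pm$ follows from $\partial_x H = \delta_0$.

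Both arguments are short and self-contained. Your approach avoids convolution of non-compactly-supported distributions entirely and stays at the level of test functions, which makes it slightly more elementary; the paper's approach writes $U$ as an explicit convolution at the cost of needing the support-condition criterion for convolvability. Neither gives more than the other for the purposes of the paper, since the lemma is only ever used as a black box.
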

\begin{proof}
    Denote as $x'\in\R^{n-1}$ the remaining coordinates, and call $e_1$ the vector with coordinates $x=1$, $x'=0$. Let $\chi\in C^\infty(\R)$ be a smooth non-decreasing cutoff function such that $\chi(x)=0$ for $x\leq -1$ and $\chi(x)=1$ for $x\geq 1$. Consider $u^+:=\chi u$, $u^-:=(1-\chi)u$, where $\chi$ is considered a function on $\R^n$ depending only on the $x$ variable. Consider the Heaviside function $H:=\mathbbm 1_{[0,\infty)}$ in the variable $x$, and let $\delta_0^{n-1}\in \Dscr'(\R^{n-1})$ be the Dirac delta. For $f\in L^1_\loc(\R)$, let $f\otimes \delta_{0}^{n-1}\in\mathscr D'(\R^n)$ be defined as
    \[
    f\otimes \delta_{0}^{n-1}(\wphi):=\int_\R f(t)\wphi(te_1)dt.
    \]
    Then by a direct verification, the convolutions $U^+:=(H\otimes \delta_{0}^{n-1})*u^+$, $U^-:=((H-1)\otimes\delta_{0}^{n-1})*u^-$ are well defined, they lie in $\mathscr D'(\R^n)$, and they satisfiy $\partial_x U^\pm=u^\pm$. It follows that $U:=U^++U^-$ satisfies $\partial_x U=u$.
\end{proof}

\subsection{Some detailed proofs}\label{subsec:proofs}

\begin{proof}[Proof of Lemma \ref{lemma:change-of-variables-c-alpha_0-gamma_0}]
    As usual, we will use Notation \ref{notation:f_alpha} and write $f_\alpha(x,y)=f(x-\alpha(y),y)$ for $\alpha=\alpha(y)$ (we use the same notation when $f$ and/or $\alpha$ are independent of $y$). Recall that $v^c$ is defined as
    \als{
    v^c=&\frac{v^+e^{V^+-c}+v^-e^{V^-+c}}{e^{V^+-c}+e^{V^-+c}}\\
    =&(\etapcnu)v^++(\etamcnu)v^-,
    }
    where we set
    \begin{equation}\label{z-018}
        \nu(x,y):=\aha(V^+(x,y)-V^-(x,y))-c.
    \end{equation}
    The function $c\mapsto v^c|_{y=0}$ is a curve in $L^2(\R_x;\cosh^2(x)dx)+G_0|_{y=0}$ by Lemma \ref{prop:vcAreSolutions} and can be easily verified to be smooth, so by Lemma \ref{lemma:expDecomposition} parts \ref{item:expDec-003} and \ref{item:expDec-004}, the map $c\mapsto \alpha_0$ is well-defined and smooth. Next, we differentiate \eqref{eq:definition_of_alpha_0} in the variable $c$ to get
    \[
    \de_c\alpha_0(c)=\frac{\int_\R \sechscnu(\cdot,0)\cdot(v^+-v^-)|_{y=0}\,\, dx}{\int_\R\sech^2_{\alpha_0}\cdot\,(v^+-v^-)|_{y=0}\,\,dx}.
    \]
    
    \begin{claim}[1] If $x_0\in\R$ is such that $\nu(x_0,y_0)=0$, then
    \[
    \|\sechscnu|_{y=y_0}-\sech^2_{x_0}\|_{L^1\cap L^\infty(\R)}+\|\etapmcnu|_{y=y_0}-\eta^\pm_{x_0}\|_{L^1\cap L^\infty(\R)}\lesssim \|u\|_{\Hcrit}.
    \]
    \end{claim}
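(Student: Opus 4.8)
The plan is to reduce everything to the $\tfrac12$-H\"older continuity in $x$ of the function $\mu(\cdot,y_0)$, which is already recorded in the proof of Proposition~\ref{prop:vcAreSolutions} (equation~\eqref{eq:z-001}). Recall $\mu(x,y):=x-\nu(x,y)$, so that $\partial_x\mu=-\tfrac12(\wv^+-\wv^-)$, and hence by Corollary~\ref{cor:estimates-for-wtv-part-2} part~(b), for every fixed $y_0$,
\[
|\mu(x_1,y_0)-\mu(x_2,y_0)|\le \tfrac12\|(\wv^+-\wv^-)(\cdot,y_0)\|_{L^2(\R_x)}\,|x_1-x_2|^{1/2}\lesssim \|u\|_\Hcrit\,|x_1-x_2|^{1/2}.
\]
Since $\nu(x_0,y_0)=0$ is equivalent to $\mu(x_0,y_0)=x_0$, setting
\[
\delta(x):=\nu(x,y_0)-(x-x_0)=-\big(\mu(x,y_0)-\mu(x_0,y_0)\big)
\]
gives $\delta(x_0)=0$ and $|\delta(x)|\le C_0\|u\|_\Hcrit|x-x_0|^{1/2}$ for an absolute constant $C_0$; in particular, assuming $u$ small enough that $C_0\|u\|_\Hcrit\le \tfrac12$, we get $|\delta(x)|\le\tfrac12|x-x_0|^{1/2}$.

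The estimates then follow from the mean value theorem applied to $\sech^2$ and to $\eta^\pm$, exploiting that $(\sech^2)'=-2\sech^2\tanh$ and $(\eta^\pm)'=\pm\tfrac12\sech^2$ decay exponentially. Write $F$ for either $\sech^2$ or $\eta^\pm$. For $|x-x_0|\le1$ we have $|\delta(x)|\le\tfrac12$, the segment joining $x-x_0$ and $\nu(x,y_0)$ stays in a fixed bounded set, so $|F(\nu(x,y_0))-F(x-x_0)|\lesssim|\delta(x)|\lesssim\|u\|_\Hcrit|x-x_0|^{1/2}$, which is $\lesssim\|u\|_\Hcrit$ pointwise and has integral $\lesssim\|u\|_\Hcrit$ over $\{|x-x_0|\le1\}$. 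For $|x-x_0|>1$ we have $|\nu(x,y_0)|\ge|x-x_0|-\tfrac12|x-x_0|^{1/2}\ge\tfrac12|x-x_0|$, and since $|\delta(x)|<|x-x_0|$ the two endpoints $x-x_0$ and $\nu(x,y_0)$ have the same sign, so the segment joining them lies in $\{|s|\ge\tfrac12|x-x_0|\}$, on which $|F'|\lesssim e^{-|x-x_0|}$. Hence $|F(\nu(x,y_0))-F(x-x_0)|\lesssim e^{-|x-x_0|}|\delta(x)|\lesssim\|u\|_\Hcrit\,e^{-|x-x_0|}|x-x_0|^{1/2}$, which is $\lesssim\|u\|_\Hcrit$ pointwise and integrates to $\lesssim\|u\|_\Hcrit$ over $\{|x-x_0|>1\}$. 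Adding the two regimes yields both the $L^\infty$ and the $L^1$ bounds, for $F=\sech^2$ and for $F=\eta^\pm$ simultaneously, proving the Claim.

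There is no genuine obstacle here: this is a routine modulus-of-continuity estimate. The only point worth being careful about is keeping the factor $\|u\|_\Hcrit$ in the tail $\{|x-x_0|>1\}$ — which forces one to pair the full exponential decay of $F'$ against $|\delta(x)|$ via the mean value theorem, rather than estimating $F(\nu)$ and $F(x-x_0)$ separately.
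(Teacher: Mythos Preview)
Your proof is correct and follows essentially the same route as the paper: both derive the $\tfrac12$-H\"older bound $|\nu(x,y_0)-(x-x_0)|\lesssim\|u\|_\Hcrit|x-x_0|^{1/2}$ from Corollary~\ref{cor:estimates-for-wtv-part-2}(b), then apply the mean value theorem combined with the exponential decay of $(\sech^2)'$ and $(\eta^\pm)'$. The paper packages the two regimes into a single $\min\{1,e^{-2(|x-x_0|-\sigma|x-x_0|^{1/2})}\}$ factor rather than splitting at $|x-x_0|=1$, but this is purely cosmetic.
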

    \begin{claimproof}[of Claim 1]
    We fix $y=0$ for simplicity and focus on the $\sech^2$ case, since the other is analogous. It holds $\partial_x\nu(x,y)=1+\frac{\wv^+-\wv^-}{2}$, so by Corollary \ref{cor:estimates-for-wtv-part-2} part (b) we have
    \[ |\nu(x_2,y)-\nu(x_1,y)-(x_2-x_1)|\lesssim \|u\|_{\dot H^{-\aha,0}(\RR)}|x_2-x_1|^\aha. \]
    In particular, for some $C>0$,
    \begin{align}
    |\nu(x,0)-(x-x_0)|&\leq C\|u\|_{\Hcrit}|x-x_0|^{1/2},\label{eq:z-013}\\
    |\nu(x,0)|&\geq |x-x_0|-C\|u\|_{\Hcrit}|x-x_0|^{1/2}.\label{eq:z-014}
    \end{align}
    Moreover, $|\frac{d}{dx}\sech^2(x)|=|-2\tanh(x)\sech^2(x)|\leq 8e^{-2|x|}$. So, calling $\sigma:=C\|u\|_{\Hcrit}$,
    \als{
    |\sech^2(\nu(x,0))-\sech^2(x-x_0)|&= \left|\int_0^1 (\sech^2)_x((x-x_0)+s(\nu(x,0)-(x-x_0)))\,ds\right|\\
    &\QQQ\cdot|\nu(x,0)-(x-x_0)|\\
    \text{\eqref{eq:z-013}, \eqref{eq:z-014}}\longrightarrow &\lesssim \min\{1,e^{-2(|x-x_0|-\sigma|x-x_0|^{1/2})}\}\sigma|x-x_0|^{1/2}\\
    &\lesssim e^{-2(|x-x_0|-|x-x_0|^{1/2})_+}|x-x_0|^{1/2}\|u\|_{\Hcrit}\\
    &=:\tau(x-x_0)\|u\|_{\Hcrit},
    }
    where the last inequality holds for small $u$. The claim is proved since $\tau\in L^1\cap L^\infty(\R)$.
    \end{claimproof}
    
    Let $x_0$ be any point such that $\nu(x_0,0)=0$. By Corollary \ref{cor:estimates-for-wtv-part-2}, it holds
    \begin{equation}\label{eq:z-021}
        \|(v^+-v^-)-2\|_{C_{0,y}L^2_x}\lesssim \|u\|_{\Hcrit}.
    \end{equation}
    This fact and Claim 1 imply that the numerator and denominator in the expression of $\de_c\alpha_0$ are uniformly bounded from above and away from zero by the smallness of $(v^+-v^-)|_{y=0}-2$ in $L^2(\R_x)$ (as already noted in the proof of Proposition \ref{lemma:expDecomposition}), and that
    \[
    \sup_{c\in\R}|\de_c\alpha_0(c)-1|\lesssim \|u\|_{\Hcrit}.
    \]
    In particular, $c\mapsto\alpha_0$ is a $C^1$-diffeomorphism of $\mathbb R$.
    
    Concerning $\gamma_0$, the map
    \begin{equation}\label{loc:map-to-use-IFT-c-gamma_0}
        (c,\gamma_0)\mapsto F(c,\gamma_0)=\int_{\RR}\rho_{\gamma_0} \frac{v^+e^{V^+-c}+v^-e^{V^-+c}}{e^{V^+-c}+e^{V^-+c}} dx\,dy
    \end{equation}
    is well-defined and smooth with $\gamma_0-$derivative
    \[
    \partial_{\gamma_0}F(c,\gamma_0)=\int_{\RR}\rho_{\gamma_0} \partial_x v^c dx\,dy.
    \]
    From Proposition \ref{prop:vcAreSolutions} and the estimates of Corollary \ref{cor:uniquenessOfMiura}, we know that $v^c=\tanh_{\alpha}+w$, where $\|w\|_{L^3(\RR)}+\|\alpha_y\|_{L^2(\R_y)}\lesssim\|u\|_{\Hcrit}$ and $\alpha(0)=\alpha_0$. Since $w\in L^3(\RR)$ and $|\alpha(y)-\alpha_0|\lesssim \|u\|_{\Hcrit}|y|^{1/2}$, for fixed $c$ we have $F(c,\gamma_0)\to\pm 1$ as $\gamma_0\to\pm\infty$, so that at least one solution of \eqref{eq:definition-of-gamma_0} exists. Moreover, if $\gamma_0$ satisfies \eqref{eq:definition-of-gamma_0}, then
    \als{ \min\{1,|\gamma_0-\alpha_0|\}&\lesssim \left|\int\rho_{\gamma_0}
    \tanh_{\alpha_0} dx\,dy\right|\\
    &\leq\left|\int\rho_{\gamma_0}
    \tanh_\alpha dx\,dy\right|+\left|\int\rho_{\gamma_0}\cdot
    (\tanh_\alpha-\tanh_{\alpha_0}) dx\,dy\right|\\
    \eqref{eq:definition-of-gamma_0}\longrightarrow\,&\leq\left|\int\rho_{\gamma_0}
    w\, dx\,dy\right|+\|\alpha_y\|_{L^2(\R_y)}\int\rho_{\gamma_0}|y|^\aha\,dx\,dy\\
    &\lesssim \|u\|_{\Hcrit},
    }
    which in turn implies, since $u$ is small, that if the map $c\mapsto \gamma_0$ exists,
    \[ |\gamma_0(c)-\alpha_0(c)|\lesssim \|u\|_\Hcrit. \]
    Now note that, since $\rho$ is radially decreasing and with unitary integral,
    \als{
    \int_{\RR}\rho(x,y) \sech^2(x)\,\dxdy&\geq \aha\int_{-1}^1\sech^2(x)\,dx\\
    &=\tanh(1)\\
    &>3/4.
    }
    Writing $v^c_x=\sech^2_{\alpha}+w_x=\sech^2_{\alpha_0}+(\sech^2_{\alpha}-\sech^2_{\alpha_0})+w_x$, with a similar computation as above, there exists a universal $\delta>0$ such that, for $u$ small enough,
    \begin{equation}\label{eq:z-015}
        |F(c,\gamma')|<\delta\quad\implies\quad \de_{\gamma_0}F(c,\gamma')>3/4.
    \end{equation}
    In particular, $\de_{\gamma_0}F(c,\gamma_0)>3/4$ if $\gamma_0$ satisfies \eqref{eq:definition-of-gamma_0}. A smooth map $c\mapsto \gamma_0$ exists then locally by the implicit function theorem, and it extends to a global, unique map due to the fact that whenever \eqref{eq:definition-of-gamma_0} holds, $\de_{\gamma_0}F(c,\gamma_0)$ is strictly positive, so that two distinct zeroes of the function $\gamma_0\mapsto F(c,\gamma_0)$ cannot exist. An analogous computation to that of the case $\alpha_0$ shows that
    \als{
    \de_c\gamma_0(c)&=\frac{\aha\int_{\RR}\rho_{\gamma_0}\cdot (v^+-v^-)\sech^2\!\circ\hspace{2pt}\nu\, dx\,dy}{\int_{\RR}\rho_{\gamma_0}v^c_x\,dx\,dy}\\
    &=\frac{\int_{\RR}\rho_{\gamma_0}\cdot(v^+-v^-)\sech^2\!\circ\hspace{2pt}\nu\,dx\,dy}{\int_{\RR}\rho_{\gamma_0}\cdot \left( \sechscnu\cdot\fr14(v^+-v^-)^2+\etapcnu\cdot v^+_x+\etamcnu\cdot v^-_x\right)\,dx\,dy}.
    }
    The terms $\sechscnu$, $\etapmcnu$ are treated in the same way as in the case of $\alpha_0$ using Claim 1, while the remaining terms can be controlled using again the bound mentioned above on $(v^+-v^-)-2$ in $C_0(\R_y,L^2(\R_x))$. In the end, we get
    \begin{equation}\label{eq:z-016}
        \sup_{c\in\R}|\de_c \gamma_0(c)-1|\lesssim \|u\|_{\Hcrit}.
    \end{equation}
    As said above, the map $c\mapsto \gamma_0$ is a smooth change of variables.

    We have proved already estimate \eqref{eq:gamma_0_and_alpha_0_are_uniformly_close}. Estimate \eqref{eq:c_is_close_to_alpha_0_and_gamma_0_for_small_c} follows by combining the bounds on $|\de_c \alpha_0-1|$ and $|\de_c\gamma_0-1|$ we have already obtained, with the bounds
    \[
    |\gamma_0(0)|\lesssim \|u\|_{\Hcrit},\qquad |\alpha_0(0)|\lesssim \|u\|_{\Hcrit}.
    \]
    These two bounds are in turn equivalent by \eqref{eq:gamma_0_and_alpha_0_are_uniformly_close}, so we focus on proving that $|\alpha_0(0)|\lesssim \|u\|_{\Hcrit}$.
    
    \begin{claim}[2]
    If $x_0\in\R$ is such that $\nu(x_0,0)=0$, then
    \[
    |x_0-\alpha_0(c)|\lesssim \|u\|_{\Hcrit}.
    \]
    \end{claim}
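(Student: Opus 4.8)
\begin{claimproof}[of Claim 2 (sketch of the argument)]
The plan is to reduce Claim 2 to Claim 1 together with the two-sided bound of Lemma \ref{lemma:expDecomposition} part \ref{item:expDec-001}, both evaluated at the slice $y=0$. First I would record the algebraic identity
\[
v^c(\cdot,0)-G_{x_0}(\cdot,0)=\big(\etapcnu|_{y=0}-\eta^+_{x_0}\big)\cdot\big((v^+-v^-)|_{y=0}\big),
\]
which follows at once from $\eta^++\eta^-\equiv 1$ and the formulas $v^c=(\etapcnu)v^++(\etamcnu)v^-$, $G_{x_0}=\eta^+_{x_0}v^++\eta^-_{x_0}v^-$: the contributions of $v^+$ and $v^-$ recombine exactly as in \eqref{eq:z-022}.

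Next I would estimate the right-hand side in $L^1(\R_x)$. Writing $(v^+-v^-)|_{y=0}=2+(\wv^+-\wv^-)|_{y=0}$, I would invoke Claim 1 at $y_0=0$ — this is precisely where the hypothesis $\nu(x_0,0)=0$ is used — to obtain $\|\etapcnu|_{y=0}-\eta^+_{x_0}\|_{L^1\cap L^\infty(\R)}\lesssim\|u\|_{\Hcrit}$, and Corollary \ref{cor:estimates-for-wtv-part-2}(b) to obtain $\|(\wv^+-\wv^-)|_{y=0}\|_{L^2(\R_x)}\lesssim\|u\|_{\Hcrit}$. Combining these with $L^1\cap L^\infty(\R)\hookrightarrow L^2(\R)$ gives
\[
\big\|v^c(\cdot,0)-G_{x_0}(\cdot,0)\big\|_{L^1(\R_x)}\lesssim \|u\|_{\Hcrit}+\|u\|_{\Hcrit}^2\lesssim\|u\|_{\Hcrit}.
\]

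Then I would conclude. By the defining relation \eqref{eq:definition_of_alpha_0} of $\alpha_0(c)$, one has $\int_\R\big(v^c(x,0)-G_{\alpha_0}(x,0)\big)\,dx=0$, so subtracting yields $\big|\int_\R(G_{\alpha_0}(x,0)-G_{x_0}(x,0))\,dx\big|\lesssim\|u\|_{\Hcrit}$. Since $h:=(\wv^+-\wv^-)|_{y=0}$ is small in $L^2(\R_x)$, Lemma \ref{lemma:expDecomposition} part \ref{item:expDec-001} applies at $y=0$ and gives $|\alpha_0(c)-x_0|\le\big|\int_\R(G_{\alpha_0}(x,0)-G_{x_0}(x,0))\,dx\big|$, which is exactly Claim 2.

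With Claim 2 in hand, the bound $|\alpha_0(0)|\lesssim\|u\|_{\Hcrit}$ — and hence $|\gamma_0(0)|\lesssim\|u\|_{\Hcrit}$ by \eqref{eq:gamma_0_and_alpha_0_are_uniformly_close}, which finishes \eqref{eq:c_is_close_to_alpha_0_and_gamma_0_for_small_c} in view of the already established bounds on $|\de_c\alpha_0-1|$ and $|\de_c\gamma_0-1|$ — would follow once one exhibits a zero $x_0$ of $\nu(\cdot,0)$ with $|x_0|\lesssim\|u\|_{\Hcrit}$. For $c=0$ one has $\nu(x,0)=x+\aha(\wV^+-\wV^-)(x,0)$; the normalization \eqref{eq:normalization-condition} forces $\int_{\RR}(\wV^+-\wV^-)\rho=0$, which together with the uniform Hölder bound of Lemma \ref{lemma:holder_regularity_for_difference_of_primitive_solutions} gives $|\nu(0,0)|\lesssim\|u\|_{\Hcrit}$, and then the sublinear bound $|\nu(x,0)-\nu(0,0)-x|\lesssim\|u\|_{\Hcrit}|x|^{\aha}$ (established in the proof of Claim 1) pins down such an $x_0$ by a short continuity argument. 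The main obstacle is not Claim 2 itself, whose proof is just the recombination above plus Lemma \ref{lemma:expDecomposition}, but this last step, where one must carefully use the normalization \eqref{eq:normalization-condition} to locate $x_0$ near the origin.
\end{claimproof}
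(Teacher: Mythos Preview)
Your proof of Claim 2 is correct and follows essentially the same approach as the paper: the identical algebraic identity $v^c-G_{x_0}=(\etapcnu-\eta^+_{x_0})(v^+-v^-)$, the same estimate via Claim 1 and the bound on $(v^+-v^-)-2$, and the same conclusion through Lemma \ref{lemma:expDecomposition} part \ref{item:expDec-001}. Your additional discussion of how Claim 2 feeds into the bound $|\alpha_0(0)|\lesssim\|u\|_{\Hcrit}$ also matches the paper's subsequent argument.
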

    \begin{claimproof}[of Claim 2]
    Using \eqref{eq:definition_of_alpha_0} and the identity $\eta^++\eta^-=1$, it holds
    \als{
        \left|\int_{\R} (G_{\alpha_0}-G_{x_0})|_{y=0}\,dx\right|&=
        \left|\int_\R (v^c-G_{x_0})|_{y=0} dx\right|\\
        &=\left|\int_{\R}((v^+-v^-)(\etapcnu-\eta_{x_0}))|_{y=0}\,dx\right|\\
        &\lesssim \|u\|_{\Hcrit},
    }
    where the last inequality follows from Claim 1 and the bounds on $(v^+-v^-)-2$ as before. The claim follows by Lemma \ref{lemma:expDecomposition}, part \ref{item:expDec-001}.
    \end{claimproof}
    
    Fix now $c=0$. By Claim 2, it is enough to show that $|x_0|\lesssim \|u\|_{\Hcrit}$ for any $x_0$ as in the statement of the Claim (such a $x_0$ always exists since $x\mapsto \nu(x,0)-x$ is globally H\"older continuous by \eqref{eq:z-021}). From Definition \ref{def:VcAndvc}, and since $\rho$ is radially symmetric, it holds
    \[
    \int_{\RR} \nu(x,y)\rho(x,y)\,\dxdy=0.
    \]
    In particular,
    \als{
    |\nu(0,0)|&=\left|\int_{\RR} \nu(0,0)\rho(x,y)\,\dxdy\right|\\
    &=\left|\int_{\RR} (\nu(0,0)-\nu(x,y))\rho(x,y)\,\dxdy\right|\\
    \text{Lemma \ref{lemma:holder_regularity_for_difference_of_primitive_solutions}}\longrightarrow&\lesssim \|u\|_{\Hcrit} \int_{\RR} \rho(x,y) |(x,y)|^{1/4}\,\dxdy\\
    &\lesssim \|u\|_{\Hcrit}.
    }
    Now, by \eqref{eq:z-013},
    \als{
    |x_0|&\leq C\|u\|_{\Hcrit}|x_0|^{1/2}+|\nu(0,0)|\\
    &\leq C\|u\|_{\Hcrit}
    \implies \frac{|x_0|}{1+|x_0|^{1/2}}\lesssim \|u\|_{\Hcrit},
    }
    which proves the bound for small $u$.
    
    For the bi-Lipschitz bound, we proceed as follows. First, by estimate \eqref{eq:z-016}, it is enough to show the Lipschitz continuity of the forward map $(u,c)\mapsto (u,\gamma_0)$. By the same estimate and the triangle inequality, it is enough to show
    \begin{equation}\label{eq:z-017}
        |\gamma_{0,1}-\gamma_{0,2}|\lesssim (1+|c|)\|u_1-u_2\|_{\Hcrit}
    \end{equation}
    for small $u_1,u_2$, the corresponding $\gamma_{0,1},\gamma_{0,2}$ satisfying \eqref{eq:definition-of-gamma_0} with $v^c_{j}=\eleV^{(-1,1)}(u_j,(c,-c))$, and for $c\in \R$ which is shared by both solutions. By the bounds in Lemma \ref{lemma:holder_regularity_for_difference_of_primitive_solutions} and the analiticity of the map $u\mapsto V^+-V^-$, it holds
    \begin{equation}\label{eq:z-019}
        \|\nu_1-\nu_2\|_{C^{0,1/4}_\unif(\RR)}\lesssim \|u_1-u_2\|_{\Hcrit},
    \end{equation}
    with $\nu_j$ corresponding to $u_j$ and defined as in \eqref{z-018}. By the normalization condition \eqref{eq:normalization-condition}, it holds $\int \rho \nu_j\,\dxdy=-c$, $j=1,2$, in particular,
    \begin{equation}\label{eq:z-020}
        \int_{\RR} \rho\cdot (\nu_1-\nu_2)\,\dxdy=0.
    \end{equation}
    Now we consider the difference $v^c_1-v^c_2$, and write it as
    \als{
    v^c_1-v^c_2&=\etapcnu_1\cdot v^+_1+\etamcnu_1\cdot v^-_1 - \etapcnu_2\cdot v^+_2-\etamcnu_2\cdot v^-_2\\
    \text{Corollary \ref{cor:estimates-for-wtv-part-2} (a)}\longrightarrow &=\tanhcnu_1-\tanhcnu_2 + (\eta^+\circ \nu_1-\eta^+\circ \nu_2)\wv_1^+\\
    &\QQQ+(\eta^-\circ \nu_1-\eta^-\circ \nu_2)\wv_1^-+\OO_{L^3}(\|u_1-u_2\|_{\Hcrit}),
    }
    where $\wv^\pm_j=v^\pm_j\mp 1$. The remainder goes to zero linearly with $\|u_1-u_2\|_{\Hcrit}$ by Corollary \ref{cor:estimates-for-wtv-part-2}, since the map $u\mapsto \wv^\pm$ is analytic with values in $L^3(\RR)$. The rewriting above implies
    \als{
    \left|\int_{\RR}\rho_{\gamma_{0,1}}\cdot(v^c_1-v^c_2)\,\dxdy \right | &\leq \int_{\RR}\rho_{\gamma_{0,1}}\cdot|\tanhcnu_1-\tanhcnu_2|(1+|\wv_1^+|+|\wv_1^-|)\,\dxdy\\
    &\QQQ +\OO(\|u_1-u_2\|_{\Hcrit})\\
    &\lesssim \int_{\RR}\rho_{\gamma_{0,1}}\cdot|\nu_1-\nu_2| (1+|\wv_1^+|+|\wv_1^-|)\,\dxdy \\
    &\QQQ+ \|u_1-u_2\|_{\Hcrit}\\
    \eqref{eq:z-020},\,\eqref{eq:z-031}\longrightarrow &\lesssim |\gamma_0|\|\nu_1-\nu_2\|_{C^{0,1/4}_\unif(\RR)}+\|u_1-u_2\|_{\Hcrit}\\
    \eqref{eq:z-019}, \,\eqref{eq:c_is_close_to_alpha_0_and_gamma_0_for_small_c}\,\longrightarrow &\leq (1+|c|) \|u_1-u_2\|_{\Hcrit}.
    }
    where we used the estimate from Corollary \ref{cor:estimates-for-wtv-part-2}
    \begin{equation}\label{eq:z-031}
    \|\wv^\pm_j\|_{L^3(\RR)}\lesssim \|u_j\|_{\Hcrit}\ll 1.
    \end{equation}
    Since it holds \eqref{eq:definition-of-gamma_0} with $\gamma_{0,1}$ and $v^c_1$, we simply have
    \[
    \int_{\RR}\rho_{\gamma_{0,1}}v^c_2\,\dxdy\lesssim (1+|c|)\|u_1-u_2\|_{\Hcrit}.
    \]
    By the property \eqref{eq:z-015} applied to $u_2,v^c_2$, estimate \eqref{eq:z-017} is proved when the right hand side is less than a universal constant, which we can assume by the smallness of $u_1,u_2$.
\end{proof}

\begin{proof}[Proof of Lemma \ref{lemma:modifiedLocalSmoothing}]
    Assume $u\in \partial_xH^\infty(\Real^2)$ by density. By considering $\alpha_\eps=\alpha*\rho_\eps$ a regularization of $\alpha$, with regularization parameter $\eps= K c^\aha \|\alpha_y\|^2$ for a universal $K$ large enough, we can assume $\alpha\in H^\infty(\R)$ and that 
    \[
    \delta:=c^{1/4}\|\alpha_y\|_{L^\infty}
    \]
    is small. In fact, by the properties of the regularization, we have $c^{1/4}\|\alpha_{\eps,y}\|_{L^\infty}\leq c^{1/4}\eps^{-\aha} \|\alpha_y\|_{L^2}=K^{-1}$, and
    \[
    \braket{c^{\aha}(x-\alpha(y))}\leq \braket{c^{\aha}\|\alpha-\alpha_\eps\|_{L^\infty}} \braket{c^{\aha}(x-\alpha_\eps(y))},
    \]
    with $c^\aha\|\alpha-\alpha_\eps\|_{L^\infty}\leq c^\aha \eps^\aha \|\alpha_y\|_{L^2}=Kc^{3/4} \|\alpha_y\|_{L^2}^2=KL$.
    
    Call $u(t):=e^{tS}u_0$, and let $a(x)=(\pi+\arctan(c^{\aha}x))$. From
    $$ u_t-cu_x+u_{xxx}+3\partial_x^{-1}u_{yy}=0, $$
    one gets
    \begin{align*}
        \aha\frac{d}{dt}\int_{\RR} a_\alpha |u|^2\dxdy&=c\int a_\alpha uu_x -\int a_\alpha uu_{xxx}-3\int a_\alpha u\partial_x^{-1}u_{yy}\\
        &=-\frac{c}{2}\int a_{x,\alpha}|u|^2+\int a_{x,\alpha}uu_{xx}+\int a_\alpha u_xu_{xx}\\
        &\QQQ-3\int\alpha_y a_{x,\alpha}u\partial_x^{-1}u_y+3\int a_\alpha u_y\partial_x^{-1}u_y\\
        &=-\frac{3}{2}\int a_{x,\alpha}|u_x|^2+\frac{1}{2}\int a_{xxx,\alpha}|u|^2-\frac{c}{2}\int a_{x,\alpha}|u|^2\\
        &\QQQ-3\int\alpha_y a_{x,\alpha}u\partial_x^{-1}u_y-\fr 32\int a_{x,\alpha}|\partial_x^{-1}u_y|^2.
    \end{align*}
    Note that $|a_{xxx}|\leq \fr c2\,a_x$. Integrating in the time variable, we obtain the estimate
    \begin{align*}
        c\|\sqrt{a_{x,\alpha}}u\|_{L^2_TL^2}^2+\|\sqrt{a_{x,\alpha}}\partial_xu\|_{L^2_TL^2}^2&\\
        +\|\sqrt{a_{x,\alpha}}\partial_x^{-1}u_y\|_{L^2_TL^2}^2&\lesssim \|\sqrt{a_\alpha}u_0\|^2_{L^2}+\|a_x\|^\aha_{L^\infty}\|\alpha_y\|_{L^\infty_y}\\
        &\QQQ\Qq\times\|\sqrt{a_{x,\alpha}}u\|_{L^2_T L^2}\|\sqrt{a_{x,\alpha}}\partial_x^{-1}u_y\|_{L^2_TL^2}\\
        &\lesssim \|u_0\|_{L^2}^2+\delta\|\sqrt{a_{x,\alpha}}u\|_{L^2_T L^2}\|\sqrt{a_{x,\alpha}}\partial_x^{-1}u_y\|_{L^2_TL^2}.
    \end{align*}
    For $\delta\geq 0$ small enough, the above implies
    \begin{equation*}
    \|\sqrt{a_{x,\alpha}}\partial_xu\|_{L^2_TL^2}+\|\sqrt{a_{x,\alpha}}\partial_x^{-1}u_y\|_{L^2_TL^2}\lesssim \|u_0\|_{L^2}.
    \end{equation*}
    Substituting $a$ with its definition, we get the desired inequality.
\end{proof}

\begin{proof}[Proof of Lemma \ref{lemma:Lp+weight/decay+zero_mean_implies_Hardy_H1}]
    The statement is monotonically weaker as $p$ grows by H\"older's inequality, so we assume $p<\infty$. By rescaling the measure $\mu$, we can assume $\mu(\Bcal_1(0))=1$, in particular $\mu(B_{j})\leq 2^{jD}$ for $j\in\N$, where $B_{j}:=\Bcal_{2^j}(x_0)$. For the sake of exposition, we prove the Lemma under the assumption
    \begin{equation}\label{eq:z-009}
        \mu(B_j)\sim 2^{jD}
    \end{equation}
    and we mention at the end how to modify the proof in the general case.
    \vspace{1ex}
    
    \textbf{Step 1.} Let $\chi_j=\mathbbm 1_{B_j}$, and $A_j:=B_{j}\setminus B_{j-1}$. For $j\geq 0$, define
    \[
    f_j:=f\chi_j-\frac{\int_X f\chi_j\,d\mu}{\int_X\chi_{j}\,d\mu}\chi_j,
    \]
    and consider the decomposition
    \[
    f=f_0+\sum_{j\geq 1}f_j-f_{j-1}.
    \]
    All the functions in the decomposition are in $L^p(X)$, have mean zero and have support in a ball, so they are multiples of $p$-atoms, as in Definition \ref{def:Hardy_space_H1}. By what said in Remark \ref{rk:q-atoms,H1-BMO_duality} and the definition of $p$-atoms, to show the claim and the above bound, it is enough to show that
    \[
    \|f_0\|_{L^p}\lesssim \|wf\|_{L^p},\qquad\sum_{j\geq 1}2^{jD/p'}\|f_j-f_{j-1}\|_{L^p}\lesssim \|wf\|_{L^p},
    \]
    since the support of $f_j-f_{j-1}$ is contained in the ball $B_j$, whose measure is comparable to $2^{jD}$.
    \vspace{1ex}
    
    \textbf{Step 2.} The first bound is immediate. For the second bound, we have
    \[
    \|f_j-f_{j-1}\|_{L^p}\leq \|f\|_{L^p(A_j)}+\mu(B_j)^{-1/p'}\left|\int_{B_j}f\,d\mu\right|+\mu(B_{j-1})^{-1/p'}\left|\int_{B_{j-1}}f\,d\mu\right|.
    \]
    In addition to \eqref{eq:z-009}, we have the bounds
    \begin{align*}
    \left|\int_{B_j} f\,d\mu\right|&=\left|\int_{B_j^c} f\,d\mu\right|\leq \|1/w\|_{L^{p'}(B_j^c)}\|wf\|_{L^p}\\
    &\lesssim 2^{-j\eps}\|wf\|_{L^p},\\[1em]
    \|f\|_{L^p(A_j)}&\leq \|1/w\|_{L^{\infty}(B_j^c)}\|wf\|_{L^p}\\
    &\lesssim 2^{-j(D/p'+\eps)} \|wf\|_{L^p},
    \end{align*}
    which combined yield
    \[
    2^{jD/p'}\|f_j-f_{j-1}\|_{L^p}\lesssim 2^{-j\eps} \|wf\|_{L^p},
    \]
    so we obtain the claim and prove the proposition under the additional assumption \eqref{eq:z-009}.
    \vspace{1ex}
    
    \textbf{Step 3.} For a general doubling metric measure space we can modify the proof as follows. If $\mu(X)<\infty$, then it is easy to show by contradiction that $X$ is $d$-bounded, so the statement is immediate since any $L^p$ function with zero mean is also a multiple of a $p$-atom. If $\mu(X)=\infty$, we set $B_j=\Bcal_{R_j}(x_0)$, with $(R_j)_j$ being a sequence of radii $R_j\to\infty$ such that $R_0=1$, $R_j\geq 2R_{j-1}$, $2\mu(B_{j-1})\leq \mu(B_j)\leq 4\mu(B_{j-1})$, and repeat the same argument in the previous two steps without other changes.   
\end{proof}

\begin{proof}[Proof of Lemma \ref{lemma:heat_kernel_and_Hardy_space_H1}]
    The way of proving this estimate is classical.  We recall the definition of parabolic norm and metric given in Definition \ref{def:parabolic_BMO}. First, by the change of coordinates
    \[
    (x,y)\mapsto (x+2\lambda y,y),
    \]
    one can assume $\lambda=0$. For a convolution operator $K$ with kernel $K=K(z)$, $z=(x,y)$, consider the property
    \begin{equation}\label{eq:z-010}
        |K(z)|\lesssim |z|_{\rm p}^{-\alpha},\qquad |\partial_xK(z)|\lesssim |z|_{\rm p}^{-\alpha-1},\qquad |\partial_yK(z)|\lesssim |z|_{\rm p}^{-\alpha-2}.
    \end{equation}
    The two kernels $|\de_x|^{-1/2}\Gamma$, $|\de_x|^{1/2}\Gamma$ satisfy the property \eqref{eq:z-010} with $\alpha=1/2$, $\alpha=3/2$, respectively: this is easy to verify since the two kernels and the parabolic norm are homogeneous with respect to the parabolic rescaling $(x,y)\mapsto (sx,s^2y)$. It is thus enough to show that for a convolution operator $K$, it holds for $\alpha\in(0,3)$:
    \[
    \left[\text{$K$ satisfies the property }\eqref{eq:z-010}\right]\quad\implies\quad \left[\|Kf\|_{L^p(\RR)}\lesssim \|f\|_{\PHu(\RR)},\,\,\frac 1p=\fr\alpha 3\right].
    \]
    By the definition of $\PHux\lambda(\RR)$ and by linearity one can assume that $f$ is an $\infty$-atom. By scaling, we can assume
    \[\|f\|_{L^\infty}\leq 1,\qquad \supp f\subset \Bcal_1((0,0))=:B,\qquad \int_{\RR}f\,dx\,dy=0.
    \]
    Let $2B:=\Bcal_2((0,0))$. By the first estimate in \eqref{eq:z-010}, the kernel $K\in L^{p,\infty}(\RR)\subset L^1+L^\infty$, thus
    \[
    \|\Gamma f\|_{L^\infty(2B)}\lesssim 1.
    \]
    By combining the estimates \eqref{eq:z-010} as follows, for $|z|_\p\geq 2|w|_\p$, $w=(x',y')$ one obtains
    \begin{align*}
    |K(z-w)-K(z)|&\leq\left|x'\int_0^1 \partial_x K(z-sw)ds\right|+\left|y'\int_0^1 \partial_y K(z-sw)ds\right|\\
    &\lesssim\frac{|w|_{\rm p}}{|z|_{\rm p}^{\alpha+1}}+\frac{|w|^{2}_{\rm p}}{|z|_{\rm p}^{\alpha+2}}\\
    &\lesssim \frac{|w|_{\rm p}}{|z|_{\rm p}^{\alpha+1}}.
    \end{align*}
    With that, one can use the zero mean of $f$ and the bounds we have to estimate
    \als{
        |K f(z)|&=\left|\int_B [K(z-w)-K(z)]f(w)\,dw\right|\\
        &\lesssim \frac{1}{|z|_{\rm p}^{\alpha+1}}
        }
    for $z\in 2B^c$. It holds $1/|\cdot|_{\rm p}^{\alpha+1}\in L^{q,\infty}(\RR)$, $q=3/(\alpha+1)<p$, and clearly $\mathbbm 1_{2B^c}(z)\,1/{|z|_{\rm p}^{\alpha+1}}\leq 1/4$, so the estimate is proved by combining the bound on $2B$ with the one on $2B^c$.
\end{proof}

\bibliographystyle{hplain_custom.bst}
\bibliography{biblio.bib}

\begin{thebibliography}{10}

\bibitem{ablowitz-baldwin-2012-pictures-of-multisolitons}
M.~J. Ablowitz and D.~E. Baldwin.
\newblock Nonlinear shallow ocean-wave soliton interactions on flat beaches, {\em Phys. Rev. E} \textbf{86} (Sep 2012), 036305.

\bibitem{aktosun-2004-IST-KdV-and-solitons}
T.~Aktosun.
\newblock Inverse scattering transform, {K}d{V}, and solitons.
\newblock In {\em Current trends in operator theory and its applications}, Oper. Theory Adv. Appl., vol. 149, pages 1--22. Birkh\"{a}user, Basel, 2004.

\bibitem{munoz-2023-sineGordon-kink}
M.~A. Alejo, C.~Mu\~{n}oz, and J.~M. Palacios.
\newblock On asymptotic stability of the sine-{G}ordon kink in the energy space, {\em Comm. Math. Phys.} \textbf{402} (2023), no. 1, 581--636.

\bibitem{bahouri-chemin-danchin-2011-fourier-analysis-and-nonlinear-PDEs}
H.~Bahouri, J.-Y. Chemin, and R.~Danchin.
\newblock {\em Fourier analysis and nonlinear partial differential equations}.
\newblock Grundlehren der mathematischen Wissenschaften [Fundamental Principles of Mathematical Sciences], vol. 343.
\newblock Springer, Heidelberg, 2011.

\bibitem{bringmann-killip-visan-2021-global-well-posedness-5th-order-kdv}
B.~Bringmann, R.~Killip, and M.~Visan.
\newblock Global well-posedness for the fifth-order {K}d{V} equation in {$H^{-1}(\mathbb R)$}, {\em Ann. PDE} \textbf{7} (2021), no. 2, Paper No. 21, 46.

\bibitem{buckKochKDVatH-1regularity}
T.~Buckmaster and H.~Koch.
\newblock The {K}orteweg--de {V}ries equation at {$H^{-1}$} regularity, {\em Ann. Inst. H. Poincar\'{e} Anal. Non Lin\'{e}aire} \textbf{32} (2015), no. 5, 1071--1098.

\bibitem{candy-herr-2018-on-the-division-problem-wave-maps}
T.~Candy and S.~Herr.
\newblock On the division problem for the wave maps equation, {\em Ann. PDE} \textbf{4} (2018), no. 2, Paper No. 17, 61.

\bibitem{ChakravartyKodama2007ClassificationOTlineSoliton}
S.~Chakravarty and Y.~Kodama.
\newblock Classification of the line-soliton solutions of {KPII}, {\em J. Phys. A} \textbf{41} (2008), no. 27, 275209, 33.

\bibitem{coifman-weiss-1977-extension-hardy-spaces}
R.~R. Coifman and G.~Weiss.
\newblock Extensions of {H}ardy spaces and their use in analysis, {\em Bull. Amer. Math. Soc.} \textbf{83} (1977), no. 4, 569--645.

\bibitem{praQuantMechFlugge1999}
S.~Fl\"{u}gge.
\newblock {\em Practical quantum mechanics}.
\newblock Classics in Mathematics. Springer-Verlag, Berlin, english edition, 1999.
\newblock Translated from the 1947 German original.

\bibitem{gardner-greene-kruskal-miura-1967-KdV}
C.~S. Gardner, J.~M. Greene, M.~D. Kruskal, and R.~M. Miura.
\newblock Method for solving the {K}orteweg--de {V}ries equation, {\em Phys. Rev. Lett.} \textbf{19} (Nov 1967), 1095--1097.

\bibitem{Gerard-kappeler-topalov-2020-sharp-wellposedness-BO-on-the-torus}
P.~G\'erard, T.~Kappeler, and P.~Topalov.
\newblock Sharp well-posedness results of the {B}enjamin-{O}no equation in {$H^s(\mathbb T,\mathbb R)$} and qualitative properties of its solutions, {\em Acta Math.} \textbf{231} (2023), no. 1, 31--88.

\bibitem{grafakos-2014-modern-fourier-analysis}
L.~Grafakos.
\newblock {\em Modern {F}ourier analysis}.
\newblock Graduate Texts in Mathematics, vol. 250.
\newblock Springer, New York, third edition, 2014.

\bibitem{hadac2008well-posednessKP-II}
M.~Hadac.
\newblock Well-posedness for the {K}adomtsev-{P}etviashvili {II} equation and generalisations, {\em Trans. Amer. Math. Soc.} \textbf{360} (2008), no. 12, 6555--6572.

\bibitem{hadacHerrKoch2009wellPosednessKP-IIinCriticalSpace}
M.~Hadac, S.~Herr, and H.~Koch.
\newblock Well-posedness and scattering for the {KP}-{II} equation in a critical space, {\em Ann. Inst. H. Poincar\'{e} C Anal. Non Lin\'{e}aire} \textbf{26} (2009), no. 3, 917--941.

\bibitem{harrophGriffith-killip-visan-2020-sharp-wellposedness-cubicNLS-and-mKdV}
B.~Harrop-Griffiths, R.~Killip, and M.~Vi{\c s}an.
\newblock Sharp well-posedness for the cubic {NLS} and m{K}d{V} in {$H^s(\mathbb R)$}, {\em Forum Math. Pi} \textbf{12} (2024), Paper No. e6, 86.

\bibitem{john-nirenberg-1961-on-functions-of-BMO}
F.~John and L.~Nirenberg.
\newblock On functions of bounded mean oscillation, {\em Comm. Pure Appl. Math.} \textbf{14} (1961), 415--426.

\bibitem{kappeler-topalov-2005-sharp-wellposedness-mKdV-on-the-torus}
T.~Kappeler and P.~Topalov.
\newblock Global well-posedness of m{K}d{V} in {$L^2(\mathbb T,\mathbb R)$}, {\em Comm. Partial Differential Equations} \textbf{30} (2005), no. 1-3, 435--449.

\bibitem{kappeler-topalov-2006-sharp-wellposedness-KdV-on-the-torus}
T.~Kappeler and P.~Topalov.
\newblock Global wellposedness of {K}d{V} in {$H^{-1}(\mathbb T,\mathbb R)$}, {\em Duke Math. J.} \textbf{135} (2006), no. 2, 327--360.

\bibitem{kenig-ziesler-2005-LWP-for-mKP-II}
C.~E. Kenig and S.~N. Ziesler.
\newblock Local well posedness for modified {K}adomstev-{P}etviashvili equations, {\em Differential Integral Equations} \textbf{18} (2005), no. 10, 1111--1146.

\bibitem{kenigMartel2006mKP-IIwellPosedness}
C.~E. Kenig and Y.~Martel.
\newblock Global well-posedness in the energy space for a modified {KP} {II} equation via the {M}iura transform, {\em Trans. Amer. Math. Soc.} \textbf{358} (2006), no. 6, 2447--2488.

\bibitem{Killip-Laurens-Visan-2024-sharp-well-posedness-BO}
R.~Killip, T.~Laurens, and M.~Vi\c~san.
\newblock Sharp well-posedness for the {B}enjamin-{O}no equation, {\em Invent. Math.} \textbf{236} (2024), no. 3, 999--1054.

\bibitem{killip-visan-2019-kdv-is-wellposed-in-H-1}
R.~Killip and M.~Vi\c{s}an.
\newblock Kd{V} is well-posed in {$H^{-1}$}, {\em Ann. of Math. (2)} \textbf{190} (2019), no. 1, 249--305.

\bibitem{killip-visan-2022-kdv-multisolitons}
R.~Killip and M.~Vi\c{s}an.
\newblock Orbital stability of {K}d{V} multisolitons in {$H^{-1}$}, {\em Comm. Math. Phys.} \textbf{389} (2022), no. 3, 1445--1473.

\bibitem{koch-liao-2021-conserved-energies-1D-gross-pitaevskii}
H.~Koch and X.~Liao.
\newblock Conserved energies for the one dimensional {G}ross-{P}itaevskii equation, {\em Adv. Math.} \textbf{377} (2021), 107467.

\bibitem{koch-liao-2023-conserved-energies-1D-gross-pitaevskii}
H.~Koch and X.~Liao.
\newblock Conserved energies for the one dimensional {G}ross-{P}itaevskii equation: low regularity case, {\em Adv. Math.} \textbf{420} (2023), Paper No. 108996, 61.

\bibitem{Koch-tataru-2018-conserved-energies-cubic-NLS-mKdV-KdV}
H.~Koch and D.~Tataru.
\newblock Conserved energies for the cubic nonlinear {S}chr\"{o}dinger equation in one dimension, {\em Duke Math. J.} \textbf{167} (2018), no. 17, 3207--3313.

\bibitem{Koch-tataru-2020-multisolitons-cubic-NLS-mKd}
H.~Koch and D.~Tataru.
\newblock Multisolitons for the cubic {NLS} in 1-d and their stability, {\em Publ. Math. Inst. Hautes \'Etudes Sci.} \textbf{140} (2024), 155--270.

\bibitem{koch-tataru-visan-2014-dispersive-equations-nonlinear-waves}
H.~Koch, D.~Tataru, and M.~Vi\c{s}an.
\newblock {\em Dispersive equations and nonlinear waves}.
\newblock Oberwolfach Seminars, vol.~45.
\newblock Birkh\"{a}user/Springer, Basel, 2014.

\bibitem{koch-yu-2023-asymptotic}
H.~Koch and D.~Yu.
\newblock Asymptotic stability of the sine-gordon kinks under perturbations in weighted sobolev norms.
\newblock Preprint: \texttt{arXiv:2308.07679}, 2023.

\bibitem{kodama2017solitonsAndGrassmannians}
Y.~Kodama.
\newblock {\em K{P} solitons and the {G}rassmannians}.
\newblock SpringerBriefs in Mathematical Physics, vol.~22.
\newblock Springer, Singapore, 2017.

\bibitem{thierry-laurens-2023-sharp-gwp-of-KdV-with-exotic-spatial-asymptotics}
T.~Laurens.
\newblock Global {W}ell-{P}osedness for {$H^{-1}(\mathbb R)$} {P}erturbations of {K}d{V} with {E}xotic {S}patial {A}symptotics, {\em Comm. Math. Phys.} \textbf{397} (2023), no. 3, 1387--1439.

\bibitem{Lemarie_Rieusset_2002_recent_developments_in_the_Navier_Stokes_problem}
P.~G. Lemari\'e-Rieusset.
\newblock {\em Recent developments in the {N}avier-{S}tokes problem}.
\newblock Chapman \& Hall/CRC Research Notes in Mathematics, vol. 431.
\newblock Chapman \& Hall/CRC, Boca Raton, FL, 2002.

\bibitem{lin-zhang-2019-ancient-solutions-heat-equaiton}
F.~Lin and Q.~S. Zhang.
\newblock On ancient solutions of the heat equation, {\em Comm. Pure Appl. Math.} \textbf{72} (2019), no. 9, 2006--2028.

\bibitem{merle-vega-2003-stability-of-solitons-for-KdV}
F.~Merle and L.~Vega.
\newblock {$L^2$} stability of solitons for {K}d{V} equation, {\em Int. Math. Res. Not.} \textbf{2003} (2003), no. 13, 735--753.

\bibitem{mizumachi2015linesoli}
T.~Mizumachi.
\newblock Stability of line solitons for the {KP}-{II} equation in {$\mathbb R^2$}, {\em Mem. Amer. Math. Soc.} \textbf{238} (2015), no. 1125, vii+95.

\bibitem{mizumachi2018linesoli2}
T.~Mizumachi.
\newblock Stability of line solitons for the {KP}-{II} equation in {$\mathbb R^2$}. {II}, {\em Proc. Roy. Soc. Edinburgh Sect. A} \textbf{148} (2018), no. 1, 149--198.

\bibitem{mizumachi2019phase}
T.~Mizumachi.
\newblock The phase shift of line solitons for the {KP}-{II} equation.
\newblock In {\em Nonlinear dispersive partial differential equations and inverse scattering}, Fields Inst. Commun., vol.~83, pages 433--495. Springer, New York, 2019.

\bibitem{mizumachi-2024-linear-stability-elastic-2-line-soliton}
T.~Mizumachi.
\newblock Linear stability of elastic 2-line solitons for the {KP-II} equation, {\em Quarterly of Applied Mathematics} \textbf{82} (2024), no. 1, 115--226.

\bibitem{mizumachi-pelinovsky-2012-nls-soliton-stability-backlund}
T.~Mizumachi and D.~Pelinovsky.
\newblock B\"{a}cklund transformation and {$L^2$}-stability of {NLS} solitons, {\em Int. Math. Res. Not. IMRN} \textbf{2012} (2012), no. 9, 2034--2067.

\bibitem{mizumachi_shimabukuro-2020-benney-luke}
T.~Mizumachi and Y.~Shimabukuro.
\newblock Stability of {B}enney-{L}uke line solitary waves in 2 dimensions, {\em SIAM J. Math. Anal.} \textbf{52} (2020), no. 5, 4238--4283.

\bibitem{mizumachi-tzvetkov-2012-stability-line-soliton-periodic-transverse-perturbations}
T.~Mizumachi and N.~Tzvetkov.
\newblock Stability of the line soliton of the {KP}-{II} equation under periodic transverse perturbations, {\em Math. Ann.} \textbf{352} (2012), no. 3, 659--690.

\bibitem{molinetSautTzvetkov2011non-localized}
L.~Molinet, J.-C. Saut, and N.~Tzvetkov.
\newblock Global well-posedness for the {KP}-{II} equation on the background of a non-localized solution, {\em Ann. Inst. H. Poincar\'{e} C Anal. Non Lin\'{e}aire} \textbf{28} (2011), no. 5, 653--676.

\bibitem{o_neil-1963-convolution_operators_and_Lpq}
R.~O'Neil.
\newblock Convolution operators and {$L(p,\,q)$} spaces, {\em Duke Math. J.} \textbf{30} (1963), 129--142.

\bibitem{palais-1997-the_symmetries_of_solitons}
R.~S. Palais.
\newblock The symmetries of solitons, {\em Bull. Amer. Math. Soc. (N.S.)} \textbf{34} (1997), no. 4, 339--403.

\bibitem{Soria_Tradacete_2019_The_least_doubling_constant}
J.~Soria and P.~Tradacete.
\newblock The least doubling constant of a metric measure space, {\em Ann. Acad. Sci. Fenn. Math.} \textbf{44} (2019), no. 2, 1015--1030.

\bibitem{stein-1970-singular-integrals}
E.~M. Stein.
\newblock {\em Singular integrals and differentiability properties of functions}.
\newblock Princeton Mathematical Series, No. 30. Princeton University Press, Princeton, N.J., 1970.

\bibitem{stein-1993-harmonic-analysis}
E.~M. Stein.
\newblock {\em Harmonic analysis: real-variable methods, orthogonality, and oscillatory integrals}.
\newblock Princeton Mathematical Series, vol.~43.
\newblock Princeton University Press, Princeton, NJ, 1993.
\newblock With the assistance of Timothy S. Murphy, Monographs in Harmonic Analysis, III.

\bibitem{talbut-2021-conservation-laws-BO}
B.~Talbut.
\newblock Low regularity conservation laws for the {B}enjamin-{O}no equation, {\em Math. Res. Lett.} \textbf{28} (2021), no. 3, 889--905.

\bibitem{Villarroel-Ablowitz-2004-On-IVP-of-KP-II-with-data-that-do-not-decay-along-a-line}
J.~Villarroel and M.~J. Ablowitz.
\newblock On the initial value problem for the {KPII} equation with data that do not decay along a line, {\em Nonlinearity} \textbf{17} (2004), no. 5, 1843--1866.

\bibitem{Derchyi-Wu-2020-direct-scattering-Gr12-KP-II-soliton}
D.~Wu.
\newblock The direct scattering problem for the perturbed {${\rm Gr}(1, 2)_{\geqslant0}$} {K}adomtsev-{P}etviashvili {II} solitons, {\em Nonlinearity} \textbf{33} (2020), no. 12, 6729--6759.

\bibitem{wu2022stability}
D.~Wu.
\newblock Stability of {K}adomtsev-{P}etviashvili multi-line solitons.
\newblock Preprint: \texttt{arXiv:2205.07432}, 2022.

\end{thebibliography}

\end{document}